\documentclass[11pt,oneside,english]{amsart}
\usepackage[utf8]{inputenc}
\usepackage{color}
\usepackage{babel}
\usepackage{enumitem}
\usepackage{amsbsy}
\usepackage{amstext}
\usepackage{amsthm}
\usepackage{amssymb}
\usepackage[letterpaper]{geometry}
\usepackage{setspace}
\usepackage{csquotes}
\usepackage[pdfusetitle,bookmarks=true,bookmarksnumbered=false,bookmarksopen=true,bookmarksopenlevel=1,
 breaklinks=false,pdfborder={0 0 1},backref=false,colorlinks=false]
 {hyperref}

\makeatletter

\newcommand{\noun}[1]{\textsc{#1}}
\providecommand{\tabularnewline}{\\}

\numberwithin{equation}{section}
\numberwithin{figure}{section}
\theoremstyle{plain}
\newtheorem{thm}{\protect\theoremname}[section]
\theoremstyle{remark}
\newtheorem{rem}[thm]{\protect\remarkname}
\theoremstyle{definition}
\newtheorem{defn}[thm]{\protect\definitionname}
\theoremstyle{plain}
\newtheorem{lem}[thm]{\protect\lemmaname}
\theoremstyle{definition}
\newtheorem{example}[thm]{\protect\examplename}
\theoremstyle{plain}
\newtheorem{cor}[thm]{\protect\corollaryname}
\theoremstyle{plain}
\newtheorem{prop}[thm]{\protect\propositionname}
\theoremstyle{plain}
\newtheorem{conjecture}[thm]{\protect\conjecturename}
\newtheorem{problem}[thm]{\protect\problemname}

\usepackage{slashed}
\usepackage[all]{xy}
\usepackage{tikz}
\usetikzlibrary{intersections}

\newcommand{\pdv}{\partial}
\newcommand{\gar}{\ifmmode{\mathrm{G\mathring{a}rding}}\else\text{Gårding}\fi}
\DeclareMathOperator{\Inv}{\tau}

\newcommand{\SSGF}{S}
\newcommand{\CSGF}{C}

\def\A{\mathcal{A}}
\def\a{\textbf{a}}
\def\b{\textbf{b}}
\def\e{\textbf{e}}
\def\t{\textbf{t}}

\def\R{\mathbb R}
\def\Z{\mathbb Z}

\def\HH{\mathrm H}
\def\N{\mathbb N}
\def\L{\mathrm L}

\def\S{\mathrm S}
\def\G{{\mathrm G}^\mathfrak{a}}

\def\H {\mathrm H}
\def\GS{\mathrm G}

\def\C{\mathcal C}

\def\x{\mathbf{x}}
\def\y{\mathbf{y}}
\def\u{\mathbf{u}}
\def\v{\mathbf{v}}
\def\w{\mathbf{w}}

\def\T{\mathrm T}
\def\D{\mathrm D}

\def\n{[n]}
\def\proj{\Pi^\downarrow}
\def\pol{\Pi^\uparrow}
\def\Sym{\mathrm {sym}}

\def\<{\langle}
\def\>{\rangle}
\def\sym{\text{sym}}
\def\branden{Br\"and\'en}
\def\vtl{\vartriangleleft}
\usepackage{babel}
\providecommand{\corollaryname}{Corollary}
\providecommand{\definitionname}{Definition}
\providecommand{\examplename}{Example}

\providecommand{\lemmaname}{Lemma}
\providecommand{\propositionname}{Proposition}
\providecommand{\remarkname}{Remark}
\providecommand{\theoremname}{Theorem}
\providecommand{\conjecturename}{Conjecture}

\ifdefined\showcaptionsetup
 \PassOptionsToPackage{caption=false}{subfig}
\fi
\usepackage{subfig}
\makeatother

\usepackage[style=numeric,doi=false, url=false,giveninits=true, maxbibnames=10]{biblatex}
\AtEveryBibitem{\clearfield{note}}
\AtEveryBibitem{\clearfield{issn}}
\AtEveryBibitem{\clearfield{isbn}}
\AtEveryBibitem{\clearfield{language}}
\providecommand{\conjecturename}{Conjecture}
\providecommand{\corollaryname}{Corollary}
\providecommand{\definitionname}{Definition}
\providecommand{\examplename}{Example}
\providecommand{\lemmaname}{Lemma}
\providecommand{\propositionname}{Proposition}
\providecommand{\remarkname}{Remark}
\providecommand{\theoremname}{Theorem}

\providecommand{\problemname}{Problem}
\begin{document}
\title{Gårding polynomials}
\author{Hao Fang}
\email{hao-fang@uiowa.edu}
\address{Department of Mathematics, University of Iowa, Iowa City, IA 52246,
USA}
\author{Biao Ma}
\email{bma@math.ecnu.edu.cn}
\address{School of Mathematical Sciences, East China Normal University, 500
Dongchuan road, Shanghai, China.}
\thanks{H. F.'s work is partially supported by a Simons Foundation mathematics
collaboration grant and an NSF-RTG grant. B. M. is partially supported
by NSFC grant 12471052.}
\begin{abstract}

We introduce G{\aa}rding polynomials, a class of real multivariate polynomials characterized by positivity regions that are invariant under translation by positive vectors and closed under strictly positive affine transformations. We prove that this geometric formulation is equivalent both to a reduction to the multi-affine setting via polarization and to a recursive criterion in terms of partial derivatives.

The class of G{\aa}rding polynomials strictly extends that of real stable polynomials while preserving many of their structural properties. In particular, multi-affine G{\aa}rding polynomials with nonnegative coefficients satisfy the Rayleigh property, and their positive univariate specializations have ultra log-concave coefficient sequences.

The G{\aa}rding property for several matroid generating functions is preserved under natural matroid operations. As applications, we derive new negative dependence results for generating functions associated with various classes of matroids and graphs, including examples previously beyond the scope of real stability and Lorentzian methods. We further obtain analogous results for characteristic polynomials arising from certain matrix classes.
\end{abstract}

\maketitle
\tableofcontents{}

\section{Introduction}

In this article, we introduce \emph{Gårding polynomials}, a natural geometric
extension of real stable polynomials.

In 1959, motivated by the well-posedness problem for linear hyperbolic PDEs, Lars G\aa{}rding \cite{Gar59} introduced hyperbolic polynomials, a class of real polynomials with rich geometric and algebraic structure. Closely related is the theory of real stable polynomials, whose multivariate framework has become a powerful tool in analysis, probability, combinatorics, and optimization; see
\cite{BB08Duke,MR2661242,BB09Annals,BB09I,B07Adv,BBL09,Wagner11,KPV15,Guler97,MSS15i,MSS15ii,MR2411443}.
Developed systematically by Borcea, Brändén, and others, this theory has as one of its central consequences the strong Rayleigh property, a robust form of negative dependence that applies to a broad class of generating functions in probability theory and combinatorics.

More recently, Lorentzian polynomials, introduced by Brändén--Huh
\cite{BrandenHuh20} and Anari-Liu-OveisGharan-Vinzant
\cite{ALOVi,ALOVii,ALOViii}, provide a unified framework capturing
strong log-concavity phenomena. In particular, they apply naturally
to homogeneous generating functions associated with matroids, such as
basis generating functions and related constructions. Together, these
theories have significantly advanced the understanding of log-concavity
and negative dependence in combinatorial generating functions.

However, many natural polynomials arising from matroid theory fall outside both frameworks, despite exhibiting closely related monotonicity and negative dependence phenomena. The strong Rayleigh property is often too restrictive for these examples. Lorentzian theory, although powerful, is most naturally adapted to homogeneous polynomials, and does not in general yield the full Rayleigh property.

Motivated by G{\aa}rding's geometric viewpoint, we introduce a class of
multivariate polynomials defined via positivity regions. 
Here, a positivity region is a connected component of the set where a nonzero real polynomial is positive. We require such a region to be closed under translation by positive vectors, a condition we call the positive ray test (PRT), which underlies our construction.

A polynomial is called  multi-affine if it has degree at most $1$
in each variable. \emph{Multi-affine Gårding polynomials} are characterized
precisely by the existence of a positivity region passing the positive
ray test. A \emph{general multivariate Gårding polynomial} is defined as an
appropriate pullback of a multi-affine Gårding polynomial by a strictly
positive affine map; see Definition~\ref{def:general garding}. Details of
our notation are introduced in later sections.

\medskip
\noindent\textbf{Main results.}
The main result of this paper is a structural theorem that provides equivalent geometric and algebraic characterizations of G{\aa}rding polynomials.
\begin{thm}
\label{thm:newmain}For a nonzero polynomial $f\in\R[x_{1},x_{2},\cdots,x_{n}]$,
the following are equivalent:
\begin{enumerate}
\item $f$ is \gar{}, i.e. $f=\mu^{*}g$ where $\mu$ is a strictly positive
affine map and $g$ is a multi-affine \gar{} polynomial;
\item Every admissible symmetric polarization of $f$ is a multi-affine
G{\aa}rding polynomial;

%The polarization of $f$ is multi-affine and \gar{}; 
\item For each multi-index $\alpha\in\Z_{\geq0}^{n}$ such that $\pdv^{\alpha}f\not\equiv0$,
$\pdv^{\alpha}f$ admits a unique positivity region satisfying the positive
ray test, and its positivity region is contained in those of its partial derivatives.
\end{enumerate}
\end{thm}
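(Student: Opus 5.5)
We plan to prove the cycle of implications $(1)\Rightarrow(2)\Rightarrow(3)\Rightarrow(1)$. The basic tool is the multi-affine case of (3). For a multi-affine \gar{} polynomial $g$ with PRT positivity region $\Omega$, we first show three facts:
\begin{itemize}
\item[(a)] $\Omega$ is unique;
\item[(b)] every nonzero partial derivative $\pdv_i g$ is again multi-affine \gar{};
\item[(c)] the positivity region of $\pdv_i g$ contains $\Omega$.
\end{itemize}
For (a) and (c), note that $g$ is affine in $x_i$, so $g(x+te_i)=g(x)+t\,\pdv_i g(x)$. If $x\in\Omega$, PRT forces $g(x+te_i)>0$ for all $t>0$, which gives $\pdv_i g(x)\ge 0$ on $\Omega$. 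We then upgrade this to strict positivity by an openness/perturbation argument: if $\pdv_i g$ vanished at an interior point, we would perturb along other positive directions to produce a sign change. The PRT region for $\pdv_i g$ is then defined as the component of $\{\pdv_i g>0\}$ containing $\Omega$. To check PRT for it, we take $x$ in that component and observe that $x+s\mathbf 1$ eventually enters $\Omega$ as $s\to\infty$: along the diagonal the top-degree part dominates, and its leading coefficient is positive because $\Omega$ is invariant under the positive orthant. Uniqueness follows by the same observation, since any PRT region contains points $x+s\mathbf 1$ for large $s$, and these all lie in the same component.

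$(1)\Rightarrow(2)$. Write $f=\mu^*g$ with $\mu$ strictly positive affine and $g$ multi-affine \gar{}. An admissible symmetric polarization $P$ of $f$ is obtained by replacing each $x_i$ by $d_i$ new variables and symmetrizing. We compare $P$ with the polarization of $g$ pulled back along the induced affine map on the polarized variables, and we use that symmetrization preserves the region: the diagonal restriction recovers $f$, and a multi-affine symmetric polynomial is determined by its diagonal. Concretely, we show that the image of $\Omega_g$ under the (positive) averaging and diagonal maps gives a PRT region for $P$. The key input is that a strictly positive affine map pulls back PRT regions to PRT regions, because positive translations map to positive translations. The delicate point is that polarization does not commute with a general affine pullback. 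To handle this we use the admissibility hypothesis (presumably the degree bounds $d_i$ match $\mu$) together with a Grace–Walsh–Szegő–type symmetrization lemma for PRT regions, which we expect to be stated earlier in the paper.

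$(2)\Rightarrow(3)$. We have $\pdv^{\alpha}f$ equal, up to a positive constant, to the diagonal restriction of $\pdv^{\beta}P$ for a suitable multi-index $\beta$ on the polarized variables. Applying the multi-affine facts (a)–(c) to $P$ repeatedly, and then restricting to the diagonal, gives a PRT positivity region for $\pdv^\alpha f$. The diagonal embedding is strictly positive affine, so restriction preserves PRT. Uniqueness again follows from the diagonal-translation argument, and nesting follows from (c) applied to $P$.

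$(3)\Rightarrow(1)$. We take $g=P$, a symmetric polarization of $f$, and $\mu$ the diagonal map, so that $f=\mu^*g$. We must prove $P$ is \gar{} using only hypothesis (3) on $f$. We proceed by induction on $\deg f$. The derivatives $\pdv_i f$ satisfy (3), so their polarizations are \gar{}. Writing $P=y\,\pdv_yP+R$, where $y$ is one polarized copy of $x_i$, we build the region of $P$ from the nested regions of $f$ and $\pdv_i f$. We expect this step to be the main obstacle, because it requires gluing the regions without an actual stability structure. Our first attempt will be to define the candidate region as the set $\{P>0,\ \pdv_y P>0 \text{ in the inductive region}\}$ and to verify PRT using monotonicity in $y$ together with the inductive hypothesis for $\pdv_yP$.
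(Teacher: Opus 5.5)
Your $(2)\Rightarrow(3)$ is sound and is essentially the paper's argument (Theorem \ref{thm:diff-closure}). The gap is that both other implications depend on one fact you never establish: $\pol_{\kappa}f$ is multi-affine G{\aa}rding for every admissible $\kappa$. (Admissible just means $\kappa\ge\kappa_f$, so that $\pol_\kappa f$ is defined; it is not a compatibility condition with $\mu$.) For $(1)\Rightarrow(2)$ you defer this to a ``Grace--Walsh--Szeg\"o-type symmetrization lemma for PRT regions'' that you expect to find earlier in the paper. No such lemma exists, and the paper stresses that GWS is unavailable here because there is no complex-analytic structure. Your concrete plan of taking an image of $\C_g$ also cannot work, because polarization acts linearly on coefficients rather than by pullback. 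For example, $x^2-1=\mu^*g$ with $g=(y_1-1)(y_2+1)$ and $\mu(x)=(x,x)$, yet $P=\pol_{(2)}(x^2-1)=y_1y_2-1$ equals $-1$ at $(2,0)\in\C_g$. Even getting \emph{some} symmetric realization needs a real tool. The paper splits $\mu$ into independent variables and applies full symmetrization, which preserves $\G_{+}$ by the symbol criterion (Theorem \ref{thm:GardSymbolTransf}). This only gives large $\kappa$; descending to every admissible $\kappa$ is the hard Theorem \ref{thm:polarization_keeps_garding}.

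For $(3)\Rightarrow(1)$ your strategy is the paper's (Theorem \ref{thm:recursivedef}): show $P=\pol_{\kappa}f$ is G{\aa}rding by induction on degree, with $\pdv_{x_{ij}}P$ handled by the polarization identity. The mechanism you propose, however, does not close. Monotonicity in $y$ plus the inductive hypothesis keep you inside $\pi_y^{-1}(\C_{\pdv_yP})$ and make $P$ increase in the $y$-direction. They say nothing about the sign of $P$ after a translation in another polarized variable $x_{jk}$. There you need $\pdv_{jk}P\ge 0$ on the candidate region, i.e.\ (Proposition \ref{prop:testing method}) $P\le 0$ on $\partial\C_{\pdv_{jk}P}$ at \emph{all} points. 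Hypothesis (3), by contrast, only constrains $f$, that is, $P$ at $\kappa$-symmetric points.

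The missing idea is the paper's real-variable substitute for GWS. When $P$ lies in the nondegenerate class $\mathcal{B}$, each point of $\partial\C_{\pdv_{11}P}$ can be replaced by a $\kappa$-symmetric point of the same boundary without decreasing $P$ (Propositions \ref{prop:symmetry} and \ref{prop:POLofNRTB}). This rests on an explicit three-variable computation, using that degree-$2$ G{\aa}rding polynomials are stable, plus a spread-minimizing argument. Since nondegeneracy can fail, you also need a perturbation $f-c\,\ell$ with $\nabla\ell>0$, which requires NRT of the polyhedral derived component (Lemma \ref{lem:top-nrt-affine}, Proposition \ref{prop:approxiG}). Finally, closedness of $\G_{+}$ (Lemma \ref{lem:G_+isclosed}) lets you pass to the limit.

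A minor point: in your preliminary (c), knowing that $x+s\mathbf 1$ eventually enters $\Omega$ does not show that the component of $\{\pdv_ig>0\}$ passes PRT. The paper identifies that component with $\pi_i(\Omega)$ via $g=x_i\pdv_ig+(\text{terms free of }x_i)$.
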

See later sections for details. Theorem~\ref{thm:newmain} provides the algebraic foundation of the geometric
theory by linking the key mechanisms that recur throughout the paper. First, polarization
reduces the general theory to a canonical multi-affine setting. Second, and
more importantly, the G{\aa}rding property admits an intrinsic \emph{recursive}
description governed by partial derivatives: each nontrivial $\pdv^{\alpha}f$
admits a distinguished positivity region passing PRT, and these regions are
nested under differentiation. This recursive, derivative-based viewpoint is closely aligned with Lorentzian theory and underlies the verification methods used in later sections and applications.

Real stable polynomials, up to a sign, form a proper subclass of G{\aa}rding
polynomials (Theorem~\ref{thm:stable-garding}). The G{\aa}rding
class strictly extends real stability while retaining many of its structural features.

Homogeneous $\gar{}$ polynomials form a proper subclass of Lorentzian polynomials.
These three classes coincide in the case of homogeneous degree~$2$ polynomials with
nonnegative coefficients. See Section~\ref{sec:Discussion} for further discussion.

Similarly to real stable and Lorentzian polynomials, the class of G{\aa}rding polynomials
is closed under a range of natural algebraic and topological operations and admits a
linear preserver theory \cite{BB09I,BrandenHuh20}. See Section~\ref{sec:linearpreserver}
and Section~\ref{sec:structuralconsequence}.

A key consequence of Theorem \ref{thm:newmain} is the following negative dependence result.

\begin{thm}
Let $f(\x)$ be a multi-affine Gårding polynomial with nonnegative coefficients. Let
\[
\Gamma^n_+ := \{\x=(x_1,\dots,x_n)\in \R^n : x_i>0 \text{ for all } i\}.
\]
Then:
\begin{enumerate}
    \item $f$ is Rayleigh. Equivalently, for all $\x \in \overline{\Gamma^n_+}$,
    \[
    \partial_i f(\x)\,\partial_j f(\x) \ge f(\x)\,\partial_{ij} f(\x).
    \]
    
    \item For any $\a \in \overline{\Gamma^n_+}$, consider the univariate specialization
    \[
    g(t) := f(\a t).
    \]
    If $g(t)\not\equiv 0$, then its coefficient sequence is ultra log-concave.
    
    \item If $f$ is also homogeneous, then $f$ is Lorentzian. Thus,  $f$ is log-concave on $\Gamma^n_+$. Moreover, the associated measure satisfies the negative association property.
\end{enumerate}
\end{thm}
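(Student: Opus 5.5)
We plan to derive all three parts from the recursive characterization (3) of Theorem~\ref{thm:newmain}, together with its invariance under restriction to faces and specialization. The basic mechanism is the following two-variable picture. Fix $i\neq j$ and freeze all other coordinates at values in $\overline{\Gamma^n_+}$. Since $f$ is multi-affine we can write
\[
f = A + B x_i + C x_j + D x_i x_j,
\]
where $A,B,C,D\ge 0$ are polynomials in the frozen variables. The Rayleigh difference is then
\[
\partial_i f\,\partial_j f - f\,\partial_{ij}f = (B+Dx_j)(C+Dx_i) - (A+Bx_i+Cx_j+Dx_ix_j)D = BC - AD,
\]
which is independent of $x_i,x_j$. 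So part (1) reduces to showing $BC\ge AD$ at every point of $\overline{\Gamma^{n-2}_+}$.

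To prove this, we use that the PRT positivity region $\Omega$ of $f$ contains $\Gamma^n_+$. This should follow because $f$ has nonnegative coefficients and is nonzero: near $\infty\cdot\mathbf 1$ we have $f>0$, and the positive orthant is connected. We also use that $\Omega$ is nested inside the regions of $\partial_i f$, $\partial_j f$ and $\partial_{ij} f$. Now suppose $D>0$ and $AD>BC$ at some frozen point. Then the bivariate $f$ factors as
\[
D(x_i+C/D)(x_j+B/D) + (A - BC/D),
\]
and its zero set is a hyperbola whose branches lie in the region $x_i<-C/D$, $x_j>-B/D$ or its mirror. Hence the set $\{f>0\}$ contains the positive quadrant and also the sheared region $\{(x_i+C/D)(x_j+B/D) > -(A-BC/D)/D\}$, which is connected across the center. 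We will then show that the component $\Omega$ contains a point where $\partial_i f = C + D x_j < 0$, namely a point with $x_j < -B/D$ slightly, $x_i$ near $-C/D$. Such a point lies outside the positivity region of $\partial_i f$, contradicting the nesting in Theorem~\ref{thm:newmain}(3). The case $D=0$ is trivial. This step, transferring the component and nesting statements from $n$ variables to the frozen 2-variable slice, is what we expect to be the main obstacle. Components can split or merge under restriction, so we would first establish a lemma that restricting a \gar{} polynomial by fixing coordinates to nonnegative values (a degenerate positive affine pullback) preserves the \gar{} property with compatible regions. We would obtain this from closure properties in Section~\ref{sec:structuralconsequence}, or via a limiting argument in Definition~\ref{def:general garding}.

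For part (2), we observe that $g(t)=f(\a t)$ is the pullback by the positive map $t\mapsto \a t$, after perturbing $\a$ into $\Gamma^n_+$ and taking limits. So $g$ is a univariate \gar{} polynomial. By Theorem~\ref{thm:newmain}(2), its symmetric polarization is multi-affine \gar{} with nonnegative coefficients $c_k/\binom{d}{k}$ times the elementary symmetric polynomials. Applying part (1) to the polarization at points with all but two coordinates equal to $s$, and eliminating, yields Newton's inequalities $\tilde c_k^2\ge \tilde c_{k-1}\tilde c_{k+1}$ for $\tilde c_k = c_k/\binom dk$. This is ultra log-concavity. We also need the absence of internal zeros. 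This follows because a zero coefficient $\tilde c_k=0$ with nonzero neighbours would violate $\partial^{\alpha}$-nonvanishing nesting, since derivatives of a \gar{} polynomial with positive values on $\Gamma_+$ remain positive there.

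For part (3), we use the Brändén--Huh criterion for homogeneous multi-affine polynomials: a homogeneous polynomial with nonnegative coefficients and M-convex support, whose degree-two derivatives $\partial^\alpha f$ are Lorentzian quadratics (at most one positive eigenvalue), is Lorentzian. The Rayleigh property from part (1), applied to all derivatives $\partial^\alpha f$ (which are again multi-affine \gar{} by (3)), gives M-convex support via the standard Rayleigh$\Rightarrow$exchange argument. For the quadratics, Theorem~\ref{thm:newmain}(3) gives that each quadratic $\partial^\alpha f$ has a PRT positivity region. For quadratic forms, this is equivalent to hyperbolicity with respect to $\mathbf 1$, hence to having Lorentzian signature. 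Log-concavity on $\Gamma^n_+$ and negative association are then the known consequences of Lorentzian, respectively Rayleigh plus homogeneous (Feder--Mihail), as in \cite{BrandenHuh20}.
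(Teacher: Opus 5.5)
Your overall route is the paper's. Part (1) is Lemma~\ref{lem:GardingToRayleigh}: restrict to a two-variable coordinate slice and apply the bivariate criterion of Example~\ref{exa:Let-.-We}; your hyperbola picture is a proof of that criterion. Part (2) is polarization followed by Pemantle's theorem that a symmetric multi-affine polynomial is Rayleigh iff its normalized coefficients are log-concave. Part (3) combines $\GS^2=\S^2$ for the quadratic derivatives with Rayleigh $\Rightarrow$ M-convex support, and is fine as you wrote it.

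The one step that fails as written is your derivation of Newton's inequalities in (2). Evaluating the Rayleigh inequality for $p=\sum_k\tilde c_k\sigma_k$ with all $d-2$ remaining coordinates equal to $s$ only gives $B(s)^2\ge A(s)D(s)$ on $[0,\infty)$. Its extreme coefficients yield $\tilde c_1^2\ge\tilde c_0\tilde c_2$ and $\tilde c_{d-1}^2\ge\tilde c_{d-2}\tilde c_d$, but the middle inequalities cannot in general be extracted from it. For $d=4$ the $s^2$-coefficient is $3\tilde c_2^2-2\tilde c_1\tilde c_3-\tilde c_0\tilde c_4$, and $\tilde c=(0.1,1,0.9,1,0.1)$ makes every coefficient of $B^2-AD$ positive although $\tilde c_2^2<\tilde c_1\tilde c_3$. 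The fix is to set only $k-1$ of the remaining coordinates equal to $s$ and the rest equal to $0$. Then $A,B,D$ have leading terms $\tilde c_{k-1}s^{k-1}$, $\tilde c_k s^{k-1}$, $\tilde c_{k+1}s^{k-1}$, and letting $s\to\infty$ gives $\tilde c_k^2\ge\tilde c_{k-1}\tilde c_{k+1}$ for $1\le k\le d-1$. Alternatively, simply cite \cite{Pemantle2000}, as the paper does.

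The obstacle you flag in (1) is not a real one and needs no lemma on degenerate pullbacks: freeze the other coordinates at \emph{strictly} positive values. Since $\Gamma^n_+\subset\C_f$ and $f=0$ on $\partial\C_f$, the slice of $\C_f$ is open and relatively closed in the positivity set of the slice polynomial, hence a union of its components; this is Lemma~\ref{lem:For-any-.}. When $AD>BC$, that positivity set $\{(x_i+C/D)(x_j+B/D)>BC/D^2-A/D\}$ is connected, so it is the whole slice of $\C_f$, and it is not PRT. Then $BC-AD\ge 0$ extends to $\overline{\Gamma^{n-2}_+}$ by continuity, exactly as in the paper. Note also that $\partial_i f=B+Dx_j$, not $C+Dx_j$; your chosen point with $x_j<-B/D$ is nonetheless the right one.

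Similarly in (2), do not perturb $\a$ and pass to a limit, since that needs a closedness statement for univariate $\GS_+$ that you have not justified. Instead, first set the coordinates with $a_i=0$ to zero, which preserves $\G_+$ by Lemma~\ref{lem:G+restriction}; then $t\mapsto\a t$ is strictly positive. Your no-internal-zeros argument works once stated precisely. If $c_k=0$ and $c_j>0$ for some $j>k$, then $r(g^{(k)})=0$, and MRS forces $r(g^{(i)})=0$, i.e.\ $c_i=0$, for every $i<k$.
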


The Rayleigh property, ultra log-concavity, and negative association  are fundamental notions that have been extensively studied in probability theory and combinatorics \cites{Pemantle2000}{COSW04}{MR2248326}{MR2428906}{BBL09}{HuhCorrelationBoundsFields2021a}.

In this paper, we study the generating functions naturally associated with matroids,
including the basis generating function and those arising from spanning and
cospanning sets. These generating functions encode rich combinatorial information
and exhibit favorable structural properties in our framework.

As an application of our construction, we establish the G{\aa}rding property, and consequently the Rayleigh property, for generating functions associated with several important classes, as stated in the following theorem.

\begin{thm}\label{newclass}
The G{\aa}rding property of the spanning-set and cospanning-set generating
functions is preserved under direct sums, series--parallel connections, deletion,
and contraction; equivalently, the corresponding classes are closed under these
operations.

\smallskip
Moreover, for each matroid in the following classes:
\begin{itemize}
    \item cycle matroids of series--parallel networks;
    \item uniform matroids;
    \item  matroids obtained from uniform matroids by removing one basis;
    \item matroids with at most six elements,
\end{itemize}
the following hold:
\begin{enumerate}
    \item The spanning-set, cospanning-set, and basis generating functions
    are G{\aa}rding;

    \item Consequently, these generating functions, as well as their
    independent-set generating functions,
    satisfy the Rayleigh property;

    \item Their univariate specializations are ultra log-concave, and 
    satisfy the Mason-type inequalities.
\end{enumerate}
\end{thm}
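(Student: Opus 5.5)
The proof splits into two parts: the closure statement, and the list of classes, which is deduced from it together with a few base cases.

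For the closure statement, we work with the spanning-set generating function $S_M(\x)=\sum_{A\text{ spanning}}\x^A$ and the cospanning-set function $C_M$. Since $C_M(\x)=\x^{E}S_{M^*}(\x^{-1})$ up to complementation, duality exchanges the two. So it suffices to treat $S_M$ and to note that each operation has a dual partner: deletion $\leftrightarrow$ contraction, series $\leftrightarrow$ parallel, and direct sum $\leftrightarrow$ direct sum. The matroid operations become polynomial operations as follows:
\begin{itemize}
\item direct sum gives the product $S_{M_1}S_{M_2}$ in disjoint variables;
\item contraction of $e$ gives the specialization $x_e\mapsto$ a value (for $S$, $S_{M/e}=\partial_e S_M$ or $S_M|_{x_e=1}$, depending on whether $e$ is a loop);
\item deletion gives a limit or leading coefficient in $x_e$;
\item series and parallel connection are expressed through deletion/contraction of the basepoint in the two summands, i.e. a bilinear combination of $S_{M_i}$, $S_{M_i\setminus p}$, $S_{M_i/p}$.
\end{itemize}
We would then invoke the closure properties of Gårding polynomials from Section~\ref{sec:structuralconsequence} and the linear preserver theory of Section~\ref{sec:linearpreserver}: closure under products, under $\partial_i$ (immediate from Theorem~\ref{thm:newmain}(3)), under specialization $x_i=a$ with $a\ge0$, under limits (via the PRT region), and under multi-affine symbol preservers. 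Parallel connection corresponds to applying a linear operator that merges the two basepoint variables, and we would verify that its symbol is Gårding.

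For the classes, uniform matroids come first. Here $S_{U_{r,n}}$ is the sum of elementary symmetric polynomials $\sum_{k\ge r}e_k(\x)$. We would show it is Gårding directly, for instance as a derivative/specialization of $\prod_i(1+x_i)$ truncated. More concretely, we would use the recursive criterion Theorem~\ref{thm:newmain}(3): every $\partial^\alpha$ is again of the same form $\sum_{k\ge r-|\alpha|}e_k$ in fewer variables, so by induction it suffices to exhibit a PRT positivity region nested with the derivatives' regions. Series--parallel networks then follow from the closure statement starting from $U_{1,1}$ and $U_{0,1}$. The remaining base cases are handled as follows:
\begin{itemize}
\item \emph{Uniform minus one basis:} $S=S_{U_{r,n}}-\x^{B}$. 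We would verify criterion (3) by symmetry reduction, treating variables in $B$ and outside $B$ as two blocks and handling the resulting two-block polynomials directly.
\item \emph{Matroids on at most six elements:} finite, by computer-assisted check of criterion (3) via polarization/symmetry.
\item \emph{Basis generating function:} this is the top-degree homogeneous part, obtained as a limit $\lambda^{-r}S_M(\lambda\x)$ or by differentiating $C_M$. So the Gårding property passes to it.
\end{itemize}

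The consequences (2), Rayleigh and ultra log-concavity, follow from the preceding negative dependence theorem, since all these functions are multi-affine with nonnegative coefficients. For the independent-set generating function we use $I_M(\x)=\x^{E}C_{M}(\x^{-1})$-type duality, which preserves Rayleigh; Mason-type inequalities are ultra log-concavity of the specialization at $\a=\mathbf 1$. The main obstacle is the base case for uniform matroids minus a basis (and verifying nesting of regions there), together with showing that the series--parallel operator is a Gårding preserver. For the latter, we would first try expressing the operation as a composition of product, derivative, and specialization only.
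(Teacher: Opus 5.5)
\paragraph{Main gap: series and parallel connections.}
Your argument fails at closure under series and parallel connections. You flag this as the main obstacle, and the series--parallel family depends on it. So does the paper's reduction of six-element matroids to $3$-connected ones through $2$-sums.

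Write $a_i=S_{M_i/p}$ and $b_i=S_{M_i\setminus p}$. With two copies $w_1,w_2$ of the basepoint variable, $S_{M_1}S_{M_2}=w_1w_2a_1a_2+w_1a_1b_2+w_2b_1a_2+b_1b_2$. Inclusion--exclusion gives
\[
S_{P(M_1,M_2)}=w_pa_1a_2+a_1b_2+b_1a_2-b_1b_2 .
\]
Suppose $T$ is a linear operator acting only on $(w_1,w_2)$ with $T(S_{M_1}S_{M_2})=S_{P(M_1,M_2)}$. It is forced to be $w_1w_2\mapsto w_p$, $w_i\mapsto 1$, $1\mapsto -1$: already for $M_1=M_2=U_{1,2}$ the four products $a_1a_2,a_1b_2,b_1a_2,b_1b_2$ are linearly independent.

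\begin{itemize}
\item Its symbol $w_p+u_1+u_2-u_1u_2$ has top-degree part $-u_1u_2$, so it is not G{\aa}rding by Proposition~\ref{prop:truncateLet--be}.
\item Indeed $T$ maps $1\in\G_+$ to $-1$, so it preserves nothing. Conjugating by translations or dilations of the variables still leaves $T(1)=-1$.
\item Series connection behaves the same way, with symbol $w_p(u_1+u_2)+u_1u_2-w_pu_1u_2$.
\item Your fallback through products, derivatives and specializations is not carried out. The obstruction is exactly the inclusion--exclusion subtraction.
\end{itemize}

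The missing ingredient is the domination relation. Set $\xi_e(M)=C_{M\setminus e}-C_{M/e}$, which has nonnegative coefficients. For $e$ not a loop, $C_M=(w_e+1)C_{M\setminus e}-\xi_e(M)$. The dichotomy of Theorem~\ref{thm:Suppose-that-is} then shows that $M$ is $C$-G{\aa}rding iff $\xi_e(M)\vtl C_{M\setminus e}$.

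Now write $f_e,f^e,g_e,g^e$ for the CSGFs of $M_1\setminus e$, $M_1/e$, $M_2\setminus e$, $M_2/e$, and put $p=\xi_e(M_1)$, $q=\xi_e(M_2)$. One gets
\[
C_{P(M_1,M_2)}=w_e(f_eg_e-pq)+f^eg^e,\qquad C_{S(M_1,M_2)}=(w_e+1)f_eg_e-pq .
\]
Both are G{\aa}rding because $f_eg_e\prec f_eg_e-pq\prec f^eg^e$, which is Lemma~\ref{lem:CSGFvtl}, itself a consequence of Theorem~\ref{proper-position}. Without this domination/proper-position machinery, the closure statement and the series--parallel class remain unproved.

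\paragraph{The other base cases are only announced.}
For $\widehat U_{r,n}$, the paper needs a genuine argument on the two-block polynomial $f_{r,q}(x,y)=g_{r,q}(x,y)-x^r$. It proves monotonicity of the largest roots in $r$ (by a continuity method) and in $y$. It then uses Theorem~\ref{thm:polarization_keeps_garding} to pass from the two-block polynomial back to the multi-affine $S_{\widehat U_{r,n}}$; your symmetry reduction needs this step too.

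For matroids on at most six elements, a ``computer-assisted check'' of a semialgebraic condition in up to six variables is not a finite enumeration. The paper instead reduces to the five $3$-connected matroids and proves explicit domination estimates for each.

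\paragraph{Smaller slips.}
\begin{itemize}
\item $C_M=S_{M^*}$ exactly. Your $\w^{E}S_{M^*}(\w^{-1})$ is $I_M$.
\item For $S_M=w_eS_{M/e}+S_{M\setminus e}$, contraction is the leading coefficient $\pdv_eS_M$. Deletion, for $e$ not a coloop, is $S_M|_{w_e=0}$. You have these reversed.
\item $B_M$ is the bottom-degree part of $S_M$; its top part is $\w^{E}$. So the limit is $\lambda^{-r}S_M(\lambda\w)$ as $\lambda\to0^+$, which Proposition~\ref{prop:truncateLet--be} covers because $S_M\in\G_+$.
\end{itemize}
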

Theorem~\ref{newclass} yields a broad class of
generating functions with the G{\aa}rding and Rayleigh properties. This is particularly effective in non-homogeneous settings, where existing theories are less directly applicable.

Wagner~\cite{WagnerRank3} showed that every rank~\(3\) matroid is Rayleigh,
giving a complete picture in that case. In higher ranks,  much less
is known. Our approach yields new families of Rayleigh matroids. The first,
arising from Theorem~\ref{newclass}, includes an infinite family of sparse
paving matroids. The second consists of several explicit rank~\(4\) families
constructed using the \(B\)-G{\aa}rding framework, which naturally extends to higher rank cases.

As a notable borderline example, the Fano matroid \(F_7\) exhibits an
asymmetry: its cospanning-set generating function is G{\aa}rding, while its
spanning-set generating function is not, although both are Rayleigh
(see Theorem~\ref{F7summary}). We also recover a result of
Erickson~\cite{EricksonNegativeCorrelation} showing that the independent-set
generating function of \(F_7\) is Rayleigh.

Overall, these results extend beyond the reach of stability and Lorentzian theories. 
Our approach provides a framework that strictly extends the scope of stability theory 
while remaining complementary to the Lorentzian framework, establishing Rayleigh-type 
inequalities and ultra log-concavity for a large class of generating functions, 
particularly in non-homogeneous settings. We expect further structural developments 
and applications.

\medskip
\noindent\textbf{Motivation from PDEs.}
Much of our work was originally motivated by the study of fully nonlinear PDEs. Hyperbolic polynomials,
introduced by G{\aa}rding \cite{Gar59}, arise naturally in this context,
including the Monge--Amp\`ere and $\sigma_k$-Hessian equations
\cite{caffarelli1985dirichlet,brendle2004variational,CGY1chang2002equation,chang2010liouville,chou-wang2001,CHY2chang2011prescribing,GSgursky2018formal,MR1719551,viaclovskytrans2000,shankar2021rigidity}.
However, several important geometric problems, including those related to the twisted
$J$-equation \cites{collins2017convergence}{chen2021j} and inverse
$\sigma_k$-equations \cite{Fang-Lai-Ma}, do not arise from hyperbolic polynomials.

In \cite{FANG2024109867} we studied a general class of such geometric PDEs on K\"ahler manifolds,
relating solvability to stability-type conditions and extending earlier work
\cite{song2008convergence,demailly2004numerical,Fang-Lai-Ma,Guan2014Second-order,chen2021j,collins2017convergence,szekelyhidi2018fully,Song2020NakaiMoishezonCF,datar2021numerical}.
Independently, Lin \cite{Lin23JFA,LIN2024} obtained an effective convexity criterion for level-sets
associated with symmetric multi-affine polynomials. These developments led to the introduction of G{\aa}rding polynomials studied in this paper.

\medskip
\noindent\textbf{Methods and techniques.}
The proof of the main theorem combines geometric and algebraic ideas and is
analytic in nature. 

For multi-affine polynomials, positivity regions satisfying the positive ray test
implies the closure properties of G{\aa}rding polynomials under basic operations
such as restrictions, translations and partial derivatives. We then study  two binary relations for \gar{} polynomials
which encode interactions of positivity regions. The closure properties and binary relations lead to a
linear preserver theorem in the G{\aa}rding setting, extending the seminal
work of Borcea and Brändén \cite{BB09Annals,BB09I,MR2661242}.

A key difficulty arises in extending the theory beyond the multi-affine
setting via polarization, which leads to a nonlinear and generally
non-convex optimization problem. Classical complex-analytic tools, such as
the Grace--Walsh--Szeg\"o theorem, which underlie polarization arguments in
the stable setting, do not apply here. A central contribution of this work
is the identification of intrinsic geometric and algebraic structures that
reduce this difficulty to linear optimization on convex domains.

In the matroid setting, the proofs of several special cases reveal delicate
analytic features of generating functions, reflecting combinatorial
properties of the underlying matroids. The more technical arguments are
developed in detail in the appendices.

 \medskip
\noindent\textbf{Organization of the paper.}
Section~2 introduces positivity regions and positive affine maps, including
the positive ray test and the associated geometric
filtration. Section~3 develops basic algebraic operations and revisits real
stable polynomials from our geometric perspective. Section~4 defines G{\aa}rding
polynomials and establishes their basic properties. Sections~5 and~6 develop
the multi-affine theory, including binary relations. Section~7 proves a linear
preserver theorem. Sections~8 and~9 extend the theory via polarization and
provide an intrinsic recursive characterization. Section~10 establishes closure
properties. Section~11 studies homogeneous \gar{} polynomials and their relation
to stable and Lorentzian polynomials. The final sections present applications
to matroid generating functions, ultra log-concavity, negative dependence, and
characteristic polynomials of certain matrix classes. The appendices contain
technical verification arguments and explicit classification results.

\smallskip{}
\section*{Acknowledgments}

The first-named author thanks Alice Chang for introducing him to
Gårding's foundational work~\cite{Gar59} many years ago.
Both authors thank Gang Tian for his encouragement, June Huh for his
interest and for suggesting the exploration of applications in matroid
theory, Shouda Wang for discussions and collaboration, including on the
material presented in Appendix~C, and Botong Wang and Jian Xiao for their
interest and helpful discussions.
The first-named author also thanks Renbo Zhao for discussions and
Haoran Wu for assistance with computations.
The second-named author thanks Yibo Gao, Tao Gui, Thuy-Duong Vuong, and
Suhan Zhong for helpful discussions.

\part{Polynomials and positivity regions}

\section{Geometry of positivity regions}\label{sec:Notations}

In this section, we develop the geometric framework that we use throughout
our paper. The central objects are positivity regions in Euclidean
spaces, together with their behavior under proper affine transforms.
This setting isolates crucial geometric notions of monotonicity and
invariance. Several concepts admit both a general form and a more
restrictive non-degenerate form, which are used according to different
requirements of algebraic and analytic arguments.

For $n\in\N$, define $[n]=\{1,2,\cdots,n\}$. Let $\R^{n}$ denote
Euclidean space with standard basis $\mathbf{e}_{1}=(1,0,\cdots,0)$,
$\mathbf{e}_{2}=(0,1,0,\cdots,0)$, etc. Points in $\R^{n}$ will
be written as $\x=(x_{1},\cdots,x_{n})\in\R^{n},$ with the same notation
as for $\y,\a,\cdots$. We write $\x>0$ (respectively, $\x\geq0$)
if each $x_{i}$ is positive (respectively, non-negative) for all
$i\in[n]$.

\subsection{Weakly and strictly positive linear and affine maps}

We introduce several basic geometric objects and transformations.
With respect to a fixed coordinate system in $\R^{n}$, the \emph{positive
hyper-octant} is defined by
\[
\Gamma_{n}^{+}:=\{\x\in\R^{n},\ \x>0\},
\]
with closure 
\[
\overline{\Gamma_{n}^{+}}:=\{\x\in\R^{n},\ \x\geq0\}.
\]
For any $\x_{0}\in\R^{n},$ the set $\x_{0}+\Gamma_{n}^{+}$ is called
an \emph{affine positive hyper-octant}; the set $\x_{0}+t\v$ for
$t>0,$ $\v>0$ is called a positive ray. 
\begin{rem}
\label{rem:easy}It is an elementary fact that a positive ray and
an affine positive hyper-octant have nontrivial intersection. Two
affine positive hyper-octants have a nontrivial intersection.

We introduce a geometric condition encoding positivity.
\end{rem}

\begin{defn}
[\noun{{}Positive Ray Test, PRT{}}]
\label{def:positive=00003D000020ray}
A subset $R\subset\mathbb{R}^{n}$ is said to pass the positive ray
test (PRT) if $R+\overline{\Gamma_{n}^{+}}\subset R$. Equivalently,
$R$ is closed under translation by positive vectors. 
\end{defn}

We also introduce a geometric non-degeneracy condition.
\begin{defn}[\noun{{}Negative Ray Terminating, NRT{}}]
\label{def:NRT region}
A subset $R\subset\R^{n}$ is said to be negative ray terminating
(NRT), if for every $\mathbf{x}\in R$, $(\{\mathbf{x}\}-\overline{\Gamma_{n}^{+}})\cap R$
is bounded. 
\end{defn}

\begin{rem}
\label{rem:intersection}Definitions \ref{def:positive=00003D000020ray}
and \ref{def:NRT region} originate in nonlinear PDEs theory, where
they serve as structural conditions ensuring ellipticity. 
\end{rem}

We record some basic properties of  the subsets in $\R^{n}$ passing PRT
or being NRT:
\begin{lem}
\label{lem:basic}Assume $R\subset\R^{n}$.
\begin{enumerate}
\item If $R$ passes PRT, then $R$ is connected; 
\item \label{enu:if--and}if $R_{1},$$R_{2}\in\R^{n}$ both pass PRT, then
$R_{1}\cap R_{2}\neq\emptyset$ ; 
\item \label{enu:directproduct}if $R_{1}\subset\R^{n_{1}}$ and $R_{2}\subset\R^{n_{2}}$
both pass PRT, then $R_{1}\times R_{2}\subset\R^{n_{1}+n_{2}}$ passes
PRT;
\item Fix an index set $I$. If $R_{i}$ passes PRT for all $i\in I$, then
$\cup_{i\in I}R_{i}$ passes PRT. If $\cap_{I}R_{i}\not=\emptyset$
then it also passes PRT.
\item \label{enu:NRTlabel}If $R\subset\R^{n}$ is NRT, so is any of its
open subsets.
\end{enumerate}
\end{lem}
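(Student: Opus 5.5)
Each of the five items is a direct unwinding of the definitions, so I will verify them one at a time. The only tool needed is Remark~\ref{rem:easy}, together with the observation that PRT means $\x\in R$ implies $\x+\v\in R$ for every $\v\ge 0$. Implicitly $R\neq\emptyset$ in items (1)--(2); the empty set trivially passes PRT, and I will assume nonemptiness where it is needed.

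For (1), take $\x,\y\in R$ and set $\mathbf{z}=\x+\y'$, where $\y'$ is chosen with $\y'\ge0$ and $\x+\y'\ge\y$ coordinatewise. For instance, take $\mathbf{z}$ with $z_i=\max(x_i,y_i)$. The segment from $\x$ to $\mathbf{z}$ has the form $\x+s(\mathbf{z}-\x)$ with $s\in[0,1]$ and $\mathbf{z}-\x\ge0$, so it lies in $R$ by PRT. Likewise the segment from $\y$ to $\mathbf{z}$ lies in $R$. Hence $R$ is path connected, and in particular connected.

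For (2), pick $\x_1\in R_1$ and $\x_2\in R_2$, and let $\mathbf{z}$ be their coordinatewise maximum. Then $\mathbf{z}-\x_i\ge0$ for $i=1,2$, so $\mathbf{z}\in R_1\cap R_2$. This is the argument behind Remark~\ref{rem:easy}.

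For (3), note that $\overline{\Gamma^+_{n_1+n_2}}=\overline{\Gamma^+_{n_1}}\times\overline{\Gamma^+_{n_2}}$. Translating $(\x_1,\x_2)$ by $(\v_1,\v_2)$ gives $(\x_1+\v_1,\x_2+\v_2)\in R_1\times R_2$.

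For (4), if $\x\in\bigcup_{i} R_i$ then $\x\in R_j$ for some $j$, so $\x+\v\in R_j\subset\bigcup_i R_i$ for every $\v\ge0$. Similarly, if $\x\in\bigcap_i R_i$ then $\x+\v\in R_i$ for each $i$. The nonemptiness hypothesis on the intersection is only there so that the statement is meaningful.

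For (5), if $U\subset R$, then $(\{\x\}-\overline{\Gamma_n^+})\cap U\subset(\{\x\}-\overline{\Gamma_n^+})\cap R$ for every $\x\in U\subset R$. A subset of a bounded set is bounded, so $U$ is NRT; openness of $U$ is not actually needed.

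I expect no real obstacle. The only point needing care is in (1): the naive path from $\x$ to $\y$ is not monotone, so one must pass through the coordinatewise maximum and not use the straight segment.
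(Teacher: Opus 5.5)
Your proposal is correct and is essentially the paper's proof, which simply states that every item follows directly from the definition of PRT and the elementary remark that two affine positive hyper-octants intersect; your coordinatewise-maximum construction is exactly the content of that remark, and it also gives path-connectedness in (1). Your observation that (2) tacitly requires $R_1,R_2\neq\emptyset$, and that openness plays no role in (5), is a fair sharpening of the statement as written.
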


\begin{proof}
All statements follow directly from Remark \ref{rem:easy} and Definition
\ref{def:positive=00003D000020ray}.
\end{proof}
To describe proper geometric transformations, we introduce notions of positive linear maps.
\begin{defn}\label{positive linear}
For a real $m\times n$ matrix $A=(a_{ij})$, let $T_{A}:\R^{n}\to\R^{m}$
be a linear map 
\[
T_{A}\x:=(\sum_{j=1}^{n}a_{1j}x_{j},\cdots\sum_{j=1}^{n}a_{mj}x_{j}).
\]
The map $T_{A}$ is called \emph{weakly positive} if 
\[
T_{A}(\overline{\Gamma_{n}^{+}})\subset\overline{\Gamma_{m}^{+}}.
\]
The map $T_{A}$ is called \emph{strictly positive} if it is weakly positive and 
\[
T_{A}(\overline{\Gamma_{n}^{+}})\cap\Gamma_{m}^{+}\neq\emptyset.
\]
\end{defn}
\begin{lem}
\label{lem:matrix characterization}
With the above notation, the operator $T_{A}$ is weakly positive if and only if $a_{ij}\geq 0$ for all $i\in[m]$ and $j\in[n]$. Furthermore, $T_{A}$ is strictly positive if and only if $a_{ij}\geq 0$ for all $i\in[m]$ and $j\in[n]$, and in addition $\sum_{j=1}^{n} a_{ij} > 0$ for every $i\in[m]$.
\end{lem}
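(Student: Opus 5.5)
The plan is to test the defining conditions of Definition \ref{positive linear} on the standard basis vectors and on the all-ones vector, using that $T_{A}$ is linear and that $\overline{\Gamma_{n}^{+}}$ is the cone generated by $\mathbf{e}_{1},\dots,\mathbf{e}_{n}$.

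\emph{Weak positivity.} Suppose $T_{A}$ is weakly positive. Since $\mathbf{e}_{j}\in\overline{\Gamma_{n}^{+}}$ and $T_{A}\mathbf{e}_{j}=(a_{1j},\dots,a_{mj})$ is the $j$-th column of $A$, this column lies in $\overline{\Gamma_{m}^{+}}$. Hence $a_{ij}\geq0$ for all $i,j$.

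Conversely, suppose every $a_{ij}\geq0$, and let $\x\geq0$. Then each coordinate $\sum_{j}a_{ij}x_{j}$ is a sum of nonnegative terms, so $T_{A}\x\geq0$.

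\emph{Strict positivity.} Assume $a_{ij}\geq0$ for all $i,j$ and $\sum_{j}a_{ij}>0$ for every $i$. Take $\mathbf{1}=(1,\dots,1)\in\overline{\Gamma_{n}^{+}}$. Then $T_{A}\mathbf{1}=\bigl(\sum_{j}a_{1j},\dots,\sum_{j}a_{mj}\bigr)$ has all coordinates positive, so it lies in $\Gamma_{m}^{+}$. Together with weak positivity from the first part, $T_{A}$ is strictly positive.

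Conversely, suppose $T_{A}$ is strictly positive. Weak positivity gives $a_{ij}\geq0$. The second condition gives some $\x\geq0$ with $T_{A}\x\in\Gamma_{m}^{+}$, i.e. $\sum_{j}a_{ij}x_{j}>0$ for each $i$. For a fixed $i$, at least one term must be positive, so some $a_{ij}>0$. Since all entries in row $i$ are nonnegative, this forces $\sum_{j}a_{ij}>0$.

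No step here is a real obstacle; the only point needing care is this last deduction from a positive weighted row sum to a positive unweighted row sum, which works because the row entries are nonnegative.
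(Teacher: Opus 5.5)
Your proof is correct and complete. The paper omits this proof as straightforward, and your argument is exactly the routine verification intended: test on the basis vectors $\mathbf{e}_j$ and on $\mathbf{1}$, and use nonnegativity of the entries to pass from a positive weighted row sum $\sum_j a_{ij}x_j>0$ to a positive row sum.
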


The proof is straightforward and is therefore omitted.

\begin{rem}
\label{rem:irreducible matrix} For a weakly positive linear map $T_{A},$
if $\sum_{i=1}^{m}a_{ij}=0,$ for some $j\in[n]$, then the image
of $T_{A}$ is independent of the coordinate $x_{j}.$ We call $T_{A}$
irreducible if $\sum_{i=1}^{m}a_{ij}>0$, for every $j\in[n]$. 
\end{rem}

We list some common geometric transformations used in this paper.
\begin{defn}
[Geometric Transformations] Let $\x=(x_{1},\cdots,x_{n})\in\R^{n}.$ We
define the following basic geometric transformations between framed
Euclidean spaces. Assume $\x\in\R^{n}$.\label{def:space-transform}

\begin{enumerate}
\item Translation. For $\a\in\R^{n},$ define $\T_{\a}(\x):=\x+\a$.
\item Permutation. For a permutation $\sigma$ of $[n],$ define $\x\to(x_{\sigma(1)},\cdots,x_{\sigma(n)})$.
\item Dilation. For $\a\in\R^{n},$ with $\a>0,$ define $\D_{\mathrm{\a}}(\x):=(a_{1}x_{1},\cdots,a_{n}x_{n})$. 
\item Deletion. For $i\in[n]$, define $\pi_{i}(x_{1},\cdots,x_{n}):=(x_{1},\cdots,x_{i-1},x_{i+1},\cdots,x_{n}).$
\item Restriction. For $i\in[n],$ define $(\x)|_{x_{i}=0}:=(x_{1},\cdots,x_{i-1},0,x_{i+1},\cdots,x_{n}).$
\item Inversion: In the domain $\{x_{i}\neq0,\ \forall i\in [n] \},$ define
\[
\iota(x_{1},\cdots,x_{n})=(-\frac{1}{x_{1}},\cdots,-\frac{1}{x_{n}}).
\]
 
\end{enumerate}
\end{defn}
Permutations, dilations, deletions, and restriction are special cases
of weakly positive linear maps, whereas the first three are strictly
positive. Inversion is \noun{not} a linear map.
\begin{defn}[Positive affine maps]\label{positive affine}
We define $\A_{+}$, the set of\emph{ weakly positive affine maps,}
which are compositions of translations and weakly positive linear
maps. We define $\A_{++}$, the set of\emph{ strictly positive affine
maps}, which are compositions of translations and strictly positive
linear maps.
\end{defn}

\begin{rem}\label{composition}

By Definitions~\ref{positive linear} and \ref{positive affine},  for any $\mu\in \A_{++}$, $\mu(\Gamma^+_n)\cap \Gamma^+_n \neq \emptyset$. A crucial geometric consequence is that the composition of two strictly positive affine transformations, whenever well defined, is again a strictly positive affine transformation.
\end{rem}

\begin{rem}
\label{rem:pullback}Closures of affine positive hyper-octants are
invariant under weakly positive affine mappings. Consequently, the
PRT 
property is preserved under weakly positive affine transformations.
More 
precisely, let $R\subset\R^{n}$ be nonempty and satisfy PRT, and
let $\mu\in\A_{+}$
. Then the pre-image
\[
\mu^{-1}(R):=\{\x\in\R^{m}\mid\mu(\x)\in R\}
\]
also passes PRT whenever it is nonempty. Furthermore, if $R\neq\emptyset$
and $\mu\in\A_{++}$, then $\mu^{-1}(R)$ is necessarily nonempty and
satisfies 
PRT. Indeed, by strict positivity, $\mathrm{Im}(\mu)$
contains 
a positive ray and by Remark~\ref{rem:intersection}, its
intersection 
with $R$ is nonempty.
\end{rem}

\begin{rem}
In this paper, the positivity of subsets of $\R^{n}$ is characterized
via the PRT property. The positivity of linear and affine maps is
defined accordingly, following ideas from matrix theory. The distinction
between weakly and strictly positive affine transforms will be crucial.
\end{rem}

\section{Algebraic and positive affine actions on polynomials}

In this section, we record some transformations on the space of polynomials. We first introduce some notations.

For $\x=(x_{1},x_{2},\cdots,x_{n})$, let $\R[\x]$ denote the polynomial
ring in $x_{1},x_{2},\cdots,x_{n}$. Let $\alpha=(\alpha_{1},\cdots,\alpha_{n})\in\N^{n}$
be a multi-index with
\[
|\alpha|:=\sum_{i=1}^{n}\alpha_{i}.
\]
Let $\pdv_{i}=\frac{\pdv}{\pdv x_{i}}$  and define
\[
\pdv^{\alpha}f:=\pdv_{1}^{\alpha_{1}}\cdots\pdv_{n}^{\alpha_{n}}f,\ \mathbf{x}^{\alpha}:=x_{1}^{\alpha_{1}}\cdots x_{n}^{\alpha_{n}}.
\]
For subset $S\subset [n]$, we denote \[\x^S:=\prod_{i\in S}x_i.\] This notation is frequently used for  multi-affine polynomials. For $k\in\N$, the $k$-th symmetric polynomial of $\x$ is denoted by 
\[
\sigma_{k}(\x):=\sum_{1\leq i_{1}<i_{2}<\cdots<i_{k}\leq n}\prod_{j=1}^{k}x_{i_{j}}.
\]
We use the convention
that $\sigma_{0}(\x)=1$ and $\sigma_{k}(\x)=0$ if $k>n$.

For $d\in\N$, and $\kappa\in{\mathbb{N}}^{n}$ define
\[
\R_{d}[\x]:=\{f\in\R[\x],\ \deg f\leq d\},\quad \R_{\kappa}[\x]:=\{f\in\R[\x],\ \deg_{x_{i}}f\leq\kappa_{i}\}.
\]

The superscript $\mathfrak{a}$ denotes the multi-affine property, the
superscript $\mathfrak{h}$ denotes homogeneity, and the subscript $+$
denotes nonnegative coefficients. For example:
\[
\R_{+}^{\mathfrak{a}}[\x]:=\{f\in\R[\x],\ \ \deg_{x_{i}}f\leq1\ \mathrm{and\ }f\ \text{has non-negative coefficients}\}.
\]

For a finite dimensional subspace $V$ of $\R[\x]$, we equip the compact-open topology, which is equivalent to the topology
of uniform convergence in compact sets. 

Finally, for a univariate $f\in\R[x]$, $r(f)$ denotes the \emph{largest
real root} of $f$ when it exists.

The geometric transformations introduced earlier naturally induce
corresponding operations on polynomials. Together with intrinsic polynomial
operations, such as partial differentiation, these induced maps will
be used to formulate invariance properties and to construct new examples.  

We list the polynomial transformations considered in this paper, many
of which have appeared in the theory of stable polynomials.
\begin{defn}
[Polynomial Transformations]\label{def:maps} Let $f,g\in\R[\x].$

\begin{enumerate}
\item Pullback by affine maps. If $\mu:\R^{n}\to\R^{m}$ is an affine transformation, $\mu^{*}:\R[\y]\to\R[\x]$ is given as $(\mu^{*}f)(\x):=f(\mu(\x)).$
Both $\A_{+}$ and $\A_{++}$ act on polynomials by
pullback.
\item Partial differentiation. For $i\in[n]$, define $\pdv_{i}f(\x):=\frac{\pdv}{\pdv x_{i}}f(\x).$
More generally, for $\mathrm{\a}\in\R^{n}$ with $\a\geq0,$ set the
directional derivative
\[
\pdv_{\a}f(\x):=\sum_{i=1}^{n}a_{i}\pdv_{i}f.
\]
\item Homogenization. For $f=\sum_{\alpha}a_{\alpha}\x^{\alpha}\in\R[\x]$
with $\deg f=d$, 
\[
\HH(f)(\x,y):=f(\frac{\x}{y})y^{d}=\sum_{\alpha}a_{\alpha}\x^{\alpha}y^{d-|\alpha|}\in\R^{\mathfrak{h}}[\x,y].
\]
\item Restriction (specialization). For $i\in[n]$, $a\in\R$, define $$f(x)|_{x_{i}=a}:=f(x_{1},\cdots,x_{i-1},a,x_{i+1},\cdots,x_{n}).$$
\item Top degree part. Let $f(x)$ have degree $d$ with $f(x)=\sum_{k=0}^{d}f_{k}$
with $f_{k}$ homogeneous of degree $k.$ Define $f^{\mathrm{top}}=f_{d}$. 
\item Projection (diagonal specialization). The diagonal restriction of
$f$ is the univariate polynomial $\proj(f):=f(t,\cdots,t)$.
\item Polarization. If $d\leq n$, $\pol$ : $\R_{d}[x]$$\to\R^{\mathfrak{a}}[\x]$
is the unique linear map such that $\pol(x^{k})=\sigma_{k}(\x)/\binom{n}{k}.$
\item $\kappa$-Projection. For $\kappa\in\N^{n}$ and $f\in\R_{\kappa}[x_{ij}]$,
$\proj_{\kappa}f:=f|_{x_{ij}=x_{i}}$, where $i\in\N$ and $j\in[\kappa_{i}]$. 
\item $\kappa$-Polarization. For $\kappa\in\N^{n}$ and $f\in\R_{\kappa}[\x]$,
$\pol_{\kappa}:\R_{\kappa}[\x]\to\R_{\kappa}^{\mathfrak{a}}[x_{ij}]$
is a linear map such that $\pol_{\kappa}(\x^{\alpha}):=\prod_{i=1}^{n}\sigma_{\alpha_{i}}(x_{i1},\cdots,x_{i\kappa_{i}})/{\binom{\kappa_{i}}{\alpha_{i}}}$.
\item Multiplication. $(f\cdot g)(\mathbf{x}):=f(\mathbf{x})\cdot g(\mathbf{x})$.
\item Inversion. If $f\in\R_{\kappa}[\x],$ then $(T_{\Inv}f)(\x):=f(-\frac{1}{x_{1}},\cdots,-\frac{1}{x_{n}})\cdot\prod_{i=1}^{n}(x_{i})^{\kappa_{i}}$.
\end{enumerate}
\end{defn}
In this paper, a subset $H$ of polynomials is said to be preserved
by $\A_{+}$ if $\mu^{*}(H)\subset H$, for all $\mu\in\A_{+}.$ A
similar definition applies to $\A_{++}.$ 

\section{From stable polynomials to Gårding polynomials}\label{sec:From-stable-polynomials}
In this section, we introduce the central objects of the paper by defining
G\aa rding components and G\aa rding polynomials, and relate them to real stable
polynomials.
\subsection{Stable and hyperbolic polynomials revisited}

We briefly recall the basic facts about stable and hyperbolic polynomials
that motivate our construction. While most of the material is standard
and is included here to fix the notation, we also introduce a new geometric
viewpoint that facilitates later discussions.
\begin{defn}[Real stable polynomials] 
\label{def:stable-polynomial}A nontrivial polynomial $f(\x)\in\R[\x]$ is called \emph{real
stable} if $f(\x)\neq0$, for all $\x\in\mathbb{C}^{n}$ with the
imaginary part $\Im x_{i}>0$ for all $i$. The zero polynomial is considered a trivial real stable polynomial.  We denote $\S[\x]$ to
be the set of real stable polynomials whose homogeneous top degree
part has nonnegative coefficients. We  denote $\S_{+}[\x]$ the set of stable polynomials with
nonnegative coefficients.

Two polynomials $f(\x),g(\x)\in\S[\x]$
are said to be \emph{in proper position}, written $f\prec g$, if
$f(\x)y+g(\x)\in\S[\x,y].$ 
\end{defn}

Since the top degree part of a real stable polynomial is also real
stable \cite[Proposition 2.2]{COSW04}, and all non-zero coefficients
of a homogeneous real stable polynomial have the same sign \cite[Theorem 6.1]{COSW04},
every real stable polynomial lies in either $\S[\x]$ or $-\S[\x]$.
By convention, the trivial polynomial $0$ is included in $\S[\x].$
The set $\S[\x]$ thus defines a distinguished positive cone in the
space of stable polynomials, which is natural in our geometric setting.

We also recall the following definition of Gårding. 
\begin{defn}
\label{def:Gardingcone}Let $h(\x)$ be an $n$-variable polynomial
of \emph{homogeneous} degree $d.$ Let $\mathbf{a}\in\R^{n}$. $h(\x)$
is \emph{hyperbolic with respect to $\mathbf{a}$} if, for any $\x\in\R^{n}$,
the univariate polynomial $h(\mathbf{a}t+\x)$ has only real roots.
The \gar{} cone of $h$ with respect to $\mathbf{a}$ is defined
by
\[
\C_{h,\mathbf{a}}:=\{\x\in\R^{n},\ (h(\mathbf{a}))^{-1}h(\mathbf{a}t+\x)>0,\ \forall t\geq0\}.
\]

We summarize some well-known facts; see, for example, \cite{Gar59},
\cite[Lemma 2.3 and Lemma 2.4]{Wagner11}, \cite[Proposition 5.3]{Pem11}),
and \cite[Proposition 7.1]{MR3055586}.
\end{defn}

\begin{lem}
\label{lem:stablepolyclosurethe}

\begin{enumerate}
\item \label{enu:part1}$f(\mathbf{x})\in\S[\x]$ if and only if for all $\mathrm{\x_{0},\mathbf{a}\in\R}^{n},$
with $\mathrm{\a>0},$ $f(\x_{0}+t\mathbf{a})\in \S[t]$.
\item Except for homogenization, $\S\cup(-\S)$ is preserved under the operations in Definition \ref{def:maps}.
\item $f\in\S_{+}$ if and only if its homogenization $\H f\in\S_{+}$.
\item For two univariate polynomials $f(x),g(x)\in\S,$ $f(x)\prec g(x)$
if and only if the roots of $f$ interlace roots of $g$ and the largest
roots satisfy $r(f)\leq r(g)$.
\item \label{enu:part3}For $f(x)$ a univariate polynomial of degree $d,$
$f(x)$ is stable if and only if it has only real roots. 
%In particular, the largest real roots of $f,f',\cdots$ satisfy $r(f)\geq r(f')\geq\cdots\geq r(f^{(d-1)})$.
\item \label{enu:part4}If $h$ is hyperbolic with respect to $\a\in\R^{n}$,
and $\b\in\C_{h,\a},$ then $\C_{h,\a}=\C_{h,\b}.$ Furthermore, $\C_{h,\a}$
is the connected component of the set $\{h\neq0\}$ that contains $\a$.
\item \label{enu:part6}If $h$ is hyperbolic with respect to $\a\in\R^{n}$,
then so is $\partial_{\a}h$. Furthermore, $\C_{h,\a}\subset\C_{\partial_{\a}h,\a}.$
\item \label{enu:part5}$f(\mathbf{x})\in\S$ if and only if \label{enu:hom-is-hyperbolic}$\HH f(\mathbf{x},y)$
is hyperbolic with respect to any $\v'=(\v,0)$ with $\v>0$.
\end{enumerate}
\end{lem}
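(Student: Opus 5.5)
This lemma collects standard facts, so the plan is to assemble them from the literature cited just before it (\cite{Gar59,Wagner11,Pem11,MR3055586,COSW04}). Only two points need argument: adapting the statements to the sign-normalized cone $\S[\x]$, and the items involving the positive orthant.

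For (\ref{enu:part1}), the forward direction is immediate, since $t\mapsto\x_0+t\a$ maps the upper half plane into $\{\Im x_i>0\}$ when $\a>0$. For the converse, suppose $f(\z)=0$ with $\Im z_i>0$ for all $i$. Write $\z=\x_0+i\mathbf{b}$ with $\mathbf{b}>0$. Then $t=i$ is a root of $f(\x_0+t\mathbf{b})$, which contradicts univariate real-rootedness.

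The sign condition on the top-degree part is inherited as follows. The top-degree coefficient of $f(\x_0+t\a)$ is $f^{\mathrm{top}}(\a)$. By \cite[Thm.~6.1]{COSW04}, $f^{\mathrm{top}}$ has coefficients of one sign, so nonnegativity of $f^{\mathrm{top}}(\a)$ for all $\a>0$ forces that sign to be nonnegative.

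Item (\ref{enu:part3}) is the univariate definition: nonreal roots of a real polynomial come in conjugate pairs, so one of each pair lies in the upper half plane.

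For item (2), I would go through the list of operations.
\begin{itemize}
\item Pullback by $\mu\in\A_+$ sends the product of upper half planes into its closure. Hurwitz's theorem then gives stability or the zero polynomial.
\item Derivatives follow from Gauss--Lucas on the lines of (\ref{enu:part1}).
\item Restriction to real values, the top-degree part, projection and multiplication are standard \cite{BB09I,COSW04}.
\item Polarization is handled by Grace--Walsh--Szeg\H{o}.
\item Inversion holds because $x\mapsto -1/x$ preserves the upper half plane.
\end{itemize}
Item (3) follows by combining \cite[Prop.~2.2]{COSW04} with the fact that $y^{d}f(\x/y)$ stays stable when the coefficients are nonnegative \cite{BBL09}. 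Item (4) is the Hermite--Biehler/Obreschkoff theorem. The normalization $r(f)\le r(g)$ fixes the orientation of the interlacing, and comes from the sign of $f'g-fg'$ via $\Im(g/f)$.

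Items (\ref{enu:part4}) and (\ref{enu:part6}) are Gårding's theorem \cite{Gar59}, in the form given in \cite[Lemma 2.3, 2.4]{Wagner11} and \cite[Prop.~5.3]{Pem11}. The ingredients are:
\begin{itemize}
\item convexity of $\C_{h,\a}$;
\item hyperbolicity with respect to every $\b\in\C_{h,\a}$;
\item the identification of $\C_{h,\a}$ with the connected component of $\{h\neq0\}$ containing $\a$, via continuity of roots along segments;
\item for $\partial_{\a}h$, Rolle's theorem applied to $t\mapsto h(\x+t\a)$, which shows its roots interlace and the largest root decreases.
\end{itemize}

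Item (\ref{enu:part5}) is \cite[Prop.~7.1]{MR3055586}. If $f$ is stable, then $\HH f((\x,y)+t(\v,0))$ is real-rooted by applying (\ref{enu:part1}) after dehomogenizing at $y\neq0$; the case $y=0$ follows by limits. For the converse, set $y=1$.

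The main obstacle I expect is bookkeeping rather than mathematics. In item (2) and at the boundary $y=0$ in item (\ref{enu:part5}), one has to verify that degenerate limits either stay in $\S$ or vanish identically, which requires a Hurwitz argument. One also has to check that the sign normalization survives each operation; for example, inversion can flip the sign, which is why the statement uses $\S\cup(-\S)$.
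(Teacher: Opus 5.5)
Your strategy matches the paper's, which simply cites the literature, and most of your sketches are fine. The one step you argue in detail, the converse of item~(1), has a gap that cannot be closed for the statement as written. If $f(\x_0+t\b)\equiv0$, then $t=i$ being a root contradicts nothing, and the paper's convention puts the zero polynomial in $\S[t]$.

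This case really occurs. Take $f=(x_1-x_2)^2$. Every restriction equals $\bigl(p_1-p_2+t(a_1-a_2)\bigr)^2$. That is either real-rooted with positive leading coefficient, a positive constant, or identically $0$, so every restriction lies in $\S[t]$. Yet $f(i,i)=0$, so $f\notin\S$.

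Item~(1) must therefore be read with the restrictions required to be nonzero. Under that reading your argument is complete, including the sign normalization via $f^{\mathrm{top}}$. The forward direction still delivers the extra requirement: for $0\neq f\in\S$ and $\a>0$, the restriction has leading coefficient $f^{\mathrm{top}}(\a)>0$. The paper's uses of item~(1) in Theorem~\ref{thm:stable-garding} (where $g(0)>0$) and Proposition~\ref{H3} (where the leading coefficient is $f(\a)>0$) are of this nondegenerate kind.

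Two smaller points.

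First, for derivatives in item~(2), Gauss--Lucas applied on the real lines of item~(1) is just Rolle. It shows only that $\partial_{\a}f$ is real-rooted along lines parallel to $\a$. The criterion of item~(1) for $\partial_i f$ needs every direction $\b>0$, and $\mathbf{e}_i$ is not a positive direction in any case. Instead, apply Gauss--Lucas to $z\mapsto f(\w+z\mathbf{e}_i)$, where $\w$ is an arbitrary point with all $\Im w_j>0$. Its zeros lie in $\{\Im z\le-\Im w_i\}$, hence so do those of its derivative, which gives $\partial_i f(\w)\neq0$. The case where that derivative vanishes identically is handled by the leading coefficient of $f$ in $x_i$, which is itself stable.

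Second, in item~(8), setting $y=1$ yields only $f\in\S\cup(-\S)$. Hyperbolicity forces $\HH f(\v,0)\neq0$ but not its sign; for example $f=-x_1$ satisfies the hypothesis but is not in $\S$. So that item also holds only up to sign.
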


For future reference, we list the following theorem, which is part
of the linear preserver theory proved by Borcea-Brändén \cite[Theorem 1.2]{BB09I}.
\begin{thm}
\label{thm:pol-restiction of stable}For any $\kappa\in\N^{n}$ and
$T:\R_{\kappa}[\x]\to\R[\y]$ to be a linear transformation. If $\sym_{T}(\y,\mathbf{u}):=T[(\x+\mathbf{u})^{\kappa}]\in\S[\y,\mathbf{u}]$,
then $T$ maps $\S_{\kappa}[\x]$ to $\S[\y].$ Moreover, for any
$\kappa\in\N^{n}$, $\pol_{\kappa}(f)\in\S$ if and only if $f\in\S$. 
\end{thm}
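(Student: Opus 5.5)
The plan is to prove the second assertion, the polarization equivalence, first. The first assertion, the symbol criterion, then follows from it together with the Lieb--Sokal lemma. The external inputs are the Grace--Walsh--Szeg\H{o} coincidence theorem and the Lieb--Sokal lemma (if $g(\mathbf v)$ and $h(\y,\u)$ are real stable, then $g(-\pdv_{\u})h$ is real stable or identically zero). I am treating both as known results; neither has been stated earlier in this paper. This is essentially the Borcea--Brändén route.

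\emph{Polarization.} One direction is immediate: $f=\proj_\kappa\pol_\kappa f$, and diagonal specialization $x_{ij}=x_i$ preserves stability. A point of the product of upper half-planes maps into the product of upper half-planes, and the top-degree sign convention defining $\S$ is clearly preserved.

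For the converse, I would polarize one variable at a time. Fix all variables except $x_1$ at values in the upper half-plane. As a polynomial in $(x_{11},\dots,x_{1\kappa_1})$, $\pol_\kappa f$ is then multi-affine and symmetric. Grace--Walsh--Szeg\H{o}, applied to the open upper half-plane (a convex circular domain), gives the following. If $\pol_\kappa f$ vanished at some point of $\{\Im x_{1j}>0\}$, it would also vanish at a diagonal point $x_{1j}=\zeta$ with $\Im\zeta>0$, where it equals the partially polarized $f$. Iterating over $i=1,\dots,n$ shows that $\pol_\kappa f$ is nonvanishing whenever $f$ is stable. The sign condition on the top-degree part is inherited because polarization is positive on monomials.

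\emph{Symbol criterion.} Expand the symbol as
\[
\sym_T(\y,\u)=\sum_{\beta\le\kappa}\binom{\kappa}{\beta}\u^{\kappa-\beta}\,T(\x^\beta).
\]
For $f=\sum_\alpha a_\alpha\x^\alpha\in\R_\kappa[\x]$, we have $\pdv_{\u}^{\kappa-\alpha}\u^{\kappa-\beta}|_{\u=0}=\delta_{\alpha\beta}(\kappa-\alpha)!$. Hence
\[
Tf(\y)=\Big[\tilde f(\pdv_{\u})\,\sym_T(\y,\u)\Big]_{\u=0},\qquad \tilde f(\mathbf v):=\sum_\alpha a_\alpha\frac{\alpha!}{\kappa!}\,\mathbf v^{\kappa-\alpha}.
\]
By Lieb--Sokal, it suffices to show that $\tilde f(-\mathbf v)$ is stable, up to a global sign. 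Setting $\u=0$ afterwards is a real specialization, which preserves stability, so $Tf$ is stable.

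To check that $\tilde f(-\mathbf v)$ is stable, pass through the multi-affine case. By the polarization step, $F=\pol_\kappa f$ is a stable multi-affine polynomial. For multi-affine $F$, all $\alpha!=1$, so the corresponding $\tilde F(-\mathbf v)$ is, up to sign, the inversion $T_\Inv F$. Inversion preserves multi-affine stability, because $z\mapsto-1/z$ maps the upper half-plane to itself. One then checks directly from the coefficient formula that $\tilde f=\proj_\kappa\tilde F$, since $\binom{\kappa_i}{\alpha_i}^{-1}$ times the counting factor reproduces $\alpha!(\kappa-\alpha)!/\kappa!$. Applying the easy direction of the polarization step again shows that $\tilde f(-\mathbf v)$ is stable.

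The main obstacle is this last bookkeeping: confirming that $\tilde f$ is exactly the projection of the inverted polarization, with the factorial weights matching. A second point of care is tracking the sign and top-degree conventions, so that the output lands in $\S[\y]$ rather than $-\S[\y]$. Since $\sym_T\in\S$ and the coefficient signs are controlled by positive expansions, I expect a single global sign check to settle this.
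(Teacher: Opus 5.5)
You are following the Borcea--Br\"and\'en route; the paper gives no proof of its own and simply cites \cite[Theorem 1.2]{BB09I}. Your polarization half, including the Grace--Walsh--Szeg\"o step, is fine. The symbol half, however, has a genuine gap as soon as $\kappa\not\le\mathbf 1$.

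Your representation $Tf=[\tilde f(\pdv_{\u})\sym_T]_{\u=0}$ with $\tilde f=\sum_\alpha a_\alpha\frac{\alpha!}{\kappa!}\v^{\kappa-\alpha}$ is correct. But $\tilde f(-\v)$ is not stable in general, and the identity $\tilde f=\proj_\kappa\tilde F$ is false.

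\begin{itemize}
\item The factor $\binom{\kappa_i}{\alpha_i}^{-1}$ in $\pol_\kappa$ exactly cancels the $\binom{\kappa_i}{\alpha_i}$ multi-affine monomials that collapse onto $\v^{\kappa-\alpha}$. Hence $\proj_\kappa\tilde F=\sum_\alpha a_\alpha\v^{\kappa-\alpha}$ carries no factorial weights at all. (The weight you expect, $\alpha!(\kappa-\alpha)!/\kappa!$, is also not the one in your own formula for $\tilde f$.)
\item The weights $\alpha!/\kappa!$ destroy stability. For $n=1$, $\kappa=2$, $f=(1+x)^2$ one gets $\tilde f(v)=\tfrac12(v^2+2v+2)$, so $\tilde f(-v)$ vanishes at $1+i$, while $\proj_\kappa\tilde F=(1+v)^2$.
\item More generally, $f=(1+x)^d$ produces the truncated exponential $\sum_{j\le d}v^j/j!$, which has nonreal zeros for $d\ge 2$.
\end{itemize}
So Lieb--Sokal cannot be applied to the unpolarized symbol.

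The missing idea is to polarize the operator as well as $f$. Let $\hat T:=T\circ\proj_\kappa$, acting on multi-affine polynomials in the variables $x_{ij}$. Then $Tf=\hat T(\pol_\kappa f)$. Moreover, $\prod_j(x_i+u_{ij})$ is the $\kappa_i$-polarization of $(x_i+u_i)^{\kappa_i}$ in $u_i$, and $T$ acts only on $\x$. Therefore
\[
\sym_{\hat T}\bigl(\y,(u_{ij})\bigr)=T\Bigl[\prod_{i,j}(x_i+u_{ij})\Bigr]
\]
is the $\kappa$-polarization of $\sym_T$ in the $\u$-variables alone. Your GWS argument, run with $\y$ frozen in the upper half-plane, shows that it is stable. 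Your multi-affine argument then applies verbatim to $\hat T$ and $F=\pol_\kappa f$, because there $\tilde F(-\v)=(-1)^{|\kappa|}T_{\Inv}F$ is genuinely stable.

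Separately, the sign cannot be fixed by a global check, because the $\S$-normalized conclusion is false as literally stated. Take $n=m=1$, $\kappa=1$, $T(1)=y$, $T(x)=-1$. Then $\sym_T=uy-1\in\S[y,u]$ and $f=x-1\in\S[x]$, yet $Tf=-(1+y)\in-\S[y]$. What the argument actually proves, and what the Borcea--Br\"and\'en theorem asserts, is preservation of real stability, i.e.\ values in $\S\cup(-\S)$. Landing in $\S$ needs an extra hypothesis, such as nonnegative coefficients of $\sym_T$ and $f$.
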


 We  discuss the geometry of stable polynomials in our setting. First,
we highlight the invariance property of stable polynomials under weakly
and strictly positive affine transforms.
\begin{lem}
\label{lem:stable-weak-invariance}Stable polynomials are preserved
under $\A_{+}$ and $\S[\x]$ is preserved under $\A_{++}$.
\end{lem}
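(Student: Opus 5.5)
The plan is to handle the two assertions separately. Both reduce to elementary facts about how affine maps interact with the upper half-plane, together with the sign convention for top-degree parts.

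\emph{Preservation under $\A_{+}$.} Since every $\mu\in\A_{+}$ is a composition of translations and weakly positive linear maps, it suffices to treat these two generators.

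For a translation $\T_{\a}$ with $\a\in\R^{n}$, a point $\x\in\mathbb{C}^{n}$ with $\Im x_{i}>0$ for all $i$ is sent to $\x+\a$, which still has $\Im(x_{i}+a_{i})=\Im x_{i}>0$. So if $f$ has no zeros in the open upper poly-half-plane, neither does $\T_{\a}^{*}f$.

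For a weakly positive linear map $T_{A}$, Lemma \ref{lem:matrix characterization} gives $a_{ij}\geq0$. For $\x$ in the upper poly-half-plane we have $\Im(T_{A}\x)_{i}=\sum_{j}a_{ij}\Im x_{j}$. This is $>0$ whenever row $i$ is nonzero and $=0$ when row $i$ vanishes. In the nonzero-row case, $f(T_{A}\x)\neq0$ directly. If some rows vanish, write $T_{A}^{*}f$ as $f$ with those coordinates specialized to the real value $0$. Restriction to real values preserves stability; this is the standard fact that a specialization of a stable polynomial at a real point is stable or identically zero, and it is part of Lemma \ref{lem:stablepolyclosurethe}(2). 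The zero polynomial is trivially stable by convention. Hence $T_{A}^{*}f$ is stable, and so stability is preserved under all of $\A_{+}$.

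\emph{Preservation of $\S[\x]$ under $\A_{++}$.} Now take $f\in\S[\x]$ and $\mu\in\A_{++}$. By the first part, $\mu^{*}f$ is stable, so it remains to show that its top-degree part has nonnegative coefficients. I expect this sign control to be the main obstacle. Translations do not change the top-degree part, so we may assume $\mu=T_{A}$ is linear and strictly positive. Then $\deg(T_{A}^{*}f)\leq\deg f=d$, and the degree-$d$ part of $T_{A}^{*}f$ is $T_{A}^{*}(f^{\mathrm{top}})$, unless it vanishes.

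Here $f^{\mathrm{top}}$ is homogeneous stable with nonnegative coefficients. By Lemma \ref{lem:matrix characterization}, strict positivity means every row of $A$ has positive sum, so $T_{A}\mathbf{1}>0$. Hence $T_{A}^{*}f^{\mathrm{top}}(\mathbf{1})=f^{\mathrm{top}}(T_{A}\mathbf{1})$. A nonzero homogeneous polynomial with nonnegative coefficients is strictly positive on $\Gamma_{n}^{+}$, so this value is $>0$. Therefore $T_{A}^{*}f^{\mathrm{top}}\not\equiv0$, and it is exactly the top-degree part of $\mu^{*}f$. Moreover, substituting nonnegative linear forms into a polynomial with nonnegative coefficients yields nonnegative coefficients, so this top part has nonnegative coefficients. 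This gives $\mu^{*}f\in\S[\x]$.

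This is where strict positivity is essential. For a merely weakly positive map, $T_{A}^{*}f^{\mathrm{top}}$ could vanish identically. The top part of $\mu^{*}f$ would then come from lower-degree pieces of $f$, whose sign is uncontrolled, so $\mu^{*}f$ could lie in $-\S[\x]$. That is why the second assertion is stated only for $\A_{++}$.
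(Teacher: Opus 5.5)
Your proof is correct, and the second half is the paper's argument: the top-degree part of $\mu^*f$ is $\mu^*(f^{\mathrm{top}})$, and strict positivity (positive row sums, Lemma \ref{lem:matrix characterization}) keeps it nonzero with nonnegative coefficients. For the first half the paper only cites the positive-line criterion (part (1) of Lemma \ref{lem:stablepolyclosurethe}) together with Remark \ref{rem:pullback}, whereas you argue directly from the upper poly-half-plane definition and handle zero rows of $A$ by real specialization. Your version is equivalent, and it spells out the degenerate case (a positive direction sent to a nonnegative one with zero entries) that the paper's one-line citation leaves implicit.
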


\begin{proof}
The first statement is a direct consequence of Lemma \ref{lem:stablepolyclosurethe}
\ref{enu:part1} and Remark \ref{rem:pullback}. 

For the second statement, let $f\in\S$ and $\mu\in\A_{++}$. By the
first part, $\mu^{*}(f)\in\S[\x]\cup(-\S[\x]).$ Assume $f\neq0$
and let $h=f^{\mathrm{top}}$. By the definition of $\S$ and \cite[Theorem 6.1]{COSW04},
all non-zero coefficients of $h$ are positive. By Lemma \ref{lem:matrix characterization},
$\mu^{*}(h)$ has a non-vanishing top degree part with positive coefficients.
As $\mu^{*}(f)$ and $\mu^{*}(h)$ share the same homogeneous top degree 
part, it follows that $\mu^{*}(f)\in\S[\x]$. 
\end{proof}
Next,  the classical construction of the Gårding cone for
a hyperbolic polynomial depends on the choice of a distinguished direction.
To capture the geometry of stable polynomials in a canonical way,
we associate each $f\in\S[\x]$ with a distinct positivity region. 
\begin{lem}
\label{lem:stablepolynomialcone} For the zero polynomial, we define
$\C_{0}=\R^{n}.$ For any nontrivial $f\in\S$, there exists a unique
open domain $\C_{f}\subset\R^{n}$ such that 
\begin{enumerate}
\item \label{enu:a1}$\C_{f}$ is a connected component of the set $\{f>0\}$; 
\item \label{enu:a2}For all $\a\in\Gamma_{n}^{+}$ and all $\x_{0}\in\C_{f}$,
$\partial_{\a}f(\x_{0})>0$;
\item \label{enu:a3}$\C_{f}$ passes PRT;
\item \label{enu:a4}$\C_{f}\subset\C_{\partial_{i}f}$ for any $i\in[n]$,
and $\partial_{i}f$ nontrivial.
\end{enumerate}
Furthermore, $\C_{f}$ is uniquely determined by these properties.
\end{lem}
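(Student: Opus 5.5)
The natural candidate is to homogenize. For nontrivial $f\in\S[\x]$ of degree $d$, Lemma \ref{lem:stablepolyclosurethe}\ref{enu:part5} says $h:=\HH f(\x,y)$ is hyperbolic with respect to every $(\v,0)$ with $\v>0$. By Lemma \ref{lem:stablepolyclosurethe}\ref{enu:part4}, all these directions give the same Gårding cone $\C_{h}:=\C_{h,(\v,0)}$, which is the component of $\{h\neq 0\}$ containing $\Gamma_n^+\times\{0\}$. Since $f\in\S$, the top part $f^{\mathrm{top}}=h(\cdot,0)$ has nonnegative coefficients, so $h(\v,0)>0$ and $h>0$ on $\C_h$. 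I will define $\C_f$ as the slice of this cone at $y=1$:
\[
\C_f:=\{\x\in\R^n:\ (\x,1)\in\C_h\}.
\]
Because $h(\x,1)=f(\x)$, this set lies in $\{f>0\}$. Convexity of the Gårding cone makes $\C_f$ convex, hence connected. It is nonempty because $(\v,0)$ lies in the open cone, so $(t\v,1)\in\C_h$ for large $t$.

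Next I check the four properties.

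For PRT (property \ref{enu:a3}), take $\x\in\C_f$ and $\a\geq0$. Then $(\a,0)$ lies in $\overline{\C_h}$, since it is a limit of $(\v,0)$ with $\v>0$. A convex cone satisfies $\C_h+\overline{\C_h}\subset\C_h$, so $(\x+\a,1)\in\C_h$.

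For property \ref{enu:a1}, I argue by maximality. Let $U$ be the component of $\{f>0\}$ containing $\C_f$, and suppose some $\x\in U$ lies in $\partial\C_f$. Then $(\x,1)\in\partial\C_h$, so $h(\x,1)=0$ because the boundary of a hyperbolicity cone lies in $\{h=0\}$. This contradicts $f(\x)>0$. Hence $\C_f$ is both open and closed in $U$, and so $\C_f=U$.

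For property \ref{enu:a2}, fix $\a>0$ and $\x_0\in\C_f$. The polynomial $t\mapsto h((\x_0,1)+t(\a,0))=f(\x_0+t\a)$ is real-rooted with all roots $<0$, since $(\a,0)$ is a hyperbolic direction and $(\x_0,1)$ lies in the cone. Its leading coefficient is positive. The logarithmic derivative at $t=0$ is $\sum 1/(0-r_i)>0$, so $\pdv_{\a}f(\x_0)>0$. This also works when the degree along the ray is less than $d$, which happens when some roots are at infinity.

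For property \ref{enu:a4}, I use Lemma \ref{lem:stablepolyclosurethe}\ref{enu:part6} with direction $(\e_i,0)$. This requires $(\e_i,0)$ to be a hyperbolic direction, which is the delicate point because it is only a boundary direction. I would approximate with $\v_\epsilon=\e_i+\epsilon\mathbf{1}$. This gives $\C_h\subset\C_{\pdv_{\v_\epsilon}h}$, and then passing to the limit would give $\C_h\subset\overline{\C_{\pdv_i h}}$-type containment. A cleaner route avoids the limit. Note that $\pdv_i h=\HH(\pdv_i f)$ up to a power of $y$, and $\pdv_i f\in\S$ by Lemma \ref{lem:stablepolyclosurethe}(2), with nonnegative top part since it is the derivative of one. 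So $\C_{\pdv_i f}$ is defined by the same construction. On $\C_f$, $\pdv_i f\ge0$ as a limit of property \ref{enu:a2}. I would upgrade this to strict positivity via real-rootedness along the ray $\x_0+t\e_i$: $f(\x_0+t\e_i)$ is a real-rooted polynomial, positive for $t\ge0$, with all roots negative. Its derivative is therefore positive at $0$ unless it is constant in $t$, which would mean $\pdv_i f\equiv0$. So $\C_f\subset\{\pdv_i f>0\}$. Since $\C_f$ is connected and passes PRT, it lies in one component of $\{\pdv_i f>0\}$. Shifting far along $\mathbf{1}$ places points in both $\C_f$ and $\C_{\pdv_i f}$, so that component is $\C_{\pdv_i f}$.

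For uniqueness, any region satisfying properties \ref{enu:a1} and \ref{enu:a3} is a component of $\{f>0\}$ that passes PRT. By Lemma \ref{lem:basic}\ref{enu:if--and}, two such regions intersect, so being components they coincide. Hence property \ref{enu:a3} alone already forces uniqueness.

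The main obstacle is justifying the facts I used about roots along positive rays, namely that $f(\x_0+t\a)$ has all roots negative for every $\a\geq0$. I would rely on Lemma \ref{lem:stablepolyclosurethe}\ref{enu:part1} together with the hyperbolicity-cone description, using PRT of $\C_f$ to see that the ray stays in $\{f>0\}$.
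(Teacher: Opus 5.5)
Your construction is the paper's: homogenize, take the hyperbolicity cone of $\HH f$ in direction $(\v,0)$, and slice at $y=1$. Your checks of (1), (2), (3) and uniqueness are correct, and more careful than the paper's.

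The gap is in (4), at ``unless it is constant in $t$, which would mean $\pdv_i f\equiv0$''. Your log-derivative argument only shows this: if $\pdv_i f(\x_0)=0$ for some $\x_0\in\C_f$, then $t\mapsto f(\x_0+t\e_i)$ is constant. That means $\pdv_i f$ vanishes on the single line $\x_0+\R\e_i$, and nothing yet forces $\pdv_i f\equiv0$. The paper's ``(4) follows from (2)'' skips the same point, since letting $\a\to\e_i$ in (2) only gives $\pdv_i f\ge0$ on $\C_f$.

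You can close the gap with what you already have. Since $f\equiv f(\x_0)>0$ on the connected line $\x_0+\R\e_i$, that line lies in the component $\C_f$. PRT then puts the open set $U:=\x_0+\R\e_i+\Gamma_n^+$ inside $\C_f$. For each $\y\in U$, the polynomial $t\mapsto f(\y+t\e_i)$ is real-rooted by Lemma~\ref{lem:stable-weak-invariance}, and it is positive on all of $\R$, hence constant. So $\pdv_i f$ vanishes on the nonempty open set $U$, and therefore $\pdv_i f\equiv0$. This is a device similar to the one the paper uses in the proof of Theorem~\ref{thm:stable-garding}.

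There is also a smaller slip. The justification ``nonnegative top part since it is the derivative of one'' does not cover the case where $f^{\mathrm{top}}$ does not involve $x_i$, because then $(\pdv_i f)^{\mathrm{top}}\neq\pdv_i(f^{\mathrm{top}})$. You do not need that justification. Once you know $\pdv_i f>0$ on the PRT set $\C_f$, and $\pdv_i f\in\S\cup(-\S)$ by Lemma~\ref{lem:stablepolyclosurethe}, suppose $\pdv_i f\in-\S$ were nonconstant. Then $(\pdv_i f)^{\mathrm{top}}(\mathbf 1)<0$, so $\pdv_i f(\x_0+t\mathbf 1)\to-\infty$ along a ray inside $\C_f$, which is impossible. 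Hence $\pdv_i f\in\S$ and $\C_{\pdv_i f}$ exists, and your concluding component argument for (4) goes through.
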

\begin{proof}
Consider a nontrivial $f\in\S$ and let $h=\H f$. By definition of
$\S,$ $h(\v,0)>0$ for any $\v\in\Gamma_{n}^{+}.$ Let $\C_{h}^{'}=\C_{h,(\v,0)}.$
By (\ref{enu:part4}) and (\ref{enu:part5}) of Lemma \ref{lem:stablepolyclosurethe},
$\C'_{h}$ is independent of the choice of $\v$, $\C'_{h}$ is a
connected component of $\{h>0\}$, and $\Gamma_{n}^{+}\times\{0\}\subset\C'_{h}.$ 

We call $\C_{f}:=\{\x\in\R^{n},\ (\x,1)\in\C'_{h}\}.$ By the definition
of $\S$ again, $h(\x,1)>0$ for $\x>>0$. Therefore, $\C_{f}\neq\emptyset.$
Since $f(\x)=h(\x,1)$, $\C_{f}$ is a connected component of $\{f>0\}$.

For $\C_{f}$ defined  above, (\ref{enu:a2}) follows from Lemma \ref{lem:stablepolyclosurethe}
(\ref{enu:part6}). (\ref{enu:a3})
then follows from (\ref{enu:a2}). The uniqueness of $\C_{f}$ follows
from (\ref{enu:a1}), (\ref{enu:a3}), and Remark \ref{rem:easy}. Finally
(\ref{enu:a4}) follows from (\ref{enu:a2}).

If $f$ is homogeneous, then $h=f$, and the above argument still works.
\end{proof}
Lemma \ref{lem:stablepolynomialcone} is a key step in reexamining and
reinterpreting the geometric construction of Gårding.
\begin{example}
For $k\leq n$, consider the $k$-th elementary symmetric function
$\sigma_{k}(\x)$ for $n$ variables. Then $\sigma_{k}(\x)$ is known
to be stable and its associated domain is
\[
\C_{\sigma_{k}(\x)}=\{\x\in\R^{n}:\ \sigma_{1}(\x)>0,\cdots,\ \sigma_{k}(\x)>0\}.
\]
Such domains arise naturally in fully-nonlinear PDE theory. See for
example \cite{caffarelli1985dirichlet,MR1284912}.
\end{example}

Lemma \ref{lem:stable-weak-invariance} and Lemma \ref{lem:stablepolynomialcone}
together identify the canonical positivity structure of stable polynomials.

\subsection{Gårding polynomial}
In this subsection, we formalize the geometric positivity structure underlying stable polynomials and extend it to a broader class of polynomials.

\begin{defn}[G\aa rding components]
\label{def:cone of polynomial}
For any nontrivial $f\in\R[\x]$, the \emph{Gårding component} $\C_{f}$ is defined to be the unique connected component of the set $\{f>0\}$ that satisfies PRT, if such a component exists. If no such component exists, we set $\C_{f}=\emptyset$. For the zero polynomial, we define $\C_{0}:=\R^{n}$.
\end{defn}

The uniqueness of $\C_{f}$, when it exists, follows from
Lemma \ref{lem:basic}.

This definition recovers all previously discussed cases. For any nontrivial
$f\in\S$, the proof of Lemma \ref{lem:stablepolynomialcone} provides
an explicit construction of its Gårding component. 

We  define \gar{} polynomials by the following geometric characterizations:
\begin{defn}[\gar{} polynomials]
\label{def:general garding} Let $n\geq1$ and $\x\in\R^{n}$. Define $$\GS_{n}^{0}:=\{f\equiv c:c\geq0\}.$$ 
Define the class of \emph{multi-affine Gårding polynomials} as
\[
\G_{n}:=\{f(\x)\in\R^{\mathfrak{a}}[\x],\ \mathcal{C}_{f}\not=\emptyset\}.
\]
The class of \emph{Gårding polynomials} is defined as follows: For \(n \ge 1\), define
\[
\GS_n
:=
\left\{
\mu^* g \in \mathbb{R}[x_1,\ldots,x_n] :
g \in \G_m \text{ for some } m,\ 
\mu : \mathbb{R}^n \to \mathbb{R}^m
\text{ is strictly positive affine}
\right\}.
\]

We denote $\GS$, $\GS^{d}$ the set of all \gar{} polynomials,
the set of all degree $\leq d$ \gar{} polynomials respectively. 
\end{defn}

By convention, for $f\in\GS$, $\C_{f}=\R^{n}$ if and only if $f\equiv c\geq0.$ The zero polynomial is included.
Also, $\GS^{1}$ consists of affine-inear functions with nonnegative
coefficients. In particular, $\GS_{n}^{1}=\S_{n}^{1}.$
Many of the structural consequences of this definition, including closure
properties and recursive characterizations, will be developed in subsequent
sections.
\begin{rem}
Observe that for any $g = \mu^{*}(f) \in \GS$, with $f \in \G$ and $\mu \in \A_{++}$, 
the associated component satisfies $\C_{g} = \mu^{-1}(\C_{f}) \neq \emptyset$, and it passes PRT by Remark
\ref{rem:pullback} The existence of such a component constitutes the fundamental geometric condition underlying the G{\aa}rding property. In the multi-affine setting, this condition is also sufficient. However, by itself, it is too weak to guarantee that general G{\aa}rding polynomials are closed under important algebraic operations, such as polarization.
\end{rem}

\begin{rem}
\label{rem:basicpreservers} For any $\mu\in\A_{++}$, if $f\in\G$
and $\mu^{*}(f)\in\R^{\mathfrak{a}}[\x],$ then $\mu^{*}(f)\in\G$ by
Remark \ref{rem:pullback}. If $\mu$ is a translation,
permutation, or dilation in Definition \ref{def:space-transform},
then $\mu^{*}$ preserves $\G$. 
\end{rem}

\begin{rem}
\label{rem:group difference}
It is straightforward to see that, by definition, $\GS$ is preserved under pullback by maps in $\A_{++}$, while stable polynomials are preserved under pullback by maps in $\A_{+}$. However, $\S$ is not preserved under $\A_{+}$; see Lemma~\ref{lem:stable-weak-invariance}. This distinction is fundamental and will be illustrated in the next example and exploited in later sections.
\end{rem}

\begin{example}
\label{rem:general restriction}
To highlight a key difference between stable and Gårding polynomials, consider the restriction operation and the polynomial $f(x,y)=(x-1)y$. The polynomial $f$ is both stable and Gårding; however, its restriction $f(0,y)=-y$ is stable but not Gårding. The restriction map $(x,y)\to(0,y)$ is weakly positive linear, but not strictly positive linear in our sense.
\end{example}
We present more examples of \gar{} polynomials. 
\begin{example}
\label{exa:Let-.-We}Let $f(x,y)=axy+bx+cy+d$ and $\deg f\geq1$.
 $f\not\in\G$ if $a<0.$ If $a>0$, a direct computation shows that
$f\in\G$ if and only if 
\[
ad-bc\leq0.
\]
When $a=0$, then $f\in\G$ if $b,c\geq0$ and one of them is non-zero.
 $f$ is also stable.
\end{example}

\begin{example}
\label{exa:nonstable-garding}
We give a non-stable example of a G{\aa}rding polynomial. Let $f(x,y,z)=xyz-1$. Then 
\[
\mathcal{C}_{f}=\{(x,y,z)\in\mathbb{R}^3 : xyz>1,\ x>0,\ y>0,\ z>0\}
\]
satisfies PRT. Therefore $f\in\G$. However, $g(t)=f(t,t,t)$ has two nonreal complex conjugate roots. Hence $g$ is not stable, and by Lemma~\ref{lem:stablepolyclosurethe}, \ref{enu:part1}, $f$ is not stable.
\end{example}

In fact, our new class strictly extends that of real stable polynomials. 
\begin{thm}
\label{thm:stable-garding}One has $\S\subsetneq\GS$, $\GS^{2}=\S^{2}.$ 
\end{thm}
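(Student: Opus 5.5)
The proof has three parts: $\S\subset\GS$, strictness of the inclusion, and the equality $\GS^{2}=\S^{2}$ in degree at most $2$.

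\emph{Step 1: $\S\subset\GS$.} Let $f\in\S$ be nontrivial with $\kappa=(\deg_{x_1}f,\dots,\deg_{x_n}f)$. By Theorem~\ref{thm:pol-restiction of stable}, the polarization $g:=\pol_{\kappa}f$ is stable. Its top degree part is the polarization of $f^{\mathrm{top}}$, which has nonnegative coefficients, so $g\in\S$. Since $g$ is multi-affine, Lemma~\ref{lem:stablepolynomialcone} gives a nonempty component $\C_g$ of $\{g>0\}$ passing PRT, hence $g\in\G$. Let $\mu$ be the diagonal map $x_{ij}=x_i$. It is a strictly positive linear map: each row has a single entry $1$, so Lemma~\ref{lem:matrix characterization} applies. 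Since $\proj_\kappa\pol_\kappa f=f$, we get $f=\mu^{*}g\in\GS$. The zero and nonnegative constant cases are trivial.

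\emph{Step 2: strictness.} This is Example~\ref{exa:nonstable-garding}: $xyz-1\in\G\subset\GS$ but $xyz-1\notin\S$.

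\emph{Step 3: $\GS^{2}=\S^{2}$.} One inclusion follows from Step 1. For the other, let $f=\mu^{*}g$ with $g\in\G_m$ and $\deg f\le 2$. The key difficulty is that $g$ may have high degree even though $\mu^{*}g$ has degree $\le2$, so it is not enough to show that multi-affine Gårding polynomials of degree $\le 2$ are stable. I would try two routes.

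\begin{enumerate}
\item \emph{Reduction to two-variable slices.} Use the criterion that $f\in\S$ if and only if every positive line restriction $t\mapsto f(\x_0+t\a)$, with $\a>0$, is real-rooted with nonnegative leading coefficient (Lemma~\ref{lem:stablepolyclosurethe}(\ref{enu:part1})). Such a restriction is itself $\nu^{*}f$ for a strictly positive affine $\nu$, so it lies in $\GS$ by composition (Remark~\ref{composition}). It then suffices to show that a univariate $h\in\GS$ of degree $\le2$ is real-rooted with nonnegative leading coefficient. Write $h(t)=g(\x_0+t\a)$ with $g\in\G_m$, where now $\a\ge0$ has a positive entry in each row. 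The set $\{t: \x_0+t\a\in\C_g\}$ is a nonempty ray $(r,\infty)$ on which $h>0$, and $h(r)=0$ if $r$ is finite. A quadratic with no real roots but positive leading coefficient must be excluded. For this I would use the Gårding structure of $g$ along the line: multi-affinity gives $\pdv_i g>0$ on $\C_g$ when $g$ has no constant directions. This property should be part of the multi-affine theory, but I would have to prove it here from PRT plus multi-affinity: $g$ is affine in $x_i$, positive on a ray in the $\e_i$ direction, so it is nondecreasing, and if the slope vanished somewhere the component would not be a component of $\{g>0\}$ of the right shape. The only remaining case is $r=-\infty$, with $h>0$ everywhere. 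Then $g>0$ along the whole line, and I expect that restricting to a suitable two-variable multi-affine slice (Example~\ref{exa:Let-.-We}) yields a contradiction unless $\deg h\le 1$. I am least sure of this step.
\item \emph{Fallback via quadratic forms.} Use that quadratic stable polynomials are characterized by the homogenization being a Lorentzian-signature quadratic form (at most one positive eigenvalue, nonnegative coefficients on positive directions). Then argue that a positivity region passing PRT forces this signature, since two positive eigen-directions would make $\{f>0\}$ contain a connected component that is not closed under positive translations, or disconnect it.
\end{enumerate}

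The main obstacle is Step 3, namely controlling the degree collapse of $\mu^{*}g$ and ruling out strictly positive quadratics.
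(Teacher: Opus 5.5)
Steps~1 and~2 are correct and match the paper. In Step~3, your first route is the right reduction and is the one the paper uses: restrict to lines $\x_0+t\a$ with $\a>0$ and show that a quadratic restriction has a real root. However, you leave open exactly the case that carries the content, namely $h>0$ on all of $\R$ ($r=-\infty$). Your proposed fix, a two-variable slice via Example~\ref{exa:Let-.-We}, does not work as stated. The line has direction $\a\in\Gamma_m^+$, so for $m>2$ it lies in no coordinate $2$-plane. A non-coordinate slice is no longer multi-affine, so that example does not apply to it. The digression about $\pdv_i g>0$ on $\C_g$ is not needed either.

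The missing idea is a one-line use of PRT. If $r=-\infty$, the whole line $\{\x_0+t\a:t\in\R\}$ lies in $\C_g$. Since $\a>0$, every $\mathbf{z}\in\R^m$ satisfies $\mathbf{z}\ge\x_0-T\a$ for $T$ large, so PRT gives $\mathbf{z}\in\C_g$. Hence $\C_g=\R^m$, and $g>0$ everywhere.

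A nonconstant multi-affine $g$ cannot be positive everywhere. One reason is that $g$ is harmonic, so Liouville applies. More elementarily, Lemma~\ref{lem:If--is-2} gives $\pdv^\alpha g\ge 0$ on all of $\R^m$. Take a top-degree monomial $\x^S$ of $g$ and $i\in S$; then $\pdv^{S\setminus\{i\}}g$ is a nonconstant affine function, which cannot be nonnegative on $\R^m$. So $g$ is constant, hence so is $h$, contradicting $\deg h=2$.

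This argument never compares $\deg g$ with $\deg f$, so the ``degree collapse'' you flag as the main obstacle is not actually an obstacle. The paper runs the same argument directly on $f$. For $\x_0\in\C_f$ and $\a>0$, a quadratic $f(\x_0+t\a)$ with no real roots is positive on the whole line, so the line lies in $\C_f$. PRT then forces $\C_f=\R^n$, which is impossible for nonconstant $f\in\GS$ by the fact above applied to a multi-affine realization.

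Your second route has the same hole in a worse form. PRT of a positivity component does not force Lorentzian signature: $x_1^2+x_2^2+1$ has $\{f>0\}=\R^2$, which passes PRT trivially. What rules out such examples is precisely that $\C_f=\R^n$ cannot occur for a nonconstant G{\aa}rding polynomial, and that must come from the multi-affine realization as above.
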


\begin{proof}
The inclusion $\S\subset\GS$ follows from Lemmas \ref{lem:stablepolyclosurethe},
\ref{lem:stablepolynomialcone} and Theorem \ref{thm:pol-restiction of stable}.
The strict inclusion is shown by Example \ref{exa:nonstable-garding}.

We now prove the degree 2 statement. Let $f\in\GS^{2}$ and $f\not\equiv0.$
Thus $\mathcal{C}_{f}\neq\mathbb{R}^{n}$. By Lemma \ref{lem:stablepolyclosurethe},
it suffices to show that for any $\mathbf{x}_{0}\in\mathcal{C}_{f}$
and $\mathbf{a}\in\Gamma_{n}^{+}$, $g(t):=f(\mathbf{x}_{0}+t\a)$
has only real roots. Suppose otherwise. Since $\deg g\leq2$, it follows that $g$
has no real roots. Since $g(0)>0,$ for all $t\in\mathbb{R}$,
$g(t)>0$. Therefore, by Definition~\ref{def:general garding} for
all $t\in\R,$ $\x_{0}+t\a\in\C_{f}.$ For any $\mathbf{z}\in\mathbb{R}^{n},$
for sufficiently large $T,$ $\mathbf{z}>\mathbf{x}_{0}-T\mathbf{a}$.
By PRT, $\mathbf{z}\in\mathcal{C}_{f}$. Hence $\mathbb{R}^{n}=\mathcal{C}_{f}$,
a contradiction. 

Therefore, $g$ has only real roots, and $f$ is stable. This completes
the proof.
\end{proof}

Further examples of Gårding polynomials arise naturally in probability
theory, matrix theory, graph and matroid theory, which will be discussed
in later sections.

In summary, we define a new class of polynomials via the existence
of the Gårding component and strictly positive affine invariance. This
framework strictly extends real stable polynomials while retaining
their essential geometric features. 

\part{Multi-affine Theory}

\section{Multi-affine Gårding polynomials}

In this section, we study multi-affine Gårding polynomials, which
form the geometric core of the general theory. 

Recall that a polynomial is multi-affine if it has degree at most
one in each variable. We denote by $\G\subset\GS$ the class of multi-affine
Gårding polynomials. Unless stated otherwise, all functions in this
part are assumed to be multi-affine.

We show that partial differentiation preserves the multi-affine G{\aa}rding property and analyze the induced inclusion relations between the G{\aa}rding components of a polynomial and those of its derivatives. We then introduce stronger geometric conditions that characterize the interior of $\G$ and establish the Rayleigh property for $\G_{+}$.

\subsection{Partial differentiation and a domain filtration}

We first establish that the class of multi-affine Gårding polynomials
is closed under partial differentiation and analyze the induced geometric
filtration on positivity regions. A key fact we exploit is that all
multi-affine polynomials are harmonic.
\begin{lem}
\label{lem:If--is-2}Assume that $f(\mathbf{x}) \in \G_n$ is nonzero. Then for any $\mathbf{x}_0 \in \mathcal{C}_f$ and any $\alpha \in \mathbb{N}^n$, we have
\[
\partial^\alpha f(\mathbf{x}_0) \ge 0.
\]
Furthermore, for each fixed $\alpha$,  $\partial^\alpha f(\mathbf{x}) > 0$ for all $\mathbf{x} \in \mathcal{C}_f$ or $\partial^\alpha f(\mathbf{x}) \equiv 0$.
\end{lem}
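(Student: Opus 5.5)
The plan is to reduce everything to one-variable asymptotics along positive rays, and then use harmonicity for the dichotomy. Since $f$ is multi-affine, $\partial^\alpha f\equiv 0$ unless $\alpha\in\{0,1\}^n$. So it suffices to treat $\alpha=\mathbf{1}_S$ for a subset $S\subset[n]$, where $\partial^\alpha f=\partial^S f:=\prod_{i\in S}\partial_i f$.

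\textbf{Nonnegativity.} Fix $\x_0\in\C_f$ and $S\subset[n]$. For $\mathbf{y}\in\R^n$ supported on $S$, multi-affinity gives the exact Taylor expansion
\[
f(\x_0+\mathbf{y})=\sum_{T\subset S}\partial^T f(\x_0)\,\mathbf{y}^T .
\]
Take $\mathbf{y}=t\,\e_S$ with $\e_S:=\sum_{i\in S}\e_i$ and $t>0$. Since $\C_f$ passes PRT, $\x_0+t\e_S\in\C_f$, so
\[
g(t):=f(\x_0+t\e_S)=\sum_{k=0}^{|S|}\Big(\sum_{T\subset S,\,|T|=k}\partial^T f(\x_0)\Big)t^k>0\quad\text{for all }t\ge 0 .
\]
The coefficient of $t^{|S|}$ is exactly $\partial^S f(\x_0)$, because $T=S$ is the only subset of size $|S|$. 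If this coefficient were negative, $g(t)\to-\infty$ as $t\to\infty$, contradicting positivity. Hence $\partial^S f(\x_0)\ge 0$.

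\textbf{Dichotomy.} Set $h:=\partial^S f$. It is again multi-affine, so each $\partial_i^2 h=0$ and $h$ is harmonic on $\R^n$. By the first step, $h\ge 0$ on the open connected set $\C_f$. Suppose $h(\x_1)=0$ for some $\x_1\in\C_f$. Then $h$ attains its minimum over $\C_f$ at an interior point. The strong minimum principle for harmonic functions on the connected open set $\C_f$ then forces $h\equiv 0$ on $\C_f$. A polynomial vanishing on a nonempty open set is the zero polynomial, so $h\equiv 0$. Otherwise $h>0$ everywhere on $\C_f$.

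I expect no serious obstacle. The points to state carefully are the following:
\begin{itemize}
\item the top coefficient of $g$ is the single term $\partial^S f(\x_0)$;
\item $\C_f$ is open and connected, being a connected component of the open set $\{f>0\}$, which is what the strong minimum principle requires.
\end{itemize}
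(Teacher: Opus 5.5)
Your proof is correct and follows the paper's argument essentially step for step: the top coefficient of the multi-affine Taylor expansion of $f$ along the positive ray $\mathbf{x}_0+t\mathbf{e}_S$ is $\partial^S f(\mathbf{x}_0)$, so PRT forces it to be nonnegative. The dichotomy then follows from the strong minimum principle for the harmonic function $\partial^S f$ on the open connected set $\mathcal{C}_f$. One small notational fix: write $\partial^S f$ as $\bigl(\prod_{i\in S}\partial_i\bigr) f$, since as written $\prod_{i\in S}\partial_i f$ reads as a product of first derivatives.
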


\begin{proof}
Since $f$ is multi-affine, $\pdv^{\alpha}f(\mathbf{x}_{0})=0$ if
$\alpha_{i}\geq2$. Therefore, with a fixed $\x_{0}\in\C_{f}$
we consider the Taylor expansion
\begin{equation}
f(\mathbf{x}_{0}+\mathbf{x})=\sum_{\alpha}\pdv^{\alpha}f(\mathbf{x}_{0})\mathbf{x}^{\alpha},\label{eq:add0}
\end{equation}
where the sum runs over all subsets of $[n]$. 

Suppose, for contradiction, that there exists $\alpha'\in\N^{n}$,
such that $\pdv^{\alpha'}f(\mathbf{x}_{0})<0$. Define $\y=\sum_{\alpha_{i}'=1}\mathbf{e}_{i}\in\R^{n}$.
Then, $\y\geq0$, and by (\ref{eq:add0}), for $t>0$,
\begin{equation}
f(\mathbf{x}_{0}+t\mathbf{y})=\pdv^{\alpha'}f(\mathbf{x}_{0})t^{|\alpha'|}+O(|t|^{|\alpha'|-1}).\label{eq:-21}
\end{equation}
For sufficiently large $t>0$, the leading term in (\ref{eq:-21})
dominates and the right hand side is then negative, contradicting
the PRT property of $\C_{f}$. Therefore, $\pdv^{\alpha}f(\mathbf{x}_{0})\geq0$.

 $\pdv^{\alpha}f(\x)$ is again multi-affine.
Therefore, $\pdv^{\alpha}f(\mathbf{x})$ is a nonnegative harmonic
function in the open set $\C_{f}$. By the maximum principle, 
$\pdv^{\alpha}f(\mathbf{x})>0$ for all $\x\in\C_{f}$, or $\pdv^{\alpha}f(\mathbf{x})\equiv0$
as a polynomial. 
\end{proof}
Next, we examine the behavior of multi-affine Gårding polynomials under
differentiation. For any $i\in[n],$ recall the deletion map $\pi_{i}$
in Definition \ref{def:space-transform}. For any $f\in\G(\x),$ since
$f$ is multi-affine, $\partial_{i}f(\x)$ is independent of $x_{i}$;
Hence, it can be viewed as a function of $\pi_{i}(\x)\in\R^{n-1}$, which,
for convenience we will also write as $\pdv_{i}f$. Therefore, 
\begin{equation}
\pdv_{i}f(\x)=\pdv_{i}f(\pi_{i}(\x)).\label{eq:add2}
\end{equation}

\begin{lem}
\label{lem:Suppose-that--2} Assume that $f\in\G$ is nonzero. Then for
any $i\in\n$, $\partial_{i}f\in\G.$ In addition:
\begin{enumerate}
\item Either $\partial_{i}f$ is identically zero, or $\pdv_{i}f\in\R^{\mathfrak{a}}[\pi_{i}(\x)]$
and $\C_{\partial_{i}f}\neq\emptyset;$ 
\item If $\pdv_{i}f$ is nontrivial, then $\mathcal{C}_{\pdv_{i}f}=\pi_{i}(\mathcal{C}_{f})\subset\mathbb{R}^{n-1}$;
\item \label{enu:C_fps}If $\pdv_{i}f$ is nontrivial, then $\C_{f}=\pi_{i}^{-1}(\C_{\pdv_{i}f})\cap\{f>0\}$.
\end{enumerate}
\end{lem}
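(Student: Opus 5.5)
Write $f = x_i\,\partial_i f + f_0$, where $g:=\partial_i f$ and $f_0 := f|_{x_i=0}$ are multi-affine functions of $\pi_i(\x)$. By Lemma~\ref{lem:If--is-2}, either $g\equiv 0$, or $g>0$ on all of $\C_f$. Suppose from now on that $g\not\equiv 0$. Then $g$ is a nonzero multi-affine polynomial in the variables $\pi_i(\x)$, and $\pi_i(\C_f)\subset\{g>0\}$.

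The central step is to show that $\pi_i(\C_f)$ is exactly a connected component of $\{g>0\}$ and that it passes PRT. Three observations give this.
\begin{enumerate}
\item $\pi_i(\C_f)$ is open and connected, since it is the continuous, open image of a connected open set.
\item It passes PRT. If $\y\in\pi_i(\C_f)$, pick $x_i$ with $(\y,x_i)\in\C_f$. For any $\v\ge0$ in $\R^{n-1}$, PRT of $\C_f$ gives $(\y+\v,x_i)\in\C_f$.
\item It is relatively closed in $\{g>0\}$. Take $\y$ with $g(\y)>0$. Then $t\mapsto f(\y,t)=g(\y)t+f_0(\y)$ is increasing and positive for $t>t_0(\y):=-f_0(\y)/g(\y)$. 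Now suppose $\y$ is a limit of points $\y_k\in\pi_i(\C_f)$, with $(\y_k,s_k)\in\C_f$. By PRT we may raise $s_k$ to a common value $T> t_0(\y)+1$, provided $T$ also exceeds $s_k$. That requires controlling the $s_k$, which we can do: $(\y_k,s_k)\in\C_f$ implies $(\y_k,s)\in\C_f$ for all $s\ge s_k$, so we may simply replace $s_k$ by $\max(s_k,T)$. Then $(\y_k,T')\in\C_f$ for a suitable large $T'$. We need $(\y,T')\in\C_f$. The segment from $(\y_k,T')$ to $(\y,T')$ lies in $\{f>0\}$ for $k$ large, since $g>0$ near $\y$ and $T'$ is large. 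That segment is connected, so $(\y,T')$ lies in the same component $\C_f$.
\end{enumerate}
Taken together, $\pi_i(\C_f)$ is a nonempty connected, relatively open and relatively closed subset of $\{g>0\}$, hence a component of $\{g>0\}$. It passes PRT, so it is $\C_g$. This proves (1) and (2), and $g\in\G$ follows.

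For (3), the inclusion $\C_f\subset\pi_i^{-1}(\C_g)\cap\{f>0\}$ is immediate. For the converse, let $(\y,t)$ satisfy $\y\in\C_g$ and $f(\y,t)>0$. Then $g(\y)>0$ and $t>t_0(\y)$. Since $\y\in\pi_i(\C_f)$, there is some $s$ with $(\y,s)\in\C_f$. The vertical segment between $(\y,s)$ and $(\y,t)$ stays in $\{f>0\}$, because $f(\y,\cdot)$ is increasing and positive on both endpoints, hence on everything above $t_0(\y)$. Thus $(\y,t)\in\C_f$.

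I expect the main obstacle to be the closedness step (iii): we must make sure that the large fiber value $T'$ can be chosen uniformly near $\y$. This uses continuity of $f_0$ and $g$ at $\y$, with $g(\y)>0$, so that $t_0$ is locally bounded.
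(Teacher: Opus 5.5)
Your proof is correct and is essentially the paper's argument. Both show that $\pi_i(\mathcal{C}_f)$ is open, connected and passes PRT, and then identify it with a whole component of $\{\partial_i f>0\}$ by lifting paths to large values of $x_i$, where $f=x_i\,\partial_i f+f|_{x_i=0}$ is positive uniformly over any compact set on which $\partial_i f>0$.

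Your clopen formulation is a slightly cleaner packaging. In step (iii), a single index $k$ and any $T'>\max\bigl(s_k,\sup_{\bar B}(-f_0/g)\bigr)$ suffice, with $\bar B$ a small closed ball around $\mathbf{y}$ on which $g>0$; so no control of the $s_k$ is needed. Your vertical-segment proof of (3) is likewise more direct. Together these avoid the paper's tacit assumption that $\partial_1 f>0$ along the straight segment from $\hat{\mathbf{p}}$ to $\hat{\mathbf{p}}+\hat{\mathbf{z}}$, which should really be a path inside the component.
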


\begin{proof}
Without loss of generality, assume $i=1$. Denote $f_{1}:=\partial_{1}f$.
If $f_{1}$ is a positive constant, then $\C_{\partial_{1}f}=\R^{n-1},$
and $f(\x)=cx_{1}+g(\x'),$ where $\x'=(x_{2},\cdots,x_{n}).$ Define
$\C:=\{(t,\x'),\ \x'\in\R^{n-1},\ t>-g(\x')\},$ which is  a connected
component of $\{f>0\}.$ For any $(t',\y')\in\C_{f},$ $(t,\y')\in\C_{f}\cap\C$
if $t>>t'$. Therefore, $\C_{f}=\C.$ The statement is valid.

Assume now that $\pdv_{1}f$ is not constant. Set $R=\pi_{1}\C_{f}$,
which is open, connected and also passes PRT in $\R^{n-1}$. Also
consider the non-constant multi-affine function $f_{1}|_{R}$, which
is harmonic. As a consequence, $f_{1}$ cannot obtain its local minimum
in $R.$ For any $\y=\pi_{1}(\x)\in R$, by Lemma \ref{lem:If--is-2}
and (\ref{eq:add2}), $f_{1}(\y)\geq0.$ We conclude that $f_{1}|_{R}>0$. 

Let $\mathcal{C}$ be the connected component of $\{f_{1}>0\}$ such
that $R\subset\C$. We claim  $\mathcal{C}=R$. We argue by contradiction,
using translation along a positive direction. If not, there exists
$\hat{\mathbf{p}}\in\mathcal{C}\backslash R\subset\R^{n-1}$. Then,
there exists $\hat{\mathbf{z}}\in\Gamma_{n-1}^{+}$ such that, $\hat{\mathbf{p}}+\hat{\mathbf{z}}\in\mathcal{C}\cap R$.
Let $t>0$ and define
\[
\mathbf{p}(t,s):=(t,\hat{\mathbf{p}}+s\hat{\mathbf{z}}).
\]
Let $c=\inf_{s\in[0,1]}\pdv_{1}f(\mathbf{p}(0,s))$. Then $c>0$ and
\begin{align*}
f(\mathbf{p}(t,s)) & =t\pdv_{1}f(\mathbf{p}(0,s))+f(\mathbf{p}(0,s))\geq ct+O(1).
\end{align*}
For sufficiently large $T$, $f(\mathbf{p}(t,s))>0$ for all $t\geq T$
and $s\in[0,1]$. Since $\C_{f}$ is a connected component, $\mathbf{p}(T,0)\in\mathcal{C}_{f}$.
Hence $\mathbf{\hat{p}}\in R$, which is a contradiction. Therefore,
$\mathcal{C}=R$. We have proved $\partial_{1}f\in\GS$, as well as
(1) and (2).

For (3), let $Q=\{f>0\}\cap\pi_{1}^{-1}(\C_{f_{1}})$. Notice $\C_{f}\subset Q$
by (2). On the other hand, if $\mathbf{p}\in Q\backslash\C_{f}$,
then $f_{1}(\mathbf{p})>0$ and $f(\mathbf{p})>0$. By the same argument
as in the proof of (2), for some $\mathbf{z}\in\Gamma_{n}^{+}$ and
$t>>0$, $\mathbf{p}+t\mathbf{e}_{1}+s\mathbf{z}\in\C_{f},$ for all
$s\in[0,1]$. Then, $\mathbf{p}+t\mathbf{e}_{1}\in\C_{f}$ for all
$t\geq 0 $ which violates the assumption that $\mathbf{p}\not\in\C_{f}$.
Hence, $Q=\C_{f}$. This completes the proof of (\ref{enu:C_fps}). 
\end{proof}
\begin{cor}
\label{cor:onemore}Assume that $f\in\G$ is nontrivial. Then for any multi-index
$\alpha,$ $\pdv^{\alpha}f\in\G$. Moreover, $\C_{\pdv^{\alpha}f}=\pi_{\alpha}(\C_{f})$
if $\pdv^{\alpha}f$ is nontrivial. 
\end{cor}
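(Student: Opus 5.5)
The plan is to induct on $|\alpha|$, using Lemma \ref{lem:Suppose-that--2} as the single step. Since $f$ is multi-affine, $\pdv^{\alpha}f\equiv0$ whenever some $\alpha_{i}\geq2$. Both claims are trivial in that case: $0\in\G$ by convention, and the second claim is vacuous. So we may assume $\alpha$ is the indicator vector of a subset $S=\{i_{1},\dots,i_{k}\}\subset[n]$. Here $\pi_{\alpha}$ denotes the composition of the deletion maps $\pi_{i}$ for $i\in S$, that is, the coordinate projection forgetting the variables indexed by $S$. We write $\pdv^{\alpha}=\pdv_{i_{k}}\cdots\pdv_{i_{1}}$, and the order does not matter because partial derivatives commute.

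For the induction, set $f_{0}=f$ and $f_{j}=\pdv_{i_{j}}f_{j-1}$, each $f_{j}$ viewed as a polynomial in the variables not yet deleted, as in (\ref{eq:add2}).

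1. Suppose $f_{j-1}\in\G$ is nontrivial. By Lemma \ref{lem:Suppose-that--2}, $f_{j}\in\G$.
2. If moreover $f_{j}$ is nontrivial, part (2) of that lemma gives $\C_{f_{j}}=\pi_{i_{j}}(\C_{f_{j-1}})$.
3. If some $f_{j}$ vanishes identically, then every later $f_{j'}$ does too, and so does $\pdv^{\alpha}f$. The first claim then holds and the second is vacuous.
4. If $\pdv^{\alpha}f=f_{k}$ is nontrivial, then every intermediate $f_{j}$ is nontrivial, since a derivative of the zero polynomial is zero. The lemma therefore applies at every step.

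Composing the equalities from step 2 gives
\[
\C_{\pdv^{\alpha}f}=\pi_{i_{k}}\circ\cdots\circ\pi_{i_{1}}(\C_{f})=\pi_{\alpha}(\C_{f}).
\]

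The only point needing care is bookkeeping. After each deletion the remaining coordinates must be relabeled, and the composition of the successive deletion maps must be identified with the single coordinate projection $\pi_{\alpha}$. This is immediate because deleting coordinates commutes with deleting other coordinates. I do not expect any genuine analytic obstacle: all the geometric content, namely the harmonicity argument and the positive-ray translation argument, is already contained in Lemma \ref{lem:Suppose-that--2}.
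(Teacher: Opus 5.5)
Your proposal is correct and follows the paper's intended route: the corollary is stated without proof right after Lemma \ref{lem:Suppose-that--2} and is obtained by applying that lemma one derivative at a time. Your handling of the trivial cases ($\alpha_i\ge 2$, or a vanishing intermediate derivative) and your identification of the composed deletions with $\pi_{\alpha}$ are exactly the needed bookkeeping.
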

We next introduce a natural filtration of positivity domains associated
with a multi-affine Gårding polynomial, reflecting an underlying recursive
structure motivated by the subsolution concept in PDE theory and used in
later arguments.

Define for $k=0,\cdots,n$, 
\[
\Gamma_{n,k}^{+}:=\{\mathbf{x}\in\overline{\Gamma_{n}^{+}}:\mathbf{x}\text{ has at least }k\text{ positive entries}\}.
\]
It follows that $\Gamma_{n,k}^{+}\subset\Gamma_{n,k-1}^{+}$.
\begin{defn}
\label{def:derivative=00003D000020cone}If $\C\subset \mathbb{R}^n$ passes PRT, 
the \emph{$k$-th derived component} of $\C$ is defined as
\begin{equation}
\mathcal{C}_{}^{(k)}:=\{\mathbf{x}:\forall\mathbf{y}\in\Gamma_{n,k}^{+},\ \exists\ t>0\ {\rm {s.t.}\ }\mathbf{x}+t\mathbf{y}\in\mathcal{C}_{}\}.\label{eq:-20}
\end{equation}
\end{defn}

The terminology is justified by the following result:
\begin{prop}
\label{prop:Let--be-1}Let $f\in\G\backslash\{\text{const}\}$. Let
$I\subset[n]$ be the set where $\pdv_{i}f\not\equiv0.$ Then
\begin{equation}
\mathcal{C}_{f}^{(1)}=\bigcap_{i\in I}\pi_{i}^{-1}(\mathcal{C}_{\pdv_{i}f}).\label{eq:derived1cone}
\end{equation}
\end{prop}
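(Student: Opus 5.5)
The plan is to prove the two inclusions in \eqref{eq:derived1cone} separately. Two earlier facts do most of the work. The first is Lemma~\ref{lem:Suppose-that--2}: for $i\in I$ it gives $\C_{\pdv_i f}=\pi_i(\C_f)$ and $\C_f=\pi_i^{-1}(\C_{\pdv_i f})\cap\{f>0\}$. The second is Lemma~\ref{lem:If--is-2}, applied to $\pdv_i f$, which controls the signs of higher derivatives.

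A preliminary remark simplifies the definition of $\C_f^{(1)}$. Since $\C_f$ passes PRT, if $\x+t\y\in\C_f$ then $\x+t'\y=(\x+t\y)+(t'-t)\y\in\C_f$ for every $t'\ge t$. So $\x\in\C_f^{(1)}$ if and only if, for every nonzero $\y\in\overline{\Gamma_n^+}$, we have $\x+t\y\in\C_f$ for all sufficiently large $t$.

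\emph{Inclusion $\subset$.} Let $\x\in\C_f^{(1)}$ and $i\in I$, and take $\y=\mathbf e_i\in\Gamma^+_{n,1}$. For some $t>0$ we have $\x+t\mathbf e_i\in\C_f$. Hence
\[
\pi_i(\x)=\pi_i(\x+t\mathbf e_i)\in\pi_i(\C_f)=\C_{\pdv_i f}
\]
by Lemma~\ref{lem:Suppose-that--2}(2). This gives $\x\in\pi_i^{-1}(\C_{\pdv_i f})$ for every $i\in I$.

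\emph{Inclusion $\supset$.} Let $\x$ satisfy $\pi_i(\x)\in\C_{\pdv_i f}$ for all $i\in I$. Fix a nonzero $\y\ge0$ with support $J$, and assume first that $J\cap I\neq\emptyset$. By multi-affinity,
\[
f(\x+t\y)=\sum_{S\subset J}\pdv^{S}f(\x)\,\y^{S}\,t^{|S|}.
\]

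1. \emph{Sign of the coefficients.} For nonempty $S$ with $\pdv^S f\not\equiv0$, we must have $S\subset I$. Choosing $i\in S$, the point $\pi_i(\x)$ lies in $\C_{\pdv_i f}$, so Lemma~\ref{lem:If--is-2} applied to the multi-affine Gårding polynomial $\pdv_i f$ gives $\pdv^S f(\x)\ge0$.

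2. \emph{Growth of $f$ along the ray.} Fix some $i\in J\cap I$. Then $\pdv_i f(\x)>0$ because $\pi_i(\x)\in\C_{\pdv_i f}$. Together with step 1, every nonconstant coefficient of $f(\x+t\y)$ as a polynomial in $t$ is nonnegative and the linear one is strictly positive. Hence $f(\x+t\y)\to+\infty$ as $t\to\infty$.

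3. \emph{Membership in $\C_f$.} Since $\pi_i(\y)\ge0$ and $\C_{\pdv_i f}$ passes PRT, we also have $\pi_i(\x+t\y)\in\C_{\pdv_i f}$ for all $t\ge0$. By Lemma~\ref{lem:Suppose-that--2}(\ref{enu:C_fps}), it follows that $\x+t\y\in\C_f$ for large $t$.

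\emph{The main obstacle} is the remaining case $J\cap I=\emptyset$, where $\y$ is supported on coordinates on which $f$ does not depend. Then $f(\x+t\y)=f(\x)$ for all $t$, and $\C_f$ is invariant under translations in those coordinates. So the defining condition of $\C_f^{(1)}$ reduces to $\x\in\C_f$, which the right-hand side of \eqref{eq:derived1cone} does not force. For instance, $f=x_1-1$ on $\R^2$ has $I=\{1\}$, the right-hand side is all of $\R^2$, but $\C_f^{(1)}=\{x_1>1\}$. I would therefore handle this case by the natural convention that $f$ genuinely depends on all variables, i.e.\ $I=[n]$, or equivalently by regarding $f$ as a polynomial in the variables indexed by $I$. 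With that convention $J\cap I\neq\emptyset$ always holds, and the argument above completes the proof.
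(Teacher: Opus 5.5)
Your proof is correct, and your caveat about $I\neq[n]$ points to a genuine defect of the statement, not of your argument. The example $f=x_1-1$ on $\R^2$ really is a counterexample to \eqref{eq:derived1cone} as written.

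The paper's own proof confirms this. It writes $\C_f^{(1)}=\bigcap_{i=1}^n R_i$ with $R_i=\{\x:\exists\, t>0,\ \x+t\mathbf{e}_i\in\C_f\}$. It then finds $R_i=\pi_i^{-1}(\C_{\pdv_i f})$ for $i\in I$ and $R_i=\C_f$ for $i\notin I$. So whenever $I\neq[n]$, its own computation gives $\C_f^{(1)}=\C_f$, and the closing sentence does not produce the displayed formula. The proposition should be read with $I=[n]$, i.e.\ $f$ NRT by Lemma~\ref{lem:Let--be}, with $\C_f^{(1)}=\C_f$ in the degenerate case.

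Your route differs from the paper's. The paper first reduces to coordinate directions. This step is implicitly a PRT argument: if $y_i>0$ and $\x+s\mathbf{e}_i\in\C_f$, then $\x+t\y=(\x+ty_i\mathbf{e}_i)+t(\y-y_i\mathbf{e}_i)\in\C_f$ for $t\ge s/y_i$. It then identifies each $R_i$ from Lemma~\ref{lem:Suppose-that--2}.

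You instead handle an arbitrary direction $\y$ through the multi-affine Taylor expansion, using Lemma~\ref{lem:If--is-2} on $\pdv_i f$ to sign every coefficient. This is valid, but Step~1 is more than you need. The single fact $\pdv_i f(\x)>0$ already gives $f(\x+s\mathbf{e}_i)\to\infty$, hence $\x+s\mathbf{e}_i\in\C_f$ for large $s$ by part~(3) of Lemma~\ref{lem:Suppose-that--2}. PRT then carries this to $\x+t\y$.

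The paper's reduction is shorter. Yours makes explicit that the only obstruction comes from directions supported off $I$, which is exactly where the stated formula fails.
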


\begin{proof}
By (\ref{eq:-20}), we have $\mathcal{C}_{f}^{(1)}=\bigcap_{i=1}^{n}R_{i},$
where 
\[
R_{i}=\{\mathbf{x}\in\mathbb{R}^{n}:\exists t>0,\mathbf{x}+t\mathbf{e}_{i}\in\mathcal{C}_{f}\}.
\]
By Lemma \ref{lem:Suppose-that--2}, $R_{i}=\pi_{i}^{-1}\left(\mathcal{C}_{\pdv_{i}f}\right)$
if $i\in I$ and $R_{i}=\C_{f}$ if $i\not\in I$. Since $\mathcal{C}_{f}\subset R_{i}$
for any $i\in I$, \ref{eq:derived1cone} holds. This completes the
proof. 
\end{proof}
\begin{rem}
By Proposition \ref{prop:Let--be-1}, every  degree $d$ polynomial $f\in\G$ 
determines a canonical increasing filtration of $\R^{n}$ by PRT domains:
\begin{equation}
\mathcal{C}_{f}=\mathcal{C}_{f}^{(0)}\subset\mathcal{C}_{f}^{(1)}\subset\cdots\subset\mathcal{C}_{f}^{(d)}=\mathbb{R}^{n}.\label{eq:-6-1-2}
\end{equation}Note also that $\mathcal{C}_{f}^{(k)}$ may be defined for any multi-affine $f$ provided that $\partial^{\alpha} f \in \G$ for all $|\alpha| = k$.
\end{rem}

Although the Gårding property is defined geometrically, in the
multi-affine setting, it admits an analytic verification. The resulting
recursive procedure takes the form of a non-linear optimization problem.
\begin{prop}
\label{prop:testing method}A multi-affine function $f(\x)$ of degree
$>1$ is Gårding if and only if the following holds:
\begin{enumerate}
\item For any $i\in[n]$, $\partial_{i}f\in\G.$
\item $f\leq0$ on the boundary of $
\C_{f}^{(1)}.$
\end{enumerate}
\end{prop}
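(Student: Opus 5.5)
Necessity is the easier direction. If $f\in\G$ has degree $>1$, then Lemma~\ref{lem:Suppose-that--2} gives $\partial_i f\in\G$ for every $i$, which is condition (1). For condition (2), Proposition~\ref{prop:Let--be-1} identifies $\C_f^{(1)}$ with $\bigcap_{i\in I}\pi_i^{-1}(\C_{\partial_i f})$. By Lemma~\ref{lem:Suppose-that--2}(\ref{enu:C_fps}), for any $i\in I$ we have $\C_f=\pi_i^{-1}(\C_{\partial_i f})\cap\{f>0\}$, and therefore
\[
\C_f=\C_f^{(1)}\cap\{f>0\}.
\]
Suppose $\x_0\in\partial\C_f^{(1)}$ and $f(\x_0)>0$. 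A small ball $B$ around $\x_0$ satisfies $f>0$ on $B$. The set $\C_f^{(1)}$ is open, being a finite intersection of preimages of open sets, so $B\cap\C_f^{(1)}$ is nonempty. That intersection lies in $\C_f$, so the connected set $B$ meets the component $\C_f$ of $\{f>0\}$ and is contained in it. Hence $\x_0\in\C_f\subset\C_f^{(1)}$, which contradicts openness, since a boundary point of an open set is not in it. So $f\le0$ on $\partial\C_f^{(1)}$.

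For sufficiency, assume (1) and (2), and set $D:=\bigcap_{i\in I}\pi_i^{-1}(\C_{\partial_i f})$. Each $\C_{\partial_i f}$ passes PRT in $\R^{n-1}$, and preimages under the weakly positive projection $\pi_i$ pass PRT. By Lemma~\ref{lem:basic} the intersection $D$ is nonempty, open, connected and passes PRT. Degree $>1$ makes $I$ nonempty. I claim the candidate component is $R:=D\cap\{f>0\}$, and three facts need proving.

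First, $R$ is nonempty. Take $\x\in D$ and move along $\x+t\mathbf 1$. Multi-affinity gives $\frac{d}{dt}f(\x+t\mathbf 1)=\sum_i\partial_i f(\x+t\mathbf 1)>0$, since the point stays in $D$, every $\partial_i f$ with $i\in I$ is positive there, and the others vanish. In fact $f(\x+t\mathbf 1)\to+\infty$: the top-degree coefficient along this ray is a sum of $\partial^\alpha f$ evaluated inside nested Gårding components, which is positive by Lemma~\ref{lem:If--is-2} applied to the $\partial_i f$.

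Second, $R$ passes PRT. For $\x\in R$ and $\v\ge0$ the segment $\x+s\v$ stays in $D$, and
\[
\frac{d}{ds}f(\x+s\v)=\partial_{\v}f>0
\]
on $D$ when $\v\neq0$. So $f$ increases along the segment and the point stays in $R$.

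Third, $R$ is a union of connected components of $\{f>0\}$. If $\y$ lies in the closure of $R$ but $\y\notin D$, then $\y\in\partial D$ and hypothesis (2) gives $f(\y)\le0$. So points where $f>0$ cannot leave $R$ through $\partial D$, and $R$ is relatively clopen in $\{f>0\}$. Since $R$ passes PRT it is connected by Lemma~\ref{lem:basic}, so $R$ is a single component of $\{f>0\}$ with PRT, that is $\C_f=R\neq\emptyset$.

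The main obstacle is the first step, showing $f$ becomes positive somewhere in $D$. Positivity of the gradient along the ray alone does not force $f\to\infty$. My first attempt is induction on degree using the Taylor expansion \eqref{eq:add0} along $\x+t\mathbf 1$. The coefficient of $t^d$ is a sum of $\partial^\alpha f$ with $|\alpha|=d$; these are constants, and they are nonnegative and not all zero because $\partial_i f\in\G$ is nontrivial with positive top terms. If that fails, I would instead use that $\partial_i f$ grows unboundedly along the ray, by induction, and integrate.
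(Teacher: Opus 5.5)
Your proof is correct and follows essentially the paper's argument. For sufficiency you take $\C_f^{(1)}\cap\{f>0\}$, get nonemptiness and PRT from the nonnegativity of the non-constant Taylor coefficients at points of $\C_f^{(1)}$, and use hypothesis (2) to show this set is a full component of $\{f>0\}$; your necessity argument spells out what the paper attributes to Proposition~\ref{prop:Let--be-1}.

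Two small points. First, $\partial_{\v}f$ is only $\ge 0$, not $>0$, on $\C_f^{(1)}$ when $\v$ is supported outside $I$; but $\ge 0$ is all the PRT step needs. Second, the hedging in your last paragraph is unnecessary: your top-coefficient argument already settles nonemptiness.
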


\begin{proof}
One direction follows from Proposition \ref{prop:Let--be-1}. We prove
the converse.

 $\C_{f}^{(1)}$ is well defined with non-empty boundary.
For any $\x_{0}\in\C_{f}^{(1)}\subset\C_{f}^{(k)},$for $k\geq1$,
the Taylor expansion of $f$ at $\x_{0}$ has non-negative coefficients
except possibly the constant term. Thus, there exists $\y>\x$ such
that $\y\in\C_{f}^{(1)}$, and $f(\y)>0.$ 

Let $\mathcal{C}' = \mathcal{C}^{(1)}_f \cap \{f > 0\}$. By the same Taylor expansion argument, $\mathcal{C}'$ satisfies the PRT condition and is therefore connected by Lemma~\ref{lem:basic}. Let $\mathcal{C}$ be the connected component of $\{f > 0\}$ containing $\mathcal{C}'$. Assumption~(2) implies that $\mathcal{C} \subset \mathcal{C}_{f}^{(1)}$. Therefore, $\mathcal{C} = \mathcal{C}'$, and  $\mathcal{C}$ satisfies PRT. This shows that $f \in \G$.
\end{proof}

In practice, Proposition~\ref{prop:testing method} provides the basic recursive
verification procedure for multi-affine G\aa rding polynomials.

\subsection{Multi-affine NRT polynomials}

It may happen that $f(\x)\in\G[\x]$ is independent of certain variables.
Such a polynomial gives a degenerate elliptic operator in the theory
of PDE. To isolate a non-degenerate subclass of multi-affine Gårding
polynomials, we introduce the following
\begin{defn}
\label{def:NRTAn-open-set} A multi-affine polynomial $f$ is called
a negative-ray-terminating (NRT) polynomial, if it admits a connected
component of the set $\{f>0\}$ that is negative ray terminating. 
\end{defn}

In the multi-affine case, the NRT condition is strictly stronger than
the PRT requirement and has significant consequences for the  first derivatives.
\begin{lem}
\label{lem:Let--be}Let $f$ be a multi-affine polynomial and let
$\mathcal{C}$ be a connected component of $\{f>0\}$. The following
are equivalent:
\begin{enumerate}
\item \label{enu:NRTtoGar}$\mathcal{C}$ is NRT;
\item $f$ is \gar{} with $\C=\C_{f}$, and $f$ does not have redundant variables;
\item \label{enu:ConverselyNRT2part}$f$ is Gårding, $\C=\C_{f}$, and
$\pdv_{i}f(\mathbf{x})>0$ for all $\x\in\mathcal{C}$.
\end{enumerate}
\end{lem}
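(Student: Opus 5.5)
The plan is to prove the cycle $(1)\Rightarrow(3)\Rightarrow(2)\Rightarrow(1)$. Throughout, I use two facts. First, for a multi-affine $f$, the restriction $t\mapsto f(\x+t\e_{i})$ is affine in $t$ with slope $\pdv_{i}f(\x)$, and $\pdv_{i}f(\x)$ does not depend on $x_{i}$, as in (\ref{eq:add2}). Second, since $\C$ is a connected component of the open set $\{f>0\}$, $f$ vanishes on $\partial\C$.

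\emph{$(1)\Rightarrow(3)$.} Let $\x\in\C$ and $i\in[n]$.
\begin{itemize}
\item By NRT, the set $\{t\le 0:\x+t\e_{i}\in\C\}$ is bounded. Hence the ray $\x+t\e_{i}$, $t\le0$, leaves $\C$ at a first parameter $t_{0}<0$, where $f(\x+t_{0}\e_{i})=0$.
\item Since $t\mapsto f(\x+t\e_{i})$ is affine, $f(\x)>0$ and $f(\x+t_{0}\e_{i})=0$ give $\pdv_{i}f(\x)=f(\x)/(-t_{0})>0$.
\item Consequently $f(\x+t\e_{i})>0$ for all $t\ge0$. The whole ray $\{\x+t\e_{i}\}_{t\ge0}$ is connected and contained in $\{f>0\}$, so it lies in $\C$.
\item $\C$ is therefore invariant under $\x\mapsto\x+t\e_{i}$ for all $t\ge0$ and all $i$. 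Composing these translations gives $\C+\overline{\Gamma_{n}^{+}}\subset\C$, so $\C$ passes PRT.
\end{itemize}
It follows that $\C=\C_{f}$, that $f\in\G$, and that $\pdv_{i}f>0$ on $\C$.

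\emph{$(3)\Rightarrow(2)$.} A redundant variable $x_{i}$ means $\pdv_{i}f\equiv0$. This is excluded because $\pdv_{i}f>0$ on the nonempty set $\C$.

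\emph{$(2)\Rightarrow(1)$.} Here $\pdv_{i}f\not\equiv0$ for every $i$. By Lemma \ref{lem:If--is-2}, this gives $\pdv_{i}f>0$ on all of $\C_{f}=\C$. Fix $\x\in\C$ and take any $\y\in(\x-\overline{\Gamma_{n}^{+}})\cap\C$; I bound $x_{i}-y_{i}$ for each $i$.
\begin{itemize}
\item Let $\w$ be the point obtained from $\x$ by replacing its $i$-th coordinate with $y_{i}$. Then $\w\ge\y$, so $\w\in\C$ by PRT, and in particular $f(\w)>0$.
\item By multi-affinity, $f(\x)-f(\w)=\pdv_{i}f(\w)\,(x_{i}-y_{i})$.
\item Since $\pdv_{i}f$ does not depend on $x_{i}$, we have $\pdv_{i}f(\w)=\pdv_{i}f(\x)$. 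This quantity is positive and independent of $\y$.
\item Hence $0\le x_{i}-y_{i}<f(\x)/\pdv_{i}f(\x)$.
\end{itemize}
These bounds show that $(\x-\overline{\Gamma_{n}^{+}})\cap\C$ lies in a bounded box, so $\C$ is NRT.

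I expect the main obstacle to be the step $(2)\Rightarrow(1)$. A naive Taylor estimate at $\y$ gives lower bounds involving $\pdv_{i}f(\y)$, and these bounds are not uniform in $\y$. The trick of raising all coordinates except the $i$-th to those of $\x$ replaces $\pdv_{i}f(\y)$ by the fixed constant $\pdv_{i}f(\x)$. I would double-check that replacement carefully.
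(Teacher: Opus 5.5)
Your proof is correct, and the replacement step you flagged is sound: $\mathbf{w}\ge\mathbf{y}$ because $\mathbf{y}\le\mathbf{x}$, and $\mathbf{w}$ and $\mathbf{x}$ differ only in the $i$-th coordinate. Your route differs from the paper's cycle $(1)\Rightarrow(2)\Rightarrow(3)\Rightarrow(1)$ mainly in how NRT is recovered.

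Your $(1)\Rightarrow(3)$ is the paper's $(1)\Rightarrow(2)$ argument. Your first-exit choice of $t_0$ is a bit more careful than the paper's, which only says $\mathbf{x}_0+t_0\mathbf{e}_i\notin\mathcal{C}$.

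For positivity of $\partial_i f$ on $\mathcal{C}$, you cite Lemma~\ref{lem:If--is-2}. The paper instead re-derives it by a Taylor-expansion propagation: a nonnegative derivative vanishing at a point of $\mathcal{C}_f$ forces all higher derivatives through $x_{i_0}$ to vanish there.

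For NRT itself, the paper argues by contradiction. If $\mathcal{C}_f$ is not NRT, it takes a full line $\mathbf{x}_0+t\mathbf{v}$ with $\mathbf{v}\ge 0$ inside $\mathcal{C}_f$. It restricts to the coordinate affine subspace spanned by the support of $\mathbf{v}$ via Lemma~\ref{lem:For-any-.}. This gives a G\aa{}rding polynomial whose component is that whole subspace, so it is constant, contradicting $\partial_i f>0$.

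Your argument instead gives the explicit inclusion $(\mathbf{x}-\overline{\Gamma_n^+})\cap\mathcal{C}\subset\prod_i\bigl(x_i-f(\mathbf{x})/\partial_i f(\mathbf{x}),\,x_i\bigr]$. This is more elementary in three ways:
\begin{itemize}
\item It needs no restriction lemma.
\item It skips a step the paper leaves implicit: producing a full line from mere unboundedness. That step needs a compactness argument on normalized directions, plus PRT to pass from $\overline{\mathcal{C}_f}$ back into $\mathcal{C}_f$.
\item It shows the negative section at $\mathbf{x}$ is bounded as soon as $\partial_i f(\mathbf{x})>0$ for all $i$ at that single point.
\end{itemize}
In exchange, the paper's version identifies the geometric cause of a failure of NRT: a direction along which $f$ is constant on a whole coordinate subspace. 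This is in line with the reduction in Lemma~\ref{lem::NRTpropirredIf-there-exists}.
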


\begin{proof}
We prove $(1)\Rightarrow(2)\Rightarrow(3)\Rightarrow(1)$. 

Assume (1). Fix $\mathbf{x}_{0}\in\mathcal{C}$ and $i\in[n]$. Since
$f$ is multi-affine, 
\[
f(\mathbf{x}_{0}+t\mathbf{e}_{i})=\pdv_{i}f(\mathbf{x}_{0})t+f(\mathbf{x}_{0}).
\]
By the NRT property, there exists $t_{0}<0$ such that $\x_{0}+t_{0}\mathbf{e}_{i}\not\in\C$,
hence 
\[
0=f(\mathbf{x}_{0}+t_{0}\mathbf{e}_{i})=\pdv_{i}f(\mathbf{x}_{0})t_{0}+f(\mathbf{x}_{0}).
\]
Since $f(\x_{0})>0$ and $t_{0}<0,$ we conclude that $\pdv_{i}f(\mathbf{x}_{0})>0$.
Thus, all first derivatives are positive on $\C,$ $\C$ satisfies
PRT. Therefore, $f\in\G$ and has no redundant variables.

Next, assume (2). We argue by contradiction. Since $f\in\G$, $\pdv^{\alpha}f|_{\C_{f}}\geq0$
for all multi-index $\alpha$ by Lemma \ref{lem:If--is-2}. If for
some $\y\in\C_{f}$, $\pdv^{\alpha}f(\y)=0$, then $\pdv_{i}\pdv^{\alpha}f(\y)=0$
for all $i\in[n]$, since otherwise a neighborhood of $\x$ has a non-empty
intersection with $\{\pdv^{\alpha}f<0\}$. By induction, $\pdv^{\beta}f(\y)=0$
for all $\beta\geq\alpha$. Thus, if we can find $\x_{0}\in\C_{f}$
and $i_{0}$ such that $\pdv_{i_{0}}f(\x_{0})=0$, the Taylor expansion
at $\x_{0}$ has the form
\[
f(\x_{0}+\x)=\sum_{\alpha\in\N^{n},\alpha_{i_{0}}=0}c_{\alpha}\x^{\alpha}.
\]
Then $f$ is independent of $x_{i_{0}}$ which is a contradiction. 

Finally, assume (3). If $\C_{f}$ were not NRT, there would exist
$\x_{0}\in\C_{f}$ and $\v\geq0,$ $\v\neq0$, such that $\x_{0}+t\v\in\C_{f}$
for all $t\in\R.$ Restricting to the affine subspace $V:=\mathbf{x}_{0}+\text{span}\{\mathbf{e}_{i}:v_{i}>0\}$,
Lemma \ref{lem:For-any-.} we obtain a multi-affine \gar{} polynomial
whose Gårding component is all of $V$. Therefore, $f|_{V}$ is a
constant polynomial with all partial derivatives vanishing, contradicting
$\partial_{i}f>0$ on $\C_{f}.$ Therefore $\C_{f}$ is NRT.
\end{proof}
\begin{example}
A degree $n$ polynomial $f\in\G_{n}$ if and only if a connected
component of $\{f>0\}$ satisfies NRT. Suppose that the top homogeneous
part of $f$ is $a\sigma_{n}(\mathbf{x})$ for $a>0$. Then, $f$
is Gårding if there exists a component $\mathcal{C}$ of $\{f>0\}$
which is contained in $\{\mathbf{x}_{0}\}+\Gamma_{n}^{+}$ for some
$\mathbf{x}_{0}\in\mathbb{R}^{n}$. In fact, $\{\mathbf{x}_{0}\}+\Gamma_{n}^{+}$
satisfies NRT. Thus, if $\mathcal{C}\subset\{\mathbf{x}_{0}\}+\Gamma_{n}^{+}$,
then by Lemma \ref{thm:dominategardnew}, $f$ is Gårding. Conversely,
if $f$ is Gårding, by a translation $\hat{f}(\mathbf{x})=f(\mathbf{x}+\mathbf{x}_{0})$,
we may assume that  $\hat{f}$ has vanishing homogeneous degree $n-1$ part.
Then, we see that $\mathcal{C}_{\hat{f}}\subset\mathcal{C}_{\hat{f}}^{(n-1)}\subset\Gamma_{n}^{+}.$
This implies that $\mathcal{C}_{f}\subset\{\mathbf{x}_{0}\}+\Gamma_{n}^{+}$. 
\end{example}
Lemma~\ref{lem:If--is-2} can also be interpreted as follows: any \gar{} polynomial can be reduced to an NRT \gar{} polynomial after removing all redundant variables.
\begin{lem}
\label{lem::NRTpropirredIf-there-exists} Let $f\in\G_{n}$. Let $\mathbf{x}_{0}\in\mathcal{C}_{f}$
and set $I:=\{i\in[n]:\pdv_{i}f(\mathbf{x}_{0})=0\}$. Then, for $i\in I$,
$\pdv_{i}f\equiv0$, and $f$ is the pullback of an NRT \gar{} polynomial
in  the subspace $V=\text{span}\{\mathbf{e}_{j}:\pdv_{j}f(\mathbf{x}_{0})>0\}$. 
\end{lem}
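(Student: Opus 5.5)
We prove the two assertions in order: the vanishing $\pdv_i f\equiv 0$ for $i\in I$ comes from Lemma~\ref{lem:If--is-2}, and the NRT reduction then follows from Lemma~\ref{lem:Let--be}.

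\emph{Step 1: $\pdv_i f\equiv0$ for $i\in I$.} Fix $i\in I$. The polynomial $\pdv_i f$ is multi-affine, and by Lemma~\ref{lem:If--is-2} it satisfies the dichotomy: either $\pdv_i f>0$ on all of $\C_f$, or $\pdv_i f\equiv 0$. Since $\x_0\in\C_f$ and $\pdv_i f(\x_0)=0$, the first alternative fails, so $\pdv_i f\equiv0$. (If $f$ is constant then $I=[n]$ and everything below is trivial, with $V=\{0\}$; we assume $f$ is nonconstant.) Consequently $f$ does not depend on $x_i$ for $i\in I$. Let $J=[n]\setminus I$, so $\pdv_j f(\x_0)>0$ for $j\in J$, and let $P:\R^n\to V\cong\R^{J}$ be the coordinate projection. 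Define $g\in\R^{\mathfrak a}[x_j:j\in J]$ by $g=f|_{V}$. Then $f=P^{*}g$, and $P$ is a strictly positive linear map: it is a composition of deletions, and each row of its matrix has a single entry $1$, so Lemma~\ref{lem:matrix characterization} applies.

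\emph{Step 2: $g$ is Gårding with $\C_g=P(\C_f)$.} Since $f$ is independent of the $x_i$, $i\in I$, we have $\{f>0\}=P^{-1}(\{g>0\})$. Hence the connected components of $\{f>0\}$ are exactly the sets $P^{-1}(D)$ for $D$ a component of $\{g>0\}$; this uses that $P^{-1}(D)=D\times\R^{I}$ is connected. Thus $\C_f=P^{-1}(D)$ for some component $D$, and $D=P(\C_f)$ passes PRT because $P$ maps $\overline{\Gamma_n^+}$ onto $\overline{\Gamma_{|J|}^+}$. So $g\in\G$ with $\C_g=D$. Alternatively, one can obtain the same conclusion by iterating Lemma~\ref{lem:Suppose-that--2} and Corollary~\ref{cor:onemore}.

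\emph{Step 3: $g$ is NRT.} Every $j\in J$ satisfies $\pdv_j g(P\x_0)=\pdv_j f(\x_0)>0$. By Lemma~\ref{lem:If--is-2} applied to $g$, it follows that $\pdv_j g>0$ on all of $\C_g$. This is condition (3) of Lemma~\ref{lem:Let--be}, so $\C_g$ is NRT and $g$ is an NRT Gårding polynomial on $V$.

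The only mildly delicate point is the identification of components in Step 2, that is, checking that the PRT component of $\{f>0\}$ is exactly $P^{-1}(\C_g)$. This follows from the product structure $\{f>0\}=\{g>0\}\times\R^{I}$ together with the uniqueness of PRT components guaranteed by Lemma~\ref{lem:basic}.
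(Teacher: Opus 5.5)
Your proof is correct and follows the argument the paper has in mind. The paper writes out no separate proof and presents the lemma as a reinterpretation of Lemma~\ref{lem:If--is-2}: the dichotomy there forces $\partial_i f\equiv 0$ for $i\in I$, and after discarding those variables, Lemma~\ref{lem:Let--be} gives the NRT property. Your Step~2, identifying the components of $\{f>0\}$ as $D\times\mathbb{R}^{I}$, supplies the detail the paper leaves implicit.
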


For any $f\in\G\backslash\{\text{const}\}$, $f-c\in\G$.
By Lemma \ref{lem:basic} (5), we have the following. 
\begin{lem}
\label{lem:strong}Suppose $f$ is an NRT polynomial. Then so is $f-c$
for any $c>0$. 
\end{lem}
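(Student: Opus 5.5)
The statement is Lemma~\ref{lem:strong}: if $f$ is an NRT polynomial, then so is $f-c$ for every $c>0$. The plan is to exhibit a connected component of $\{f-c>0\}$ inside $\C_f$; by Lemma~\ref{lem:basic}(\ref{enu:NRTlabel}) it is then automatically NRT.

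First, by Lemma~\ref{lem:Let--be}, the NRT component $\C$ of $\{f>0\}$ equals $\C_f$. Moreover, $f$ is G\aa rding, has no redundant variables, and $\pdv_i f>0$ on $\C_f$ for all $i\in[n]$. In particular $f$ is not constant.

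Next, I construct the set $\C'$. Fix $\x_0\in\C_f$. Along the ray $\x_0+t\mathbf{1}$ with $t>0$, the function $f$ is nondecreasing. It is in fact unbounded: the Taylor expansion at $\x_0$ has nonnegative coefficients by Lemma~\ref{lem:If--is-2}, and the linear term $\sum_i\pdv_i f(\x_0)\,t$ is strictly positive. Hence the set $\{f>c\}\cap\C_f$ is nonempty.

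I take $\C'$ to be the connected component of $\{f>c\}$ containing some point of $\{f>c\}\cap\C_f$. Since $\{f>c\}\subset\{f>0\}$ and $\C_f$ is a connected component of $\{f>0\}$, any connected subset of $\{f>c\}$ meeting $\C_f$ lies in $\C_f$. Therefore $\C'\subset\C_f$.

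The set $\C'$ is open, being a component of an open set. It is also a connected component of $\{f-c>0\}$. Since $\C_f$ is NRT and $\C'$ is an open subset of $\C_f$, Lemma~\ref{lem:basic}(\ref{enu:NRTlabel}) shows that $\C'$ is NRT. By Definition~\ref{def:NRTAn-open-set}, $f-c$ is therefore an NRT polynomial.

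The only real point requiring care is the nonemptiness of $\{f>c\}\cap\C_f$. This is handled by the strict positivity of the first derivatives, which rules out $f$ being bounded along positive rays. One may also note that $\C'$ passes PRT, since $f$ increases along positive directions inside $\C_f$. Consequently $\C'=\C_{f-c}$, which is consistent with the remark preceding the lemma.
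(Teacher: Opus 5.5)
Your argument is correct and is essentially the paper's: the paper notes that $f-c$ stays G\aa rding for non-constant G\aa rding $f$, with its positivity component lying inside $\mathcal{C}_f$, and then invokes Lemma~\ref{lem:basic}~(\ref{enu:NRTlabel}). You supply the details the paper leaves implicit, namely that $\{f>c\}\cap\mathcal{C}_f$ is nonempty (via the Taylor expansion and $\partial_i f>0$) and that the corresponding component of $\{f>c\}$ is contained in $\mathcal{C}_f$.
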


\begin{rem}
\label{rem:strong reduction} With notation as in
Lemma \ref{lem:strong},  for  any NRT polynomial $f$,
there is a sequence of NRT functions $h_{k}$ such that $h_{k}\to f$
and $\overline{\C_{h_{k}}}\subset\C_{f}\subset\C_{f}^{(1)}=\C_{h_{k}}^{(1)}$. 
\end{rem}

\subsection{Restriction to coordinate subspaces}

As noted in Remark \ref{rem:general restriction}, the restriction
operation does not preserve the Gårding property in general. Nevertheless, the restriction does preserve the Gårding property
in the following case.
\begin{lem}
\label{lem:splitting}Consider a nontrivial polynomial $f(\x)\in\G$
and $\a=(a_{1},\cdots,a_{n})$.
\begin{enumerate}
\item If $\a\in\C_{f}$, then the restriction $f(a_{1},x_{2},\cdots,x_{n})$
is a nontrivial Gårding polynomial;
\item For $\a\in\overline{\C_{f}}$, then the restriction $f(a_{1},x_{2},\cdots,x_{n})$
is Gårding; and, moreover, either it is nontrivial or $f(\x)=(x_{1}-a_{1})g(x_{2},\cdots,x_{n}),$
for $g\in\G[x_{2},\cdots,x_{n}].$
\end{enumerate}
\end{lem}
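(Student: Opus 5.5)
Throughout, write $\x'=(x_{2},\cdots,x_{n})$ and decompose the multi-affine $f$ as $f(\x)=x_{1}g(\x')+h(\x')$ with $g=\pdv_{1}f$. The restriction is then $r(\x'):=f(a_{1},\x')=a_{1}g(\x')+h(\x')$. The natural candidate for the Gårding component of $r$ is the slice
\[
S:=\{\x'\in\R^{n-1}:\ (a_{1},\x')\in\C_{f}\}.
\]
The plan is to show that whenever $S\neq\emptyset$ it is a connected component of $\{r>0\}$ passing PRT, and to handle $S=\emptyset$ separately in part (2).

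\textbf{Part (1).} If $\a\in\C_{f}$, then $\a'\in S$, so $S\neq\emptyset$. The set $S$ is open. It passes PRT because $(a_{1},\x')+(0,\v')=(a_{1},\x'+\v')$ and $\C_{f}$ passes PRT. It is therefore connected by Lemma~\ref{lem:basic}, and $r>0$ on $S$.

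To see that $S$ is a full component of $\{r>0\}$, I will show it is relatively closed in $\{r>0\}$. Take $\x'\in\overline{S}$ with $r(\x')>0$. Then $(a_{1},\x')\in\overline{\C_{f}}\cap\{f>0\}$. Since $\C_{f}$ is a connected component of $\{f>0\}$, it is relatively closed there, so $(a_{1},\x')\in\C_{f}$, i.e.\ $\x'\in S$. Hence $S$ is a PRT connected component of $\{r>0\}$, so $r\in\G$ with $\C_{r}=S$. Moreover $r$ is nontrivial because $r(\a')=f(\a)>0$.

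\textbf{Part (2).} If $S\neq\emptyset$ (for instance if $\a\in\C_{f}$), the argument of Part (1) applies verbatim and $r$ is a nontrivial Gårding polynomial. So assume $S=\emptyset$; in particular $\a\notin\C_{f}$, hence $f(\a)=0$.

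Since $\C_{f}$ is open and passes PRT, $\a+\epsilon\mathbf{1}\in\C_{f}$ for every $\epsilon>0$. By Lemma~\ref{lem:If--is-2}, all Taylor coefficients $\pdv^{\alpha}f(\a+\epsilon\mathbf{1})$ are nonnegative, and letting $\epsilon\to0$ gives $\pdv^{\alpha}f(\a)\geq0$ for all $\alpha$. Consequently $r(\a'+\y')=\sum_{\alpha_{1}=0}\pdv^{\alpha}f(\a)\y'^{\alpha}$ has nonnegative coefficients and zero constant term.

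Suppose $r\not\equiv0$. Then $r>0$ on $\a'+\Gamma_{n-1}^{+}$. I claim $(a_{1},\a'+\y')\in\C_{f}$ for every $\y'>0$, which would give $S\neq\emptyset$, a contradiction. The route is as follows.
\begin{itemize}
\item Because all Taylor coefficients of $f$ at $\a$ are nonnegative, $f(a_{1}+s,\a'+\y')\geq r(\a'+\y')>0$ for all $s\geq0$.
\item For large $s$, the point $(a_{1}+s,\a'+\y')$ dominates some $\a+\epsilon\mathbf{1}$ with $\epsilon\leq\min y_{i}$, so it lies in $\C_{f}$ by PRT.
\item The vertical segment from $(a_{1},\a'+\y')$ to that point therefore lies in $\{f>0\}$ and meets $\C_{f}$, so it lies entirely in $\C_{f}$.
\end{itemize}
Hence $r\equiv0$, i.e.\ $a_{1}g+h\equiv0$, so $f=(x_{1}-a_{1})g(\x')$ with $g=\pdv_{1}f\in\G$ by Lemma~\ref{lem:Suppose-that--2}. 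The zero polynomial is Gårding by convention, which completes (2).

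The main obstacle I expect is this degenerate case of Part (2). There one cannot use a limit argument for $\G$ directly, since closedness of $\G$ has not yet been established at this point in the paper. The key step is passing the nonnegativity of Taylor coefficients to the boundary point $\a$ and then propagating membership in $\C_{f}$ along vertical segments.
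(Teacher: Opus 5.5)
Your proof is correct. Part (1) follows the paper's argument, with the omitted ``straightforward check'' written out: the slice is open, passes PRT, and is relatively closed in $\{r>0\}$ because connected components are closed.

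In the degenerate case of Part (2) you take a different route. The paper argues directly. Since $\overline{\C_f}$ also passes PRT, $\a+(0,\y')\in\overline{\C_f}$ for every $\y'\geq0$. The hyperplane $\{x_1=a_1\}$ misses $\C_f$, so all these points lie in $\partial\C_f$, where $f=0$. Hence $r$ vanishes on the orthant $\a'+\overline{\Gamma_{n-1}^{+}}$, and therefore identically.

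You argue by contradiction instead. You pass the nonnegativity of Taylor coefficients from Lemma~\ref{lem:If--is-2} to the boundary point $\a$. You then pull points of the slice into $\C_f$ along vertical segments, using that $\C_f$ is a connected component.

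Both arguments are valid. The paper's version is shorter and needs only PRT of the closure and $f|_{\partial\C_f}=0$. Yours uses a little more machinery but makes every step explicit. This includes the identification $g=\pdv_1 f\in\G$ via Lemma~\ref{lem:Suppose-that--2}, which the paper leaves implicit.
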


\begin{proof}
For (1), let $\C=\pi_{1}(\{x_{1}=a_{1}\}\cap\C_{f}).$ Since $f(\a)>0,$ $\a\in\{x_{1}=a_{1}\}\cap\C_{f}$,
$\C$ is nonempty. It is straightforward to check that $\C$ is a
connected component of the positivity region of the restricted function passing PRT. The proof
is complete.

For (2), since $f(\x)\in\G,$ $\C_{f}$ satisfies PRT, so are $\overline{\C_{f}}$,
and $\pi_{1}(\{x_{1}=a_{1}\}\cap\overline{\C_{f}})\subset\R^{n-1}.$
Thus, for any $\y=(0,y_{2},\cdots,y_{n})\geq0$,
\begin{equation}
\a+\y\in\{x_{1}=a_{1}\}\cap\overline{\C_{f}}\neq\emptyset\label{eq:add21}
\end{equation}

If $\{x_{1}=a_{1}\}\cap\C_{f}\neq\emptyset$, we have $f|_{x_{1}=a_{1}}\in\G$.
Otherwise, 
\begin{equation}
\{x_{1}=a_{1}\}\cap\C_{f}=\emptyset.\label{eq:add22}
\end{equation}
By (\ref{eq:add21}) and (\ref{eq:add22}), we see $\a+\y\in\partial\C_{f}$
for all $\y=(0,y_{2},\cdots,y_{n})$, with $y_{i}\geq0$, which indicates
that $f(\x)$ has a factor of $(x_{1}-a_{1}).$ We have proved the
lemma.
\end{proof}
We also analyze the restriction to coordinate affine subspaces. 

For any subset $I=\{i_{1}<i_{2}<\cdots<i_{k}\}\subset[n]$ with
$k$ elements, define the inclusion map and the projection
map
\begin{align*}
\iota_{I} & :\mathbb{R}^{k}\to\mathbb{R}^{n},\ (x_{1},\cdots,x_{k})\mapsto\sum_{i\in I}x_{i}\mathbf{e}_{i},\\
\pi_{I} & :\mathbb{R}^{n}\to\mathbb{R}^{n-k},\x\mapsto\pi_{i_{1}}\circ\cdots\circ\pi_{i_{k}}(\x).
\end{align*}
We have the following lemma.
\begin{lem}
\label{lem:For-any-.}Suppose $f\in\G$. If $\pi_{I}(\mathbf{x}_{0})\in\pi_{I}(\mathcal{C}_{f})$,
then for $\mathbf{y}\in\mathbb{R}^{k}$, 
\[
g(\mathbf{y}):=f(\iota_{I}(\mathbf{y})+\mathbf{x}_{0})\in\G.
\]
In particular, if $\x_{0}\in\C_{f}^{(k_{0})}$ where $k_{0}\leq k$,
then $g(\y)\in\G$.
\end{lem}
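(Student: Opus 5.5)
We interpret the statement as follows. We have $I=\{i_1<\dots<i_k\}$, and $g(\y)=f(\iota_I(\y)+\x_0)$ is the restriction of $f$ to the affine coordinate subspace $V=\x_0+\mathrm{span}\{\e_i:i\in I\}$. The free coordinates are those in $I$, and the coordinates outside $I$ are frozen at the values of $\x_0$. We may replace $\x_0$ by $\x_0-\iota_I(\pi'(\x_0))$ so that its $I$-coordinates vanish. The hypothesis should then be read as saying that the frozen coordinates of $\x_0$ are the frozen coordinates of some point of $\C_f$, i.e. $V\cap\C_f\neq\emptyset$.

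The plan is to exhibit directly a connected component of $\{g>0\}$ passing PRT. Set
\[
R:=\{\y\in\R^{k}:\ \iota_I(\y)+\x_0\in\C_f\}.
\]
By hypothesis $R\neq\emptyset$. Since translating $\y$ by $\v\in\overline{\Gamma_k^+}$ translates $\iota_I(\y)+\x_0$ by $\iota_I(\v)\in\overline{\Gamma_n^+}$, PRT of $\C_f$ gives PRT of $R$. Then Lemma~\ref{lem:basic}(1) shows $R$ is connected, and $R$ is open. Also $g>0$ on $R$, so $R$ lies in a single component $\C$ of $\{g>0\}$.

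The main step is to show $\C=R$, i.e. that $R$ is relatively closed in $\{g>0\}$. Equivalently, a point $\y\in\C\setminus R$ leads to a contradiction. I would model this on the proof of Lemma~\ref{lem:Suppose-that--2}(2). Take a path in $\C$ from such $\y$ to a point of $R$. Its image $\x(s)=\iota_I(\y(s))+\x_0$ stays in $\{f>0\}$, so the whole path lies in one component of $\{f>0\}$. That component meets $\C_f$ at the endpoint in $R$, so it equals $\C_f$. Hence $\iota_I(\y)+\x_0\in\C_f$, which means $\y\in R$, a contradiction. This argument is actually simpler than the derivative case, because components of $\{f>0\}$ intersected with $V$ are unions of components of $\{g>0\}$. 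So $\C=R$ passes PRT, $\C_g=R\neq\emptyset$, and since $g$ is multi-affine, $g\in\G$.

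For the ``in particular'' clause, the task is to show that $\x_0\in\C_f^{(k_0)}$ with $k_0\le k$ forces $V\cap\C_f\neq\emptyset$. Take $\y=\sum_{i\in I}\e_i\in\Gamma_{n,k}^+\subset\Gamma_{n,k_0}^+$. Definition~\ref{def:derivative=00003D000020cone} then gives $t>0$ with $\x_0+t\y\in\C_f$, and $\x_0+t\y\in V$. The first part then applies.

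The only delicate point I expect is bookkeeping of the hypothesis, namely matching $\pi_I$ (which deletes the $I$-coordinates) with the condition that $V$ meets $\C_f$. I would state this normalization explicitly at the start.
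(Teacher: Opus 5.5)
Your proposal is correct and follows essentially the same route as the paper. You use the same set $R$, derive PRT and connectedness from Lemma~\ref{lem:basic}, and show that $R$ is a full component of $\{g>0\}$. The only cosmetic differences are these: you read $R\neq\emptyset$ directly off the hypothesis, whereas the paper translates $\mathbf{x}_0$ along $\sum_{i\in I}\mathbf{e}_i$ and uses PRT. You also prove relative closedness by a path argument, whereas the paper observes that boundary points of $R$ map into $\partial\mathcal{C}_f$, where $f=0$.
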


\begin{proof}
If $g\equiv c\geq0$, then the result is trivial. Otherwise, denote
\[
R:=\{\mathbf{y}:\iota_{I}(\mathbf{y})+\mathbf{x}_{0}\in\mathcal{C}_{f}\},
\]
which is an open subset of $\{g(\mathbf{y})>0\}$. Since $\pi_{I}(\mathbf{x}_{0})\in\pi_{I}(\mathcal{C}_{f})$,
for $\mathbf{a}=\sum_{i\in I}\mathbf{e}_{i}\in\Gamma_{n,|I|}^{+}$,
there exists $t>>0$ such that $t\mathbf{a}+\mathbf{x}_{0}\in\C_{f}$.
Then $R$ is not empty and passes PRT . Thus, by Lemma \ref{lem:basic},
$R$ is connected. Suppose $\mathbf{y}_{0}\in\pdv R$. Then $\iota_{I}(\mathbf{y}_{0})+\mathbf{x}_{0}\in\pdv\mathcal{C}_{f}$
and hence $g(\mathbf{y}_{0})=0$. Thus, $R$ is both open and closed
in $\{g(\mathbf{y})>0\}$. Hence, $R$ is a connected component of
$\{g(\mathbf{y})>0\}$. Therefore, $g$ is Gårding. 
\end{proof}

\subsection{Multi-affine Gårding polynomials with nonnegative coefficients}

As an application, we now focus on a distinguished subclass of multi-affine
Gårding polynomials, namely those with nonnegative coefficients. This
subclass, denoted by $\G_{+}$, plays a central role in our linear
preserver theory and applications to probability theory and combinatorics. 

We begin by recording the basic stability properties of $\G_{+}$.
\begin{lem}
\label{lem:move-into-GA+}For any $f\in\G,$ and $\x_{0}\in\C_{f}$,
the translation $\text{T}_{\x_{0}}^{*}f\in\G_{+}$.
\end{lem}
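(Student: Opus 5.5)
Set $g:=\mathrm{T}_{\x_{0}}^{*}f$, that is, $g(\x)=f(\x+\x_{0})$. The plan has two steps: first show that $g$ is still a multi-affine Gårding polynomial, and then read off its coefficients from the Taylor expansion of $f$ at $\x_{0}$.

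For the first step, $g$ is multi-affine because translation does not raise the degree in any variable. Translation is a strictly positive affine map, so Remark \ref{rem:basicpreservers} gives $g\in\G$. Concretely, its Gårding component is $\C_{g}=\C_{f}-\x_{0}$. This set is a connected component of $\{g>0\}$, and it passes PRT because $\C_{f}$ does. Moreover $0\in\C_{g}$, since $\x_{0}\in\C_{f}$.

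For the second step, since $f$ is multi-affine, its Taylor expansion at $\x_{0}$ is exact, as in (\ref{eq:add0}):
\[
g(\x)=f(\x_{0}+\x)=\sum_{S\subset[n]}\pdv^{S}f(\x_{0})\,\x^{S}.
\]
Here $\pdv^{S}$ denotes $\pdv^{\alpha}$ with $\alpha$ the indicator vector of $S$. The coefficients of $g$ are therefore exactly the values $\pdv^{S}f(\x_{0})$. By Lemma \ref{lem:If--is-2}, applied to $f$ at the point $\x_{0}\in\C_{f}$, each of these values is nonnegative. The constant term is $f(\x_{0})$, which is strictly positive. Hence $g$ has nonnegative coefficients, and together with the first step this gives $g\in\G_{+}$.

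The trivial case $f\equiv c\geq0$ is immediate, since then $g=f$. There is no genuine obstacle in this argument. The only step with content is the sign of the coefficients, and Lemma \ref{lem:If--is-2} has already settled that. The remaining point to check is the identification of the Taylor coefficients with the partial derivatives, which holds with no factorial factors precisely because $f$ is multi-affine.
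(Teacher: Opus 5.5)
Your proof is correct and follows the paper's own argument: the paper likewise reads off the coefficients of $\mathrm{T}_{\x_{0}}^{*}f$ from the exact Taylor expansion of the multi-affine $f$ at $\x_{0}$, and uses Lemma~\ref{lem:If--is-2} to get their nonnegativity. Your explicit check that translation keeps $f$ in $\G$, via Remark~\ref{rem:basicpreservers}, spells out a step the paper leaves implicit.
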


\begin{proof}
By Lemma \ref{lem:If--is-2}, the claim follows from the
Taylor expansion of $f$ at $\x_{0}$. 
\end{proof}
\begin{lem}
\label{lem:G+restriction}If $f\in\G_{+},$ then for any $I\subset[n]$,
$\iota_{I}^{*}f\in\G_{+}$. 
\end{lem}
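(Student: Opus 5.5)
The strategy is to realize $\iota_I^{*}f$ as a sequence of single-variable restrictions $x_j=0$, one for each $j\notin I$. Each step will be justified by Lemma~\ref{lem:splitting}~(2), using the fact that for a polynomial with nonnegative coefficients the origin always lies in the closure of its Gårding component. Nonnegativity of coefficients is clearly inherited by $\iota_I^{*}f$, so the only issue is the Gårding property. If $f\equiv0$ or $f\equiv c>0$ there is nothing to prove, so assume $f$ is nonconstant.

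\textbf{Step 1: $\Gamma_n^{+}\subset\C_f$ for every nontrivial $f\in\G_+$.}
Since $f$ has nonnegative coefficients and is not identically zero, $f>0$ on $\Gamma_n^{+}$. The set $\Gamma_n^{+}$ is connected, so it lies in a single connected component of $\{f>0\}$. The set $\Gamma_n^{+}$ passes PRT, and so does $\C_f$. By Lemma~\ref{lem:basic}~(\ref{enu:if--and}) they intersect, so this component must be $\C_f$. Hence $\Gamma_n^{+}\subset\C_f$, and in particular $0\in\overline{\C_f}$.

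\textbf{Step 2: one-variable restriction.}
Fix $j\notin I$. After a permutation of coordinates (which preserves $\G$ by Remark~\ref{rem:basicpreservers}), we may assume $j=1$. Apply Lemma~\ref{lem:splitting}~(2) with $\a=0\in\overline{\C_f}$. This shows that $f|_{x_1=0}$ is Gårding; it may also be the zero polynomial, which is allowed since the zero polynomial lies in $\G_+$. The restriction has nonnegative coefficients, because it is obtained by deleting the monomials that contain $x_1$. Hence $f|_{x_1=0}\in\G_+$ as a polynomial in the remaining $n-1$ variables.

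\textbf{Step 3: induction.}
Now iterate Step 2 over the indices $j\notin I$. At each stage the current polynomial is either zero, a nonnegative constant, or a nonconstant element of $\G_+$. In the last case Step 1 applies again and puts $0$ in the closure of its Gårding component. After $n-|I|$ steps we obtain exactly $\iota_I^{*}f$, which therefore lies in $\G_+$.

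\textbf{Main obstacle.}
The only delicate point is Step 1, namely identifying the component that contains $\Gamma_n^{+}$ as the Gårding component. This is what makes the weakly positive restriction legitimate here, in contrast with Example~\ref{rem:general restriction}, where $f(x,y)=(x-1)y$ has a negative coefficient and its restriction to $x=0$ fails to be Gårding. Once Step 1 is in place, Lemma~\ref{lem:splitting} does the remaining work, including the degenerate case in which the restriction vanishes identically.
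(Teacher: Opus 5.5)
Your proof is correct and follows the same route as the paper. The paper notes that $\mathbf{0}\in\overline{\C_f}$ and then invokes Lemma~\ref{lem:splitting} together with Lemma~\ref{lem:For-any-.}; your Step~1 ($\Gamma_n^{+}\subset\C_f$ via Lemma~\ref{lem:basic}) gives a more careful justification of $\mathbf{0}\in\overline{\C_f}$ than the paper's brief appeal to $f(\mathbf{0})\ge 0$.
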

\begin{proof}
Since $f\in\G_{+},$ $f(\mathbf{0})\geq0.$
Then $\mathbf{0}\in\overline{\C_{f}}$, and the claim follows from Lemma \ref{lem:For-any-.}
and Lemma \ref{lem:splitting}.
\end{proof}

To formulate the analytic closure property of $\G_{+}$, we equip the spaces
$\mathbb{R}^{\mathfrak{a}}[\mathbf{x}]$ and $\mathbb{R}_{\kappa}[\mathbf{x}]$ with the compact--open topology, i.e., the topology of uniform convergence on compact subsets.
With this topology, $\G_{+}[\mathbf{x}]$ is a closed subset of $\mathbb{R}^{\mathfrak{a}}[\mathbf{x}]$.
\begin{lem}
\label{lem:G_+isclosed}$\G_{+}[\x]$ is closed in $\R^{\mathfrak{a}}[\x]$.
\end{lem}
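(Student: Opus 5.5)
The plan is to take a sequence $f_k\in\G_{+}[\x]$ converging to some $f\in\R^{\mathfrak{a}}[\x]$ and show $f\in\G_{+}$. Two easy reductions come first:
\begin{itemize}
\item Nonnegativity of coefficients passes to the limit, because convergence in the finite-dimensional space $\R^{\mathfrak{a}}[\x]$ is coefficientwise.
\item If $f\equiv0$ or $f$ is a positive constant, then $f\in\G_{+}$ by convention.
\end{itemize}
So we may assume $f$ is nonzero with nonnegative coefficients. Then $f>0$ on $\Gamma_{n}^{+}$. Let $\C$ be the connected component of $\{f>0\}$ that contains the connected set $\Gamma_{n}^{+}$. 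It suffices to show that $\C$ passes PRT, since then $\C_{f}=\C\neq\emptyset$.

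Next I would show that $\Gamma_{n}^{+}\subset\C_{f_k}$ for all large $k$. Since $f\neq0$, eventually $f_k\neq0$. Then $f_k$ has nonnegative coefficients, so $f_k>0$ on $\Gamma_{n}^{+}$. Also $f_k(\mathbf{0})\geq0$, so, as in the proof of Lemma \ref{lem:G+restriction}, $\mathbf{0}\in\overline{\C_{f_k}}$. Because $\C_{f_k}$ is open and passes PRT, adding any vector $\v>0$ to a point of $\overline{\C_{f_k}}$ lands in $\C_{f_k}$. Hence $\Gamma_{n}^{+}\subset\C_{f_k}$.

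The key step is to show that every $\x\in\C$ lies in $\C_{f_k}$ for all large $k$:
\begin{itemize}
\item Choose a compact path $\gamma\subset\C$ joining $\x$ to a point of $\Gamma_{n}^{+}$.
\item On $\gamma$ we have $f\geq\delta>0$ for some $\delta$.
\item By uniform convergence on compact sets, $f_k>0$ on $\gamma$ for large $k$.
\item So $\gamma$ lies in a single component of $\{f_k>0\}$. That component meets $\Gamma_{n}^{+}\subset\C_{f_k}$, so it is $\C_{f_k}$, and hence $\x\in\C_{f_k}$.
\end{itemize}

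Now fix $\x\in\C$ and $\v\geq0$. By PRT the segment $\x+s\v$, $s\in[0,1]$, lies in $\C_{f_k}$ for large $k$. By Lemma \ref{lem:If--is-2}, all first partial derivatives of $f_k$ are nonnegative on $\C_{f_k}$. Therefore $s\mapsto f_k(\x+s\v)$ is nondecreasing on $[0,1]$, so $f_k(\x+s\v)\geq f_k(\x)$. Passing to the limit gives $f(\x+s\v)\geq f(\x)>0$ for all $s\in[0,1]$. The segment is then a connected subset of $\{f>0\}$ containing $\x$, so it lies in $\C$; in particular $\x+\v\in\C$. Thus $\C$ passes PRT and $f\in\G_{+}$.

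The main obstacle I expect is the key step above: transferring membership in $\C$ to membership in $\C_{f_k}$. The difficulty is that components of $\{f_k>0\}$ need not converge to components of $\{f>0\}$. The compact-path argument avoids this by using strict positivity of $f$ on a compact set together with the fact that all the $\C_{f_k}$ contain the fixed anchor $\Gamma_{n}^{+}$.
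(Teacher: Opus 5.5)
Your proof is correct and follows essentially the same route as the paper. Both take the component $\C$ of $\{f>0\}$ containing $\Gamma_{n}^{+}$, use a compact path with uniform convergence to put each point of $\C$ into $\C_{f_k}$ for large $k$, and then apply Lemma \ref{lem:If--is-2} to get $f_k(\x+\v)\ge f_k(\x)$ and pass to the limit. The differences are cosmetic: you argue directly rather than by contradiction, you get $f>0$ on $\Gamma_{n}^{+}$ from nonnegative coefficients instead of the maximum principle, and you justify explicitly that $\Gamma_{n}^{+}\subset\C_{f_k}$ and that the segment stays in $\C$, both of which the paper leaves implicit.
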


\begin{proof}
We argue by contradiction. Suppose that there exists a sequence $\{f_{k}(\x)\}_{k=1}^{\infty}\subset\G_{+}[\x]$
converging uniformly on compact subsets to a multi-affine polynomial
$f$ which is not \gar{}. 

By continuity, $f(\x)\geq0$ for all $\x\geq0$. If $f$ is a constant,
then $f\in\G_{+}[\x]$. Hence, we assume that $f$ is not constant.
Since $f$ is multi-affine, it is harmonic, and by the maximum principle
we have $f>0$ on $\Gamma_{n}^{+}$. Let $\C$ denote the connected
component of $\{f>0\}$ containing $\Gamma_{n}^{+}$. 

We claim that for any $\w\in\C$ there exists $N\in\N$ such that
if $k>N$, then $\w\in\C_{f_{k}}$. Indeed, since $\C$ is connected,
there exists a continuous path $\gamma:[0,1]\to\C$ with $\gamma(0)=\mathbf{1}$
and $\gamma(1)=\w$. Set $a:=\min_{t\in[0,1]}f(\gamma(t)).$ Then
$a>0$. Uniform convergence on compact sets implies the existence
of $N\in\N$ such that if $k>N$, 
\[
\sup_{t\in[0,1]}|f_{k}(\gamma(t))-f(\gamma(t))|<\frac{a}{2},
\]
hence $\min_{t\in[0,1]}f_{k}(\gamma(t))>\frac{a}{2}$. Since
\textbf{$\mathbf{1}\in$$\C_{f_{k}}$}, it follows that $\gamma([0,1])\subset\C_{f_{k}}$
and $\w\in\C_{f_{k}}$.

Since \(f\) is not \gar{}, there exist \(\x_{0}\in\C\) and
\(\a\ge0\) such that
\[
f(\x_{0}+\a)=0.
\]
By the PRT property of \(f_k\) and the previous claim, we have
\(\x_0\in\C_{f_k}\) for all sufficiently large \(k\). 
By Lemma~\ref{lem:If--is-2}, the Taylor expansions of  \(f_k\) at $\x_0$ are nonnegative. So we have
\[
f(\x_{0}+\a)
=\lim_{k\to\infty}f_k(\x_{0}+\a)
\ge \lim_{k\to\infty}f_k(\x_0)
\ge \frac12 f(\x_0)
>0,
\]
contradicting \(f(\x_{0}+\a)=0\). Therefore \(f\) is \gar{}, and
\(\G_+\) is closed.
\end{proof}
\begin{rem}
$\G[\x]$ itself is not closed. For example, let $g_{k}=\frac{1}{k}x-1$
and let $k\to+\infty$. Then the limit $-1$ is not a \gar{} polynomial.
On the other hand, if $g_{k}(\x)\in\G[\x]$ and there exists a fixed
point $\x_{0}\in$$\cap_{k=j}^{\infty}\C_{g_{k}},$ then, by a translation
and Lemma \ref{lem:G_+isclosed}, the limit of $\{g_{k}\}$ is a \gar{}
polynomial. 
\end{rem}

Next, we recall the definition of a Rayleigh polynomial. 
\begin{defn}
\label{def:Rayleigh}A multi-affine polynomial $f$ is \emph{Rayleigh}
if it has nonnegative coefficients and for any $\mathbf{x}\in\Gamma_{+}^{n}$,
\begin{equation}
f(\mathbf{x})\pdv_{i}\pdv_{j}f(\mathbf{x})-\pdv_{i}f(\mathbf{x})\pdv_{j}f(\mathbf{x})\leq0\label{eq:-18}
\end{equation}
for each $i<j$. If (\ref{eq:-18}) holds for all $\mathbf{x}\in\mathbb{R}^{n}$,
then $f$ is called \emph{strong Rayleigh}. By \cite{BBL09}, a multi-affine
polynomial is strong Rayleigh if and only if it is stable. 
\end{defn}

We show that \gar{} polynomials are Rayleigh polynomials.
\begin{lem}
\label{lem:GardingToRayleigh}
\begin{enumerate}
\item \label{enu:GartoRay}For $i\not=j$, if $f\in\G$, then for $\x$
with $\pi_{ij}(\x)\in\pi_{ij}(\C_{f})$,
\[
f(\x)\pdv_{i}\pdv_{j}f(\x)-\pdv_{i}f(\x)\pdv_{j}f(\x)\leq0.
\]
\item \label{enu:gartoRay2}If $f\in\G_{+}$, then $f$ is a Rayleigh polynomial. 
\item \label{enu:gartoRay3}If in a component $R$ of $\{f>0\}$ and for
any $\mathbf{x}_{0}\in R$, the translation $\T_{\x_{0}}^{*}f$ is
Rayleigh, then $f\in\G$. 
\item \label{enu:gartoRay4}If $f$ is a strong Rayleigh polynomial with
non-negative leading coefficients, then $f$ is Gårding. 
\end{enumerate}
\end{lem}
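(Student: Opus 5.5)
The plan is to reduce part (1) to the two-variable classification in Example~\ref{exa:Let-.-We} and then derive (2)--(4) from (1), from Lemma~\ref{lem:If--is-2}, and from the stable theory.

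\emph{Part (1).} Fix $\x$ with $\pi_{ij}(\x)\in\pi_{ij}(\C_f)$ and set $I=\{i,j\}$. Define
\[
g(s,t):=f(\x+s\mathbf{e}_i+t\mathbf{e}_j).
\]
This is of the form $f(\iota_I(\y)+\x_0)$, where $\x_0$ is $\x$ with the $i$-th and $j$-th coordinates set to $0$, after a harmless translation in $(s,t)$. Since $\pi_I(\x_0)=\pi_{ij}(\x)\in\pi_I(\C_f)$, Lemma~\ref{lem:For-any-.} gives $g\in\G_2$. Because $f$ is multi-affine,
\[
g(s,t)=a\,st+b\,s+c\,t+d,
\]
with $a=\pdv_i\pdv_j f(\x)$, $b=\pdv_i f(\x)$, $c=\pdv_j f(\x)$ and $d=f(\x)$. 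I then split into cases using Example~\ref{exa:Let-.-We}.
\begin{itemize}
\item If $g$ is constant, then $a=b=c=0$ and the inequality is trivial.
\item If $a>0$, the Gårding condition is exactly $ad-bc\le 0$.
\item If $a=0$ and $g$ is nonconstant affine, then PRT for $\C_g$ forces $b,c\ge0$; otherwise $g\to-\infty$ along a positive ray. Hence $ad-bc=-bc\le0$.
\item The case $a<0$ is excluded, since then $g\notin\G$.
\end{itemize}
In every case $f\,\pdv_i\pdv_j f-\pdv_i f\,\pdv_j f=ad-bc\le 0$.

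\emph{Part (2).} If $f\in\G_+$ is nonzero, it has nonnegative coefficients and at least one is positive, so $f>0$ on $\Gamma_n^+$. Since $\Gamma_n^+$ is connected, it lies in a single component of $\{f>0\}$. By Remark~\ref{rem:easy}, $\C_f$ meets $\Gamma_n^+$, so that component is $\C_f$ and $\Gamma_n^+\subset\C_f$. Consequently every $\x\in\Gamma_n^+$ satisfies $\pi_{ij}(\x)\in\pi_{ij}(\C_f)$, and part (1) yields the Rayleigh inequality. Nonnegativity of the coefficients is part of the hypothesis, so $f$ is Rayleigh.

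\emph{Part (3).} Only the nonnegative-coefficient half of the Rayleigh hypothesis is needed. For $\x_0\in R$, the polynomial $\T_{\x_0}^*f$ has nonnegative coefficients, so every $\pdv^\alpha f(\x_0)\ge0$. The Taylor expansion \eqref{eq:add0} then gives $f(\x_0+\y)\ge f(\x_0)>0$ for all $\y\ge0$. The segment from $\x_0$ to $\x_0+\y$ therefore lies in $\{f>0\}$ and is connected, so it stays in $R$. Thus $R+\overline{\Gamma_n^+}\subset R$, so $R$ passes PRT, $R=\C_f\neq\emptyset$, and $f\in\G$.

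\emph{Part (4).} By \cite{BBL09}, quoted in Definition~\ref{def:Rayleigh}, a strong Rayleigh multi-affine $f$ is real stable. The top-degree part of $f$ is stable and homogeneous, so its nonzero coefficients share a sign; the hypothesis on the leading coefficients makes that sign nonnegative, hence $f\in\S$. Lemma~\ref{lem:stablepolynomialcone} then provides a component $\C_f$ of $\{f>0\}$ passing PRT. Since $f$ is multi-affine, $f\in\G$. (Alternatively one can invoke Theorem~\ref{thm:stable-garding} directly.)

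The main obstacle is part (1): checking that the two-variable restriction really satisfies the hypotheses of Lemma~\ref{lem:For-any-.}, and treating the degenerate case $a=0$, where Example~\ref{exa:Let-.-We} gives only a sufficient condition and the sign constraints $b,c\ge0$ must be extracted from PRT directly.
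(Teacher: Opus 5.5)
Your proof is correct. For parts (1)--(3) it follows the paper's argument. The paper also restricts to the $(x_i,x_j)$-slice via Lemma~\ref{lem:For-any-.} and reads off $ad-bc\le 0$ from Example~\ref{exa:Let-.-We}. It deduces (2) from (1) by noting $\mathbf 0\in\overline{\C_f}$ and appealing to continuity; your argument that $\Gamma_n^+\subset\C_f$ supplies the justification the paper leaves implicit. It proves (3) from the nonnegativity of the Taylor coefficients of $\T_{\x_0}^*f$, exactly as you do. Your separate handling of the case $a=0$ is a welcome extra bit of care, since Example~\ref{exa:Let-.-We} only records a sufficient condition there.

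Part (4) is where the routes differ. The paper derives it in one line from (3): a strong Rayleigh polynomial is Rayleigh on a component of $\{f>0\}$, hence G{\aa}rding. You instead use the characterization of multi-affine strong Rayleigh polynomials as real stable ones to get $f\in\S$, and then take $\C_f$ from Lemma~\ref{lem:stablepolynomialcone}.

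Your route is the more complete of the two. Applying (3) requires a component $R$ on which every translate $\T_{\x_0}^*f$ has nonnegative coefficients. The inequality $\pdv_if\,\pdv_jf-f\,\pdv_i\pdv_jf\ge 0$ on $\R^n$ does not by itself identify such a component, nor does it give nonnegativity of the higher Taylor coefficients there. In your argument this comes from the positivity of directional derivatives and the nesting $\C_f\subset\C_{\pdv_if}$ in Lemma~\ref{lem:stablepolynomialcone}. The paper's formulation keeps (4) inside the Rayleigh-to-G{\aa}rding mechanism of (3), but leaves that step implicit.

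Your remark that the coefficients of $f^{\mathrm{top}}$ share a sign is unnecessary, since nonnegative leading coefficients are assumed, but it is harmless.
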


\begin{proof}
For (\ref{enu:GartoRay}), we prove for $i=1,\ j=2$.  Other cases
are similar. For $\pi_{12}(\x_{0})\in\pi_{12}(\C_{f})$, let 
\begin{align*}
g(x,y) & =f(\x_{0}+xe_{1}+ye_{2})\\
 & =\pdv_{12}f(\mathbf{x}_{0})xy+\pdv_{1}f(\mathbf{x}_{0})x+\pdv_{2}f(\mathbf{x}_{0})y+f(\mathbf{x}_{0}).
\end{align*}
Then, $g(x,y)\in \GS[x,y]$ by Lemma \ref{lem:For-any-.}.
The conclusion follows from Example \ref{exa:Let-.-We}.

For (\ref{enu:gartoRay2}), since $\mathbf{0}\in\overline{\C_f}$, by (\ref{enu:GartoRay}) and continuity,
$f$ is Rayleigh.

Suppose that $R$ is the connected component of $\{f>0\}$ given in
(\ref{enu:gartoRay3}). Then  $R+\overline{\Gamma_{n}^{+}}\subset R$
since $p(\mathbf{x})=f(\mathbf{x}_{0}+\mathbf{x})$ is Rayleigh and
has nonnegative coefficients for $\mathbf{x}_{0}\in R$. $f$ is Gårding.

If $f$ is strong Rayleigh, then $f$ is Rayleigh on a component of
$\{f>0\}$, and $f$ is Gårding. 
\end{proof}

\subsection{Truncation to top and bottom degrees}
Let $f \in \mathbb{R}[\mathbf{x}]$ have total degree $d$ and vanishing order $v$ at $\mathbf{0}$. The \emph{top- and bottom-degree parts} of $f$, denoted by $f^{\text{top}}$ and $f^{\text{bot}}$, are defined to be the homogeneous components of $f$ of degrees $d$ and $v$, respectively. The operations of truncating a polynomial to its top- or bottom-degree part are defined purely algebraically, while the preservation of the G{\aa}rding property follows from a limiting argument using the underlying topological structure.
\begin{prop}
\label{prop:truncateLet--be}Suppose that $f\in\G$. Then $f^{\mathrm{top}}\in\G_{+}.$
Furthermore, if $f\in\G_{+}$, then $f^{\text{bot}}\in\G_{+}$. 
\end{prop}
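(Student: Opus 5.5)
The plan is a limiting argument: $f^{\mathrm{top}}$ and $f^{\mathrm{bot}}$ arise as limits of rescaled dilations of $f$, and $\G_{+}$ is closed by Lemma~\ref{lem:G_+isclosed}.

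\emph{Top part.} Let $f\in\G$ have degree $d\geq1$; the case $d=0$ is trivial since $f\equiv c$ with $c\geq0$. First pick $\x_0\in\C_f$ and set $g:=\T_{\x_0}^{*}f$. By Lemma~\ref{lem:move-into-GA+}, $g\in\G_{+}$. Translation does not change the top-degree homogeneous part, so $g^{\mathrm{top}}=f^{\mathrm{top}}$, and it suffices to treat $f\in\G_{+}$.

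Next, for $\lambda>0$ set $f_\lambda(\x):=\lambda^{-d}f(\lambda\x)$. This is a pullback by a dilation followed by multiplication by a positive constant. Dilations preserve $\G$ by Remark~\ref{rem:basicpreservers}, and multiplying by a positive constant leaves $\{f>0\}$ and its components unchanged. Hence $f_\lambda\in\G_{+}$, and it stays multi-affine with nonnegative coefficients.

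Writing $f=\sum_{k\le d}f_k$, we have $f_\lambda=\sum_k\lambda^{k-d}f_k$. As $\lambda\to\infty$ this converges coefficientwise to $f_d=f^{\mathrm{top}}$ in the finite-dimensional space $\R^{\mathfrak a}[\x]$, hence uniformly on compact sets. Lemma~\ref{lem:G_+isclosed} then gives $f^{\mathrm{top}}\in\G_{+}$.

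\emph{Bottom part.} Now assume $f\in\G_{+}$ with vanishing order $v$. Set $f^\lambda(\x):=\lambda^{-v}f(\lambda\x)=\sum_{k\ge v}\lambda^{k-v}f_k$ and let $\lambda\to0^{+}$. Each $f^\lambda$ lies in $\G_{+}$ by the same dilation argument. The limit is $f_v=f^{\mathrm{bot}}$, so closedness again gives $f^{\mathrm{bot}}\in\G_{+}$.

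Here the hypothesis $f\in\G_{+}$, rather than merely $f\in\G$, cannot be dropped. A translation that moves the origin into $\overline{\C_f}$ would change the bottom part, so the reduction used for the top part is unavailable. This is exactly why the proposition assumes nonnegative coefficients in the second statement.

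\emph{Main obstacle.} The one point that needs care is checking that $f_\lambda$ really lies in $\G_{+}$ and not only in $\G$. It is in $\G$ because $\mathrm D_{\lambda\mathbf 1}$ is a strictly positive linear map preserving $\G$ (Remark~\ref{rem:basicpreservers}). Its coefficients are those of $f$ multiplied by the positive factors $\lambda^{k-d}$, so nonnegativity is kept. With that checked, the rest is Lemma~\ref{lem:G_+isclosed} applied to a convergent family in a finite-dimensional space.
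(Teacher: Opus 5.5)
Your proposal is correct and follows essentially the same route as the paper. You first reduce to $\G_{+}$ by translating a point of $\C_f$ to the origin, which leaves $f^{\mathrm{top}}$ unchanged. You then realize $f^{\mathrm{top}}$ and $f^{\mathrm{bot}}$ as limits of rescaled dilations and conclude by the closedness of $\G_{+}$ (Lemma~\ref{lem:G_+isclosed}).
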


\begin{proof}
By Lemma \ref{lem:move-into-GA+}, $\text{T}_{\x_{0}}^{*}f\in\G_{+}$
for any $\x_{0}\in\C_{f}$. Since $(\text{T}_{\x_{0}}^{*}f)^{\text{top}}=f^{\text{top}}$,
it suffices to give a proof assuming $f\in\G_{+}$.

Let $d=\deg f$ and $v=\text{ord}_{\mathbf{0}}f$. Then
\[
f^{\mathrm{top}}(\x)=\lim_{k\to+\infty}k^{-d}f(k\x),\ f^{\text{bot}}(\x)=\lim_{k\to+\infty}k^{v}f\left(\frac{\x}{k}\right).
\]
Since dilation preserves \gar{} polynomials, our conclusion 
follows from Lemma \ref{lem:G_+isclosed}.
\end{proof}

\section{Decomposition and binary relations}

In this section, we develop a decomposition theory for multi-affine
Gårding polynomials. This decomposition clarifies an additional geometric
and algebraic structure and naturally leads  to the introduction of
two associated binary relations.

The first relation, \emph{proper position}, is a direct analog
of the corresponding notion for stable polynomials. The second is
a \emph{domination} relation, which appears to be specific to the
Gårding setting. For both relations, we establish geometric, algebraic,
and analytic characterizations and establish their basic properties.
Our approach differs conceptually from those typically used in the
theory of stable polynomials. Readers may find it instructive to compare
the present results and methods with their counterparts in the stable
setting, cf. \cite{BB09Annals,BB09I,Wagner11}.

This section provides the technical backbone for the constructions
and arguments developed later in the paper and is therefore necessarily
technical. 

\subsection{Definitions and a dichotomy theorem}

We begin by introducing two binary relations that encode the two alternatives
in the restriction dichotomy.
\begin{defn}[Domination]
\label{def:dominatesGA}Let $g(\mathbf{x})\in\G_{n}$. Let $f\in\R^{\mathfrak{a}}[\x]$
be a nontrivial multi-affine polynomial. We say that \emph{$g$ dominates
$f$},  denoted by  $f\vartriangleleft g$, if 
\[
f|_{\C_{g}}>0.
\]
By convention, $0\vtl g$ for any $g\in\G$. 
\end{defn}

\begin{rem}
When $f,g\in\G,$ $f\vartriangleleft g$ if and only if $\C_{g}\subset\C_{f}.$
In  Definition~\ref{def:dominatesGA}, $f$ is not required to be Gårding,
which is important for later applications.
\end{rem}

Next, we introduce the analogue of proper position in the stable theory.
\begin{defn}[Proper Position]
\label{def:properposition}For nontrivial $f(\textbf{x}),g(\textbf{x})\in\G[\x]$,
we say that $(f,g)$ is in  a \emph{proper position}, written   $f\prec g$,
if 
\[
h(\mathbf{x},y)=f(\textbf{x})y+g(\textbf{x})\in\G[\x,y].
\]
By convention, $0\prec g\prec0$ for any $g\in\G$.
\end{defn}

The following theorem gives a precise formulation of the restriction
dichotomy in terms of the two relations introduced above.
\begin{thm}
\label{thm:Suppose-that-is}Suppose that $h(\mathbf{x},y)=f(\mathbf{x})y+g(\mathbf{x})\in\G_{n+1}$
where $f$ and $g$ are nontrivial. Then exactly one of the following
alternatives holds:
\begin{enumerate}
\item \label{enu:-is-Garding}$f\in\G$ and $g|_{\C_{f}}<0$; equivalently,
$-g\vartriangleleft f$. 
\item \label{enu:or--are} Both $f,g\in\G$, and $\mathcal{C}_{g}=\mathcal{C}_{f}\cap\{g>0\}$.
In particular, $f\vartriangleleft g$ and $f\prec g$. 
\end{enumerate}
\end{thm}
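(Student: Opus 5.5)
The plan is to read everything off the $y$-derivative. Since $h$ is multi-affine in $(\x,y)$, we have $\pdv_y h=f$. Lemma~\ref{lem:Suppose-that--2} then gives three facts at once: $f\in\G$, $\C_f=\pi_y(\C_h)$, and, by part~(\ref{enu:C_fps}),
\[
\C_h=\pi_y^{-1}(\C_f)\cap\{h>0\}.
\]
For each fixed $\x\in\C_f$ we have $f(\x)>0$ by Lemma~\ref{lem:If--is-2}. Hence the fiber $\{y:(\x,y)\in\C_h\}$ is exactly the half-line $y>-g(\x)/f(\x)$. The whole dichotomy then comes down to the sign of $g$ on the open connected set $\C_f$.

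\emph{Case A: $g<0$ on all of $\C_f$.} This is alternative (\ref{enu:-is-Garding}). Indeed, $f\in\G$ has already been shown, and $g|_{\C_f}<0$ is precisely $-g\vtl f$ by Definition~\ref{def:dominatesGA}.

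\emph{Case B: $g(\x_0)\ge 0$ for some $\x_0\in\C_f$.} We first rule out the possibility that $g\le 0$ on $\C_f$ with an interior zero. In that situation the multi-affine, hence harmonic, function $g$ would attain an interior maximum $0$ on the open connected set $\C_f$. The maximum principle (as in Lemma~\ref{lem:If--is-2}) would then force $g\equiv 0$, contradicting nontriviality. So we may assume $g(\x_0)>0$.

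Then $h(\x_0,0)=g(\x_0)>0$, and by the displayed description of $\C_h$ the point $(\x_0,0)$ lies in $\C_h$. Lemma~\ref{lem:splitting}(1), applied with the variable $y$ in place of $x_1$, shows that $g=h|_{y=0}$ is a nontrivial Gårding polynomial. Moreover, the proof of that lemma, or equivalently Lemma~\ref{lem:For-any-.}, identifies the Gårding component:
\[
\C_g=\{\x:(\x,0)\in\C_h\}.
\]
Using the description of $\C_h$ once more, $(\x,0)\in\C_h$ if and only if $\x\in\C_f$ and $g(\x)>0$. Hence $\C_g=\C_f\cap\{g>0\}$. In particular $\C_g\subset\C_f$, so $f>0$ on $\C_g$, which is exactly $f\vtl g$. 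Finally, $f\prec g$ holds by Definition~\ref{def:properposition}, since $h\in\G$ by hypothesis.

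Exclusivity is immediate. In alternative (\ref{enu:or--are}), $\C_g$ is a nonempty subset of $\C_f$ on which $g>0$, which is incompatible with $g|_{\C_f}<0$.

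The main obstacle is the borderline situation in Case B, where $g$ is nonnegative but possibly vanishing at the chosen point of $\C_f$. A naive argument would invoke Lemma~\ref{lem:splitting}(2) on the closure of $\C_h$ and then have to exclude the degenerate factorization $h=y\cdot(\cdots)$. The harmonic maximum-principle step is meant to sidestep this cleanly. I also need to check carefully that Lemma~\ref{lem:Suppose-that--2}(\ref{enu:C_fps}) applies to the variable $y$, i.e.\ that $f\not\equiv 0$, which holds by assumption.
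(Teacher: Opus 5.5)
Your proposal is correct and follows essentially the paper's own argument: $f=\pdv_y h\in\G$ with $\C_h=\pi_y^{-1}(\C_f)\cap\{h>0\}$ from Lemma~\ref{lem:Suppose-that--2}, then a dichotomy on the sign of $g$ over $\C_f$, then the identification $\C_g=\{\x:(\x,0)\in\C_h\}=\C_f\cap\{g>0\}$ (the paper checks directly, by a path argument, that this slice is a component of $\{g>0\}$, where you cite Lemma~\ref{lem:splitting}(1) and Lemma~\ref{lem:For-any-.}, which rest on the same reasoning). Your harmonic maximum-principle step excluding $g\le 0$ on $\C_f$ with an interior zero, and your explicit exclusivity check, make precise what the paper compresses into ``since $g$ is nontrivial and $\C_f$ is open.''
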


\begin{proof}
 $f(\x)=\partial_{y}h$ is necessarily Gårding. Since $g$
is nontrivial and $\mathcal{C}_{f}$ is open, either $g|_{\C_{f}}<0$
or $G:=\C_{f}\cap\{g>0\}\neq\emptyset$. Assuming the latter, we claim
that $g\in\G$ and $G=\C_{g}.$ 

By Lemma \ref{lem:Suppose-that--2}, we have $\C_{h}=\{h>0\}\cap\pi_{y}^{-1}(\C_{f}),$
where $\pi_{y}:\R^{n}\times\R\to\R^{n}$ is the projection. Consequently,
\[
G=\pi_{y}(\C_{h}\cap\{y=0\}).
\]
It follows that $G$ is connected and passes the PRT. 

If $G$ were not a component of $\{g>0\}$, then there would exist
a continuous curve $\mathbf{x}(t)\not\subset G$ with $g(\x(t))>0$
for $t\in[0,1]$, and $\mathbf{x}(0)\in G$. However, this would imply
$h(\x(t),0)>0$ for $t\in[0,1]$ and hence 
\[
(\x(t),0)\subset\C_{h}\cap\{z=0\}=G,
\]
 a contradiction. Therefore, $g\in\G$ and $G=\C_{g}$. 
\end{proof}
Theorem \ref{thm:Suppose-that-is} highlights both the similarities
and the structural differences between Gårding and stable polynomials. 
Geometrically, this dichotomy shows that two-term extensions either force
a strict sign on $\mathcal{C}_f$ or carve out $\mathcal{C}_g$ as an
intersection of positivity regions.

\subsection{Domination relation}

This subsection is devoted to the domination relation. We start with
a technical lemma.
\begin{lem}
\label{lem:usefullemma} Let $\mathcal{A},\mathcal{\C}$ be open connected
 sets such that $\mathcal{C}\cap\mathcal{A}\not=\emptyset$. Then,
the following are equivalent:
\begin{enumerate}
\item $\mathcal{A}\backslash\C\not=\emptyset$;
\item $\mathcal{A}\cap\pdv\mathcal{C}\not=\emptyset$.
\end{enumerate}
If $\C$ is a component of $\{f>0\}$ for  $f(\x)\in\R^\mathfrak{a}[\x]$,  there is a third equivalence:
\begin{enumerate}[resume]
\item $\exists\, \mathbf{x}\in\mathcal{A}$ such that $f(\mathbf{x})<0$.
\end{enumerate}
\end{lem}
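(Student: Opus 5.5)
The plan is to prove (1)$\Leftrightarrow$(2) by pure point-set topology and then handle (3) using that $\C$ is a component of $\{f>0\}$ together with harmonicity of multi-affine $f$.

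\emph{(2)$\Rightarrow$(1).} Since $\C$ is open, it is disjoint from $\pdv\C$. Hence any $\x\in\mathcal{A}\cap\pdv\C$ lies in $\mathcal{A}\backslash\C$.

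\emph{(1)$\Rightarrow$(2).} Suppose $\mathcal{A}\cap\pdv\C=\emptyset$. Then
\[
\mathcal{A}=(\mathcal{A}\cap\C)\cup\bigl(\mathcal{A}\cap(\R^{n}\backslash\overline{\C})\bigr),
\]
and both pieces are open and disjoint. The first piece is nonempty by hypothesis, so connectedness of $\mathcal{A}$ forces the second piece to be empty. Thus $\mathcal{A}\subset\C$, contradicting (1).

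\emph{(3)$\Rightarrow$(1).} Since $f>0$ on $\C$, a point $\x\in\mathcal{A}$ with $f(\x)<0$ lies in $\mathcal{A}\backslash\C$.

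\emph{(2)$\Rightarrow$(3).} This is the one step that needs the structure of $f$, and I expect it to be the main obstacle. Take $\x_{0}\in\mathcal{A}\cap\pdv\C$. Since $\C$ is a connected component of the open set $\{f>0\}$, a boundary point of $\C$ cannot satisfy $f>0$: otherwise a small ball around it would lie in $\{f>0\}$, meet $\C$, and hence be contained in $\C$, so the point would be interior to $\C$. By continuity $f(\x_{0})\geq0$, so $f(\x_{0})=0$.

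The argument then splits on whether $f$ is constant.

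\emph{Case $f$ constant.} Then $f\equiv c$, and since $\C\neq\emptyset$ we have $c>0$. So $\C=\R^{n}$ and $\pdv\C=\emptyset$, and (2) cannot hold; the implication is vacuous.

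\emph{Case $f$ nonconstant.} Here $f$ is multi-affine, hence harmonic. Choose a small open ball $B\subset\mathcal{A}$ centred at $\x_{0}$. By the mean value property, $f(\x_{0})=0$ equals the average of $f$ over $B$. Because $\x_{0}\in\pdv\C$, the ball $B$ meets $\C$, where $f>0$, so $f$ is positive somewhere in $B$ and not identically zero there (a nonzero polynomial cannot vanish on an open set). An average equal to zero with a positive value somewhere forces $f<0$ at some point of $B\subset\mathcal{A}$. This gives (3).

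The obstacle is exactly this last use of the mean value property: one must rule out $f\geq0$ near a zero. The argument above does so via harmonicity; alternatively one can cite the strong minimum principle, under which a nonnegative harmonic function vanishing at an interior point is identically zero.
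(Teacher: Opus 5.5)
Your proposal is correct and follows the paper's proof. You handle (1)$\Leftrightarrow$(2) and (3)$\Rightarrow$(1) by elementary point-set topology, and for (2)$\Rightarrow$(3) you use that a multi-affine $f$ is harmonic, so the mean value (maximum) principle on a small ball in $\mathcal{A}$ around a point of $\mathcal{A}\cap\partial\mathcal{C}$ forces $f<0$ somewhere. You also spell out two details the paper leaves implicit: that $f(\mathbf{x}_0)=0$ at a boundary point of a component of $\{f>0\}$, and that the constant case is vacuous.
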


\begin{proof}
The implications $(1)\Leftrightarrow(2)$ and $(3)\Rightarrow(1)$ (when $\mathcal{C}=\{f>0\}$) are both immediate.

(2)$\Rightarrow$(3). If $\pdv\C\not=\emptyset$, then $f$ is not
constant. Pick $\mathbf{x}_{0}\in\mathcal{A}\cap\pdv\C$. Since $\mathcal{A}$
is open, a ball $B_{r}(\mathbf{x}_{0})$ of radius $r$ is contained
in $\mathcal{A}$. Since $f$ is multi-affine and hence harmonic,
according to the maximum principle, $\{f<0\}\cap B_{r}(\mathbf{x}_{0})\not=\emptyset$. 
\end{proof}
\begin{lem}
\label{lem:basic-domination}Let $g\in\G_{n}$. 
\begin{enumerate}
\item \label{enu:If--isdominate}If $h\in\G_{n}$ , $f\vartriangleleft g$
and $g\vartriangleleft h$, then $f\vartriangleleft h$.
\item \label{enu:If--anddominate}If $f\vartriangleleft g$ and $g\vartriangleleft f$,
then $g$ and $f$ are proportional.
\item \label{enu:If--anddominate-1}If $f\vtl g$, then for any $\alpha\subset[n],$
$\pdv_{\alpha}f\vtl\pdv_{\alpha}g$. 
\item \label{enu:If--anddominate-1-1}If $f\vtl g$, then $\deg f\leq\deg g$.
\item \label{enu:For-any-,partialsub}For any multi-index $\alpha,$ $\partial_{\alpha}g\vartriangleleft g.$
In particular, for any $i\in[n]$, $\pdv_{i}g\vartriangleleft g$.
\item \label{enu:convexcombinationdomiation}Suppose that $f_{1},f_{2}\in\R^{\mathfrak{a}}[\x]$
and $f_{i}\vartriangleleft g$ for each $i$. For $c_{1},c_{2}\geq0$,
$\sum_{i=1}^{2}c_{i}f_{i}\vtl g$. 
\end{enumerate}
\end{lem}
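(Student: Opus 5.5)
Each item of Lemma~\ref{lem:basic-domination} comes from the definition $f\vtl g \iff f|_{\C_g}>0$, combined with the structure results for $\C_g$ proved earlier. Throughout, assume the polynomials involved are nontrivial; when some $f\equiv 0$ the conventions make each claim immediate.

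For (\ref{enu:If--isdominate}), $g\vtl h$ with $g,h\in\G$ gives $\C_h\subset\C_g$ by the remark after Definition~\ref{def:dominatesGA}. Then $f>0$ on $\C_g\supset\C_h$, so $f\vtl h$.

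For (\ref{enu:If--anddominate}), $f$ and $g$ play symmetric roles, so we may treat $f\in\G$ and get $\C_f=\C_g$. Then $f$ and $g$ are nonconstant multi-affine polynomials vanishing on the common boundary $\partial\C_f$. I plan to pick $\x_0\in\C_f$ and compare the two at points $\x_0+t\v$ as $t\to-\infty$ with $\v>0$, or to show directly that $\{f=0\}\cap\{g=0\}$ contains a real hypersurface piece on which both are nonsingular. That would force a common irreducible factor, and iterating the argument on the quotient (which is again Gårding by a Lemma~\ref{lem:splitting}-type argument) would give proportionality. Alternatively, a cleaner route is to consider $f-cg$ with $c=\sup_{\C_g} f/g$. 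A boundary/harmonic maximum argument should show $f-cg$ is either identically zero or of constant sign on $\C_g$ with a forced contradiction. I expect this item to be the main obstacle, since equality of components alone requires an algebraic argument showing that $\partial\C_f$ is Zariski dense in $\{f=0\}$ up to components.

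For (\ref{enu:If--anddominate-1}) and (\ref{enu:For-any-,partialsub}), use Corollary~\ref{cor:onemore}: $\C_{\pdv^\alpha g}=\pi_\alpha(\C_g)$. For (\ref{enu:For-any-,partialsub}), Lemma~\ref{lem:If--is-2} gives $\pdv^\alpha g>0$ on $\C_g$, which is exactly $\pdv_\alpha g\vtl g$.

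For (\ref{enu:If--anddominate-1}), fix $\y\in\pi_\alpha(\C_g)$ and a lift $\x\in\C_g$. By PRT, $\x+\sum_{i\in\alpha}t_i\e_i\in\C_g$ for all $t_i\ge0$, so $f>0$ there. The function $f$ restricted to this positive orthant is multi-affine in the $t_i$, and its top coefficient in $\prod t_i$ is $\pdv_\alpha f(\y)$. Positivity on the whole orthant forces this coefficient to be $\ge0$. Strict positivity then follows from the harmonic/maximum principle argument of Lemma~\ref{lem:If--is-2}, applied on the open set $\pi_\alpha(\C_g)$ unless $\pdv_\alpha f\equiv0$, which is allowed by convention.

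For (\ref{enu:If--anddominate-1-1}), take $\alpha$ of size $\deg f$ such that $\pdv_\alpha f$ is a nonzero constant. By (\ref{enu:If--anddominate-1}), $\pdv_\alpha f\vtl\pdv_\alpha g$, so $\pdv_\alpha g\not\equiv 0$ and hence $\deg g\ge|\alpha|$; otherwise the convention would not apply. One should check here that $\pdv_\alpha g\equiv0$ yields $\C_{\pdv_\alpha g}=\R^{n-|\alpha|}$, so domination would still hold trivially. I would instead argue directly: if $\deg g<|\alpha|$, then $g$ is independent of some $x_i$ with $i\in\alpha$ in every monomial containing the others. This means $\C_g$ contains lines in a nonnegative direction, along which $f$ grows with a leading coefficient of either sign after adjusting base points, and that gives a contradiction.

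Finally, (\ref{enu:convexcombinationdomiation}) is immediate, since a nonnegative combination of functions positive on $\C_g$ is positive there unless it is zero.
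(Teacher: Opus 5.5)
Items (1), (5) and (6) are fine. Your argument for (3) is correct whenever $\partial_\alpha g\not\equiv 0$, so that $\mathcal{C}_{\partial_\alpha g}=\pi_\alpha(\mathcal{C}_g)$: the coefficient of $\prod_{i\in\alpha}t_i$ in $f(\mathbf{x}+\sum_{i\in\alpha}t_i\mathbf{e}_i)$ is $\partial_\alpha f(\pi_\alpha(\mathbf{x}))$, hence $\ge 0$, and the strong minimum principle for the harmonic function $\partial_\alpha f$ on the connected open set $\pi_\alpha(\mathcal{C}_g)$ gives strict positivity. This is a more explicit version of the paper's one-line justification.

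The genuine gap is item (2). You correctly reduce to $\mathcal{C}_f=\mathcal{C}_g$, but then you only list possible strategies, and none is carried out. The $\sup f/g$ route in particular gives nothing if the supremum is infinite, or if it is approached only at $\partial\mathcal{C}_g$ or at infinity, where the maximum principle says nothing.

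The missing idea is the paper's Lemma~\ref{lem:propotional}: induct on $\max\{\deg f,\deg g\}$.
\begin{itemize}
\item First delete redundant variables. They are the same for both polynomials, because $x_i$ is redundant exactly when the common component is invariant under $\mathbf{e}_i$-translations. After this the common component is NRT.
\item Lemma~\ref{lem:Suppose-that--2} gives $\mathcal{C}_{\partial_i f}=\pi_i(\mathcal{C}_f)=\pi_i(\mathcal{C}_g)=\mathcal{C}_{\partial_i g}$. By induction, $\partial_i f=c\,\partial_i g$ for some $c>0$.
\item For $\mathbf{x}_0\in\mathcal{C}_f$, NRT gives $t>0$ with $\mathbf{x}_0-t\mathbf{e}_i\in\partial\mathcal{C}_f$, where both $f$ and $g$ vanish.
\item Integrating $\partial_i f=c\,\partial_i g$ along the segment to $\mathbf{x}_0$ gives $f=c\,g$ on the open set $\mathcal{C}_f$, hence $f\equiv cg$.
\end{itemize}
No factorization argument is needed.

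Your direct argument for (4) is also wrong as stated. The condition $\deg g<|\alpha|$ does not make $\mathcal{C}_g$ contain lines in a nonnegative direction: for $g=x_1+x_2$ and $\alpha=\{1,2\}$, $\mathcal{C}_g=\{x_1+x_2>0\}$ is NRT. Such lines appear only after differentiating.
\begin{itemize}
\item Take $\alpha$ with $|\alpha|=\deg f$ and $\partial_\alpha f$ a nonzero constant, and suppose $\partial_\alpha g\equiv 0$.
\item Choose $\alpha'\subsetneq\alpha$ maximal with $\partial_{\alpha'}g\not\equiv 0$, and pick $i\in\alpha\setminus\alpha'$.
\item By maximality, $\partial_{\alpha'}g$ does not depend on $x_i$, so $\mathcal{C}_{\partial_{\alpha'}g}$ is a union of lines parallel to $\mathbf{e}_i$.
\item By the nondegenerate case of (3), $\partial_{\alpha'}f>0$ on $\mathcal{C}_{\partial_{\alpha'}g}$.
\item But $\partial_{\alpha'}f$ is affine in $x_i$ with slope $\partial_{\alpha'\cup\{i\}}f\not\equiv 0$. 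Pick a point of this open set where the slope is nonzero; along the line through it, $\partial_{\alpha'}f$ becomes negative.
\end{itemize}
This contradiction proves (4). The same argument shows that $\partial_\alpha g\equiv 0$ forces $\partial_\alpha f\equiv 0$. That is the case of (3) your argument leaves open: there $\mathcal{C}_{\partial_\alpha g}=\mathbb{R}^{n-|\alpha|}$ by convention, which can be strictly larger than $\pi_\alpha(\mathcal{C}_g)$.
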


\begin{proof}
(\ref{enu:If--anddominate}) follows from Lemma \ref{lem:propotional}.
(\ref{enu:If--anddominate-1}) follows from the fact that $\pi_{\alpha}(\C_{g})\subset\pi_{\alpha}\{f>0\}$.
(\ref{enu:If--anddominate-1-1}) follows from (\ref{enu:If--anddominate-1}).
The remaining statements follow directly from the definition.
\end{proof}

We can construct new \gar{} polynomials using the domination relation.

\begin{defn}\label{def:rho}
Let $f$ and $g$ be multi-affine polynomials with $g\in\G$. Define
\[
\rho_{g}(f):=\sup\{c>0:\{g-cf>0\}\cap\C_{g}\neq\emptyset\}
\]
Notice that $\rho_g(f)=\infty$ if $\deg g>\deg f.$
\end{defn}

\begin{thm}
\label{thm:dominategardnew}Let $g\in\G$ and let $f$ be a multi-affine
polynomial. If $f\vartriangleleft g$, and $\rho_{g}(f)>0$. then
$g-cf\in\G$ for every $c\in[0,\rho_{g}(f))$. 
\end{thm}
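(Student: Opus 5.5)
We want to show that for $c\in[0,\rho_g(f))$ the set $\{g-cf>0\}\cap\C_g$ is a connected component of $\{g-cf>0\}$ and passes PRT; that set will then be $\C_{g-cf}$. The case $c=0$ is trivial, so fix $0<c<\rho_g(f)$ and set $h_c:=g-cf$. By definition of $\rho_g(f)$ the set $D_c:=\{h_c>0\}\cap\C_g$ is nonempty. (Monotonicity in $c$ comes from $f>0$ on $\C_g$: if $c'<c$ then $h_{c'}>h_c$ on $\C_g$, so $D_c\subset D_{c'}$.)

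\textbf{Step 1: $D_c$ is closed in $\{h_c>0\}$.} Let $\x\in\partial\C_g$ be a limit of points of $D_c$. Then $g(\x)=0$. Since $f>0$ on $\C_g$, continuity gives $f(\x)\ge0$, hence $h_c(\x)=-cf(\x)\le0$. So no boundary point of $\C_g$ lies in $\{h_c>0\}$. Thus $D_c$ is open and closed in $\{h_c>0\}$, i.e. a union of components of $\{h_c>0\}$. It therefore remains to show that $D_c$ passes PRT; connectedness then follows from Lemma~\ref{lem:basic}.

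\textbf{Step 2: PRT for $D_c$.} This is the main step. Fix $\x_0\in D_c$ and $\v\ge0$. We need $h_c(\x_0+t\v)>0$ for all $t\ge0$. Note $\x_0+t\v\in\C_g$ automatically. The first idea is to translate so that $\x_0$ becomes the origin and use Lemma~\ref{lem:move-into-GA+}: all Taylor coefficients of $g$ at $\x_0$ are nonnegative. But $f$ may have Taylor coefficients of either sign, so we cannot simply compare coefficients.

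Instead, consider the univariate function $\phi(t):=h_c(\x_0+t\v)$ and argue by contradiction. Suppose $t_1>0$ is the first zero of $\phi$, and write $\y=\x_0+t_1\v\in\C_g$. Then $g(\y)=cf(\y)$ with $f(\y)>0$.

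We would like to exploit the whole family $g-sf$ for $s\in[c,\rho_g(f))$, using an openness/continuity argument in the parameter. The quantity $\rho_g(f)$ is the supremum of the ratio $g/f$ over $\C_g$, since $f>0$ there. Hence $r(\x):=g(\x)/f(\x)$ is a positive function on $\C_g$ with $\sup_{\C_g} r=\rho_g(f)$. The PRT claim is equivalent to $r$ being nondecreasing along positive rays inside $\C_g$ wherever $r>c$; we only need the weaker statement that $\{r>c\}$ is forward-invariant.

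\textbf{Step 3: the expected obstacle.} The gap is in proving this forward invariance. The plan is to reduce to two variables. Restrict to the line through $\y$ in direction $\v$, and further to a coordinate slice via Lemma~\ref{lem:For-any-.}, so that $g$ becomes a multi-affine Gårding polynomial in few variables on which $f$ is positive. The hope is to use harmonicity (maximum principle, as in Lemma~\ref{lem:If--is-2}) on the multi-affine function $h_s$ over the PRT region $\C_g$:
\begin{itemize}
\item if $\{h_c>0\}\cap\C_g$ failed PRT, some component of $\{h_c\le 0\}\cap\C_g$ would be "trapped" in forward directions;
\item moving $c$ up to values near $\rho_g(f)$ would then force $\{h_s>0\}\cap\C_g$ to be empty or bounded in a forward direction;
\item a bounded positivity pocket of a harmonic function vanishing on its boundary is impossible, which would give the contradiction.
\end{itemize}
I expect this harmonic/maximum-principle argument on the unbounded region $\C_g$ to be the main difficulty; it likely needs the NRT approximation of Remark~\ref{rem:strong reduction} to control behavior at infinity.
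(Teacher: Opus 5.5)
Your Step~1 is correct and coincides with the containment step in the paper. Since $g=0$ and $f\ge 0$ on $\partial\mathcal{C}_g$, the set $D_c=\{g-cf>0\}\cap\mathcal{C}_g$ is open and closed in $\{g-cf>0\}$, so everything reduces to showing that $D_c$ passes PRT. That is exactly the part you leave unproved, so there is a genuine gap. The plan in Step~3 does not fill it:
\begin{itemize}
\item the option ``$\{h_s>0\}\cap\mathcal{C}_g$ empty'' cannot occur for any $s<\rho_g(f)$, by the definition of $\rho_g(f)$;
\item nothing in the sketch turns a failure of PRT into a \emph{bounded} component of $\{h_c>0\}$, so the maximum-principle fact has nothing to act on;
\item restricting to a line in a non-coordinate direction $\mathbf{v}$ leaves the multi-affine setting, where Lemma~\ref{lem:For-any-.} no longer applies.
\end{itemize}

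The missing idea is to use the NRT property of $\mathcal{C}_g$ together with affineness in each single variable. No harmonic analysis or deformation in $c$ is needed.

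First reduce to the case where $\mathcal{C}_g$ is NRT. If it is not, some $\partial_i g\equiv0$ by Lemma~\ref{lem:Let--be}, so $\mathcal{C}_g$ contains whole lines $\mathbf{x}_0+\mathbb{R}\mathbf{e}_i$. Since $f>0$ on each such line and $f$ is affine in $x_i$, you get $\partial_i f=0$ on the open set $\mathcal{C}_g$, hence $\partial_i f\equiv 0$. You can then delete $x_i$ from both $g$ and $f$ without affecting the domination, $\rho_g(f)$, or the conclusion.

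Now let $\mathcal{C}_g$ be NRT, $\mathbf{x}_0\in D_c$ and $i\in[n]$. The ray $\mathbf{x}_0-t\mathbf{e}_i$, $t\ge0$, leaves $\mathcal{C}_g$ at some finite $t^*>0$. It does so at a point $\mathbf{x}^*\in\partial\mathcal{C}_g$ with $h_c(\mathbf{x}^*)=-cf(\mathbf{x}^*)\le0$. Since $h_c(\mathbf{x}_0-t\mathbf{e}_i)=h_c(\mathbf{x}_0)-t\,\partial_ih_c(\mathbf{x}_0)$, this gives $\partial_ih_c(\mathbf{x}_0)\ge h_c(\mathbf{x}_0)/t^*>0$. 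Hence $\nabla h_c>0$ on $D_c$.

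Along $\mathbf{x}_0+t\mathbf{v}$ with $\mathbf{v}\ge0$ you stay in $\mathcal{C}_g$ by PRT. On that ray $h_c$ increases as long as it is positive, so it never vanishes, and $D_c$ passes PRT. The paper phrases this as follows: $D_c\subset\mathcal{C}_g$ inherits NRT by Lemma~\ref{lem:basic}, and a multi-affine polynomial with an NRT positivity component is G{\aa}rding by Lemma~\ref{lem:Let--be}.
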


\begin{rem}
If $\rho_{g}(f)=\infty,$ then the conclusion of Theorem \ref{thm:dominategardnew}
holds for all $c\geq0$. 
\end{rem}

\begin{proof}
Assume $f\vtl g$ and $\rho_{g}(f)>0$. If $\mathcal{C}_{g}$ is not
NRT, then by Lemma \ref{lem::NRTpropirredIf-there-exists} there exists
an index $i\in[n]$ such that $\pdv_{i}g\equiv0$, and $\C_{g}=\pi_{i}(\C_{g})\times\R.$
In this case the domination condition implies that for any $\mathbf{x}_{0}\in\C_{g}$
and $t\in\R$, $f(\mathbf{x}_{0}+t\mathbf{e}_{i})>0$. Thus, $\pdv_{j}f(\mathbf{x}_{0})=0$,
and $f$ is independent of $x_{i}$. By restricting to a coordinate subspace, we may assume that $\mathcal{C}_{g}$ satisfies NRT.

Fix $0 < c < \rho_{g}(f)$, and set
\[
h(\mathbf{x}) := g(\mathbf{x}) - c f(\mathbf{x}).
\]
Let $\mathbf{y} \in \{h > 0\} \cap \mathcal{C}_{g}$, and let $\mathcal{C}$ denote the connected component of $\{h > 0\}$ containing $\mathbf{y}$. We claim that $\mathcal{C} \subset \mathcal{C}_{g}$. Otherwise, by Lemma~\ref{lem:usefullemma}, there exists $\mathbf{x}_1 \in \mathcal{C} \cap \partial \mathcal{C}_{g}$. Then
\[
0 < h(\mathbf{x}_1) = g(\mathbf{x}_1) - c f(\mathbf{x}_1) \le 0,
\]
a contradiction.

Therefore, $\mathcal{C}\subset\mathcal{C}_{g}$. By Lemma \ref{lem:basic}
\eqref{enu:NRTlabel}, $\mathcal{C}$ is NRT, and it follows from
Lemma \ref{lem:Let--be} that $h(\mathbf{x})$ is Gårding.
\end{proof}
\begin{example}
Suppose that $g(\x)\in \G[\x]$ and
$\mathcal{C}_{g}\subset\Gamma_{n}^{+}$. Let $p(\x)\in\R_{+}^{\mathfrak{a}}[\x]$ be any multi-affine
polynomial with  degree less than $n$. Then $p\vtl g$. By Theorem \ref{thm:dominategardnew},
for any $c>0$, $g-cp$ is Gårding.
This construction was used essentially  in our previous work~\cite{FANG2024109867}.
\end{example}

New Gårding polynomials may also be constructed as follows. 
\begin{example}
\label{exa:product}Let $g(\x)\in\G[\x]$ and $f(\y)\in\G[\y]$, and
assume $q\vartriangleleft g,\ p\vartriangleleft f.$ Then $p(\x)q(\y)\vtl g(\x)f(\y)$,
and for $c\in[0,\rho_{g}(q)\cdot\rho_{f}(p))$
\[
h(\x,\y):=g(\x)f(\y)-cq(\x)p(\y)\in\G[\x,\y],
\]
%In particular, for integers $m> l\ge0$, one has $\sigma_{l}(\y)\vtl\sigma_{m}(\mathbf{y}),$ and $\rho_{\sigma_{m}}(\sigma_{l})=\infty$. Thus, for any $c\geq0$

%\begin{equation} g(\mathbf{x})\sigma_{m}(\mathbf{y})-c\sigma_{l}(\mathbf{y})q(\mathbf{x})\in\G[\x,\y].\label{eq:-4} \end{equation}

\end{example}

Next, we have the following algebraic characterization of the domination
relation.

\begin{thm}
\label{thm:iffdominate}Let $g$ and $f$ be nonzero multi-affine
polynomials with $g\in\G$. Then $f(\mathbf{x})\vtl g(\mathbf{x})$
if and only if $g(\mathbf{x})y-f(\mathbf{x})\in\G[\mathbf{x},y]$
and the set $\{f>0\}$ contains a positive affine hyper-octant.
\end{thm}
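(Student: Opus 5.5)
We prove the two directions separately, working with $h(\x,y):=g(\x)y-f(\x)$ and the projection $\pi_{y}$ that forgets $y$, as in the proof of Theorem \ref{thm:Suppose-that-is}.

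\emph{($\Rightarrow$)} Assume $f\vtl g$, i.e.\ $f>0$ on $\C_{g}$.

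First, $\{f>0\}$ contains a positive affine hyper-octant. Take $\x_{0}\in\C_{g}$. By PRT, $\x_{0}+\Gamma_{n}^{+}\subset\C_{g}\subset\{f>0\}$.

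Next, we exhibit a component of $\{h>0\}$ passing PRT. Set
\[
R:=\{(\x,y):\x\in\C_{g},\ y>f(\x)/g(\x)\}.
\]
On $\C_{g}$ we have $g>0$, so $R=\{h>0\}\cap(\C_{g}\times\R)$. The set $R$ is connected, since it is the region above the graph of a continuous function on a connected set. It is open and nonempty.

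We check that $R$ is a whole component of $\{h>0\}$. Suppose $(\x_{1},y_{1})\in\partial R$ with $h(\x_{1},y_{1})>0$. Then either $\x_{1}\in\partial\C_{g}$, or $h(\x_{1},y_{1})=0$. The second case is excluded. In the first case $g(\x_{1})=0$, so $h(\x_{1},y_{1})=-f(\x_{1})$, and $f(\x_{1})\ge0$ by continuity of $f$ from $\C_{g}$. This gives $h(\x_{1},y_{1})\le0$, a contradiction.

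Finally, PRT for $R$ uses the nonnegative Taylor coefficients of $h$ at points of $R$. Lemma \ref{lem:If--is-2} does not apply directly, since we do not yet know $h$ is Gårding. Instead we argue directly:
\begin{itemize}
\item moving $y$ upward increases $h$, because $g>0$ there;
\item moving $\x$ in a positive direction keeps $\x$ in $\C_{g}$.
\end{itemize}
So it suffices to show that $y>f/g$ is preserved along positive directions in $\x$. This is the obstacle. It is not enough to know that $f/g$ is monotone, and $f$ need not be Gårding.

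The first idea is a connectedness argument. Along a segment $\x+t\v$ with $\v\ge0$, the set where $h>0$ would have to be exited at some point $(\x',y')$ with $h(\x',y')=0$ and $\x'\in\C_{g}$. I would then like to compare with the region $\{g(\x)s-f(\x)>0\}$ for large $s$, using monotonicity in $y$, but I do not see how to close this directly.

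Instead I would reduce via Theorem \ref{thm:dominategardnew}. Its domination hypothesis needs $f$ itself to be dominated; here we need it for $gy_{0}$ against $f$, which reduces to the same issue. The cleaner route is Proposition \ref{prop:testing method}. We have $\pdv_{y}h=g\in\G$, and $\pdv_{i}h=(\pdv_{i}g)y-\pdv_{i}f$ with $\pdv_{i}f\vtl\pdv_{i}g$ by Lemma \ref{lem:basic-domination}(\ref{enu:If--anddominate-1}). So we can induct on the number of variables. It then remains to check $h\le0$ on $\partial\C_{h}^{(1)}$. On that boundary either $g=0$ with $f\ge0$, or some $\pdv_{i}h=0$; the latter case must be handled via the inductive description of $\C_{\pdv_{i}h}$.

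\emph{($\Leftarrow$)} Assume $h\in\G$ and $\{f>0\}\supset\x_{0}+\Gamma_{n}^{+}$. Apply Theorem \ref{thm:Suppose-that-is} to $h=g\,y+(-f)$. Either $f>0$ on $\C_{g}$, which is exactly $f\vtl g$, or $-f\in\G$. In the second case $\C_{-f}$ passes PRT. By Lemma \ref{lem:basic}(\ref{enu:if--and}) it meets $\x_{0}+\Gamma_{n}^{+}$, where $f>0$, contradicting $-f>0$ on $\C_{-f}$. Hence $f\vtl g$.
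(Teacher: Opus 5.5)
Your ($\Leftarrow$) direction is correct and is the paper's argument: apply Theorem \ref{thm:Suppose-that-is} to $g\,y+(-f)$, and rule out the second alternative because $\C_{-f}$ would meet the hyper-octant where $f>0$. Your hyper-octant claim and your check that $R$ is a component of $\{h>0\}$ are also correct.

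The gap is that you never prove $h=gy-f\in\G$. PRT for $R$ is equivalent to $f/g$ being non-increasing along positive directions on $\C_g$, i.e.\ $g\,\pdv_i f\le f\,\pdv_i g$ there. That is Proposition \ref{51}, which the paper deduces from the present theorem, so it is not available to you. Your fallback through Proposition \ref{prop:testing method} stops at the same spot. At a point of $\partial\C_h^{(1)}$ with $\x\in\C_g$ and $\pdv_i h=0$ one has $y=\pdv_i f/\pdv_i g$, and $h\le 0$ there is again $g\,\pdv_i f\le f\,\pdv_i g$. As written, the forward direction is unproved.

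The missing step is short. Your reason for discarding Theorem \ref{thm:dominategardnew} is mistaken, and that theorem is exactly the paper's route. Apply it in $n+1$ variables to $G(\x,y)=y\,g(\x)\in\G_{n+1}$, whose G\aa rding component is $\C_G=\C_g\times(0,\infty)$. Since $f$ does not involve $y$, the hypothesis $f\vtl G$ is literally $f\vtl g$. Also $\rho_G(f)=\infty$, because $y\,g(\x)-cf(\x)>0$ for $\x\in\C_g$ and $y$ large. Hence $yg-cf\in\G$ for every $c\ge 0$, in particular for $c=1$. The paper phrases this via Example \ref{exa:product} with $1\vtl y$.

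Alternatively, you can close your own argument with the NRT criterion that underlies that theorem. First reduce to the case where $\C_g$ is NRT. If $\pdv_i g\equiv 0$, then $\C_g$ is invariant under $\mathbf{e}_i$-translations. Then $f$, being affine and positive on each such line, has $\pdv_i f\equiv 0$, so such variables can be dropped.

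Now $R$ is NRT. Below a point $(\x,y)\in R$, the $\x$-part lies in the bounded set $(\x-\overline{\Gamma_n^+})\cap\C_g$. The $y$-part lies in $(0,y]$, since $y'>f/g>0$ on $\C_g$. As $R$ is a component of $\{h>0\}$, Lemma \ref{lem:Let--be} gives $h\in\G$ with $\C_h=R$. The monotonicity of $f/g$ then follows as a consequence rather than being needed as an input.
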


\begin{proof}
Assume first that $f\vtl g$. Clearly, $1\vtl y$, with $\rho_{y}(1)=\infty.$
By Example \ref{exa:product}, $f(\mathbf{x})\vtl y\cdot g(\mathbf{x})$
with $\rho_{g\cdot y}(f)=+\infty$. Hence, $g(\mathbf{x})y-f(\mathbf{x})\in\G[\mathbf{x},y]$
by Theorem \ref{thm:dominategardnew}. Since $f\vtl g,$ the positivity
set $\{f>0\}$ contains $\C_{g}$, which in turn contains an affine positive
hyper-octant.

Conversely, suppose that $g(\mathbf{x})\cdot y-f(\mathbf{x})\in\G_{n+1}$
and that $f$ is positive on an affine hyper-octant. Applying the dichotomy
of Theorem \ref{thm:Suppose-that-is}, only the first alternative
can occur. Hence, $f\vtl g$. 
\end{proof}
Finally, we list an analytical result for future use. 
\begin{prop}
\label{51} Suppose that $f\vtl g$ and $f$ is nontrivial. Then
the function $\phi(\x):=\text{\ensuremath{g}}(\x)/f(\x)$ is non-decreasing
for $\x\in\mathcal{C}_{\text{\ensuremath{g}}}$ in $\overline{\Gamma_{n}^{+}}$
directions. In particular, $\deg g\geq\deg f$.
\end{prop}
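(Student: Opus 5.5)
The plan is to reduce the monotonicity claim to a sign condition on directional derivatives and read that sign off from the G\aa rding property of the auxiliary polynomial $h(\x,y):=g(\x)y-f(\x)$, given by Theorem \ref{thm:iffdominate}.

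On $\C_g$ we have $f>0$ by the definition of $f\vtl g$, so $\phi=g/f$ is smooth and positive there. The set $\C_g$ passes PRT. Therefore, for $\x\in\C_g$ and $\v\in\overline{\Gamma_n^+}$, the segment $\x+t\v$ with $t\ge0$ stays in $\C_g$. It suffices to show that $\pdv_{\v}\phi\ge0$ on $\C_g$, which amounts to
\[
f\,\pdv_i g-g\,\pdv_i f\ \ge 0\quad\text{on }\C_g,\ \text{for each } i\in[n].
\]

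For this I use the two-variable restriction in the style of Lemma \ref{lem:GardingToRayleigh}(\ref{enu:GartoRay}). By Theorem \ref{thm:iffdominate}, $h(\x,y)=g(\x)y-f(\x)\in\G[\x,y]$. Fix $\x_0\in\C_g$ and set $y_0:=f(\x_0)/g(\x_0)>0$.

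Next I check that the projection condition of Lemma \ref{lem:GardingToRayleigh}(\ref{enu:GartoRay}) holds for the pair of variables $(x_i,y)$. Consider the point $(\x_0,y)$ with $y$ large. There $h\approx g(\x_0)y$, which is positive, and $\pdv_y h=g>0$. By Lemma \ref{lem:Suppose-that--2}(\ref{enu:C_fps}), $\C_h=\pi_y^{-1}(\C_{\pdv_y h})\cap\{h>0\}$. Since $\pdv_y h=g$ and $\C_{\pdv_y h}=\C_g$, the point $(\x_0,y)$ lies in $\C_h$ for large $y$.

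Hence the projection of $(\x_0,y_0)$ forgetting $x_i$ and $y$ lies in the projection of $\C_h$, and the lemma applies to $h$ at $(\x_0,y_0)$ with $i$ and $y$. It gives
\[
h\,\pdv_i\pdv_y h-\pdv_i h\,\pdv_y h\le 0.
\]
At this point $h=0$, while $\pdv_y h=g>0$ and $\pdv_i h=y_0\pdv_i g-\pdv_i f$. So the inequality reduces to $y_0\pdv_i g-\pdv_i f\ge0$. Multiplying by $g(\x_0)>0$ gives $f\pdv_i g-g\pdv_i f\ge0$ at $\x_0$, which is the required sign.

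The main obstacle is this application of Lemma \ref{lem:GardingToRayleigh}(\ref{enu:GartoRay}) at a boundary point of $\C_h$, where $h=0$. The lemma only requires the projection condition rather than membership in $\C_h$, so I expect this to go through. If it does not, a fallback is to apply Example \ref{exa:Let-.-We} directly to the bivariate restriction $(s,y)\mapsto h(\x_0+s\mathbf{e}_i,y)$. That restriction is G\aa rding by Lemma \ref{lem:For-any-.}, and Example \ref{exa:Let-.-We} gives the condition $ad-bc\le0$ with
\[
a=\pdv_i g,\quad b=\pdv_i f\ (\text{up to sign}),\quad c=g,\quad d=-f,
\]
all evaluated at $\x_0$. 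This is exactly $-\pdv_i g\,f+g\,\pdv_i f\le0$, the same inequality.

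Finally, for the degree claim, fix $\x_0\in\C_g$ and take $\v=\mathbf 1$. Monotonicity gives that $t\mapsto g(\x_0+t\mathbf 1)/f(\x_0+t\mathbf 1)$ is non-decreasing and positive for $t\ge0$. If $\deg f>\deg g$, then for a generic point $\x_0$ of the open set $\C_g$ the restrictions keep their degrees, so this ratio tends to $0$ as $t\to\infty$, contradicting monotonicity. Hence $\deg g\ge\deg f$, which also agrees with Lemma \ref{lem:basic-domination}(\ref{enu:If--anddominate-1-1}).
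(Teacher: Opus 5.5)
Your monotonicity argument is correct and is essentially the paper's proof. Both use \(h=gy-f\in\G\) from Theorem~\ref{thm:iffdominate}, and both apply the Rayleigh inequality of Lemma~\ref{lem:GardingToRayleigh} to the pair \((x_i,y)\) at points lying over \(\C_g=\C_{\pdv_y h}\). The only difference is cosmetic. The paper notes that \(h\,\pdv_i\pdv_y h-\pdv_i h\,\pdv_y h=g\,\pdv_i f-f\,\pdv_i g\) identically, because the \(y\)-terms cancel. So any \(y\) works, and your choice \(y_0=f(\mathbf{x}_0)/g(\mathbf{x}_0)\) and the worry about boundary points of \(\C_h\) are unnecessary.

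There is one slip in the degree claim. Choosing a generic base point \(\mathbf{x}_0\) does nothing: the coefficient of \(t^{\deg f}\) in \(f(\mathbf{x}_0+t\mathbf{v})\) is \(f^{\mathrm{top}}(\mathbf{v})\), which does not depend on \(\mathbf{x}_0\). What you need is a good direction instead. For \(\mathbf{v}\in\Gamma_n^+\), PRT gives \(\mathbf{x}_0+t\mathbf{v}\in\C_g\) for \(t\ge0\), and \(f>0\) there. Hence \(f^{\mathrm{top}}(\mathbf{v})=\lim_{t\to\infty}t^{-\deg f}f(\mathbf{x}_0+t\mathbf{v})\ge0\). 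Since \(f^{\mathrm{top}}\not\equiv0\), you can pick \(\mathbf{v}\in\Gamma_n^+\) with \(f^{\mathrm{top}}(\mathbf{v})>0\). In fact \(\mathbf{v}=\mathbf{1}\) works, by the minimum principle applied to the multi-affine, hence harmonic, polynomial \(f^{\mathrm{top}}\). Only the denominator's degree then matters. If \(\deg f>\deg g\), the ratio \(g(\mathbf{x}_0+t\mathbf{v})/f(\mathbf{x}_0+t\mathbf{v})\) tends to \(0\), which contradicts its being positive and non-decreasing.
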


\begin{proof}
It suffices
to verify the sign of $\pdv_{i}\phi(\mathbf{x})$. A straightforward computation gives
\begin{equation}
\pdv_{i}\phi(\mathbf{x})=\frac{f(\x)\pdv_{i}g(\x)-g(\x)\pdv_{i}f(\x)}{f(\x)^{2}}.\label{eq:-5}
\end{equation}
By Theorem \ref{thm:dominategardnew}, (\ref{thm:iffdominate}), the
assumption $f\vtl g$ implies 
\[
h(\x,y):=g(\x)y-f(\x)\in\G_{n+1}.
\]
For $\x\in\C_{g}=\C_{\pdv_{y}h}$, the Rayleigh property of $h$,
Lemma \ref{lem:GardingToRayleigh}, implies
\begin{align*}
0 & \geq h(\x,y)\pdv_{y}\pdv_{i}h(\x,y)-\pdv_{y}h(\x,y)\pdv_{i}h(\x,y)\\
 & =g(\x)\pdv_{i}f(\x)-f(\x)\pdv_{i}g(\x).
\end{align*}
Thus, $\pdv_{i}\phi\geq0$ for $\x\in\C_{g}$. 
\end{proof}

\subsection{The proper position relation}

Now we  focus on the proper position relation.
\begin{thm}
\label{proper-position} Let $f,g\in\G$ be two nontrivial multi-affine
\gar{} polynomials. The following are equivalent: 
\begin{enumerate}[label=(\Alph*)]
\item \label{enu:AAA}$f\prec g$.
\item \label{enu:BBB}$\mathcal{C}_{f}\cap\{g>0\}=\mathcal{C}_{g}$, equivalently
$g\leq0$ on $\overline{\mathcal{C}_{f}}\backslash\mathcal{C}_{g}$.
\item \label{enu:CCC}For any $\x\in\mathcal{C}_{f}$, and any $i\in[n]$,
\begin{equation}
(g\pdv_{i}f-f\pdv_{i}g)|_{\mathbf{x}}\leq0.\label{add-1}
\end{equation}
\end{enumerate}
\end{thm}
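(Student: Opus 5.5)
The plan is to prove the cycle \ref{enu:AAA}$\Rightarrow$\ref{enu:BBB}$\Rightarrow$\ref{enu:CCC}$\Rightarrow$\ref{enu:AAA}. Throughout, let $h(\x,y)=f(\x)y+g(\x)$.

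\emph{\ref{enu:AAA}$\Rightarrow$\ref{enu:BBB}.} Apply the dichotomy Theorem~\ref{thm:Suppose-that-is} to $h\in\G_{n+1}$.
\begin{itemize}
\item The first alternative would give $g<0$ on $\C_f$.
\item But $g\in\G$, so $\C_g$ and $\C_f$ both pass PRT and therefore intersect by Lemma~\ref{lem:basic}.
\item Hence the second alternative holds, which is exactly $\C_g=\C_f\cap\{g>0\}$.
\end{itemize}
For the reformulation inside \ref{enu:BBB}, first note $\C_g\subset\C_f$, so $\C_g\subset\overline{\C_f}$. The identity $\C_f\cap\{g>0\}=\C_g$ gives $g\le0$ on $\C_f\setminus\C_g$. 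Continuity then extends this to boundary points of $\C_f$ lying outside $\C_g$.

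For the converse reformulation, I will show $\C_g\subset\C_f$ as follows.
\begin{itemize}
\item Suppose $\C_g\not\subset\C_f$. By Lemma~\ref{lem:usefullemma} the connected open set $\C_g$ meets $\partial\C_f$.
\item At such a meeting point $g>0$, and the point lies in $\overline{\C_f}$.
\item Approaching it from inside $\C_f$ within $\C_g$ then forces a contradiction with $g\le 0$ on $\overline{\C_f}\setminus\C_g$, using that $\C_g$ is a full component of $\{g>0\}$.
\end{itemize}

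\emph{\ref{enu:BBB}$\Rightarrow$\ref{enu:CCC}.} Fix $\x\in\C_f$ and $i$, and restrict to the line $\x+t\e_i$. Write $f=a+bt$ and $g=c+dt$, where $a=f(\x)>0$, $b=\pdv_if(\x)\ge0$ (Lemma~\ref{lem:If--is-2}), $c=g(\x)$ and $d=\pdv_ig(\x)$. The quantity in \eqref{add-1} equals $cb-ad$.
\begin{itemize}
\item Since $\C_f$ passes PRT, its slice on this line is a half-line $\{t>s\}$ with $s\ge -a/b$. When $b=0$ the slice is the whole line.
\item By \ref{enu:BBB}, the slice of $\C_g$ is $\{t>s\}\cap\{c+dt>0\}$, and it is nonempty since $\C_g$ passes PRT.
\item If $d<0$, then $g<0$ for large $t$, which contradicts PRT of $\C_g$.
\item If $d=0$ and $c>0$, the slice of $\C_g$ is all of $\{t>s\}$. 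Since $g=c>0$ at $t=s$, this point would be a boundary point of $\C_g$ with $g>0$, which is impossible. So $s=-\infty$, i.e.\ $b=0$, and $cb-ad=0$. If $d=0$ and $c\le0$, then $cb\le0$ directly.
\item If $d>0$, the slice of $\C_g$ is $\{t>-c/d\}$, and \ref{enu:BBB} forces $-c/d\ge s\ge -a/b$. This gives $cb\le ad$.
\end{itemize}

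\emph{\ref{enu:CCC}$\Rightarrow$\ref{enu:AAA}.} Condition \eqref{add-1} says exactly that $\phi=g/f$ is non-decreasing on $\C_f$ in all $\overline{\Gamma_n^+}$ directions. Set
\[
R:=\{(\x,y):\ \x\in\C_f,\ y>-\phi(\x)\}.
\]
\begin{itemize}
\item $R$ is open, connected (it is the region above a continuous graph over a connected set), and contained in $\{h>0\}$.
\item $R$ passes PRT, because $\C_f$ does and $-\phi$ is non-increasing in positive directions.
\end{itemize}
It remains to show $R$ is a full connected component of $\{h>0\}$. A boundary point $(\x_0,y_0)$ of $R$ with $h(\x_0,y_0)>0$ cannot have $\x_0\in\C_f$, since there $h=0$ on $\partial R$. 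So $\x_0\in\partial\C_f$, $f(\x_0)=0$, and $g(\x_0)=h(\x_0,y_0)>0$. Then:
\begin{itemize}
\item Choose $\x_k\to\x_0$ inside $\C_f$ with $(\x_k,y_k)\in R$. Then $\phi(\x_k)=g/f\to+\infty$.
\item Pick $\mathbf z=\x_0+\mathbf 1\in\C_f$ (using PRT of $\overline{\C_f}$ and openness). For large $k$ we have $\mathbf z\ge\x_k$, so monotonicity gives $\phi(\mathbf z)\ge\phi(\x_k)\to\infty$, a contradiction.
\end{itemize}
Hence $R=\C_h$ and $h\in\G$.

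I expect the main obstacle to be the boundary bookkeeping, namely the degenerate cases $b=0$ or $d=0$ in the line restriction and the component argument at $\partial\C_f$. The monotonicity of $g/f$ is the tool I will try first in each of these cases.
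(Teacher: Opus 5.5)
Your cycle is sound in outline. For (B)$\Rightarrow$(C) it takes a genuinely different and much more economical route than the paper. The paper reduces to NRT $g$, perturbs via Remark~\ref{rem:strong reduction}, and controls $g/f$ along lines issuing from $\pdv\C_g$ using Propositions~\ref{prop:precpositivecombination} and~\ref{51}. You instead observe that $f$ and $g$ are affine on each coordinate line, so $\pdv_i(g/f)$ has the constant sign of $ad-bc$ there, and (C) reduces to comparing the roots $-a/b$ and $-c/d$.

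However, the case $d<0$ is wrong as written. It rests on the claim that the slice $\{t:\x+t\mathbf{e}_i\in\C_g\}$ is nonempty because $\C_g$ passes PRT. PRT does not force $\C_g$ to meet every coordinate line through $\C_f$, and $d<0$ genuinely occurs. Take $f=x_1$ and $g=x_1x_2-1$, which satisfy (B), with $\x=(1,-1)\in\C_f$ and $i=1$. Then $d=\pdv_1g(\x)=-1$ and the line $x_2=-1$ misses $\C_g$. So there is no contradiction to extract, and you must prove $cb\le ad$ directly.

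This is easy in your framework. For $d<0$ the slice of $\C_g$ is $(s,-c/d)$, which is bounded above, so PRT forces it to be empty, i.e.\ $-c/d\le s$. That excludes $b=0$, where $s=-\infty$. For $b>0$ you need the exact value $s=-a/b$, not merely $s\ge -a/b$. This holds because a finite endpoint of the slice of $\C_f$ is a boundary point of a component of $\{f>0\}$, hence a zero of $f$. Multiplying $-c/d\le -a/b$ by $-bd>0$ then gives $cb\le ad$.

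Your argument for the converse half of the reformulation inside (B) also fails. A point of $\C_g\cap\pdv\C_f$ lies in $\C_g$, hence not in $\overline{\C_f}\setminus\C_g$, so approaching it through $\C_f\cap\C_g$ contradicts nothing. In fact that half is false as literally stated. Take $f=x_1x_2-1$ and $g=x_1+x_2$: then $\overline{\C_f}\subset\C_g$, so ``$g\le0$ on $\overline{\C_f}\setminus\C_g$'' holds vacuously. Yet $\C_f\cap\{g>0\}=\C_f\neq\C_g$, and $f\not\prec g$, since $g\pdv_1 f-f\pdv_1 g=x_2^2+1>0$. The second form of (B) needs $\C_g\subset\C_f$ added. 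Your cycle uses only $\C_f\cap\{g>0\}=\C_g$, so nothing else is affected.

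Finally, your (C)$\Rightarrow$(A) follows the paper, but you actually prove that $R$ is a full component of $\{h>0\}$, which the paper only asserts. You do this by playing the blow-up of $g/f$ at $\pdv\C_f$ against monotonicity up to $\x_0+\mathbf 1$. You also rightly skip the paper's preliminary step $\C_g\subset\C_f$, which is not needed.
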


\begin{thm}
\label{lem:proper-position-basic}
The following properties of the proper position relation hold:
\begin{enumerate}[label=(\roman*)]
\item \label{enu:111}If $f\prec g$, then $f\vtl g$.
\item \label{enu:222}For $f(\x)\in\G_{n,+}$ and $i\in[n]$, $\partial_{i}f\prec f$.
\item \label{enu:333}If $f\prec g$, then $\partial_{\alpha}f\prec\partial_{\alpha}g$.
\item \label{enu:444}If $f\prec g$, then $\deg f\leq\deg g\leq\deg f+1$.
\item \label{enu:555}If $f\prec g$ and $c\geq0$, then $cf+g\in\G_{n}$.
\item \label{lem:subordinatetoboth}
Assume  $f,g_{1},g_{2},\cdots,g_{k}\in\G_{n}\backslash\{0\}$ with $f(\x)\prec g_{i}(\x)$ for all $i\in[k]$. Let $g=\sum_{i=1}^{k}\theta_{i}g_{i}$ with $\theta_{i}\geq0$. Then $g\in\G_{n}$. Moreover, if $g$ is nontrivial,
\begin{equation}
f(\x)\prec g(\x).
\label{eq:dominateconvexcombination}
\end{equation}
\end{enumerate}
\end{thm}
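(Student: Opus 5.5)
The plan is to derive every item from the dichotomy Theorem~\ref{thm:Suppose-that-is} and the three equivalent forms \ref{enu:AAA}--\ref{enu:CCC} of Theorem~\ref{proper-position}. The degenerate cases $f\equiv0$ or $g\equiv0$ are covered by the conventions in Definitions~\ref{def:dominatesGA} and \ref{def:properposition}, so I assume $f,g$ nontrivial throughout.

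\textbf{Item \ref{enu:111}.} If $f\prec g$, then $h=fy+g\in\G_{n+1}$, so one of the two alternatives of Theorem~\ref{thm:Suppose-that-is} holds. Alternative (1) would force $g<0$ on $\C_f$. But $\C_f$ and $\C_g$ both pass PRT, so they meet by Lemma~\ref{lem:basic}, and $g>0$ there. Hence alternative (2) holds: $\C_g=\C_f\cap\{g>0\}\subset\C_f$, which gives $f>0$ on $\C_g$, i.e. $f\vtl g$.

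\textbf{Item \ref{enu:222}.} I will verify criterion \ref{enu:CCC} for the pair $(\pdv_i f,f)$. By Lemma~\ref{lem:Suppose-that--2}, $\C_{\pdv_i f}$ (viewed in $\R^n$) equals $\pi_i^{-1}\pi_i(\C_f)$. Take $x$ in this set.
\begin{itemize}
\item For $j\neq i$, $\pi_{ij}(x)\in\pi_{ij}(\C_f)$. Lemma~\ref{lem:GardingToRayleigh}\,(\ref{enu:GartoRay}) then gives $f\pdv_j\pdv_i f-\pdv_i f\,\pdv_j f\le0$.
\item For $j=i$, the expression equals $-(\pdv_i f)^2\le 0$.
\end{itemize}

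\textbf{Item \ref{enu:333}.} Apply Corollary~\ref{cor:onemore} to $h=fy+g$: this gives $\pdv^\alpha h=(\pdv^\alpha f)y+\pdv^\alpha g\in\G$, which is exactly $\pdv^\alpha f\prec\pdv^\alpha g$.

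\textbf{Item \ref{enu:555}.} Pick $x\in\C_g\subset\C_f$ (using \ref{enu:BBB}). Then $h(x,0)=g(x)>0$, and $(x,0)\in\C_h=\{h>0\}\cap\pi_y^{-1}(\C_f)$ by Lemma~\ref{lem:Suppose-that--2}. By PRT, $(x,c)\in\C_h$ for every $c\ge0$. Lemma~\ref{lem:splitting}(1), applied in the variable $y$, then shows that $h|_{y=c}=cf+g$ is Gårding.

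\textbf{Item \ref{lem:subordinatetoboth}.} By \ref{enu:CCC}, each ratio $g_i/f$ is nondecreasing in positive directions on $\C_f$. Hence so is $g/f=\sum_i\theta_i g_i/f$, and therefore $G:=\C_f\cap\{g>0\}$ passes PRT.
\begin{itemize}
\item $G$ is nonempty if $g\not\equiv0$: $\bigcap_i\C_{g_i}\ne\emptyset$ by Lemma~\ref{lem:basic}, and every $g_i>0$ there.
\item $g\le0$ on $\partial\C_f$: each $\C_{g_i}\subset\C_f$ is open, so $\partial\C_f\cap\C_{g_i}=\emptyset$, and \ref{enu:BBB} gives $g_i\le0$ on $\partial\C_f$.
\end{itemize}
Consequently $G$ is open and closed in $\{g>0\}$. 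Being PRT, it is connected, so it is a component. This gives $g\in\G$ with $\C_g=G$, which is \ref{enu:BBB}, i.e. $f\prec g$.

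\textbf{Item \ref{enu:444} (main obstacle).} The lower bound $\deg f\le\deg g$ is immediate from \ref{enu:111} and Lemma~\ref{lem:basic-domination}(\ref{enu:If--anddominate-1-1}). For the upper bound $\deg g\le\deg f+1$, the idea is a harmonic/Liouville argument: reduce to a pair $(c,\,q)$ with $c>0$ constant and $cy+q\in\G$. Then \ref{enu:CCC} on $\C_c=\R^n$ reads $-c\,\pdv_i q\le0$ everywhere. So each $\pdv_i q$ is a multi-affine (hence harmonic) function bounded below on all of $\R^n$, thus constant, and $\deg q\le1$.

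To reach that situation, set $d=\deg f$ and suppose for contradiction that $e=\deg g\ge d+2$. First translate by a point of $\C_g$ (Lemma~\ref{lem:move-into-GA+}), which makes every nonvanishing derivative positive there. Then apply \ref{enu:333} with $|\alpha|=d$, so that $\pdv^\alpha f$ is a positive constant and hopefully $\pdv^\alpha g$ still has degree $\ge e-d\ge2$.

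The difficulty is choosing $\alpha$ so that both conditions hold simultaneously. This is not automatic, since $\alpha$ must lie in a top-degree monomial of $f$ and also inside the support of a degree-$e$ monomial of $g$.
\begin{itemize}
\item First attempt: restrict, via Lemma~\ref{lem:For-any-.}, to the coordinate subspace spanned by a top monomial $S$ of $g$. Then use $\pdv^\beta f\vtl\pdv^\beta g$ (Lemma~\ref{lem:basic-domination}(\ref{enu:If--anddominate-1})) to control which derivatives of $f$ survive.
\item Fallback: use the monotonicity of $g/f$ from Proposition~\ref{51} along the diagonal ray in $\C_g$, where $g/f$ grows like $t^{e-d}$. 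Combine this with the fact that the boundary of $\C_h$ is the graph $y=-g/f$, and try to show that PRT forces $e-d\le1$.

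I expect this fallback to require a genuinely new estimate, so it is the weakest point of the plan.
\end{itemize}
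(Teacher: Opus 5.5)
Your arguments for (i) and (iii) coincide with the paper's. Your routes for (ii), (v) and (vi) differ from the paper's but are sound:
\begin{enumerate}
\item[(ii)] you verify criterion (C) of Theorem~\ref{proper-position} through Lemma~\ref{lem:GardingToRayleigh}, where the paper uses the pullback $f(\x+y\mathbf{e}_i)=(\pdv_i f)\,y+f$;
\item[(v)] you restrict $h=fy+g$ to $y=c$ through a point $(\x,c)\in\C_h$, where the paper invokes Proposition~\ref{prop:precpositivecombination};
\item[(vi)] you get PRT of $\C_f\cap\{g>0\}$ from the monotonicity of $g/f$, which bypasses the paper's NRT reduction.
\end{enumerate}
The genuine gap is the upper bound $\deg g\le \deg f+1$ in (iv), which you leave open. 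Your constant case is correct, but nothing in the proposal reduces to it. Comparing $\pdv^\beta f\vtl\pdv^\beta g$ does not produce a multi-index $\alpha$ with $\pdv^\alpha f$ a positive constant and $\deg\pdv^\alpha g\ge 2$. Growth of $g/f$ like $t^{e-d}$ along the diagonal is compatible with monotonicity, so the fallback yields no contradiction by itself. The paper obtains the bound from Lemma~\ref{lem:usefullemmaprec}, i.e.\ by the same reduction to the constant case; choosing the right $\alpha$ is exactly the step that needs an argument.

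The missing idea is to restrict to the support of a top monomial of $g$ before differentiating, which makes the choice of $\alpha$ automatic.

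\textbf{Normalize.} Take $\x_0\in\C_g$. Then $(\x_0,0)\in\C_h$ by Lemma~\ref{lem:Suppose-that--2}. After translating by $(\x_0,0)$ you may assume $h\in\G_+$, $(\mathbf{0},0)\in\C_h$ and $f(\mathbf{0})>0$. The degrees $d=\deg f$, $e=\deg g$ and all top monomials are unchanged.

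\textbf{Restrict.} Let $\x^S$ be a monomial of $g$ with $|S|=e$; its coefficient is positive. Set $x_i=0$ for $i\notin S$; by Lemma~\ref{lem:For-any-.} through the interior point $\mathbf{0}$ this gives $\tilde h=\tilde f\,y+\tilde g\in\G_+$. Here $\tilde f\neq 0$ because its constant term is $f(\mathbf{0})>0$, $\tilde d:=\deg\tilde f\le d$, and $\tilde g$ still contains $\x^S$.

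\textbf{Differentiate.} Every monomial of $\tilde f$ is now supported in $S$, so a top monomial $\x^T$ of $\tilde f$ satisfies $T\subset S$. Then $\pdv^T\tilde h=c\,y+q$ with $c>0$, and $q$ contains $\x^{S\setminus T}$ with positive coefficient. Hence $\deg q\ge e-\tilde d\ge e-d\ge 2$ if $e\ge d+2$.

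\textbf{Contradiction.} By Corollary~\ref{cor:onemore}, $c\,y+q\in\G$. As in your constant case, PRT of the connected set $\{c\,y+q>0\}=\{y>-q/c\}$ forces $\pdv_j q\ge 0$ everywhere. So each $\pdv_j q$ is constant and $q$ is affine, contradicting $\deg q\ge 2$.

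With this step inserted, (iv), and hence the whole proposal, goes through.
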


The proofs of Theorems~\ref{proper-position} and \ref{lem:proper-position-basic} are intertwined. We first establish several elementary implications among the conditions of Theorem~\ref{proper-position}, along with the basic properties listed in Theorem~\ref{lem:proper-position-basic}. We then complete the proofs of both theorems.

\begin{proof}[Proof of Theorem~\ref{proper-position}, \ref{enu:AAA}$\Longrightarrow$\ref{enu:BBB}.]
Since $f\prec g$, $h(\x,y):=f(\x)y+g(\x)\in\G[\x,y]$. Statement~(B) follows from the dichotomy in Theorem~\ref{thm:Suppose-that-is}.
\end{proof}

\begin{lem}
\label{lem:propotional}
For $n\geq1$, if $p_{1},p_{2}\in\G_{n}$ and $\C_{p_{1}}=\C_{p_{2}}$, then $p_{1}$ and $p_{2}$ are proportional.
\end{lem}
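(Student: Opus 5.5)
We would prove Lemma~\ref{lem:propotional} by showing that $p_{1}$ and $p_{2}$ have the same zero set near the common boundary and then using irreducibility-type arguments for multi-affine polynomials. Write $\C:=\C_{p_{1}}=\C_{p_{2}}$. If $\C=\R^{n}$, both polynomials are nonnegative constants and there is nothing to prove. So we assume $\C\neq\R^{n}$, and hence $\partial\C\neq\emptyset$.

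First, we reduce to the NRT case. By Lemma~\ref{lem::NRTpropirredIf-there-exists}, the redundant variables of $p_{1}$ are exactly the coordinate directions along which $\C$ is invariant, so they are determined by $\C$ alone. Hence $p_{1}$ and $p_{2}$ share them and both are pullbacks from the same coordinate subspace. We may therefore assume both are NRT, with $\partial_{i}p_{k}>0$ on $\C$ for all $i$ (Lemma~\ref{lem:Let--be}).

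Second, we describe the boundary as a graph. Fix $\x_{0}\in\C$ and a direction, say $\mathbf{e}_{1}$. For $\y'$ in the open set $\pi_{1}(\C)$, the NRT property together with PRT shows that the line $\{(t,\y')\}$ meets $\C$ in a half-line $t>\tau(\y')$. Since $p_{k}$ is affine in $x_{1}$ with positive slope $\partial_{1}p_{k}$ there (on $\pi_{1}(\C)=\C_{\partial_{1}p_{k}}$, by Lemma~\ref{lem:Suppose-that--2}), we get
\[
\tau(\y')=-\frac{p_{k}|_{x_{1}=0}(\y')}{\partial_{1}p_{k}(\y')},\qquad k=1,2.
\]
Writing $p_{k}=x_{1}a_{k}+b_{k}$ with $a_{k},b_{k}$ independent of $x_{1}$, this gives $b_{1}a_{2}=b_{2}a_{1}$ on the open set $\pi_{1}(\C)$, hence identically as polynomials. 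Equivalently, $p_{1}a_{2}=p_{2}a_{1}$ as polynomials.

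Third, we conclude proportionality. This is the main obstacle, because $a_{k}$ and $b_{k}$ may share factors. We plan to proceed by induction on $n$. The case $n=1$ is immediate: both polynomials are linear with the same root and positive slope. For the inductive step, note that $\C_{a_{1}}=\pi_{1}(\C)=\C_{a_{2}}$ by Lemma~\ref{lem:Suppose-that--2}, so the inductive hypothesis gives $a_{2}=\lambda a_{1}$ with $\lambda>0$. The identity $b_{1}a_{2}=b_{2}a_{1}$ then gives $a_{1}(\lambda b_{1}-b_{2})=0$. Since $a_{1}\not\equiv0$ by non-degeneracy, it follows that $b_{2}=\lambda b_{1}$, and therefore $p_{2}=\lambda p_{1}$.

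If degeneracy appears, namely $\partial_{1}p_{k}\equiv0$, we choose an index on which $p_{k}$ genuinely depends; after the NRT reduction every index qualifies. Constants arising in the induction are handled directly, since then $\C_{a_{k}}=\R^{n-1}$ forces both $a_{k}$ to be positive constants. The substantive checks are that the reduction preserves equality of components and the graph description of $\partial\C$.
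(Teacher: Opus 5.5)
Your proof is correct and follows essentially the same route as the paper: reduce to the NRT case, use the inductive hypothesis to get $\partial_1 p_2=\lambda\,\partial_1 p_1$ (since $\C_{\partial_1 p_1}=\pi_1(\C)=\C_{\partial_1 p_2}$), and use that every $\mathbf{e}_1$-line through $\C$ hits $\partial\C$, where both polynomials vanish. The differences are cosmetic: the paper inducts on degree and integrates $\partial_i p_k$ from that boundary point, whereas you induct on the number of variables and read off the common root $-b_k/a_k$ directly. Your argument that $p_1$ and $p_2$ have the same redundant variables, which justifies reducing both to NRT at once, is a detail the paper leaves implicit.
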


\begin{proof}
We argue by induction on $d(p_{1},p_{2}):=\max\{\deg p_{1},\deg p_{2}\}$. The case $d\leq 1$ is trivial. Suppose the statement holds for all pairs of multi-affine Gårding polynomials whose maximal degree is at most $d-1$. Consider $p_{1},p_{2}\in\G$ with $d(p_{1},p_{2})=d$ and $\C_{p_{1}}=\C_{p_{2}}$. By passing to a coordinate subspace if necessary, we may assume that both $p_{1}$ and $p_{2}$ are NRT.

Fix $i\in[n]$ and consider $\partial_{i}p_{1}$ and $\partial_{i}p_{2}$. By Lemma~\ref{lem:Suppose-that--2}, we have $\C_{\pdv_{i}p_{1}}=\C_{\pdv_{i}p_{2}}$. Hence, by the induction hypothesis, there exists $c=c(i)>0$ such that
\[
\pdv_{i}p_{1}(\hat{\x})=c\,\pdv_{i}p_{2}(\hat{\x}).
\]

For any $\hat{\x}_{0}\in\C_{p_{1}}$, since $\C_{p_{1}}=\C_{p_{2}}$ are NRT, there exist $t\in\R_{+}$ and $\hat{\x}_{1}\in\partial \C_{p_{1}}$ such that $\hat{\x}_{0}=t\mathbf{e}_{i}+\hat{\x}_{1}$. Then
\[
p_{1}(\hat{\x}_{0})=\int_{0}^{t}\pdv_{i}p_{1}(\hat{\x}_{1}+s\mathbf{e}_{i})\,ds
=\int_{0}^{t}c\,\pdv_{i}p_{2}(\hat{\x}_{1}+s\mathbf{e}_{i})\,ds
= c\,p_{2}(\hat{\x}_{0}).
\]

Since $\hat{\x}_{0}$ is arbitrary, $p_{1}=c\, p_{2}$ on the open set $\C_{p_{1}}$. Hence $p_{1}\equiv cp_{2}$. This completes the induction and the proof.
\end{proof}
\begin{proof}[Proof of \ref{enu:111}, \ref{enu:222}, \ref{enu:333} of Theorem
\ref{lem:proper-position-basic}]

\ref{enu:111} follows directly from the definition.

We prove \ref{enu:222}. Let $f(\x)\in\G_{n,+}$, write $f_{i}(\x)=\pdv_{i}f(\x)$
and $f^{i}(\x)=f(\pi_{i}(\x))$. By Lemmas \ref{lem:Suppose-that--2}
and \ref{lem:For-any-.}, $f_{i},f^{i}\in\G_{n-1,+}$ . Since $f=x_{i}f_{i}+f^{i}\in\G$,
$f_{i}\prec f^{i}$. Consider the multi-affine polynomial 
\[
F(\x,y):=f_{i}(\x)y+f(\x)=f(\x+y\mathbf{e}_{i}).
\]
Since the linear map $(\x,y)\to\x+y\mathbf{e}_{i}$ is strictly positive,
$F\in\G_{n+1,+}[\x,y]$. Thus, $f_{i}\prec f$ by definition, proving
\ref{enu:222}.

Since $\G$ is preserved by partial derivatives, the relation $\prec$
is also preserved under taking partial derivatives, proving \ref{enu:333}.
\end{proof}
\begin{lem}
\label{lem:preplemma}Suppose that condition \ref{enu:BBB} of Theorem
\ref{proper-position} holds for $f,g\in\G_{n}\backslash\{0\}.$ Then
for all  $\x\in\R^{n}$ with $\pi_{\alpha}(\x)\in\overline{\pi_{\alpha}(\C_{f})}\backslash\C_{\pdv^{\alpha}g}$, $\pdv^{\alpha}g(\x)\leq0.$
\end{lem}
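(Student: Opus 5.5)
Write $\alpha$ for a subset of $[n]$; since $g$ is multi-affine, only such $\alpha$ give nonzero $\pdv^{\alpha}g$. The plan is to argue by induction on $|\alpha|$, treating one coordinate at a time and using that $\pdv^{\alpha}g$ does not depend on the variables indexed by $\alpha$.

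\textbf{Base case $\alpha=\emptyset$.} The claim reads: $g\le 0$ on $\overline{\C_f}\setminus\C_g$. This is exactly the second formulation of condition \ref{enu:BBB}. To justify that formulation, note that $\C_g=\C_f\cap\{g>0\}$. Hence on $\C_f\setminus\C_g$ we have $g\le0$. On $\partial\C_f$, suppose $g(\x)>0$. Then points of $\C_f$ near $\x$ would also satisfy $g>0$, so they lie in $\C_g$. Thus $\x\in\overline{\C_g}$. But $\x\notin\C_g\subset\C_f$, so $\x\in\partial\C_g$, where $g=0$, a contradiction.

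\textbf{Inductive step.} It suffices to pass from $\alpha$ to $\alpha\cup\{i\}$ with $i\notin\alpha$. Set $F=\pdv^{\alpha}f$ and $G=\pdv^{\alpha}g$, regarded as functions on $\R^{n-|\alpha|}$. By Corollary~\ref{cor:onemore}, $\C_F=\pi_\alpha(\C_f)$ and $\C_G=\pi_\alpha(\C_g)$. I would first show that $(F,G)$ again satisfies \ref{enu:BBB}. The inclusion $\pi_\alpha(\C_g)\subset\pi_\alpha(\C_f)\cap\{G>0\}$ is clear, because $G>0$ on $\C_g$ by Lemma~\ref{lem:If--is-2}. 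The reverse inclusion is the inductive hypothesis phrased as ``$G\le0$ on $\overline{\C_F}\setminus\C_G$''.

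This reduces the problem to the case $\alpha=\{i\}$: assuming \ref{enu:BBB} for $(f,g)$, show $\pdv_i g\le0$ on $\overline{\pi_i(\C_f)}\setminus\pi_i(\C_g)$.

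\textbf{The case $\alpha=\{i\}$.} Take $\y$ in this set and suppose, for contradiction, that $\pdv_i g(\y)>0$. Since $\y\in\overline{\pi_i(\C_f)}$ and $\C_f$ passes PRT, for every $\epsilon>0$ the whole ray $(\y+\epsilon\mathbf 1, t)$ with $t\ge T_\epsilon$ lies in $\C_f$. By multi-affinity,
\[
g(\y',t)=t\,\pdv_i g(\y')+g(\y',0).
\]
Because $\pdv_i g(\y)>0$, for $\y'$ near $\y$ and $t$ large we get $g>0$ at such points of $\C_f$, hence these points lie in $\C_g$. This places $\y+\epsilon\mathbf 1$ in $\pi_i(\C_g)$ for all small $\epsilon>0$, so $\y\in\overline{\pi_i(\C_g)}$.

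Since $\y\notin\pi_i(\C_g)$, we get $\y\in\partial\C_{\pdv_i g}$. There $\pdv_i g(\y)=0$, because $\C_{\pdv_i g}$ is a connected component of $\{\pdv_i g>0\}$ (Lemma~\ref{lem:Suppose-that--2}). This contradicts $\pdv_i g(\y)>0$.

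\textbf{Expected obstacle.} The delicate points are the boundary cases. One is making the choice of $T_\epsilon$ uniform for $\y'$ near $\y$; this follows from continuity of $\pdv_i g$ and $g(\cdot,0)$ on a compact neighborhood. The other is confirming that $\pi_i(\C_g)$ is genuinely the Gårding component of $\pdv_i g$. I expect the main work to lie in checking that the reduction step preserves \ref{enu:BBB}, rather than in the one-variable estimate itself.
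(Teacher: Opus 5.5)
Your argument is correct, and it uses the same mechanism as the paper. If $\pdv^{\alpha}g>0$ at a point, you push outward along the coordinate directions in $\alpha$, use multi-affinity to make $g$ positive inside $\C_f$, invoke (B) to land in $\C_g$, and project using $\C_{\pdv^{\alpha}g}=\pi_\alpha(\C_g)$.

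The difference is organizational. The paper handles all of $\alpha$ at once: it moves along $\y=\sum_{i\in\alpha}\e_i$ and reads off the leading term $g(\x+t\y)=\pdv^{\alpha}g(\x)\,t^{|\alpha|}+O(t^{|\alpha|-1})$. You peel off one coordinate at a time, which forces the extra step of checking that (B) descends to the pair $(\pi_\alpha(\C_f),\pdv^{\alpha}g)$. You expected that step to be the main work, but it can be dropped: your $\epsilon\mathbf 1$-shift argument goes through verbatim for arbitrary $\alpha$ once you use the leading-term expansion, so no induction is needed.

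In the other direction, your version is more careful than the paper's at the closure. The paper takes $\x+t\y\in\C_f$ for granted, although only $\pi_\alpha(\x)\in\overline{\pi_\alpha(\C_f)}$ is assumed. Your shift to an interior point fills exactly that step, together with the observation that a point of $\overline{\C_{\pdv_i g}}$ where $\pdv_i g>0$ must lie in $\C_{\pdv_i g}$. The uniformity in $T_\epsilon$ you worried about is not needed, since $\epsilon$ is fixed before $t$ is chosen.

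One bookkeeping fix: do not identify $\C_{\pdv^{\alpha}f}$ with $\pi_\alpha(\C_f)$, because $\pdv^{\alpha}f$ can vanish while $\pdv^{\alpha}g$ does not. For example, in $\R^2$ take $f=x_1-1$, $g=x_2(x_1-1)$ and $\alpha=\{2\}$. Then $\C_{\pdv_2 f}=\R$ by convention, but $\pi_2(\C_f)=\{x_1>1\}$. Work with the open PRT set $\pi_\alpha(\C_f)$ throughout, which is all your one-coordinate step uses. Also stop the induction once $\pdv^{\alpha}g\equiv 0$, where the claim is trivial.
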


\begin{proof}
We argue by contradiction. If $\pi_{\alpha}(\x)\in\overline{\pi_{\alpha}\mathcal{C}_{f}}\backslash\C_{\pdv_{\alpha}g}$
with $\pdv_{\alpha}g(\x)>0$. By (\ref{eq:-21}), for sufficiently
large $t$ and $\y=\sum_{i\in\alpha}\mathbf{e}_{i}$, we have
$\x+t\y\in\C_{f}\cap\{g>0\}.$
Hence $\x+t\y\in\C_{g}$ by condition \ref{enu:BBB}. However $\pi_{\alpha}(\x+t\y)=\pi_{\alpha}(\x)\in\C_{\partial^{\alpha}g}\cap\overline{\pi_{\alpha}\C_{f}}$,
a contradiction. We have proved the lemma.
\end{proof}
\begin{lem}
\label{lem:deg 0 case}Let $g\in\G_{n}\setminus\{0\}$and $f=c>0$
be constant. If condition \ref{enu:BBB} of Theorem \ref{proper-position}
holds, then $\deg g\leq1.$
\end{lem}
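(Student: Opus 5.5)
Since $f=c>0$ is a nonzero constant, $\C_{f}=\R^{n}$. Condition \ref{enu:BBB} therefore says $\{g>0\}=\C_{g}$: the whole positivity set of $g$ is the single PRT component. The plan is to argue by contradiction. Assuming $d:=\deg g\geq 2$, we pass to a degree-$2$ derivative of $g$ and exhibit, in a suitable two-dimensional slice, a second positivity component of that derivative. This contradicts the derivative version of \ref{enu:BBB} supplied by Lemma~\ref{lem:preplemma}.

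\emph{Step 1: transfer condition \ref{enu:BBB} to derivatives.} Because $\pi_{\alpha}(\C_f)=\R^{n-|\alpha|}$, its closure is the whole space. Lemma~\ref{lem:preplemma} then gives $\pdv^{\alpha}g\le 0$ at every $\x$ with $\pi_{\alpha}(\x)\notin\C_{\pdv^{\alpha}g}$. Hence $\{\pdv^{\alpha}g>0\}=\C_{\pdv^{\alpha}g}$ for every $\alpha$ with $\pdv^\alpha g\not\equiv0$.

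\emph{Step 2: choose a degree-$2$ derivative.} Since $g^{\mathrm{top}}\in\G_{+}$ by Proposition~\ref{prop:truncateLet--be}, we may pick a monomial $\x^{S}$ of $g$ with $|S|=d$ and positive coefficient. Choose $i\neq j$ in $S$ and set $\alpha:=S\setminus\{i,j\}$ and $h:=\pdv^{\alpha}g$. By Corollary~\ref{cor:onemore}, $h\in\G$ has degree $2$. Moreover $\pdv_i\pdv_j h$ is the nonzero constant coefficient $a>0$ of $x_ix_j$ in $h$, the sign being forced by Lemma~\ref{lem:If--is-2}. By Step 1, $\{h>0\}=\C_h$.

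\emph{Step 3: the two-variable slice.} Fix $\x_0\in\C_h$ and put
\[
q(x,y):=h(\x_0+x\mathbf{e}_i+y\mathbf{e}_j)=axy+bx+c'y+e .
\]
The proof of Lemma~\ref{lem:For-any-.} shows $q\in\G$, with Gårding component equal to the slice of $\C_h$. Example~\ref{exa:Let-.-We} then gives $ae-bc'\le 0$. Writing
\[
q=a\,(x+c'/a)(y+b/a)+(ae-bc')/a,
\]
the set $\{q>0\}$ contains the \enquote{south-west} region $\{x<-c'/a,\ y<-b/a,\ a(x+c'/a)(y+b/a)>(bc'-ae)/a\}$. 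This region is nonempty: the product is unbounded there, and when $ae-bc'=0$ it is the whole open quadrant. It is disjoint from the \enquote{north-east} region containing the PRT component of $q$, because the lines $x=-c'/a$ and $y=-b/a$ lie in $\{q\le 0\}$ when $ae-bc'\le 0$.

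\emph{Step 4: conclude.} A point of the south-west region lies in $\{h>0\}$. It does not lie in $\C_h$, since the slice of $\C_h$ is exactly the PRT component of $q$. This contradicts $\{h>0\}=\C_h$, so $\deg g\le1$.

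The main care point is Step 3. One must verify that the slice of $\C_h$ is precisely the PRT component of $q$, which is what Lemma~\ref{lem:For-any-.}'s proof gives. One must also handle the degenerate case $ae=bc'$, where the two branches touch at a point but the open quadrants remain separate components.
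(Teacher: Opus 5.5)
Your proof is correct and follows the paper's argument. In both, one assumes $\deg g\ge 2$, passes to a degree-$2$ derivative $\pdv^{\alpha}g$, uses Lemma~\ref{lem:preplemma} with $\C_f=\R^n$ to force $\{\pdv^{\alpha}g>0\}=\C_{\pdv^{\alpha}g}$, and contradicts the existence of a second positivity component. The only difference is that the paper gets the second component from real stability (Theorem~\ref{thm:stable-garding}), whereas you exhibit it explicitly on a two-variable slice using Lemma~\ref{lem:For-any-.} and the bivariate criterion of Example~\ref{exa:Let-.-We}, which makes that step fully explicit.
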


\begin{proof}
Suppose, for contradiction, that $\deg g\geq 2$. Then there exists a multi-index $\alpha$ such that $\deg \pdv^{\alpha}g=2$. By Theorem~\ref{thm:stable-garding}, $\pdv^{\alpha}g$ is real stable, and hence $\{\pdv^{\alpha}g>0\}$ has two connected components.

On the other hand, since $f\equiv c>0$, we have $\C_{f}=\R^{n}$. Because $f\prec g$, condition~(B) holds, and by Lemma \ref{lem:preplemma}, $\partial^{\alpha}g(\x)\leq 0$ for $\pi_{\alpha}(\x)\in \pi_{\alpha}(\R^{n})\backslash \C_{\partial^{\alpha}g}$. Consequently, $\{\partial^{\alpha}g>0\}$ has only one connected component, a contradiction.

Therefore, $\deg g\leq 1$.
\end{proof}
\begin{lem}
\label{lem:usefullemmaprec}If $f,g\in\G_{n}\backslash\{0\}$
and the condition \ref{enu:BBB} of Theorem \ref{proper-position}
holds, then
\begin{enumerate}
\item \label{enu:For-any-leq0}for any $\alpha\subset[n]$ and any $\x$
with $\x\in\overline{\C_{\pdv^{\alpha}f}}\backslash\C_{\pdv^{\alpha}g}$,
$\pdv^{\alpha}g(\x)\leq0$;
\item \label{enu:deg}$\deg g\leq\deg f+1$. 
\end{enumerate}
\end{lem}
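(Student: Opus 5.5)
Part (1) is essentially Lemma~\ref{lem:preplemma} restated in terms of derived components. By Corollary~\ref{cor:onemore}, whenever $\pdv^{\alpha}f\not\equiv0$ we have $\C_{\pdv^{\alpha}f}=\pi_{\alpha}(\C_f)$. Since $\pdv^{\alpha}g$ and $\pdv^{\alpha}f$ do not depend on the variables indexed by $\alpha$, we may view both as functions of $\pi_\alpha(\x)$. Under this identification the hypothesis $\x\in\overline{\C_{\pdv^{\alpha}f}}\backslash\C_{\pdv^{\alpha}g}$ reads $\pi_\alpha(\x)\in\overline{\pi_\alpha(\C_f)}\backslash\C_{\pdv^\alpha g}$. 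Lemma~\ref{lem:preplemma} then gives $\pdv^\alpha g(\x)\le 0$ directly.

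It remains to handle the degenerate case $\pdv^{\alpha}f\equiv0$, where $\C_{\pdv^\alpha f}=\R^{n-|\alpha|}$ by convention. Here I would still argue by contradiction using the Taylor expansion~(\ref{eq:-21}). Suppose $\pdv^\alpha g(\x)>0$ and $\pi_\alpha(\x)\notin\C_{\pdv^\alpha g}$. Moving along $\y=\sum_{i\in\alpha}\mathbf{e}_i$ gives $g(\x+t\y)>0$ for large $t$. We also need $\x+t\y\in\C_f$, which is where the degenerate case costs something. One option is to translate first by a large positive vector in the complementary coordinates so that the point enters $\C_f$; by PRT and $\C_{\pdv^\alpha f}=\pi_\alpha(\C_f)$ in the nondegenerate case, this is exactly what Lemma~\ref{lem:preplemma} does. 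So the cleanest plan is to restrict the statement of (1) to those $\alpha$ with $\pdv^\alpha f\not\equiv0$, which is all that is used for (2).

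For (2), I would argue by contradiction. Suppose $\deg g\ge\deg f+2$, and pick a multi-index $\alpha$ with $|\alpha|=\deg f$ such that $\pdv^\alpha f$ is a nonzero constant $c>0$; it is positive by Lemma~\ref{lem:If--is-2}. Since $\deg g\ge \deg f+2$, we can choose $\alpha$ (among the monomials of $g$ of top degree) so that $\pdv^\alpha g$ still has degree at least $2$. This needs care. Take a top-degree monomial $\x^S$ of $f$ with $|S|=\deg f$, and a top-degree monomial $\x^T$ of $g$.

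The key question is whether some $\alpha\subset S$ of size $\deg f$ leaves $g$ with degree at least $2$. That holds only if $S\subset T$, which need not be true. If $S\not\subset T$, I would instead take $\alpha=S$ and note that $\deg\pdv^S g$ could be smaller. An alternative is to use $\C_{f}$ directly. Choose $\beta\subset T$ with $|\beta|=\deg g-2$, so that $\pdv^\beta g$ has degree $2$. By (1), $\pdv^\beta g\le0$ on $\overline{\C_{\pdv^\beta f}}\backslash\C_{\pdv^\beta g}$. If $\pdv^\beta f\equiv$ const $>0$, then $\C_{\pdv^\beta f}$ is the whole space, and Lemma~\ref{lem:deg 0 case} (with the pair $(\pdv^\beta f,\pdv^\beta g)$, which satisfies (B) by (1)) gives $\deg\pdv^\beta g\le1$, a contradiction. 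If $\pdv^\beta f\equiv0$, I would use the degenerate-case version of (1) to reach the same contradiction. Since $|\beta|=\deg g-2\ge\deg f$, $\pdv^\beta f$ is either constant or zero, so these two cases are exhaustive.

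The main obstacle is therefore the case $\pdv^\beta f\equiv 0$. My first attempt there is to replace $\beta$ with the choice $\beta\cup$ (indices from $S$) adjusted so that $\pdv^\beta f$ is a positive constant while $\pdv^\beta g$ keeps degree $\ge2$. Failing that, I would prove (1) for $\pdv^\beta f\equiv0$ directly by the Taylor-expansion argument, using that $\C_f$ contains an affine positive hyper-octant: move $\x$ by a large multiple of $\sum_i\mathbf{e}_i$ inside $\C_f$ while controlling the sign of $g$.
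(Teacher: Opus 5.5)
\paragraph{The gap: the case $\partial^\beta f\equiv 0$.} Your nondegenerate case of (1) is the paper's argument (Lemma~\ref{lem:preplemma} together with Corollary~\ref{cor:onemore}). Your reduction of (2) to the pair $(\partial^\beta f,\partial^\beta g)$ with $|\beta|=\deg g-2$ also works when $\partial^\beta f$ is a positive constant. The gap is the case $\partial^\beta f\equiv 0$, which you leave open.

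Your second fallback cannot close it: with the convention $\mathcal{C}_0=\mathbb{R}^n$, part (1) is false in the degenerate case. Take $f=x_1x_2$ and $g=x_1x_2x_3$ in $\mathbb{R}^3$.
\begin{itemize}
\item Condition (B) holds; indeed $fy+g=x_1x_2(x_3+y)$ is G{\aa}rding.
\item For $\alpha=\{3\}$ we have $\partial^\alpha f\equiv 0$, so the hypothesis of (1) is satisfied by every point.
\item But $\partial^\alpha g=x_1x_2$ equals $1>0$ at $(-1,-1,0)$, and this point is not in $\mathcal{C}_{x_1x_2}=\{x_1>0,x_2>0\}$.
\end{itemize}
The paper's own treatment of this case asserts that some $\beta\subsetneq\alpha$ has $\partial^\beta f$ a positive constant. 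That fails on the same example, since the only candidate is $\beta=\emptyset$.

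Your Taylor sketch also cannot work as written. Translating by a multiple of $\sum_i\mathbf{e}_i$ changes $\pi_\beta(\mathbf{x})$, which destroys the contradiction with $\pi_\beta(\mathbf{x})\notin\mathcal{C}_{\partial^\beta g}$. Translating only in $\beta$-directions requires $\pi_\beta(\mathbf{x})\in\overline{\pi_\beta(\mathcal{C}_f)}$, and in the degenerate case that set is a proper subset. So your instinct to restrict (1) to $\partial^\alpha f\not\equiv 0$ is correct. But then (2) must be proved without the degenerate case, and neither of your fallbacks provides a way to do that.

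\paragraph{The missing idea.} You do not need $\partial^\beta f$ to be constant. You only need it to be independent of two of the leftover directions of a top monomial of $g$.
\begin{itemize}
\item Let $\mathbf{x}^T$ be a monomial of $g$ with $|T|=\deg g\ge \deg f+2$.
\item Choose $\beta\subset T$ maximal among subsets of $T$ with $p:=\partial^\beta f\not\equiv 0$.
\item Then $|\beta|\le \deg f$, so $\gamma:=T\setminus\beta$ has at least two elements.
\item By maximality, $\partial_i p\equiv 0$ for $i\in\gamma$, so $\mathcal{C}_p$ is invariant under translation by $\mathbf{e}_i$ for $i\in\gamma$.
\item The polynomial $q:=\partial^\beta g$ contains the monomial $\mathbf{x}^\gamma$.
\end{itemize}
The nondegenerate case of (1) gives $\mathcal{C}_p\cap\{q>0\}\subset\mathcal{C}_q$. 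Also $\mathcal{C}_q=\pi_\beta(\mathcal{C}_g)\subset\pi_\beta(\mathcal{C}_f)=\mathcal{C}_p$.

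Now fix $\mathbf{x}_0\in\mathcal{C}_q$ and $j\neq k$ in $\gamma$. The plane $\mathbf{x}_0+s\mathbf{e}_j+t\mathbf{e}_k$ lies in $\mathcal{C}_p$. On it, $q$ restricts to $r(s,t)=ast+bs+ct+e$ with $a=\partial_{jk}q(\mathbf{x}_0)>0$ by Lemma~\ref{lem:If--is-2}.
\begin{itemize}
\item Since $r(s,s)\to+\infty$ as $s\to-\infty$, these diagonal points lie in $\{q>0\}$ for $s$ very negative.
\item They cannot all lie in $\mathcal{C}_q$: by PRT the whole plane would then be in $\mathcal{C}_q$, whereas $r$ takes negative values because $a>0$.
\item Hence some point of the plane lies in $\mathcal{C}_p\cap\{q>0\}\setminus\mathcal{C}_q$, a contradiction.
\end{itemize}
This is the argument of Lemma~\ref{lem:deg 0 case} run on a coordinate slice, and it uses (1) only when $\partial^\alpha f\not\equiv 0$.
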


\begin{proof}
If $\partial^{\alpha}f\not\equiv0$, then (\ref{enu:For-any-leq0})
follows from Lemmas \ref{lem:preplemma} and \ref{lem:Suppose-that--2}.
If $\partial^{\alpha}f\equiv0,$ then there exists a multi-index $\beta$
such that $\beta\leq\alpha$, $|\beta|<|\alpha|$ and $\partial^{\beta}f=c>0$.
In particular, (\ref{enu:For-any-leq0}) holds for index $\beta$.
By Lemma \ref{lem:deg 0 case}, deg$\partial^{\beta}g\leq1$. Therefore
$\deg\partial^{\alpha}g\leq0$, which implies that $\overline{\C_{\pdv^{\alpha}f}}\backslash\C_{\pdv^{\alpha}g}=\emptyset$.
(\ref{enu:For-any-leq0}) is proved.

(\ref{enu:deg}) then follows from Lemma \ref{lem:deg 0 case} and
(\ref{enu:For-any-leq0}).
\end{proof}
\begin{lem}
\label{lem:constant multiple}Suppose that $f,g\in\G_{n}\backslash\{0\}$, $f$ is independent of $x_{i}$, and $g$ is NRT. If 
condition \ref{enu:BBB} of Theorem \ref{proper-position} holds,
then $f$ is a constant multiple of $\pdv_{i}g$.
\end{lem}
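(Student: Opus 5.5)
The plan is to identify the Gårding component of $f$, viewed as a polynomial on $\R^{n-1}$, with that of $\pdv_i g$, and then invoke Lemma~\ref{lem:propotional}. Write $g=x_i g_i+g^i$ with $g_i:=\pdv_i g$ and $g^i$ both independent of $x_i$. Since $g$ is NRT, Lemma~\ref{lem:Let--be} gives $\pdv_i g>0$ on $\C_g$. In particular $g_i\not\equiv0$, so by Lemma~\ref{lem:Suppose-that--2} we have $g_i\in\G_{n-1}$ and $\C_{g_i}=\pi_i(\C_g)$. Since $f$ is independent of $x_i$, we regard it as an element of $\G_{n-1}$ via \eqref{eq:add2}. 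Then $\C_f=\pi_i^{-1}(\C_f')$, where $\C_f':=\pi_i(\C_f)\subset\R^{n-1}$ is the Gårding component of $f$ in the variables $\pi_i(\x)$.

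First I will show $\C_{g_i}\subset\C_f'$. Condition \ref{enu:BBB} gives $\C_g=\C_f\cap\{g>0\}\subset\C_f$. Projecting, $\C_{g_i}=\pi_i(\C_g)\subset\pi_i(\C_f)=\C_f'$.

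Next I will show the reverse inclusion $\C_f'\subset\C_{g_i}$, by contradiction. The sets $\mathcal{A}:=\C_f'$ and $\C_{g_i}$ are open and connected, and they intersect, since $\C_{g_i}\subset\C_f'$ and $\C_{g_i}\neq\emptyset$. Suppose $\C_f'\setminus\C_{g_i}\neq\emptyset$. Since $g_i$ is multi-affine and $\C_{g_i}$ is a component of $\{g_i>0\}$, Lemma~\ref{lem:usefullemma} gives a point $\y\in\C_f'$ with $g_i(\y)<0$. Consider the line $\x(t)=\y+t\mathbf{e}_i$ (with $\y$ embedded in $\R^n$ by inserting the $i$-th coordinate). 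This whole line lies in $\C_f=\pi_i^{-1}(\C_f')$, and along it
\[
g(\x(t))=t\,g_i(\y)+g^i(\y),
\]
which is positive for $t\ll0$. For such $t$, condition \ref{enu:BBB} gives $\x(t)\in\C_f\cap\{g>0\}=\C_g$. Hence $\y=\pi_i(\x(t))\in\pi_i(\C_g)=\C_{g_i}$, contradicting $g_i(\y)<0$. Therefore $\C_f'=\C_{g_i}$.

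Finally, $f$ and $g_i$ are nonzero elements of $\G_{n-1}$ with the same Gårding component, so Lemma~\ref{lem:propotional} gives $f=c\,\pdv_i g$ for some $c>0$. The degenerate case where both components equal $\R^{n-1}$ is consistent: then both polynomials are positive constants.

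The main obstacle is the reverse inclusion. It rests on the observation that independence of $f$ from $x_i$ makes $\C_f$ invariant under all translations in the $x_i$-direction, including negative ones. That invariance is what lets us slide to $t\to-\infty$ and use the sign of $g_i$ to force a point into $\C_g$. Some care is needed to check the hypotheses of Lemma~\ref{lem:usefullemma}, namely openness, connectedness and nonempty intersection, in the reduced space $\R^{n-1}$.
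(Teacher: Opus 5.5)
Your proof is correct and is essentially the paper's argument: the paper also proves $\C_{\pdv_i g}=\pi_i(\C_f)$ by combining Lemma~\ref{lem:usefullemma} with a slide $T\to-\infty$ along $\mathbf{e}_i$ against condition~\ref{enu:BBB}, and then invokes Lemma~\ref{lem:propotional}. The only difference is that the paper splits off the case $\pdv_i g\equiv c>0$ and handles it by a separate degree argument, whereas your uniform treatment already covers it: Lemma~\ref{lem:Suppose-that--2} gives $\C_{\pdv_i g}=\R^{n-1}=\pi_i(\C_g)$ in that case, which forces $\C_f=\R^n$, i.e.\ $f$ is a positive constant.
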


\begin{proof}
Assume that $f$ is independent of $x_{i}$ and $g$ is NRT. By (\ref{enu:For-any-leq0}),
$\pdv_{i}g(\x)\leq0$ for $\x\in\R^{n-1}\backslash\C_{\pdv_{i}g}$. 

Consider first the case where $\pdv_{i}g\equiv c$. Since $g$ is
NRT, $c>0$. Since $g\in\G$, pick $\a=(a_{1},\cdots,a_{n})\in\C_{g}$
we may write $g(\x)=c(x_{i}-a_{i})+h(\x),$ where by Lemma \ref{lem:splitting},
$h(\x)\in\G_{n-1}$, which is independent of $x_{i}$. Therefore,
$c\prec h$. By Lemma \ref{lem:usefullemmaprec} (\ref{enu:deg}),
$\deg h\leq1$, hence $\deg g\leq1.$ Condition \ref{enu:BBB} implies
that $f\vtl g$, and thus $\deg f\leq1$ by Theorem \ref{51}. If
$f$ were not constant, $\C_{g}\subset\C_{f}$ would force $f$ and
$g$ have the same gradient, contradicting $\pdv_{i}g>0=\pdv_{i}f$.
Thus, $f'$ is a constant and therefore proportional to $\pdv_{i}g$. 

If instead $\deg\pdv_{i}g\geq1,$ then by Lemma \ref{lem:Suppose-that--2},
$\C_{\pdv_{i}g}=\pi_{i}(\C_{g})\subset\pi_{i}(\C_{f})$. Since $f$
is independent of $x_{i}$, we have $f|_{\C_{\partial_{i}g}}>0$ and
$f\vtl\pdv_{i}g$. If $\C_{\partial_{i}g}\subsetneq\pi_{i}(\C_{f})$,
by Lemma \ref{lem:usefullemma}, there exists $\mathbf{x}\in$$\mathcal{C}_{f}\backslash\pi_{i}^{-1}(\mathcal{C}_{\pdv_{i}g})$
such that $\pdv_{i}g(\mathbf{x})<0$ and $\x\not\in\C_{g}$. Then
for sufficiently negative $T<<0$, $g(\mathbf{x}+T\mathbf{e}_{i})>0.$
Hence,
\[
\mathbf{x}+T\mathbf{e}_{i}\in(\mathcal{C}_{f}\backslash\mathcal{C}_{g})\cap\{g>0\},
\]
 contradicting  \ref{enu:BBB}. Hence, $\mathcal{C}_{\pdv_{i}g}=\mathcal{C}_{f}$,
and by Lemma \ref{lem:propotional}, $\pdv_{i}g$ and $f$ are proportional. 
\end{proof}
\begin{prop}
\label{prop:precpositivecombination}Suppose that $f,g\in\G_{n}$
and condition (B) of Theorem \ref{proper-position} holds. Then for
any $c\geq0$, 
\[
h:=g+cf\in\G_{n}.
\]
\end{prop}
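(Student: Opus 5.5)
The plan is to identify the candidate G\aa rding component of $h=g+cf$ as
\[
\mathcal{C}:=\mathcal{C}_{f}\cap\{h>0\},
\]
and to prove, by induction on $\deg f+\deg g$, the stronger statement that $h\in\G_{n}$ with $\mathcal{C}_{h}=\mathcal{C}_{f}\cap\{h>0\}$. In other words, we show that the pair $(f,h)$ again satisfies condition \ref{enu:BBB}.

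\emph{Reductions.} First reduce to the case where $g$ is NRT: by Lemma~\ref{lem::NRTpropirredIf-there-exists} we may discard the redundant variables of $g$. Condition \ref{enu:BBB} then forces $f$ not to depend on these variables, as in the first step of the proof of Theorem~\ref{thm:dominategardnew}.

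Next, suppose $f$ is independent of some $x_{i}$. Lemma~\ref{lem:constant multiple} gives $f=\lambda\,\partial_{i}g$ with $\lambda\geq0$. Then
\[
h=g+c\lambda\,\partial_{i}g=\mathrm{T}_{c\lambda\mathbf{e}_{i}}^{*}g,
\]
which is G\aa rding by Remark~\ref{rem:basicpreservers}, and its component $\mathcal{C}_{g}-c\lambda\mathbf{e}_{i}$ is easily checked to equal $\mathcal{C}_{f}\cap\{h>0\}$. In particular this covers the base case $f\equiv$ const, where $\deg g\leq1$ by Lemma~\ref{lem:deg 0 case}. We may therefore assume that $f$ depends on every variable, so that every $\partial_{i}f$ is nontrivial.

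\emph{Component step.} By condition \ref{enu:BBB}, $\mathcal{C}_{g}\subset\mathcal{C}_{f}$, and $g\leq0$ on $\partial\mathcal{C}_{f}$ because $\partial\mathcal{C}_{f}\subset\overline{\mathcal{C}_{f}}\setminus\mathcal{C}_{g}$. Since also $f=0$ on $\partial\mathcal{C}_{f}$, we get $h\leq0$ on $\partial\mathcal{C}_{f}$. By Lemma~\ref{lem:usefullemma}, every connected component of $\{h>0\}$ that meets $\mathcal{C}_{f}$ is contained in $\mathcal{C}_{f}$. Hence each connected piece of $\mathcal{C}$ is a full component of $\{h>0\}$. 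The set $\mathcal{C}$ is nonempty because it contains $\mathcal{C}_{g}$. It remains to show that $\mathcal{C}$ passes PRT; connectedness then follows from Lemma~\ref{lem:basic}.

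\emph{PRT via derivatives.} By Lemma~\ref{lem:usefullemmaprec}\,(\ref{enu:For-any-leq0}), each pair $(\partial_{i}f,\partial_{i}g)$ again satisfies \ref{enu:BBB}. The induction hypothesis then gives
\[
\partial_{i}h=\partial_{i}g+c\,\partial_{i}f\in\G,\qquad
\mathcal{C}_{\partial_{i}h}=\mathcal{C}_{\partial_{i}f}\cap\{\partial_{i}h>0\}.
\]
Since $h$ is affine in $x_{i}$, PRT in the direction $\mathbf{e}_{i}$ at a point $\mathbf{x}\in\mathcal{C}$ reduces to $\partial_{i}h(\mathbf{x})\geq0$. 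Coordinate directions suffice, because PRT for $\overline{\Gamma_{n}^{+}}$ is generated by moves along the $\mathbf{e}_{i}$.

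So the key claim is: if $\mathbf{x}\in\mathcal{C}_{f}$ and $h(\mathbf{x})>0$, then $\pi_{i}(\mathbf{x})\in\mathcal{C}_{\partial_{i}h}$, and in particular $\partial_{i}h(\mathbf{x})>0$.

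We already know $\pi_{i}(\mathbf{x})\in\pi_{i}(\mathcal{C}_{f})=\mathcal{C}_{\partial_{i}f}$ by Lemma~\ref{lem:Suppose-that--2}. So it remains to exclude $\partial_{i}h(\mathbf{x})\leq0$. Suppose it held. By the inductive description of $\mathcal{C}_{\partial_{i}h}$ and the harmonicity argument of Lemma~\ref{lem:usefullemma}, we could perturb $\mathbf{x}$ within $\mathcal{C}_{f}\cap\{h>0\}$ to a point with $\partial_{i}h<0$. We would then translate backwards along $-\mathbf{e}_{i}$, so that $h$ increases. The aim is to derive a contradiction with NRT of $\mathcal{C}_{f}$ combined with $h\leq0$ on $\partial\mathcal{C}_{f}$: the backward ray must exit $\mathcal{C}_{f}$ at a point where $h\leq0$, yet $h$ has been increasing along it.

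\emph{Main obstacle.} The hard step is that last sign argument. The information $h>0$ at $\mathbf{x}$ has to be turned into positivity of $\partial_{i}h$ at $\mathbf{x}$, but the directional derivative need not be controlled pointwise off $\mathcal{C}_{g}$.

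If this direct argument fails, the fallback is the recursive test of Proposition~\ref{prop:testing method}. We already have $\partial_{i}h\in\G$ for all $i$ by induction. It would then suffice to show that $h\leq0$ on
\[
\partial\,\mathcal{C}_{h}^{(1)}=\partial\Bigl(\textstyle\bigcap_{i}\pi_{i}^{-1}\mathcal{C}_{\partial_{i}h}\Bigr).
\]
At such a boundary point some $\partial_{i}h$ vanishes. Since $h$ is affine in $x_{i}$ along the line there, comparing with the point where this line leaves $\overline{\mathcal{C}_{f}}$ (where $h\leq0$) should yield $h\leq0$.
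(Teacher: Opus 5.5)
Your argument is correct and is essentially the paper's: the reductions coincide ($g$ NRT; if $f$ misses a variable then $f=\lambda\,\partial_i g$ by Lemma~\ref{lem:constant multiple} and $h$ is a translate of $g$), and in the remaining case the paper observes, as you do, that $h\le 0$ on $\partial\mathcal{C}_{f}$, so the component of $\{h>0\}$ containing $\mathcal{C}_{g}$ lies in the NRT set $\mathcal{C}_{f}$, is therefore NRT, and $h\in\G$ follows from Lemma~\ref{lem:Let--be}. The step you flag as the main obstacle is exactly the content of that lemma and is immediate: since $h$ is affine in $x_i$, $\partial_i h(\mathbf{x})\le 0$ gives $h(\mathbf{x}-t\mathbf{e}_i)\ge h(\mathbf{x})>0$ for all $t\ge 0$, while $\mathcal{C}_{f}$ is NRT (Lemma~\ref{lem:Let--be}, since $f$ now has no redundant variables), so this ray must reach $\partial\mathcal{C}_{f}$, where $h=g\le 0$ --- hence no perturbation, no induction on derivatives, and no fallback are needed.
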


\begin{proof}
The claim is trivial if $g\equiv0$ or $f\equiv0$, so assume that
both are nontrivial. Then  condition (B) implies $f\vtl g$. As in
the proof of Theorem \ref{thm:dominategardnew}, by restriction to
subspaces we may assume that $g$ is NRT . 

If $f$ is also NRT, for any $\mathbf{x}\in\pdv\mathcal{C}_{f}$,
$h\leq0$. On the other hand, since $\mathcal{C}_{g}\subset\{h>0\}\not=\emptyset$,
let $\C$ be the connected component of $\{h>0\}$ containing $\C_{g}.$
Then $\mathcal{C}_{}\subset\mathcal{C}_{f}$. Since $\mathcal{C}_{f}$
is NRT, $\C$ is also NRT. By Lemma \ref{lem:Let--be}, this implies
that $h\in G_{n}$. 

If $f$ is not NRT, then, without loss of generality, we may assume
that $f$ is independent of $x_{1}$. By Lemma \ref{lem:usefullemmaprec},
$f=c_{1}\pdv_{1}g$ for some constant $c_{1}>0$ and 
\[
h(\mathbf{x})=g(x_{1}+cc_{1},x_{2},\cdots,x_{n}).
\]
Hence, $h$ is also \gar{}. 
\end{proof}
We are ready to finish the proof of Theorem \ref{proper-position}.
\begin{proof}[Proof of Theorem \ref{proper-position}: \ref{enu:BBB}$\Longrightarrow\ref{enu:CCC}$]
 We note  that if \ref{enu:BBB} holds, then $f\vtl g$. As in the
proof of Theorem \ref{thm:dominategardnew}, by removing coordinates
if necessary, we may assume that $g$ is NRT. 

By Remark \ref{rem:strong reduction}, by a limiting argument, it
is sufficient to prove the statement with the additional assumption
that $\partial\C_{g}\subset\C_{g}^{(1)}$ and $\pdv\mathcal{C}_{g}\subset\mathcal{C}_{f}$.
Fix $\mathbf{x}_{0}\in\pdv\mathcal{C}_{g}$ and an index $i\in[n]$.
Set
\[
\phi(t):=\frac{g(\x_{0}+t\mathrm{e}_{i})}{f(\x_{0}+t\mathrm{e}_{i})},
\]
for $t>T_{0}:=\inf\{t:\mathbf{x}_{0}+t\mathbf{e}_{i}\in\mathcal{C}_{f}\}.$
By hypothesis $\phi(0)=0$, $\phi'(0)>0$, and $T_{0}<0$.

Condition \ref{enu:CCC} is equivalent to the fact that $\phi(t)$
is non-decreasing for $t>T_{0}$. By \ref{enu:BBB}, $\C_{g}\subset\C_{f}$;
hence $\phi(t)>0$ for $t>0$. 

Since $\phi'(0)>0,$ set
\[
T:=\inf\{t^{*}>T_{0}:\phi'(t)>0,\ t\in(t^{*},0]\}.
\]

If $T=-\infty$, then $T_{0}=-\infty$, which implies that $f(\x_{0}+t\mathrm{e}_{i})=c>0$.
Therefore, $\phi(t)$ is a linear function with $\phi'(t)=\phi'(0)>0.$ 

Otherwise, we claim $T=T_{0}$. By definition, $\phi'(T)=0.$ Since
$0<\int_{T}^{0}\phi'(t)dt=0-\phi(T),$ $-\phi(T)>0$. Therefore, by
Proposition \ref{prop:precpositivecombination},
\[
p(\mathbf{x}):=g(\mathbf{x})-\phi(T)f(\mathbf{x})\in\G.
\]
Set $\x_{T}:=\x_{0}+T\mathbf{e}_{i}$. Since $\phi'(T)=0$,
\[
\pdv_{i}p(\x_{T})=\pdv_{i}g(\x_{T})-\phi(T)\pdv_{i}f(\x_{T})=0.
\]
On the other hand, by the definition of $\x_{T},$$p(\x_{T})=0$.
Since $p$ is multi-affine, $p(\x_{0}+t\mathbf{e}_{i})\equiv0$, which
is a contradiction to the fact that $\phi'(0)>0$. Hence, $T=T_{0}$
and $\phi$ is non-decreasing in $(T_{0},0).$ Together with Proposition
\ref{51}, $\phi$ is non-decreasing for $t\in(0,\infty).$ 
\ref{enu:CCC} is established.
\end{proof}

\begin{proof}[Proof of Theorem \ref{proper-position}: \ref{enu:CCC}$\Longrightarrow\ref{enu:AAA}$]
 Assume that condition \ref{enu:CCC} holds. We claim that $\mathcal{C}_{g}\subset\mathcal{C}_{f}$.
Suppose not. By Lemma \ref{lem:usefullemma}, there exists $\mathbf{x}_{0}\in\mathcal{C}_{g}\setminus\mathcal{C}_{f}$
with $f(\x_{0})<0$. For any $\mathbf{v}\in\Gamma_{n}^{+}$, the ray
$\{\mathbf{x}_{0}+t\mathbf{v}\}$ intersects with $\C_{f}$, so there
exists $t>0$ such that $f(\mathbf{x}_{0}+t\mathbf{v})=0$. Since
$\x_{0}\in\C_{g}$ and $g$ is nontrivial, $g(\mathbf{x}_{0}+s\mathbf{v})>0$
for all $s>0$, then letting $s\to t^{-}$ yields $g/f\to+\infty$,
contracting the monotonicity of $\phi.$ We have proved $\mathcal{C}_{g}\subset\mathcal{C}_{f}$.

Denote $h:=f(\x)y+g(\x)$ and denote $\C=\{(\mathbf{x},y),\ \mathbf{x}\in\mathcal{C}_{f},\ y>-g(\mathbf{x})/f(\mathbf{x})\}$.
$\C$ is a connected component of $\{h>0\}$. For any $(\mathbf{x}_{0},y_{0})\in\C$
and $\mathbf{v}\in\Gamma_{1+n}^{+}$, $h(\x,y)=f(\x)(y+g(\x)/f(\x))$
is a multiple of two positive non-decreasing functions along $\mathbf{v}$.
In other words $h((\mathbf{x}_{0},y_{0})+t\mathbf{v})>0$ for all
$t>0$. Therefore, $\C$ satisfies PRT, which indicates that $h$
is \gar{}, and $f\prec g.$ The proof is complete.
\end{proof}
\begin{proof}[Proof of Theorem \ref{lem:proper-position-basic} \ref{enu:444}
\ref{enu:555}, \ref{lem:subordinatetoboth}]
 \ref{enu:444} follows from \ref{enu:111}, Proposition \ref{51},
Theorem \ref{proper-position} and Lemma \ref{lem:usefullemmaprec}.
\ref{enu:555} follow from Theorem \ref{proper-position} and Proposition
\ref{prop:precpositivecombination}. 

We prove \ref{lem:subordinatetoboth}. It is sufficient to prove the
case for $k=2$. We assume that $\theta_{1},$$\theta_{2}\not=0$
or otherwise the statement is trivial. Let $g=\theta_{1}g_{1}+\theta_{2}g_{2}$. 

If $f$ is not NRT, then for some $i\in[n]$, $f$ is independent
of $x_{i}$, by Lemma \ref{lem:constant multiple}, for some $c_{1},c_{2}\geq0$,
$\pdv_{i}g_{k}=c_{k}f$ for $k=1,2$, and
\[
\pdv_{i}g=(\theta_{1}c_{1}+\theta_{2}c_{2})f.
\]
Hence if $\sum_{k=1}^{2}\theta_{k}c_{k}>0$, $f\prec g$ and we are
done. If $\sum_{k=1}^{2}\theta_{k}c_{k}=0$, then $g$ is also independent
of $x_{i}$. Therefore, we may restrict the discussion to a subspace
on which $f$ is NRT. We first prove $g\in\G$.

Since \(f\prec g_i\), Theorem~\ref{proper-position} yields
\[
\mathcal C_{g_i}=\mathcal C_f\cap\{g_i>0\},
\qquad i=1,2.
\]
In particular,
\[
g_i\le 0 \qquad \text{on } \partial\mathcal C_f.
\]
Hence
\[
g=\theta_1 g_1+\theta_2 g_2 \le 0
\qquad \text{on } \partial\mathcal C_f.
\]
Since each \(\mathcal C_{g_i}\) satisfies
PRT, by Lemma~\ref{lem:basic}, their intersection is nonempty.  
Let \(\mathcal C\) be the connected component of \(\{g>0\}\) containing their intersection. We claim that
\[
\mathcal C \subset \mathcal C_f.
\]
Indeed, if not, then since  
\(\mathcal C\) is connected, \(\mathcal C\) would meet
\(\partial \mathcal C_f\). But \(g>0\) on \(\mathcal C\), whereas
\(g\le 0\) on \(\partial \mathcal C_f\), a contradiction. Hence
\(\mathcal C\subset \mathcal C_f\).

Since \(\mathcal C_f\) is NRT, every open subset of \(\mathcal C_f\) is
NRT by Lemma~\ref{lem:basic}. Thus \(\mathcal C\) is NRT, and
Lemma~\ref{lem:Let--be} implies that \(g\in \G_n\). Applying
condition~\ref{enu:CCC} to each \(g_i\) and summing the resulting
inequalities, we use Theorem~\ref{proper-position} to conclude that \(f\prec g\).
 
This completes the proof of \ref{lem:subordinatetoboth}, and the proof of Theorem~\ref{lem:proper-position-basic}.

\end{proof}

\section{Linear preservers of multi-affine \gar{} polynomials}\label{sec:linearpreserver}

In this section, we study linear transformations that preserve the class of multi-affine \gar{} polynomials. The results  will later be extended to general Gårding polynomials via polarization and specialization. Our approach follows the philosophy of Borcea--Brändén for stable polynomials \cite{BB09I} and Brändén-Huh for Lorentzian polynomials \cite{BrandenHuh20}.

Let $T:\R_{d}^{\mathfrak{a}}[x_{1},\cdots,x_{n}]\to\R^{\mathfrak{a}}[y_{1},\cdots,y_{m}]$ be a linear transformation.  We say that $T$ preserves $\G_{+}$ if, for any $f\in\GS_{n,+}^{\mathfrak{a}}$, one has $T(f)\in\G_{m,+}$.

Here we restrict the discussion to polynomials with nonnegative coefficients. By Lemma~\ref{lem:move-into-GA+}, for $f\in\G$ and $\x_{0}\in\C_{f}$, we have $T_{\x_{0}}^{*}f=f(\x_{0}+\x)\in\G_{+}$. By Lemma~\ref{lem:G+restriction}, the restriction operation preserves $\G_{+}$, while in general the restriction operation does not  preserve $\G$.

Following similar constructions in the stable polynomial theory, we
give the following:
\begin{defn}[Symbol]
For a linear transform $T:\R^{\mathfrak{a}}[x_{1},\cdots,x_{n}]\to\R^{\mathfrak{a}}[y_{1},\cdots,y_{m}]$,
its symbol is a polynomial with $m+n$ variables defined by 
\[
\Sym_{T}(\mathbf{y},\mathbf{u})=\sum_{\alpha\subset[n]}T(\mathbf{x}^{\alpha})\mathbf{u}^{[n]\backslash\alpha}=\sum_{\alpha\subset[n]}T(\mathbf{x}^{\alpha})\pdv^{\alpha}\mathbf{u}^{[n]}.
\]
Equivalently, $\text{sym}_{T}(\mathbf{y},\mathbf{u})=T(\prod_{i=1}^{n}(x_{i}+u_{i})).$ 
\end{defn}

%\begin{example}For the identity map, its symbol is $\prod_{i=1}^{n}(y_{i}+u_{i})$, which is  real stable. \end{example}

The main theorem of this section is the following:

\begin{thm}
\label{thm:GardSymbolTransf}
Suppose that $T:\R^{\mathfrak{a}}[x_{1},\cdots,x_{n}]\to\R^{\mathfrak{a}}[y_{1},\cdots,y_{m}]$ is a linear transformation on multi-affine functions. If $\Sym_{T}(\y,\u)\in\G_{n+m,+}$, then $T$ preserves $\G_{+}$.
\end{thm}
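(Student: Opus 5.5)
The plan is to follow the Borcea--Brändén strategy. We realize $T(f)$ as the image of the product $\Sym_T(\y,\u)\cdot f(\x)$ under an operator that is built only from operations already known to preserve $\G_{+}$: partial derivatives, restrictions to coordinate subspaces (Lemma \ref{lem:G+restriction}), and products of polynomials in disjoint variables.

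\textbf{Step 1 (a contraction identity).} For multi-affine $f=\sum_{\alpha}a_{\alpha}\x^{\alpha}$, write $\u^{[n]\setminus\alpha}=\pdv^{\alpha}\u^{[n]}$. The pairing
\[
\Phi(F)(\y):=\Big(\prod_{i=1}^{n}\pdv_{u_{i}}\Big)\Big|_{\u=0}
\]
applied to $F(\y,\u)=\Sym_T(\y,\u)\,\tilde f(\u)$ with a suitable ``dual'' $\tilde f$ recovers $T(f)$. Concretely, set
\[
\tilde f(\u)=\sum_{\alpha}a_{\alpha}\u^{\alpha},
\]
so $\tilde f$ is $f$ with variables renamed. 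Then the coefficient of $\u^{[n]}$ in $\Sym_T(\y,\u)\tilde f(\u)=\sum_{\beta,\alpha}T(\x^{\beta})a_{\alpha}\u^{[n]\setminus\beta}\u^{\alpha}$ is exactly $\sum_{\alpha}a_{\alpha}T(\x^{\alpha})=T(f)$. The catch is that the product is not multi-affine in $\u$. To fix this, we use fresh variables $\v$ and consider the multi-affine polynomial $P(\y,\u,\v)=\Sym_T(\y,\u)f(\v)$. By Lemma \ref{lem:basic}\eqref{enu:directproduct}, the product of Gårding components shows $P\in\G_{+}$, since it has nonnegative coefficients and $\C_{\Sym_T}\times\C_f$ is a PRT component of $\{P>0\}$. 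We then need a $\G_{+}$-preserving operator that extracts the coefficient of $\prod_i u_iv_i$ paired diagonally, i.e. that sends $u_i^{\epsilon}v_i^{\delta}\mapsto 1$ if $\epsilon+\delta=1$ and $0$ otherwise, applied coordinatewise. In the variables $(u_i,v_i)$ this is the map $\pdv_{u_i}+\pdv_{v_i}$ followed by restriction to $u_i=v_i=0$.

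\textbf{Step 2 (the one-pair operator preserves $\G_{+}$).} Fix $i$ and write $P=A\,u_iv_i+B\,u_i+C\,v_i+D$. The operator sends $P\mapsto B+C$ evaluated at $u_i=v_i=0$, which is $(\pdv_{u_i}+\pdv_{v_i})P|_{u_i=v_i=0}$. Pulling back by the strictly positive linear map $(\dots,w,\dots)\mapsto(u_i,v_i)=(w,w)$ gives a polynomial in $w$ that is not multi-affine, so instead we argue directly. By Lemma \ref{lem:G+restriction}, the restrictions $P|_{v_i=0}=Bu_i+D$ and $P|_{u_i=0}=Cv_i+D$ are in $\G_+$, so $B\prec D$ and $C\prec D$ by definition of proper position. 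Theorem \ref{lem:proper-position-basic}\ref{lem:subordinatetoboth} then applies to the pair $B,C$ both in proper position with $D$, which is the wrong direction for what we need. What we actually want is $B+C\in\G_+$. For this, use that $\pdv_{u_i}P|_{v_i=0}=B$ and $\pdv_{v_i}P|_{u_i=0}=C$, and that both $B\prec A$-type relations hold: applying \ref{enu:222} to $P$ and then restricting gives $A\prec B$ and $A\prec C$, since $\pdv_{v_i}\pdv_{u_i}P=A$ and $\pdv_{u_i}P=Av_i+B\in\G_+$. Now Theorem \ref{lem:proper-position-basic}\ref{lem:subordinatetoboth} gives $B+C\in\G$, and nonnegativity of coefficients gives $B+C\in\G_+$. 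Degenerate cases ($A\equiv 0$) are handled by the conventions $0\prec g$, using the closedness in Lemma \ref{lem:G_+isclosed} if needed.

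\textbf{Step 3 (iterate).} Applying Step 2 successively for $i=1,\dots,n$, each time to a polynomial that Step 2 has already shown to be in $\G_+$, yields $\sum_{\alpha}a_{\alpha}T(\x^{\alpha})=T(f)\in\G_{m,+}$.

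The main obstacle is Step 2: justifying that $B+C\in\G_+$ via the proper-position combination theorem, including the degenerate cases where $A$, $B$ or $C$ vanishes, or where the NRT reductions in that theorem's proof interact with the extra variables. I would first try to settle these cases by a perturbation: replace $P$ by $P+\epsilon\,(\text{stable positive multi-affine})$ to make everything nontrivial, then pass to the limit using Lemma \ref{lem:G_+isclosed}.
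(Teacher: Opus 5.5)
This is essentially the paper's proof. The paper also forms $\Sym_T(\y,\u)f(\x)$ in disjoint variables and iterates Lemma~\ref{lem:Lem3.3BrandenHuh20}, whose proof is your Step~2: $A\prec B$ and $A\prec C$, then Theorem~\ref{lem:proper-position-basic}\ref{lem:subordinatetoboth}, giving $Ax_i+B+C\in\G_{+}$. The only difference is that the paper restricts to $\x=0$ once at the end rather than pair by pair.

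One correction concerns your degenerate case. Item \ref{lem:subordinatetoboth} requires $A\not\equiv0$. The convention $0\prec g$ cannot substitute for this, since otherwise $\G$ would be closed under sums. Adding a stable term also need not preserve the G{\aa}rding property; for example, $xy+\epsilon$ is not G{\aa}rding.

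The case $A\equiv0$ is easy directly. The strictly positive diagonal pullback $u_i=v_i=w$ gives the multi-affine polynomial $(B+C)w+D$, which lies in $\G_{+}$ by Remark~\ref{rem:basicpreservers}. Hence $B+C=\pdv_w\bigl((B+C)w+D\bigr)\in\G_{+}$. The paper passes over this case as well.
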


We emphasize that Theorem~\ref{thm:GardSymbolTransf} provides a sufficient
symbol criterion for preservers in this setting. 
Our proof is similar to those of related results for stable and Lorentzian polynomials. 
However, in order to apply our more general concepts of binary relations, it is more geometric.

To prove  Theorem~\ref{thm:GardSymbolTransf}, we need the following lemma, which is similar to~\cite[Lemma~3.3]{BrandenHuh20}.
\begin{lem}
\label{lem:Lem3.3BrandenHuh20}
Suppose that $f(x_{1},x_{2},\hat{\x})\in\G_{n,+}$. Then
\begin{equation}
h(x_{2},\hat{\x}):=(\pdv_{1}f)(0,x_{2},\hat{\x})+(\pdv_{2}f)(0,0,\hat{\x})\in\G_{n,+}.
\label{eq:hisgard}
\end{equation}
\end{lem}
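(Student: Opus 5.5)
Write $f=a\,x_{1}x_{2}+b\,x_{1}+c\,x_{2}+d$, where $a,b,c,d\in\R^{\mathfrak{a}}[\hat{\x}]$ have nonnegative coefficients. Then
\[
(\pdv_{1}f)(0,x_{2},\hat{\x})=a\,x_{2}+b,\qquad (\pdv_{2}f)(0,0,\hat{\x})=c,
\]
so $h=a\,x_{2}+b+c$. Nonnegativity of the coefficients of $h$ is immediate, and only the \gar{} property needs proof. The plan is to write $h$ as a sum of two polynomials that are both in proper position with respect to the common polynomial $a$. Then Theorem~\ref{lem:proper-position-basic}\ref{lem:subordinatetoboth} gives that the sum is \gar{}.

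\emph{Step 1: $a\prec c$.} By Lemma~\ref{lem:Suppose-that--2}, $\pdv_{2}f=a\,x_{1}+c\in\G_{+}$. Reading $x_{1}$ as the auxiliary variable $y$ in Definition~\ref{def:properposition}, this says exactly that $a\prec c$ when $a$ and $c$ are nontrivial.

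\emph{Step 2: $a\prec q$ with $q:=a\,x_{2}+b$.} Again $q=\pdv_{1}f|_{x_{1}=0}$ is in $\G_{+}$, by Lemma~\ref{lem:Suppose-that--2} and Lemma~\ref{lem:G+restriction}. Now
\[
a\,y+q=q(x_{2}+y,\hat{\x})
\]
is the pullback of $q$ under the strictly positive linear map $(x_{2},\hat{\x},y)\mapsto(x_{2}+y,\hat{\x})$. By Remark~\ref{rem:basicpreservers} (via Remark~\ref{rem:pullback}) it is a multi-affine \gar{} polynomial, so $a\prec q$. Equivalently, this is Theorem~\ref{lem:proper-position-basic}\ref{enu:222} with $\pdv_{2}q=a$. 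Here $a$ and $c$ are regarded as polynomials in $(x_{2},\hat{\x})$ that do not depend on $x_{2}$. This is harmless, since $\G$ is stable under adding a dummy variable (Remark~\ref{rem:pullback}).

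\emph{Step 3: conclude.} If $a$, $q$ and $c$ are all nontrivial, Theorem~\ref{lem:proper-position-basic}\ref{lem:subordinatetoboth} with $f=a$, $g_{1}=q$, $g_{2}=c$ and $\theta_{1}=\theta_{2}=1$ gives $h=q+c\in\G$, and therefore $h\in\G_{+}$. The degenerate cases are handled separately. If $c\equiv0$, then $h=q\in\G_{+}$ by Step 2. If $q\equiv0$, then $h=c=\pdv_{2}f|_{x_{1}=0}\in\G_{+}$.

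The remaining case, $a\equiv0$, is the main obstacle, because Step 3 no longer has a common nontrivial $a$ to anchor the proper-position argument. Here $f=b\,x_{1}+c\,x_{2}+d$ and $h=b+c$. The plan is to use the pullback
\[
f(t,t,\hat{\x})=(b+c)\,t+d .
\]
The map $(t,\hat{\x})\mapsto(t,t,\hat{\x})$ is strictly positive, and the result is multi-affine because $a\equiv0$, so it lies in $\G$ by Remark~\ref{rem:basicpreservers}. Differentiating in $t$ (Lemma~\ref{lem:Suppose-that--2}) then gives $b+c\in\G_{+}$. I expect the only delicate point in the whole argument to be this bookkeeping of trivial terms against the conventions $0\prec g\prec0$.
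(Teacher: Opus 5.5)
Your proposal is correct and follows essentially the paper's route. Both arguments write $f=a\,x_1x_2+b\,x_1+c\,x_2+d$ with $a=\pdv_{12}f$, obtain two proper-position relations anchored at $a$, and invoke Theorem~\ref{lem:proper-position-basic}\ref{lem:subordinatetoboth}. The paper uses $a\prec b$ and $a\prec c$ in $\hat{\x}$ and reads off $a\prec b+c$, which is literally $h\in\G$, so it needs no dummy variable. Your separate treatment of $a\equiv0$ via the diagonal pullback $f(t,t,\hat{\x})$ is a genuine addition: \ref{lem:subordinatetoboth} requires a nonzero anchor, and the paper's proof passes over that case.
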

\begin{proof}
We have the following decomposition of multi-affine functions:
\[
f(\x)=x_{1}\pdv_{1}f(x_{2},\hat{\x})+g_{1}(x_{2},\hat{\x})
= x_{1}\bigl(x_{2}\pdv_{12}f(\hat{\x})+g_{2}(\hat{\x})\bigr)
+ g_{1}(x_{2},\hat{\x}),
\]
where
\begin{equation}
\pdv_{1}f(x_{2},\hat{\x})=x_{2}\pdv_{12}f(\hat{\x})+g_{2}(\hat{\x}).
\label{eq:add11}
\end{equation}

Then
\[
h(x_{2},\hat{\x})=x_{2}\pdv_{12}f(\hat{\x})+g_{2}(\hat{\x})+\pdv_{2}g_{1}(\hat{\x}).
\]
Note that $\partial_{1}f\prec g_{1}$, which, by Lemma~\ref{lem:proper-position-basic}, implies $\pdv_{12}f\prec\pdv_{2}g_{1}$. Also, by~\eqref{eq:add11}, we have $\pdv_{12}f\prec g_{2}$. Thus,~\eqref{eq:hisgard} follows from Lemma~\ref{lem:proper-position-basic} \ref{lem:subordinatetoboth}.
\end{proof}
Now we prove Theorem~\ref{thm:GardSymbolTransf}.
\begin{proof}[Proof of Theorem~\ref{thm:GardSymbolTransf}]
Suppose $\Sym_{T}(\mathbf{y},\mathbf{u})\in\G_{n+m,+}$. We use a new set of variables to write
\[
\Sym_{T}(\mathbf{y},\mathbf{u})=\sum_{\alpha\subset[n]}T(\mathbf{w}^{\alpha})\mathbf{u}^{[n]\backslash\alpha}.
\]
By Lemma~\ref{lem:basic}\,\ \ref{enu:directproduct}, the product of two multi-affine G\aa rding polynomials in disjoint variable sets is again 
By Lemma~\ref{lem:basic}\,\ \ref{enu:directproduct}, the product of two multi-affine G\aa rding polynomials with disjoint variable sets is G\aa rding. Thus,G\aa rding. Thus,
$\Sym_{T}(\mathbf{y},\mathbf{u})\cdot f(\mathbf{x})\in\G_{+}.$
We iterate Lemma~\ref{lem:Lem3.3BrandenHuh20} over the pairs $(u_{i},x_{i})$, $i=1,\cdots,n$, to obtain
\[
\sum_{\alpha\subset[n]}T(\mathbf{w}^{\alpha})\pdv^\alpha f(\mathbf{x})\in\G_{n+m,+}.
\]
Finally, we apply Lemma~\ref{lem:G+restriction} to obtain
\[
\left[\sum_{\alpha\subset[n]}T(\mathbf{w}^{\alpha})\pdv_{\alpha}f(\mathbf{x})\right]_{\mathbf{x}=0}
= T(f)\in\G_{m,+}.
\]
\end{proof}

Since real stable multi-affine polynomials are \gar{}, the following
corollary is the consequence of Theorem \ref{thm:stable-garding}
and Theorem \ref{thm:GardSymbolTransf}:
\begin{cor}
\label{cor:transformwithrealstablesymb}If $\Sym_{T}$ is real-stable with non-negative coefficients,
then $T$ preserves $\G_{+}$. 
\end{cor}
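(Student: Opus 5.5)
The corollary follows by combining the symbol criterion of Theorem~\ref{thm:GardSymbolTransf} with the inclusion $\S\subset\GS$ of Theorem~\ref{thm:stable-garding}. It suffices to show that a multi-affine real stable polynomial with nonnegative coefficients lies in $\G_{+}$. Once that is done, $\Sym_{T}(\y,\u)\in\G_{n+m,+}$, and Theorem~\ref{thm:GardSymbolTransf} gives that $T$ preserves $\G_{+}$.

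\textbf{Step 1: $\Sym_{T}$ lies in $\S$.} Since $\Sym_{T}$ has nonnegative coefficients, its top-degree homogeneous part also has nonnegative coefficients. Hence $\Sym_{T}\in\S[\y,\u]$ in the sense of Definition~\ref{def:stable-polynomial}. We may assume $\Sym_{T}\neq0$, since otherwise $T=0$ and the claim is trivial.

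\textbf{Step 2: $\Sym_{T}$ is multi-affine G\aa rding.} Lemma~\ref{lem:stablepolynomialcone} provides a connected component $\C_{\Sym_{T}}$ of $\{\Sym_{T}>0\}$ that passes PRT. Since $\Sym_{T}=T(\prod_{i}(x_{i}+u_{i}))$ is multi-affine in $(\y,\u)$, Definition~\ref{def:general garding} places it in $\G_{n+m}$. The nonnegative coefficients then place it in $\G_{n+m,+}$.

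Alternatively, one can argue via Lemma~\ref{lem:GardingToRayleigh}\,(\ref{enu:gartoRay4}). By \cite{BBL09}, a stable multi-affine polynomial is strong Rayleigh, so $\Sym_{T}$ is strong Rayleigh with nonnegative leading coefficients and is therefore G\aa rding.

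\textbf{Step 3: conclusion.} Apply Theorem~\ref{thm:GardSymbolTransf}.

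The only point requiring care is the sign normalization in Step 1: real stability alone puts a polynomial in $\S\cup(-\S)$, and Lemma~\ref{lem:stablepolynomialcone} needs it to lie in $\S$. Here the hypothesis of nonnegative coefficients settles this. No further obstacle is expected.
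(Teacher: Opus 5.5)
Your proposal is correct and follows the paper's own argument: the paper's proof just observes that multi-affine real stable polynomials are G\aa rding by Theorem~\ref{thm:stable-garding}, so $\Sym_{T}\in\G_{n+m,+}$, and then applies Theorem~\ref{thm:GardSymbolTransf}. Citing Lemma~\ref{lem:stablepolynomialcone} directly, together with your sign normalization in Step 1, is simply the multi-affine case of that inclusion, so nothing further is needed.
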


\begin{cor}\label{cor:preserverofG}
Suppose that  $T$ preserves $\G_{+}$. If there exists $\mathbf{a}>0$ and $T$ commutes with translations $T^*_{t\mathbf{a}}$ for $t\in \R$,  then $T$ preserves $\G$.
\end{cor}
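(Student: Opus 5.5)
Fix $f\in\G$ and the direction $\mathbf{a}>0$ for which $T\circ T^{*}_{t\mathbf{a}}=T^{*}_{t\mathbf{a}}\circ T$ for all $t\in\R$. The plan is to move $f$ into $\G_{+}$ by a translation along $\mathbf{a}$, apply $T$ there, and then translate back. Translating back is legitimate because $\G$ is preserved by every translation; this is Remark~\ref{rem:basicpreservers}, since translations lie in $\A_{++}$ and keep polynomials multi-affine.

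If $f$ is a nonnegative constant, it already lies in $\G_{+}$ and there is nothing to show, so assume $f$ is nontrivial. The first step is to pick $t_{0}$ with $t_{0}\mathbf{a}\in\C_{f}$. Such a $t_{0}$ exists by Remark~\ref{rem:easy}: the positive ray $\{t\mathbf{a}:t>0\}$ meets the affine positive hyper-octant $\x_{1}+\Gamma_{n}^{+}$ for any $\x_{1}\in\C_{f}$, and that hyper-octant lies in $\C_{f}$ by PRT. Lemma~\ref{lem:move-into-GA+} then gives
\[
g:=T^{*}_{t_{0}\mathbf{a}}f=f(\x+t_{0}\mathbf{a})\in\G_{+}.
\]
Since $T$ preserves $\G_{+}$, we get $T(g)\in\G_{m,+}\subset\G$.

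The second step uses the commutation hypothesis to write
\[
T(f)=T\bigl(T^{*}_{-t_{0}\mathbf{a}}g\bigr)=T^{*}_{-t_{0}\mathbf{a}}\bigl(T(g)\bigr),
\]
which is the translate of the multi-affine G\aa rding polynomial $T(g)$ by $-t_{0}\mathbf{a}$. Remark~\ref{rem:basicpreservers} therefore yields $T(f)\in\G$. Note that the same symbol $T^{*}_{t\mathbf{a}}$ has to be read on both the source space $\R^{n}$ and the target space $\R^{m}$. Implicitly this requires $\mathbf{a}$ to make sense in both, for instance $n=m$ or $\mathbf{a}$ being a vector of ones of appropriate length.

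The only real subtlety is interpreting the hypothesis correctly, so that the translation on the source side corresponds to a positive translation (by an element of $\A_{++}$) on the target side. Once the hypothesis is read that way, each step is a direct citation of earlier results.
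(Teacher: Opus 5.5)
Your proof is correct and is essentially the paper's argument: translate $f$ along $\mathbf{a}$ into $\G_{+}$ via Lemma~\ref{lem:move-into-GA+}, apply $T$, and use the commutation with $T^{*}_{t\mathbf{a}}$ to translate back. Your explicit appeal to Remark~\ref{rem:basicpreservers} for the final translation, and your reading of $T^{*}_{t\mathbf{a}}$ on both source and target (e.g.\ $\mathbf{a}=\mathbf{1}$ of the matching lengths, as in the symmetrization examples), only spell out what the paper's short proof leaves implicit.
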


\begin{proof}
For any $f\in\G$, using PRT, there exists a large enough $s>0$ such that $\a_0:=s\mathbf{a}\in \C_f$. By Lemma \ref{lem:move-into-GA+}, $T_{\a_{0}}^{*}f\in\G_{+}$.
Thus, by Theorem~\ref{thm:GardSymbolTransf}, 
\[
Tf=T_{-\a_{0}}^{*}\circ T\circ T_{\a_{0}}^{*}f\in\G.
\]
\end{proof}
We list some examples of Gårding linear preservers.
\begin{example}[Partial symmetrization]
\label{exa:partialsymme}Let $\theta\in[0,1]$. Define  
\[
\Phi_{\theta}^{1,2}(f)=(1-\theta)f(x_{1},x_{2},\cdots,x_{n})+\theta f(x_{2},x_{1},\cdots,x_{n}).
\]
Then by \cite[Exercise 4.4]{Wagner11}, the symbol of $\Phi_{\theta}^{1,2}\in \S_+$.
 $\Phi_\theta^{1,2}$ commutes with $T_{t\mathbf{1}}^*$, where $\mathbf{1}$ is the all-one vector. 
By Corollary \ref{cor:transformwithrealstablesymb} and Corollary  \ref{cor:preserverofG}, $\Phi_{\theta}^{1,2}$
preserves $\G$.
\end{example}

\begin{example}[Full symmetrization]
\label{exam:fullsymm} For $n\geq m\in\N$, define
the linear operator 
\[
T_{\text{sym},n}:\R_{m}^{\mathfrak{a}}[x_{1},\cdots,x_{m}]\to\R_{n}^{\mathfrak{a}}[y_{1},\cdots,y_{n}],\ \textbf{x}^{\alpha}\mapsto\frac{1}{\binom{n}{|\alpha|}}\sigma_{|\alpha|}(\textbf{y}),
\]
for any multi-index $\alpha$.  By \cite[Theorem 1.1]{BB09I}, $\Sym_{T_{\text{sym},n}}\in\S_+$.  Since $T_\text{sym,n}$ also commutes with $T_{t\mathbf{1}}^*$, $T_{\text{sym,n}}$  preserves $\G$. 
\end{example}

%In summary, for multi-affine Gårding polynomials, we have developed algebraic theories mirroring those of stable polynomials, providing a foundation for extending the theory to general Gårding polynomials in the next section.

\part{General \gar{} Theory}

\section{General \gar{} polynomial}\label{sec:General--polynomials}
We now extend the multi-affine theory to general polynomials. The main
difficulty is that the defining geometric condition is not directly preserved
under standard algebraic operations such as polarization. The strategy is to
reduce the general case to the multi-affine setting via polarization and to
recover the geometric structure through a recursive analysis of partial
derivatives.

By Remark~\ref{composition}, the class of Gårding polynomial is closed under strictly positive affine transformations.

One of the main results of this section is Theorem~\ref{thm:kappa-symmetric-realization}, which establishes the second characterization in Theorem~\ref{thm:newmain}.

\subsection{Symmetric realizations}\label{subsec:symmetric-realizations}
Let $\kappa\in\N^{n}$. Recall that
\[
\R_{\kappa}[x_{1},\cdots,x_{n}]
=\{f\in\R[x_{1},\cdots,x_{n}]:\deg_{x_{i}}f\leq\kappa_{i}\ \text{for all } i\},
\]
and that $\R^{\mathfrak{a}}[x_{1},\cdots,x_{n}]$ denotes the subspace of multi-affine polynomials. We write
\[
\GS_{n}^{\kappa}=\GS_{n}\cap\R_{\kappa}[\x],\qquad
\GS^{\mathfrak{a},\kappa}[\x]=\G\cap\R_{\kappa}[\x].
\]

For $g=\sum_{\alpha}c_{\alpha}x^{\alpha}\in\R_{\kappa}[\x]$, the $\kappa$-polarization of $g$ is defined by
\begin{equation}
(\pol_{\kappa}g)(x_{11},\cdots,x_{1\kappa_{1}},\cdots,x_{n1},\cdots,x_{n\kappa_{n}})
:=\sum_{\alpha\leq\kappa}c_{\alpha}\prod_{i=1}^{n}
\frac{\sigma_{\alpha_{i}}(x_{i1},\cdots,x_{i\kappa_{i}})}{\binom{\kappa_{i}}{\alpha_{i}}},
\label{eq:-11-2}
\end{equation}
where $\sigma_{\alpha_{i}}$ denotes the elementary symmetric polynomial of degree $\alpha_{i}$. The associated $\kappa$-projection (or $\kappa$-diagonal specialization) operator is given by
\[
\proj_{\kappa}(f):=f\big|_{x_{ij}=x_{i}},
\]
and satisfies $\proj_{\kappa}\circ\pol_{\kappa}=\mathrm{Id}$ on $\R_{\kappa}[\x]$.

Recall that a general G\aa rding polynomial $g$ is defined as a pullback
\[
g=\mu^{*}(f),
\]
where $f$ is a multi-affine G\aa rding polynomial and $\mu$ is a strictly positive affine map. We refer to the pair $(f,\mu)$ as a \emph{realization} of $g$. Thus, if $f=\pol_{\kappa}(g)\in\G_{\kappa}$ and $\mu$ is the diagonal map
\[
\mu:(x_{1},x_{2},\cdots,x_{n})\mapsto(\overbrace{x_{1},\cdots,x_{1}}^{\kappa_{1}},\cdots,\overbrace{x_{n},\cdots,x_{n}}^{\kappa_{n}}),
\]
then $(f,\mu)$ is called a $\kappa$-symmetric realization of $g$. Such realizations provide natural and convenient multi-affine models for general G\aa rding polynomials.

\begin{rem}
\label{rem:kappa-bound}
Let $\kappa_{g}=(\deg_{x_{1}}g,\ldots,\deg_{x_{n}}g)$ denote the multi-degree of $g$. If $g=\proj_{\kappa}(f)$ is realized as a $\kappa$-projection of a $\kappa$-symmetric multi-affine polynomial $f$, then necessarily $\kappa\ge\kappa_{g}$. Therefore, any $\kappa$-symmetric realization of $g$ requires at least $\deg_{x_{i}}g$ copies of the variable $x_{i}$ for each $i$.
\end{rem}

The $\kappa$-projection map $\proj_{\kappa}$ from the $\kappa$-symmetric multi-affine polynomials in $\G$ onto $\GS_{m}$ is surjective. We have the following realization theorem.
\begin{thm}
\label{thm:kappa-symmetric-realization} Let $g\in \GS_{m}$
be a general G\aa rding polynomial of positive degree. Then there
exist a multi-index $\kappa\in\N^{m}$ and a $\kappa$-symmetric
multi-affine polynomial $h\in\G$ such that 
\[
g\;=\;\proj_{\kappa}(h).
\]
 $h$ provides a $\kappa_{}$-symmetric realization
of $g$. 
\end{thm}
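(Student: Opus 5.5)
Since $g\in\GS_m$, we may write $g=\mu^{*}f$ with $f\in\G_N$ multi-affine and $\mu(\x)=A\x+\b$ a strictly positive affine map. The plan is to produce a multi-affine \gar{} polynomial that lives on blown-up copies of the variables $x_1,\dots,x_m$ and then symmetrize it. Write $f(\y)=f(y_1,\dots,y_N)$ with $y_k=\sum_{i}a_{ki}x_i+b_k$. Since $f$ is multi-affine, each $y_k$ occurs with degree at most one, and so $\deg_{x_i}g\le\#\{k:a_{ki}>0\}=:\kappa_i$.

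\textbf{First step: a multi-affine realization.} Introduce variables $x_{ik}$, one for each pair $(i,k)$ with $a_{ki}>0$, and set
\[
F(\{x_{ik}\}):=f\Bigl(\dots,\textstyle\sum_{i:a_{ki}>0}a_{ki}x_{ik}+b_k,\dots\Bigr).
\]
Each $x_{ik}$ enters only through $y_k$, so $F$ is multi-affine. The substitution is a strictly positive affine map (every row has a positive entry, or is a translation when the row of $A$ vanishes). By Remark~\ref{rem:basicpreservers} and Remark~\ref{rem:pullback}, $F$ has a nonempty PRT positivity component, so $F\in\G$. Setting $x_{ik}=x_i$ recovers $g$. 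Padding with dummy variables, on which $F$ does not depend, we may index the copies of $x_i$ as $x_{i1},\dots,x_{i\kappa_i}$.

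\textbf{Second step: symmetrization.} Apply, for each $i$ separately, the full symmetrization operator over the block $(x_{i1},\dots,x_{i\kappa_i})$, averaging $F$ over the group $\prod_i S_{\kappa_i}$ permuting copies within blocks. Call the result $h$. This operator is a composition of partial symmetrizations $\Phi^{j,l}_{\theta}$ (Example~\ref{exa:partialsymme}), or equivalently a blockwise version of Example~\ref{exam:fullsymm}. Each of these has a real stable symbol with nonnegative coefficients and commutes with translation by $t\mathbf{1}$, so by Corollaries~\ref{cor:transformwithrealstablesymb} and~\ref{cor:preserverofG} it preserves $\G$. Hence $h\in\G$. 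It is symmetric within each block, and the diagonal specialization is unchanged because $\proj_\kappa$ is invariant under permutations of copies; thus $\proj_\kappa h=g$. A multi-affine polynomial symmetric in each block is determined by its diagonal restriction, so $h=\pol_\kappa g$. This gives the $\kappa$-symmetric realization.

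\textbf{Main obstacle.} The delicate point is the symmetrization: Corollary~\ref{cor:preserverofG} needs preservation of $\G_+$ and commutation with a positive translation. The averaging over $S_{\kappa_i}$ must therefore be realized as an iterated composition of the two-variable maps $\Phi^{j,l}_\theta$ with suitable $\theta$; exact uniform averaging over $S_k$ is a finite product of such transposition averages, e.g.\ by successive $\theta=1/2,2/3,\dots$ insertion steps. Alternatively, one can use Example~\ref{exam:fullsymm} block by block: pass first to the multi-affine polynomial in block $i$ as a function of the other variables, and note that the symbol argument tolerates extra untouched variables, since tensoring with the identity keeps the symbol stable. I would verify this blockwise extension carefully, checking that the symbol of the blockwise operator is a product of stable symbols in disjoint variables, hence stable with nonnegative coefficients.
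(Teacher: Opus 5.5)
Your proposal is correct and follows essentially the same route as the paper. You split each variable of $f$ into one copy per column of $A$ through a strictly positive affine map to get a multi-affine \gar{} pullback, then symmetrize blockwise with \gar{}-preserving symmetrization operators; the paper translates into $\G_+$, applies $T_{\text{sym},\kappa_i}$ with $\kappa_i\ge n$ to each block, and translates back, which is exactly the mechanism of Corollary~\ref{cor:preserverofG}. Your parenthetical about vanishing rows of $A$ is a slip, though harmless: strict positivity of $\mu$ forces every row of $A$ to be nonzero, so that case never occurs, and a zero row would destroy strict positivity rather than reduce to a translation.
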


\begin{proof}
For any $g\in\GS_{m}[\x],$ by Definition \ref{def:general garding},
there exists $n\in\N$, $f\in\G$, and a positive affine transform
$\mu$ such that $g=\mu^{*}(f).$ Equivalently, there exist $\mathbf{a}_{1},\cdots,\mathbf{a}_{m}\in\overline{\Gamma_{n}^{+}}$,
$A=(\mathbf{a}_{1},\cdots,\mathbf{a}_{m})$ with $\sum_{i}a_{ij}>0$
and $\mathbf{b}_{}=(b_{1},\cdots,b_{n})\in\mathbb{R}^{n},$ such that
\[
g(x_{1},\cdots,x_{m})=f(\sum_{j=1}^{m}\mathbf{a}_{j}x_{j}+\mathbf{b}).
\]
Let $\mathbf{t}=(t_{ij})_{i\in[n],j\in[m]}$
and define a strictly positive affine map $\nu:\R^{mn}\to\R^{n}:$
\[
\nu(\t):=(\sum_{j=1}^{m}a_{1j}t_{1j}+b_{1},\cdots,\sum_{j=1}^{m}a_{nj}t_{nj}+b_{n}).
\]
 We observe that $\nu$ is strictly positive because $\mu$ is strictly
positive. Therefore 
\begin{align}
\tilde{g}(\mathbf{t}):=f(\nu(\t))\in\G_{m\times n}.\label{eq:tildeg-1}
\end{align}
Since $A$ is strictly positive,  choose a sufficiently large $T>0$
such that if $t_{ij}>T$, then $(t_{ij})\in\C_{\tilde{g}}$. Let $\mathbf{t}_{0}=T\cdot\mathbf{1}_{n\times m}$.
Then, by a translation, $\T_{\mathbf{t}_{0}}^{*}(\tilde{g})(\mathbf{t})=\tilde{g}(\mathbf{t}+\mathbf{t}_{0})\in\G_{+}[\mathbf{t}]$. 

Let $\kappa_{i}\geq n$ and $\kappa=(\kappa_{1},\cdots,\kappa_{m})\in\N^{m}.$
By Example \ref{exam:fullsymm}, for each $i,$ the full symmetrization
operator $T_{\text{sym},\kappa_{i}}$with respect to variables $t_{1i},\cdots,t_{ni}$
preserves $\G_{+}$. Therefore, for 
\[
\y=(y_{11},\cdots,y_{\kappa_{1}1},\cdots,y_{1m},\cdots,y_{\kappa_{m}m})\in\mathbb{R}^{|\kappa|},
\]
we have
\[
h(\mathbf{y}):=\T_{(-\mathbf{t}_{0})}^{*}\circ T_{\text{sym},\kappa_{m}}\circ\cdots\circ T_{\text{sym},\kappa_{1}}\circ\T_{\mathbf{t}_{0}}^{*}(\tilde{g})(\mathbf{y})\in\G_{|\kappa|}[\y].
\]
Then, $\proj_{\kappa}(h)=g$ and $\pol_{\kappa}(g)=h\in\G$. We have
finished the proof. 
\end{proof}

\subsection{Closure under partial derivatives and domain filtration }\label{subsec:PRT-filtration}
The symmetric realization theorem (Theorem~\ref{thm:kappa-symmetric-realization}) provides additional closure operations for general \gar{} polynomials. We show that partial derivatives preserve general \gar{} polynomials.

\begin{thm}
\label{thm:diff-closure}
Let $g$ be a G\aa rding polynomial, and let $\alpha\in\N^{n}$ be a multi-index. Then $\partial^{\alpha}g$ is G\aa rding. Moreover, the corresponding G\aa rding components satisfy
\[
\C_{\partial^{\alpha}g}\supseteq \C_{g}.
\]
\end{thm}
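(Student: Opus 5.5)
It suffices to treat a single first derivative $\partial_{i}g$ and then iterate, since $\C_{\partial^{\alpha}g}\supseteq\C_{g}$ follows by chaining the inclusions $\C_{\partial_{i}\partial^{\beta}g}\supseteq\C_{\partial^{\beta}g}$ (with the convention $\C_{0}=\R^{n}$ handling vanishing derivatives). The plan is to pass to a symmetric multi-affine model, differentiate there, and project back.

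Fix $i$. By Theorem~\ref{thm:kappa-symmetric-realization}, write $g=\proj_{\kappa}(h)$ with $h=\pol_{\kappa}(g)\in\G$ $\kappa$-symmetric. Here $\proj_{\kappa}=\mu^{*}$, with $\mu$ the diagonal map, which is strictly positive linear. The chain rule gives
\[
\partial_{i}g=\mu^{*}\Big(\sum_{j=1}^{\kappa_{i}}\partial_{x_{ij}}h\Big),
\]
and by the symmetry of $h$ in $x_{i1},\dots,x_{i\kappa_{i}}$ the summands agree after restriction to the diagonal, so $\partial_{i}g=\kappa_{i}\,\mu^{*}(\partial_{x_{i1}}h)$. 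By Lemma~\ref{lem:Suppose-that--2}, $\partial_{x_{i1}}h\in\G$; it is independent of $x_{i1}$, and we regard it as a multi-affine polynomial on all coordinates. Pulling back by the strictly positive map $\mu$ keeps it in $\GS$ by Definition~\ref{def:general garding}, so $\partial_{i}g\in\GS$.

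For the inclusion of components, note that $\C_{g}=\mu^{-1}(\C_{h})$: this set is nonempty, passes PRT by Remark~\ref{rem:pullback}, and is a connected component of $\{g>0\}$, so it is the Gårding component by uniqueness. Similarly $\C_{\partial_{i}g}=\mu^{-1}(\C_{\partial_{x_{i1}}h})$, where $\partial_{x_{i1}}h$ is viewed on $\R^{|\kappa|}$. By Lemma~\ref{lem:Suppose-that--2}(2), $\C_{\partial_{x_{i1}}h}=\pi^{-1}\pi(\C_{h})\supseteq\C_{h}$, with $\pi$ deleting $x_{i1}$. Taking $\mu^{-1}$ gives $\C_{\partial_{i}g}\supseteq\C_{g}$.

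The main obstacle is justifying the identity $\mu^{-1}(\C_{F})=\C_{\mu^{*}F}$ for $F\in\G$. One needs $\mu^{-1}(\C_{F})$ to be a full connected component of $\{\mu^{*}F>0\}$ and not merely contained in one. I would argue as in Lemma~\ref{lem:For-any-.}: it is open, and at any boundary point $\mathbf{x}$ of it, $\mu(\mathbf{x})\in\partial\C_{F}$, so $\mu^{*}F(\mathbf{x})=0$. Hence the set is relatively closed in $\{\mu^{*}F>0\}$, and connectedness follows from PRT via Lemma~\ref{lem:basic}. The degenerate case $\partial_{i}g\equiv0$ is trivial.
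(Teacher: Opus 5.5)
Your proposal is correct and follows the paper's own argument. Like the paper, you realize $g$ as $\proj_{\kappa}$ of a $\kappa$-symmetric $h\in\G$, use symmetry to get $\partial_i g=\kappa_i\,\proj_{\kappa}(\partial_{x_{i1}}h)$, and induct on $|\alpha|$. You also check explicitly that $\mu^{-1}(\C_F)$ is the G\aa rding component of $\mu^{*}F$: it is open, relatively closed in $\{\mu^{*}F>0\}$, and connected by PRT. That supplies the detail the paper compresses into ``the inclusion of components follows directly from the projection.''
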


\begin{proof}
By Theorem~\ref{thm:kappa-symmetric-realization}, there exist multi-index $\kappa$ and $f\in\G_{\kappa}$ such that $g=\proj_{\kappa}(f)$. Note
\[
\partial_{i}g(x_{1},\cdots,x_{n})
=\proj_{\kappa}\!\bigl[\kappa_{i}(\partial_{i1}\pol_{\kappa}g)\bigr]
=\proj_{\kappa}\!\bigl[\kappa_{i}(\partial_{i1}f)\bigr]\in\G_{\kappa}.
\]
This proves the claim for the first derivatives. The inclusion of components follows directly from the projection. The general statement follows by induction on $|\alpha|$.
\end{proof}
As in the multi-affine case, we define a filtration of G{\aa}rding components.
\begin{defn}
[PRT domain filtration]\label{defprop:PRT-filtration} Let $g\in\mathcal{\GS}$
be a G\aa rding polynomial of total degree $d\geq1$. For each integer
$k\ge0$, define the $k-$th derived Gårding component, or the $k-$th
derived component, as
\[
\C_{g}^{(k)}\;:=\;\bigcap_{\substack{\alpha\in\mathbb{Z}_{\ge0}^{n}\\
|\alpha|=k
}
}\C_{\partial^{\alpha}g},
\]
where, by convention, $\C_{\partial^{\alpha}g}=\mathbb{R}^{n}$ whenever
$\partial^{\alpha}g\equiv c\geq0$.
\end{defn}

By Theorem \ref{thm:diff-closure} the family $\{\C_{g}^{(k)}\}_{k\ge0}$
is well defined and forms a nested filtration 
\begin{equation}
\C_{g}^{(0)}\;\subset\;\C_{g}^{(1)}\;\subset\;\C_{g}^{(2)}\;\subset\;\cdots\;\subset\;\C_{g}^{(d-1)}\;\subset\;\C_{g}^{(d)}\;=\;\mathbb{R}^{n}.\label{eq:nested cone}
\end{equation}
In the special case of $k=d-1$, the defining inequalities come from
nontrivial affine-linear polynomials $\partial^{\alpha}g=\a_{\alpha}\cdot\x-\b_{\alpha}$
with $|\alpha|=d-1,$and hence $\C_{g}^{(d-1)}$is a polyhedral set
cut out by finitely many hyperplanes. For this reason, it is also
called the \emph{polyhedral derived \gar{} component} or \emph{polyhedral
derived component}.

\subsection{NRT \gar{} polynomials and polyhedral components}

Next we extend the NRT condition to the general setting.
\begin{defn}
\label{def:NRT-general} A G\aa rding polynomial $g\in\GS$ is said
to be \emph{NRT} if its G\aa rding component $\C_{g}$ satisfies
the negative ray terminating  condition (NRT).
\end{defn}

\begin{lem}
\label{lem:polyhedral-dual}Let $\Lambda$ be a finite index set.
Let 
\[
\mathcal{C}=\{\x\in\mathbb{R}^{n}:\ \a_{\alpha}\cdot\x>b_{\alpha},\ \alpha\in\Lambda\},
\]
with $\a_{\alpha}\in\overline{\Gamma_{n}^{+}}\setminus\{0\}$, and
let $\overline{\mathcal{C}}$ be its closure. For $\v\in\Gamma_{n}^{+}$,
the infimum $\inf_{\x\in\overline{\mathcal{C}}}\v\cdot\x$ is finite
if and only if there exist coefficients $\lambda_{\alpha}\ge0$ such
that $\v=\sum_{\alpha\in\Lambda}\lambda_{\alpha}\a_{\alpha}$. In
particular, $\C$ is NRT if and only if for any $i\in[n],$ there
exists $\alpha\in\Lambda$ such that $\a_{\alpha}\cdot\e_{i}>0$.
\end{lem}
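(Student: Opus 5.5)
The plan is to prove the first statement by LP duality (Farkas' lemma), and then read off the NRT criterion as a special case.

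\emph{Sufficiency.} Suppose $\v=\sum_{\alpha}\lambda_{\alpha}\a_{\alpha}$ with $\lambda_{\alpha}\ge0$. Every $\x\in\overline{\C}$ satisfies $\a_{\alpha}\cdot\x\ge b_{\alpha}$, so
\[
\v\cdot\x=\sum_{\alpha}\lambda_{\alpha}\,\a_{\alpha}\cdot\x\ge\sum_{\alpha}\lambda_{\alpha}b_{\alpha}>-\infty .
\]
We also need $\overline{\C}\neq\emptyset$, so that the infimum is a finite number rather than $+\infty$. This holds because each $\a_{\alpha}$ is nonzero and nonnegative: for $t$ large, $t\mathbf{1}\in\C$. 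The same observation shows that $\overline{\C}$ is the full closed polyhedron $\{\a_{\alpha}\cdot\x\ge b_{\alpha}\}$, since this polyhedron has nonempty interior $\C$.

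\emph{Necessity.} Suppose $\v\notin\mathrm{cone}\{\a_{\alpha}\}$. The cone is finitely generated and hence closed, so by Farkas' lemma there is $\w\in\R^{n}$ with $\a_{\alpha}\cdot\w\ge0$ for all $\alpha$ and $\v\cdot\w<0$. Fix $\x_{0}\in\overline{\C}$. Then $\x_{0}+s\w\in\overline{\C}$ for all $s\ge0$, while $\v\cdot(\x_{0}+s\w)\to-\infty$, so the infimum is not finite. (Equivalently, one can invoke strong LP duality for $\min\{\v\cdot\x:A\x\ge\b\}$: it is feasible, and it is bounded exactly when the dual $\{A^{T}\lambda=\v,\lambda\ge0\}$ is feasible.)

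\emph{The NRT criterion.} $\C$ fails NRT exactly when some $\x\in\C$ and some $\u\ge0$, $\u\neq0$, satisfy $\x-s\u\in\C$ for all $s\ge0$. Since $\C$ is defined by linear inequalities, this is equivalent to $\a_{\alpha}\cdot\u\le0$, and hence $\a_{\alpha}\cdot\u=0$, for every $\alpha$, because both vectors are nonnegative. (Note that $(\x-\overline{\Gamma_{n}^{+}})\cap\C$ is a polyhedron, so it is unbounded iff it contains a ray, with direction $-\u$ for some $\u\in\overline{\Gamma_{n}^{+}}\setminus\{0\}$.)

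If every coordinate $i$ has some $\alpha$ with $\a_{\alpha}\cdot\e_{i}>0$, then for any $\u\ge0$ with $u_{i}>0$ we get $\a_{\alpha}\cdot\u\ge a_{\alpha,i}u_{i}>0$, so no such ray exists and $\C$ is NRT. Conversely, if some coordinate $i$ has $\a_{\alpha}\cdot\e_{i}=0$ for all $\alpha$, then $\u=\e_{i}$ gives a ray inside $\C$ in the direction $-\e_{i}$, so $\C$ is not NRT.

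The NRT criterion can also be tied back to the first part. Coverage of every coordinate means that $\sum_{\alpha}\a_{\alpha}>0$ componentwise, so $\mathbf{1}$, or indeed any $\v\in\Gamma_{n}^{+}$, lies in the cone after scaling. This works because the cone then contains an interior point of $\overline{\Gamma_{n}^{+}}$ that dominates a positive multiple of $\mathbf{1}$.

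The only step needing care is the necessity direction, which rests on Farkas' lemma and the closedness of finitely generated cones. Both are standard, so I expect no real obstacle.
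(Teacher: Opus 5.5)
Your proof of the lemma is correct and follows the paper's route. The paper simply invokes LP duality (Farkas' lemma) for the first claim and the definition of NRT for the second. Your sufficiency argument, the Farkas alternative giving $\w$ with $\a_\alpha\cdot\w\ge0$ and $\v\cdot\w<0$, and the reduction of non-NRT to some $\u\ge0$, $\u\neq0$ with $\a_\alpha\cdot\u=0$ for all $\alpha$, fill in exactly the omitted details.

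One small point of precision. Since $\C$ is open, $(\x-\overline{\Gamma_n^+})\cap\C$ is not a closed polyhedron, so ``unbounded iff it contains a ray'' should be applied to the closed set $(\x-\overline{\Gamma_n^+})\cap\{\y:\a_\alpha\cdot\y\ge b_\alpha\ \forall\alpha\}$. A recession direction $-\u$ of that set has $\a_\alpha\cdot\u\le0$, hence $\a_\alpha\cdot\u=0$, and the ray from $\x$ then stays in $\C$. Alternatively, avoid rays entirely: if $\x-\u\in\C$ with $\u\ge0$ and $a_{\alpha,i}>0$ (the $i$-th entry of $\a_\alpha$), then $u_i<(\a_\alpha\cdot\x-b_\alpha)/a_{\alpha,i}$.

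Your closing ``tie back'' paragraph, however, is false and should be removed. Covering every coordinate does not place $\mathbf{1}$, let alone every $\v\in\Gamma_n^+$, in the cone generated by the $\a_\alpha$. A cone vector that dominates a multiple of $\mathbf{1}$ does not give that membership either. For instance, take $n=2$ and the single constraint $x_1+2x_2>0$:
\begin{itemize}
\item the region $\C$ is NRT;
\item yet $(-2t,t)\in\overline{\C}$ and $\mathbf{1}\cdot(-2t,t)=-t\to-\infty$;
\item so $\inf_{\overline{\C}}\mathbf{1}\cdot\x=-\infty$, consistent with $\mathbf{1}\notin\mathrm{cone}\{(1,2)\}$.
\end{itemize}
The correct link between the two parts is existential. $\C$ is NRT iff \emph{some} $\v\in\Gamma_n^+$ (e.g.\ $\sum_\alpha\a_\alpha$) lies in the cone, i.e.\ iff some strictly positive linear functional is bounded below on $\overline{\C}$. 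This is the form in which the paper uses the lemma, e.g.\ $\nabla\ell=\sum_\alpha\a_\alpha\in\Gamma_N^+$ in the proof of Theorem~\ref{thm:approx-by-B}. Nothing in your actual proof depends on that paragraph, so deleting it leaves a complete argument.
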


\begin{proof}
The first part is a standard consequence of linear programming duality
(equivalently, Farkas' lemma \cite[pp. 200]{RockafellarConvexAnalysis})
applied to the constraints $\a_{\alpha}\cdot\x\geq b_{\alpha}$. The
second part follows from the definition of NRT region.
\end{proof}
\begin{prop}
\label{prop:NRTrealization}Let $g\in\mathrm{G}$ be a G\aa rding
polynomial of degree $d$. If $g$ is NRT, then
\begin{enumerate}
\item \label{enu:the-top-homogeneous}the top homogeneous part $g^{\mathrm{top}}$
is a G\aa rding polynomial and is NRT;
\item \label{enu:for-a--symmetric}for a $\kappa$-symmetric realization
$h=\pol_{\kappa}(g)$, the polyhedral component $\C_{h}^{(d-1)}$
is NRT.
\end{enumerate}
\end{prop}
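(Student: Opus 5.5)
The plan is to prove \eqref{enu:for-a--symmetric} first and then deduce \eqref{enu:the-top-homogeneous} from it, working throughout with a $\kappa$-symmetric realization $h=\pol_{\kappa}(g)\in\G$ given by Theorem~\ref{thm:kappa-symmetric-realization}. Let $\mu$ be the diagonal map, so that $g=\mu^{*}h$ and $\C_{g}=\mu^{-1}(\C_{h})$. By Corollary~\ref{cor:onemore}, every $\pdv^{\alpha}h$ with $|\alpha|=d-1$ is an affine function $\a_{\alpha}\cdot\y-b_{\alpha}$ with $\a_{\alpha}\geq0$. Hence $\C_{h}^{(d-1)}$ is exactly of the form treated in Lemma~\ref{lem:polyhedral-dual}.

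By that lemma, $\C_{h}^{(d-1)}$ fails to be NRT if and only if some variable $y_{ij}$ has $\a_{\alpha}\cdot\e_{ij}=0$ for all $|\alpha|=d-1$. This means that $y_{ij}$ occurs in no degree-$d$ monomial of $h$. Since $h$ is symmetric in the block $(y_{i1},\dots,y_{i\kappa_{i}})$, no variable of block $i$ occurs in $h^{\mathrm{top}}$. Because $\proj_{\kappa}(h^{\mathrm{top}})=g^{\mathrm{top}}$, this is the statement that $g^{\mathrm{top}}$ is independent of $x_{i}$. So \eqref{enu:for-a--symmetric} reduces to the following claim: if $g$ is NRT, then for every $i$ the top part $g^{\mathrm{top}}$ genuinely involves $x_{i}$.

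To prove the claim, I argue by contradiction. Suppose $g^{\mathrm{top}}$ does not involve $x_{i}$. Expanding $g=\sum_{k}x_{i}^{k}q_{k}$, we then have $\deg q_{k}\le d-1-k$ for every $k\geq1$. I would study $g$ along points $p_{\lambda}-t\e_{i}$ with $p_{\lambda}=\x_{0}+\lambda\mathbf{1}$, $\x_{0}\in\C_{g}$ and $\lambda\to\infty$. The degree-$d$ part contributes $\lambda^{d}g^{\mathrm{top}}(\mathbf{1}+\cdots)$, independent of $t$, while the remaining terms are $O(\lambda^{d-1-k}t^{k})$.

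The main obstacle is controlling all $t>0$ at once, since large $t$ is not handled by this scaling. To overcome it, I would use the Gårding structure in the $x_{i}$ direction rather than crude bounds. By Theorem~\ref{thm:diff-closure}, each $\pdv_{i}^{k}g$ is Gårding with $\C_{\pdv_{i}^{k}g}\supseteq\C_{g}$. On the multi-affine side, $\C_{h}=\pi^{-1}(\C_{\pdv h})\cap\{h>0\}$ (Lemma~\ref{lem:Suppose-that--2}), and iterating down to the polyhedral level suggests that the only obstruction to going to $-\infty$ in $\e_{ij}$ comes from the hyperplanes $\a_{\alpha}\cdot\y=b_{\alpha}$. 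If none of them involves $y_{ij}$, I would try to show inductively in $k$ that the component $\C_{h}^{(k)}$ is invariant under $-\e_{ij}$. The inductive step combines Proposition~\ref{prop:testing method} with the fact that $h$ is affine in $y_{ij}$ with coefficient $\pdv_{ij}h$ of degree $<d-1$. This would contradict the NRT of $\C_{g}=\mu^{-1}(\C_{h})$, which I pull back to $\C_{h}$ via the symmetric block.

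For \eqref{enu:the-top-homogeneous}, Proposition~\ref{prop:truncateLet--be} gives $h^{\mathrm{top}}\in\G_{+}$, and the diagonal map is strictly positive, so $g^{\mathrm{top}}=\mu^{*}h^{\mathrm{top}}$ is Gårding. For the NRT property, note that $(h^{\mathrm{top}})^{(d-1)}$ has the same linear forms $\a_{\alpha}$ with $b_{\alpha}=0$. By \eqref{enu:for-a--symmetric} and Lemma~\ref{lem:polyhedral-dual}, its polyhedral component is NRT. Since $\C_{h^{\mathrm{top}}}$ is contained in this component, it is NRT by Lemma~\ref{lem:basic}~\eqref{enu:NRTlabel}, and pulling back along the diagonal preserves NRT.
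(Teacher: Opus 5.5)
Your reduction is correct. By Lemma~\ref{lem:polyhedral-dual} and the block symmetry of $h=\pol_{\kappa}(g)$, part (2) is equivalent to the claim that $g^{\mathrm{top}}$ involves every $x_i$. Your derivation of part (1) from part (2) is also fine (truncation, pullback along the diagonal, Lemma~\ref{lem:basic}).

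The gap is that this claim, which carries all the content of the proposition, is never proved. You abandon the scaling argument yourself. The proposed induction, that $\C_h^{(k)}$ is invariant under $-\e_{ij}$ for $k=d-1,\dots,0$, does not go through as described:
\begin{itemize}
\item Proposition~\ref{prop:testing method} only yields $\C_{\pdv^{\alpha}h}=\C^{(1)}_{\pdv^{\alpha}h}\cap\{\pdv^{\alpha}h>0\}$. So invariance of the level above says nothing about whether $\pdv^{\alpha}h$ itself depends on $y_{ij}$. For example, $f=y_1$ has $\C_f^{(1)}=\R^n$ while $\C_f=\{y_1>0\}$.
\item The hypothesis on $h^{\mathrm{top}}$ would therefore have to enter at every level. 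But it does not pass to $(\pdv^{\alpha}h)^{\mathrm{top}}$ once $\deg\pdv^{\alpha}h<d-|\alpha|$.
\end{itemize}
Your observation that $\pdv_{ij}h$ has degree $<d-1$ is the right one, but you never say what it contradicts.

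The missing ingredient is the degree constraint for two-term decompositions of multi-affine G\aa rding polynomials. Write $h=y_{ij}\,p+q$ with $p=\pdv_{ij}h$ and $q=h|_{y_{ij}=0}$. Since $\C_g$ is NRT, $g$ depends on $x_i$, so $p\not\equiv0$. Also $q\not\equiv0$, because it contains $h^{\mathrm{top}}$. By Theorem~\ref{thm:Suppose-that-is}, one of two cases holds:
\begin{itemize}
\item $p\prec q$, in which case Theorem~\ref{lem:proper-position-basic}\ref{enu:444} gives $\deg q\le\deg p+1$;
\item $-q\vtl p$, in which case Proposition~\ref{51} gives $\deg q\le\deg p$.
\end{itemize}
If $h^{\mathrm{top}}$ omitted $y_{ij}$, then $\deg q=d$ while $\deg p\le d-2$, which contradicts both cases.

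This is exactly the paper's argument. It proves (1) for multi-affine NRT polynomials via this degree bound, transfers it to $g$ through the $\kappa$-symmetric realization, and then reads off (2) from Lemma~\ref{lem:polyhedral-dual}. That is your outline with the key step supplied.
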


\begin{proof}
First, assume that $g$ is multi-affine. Then, by Proposition \ref{prop:truncateLet--be},
$g^{\mathrm{top}}\in\G$. Suppose, for the sake of contradiction, that $g^{\mathrm{top}}$
is not NRT. Then there exists $i\in[n]$ such that $\partial_{i}g^{\mathrm{top}}\equiv0$.
Writing $g(\x)=x_{i}p(\x)+q(\x)$, we have $\deg q=d$ and $\deg p\le d-2$,
since the degree-$d$ part of $g$ is independent of $x_{i}$. This
contradicts \ref{lem:proper-position-basic} \ref{enu:444}, which
excludes such a decomposition. This completes the proof of (\ref{enu:the-top-homogeneous})
for $g\in\G$. 

To finish the proof for the general \gar{} polynomial, we apply Theorem
\ref{thm:kappa-symmetric-realization} to obtain a $\kappa$-symmetric
realization $h=\pol_{\kappa}(g)$. Then $h^{\text{top}}$ is NRT
which implies (\ref{enu:the-top-homogeneous}) after taking the $\kappa$-diagonal
specialization. 

To prove (\ref{enu:for-a--symmetric}), by the first part, for each
$i$ there exists $\alpha$ with $|\alpha|=d-1$ such that $\partial^{\alpha}g(\x)=\a_{\alpha}\cdot\x-b_{\alpha}$
and $\a_{\alpha}\cdot\mathbf{e}_{i}>0$. By Lemma~\ref{lem:polyhedral-dual},
the polyhedral component $\mathcal{C}_{h}^{(d-1)}$ is NRT. 
\end{proof}

\subsection{Auxiliary classes and an approximation theorem}

We introduce 2 auxiliary classes of polynomials. They will be used
to perturb general \gar{} polynomials to have additional non-degeneracy
conditions.
\begin{defn}[Auxiliary Classes]
\label{def:class-A} Denote $\mathcal{A}$ to be the class of polynomials
$g(\x)\in\R[\x]$ such that 
\[
\partial_{i}g\in\mathrm{G}\qquad\text{for all }i\in[n].
\]

Let $\mathcal{B}$ be the subclass 
consisting of $f\in \mathcal{A}$ satisfying the additional non-degeneracy
condition 
\[\label{def:class-B}
\overline{\mathcal{C}_{\partial_{i}f}}\;\subset\;\mathcal{C}_{\partial_{i}\pdv_{j}f}\qquad\text{for all }i\neq j.
\]
\end{defn}

Note that for $g\in\mathcal{A}^{d}$ of degree $d,$ derived components
can still be defined for $k\geq1$. Furthermore, we have
\[
\C_{g}^{(1)}\;\subset\;\C_{g}^{(2)}\;\subset\;\cdots\;\subset\;\C_{g}^{(d-1)}\;\subset\;\C_{g}^{(d)}\;=\;\mathbb{R}^{n}.
\]
The special component
$\C_{g}^{(d-1)}$ is also polyhedra. Define the index set 
\[
\mathcal{I}_{g}:=\{\alpha\in\mathbb{Z}_{\ge0}^{n}:|\alpha|=d-1,\ \partial^{\alpha}g\not\equiv0\}.
\]
For each $\alpha\in\mathcal{I}_{g}$, the derivative $\partial^{\alpha}g$
is affine and can be written as 
\[
\partial^{\alpha}g(x)=\a_{\alpha}\cdot\x-b_{\alpha},\qquad\a_{\alpha}\geq0,\ \a_{\alpha}\neq0,b_{\alpha}\in\R.
\]
Therefore,
\[
\mathcal{C}_{g}^{(d-1)}:=\{\,x\in\mathbb{R}^{n}:\a_{\alpha}\cdot\x>b_{\alpha}\ \text{for all }\alpha\in\mathcal{I}_{g}\}.
\]
We call $\C_{g}^{(d-1)}$ the\emph{ polyhedral derived component}
of $g\in\mathcal{A}.$ 
\begin{thm}
\label{thm:approx-by-B} Let $g\in\mathrm{G}$ be an NRT \gar{} polynomial
with $\deg g>1$ and let $\kappa\ge\kappa_{g}$ be a multi-index.
Set $h=\pol_{\kappa}(g)$ and assume $h\in\mathcal{A}$. Then there
exists a sequence of NRT Gårding functions $\{g_{m}\}_{m\ge1}\subset\mathrm{G}$
such that $g_{m}$ converges to $g$ in the compact-open topology,
and writing $h_{m}=\pol_{\kappa}(g_{m})$, one has $h_{m}\in\mathcal{B}$
for every $m$. 
\end{thm}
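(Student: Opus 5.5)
The plan is to perturb $g$ by a small linear polynomial, chosen so that on the polarized side every first derivative $\pdv_{i}h$ drops by a fixed constant while every mixed derivative $\pdv_{i}\pdv_{j}h$ stays the same. Concretely, with $h=\pol_{\kappa}(g)$ in the variables $x_{ij}$, I will take
\[
\ell(\x):=\sum_{i=1}^{n}\kappa_{i}\,(x_{i}-a_{i}),
\]
for a constant vector $\a$ to be chosen, and define $g_{m}:=g-\frac{1}{m}\ell$. Since $\pol_{\kappa}(x_{i})=\sigma_{1}(x_{i1},\dots,x_{i\kappa_{i}})/\kappa_{i}$, we get
\[
h_{m}=\pol_{\kappa}(g_{m})=h-\tfrac{1}{m}\,\tilde\ell,\qquad \tilde\ell:=\sum_{i,j}x_{ij}-c_{\a},
\]
so every first derivative of $h_{m}$ equals the corresponding first derivative of $h$ minus $\frac1m$. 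Mixed derivatives in distinct variables are unchanged. It is clear that $g_{m}\to g$ in the compact-open topology.

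\textbf{Step 1: $g_{m}$ is an NRT G\aa rding polynomial.} By Proposition~\ref{prop:NRTrealization}, the polyhedral component $\C_{h}^{(d-1)}$ is NRT. Lemma~\ref{lem:polyhedral-dual} applied to $\v=\mathbf{1}$ then shows that $\sum x_{ij}$ is bounded below on $\overline{\C_{h}^{(d-1)}}\supset\C_{h}$. Hence I can choose $\a$ so that $\tilde\ell>0$ on $\C_{h}$, i.e.\ $\tilde\ell\vtl h$. Because $\deg h=\deg g>1=\deg\tilde\ell$, we have $\rho_{h}(\tilde\ell)=\infty$, and Theorem~\ref{thm:dominategardnew} gives $h_{m}\in\G$. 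Its proof also shows that the relevant component of $\{h_{m}>0\}$ lies inside $\C_{h}$. Since $g$ is NRT, $h$ is NRT as well (its component is the $\kappa$-symmetric lift of that of $g$), so by Lemma~\ref{lem:basic}(\ref{enu:NRTlabel}) $h_{m}$ is NRT. Taking the diagonal specialization $\proj_{\kappa}$, which is strictly positive, we conclude that $g_{m}=\proj_{\kappa}(h_{m})$ is G\aa rding with $\C_{g_{m}}\subset\C_{g}$, hence NRT. The one point to check here is that $\C_{g_{m}}$ is the diagonal slice of $\C_{h_{m}}$, which sits inside the diagonal slice of $\C_{h}$.

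\textbf{Step 2: $h_{m}\in\mathcal{B}$.} We have $\pdv_{ij}h_{m}=\pdv_{ij}h-\frac1m$ for the first derivatives. Since $h\in\mathcal{A}$, each $\pdv_{ij}h$ is G\aa rding. By Lemma~\ref{lem:strong}, or directly by the multi-affine theory (Lemma~\ref{lem:Suppose-that--2}), $\pdv_{ij}h-\frac1m$ is G\aa rding with component $\C':=$ the component of $\{\pdv_{ij}h>\frac1m\}$ inside $\C_{\pdv_{ij}h}$. This shows $h_{m}\in\mathcal{A}$.

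For the nondegeneracy condition, let $\y\in\overline{\C'}$. Then $\pdv_{ij}h(\y)\ge\frac1m>0$, and $\y$ is a limit of points of $\C_{\pdv_{ij}h}$, so $\y$ lies in the component $\C_{\pdv_{ij}h}$ itself. Thus $\overline{\C_{\pdv_{ij}h_{m}}}\subset\C_{\pdv_{ij}h}$. Next, by Theorem~\ref{thm:diff-closure} (equivalently, Lemma~\ref{lem:basic-domination}(\ref{enu:For-any-,partialsub}) applied to $\pdv_{ij}h\in\G$), $\C_{\pdv_{ij}h}\subset\C_{\pdv_{kl}\pdv_{ij}h}$. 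Finally, $\pdv_{kl}\pdv_{ij}h_{m}=\pdv_{kl}\pdv_{ij}h$ for distinct variables. Chaining these three facts gives $\overline{\C_{\pdv_{ij}h_{m}}}\subset\C_{\pdv_{kl}\pdv_{ij}h_{m}}$, which is exactly the defining condition of $\mathcal{B}$. When $\pdv_{kl}\pdv_{ij}h$ is a nonnegative constant the convention $\C=\R^{N}$ makes the condition trivial. When $\pdv_{ij}h$ is itself constant $c>0$, I will take $m$ with $\frac1m<c$, so that $\pdv_{ij}h_{m}$ stays a positive constant.

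\textbf{Main obstacle.} I expect the hardest step to be Step 1, namely ensuring that the linear perturbation is dominated by $h$ so that $g_{m}$ remains G\aa rding. This is exactly where NRT is used, through the polyhedral component and the Farkas-type Lemma~\ref{lem:polyhedral-dual}. The remaining care is bookkeeping: verifying that the G\aa rding component is compatible with the diagonal specialization, so that NRT transfers from $h_{m}$ back down to $g_{m}$.
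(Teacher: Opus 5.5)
Your Step~2 is fine and matches the paper's final step: for $h_m\in\mathcal{B}$ it only matters that the polarized perturbation has strictly positive gradient. The gap is in Step~1, and it has two independent parts.

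First, you treat $h=\pol_\kappa(g)$ as a multi-affine G\aa rding polynomial. You use its component $\C_h$, assert $\tilde\ell\vtl h$, apply Theorem~\ref{thm:dominategardnew} with $h$ as the dominating polynomial, and assert that $h$ is NRT. Proposition~\ref{prop:NRTrealization}(2) is likewise stated for a realization, so it already assumes $\pol_\kappa(g)\in\G$. The hypothesis only gives $h\in\mathcal{A}$. Moreover, $\pol_\kappa(g)\in\G$ is precisely what Theorem~\ref{thm:polarization_keeps_garding} deduces \emph{from} this approximation result, so the argument is circular. The paper instead runs the domination argument on a polynomial already known to be G\aa rding: a $\kappa_0$-symmetric realization $f\in\G$ of $g$ supplied by Theorem~\ref{thm:kappa-symmetric-realization}, with $\kappa_0$ possibly much larger than $\kappa$. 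It perturbs $f$ to $f_c=f-c\ell$ and projects to the NRT G\aa rding polynomial $g_c=g-c\,\proj_{\kappa_0}\ell$. Only then does it pass to $h_c=\pol_\kappa(g_c)=h-cL$, where it uses nothing but the shift of the first derivatives.

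Second, the direction of your linear perturbation is wrong, and this is fatal even when $h\in\G$. Lemma~\ref{lem:polyhedral-dual} bounds $\v\cdot\mathbf{z}$ below on the polyhedral component only when $\v$ is a nonnegative combination of the facet normals $\a_\alpha$. NRT only says that each coordinate is hit by some normal, i.e.\ $\sum_\alpha\a_\alpha\in\Gamma_N^{+}$. It does not put $\mathbf{1}$ (or $(\kappa_1,\dots,\kappa_n)$ on the $g$ side) in that cone.

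For a counterexample, take $g=(x+2y)^2$ and $\kappa=\kappa_g=(2,2)$. Then $g$ is real stable and NRT with $\C_g=\{x+2y>0\}$, and $h=x_1x_2+(x_1+x_2)(y_1+y_2)+4y_1y_2$ is even real stable. Nevertheless $x+y$ is unbounded below on $\C_g$. Your $g_m=(x+2y)^2-\frac{2}{m}(x+y)+\delta$, restricted to the line $(x_0+t,y_0+t)$, is a quadratic in $t$ with discriminant $\frac{24}{m}(x_0-y_0)+\frac{16}{m^2}-36\delta$. This is negative once $y_0$ is large. So $g_m$ is not real stable, and since $\GS^{2}=\S^{2}$ (Theorem~\ref{thm:stable-garding}) it is not G\aa rding, for every $m$ and every choice of $\a$.

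The remedy is the paper's choice $\ell(\mathbf{z})=\sum_\alpha(\a_\alpha\cdot\mathbf{z}-b_\alpha)$, the sum of the affine forms cutting out $\C_f^{(d-1)}$. It is positive on $\C_f^{(d-1)}\supset\C_f$ by construction, so $\ell\vtl f$ with $\rho_f(\ell)=\infty$. Its gradient $\sum_\alpha\a_\alpha$ is strictly positive by NRT, which is all your Step~2 requires.
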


\begin{proof}
We construct a linear perturbation of $h$ from the symmetric
realization of $g$.

Let $f\in\G_{\kappa_{0}}$ be a symmetric realization of $g$. By
Proposition ~\ref{prop:NRTrealization}, the derived polyhedral component
$\mathcal{C}_{f}^{(d-1)}$ is NRT. Write $\mathcal{C}_{f}^{(d-1)}=\{\mathbf{z}\in\mathbb{R}^{N}:\ A\mathbf{z}>\mathbf{b}\}$,
where the rows of $A$ are the facet normals $\mathbf{a}_{\alpha}\in\overline{\Gamma_{N}^{+}}$.
Define $\ell(\mathbf{z}):=\sum_{\alpha}(\mathbf{a}_{\alpha}\cdot\mathbf{z}-b_{\alpha})$
which is $\kappa_{0}$-symmetric. Lemma~\ref{lem:polyhedral-dual}
implies that $\nabla\ell=\sum_{\alpha}\mathbf{a}_{\alpha}\in\Gamma_{N}^{+}$
and $\ell(\mathbf{z})>0$ in $\mathcal{C}_{f}^{(d-1)}$. 

Set $f_{c}=f-c\,\ell$. Since $\C_f\subset\C_f^{(d-1)}$ and $\ell>0$ on $\C_f^{(d-1)}$, we have $\ell\vtl f$. Since $\deg f=\deg g>1$, $\rho_f (\ell)$ given in Definition~\ref{def:rho} is $+\infty$. Hence, Theorem \ref{thm:dominategardnew}
implies that $f_{c}$ is NRT multi-affine Gårding for all $c>0$.
Since $\nabla\ell>0$, each polarized partial satisfies $\partial_{x_{ij}}f_{c}=\partial_{x_{ij}}f-c\,a_{ij}$
with $a_{ij}>0$, Lemma \ref{lem:strong} and Remark \ref{rem:strong reduction}
imply that $f_{c}\in\mathcal{B}$.

Let $g_{c}=\proj_{\kappa_{0}}f_{c}$ and $\bar{\ell}=\proj_{\kappa_{0}}\ell$.
Then, $g_{c}=g-c\,\bar{\ell}\in\GS$ for all $c>0$. 

Let $h\in\pol_{\kappa}g,$ which belongs to $\mathcal{A}$ by assumption.
Set $L=\pol_{\kappa}(\bar{\ell})$. Note that $\nabla L>0$. Define 
\[
h_{c}:=\pol_{\kappa}(g_{c})=h-c\,L.
\]
Then, for any $i\in[n]$ and $j\in[\kappa_{i}],$$\partial_{x_{ij}}h_{c}=\partial_{x_{ij}}h-ca'_{ij},$
with some constants $a'_{ij}>0.$ Lemma \ref{lem:strong} and Remark
\ref{rem:strong reduction} imply that $h_{c}\in\mathcal{B}$ for all
$c>0$.

Finally, choosing any sequence $c_{m}\downarrow0$ yields a sequence that
satisfies the conclusion of the theorem. 
\end{proof}

\subsection{Closure under $\kappa$-polarization }

Theorem \ref{thm:kappa-symmetric-realization} associates every general
\gar{} polynomial $g$ with a $\kappa$-symmetric realization for some 
$\kappa\geq\kappa_{g}$. In this subsection, we show that we can always
pick $\kappa=\kappa_{g}$. 
\begin{prop}
\label{prop:n=00003D2}Let $f(x_{0},x_{1},x_{2})\in\mathcal{B}$ be
a multi-affine polynomial and symmetric in $(x_{1},x_{2})$. Let $(u_{0},u_{1},u_{2})\in\partial\mathcal{C}_{\partial_{0}f}$.
Then there exists a diagonal $u$ with $\partial_{0}f(u_{0},u,u)=0$
such that $\min\{u_{1},u_{2}\}\le u\le\max\{u_{1},u_{2}\},$ and 
\[
f(u_{0},u_{1},u_{2})\le f(u_{0},u,u).
\]
Moreover, if $u_{1}\neq u_{2}$ then $\min\{u_{1},u_{2}\}<u<\max\{u_{1},u_{2}\}.$
\end{prop}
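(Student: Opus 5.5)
Write $f=x_{0}\,p(x_{1},x_{2})+q(x_{1},x_{2})$ with $p=\pdv_{0}f$. Since $f$ is multi-affine and symmetric in $(x_{1},x_{2})$, both $p$ and $q$ are symmetric multi-affine:
\[
p=a\,x_{1}x_{2}+b(x_{1}+x_{2})+c,\qquad q=a'\,x_{1}x_{2}+b'(x_{1}+x_{2})+c'.
\]
Since $p(u_{1},u_{2})=0$ at a boundary point and $p(u,u)=0$ at the point $u$ we seek, the desired inequality $f(u_{0},u_{1},u_{2})\le f(u_{0},u,u)$ reduces to $q(u_{1},u_{2})\le q(u,u)$. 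So the plan is to study $q$ along the curve $\{p=0\}$ in the $(x_{1},x_{2})$-plane, using the variables $s=x_{1}+x_{2}$ and $m=x_{1}x_{2}$.

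\emph{Step 1 (locate $u$).} Since $f\in\mathcal{B}$, we have $\overline{\C_{\pdv_{0}f}}\subset\C_{\pdv_{0}\pdv_{i}f}$ for $i=1,2$. Hence $\pdv_{1}p,\pdv_{2}p>0$ on $\pdv\C_{p}$. So the boundary $\pdv\C_{p}$ is a connected, smooth, strictly decreasing graph $x_{2}=\varphi(x_{1})$. By symmetry of $p$, this graph is invariant under the reflection $(x_{1},x_{2})\mapsto(x_{2},x_{1})$, so it meets the diagonal in exactly one point $(u,u)$. Strict monotonicity and the reflection symmetry then give $\min\{u_{1},u_{2}\}\le u\le\max\{u_{1},u_{2}\}$, with strict inequalities when $u_{1}\ne u_{2}$.

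\emph{Step 2 (sign constraints).} By Lemma~\ref{lem:If--is-2} applied at points of $\C_{f}$, we have $a=\pdv_{0}\pdv_{1}\pdv_{2}f\ge0$ and $\pdv_{1}\pdv_{2}f=a x_{0}+a'\ge 0$ there. Also $\pdv_{1}f=x_{0}(a x_{2}+b)+(a' x_{2}+b')$ is a bivariate multi-affine \gar{} polynomial in $(x_{0},x_{2})$ by Lemma~\ref{lem:Suppose-that--2}. Example~\ref{exa:Let-.-We} then gives $ab'-a'b\le0$ when $a>0$.

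\emph{Step 3 (compare $q$).}

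\emph{Case $a=0$.} Here $p=bs+c$, so the curve is the line $s=s_{*}$ and $\C_{p}$ is the half-plane $\{s>s_{*}\}$. Taking $x_{0}\to+\infty$ inside $\C_{f}$ in $\pdv_{1}\pdv_{2}f=a'\ge0$ gives $a'\ge0$. Along the line, $q=a'm+\text{const}$, and $m\le s_{*}^{2}/4$ with equality exactly on the diagonal. Hence $q$ is maximized at $(u,u)$.

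\emph{Case $a>0$.} On $\{p=0\}$ we have $m=-(bs+c)/a$, so
\[
q=\frac{a'}{a}\,p+\frac{ab'-a'b}{a}\,s+\text{const}
\]
is affine in $s$ along the curve, with slope $\le0$ by Step~2. It remains to show that $s=x_{1}+\varphi(x_{1})$ is minimized on the branch at the diagonal. The map $s$ is critical there by symmetry. Moreover $m=-(bs+c)/a\le s^{2}/4$ holds on the whole curve, with equality only at diagonal points; on the upper branch this forces $s\ge 2u$. Alternatively one can use convexity of the upper hyperbola branch, which bounds $\C_{p}$ from below, giving $\varphi''>0$. Combining the two facts yields $q(u_{1},u_{2})\le q(u,u)$.

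The main obstacle I expect is Step~2 together with the branch geometry in Step~3: identifying that $\pdv\C_{p}$ is exactly the upper branch of the hyperbola, on which $s$ is minimal at the diagonal, and extracting the sign $ab'-a'b\le0$ from the \gar{} property of $\pdv_{1}f$. If Example~\ref{exa:Let-.-We} turns out to apply with the roles of the coefficients transposed, I would instead use the Rayleigh inequality of Lemma~\ref{lem:GardingToRayleigh} for $f$ in the pair $(x_{0},x_{1})$ at points of $\C_{f}$, and pass to the limit $x_{0}\to\infty$.
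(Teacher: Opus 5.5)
Your proposal is correct and is essentially the paper's proof. Both arguments write $f=x_0\,\partial_0 f+q$ (your $q$ is the paper's $h$) and reduce to comparing $q$ on $\{\partial_0 f=0\}$. Both treat $a=0$ through $q(u_1,u_2)-q(u,u)=-\frac{a'}{4}(u_1-u_2)^2$. For $a>0$, both use that $q$ is affine in $x_1+x_2$ along the curve, with slope $(ab'-a'b)/a\le 0$, together with $u_1+u_2\ge 2u$. The only difference is that you obtain betweenness and $u_1+u_2\ge 2u$ from the monotone-graph/upper-branch geometry, whereas the paper uses the identity $A_1A_2=A^2=b^2-ac_0$.

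One slip to fix: in Step~2 you may not invoke $\mathcal{C}_f$ or Lemma~\ref{lem:Suppose-that--2}, because $f\in\mathcal{B}$ does not make $f$ itself G\aa rding (this proposition is later used to prove exactly that). However, $\partial_1 f$ is multi-affine G\aa rding by the definition of $\mathcal{A}$. Applying Lemma~\ref{lem:If--is-2} and Example~\ref{exa:Let-.-We} to $\partial_1 f$ then gives $a\ge 0$, $a'\ge 0$ when $a=0$, and $ab'-a'b\le 0$ when $a>0$.
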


\begin{proof}
Considering symmetry, write
\[
f(x_{0},x_{1},x_{2})=ax_{0}x_{1}x_{2}+b_{0}x_{1}x_{2}+bx_{0}(x_{1}+x_{2})+c_{0}x_{0}+c(x_{1}+x_{2})+d.
\]
Then 
\[
\partial_{0}f(x_{1},x_{2})=ax_{1}x_{2}+b(x_{1}+x_{2})+c_{0},\qquad f=x_{0}\,\partial_{0}f+h,
\]
where $h(x_{1},x_{2})=b_{0}x_{1}x_{2}+c(x_{1}+x_{2})+d$.

Since $f\in\mathcal{B}$ polynomials $\partial_{0}f$ and $\partial_{1}f$
are G\aa rding with degrees at most 2 and hence real--stable. In
particular, 
\[
b^{2}-ac_{0}\ge0\qquad\text{and}\qquad bb_{0}-ac\ge0.
\]

Define $p(x)=\partial_{0}f(x,x)=ax^{2}+2bx+c_{0}$. Since $p$ is
stable, its G\aa rding component is the interval to the right of
its largest real root. Let $u$ be this boundary point, i.e. 
\[
u=\frac{-b+\sqrt{\,b^{2}-ac_{0}\,}}{a}\quad(a\ne0),\ \text{or }u=-\frac{c_{0}}{2b}\quad(a=0).
\]
Then $\partial_{0}f(u,u)=0$, so $(u_{0},u,u)\in\partial\mathcal{C}_{\partial_{0}f}$.
Because $(u_{0},u_{1},u_{2})\in\partial\mathcal{C}_{\partial_{0}f}$,
we also have $\partial_{0}f(u_{1},u_{2})=0$. Using $f=x_{0}\partial_{0}f+h$,
\[
f(u_{0},u_{1},u_{2})-f(u_{0},u,u)=h(u_{1},u_{2})-h(u,u).
\]

If $a=0$, then $\partial_{0}f=b(x_{1}+x_{2})+c_{0}$ and $\partial_{0}f(u_{1},u_{2})=0$
imply  $u=(u_{1}+u_{2})/2$. Hence $\min\{u_{1},u_{2}\}<u<\max\{u_{1},u_{2}\}$
when $u_{1}\ne u_{2}$. A direct computation gives 
\[
h(u_{1},u_{2})-h(u,u)=-\frac{b_{0}}{4}(u_{1}-u_{2})^{2}\le0,
\]
and since $\partial_{1}f$ is G\aa rding we have $b_{0}\ge0$. Thus
$f(u_{0},u_{1},u_{2})\le f(u_{0},u,u)$.

Now assume $a>0$. Set $A_{1}:=au_{1}+b$, $A_{2}:=au_{2}+b$, and
$A:=au+b$. Then $A_{1}=\partial_{02}f(u_{1},u_{2})$, $A_{2}=\partial_{01}f(u_{1},u_{2})$,
and $A=\partial_{01}f(u,u)$, all strictly positive by boundary non-degeneracy.
From $\partial_{0}f(u_{1},u_{2})=0$ one finds 
\[
A_{1}A_{2}=b^{2}-ac_{0},
\]
while by definition $A=\sqrt{b^{2}-ac_{0}}$, hence $A^{2}=A_{1}A_{2}$.
If $u_{1}\ne u_{2}$, then $A_{1}\ne A_{2}$; assuming that $A_{1}<A_{2}$
gives $A^{2}=A_{1}A_{2}<A_{2}^{2}$, so $A<A_{2}$. Since $x\mapsto ax+b$
is strictly increasing, this implies $u<u_{2}$, and symmetrically
$u>u_{1}$.

Eliminating products using $\partial_{0}f(u_{1},u_{2})=\partial_{0}f(u,u)=0$
yields 
\[
h(u_{1},u_{2})-h(u,u)=-\frac{bb_{0}-ac}{a}\,(u_{1}+u_{2}-2u).
\]
Because $bb_{0}-ac\ge0$ and $u_{1}+u_{2}-2u\ge0$, the right--hand
side is nonpositive. Hence $f(u_{0},u_{1},u_{2})\le f(u_{0},u,u)$. 
\end{proof}
\begin{prop}
\label{prop:symmetry}Let $f\in\mathcal{B}$ be multi-affine and symmetric
in $x_{1},\dots,x_{n}$. Let $(u_{0},\u)=(u_{0},u_{1},\dots,u_{n})\in\partial\mathcal{C}_{\partial_{0}f}$
and set $I=[\min_{i}u_{i},\max_{i}u_{i}]$. Then there exists $u^{*}\in I$
such that 
\[
(u_{0},\u^{*})=(u_{0},u^{*},\dots,u^{*})\in\partial\mathcal{C}_{\partial_{0}f}
\]
and 
\[
f(u_{0},u_{1},\dots,u_{n})\le f(u_{0},u^{*},\dots,u^{*}).
\]
\end{prop}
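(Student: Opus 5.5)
The plan is to reduce to Proposition~\ref{prop:n=00003D2} by a pairwise averaging (smoothing) argument, iterated inside a compact set, and then to conclude by extremality.

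Fix $u_{0}$ and consider the set
\[
K:=\{\v\in I^{n}:\ (u_{0},\v)\in\partial\mathcal{C}_{\partial_{0}f},\ f(u_{0},\v)\ge f(u_{0},\u)\}.
\]
It contains $\u$. Since the boundary of $\mathcal{C}_{\partial_0 f}$ is closed, $K$ is closed; it is also bounded, so $K$ is compact. On $K$ consider the spread functional $\Phi(\v):=\sum_{i<j}(v_{i}-v_{j})^{2}$, or alternatively $\max_i v_i-\min_i v_i$. It is continuous, so it attains its minimum at some $\v^{*}\in K$. The goal is to show $\Phi(\v^{*})=0$, which gives the desired diagonal point $u^{*}\in I$.

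Suppose instead that $v^{*}_{1}\ne v^{*}_{2}$, after relabelling using the symmetry of $f$. Freeze the remaining coordinates $v^{*}_{3},\dots,v^{*}_{n}$ and set
\[
F(x_{0},x_{1},x_{2}):=f(x_{0},x_{1},x_{2},v^{*}_{3},\dots,v^{*}_{n}).
\]
This $F$ is multi-affine and symmetric in $(x_{1},x_{2})$.

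We need $F\in\mathcal{B}$ and $(u_{0},v^{*}_{1},v^{*}_{2})\in\partial\mathcal{C}_{\partial_{0}F}$, so that Proposition~\ref{prop:n=00003D2} applies. The strategy is as follows. Lemma~\ref{lem:For-any-.} shows that restricting to a coordinate affine subspace through a point whose projection lies in the projected component preserves the Gårding property of $\partial_{i}f$ and $\partial_{ij}f$. Moreover, the components of the restriction are the slices of the original components; this follows from the connected-component argument in that lemma. Hence the inclusions $\overline{\mathcal{C}_{\partial_{i}f}}\subset\mathcal{C}_{\partial_{ij}f}$ restrict to the slices. The base point $(u_{0},\v^{*})$ lies in $\overline{\mathcal{C}_{\partial_{0}f}}\subset\mathcal{C}_{\partial_{0}\partial_{j}f}$. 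By the $\mathcal{B}$ condition, this is exactly what is needed for the frozen coordinates to project into the relevant components.

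Proposition~\ref{prop:n=00003D2} then produces $w$ with $\min(v^{*}_{1},v^{*}_{2})<w<\max(v^{*}_{1},v^{*}_{2})$ such that $\partial_{0}F(w,w)=0$ and $f(u_{0},w,w,v^{*}_{3},\dots)\ge f(u_{0},\v^{*})\ge f(u_{0},\u)$. The new point lies in $I^{n}$. Replacing the two unequal entries by a strictly intermediate common value strictly decreases $\Phi$. To see this for $\sum_{i<j}(v_i-v_j)^2$, note the following. The pair term drops to $0$. For each other index $k$, the contribution $(v_{1}-v_k)^2+(v_{2}-v_k)^2$ becomes $2(w-v_k)^2$. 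This need not decrease when $w$ is not the midpoint, which is why the choice of functional matters. The safer choice is to minimise the number of coordinates equal to the extreme values $\max_i v_i$ or $\min_i v_i$, lexicographically after the range $\max_i v_i-\min_i v_i$. Taking $v^*_1=\max$ and $v^*_2=\min$ (they differ unless everything is diagonal), the move shrinks the range or reduces the multiplicity of an extreme value. The range is continuous, so we minimise it over compact $K$; at a minimiser, repeated moves on extreme pairs eventually strictly decrease the range, which is a contradiction.

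To place the new point in $\partial\mathcal{C}_{\partial_{0}f}$ rather than merely in the zero set, I use the explicit form of $u$ in Proposition~\ref{prop:n=00003D2}: it is the largest root of the diagonal restriction. Then Lemma~\ref{lem:Suppose-that--2} together with the slice description of the component identifies it as a boundary point of $\mathcal{C}_{\partial_{0}F}$, hence of $\mathcal{C}_{\partial_{0}f}$.

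I expect the main obstacle to be the membership $F\in\mathcal{B}$ and the boundary identification. The comparison of components under restriction at boundary (rather than interior) points of $\mathcal{C}_{\partial_{0}f}$ is delicate. There I would lean on the strict inclusion $\overline{\mathcal{C}_{\partial_{i}f}}\subset\mathcal{C}_{\partial_{ij}f}$ built into $\mathcal{B}$. If needed, I would first perturb within $\mathcal{B}$ using Theorem~\ref{thm:approx-by-B} and pass to the limit by compactness.
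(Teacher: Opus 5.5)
Your proposal is correct and follows essentially the same route as the paper. Both arguments minimise the range $\max_i v_i-\min_i v_i$ over the compact set of points $\mathbf{v}\in I^n$ with $(u_0,\mathbf{v})\in\partial\mathcal{C}_{\partial_0 f}$ and $f(u_0,\mathbf{v})\ge f(u_0,\mathbf{u})$, then repeatedly apply Proposition~\ref{prop:n=00003D2} to a (minimal, maximal) pair of coordinates until an extreme value disappears and the range drops, giving a contradiction. The one difference is in checking that the frozen three-variable restriction lies in $\mathcal{B}$: the paper first replaces $u_0$ by $u_0+T$ (harmless since $\partial_0 f=0$ there) so every $\partial_\ell f$ is positive at the base point and Lemma~\ref{lem:For-any-.} applies directly, whereas your use of $\overline{\mathcal{C}_{\partial_0 f}}\subset\mathcal{C}_{\partial_0\partial_j f}$ with Lemma~\ref{lem:Suppose-that--2} does the same job, so the fallback to Theorem~\ref{thm:approx-by-B} is unnecessary.
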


\begin{proof}
Define $\phi(\x)=\max_{i\in[n]}x_{i}-\min_{i\in[n]}x_{i}$ for $\x\in I^{n}$,
and set 
\[
\mathcal{F}=\bigl\{\x\in I^{n}:\ (u_{0},\x)\in\partial\mathcal{C}_{\partial_{0}f}\ \text{and}\ f(u_{0},\x)\ge f(u_{0},u_{1},\dots,u_{n})\bigr\}.
\]
The set $\mathcal{F}\not=\emptyset$ and is compact, hence, $\phi$ attains
a minimum at some $\mathbf{v}=(v_{1},\dots,v_{n})\in\mathcal{F}$.

By the non-degeneracy condition in the definition of $\mathcal{B}$,
there exists $T\ge0$ such that 
\[
\partial_{i}f(u_{0}+T,\mathbf{v})>0\qquad\text{for all }i=1,\dots,n.
\]
Since $\partial_{0}f\equiv0$ on $\mathcal{F}$, $f(u_{0},\v)=f(u_{0}+T,\v)$
and replacing $u_{0}$ by $u_{0}+T$ does not affect $\mathcal{F}$
or $\phi$, and $\mathbf{v}$ remains a minimizer. 

Suppose that not all coordinates of $\v$ coincide. Let $w^{(1)}<\cdots<w^{(m)}$
be the distinct values among the set $\{v_{i}\}_{i\in[n]}$ with $m\ge2$,
and let 
\[
K=\{k:\ v_{k}=w^{(m)}\}.
\]
Fix an index $i$ with $v_{i}=w^{(1)}$. For each $j\in K$, freeze
every coordinate $x_{k}$ with $k\notin\{i,j\}$ to $v_{k}$ and consider
the three--variable restriction 
\[
g_{ij}(x_0,x_{i},x_{j})=f(x_0,\dots,x_{i},\dots,x_{j},\dots).
\]
Because $\partial_{\ell}f(u_{0}+T,\mathbf{v})>0$ for all frozen indices
$\ell$, the polynomial $g_{ij}$ lies in $\mathcal{B}$ by Lemma \ref{lem:For-any-.} and Proposition
\ref{prop:n=00003D2} applies and we may replace $(v_{i},v_{j})$
by $(u_{ij},u_{ij})$ with 
\[
u_{ij}\in[\min\{v_{i},v_{j}\},\max\{v_{i},v_{j}\}),
\]
hence $u_{ij}\neq w^{(m)}$. Thus, each such replacement strictly decreases
$|K|$ which is the multiplicity of the maximal value $w^{(m)}$.
After at most $|K|$ steps, the value $w^{(m)}$ disappears from the
coordinate list, so the maximum coordinate strictly decreases and
$\phi$ strictly decreases. This contradicts the minimality of $\phi(\mathbf{v})$.

Therefore, $\v=(u^{*},\dots,u^{*})$ for some $u^{*}\in I$. Since
$\v\in\mathcal{F}$, we have $(u_{0}+T,\u^{*})\in\partial\mathcal{C}_{\partial_{0}f}$.
Hence $(u_{0},\u^{*})\in\pdv\C_{\pdv_{0}f}$ and $f(u_{0},\u^{*})=f(u_{0}+T,\u^{*})$.
By the definition of $\mathcal{F}$, 
\[
f(u_{0},u_{1},\dots,u_{n})\le f(u_{0}+T,u^{*},\dots,u^{*})=f(u_{0},\u^{*}).
\]
We have finished the proof.
\end{proof}
\begin{prop}
\label{prop:POLofNRTB}Let $g\in\GS$ be NRT and $\deg g>1.$ Let
$f=\pol_{\kappa}(g)$. If $f\in\mathcal{B}$, then $f\in\GS$. 
\end{prop}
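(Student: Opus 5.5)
The plan is to verify the recursive criterion of Proposition~\ref{prop:testing method} for the multi-affine polynomial $f=\pol_{\kappa}(g)$. Since $f\in\mathcal{B}\subset\mathcal{A}$, every first partial $\partial_{x_{ij}}f$ is already G\aa rding, so condition (1) of that proposition holds. Everything therefore reduces to condition (2):
\[
f(\u)\le 0 \quad \text{for every } \u\in\partial\C_{f}^{(1)}.
\]
By Proposition~\ref{prop:Let--be-1}, $\C_f^{(1)}$ is the intersection of the sets $\pi_{ij}^{-1}(\C_{\partial_{x_{ij}}f})$. Hence a boundary point $\u$ lies on $\partial\C_{\partial_{x_{ij}}f}$ for some variable $x_{ij}$, and in the closure of all the other derived sets. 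By the $\kappa$-symmetry of $f$ I may assume this variable is $x_{i1}$, and I will write $x_0:=x_{i1}$.

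The core step is to move $\u$ to a $\kappa$-diagonal point without decreasing $f$ and without leaving $\partial\C_{\partial_0 f}$. The tool is Proposition~\ref{prop:symmetry}, applied block by block.

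First take a block $j\neq i$. Freeze every variable outside $\{x_0\}\cup\{x_{j1},\dots,x_{j\kappa_j}\}$ at its current value. The result is a multi-affine polynomial in $x_0$ and block $j$ that is symmetric in block $j$. It lies in $\mathcal{B}$ by Lemma~\ref{lem:For-any-.} together with the non-degeneracy in the definition of $\mathcal{B}$; as in the proof of Proposition~\ref{prop:symmetry}, I first shift $x_0$ by some $T\ge 0$, which is harmless because $\partial_0 f=0$ along the boundary. Proposition~\ref{prop:symmetry} then replaces block $j$ by a constant vector $(u_j^*,\dots,u_j^*)$. This keeps the point on $\partial\C_{\partial_0 f}$ and does not decrease $f$.

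Doing this for every $j\neq i$, and then for the remaining variables $x_{i2},\dots,x_{i\kappa_i}$ of block $i$, produces a point whose only possibly off-diagonal coordinate is $x_0$. At that point $\partial_0 f=0$, so $f$ is constant in the $x_0$-direction. I can therefore slide $x_0$ to $u_i^*$ without changing $f$. The final point is $\mu(\y)$, where $\mu$ is the diagonal embedding, and
\[
f(\u)\le f(\mu(\y))=g(\y),\qquad \partial_i g(\y)=\kappa_i\,\partial_{x_{i1}}f(\mu(\y))=0 .
\]

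It remains to show $g(\y)\le 0$. For this I use Theorem~\ref{thm:kappa-symmetric-realization}: it gives some $\kappa'\ge\kappa$ and a genuine multi-affine G\aa rding realization $H=\pol_{\kappa'}(g)\in\G$. The diagonal lift of $\y$ lies on $\partial\C_{\partial_{x_{i1}}H}$, since that component is the pullback of $\C_{\partial_i g}$ (Theorem~\ref{thm:diff-closure}). Because $H\in\G$, Proposition~\ref{prop:testing method} (equivalently Lemma~\ref{lem:Suppose-that--2}(\ref{enu:C_fps})) gives $H\le 0$ there. Hence $g(\y)=H(\text{diagonal lift of }\y)\le 0$, so $f(\u)\le 0$, and Proposition~\ref{prop:testing method} yields $f\in\G\subset\GS$.

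The hypotheses that $g$ is NRT and $\deg g>1$ enter through Theorem~\ref{thm:approx-by-B} and Proposition~\ref{prop:NRTrealization}. They guarantee that the relevant boundaries are nondegenerate and that the diagonal lift of $\y$ really lies on the boundary of the derived component, rather than in a degenerate region.

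I expect the main obstacle to be the iterated symmetrization. At each stage the three-variable restrictions used inside Proposition~\ref{prop:symmetry} must stay in $\mathcal{B}$, and the compact set $\mathcal{F}$ in that proof must remain nonempty after the shift $x_0\mapsto x_0+T$. Ensuring this requires checking that freezing the other blocks at points of the closures of their derived components keeps the frozen partials positive. I would first try to secure this uniformly by using the strict inclusion $\overline{\C_{\partial_i f}}\subset\C_{\partial_i\partial_j f}$ that comes with $f\in\mathcal{B}$.
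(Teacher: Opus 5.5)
Your proposal follows the paper's proof essentially step for step. You reduce via Proposition~\ref{prop:testing method} to showing $f\le 0$ on $\partial\C_{\partial_{x_{11}}f}$, symmetrize block by block with Proposition~\ref{prop:symmetry}, and slide the distinguished coordinate, along which $f$ is constant because $\partial_{x_{11}}f=0$, onto the diagonal. You then conclude with $f(\mathbf q)=g(\mathbf q^*)$ and $\mathbf q^*\in\partial\C_{\partial_1 g}$. The one difference is in that last inequality: the paper simply asserts $g(\mathbf q^*)\le 0$ ``because $g$ is G\aa rding,'' whereas you justify it by passing to a multi-affine realization $\pol_{\kappa'}(g)\in\G$ from Theorem~\ref{thm:kappa-symmetric-realization} and applying Lemma~\ref{lem:Suppose-that--2}(\ref{enu:C_fps}) there, which is a valid and slightly more careful way to close that step.
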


\begin{proof}
Let $(x_{1},\cdots,x_{n})$ be variables for $g$ and let $(x_{ij})_{i\in[n],j\in[\kappa_{i}]}$
be variables for $f$. By Proposition \ref{prop:testing method},
if $f\in\mathcal{B}$, it suffices to verify that $f\le0$ on each
boundary piece $\partial\mathcal{C}_{\partial_{ij}f}$. Without loss
of generality, we prove only the $i=1,j=1$ case. Fix an arbitrary
point $p=(u_{11},\x')\in\partial\mathcal{C}_{\partial_{11}f}$. Apply
Proposition \ref{prop:symmetry}, to produce a point $\mathbf{q}=(u_{11},\mathbf{q}')\in\partial\mathcal{C}_{\pdv_{11}f}$
such that 
\[
f(\mathbf{p})\le f(\mathbf{q}).
\]

By the coordinate replacements allowed in the lemma, we may first
make every block $i\neq1$ constant, then make all coordinates in
the first block equal except possibly the distinguished coordinate
$x_{11}$. Finally, since $\pdv_{11}f(\mathbf{q})=0$ move freely, $x_{11}$
will not affect $\pdv_{11}f(\mathbf{q})$ or $f(\mathbf{q})$.
Hence, we may assume that $\mathbf{q}$ is $\kappa$-symmetric,
i.e.\ $q_{i,1}=\cdots=q_{i,\kappa_{i}}=:q_{i}^{*}$ for each $i$.

Since $\mathbf{q}$ is $\kappa$--symmetric, by the definition of polarization
we have $f(\mathbf{q})=g(\mathbf{q}^{*})$, where $q^{*}=(q_{1}^{*},\dots,q_{n}^{*})$.
Moreover $\mathbf{q}\in\partial\mathcal{C}_{\partial_{11}f}$ implies
$\mathbf{q}^{*}\in\partial\mathcal{C}_{\partial_{1}g}$ . Because
$g$ is \gar{}, $g(\mathbf{q}^{*})\le0$, and hence 
\[
f(\mathbf{q})=g(\mathbf{q}^{*})\le0.
\]
Combining with $f(\mathbf{p})\le f(\mathbf{q})$ yields $f(\mathbf{p})\le0$.

Since $\mathbf{p}\in\partial\mathcal{C}_{\partial_{11}f}$ was arbitrary,
this shows that $f\le0$ on each $\partial\mathcal{C}_{\partial_{ij}f}$,
and therefore on $\partial\mathcal{C}_{f}^{(1)}$. By Proposition
\ref{prop:testing method}, it follows that $f$ is G{\aa}rding.
\end{proof}

\begin{thm}

\label{thm:polarization_keeps_garding}

If $g(\x)\in\GS[\x]\cap \R_\kappa[\x]$ is NRT, then
$\pol_{\kappa}(g)\in\mathcal{\GS}$. 
\end{thm}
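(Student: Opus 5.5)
The plan is to combine the approximation Theorem~\ref{thm:approx-by-B}, Proposition~\ref{prop:POLofNRTB}, and the closedness of $\G_{+}$ (Lemma~\ref{lem:G_+isclosed}). We argue by induction on $\deg g$. If $\deg g\le 1$, then $g$ is affine with nonnegative linear coefficients. Its polarization $\pol_{\kappa}(g)$ is again affine with nonnegative coefficients, so it is G\aa rding. For the inductive step with $\deg g=d>1$, set $h=\pol_{\kappa}(g)$. The first goal is to show $h\in\mathcal{A}$, i.e.\ every $\partial_{x_{ij}}h$ is G\aa rding. By symmetry of $h$ within each block, $\partial_{x_{ij}}h=\partial_{x_{i1}}h$ up to relabeling. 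Moreover $\partial_{x_{i1}}h=\kappa_i^{-1}\pol_{\kappa-e_i}(\partial_i g)$ in the remaining variables, with $x_{i1}$ absent. By Theorem~\ref{thm:diff-closure}, $\partial_i g\in\GS$ has degree $<d$ and lies in $\R_{\kappa-e_i}[\x]$.

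To apply induction, $\partial_i g$ must be NRT. If it is not, Lemma~\ref{lem::NRTpropirredIf-there-exists} (applied through a symmetric realization) shows that it is the pullback of an NRT G\aa rding polynomial in fewer variables. Polarization commutes with adding dummy variables, so the induction hypothesis still applies. Hence $\pol_{\kappa-e_i}(\partial_i g)\in\G$, and therefore $h\in\mathcal{A}$.

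With $h\in\mathcal{A}$ in hand, Theorem~\ref{thm:approx-by-B} gives NRT G\aa rding polynomials $g_m\to g$ with $h_m=\pol_{\kappa}(g_m)\in\mathcal{B}$. Note that $g_m=g-c_m\bar\ell$ still has $\deg_{x_i}\le\kappa_i$, because $\bar\ell$ is affine. Proposition~\ref{prop:POLofNRTB} then yields $h_m\in\G$ for each $m$.

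It remains to pass to the limit $h_m\to h$, which is where $\G$ can fail to be closed. To avoid this, fix a point $\mathbf{t}_0=T\mathbf{1}$ that lies in $\C_{h_m}$ for all $m$. Then $\T_{\mathbf{t}_0}^*h_m\in\G_{+}$, these converge to $\T_{\mathbf{t}_0}^*h$, and Lemma~\ref{lem:G_+isclosed} gives $\T_{\mathbf{t}_0}^*h\in\G_{+}$, hence $h\in\G$.

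To find such a uniform point, use the structure of the perturbation. We have $h_m=h-c_mL$ with $L$ affine, $\nabla L>0$, and $c_m\downarrow0$. Each $h_m$ is G\aa rding with top part $h^{\mathrm{top}}$. Since $h^{\mathrm{top}}=\pol_{\kappa}(g^{\mathrm{top}})$ and $g^{\mathrm{top}}$ is a nonzero polynomial with nonnegative coefficients (Proposition~\ref{prop:NRTrealization}), $h$ grows like $T^d$ along $T\mathbf{1}$ with positive leading coefficient.

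I expect the main obstacle to be showing that $T\mathbf{1}\in\C_{h_m}$ for a single $T$ independent of $m$. The idea is that $\C_{h_m}$ decreases as $c$ increases, because $h-cL\le h-c'L$ for $c\ge c'$ on the region where $L>0$. So it suffices to take $T$ with $T\mathbf{1}\in\C_{h_{m_0}}$ for the largest parameter $c_{m_0}$. Monotonicity then places $T\mathbf{1}$ in every $\C_{h_m}$ with $m\ge m_0$. Verifying this monotonicity needs connectivity of the segment in $c$: $h-cL>0$ at the point for all smaller $c$, and by PRT the component is then the G\aa rding one.
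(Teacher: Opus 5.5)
Your proposal is correct and follows the same route as the paper: induction on degree, $\pol_{\kappa}(g)\in\mathcal{A}$ via the identity $\partial_{x_{ij}}\pol_{\kappa}(g)=\kappa_i^{-1}\pol_{\kappa-e_i}(\partial_i g)$, approximation by Theorem~\ref{thm:approx-by-B}, Proposition~\ref{prop:POLofNRTB}, and closedness of $\G_{+}$ (Lemma~\ref{lem:G_+isclosed}). The differences are only in details the paper treats briefly. You start the induction at degree $\le 1$ and let degree $2$ run through the same machinery, where the paper cites Grace--Walsh--Szeg\"o. You explicitly handle a non-NRT $\partial_i g$ by discarding dummy variables. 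For the limit, you use a base point common to all $\C_{h_m}$, which is cleanly justified by $T\mathbf{1}+\overline{\Gamma_{|\kappa|}^{+}}\subset\C_{h_{m_0}}\cap\{L>0\}\subset\{h_m>0\}$ for $c_m\le c_{m_0}$; the paper instead translates so that $g(\mathbf{0})>0$ and reads off nonnegative coefficients of $f^{(m)}$ for large $m$.
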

\begin{proof}

Let $f=\pol_{\kappa}(g)$ and set $d=\deg g$. We argue by induction.

\emph{Base case $d\le2$.} Proposition 3.4 in \cite{BB09I} uses Grace-Walsh-Szeg\"o
Theorem \cite{grace_zeros_1902,szego_bemerkungen_1922} to show the
$\kappa$-polarization preserves real stability in degree $\le2$.
Thus, $f\in\mathcal{\GS}$.

\emph{Induction step.} Assume the statement holds for all polynomials
in $\GS$ of degree less than $d$, and let $\deg g=d>2$. Since the
polarization commutes with translation in all one direction, without loss of generality,
assume that 
$$
 g\in\GS_{+},\qquad g(\mathbf{0})>0.
$$
For each $i\in[n]$ and $j\in[\kappa_{i}]$, the polarization identity
\begin{equation}
\pdv_{x_{ij}}\pol_{\kappa}(g)(\cdots,x_{i1},\cdots,x_{i\kappa_{i}},\cdots)=\frac{1}{\kappa_{i}}\Pi_{\kappa-\mathbf{e}_{i}}^{\uparrow}(\pdv_{x_{i}}g)(\cdots,x_{i1},\cdots,\hat{x}_{ij},\cdots,x_{i\kappa_{i}},\cdots).\label{eq:polarizationIdentity}
\end{equation}
gives 
\[
\partial_{x_{ij}}f=\frac{1}{\kappa_{i}}\,\pol_{\kappa-e_{i}}\!\bigl(\partial_{x_{i}}g\bigr).
\]
Since $\partial_{x_{i}}g\in\mathcal{\GS}$ has degree at most $d-1$,
the induction hypothesis implies 
\[
\pol_{\kappa-e_{i}}\!\bigl(\partial_{x_{i}}g\bigr)\in\mathcal{\GS}.
\]
Therefore, each first partial $\partial_{x_{ij}}f\in\GS$, and $f$
is in the auxiliary class $\mathcal{A}$. 

We apply Proposition~\ref{thm:approx-by-B} to obtain  a sequence of polynomials  $g^{(m)}$ that converge to $g$ in the compact-open
topology such that $f^{(m)}=\pol_{\kappa}(g^{(m)})\in \mathcal{B}$. By the previous
paragraph, each $f^{(m)}$ satisfies the hypotheses of Proposition~\ref{prop:POLofNRTB},
hence each $f^{(m)}$ is G{\aa}rding. Since $f^{(m)}\to f$ in compact-open topology and $g(\mathbf{0})>0$,
 for large enough $m,$ $f^{(m)}\in\G_{+}.$ We apply Lemma \ref{lem:G_+isclosed}
to conclude that $f=\pol_{\kappa}(g)$ is G{\aa}rding.

This completes the induction and the proof. 
\end{proof}

\section{A recursive characterization of \gar{} polynomials}\label{sec:A-recursive-characterization}
In this section, we extend the methods of the previous section to obtain a recursive characterization of \gar{} polynomials and complete the proof of Theorem~\ref{thm:newmain}. This alternative characterization also allows us to classify univariate \gar{} polynomials, which coincide with the right Noetherian polynomials of Lin~\cite{Lin23JFA}.

We first define a broader class of polynomials with weaker requirements. Let $f(\mathbf{x})$ be a polynomial in $n$ variables. If the set $\{f>0\}$ has a connected component that contains a positive affine hyper-octant, we denote this component by $\widehat{\C_{f}}$; if $f(\mathbf{x})\equiv\text{const}\geq0$, then we set $\widehat{\C_{f}}=\mathbb{R}^{n}$; otherwise, we set $\widehat{\C_{f}}=\emptyset$.   $\widehat{\C}_{f}$ is unique by Lemma~\ref{lem:basic}.
\begin{defn}
\label{def:gard_recursive}
Fix $n\ge 1$. For degrees $d\le 2$, let $\mathcal{G}_{n}^{d}=\GS_{n}^{d}$. For degrees $d\ge 3$, let $\mathcal{G}_{n}^{d}$ be the class of $n$-variate polynomials $g(\x)$ such that
\begin{enumerate}
\item $\widehat{\C}_{g}\neq\emptyset$,
\item $\partial_{i}g\in\mathcal{G}_{n}^{\,d-1}\ \text{for all } i\in[n],$
\item $\widehat{\C}_{g}\subseteq\bigcap_{i=1}^{n}\widehat{\C}_{\partial_{x_{i}}g}$.
\end{enumerate}
Set
\[
\mathcal{G}:=\bigcup_{n\ge 1}\ \bigcup_{d\ge 0}\ \mathcal{G}_{n}^{d}.
\]
We denote
\[
\widehat{\C}_{f}^{(k)}=\bigcap_{|\alpha|=k}\widehat{\C}_{\pdv^{\alpha}f}.
\]
Then $f\in\mathcal{G}_{n}$ if and only if $\pdv_{i}f\in\mathcal{G}$ for all $i\in[n]$ and $\emptyset\neq\widehat{\C}_{f}\subseteq\widehat{\C}_{f}^{(1)}$.

Finally, $f$ is called an \emph{NRT-$\mathcal{G}$ polynomial} if $f\in\mathcal{G}$ and $\widehat{\C}_{f}$ is NRT.
\end{defn}

\begin{rem}
The existence of positive components is the central feature common in the definitions of
both $\GS$ and $\mathcal{G}.$ In the multi-affine case, $\G=\mathcal{G}^{\mathfrak{a}}$
according to Lemma \ref{lem:Suppose-that--2}. More generally, $\GS$
is defined through strictly positive invariance, while $\mathcal{G}$
is defined through the compatibility of Gårding components through a recursive
procedure. By Theorem \ref{thm:diff-closure}, $\GS\subset\mathcal{G}$.
We will show later that  $\GS=\mathcal{G}$.
\end{rem}

\subsection{From $\mathcal{G}$ to \gar{} polynomials}

We aim to prove $\mathcal{G} =\GS$. We start with some basic
properties whose proofs are parallel to those in the multi-affine
case and are omitted.
\begin{lem}
\label{lem:trivialGarding}
Suppose that $f\in\mathcal{G}_{n}$. Then
\begin{enumerate}
\item $\widehat{\C}_{f}$ satisfies the PRT;
\item $\widehat{\C}_{f}=\R^{n}$ if and only if $f\equiv c$;
\item for any $c\geq 0$, $f-c\in\mathcal{G}$.
\end{enumerate}
\end{lem}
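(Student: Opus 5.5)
The plan is to argue by induction on the degree $d=\deg f$, proving all three statements together. Alongside them I will carry the auxiliary claim
\[
(\ast)\qquad \text{for every } i\in[n],\ \text{either } \partial_i f\equiv 0 \text{ or } \partial_i f>0 \text{ on } \widehat{\C}_f .
\]
Claims (1) and (3) will follow from a monotonicity principle along positive directions, and (2) from a degree count.

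\emph{Base case $d\le 2$.} Here $\mathcal{G}^d_n=\GS^d_n$.
\begin{itemize}
\item Write $f=\mu^*g$ with $g\in\G$ and $\mu\in\A_{++}$. Then $\widehat{\C}_f=\mu^{-1}(\C_g)=\C_f$, which passes PRT by Remark~\ref{rem:pullback}.
\item By Theorem~\ref{thm:stable-garding}, $f$ is real stable. So (2) and $(\ast)$ follow from Lemma~\ref{lem:stablepolynomialcone}, items (\ref{enu:a2})--(\ref{enu:a3}), together with the argument in the proof of Theorem~\ref{thm:stable-garding}, which shows that a nonconstant such $f$ cannot have $\C_f=\R^n$.
\item For (3), note $f-c=\mu^*(g-c)$. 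For $g\in\G$ nonconstant we have $g-c\in\G$, as recorded before Lemma~\ref{lem:strong}. Hence $f-c\in\GS^d=\mathcal{G}^d$.
\end{itemize}

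\emph{Inductive step, claims $(\ast)$ and (1).} Let $d\ge 3$ and assume everything holds in degree $\le d-1$.
\begin{itemize}
\item By condition (3) of Definition~\ref{def:gard_recursive}, $\widehat{\C}_f\subseteq \widehat{\C}_{\partial_i f}$ for every $i$.
\item By the inductive $(\ast)$ applied to $\partial_i f$ and its own derivatives, together with (2), each nonzero $\partial_i f$ is positive on $\widehat{\C}_{\partial_i f}$. This gives $(\ast)$ for $f$.
\item Fix $\x\in\widehat{\C}_f$ and $\v\ge 0$. Then $\frac{d}{dt}f(\x+t\v)=\partial_{\v}f(\x+t\v)\ge 0$ as long as $\x+t\v$ stays in $\widehat{\C}_f$.
\item Let $t^*$ be the supremum of the $s$ with $\x+[0,s]\v\subset\widehat{\C}_f$. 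Monotonicity gives $f(\x+t^*\v)\ge f(\x)>0$, so $\x+t^*\v$ lies in the open set $\{f>0\}$ and is connected to $\widehat{\C}_f$ by the segment. Hence it lies in $\widehat{\C}_f$, which forces $t^*=\infty$.
\end{itemize}
This proves (1).

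\emph{Inductive step, claim (2).} Suppose $\widehat{\C}_f=\R^n$.
\begin{itemize}
\item Then $\widehat{\C}_{\partial_i f}\supseteq\R^n$ for every $i$, so by the inductive (2) each $\partial_i f$ is a constant $c_i\ge 0$.
\item Hence $f$ is affine, $f=\mathbf{c}\cdot\x+b$. Being positive on all of $\R^n$ forces $\mathbf{c}=0$.
\end{itemize}
This contradicts $d\ge 3$, so only constants have $\widehat{\C}_f=\R^n$. The converse is the convention.

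\emph{Inductive step, claim (3).} This is the step I expect to need the most care: one must show $\widehat{\C}_{f-c}\neq\emptyset$, i.e. that $f$ exceeds $c$ on some affine positive hyper-octant.
\begin{itemize}
\item Take $\x_0\in\widehat{\C}_f$ and $\mathbf{1}=(1,\dots,1)$. Since $f$ is nonconstant, some $\partial_i f\not\equiv 0$, and by $(\ast)$ it is positive on $\widehat{\C}_f$.
\item Therefore $t\mapsto f(\x_0+t\mathbf{1})$ is a polynomial with strictly positive derivative for $t\ge 0$. In particular it is nonconstant and increasing, so it tends to $+\infty$.
\item Pick $T$ with $f(\x_0+T\mathbf{1})>c$. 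By the monotonicity from (1), the whole octant $\x_0+T\mathbf{1}+\overline{\Gamma^+_n}$ lies in $\{f-c>0\}$.
\end{itemize}
It remains to check the recursive conditions for $f-c$.
\begin{itemize}
\item The component of $\{f-c>0\}$ containing this octant is contained in $\{f>0\}$ and meets $\widehat{\C}_f$. So $\widehat{\C}_{f-c}\subseteq\widehat{\C}_f\subseteq\widehat{\C}^{(1)}_f$.
\item The partial derivatives of $f-c$ coincide with those of $f$, so they lie in $\mathcal{G}^{d-1}$.
\end{itemize}
Hence $f-c\in\mathcal{G}^d_n$, which completes the induction.
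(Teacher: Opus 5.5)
The paper omits this proof, saying only that it parallels the multi-affine case. Your argument is a correct way to supply it. Note that the logic runs opposite to the multi-affine template. There, PRT is built into the definition of $\G$, and the sign of the derivatives on $\C_f$ is deduced from it via Taylor expansion and the maximum principle (Lemma~\ref{lem:If--is-2}). Here, you read off $\partial_i f\ge 0$ on $\widehat{\C}_f$ directly from condition (3) of Definition~\ref{def:gard_recursive}, and then obtain PRT and the shift property by monotonicity along nonnegative directions. Your steps for (1), and for the inductive cases of (2) and (3), are sound.

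Three things need tidying.

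First, item (3) is false for a constant $f\equiv c_0$ with $c>c_0$: then $f-c$ is a negative constant and $\widehat{\C}_{f-c}=\emptyset$. You use nonconstancy in both the base case and the inductive step, so state it as a hypothesis.

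Second, for the base case of (2), do not rely on Theorem~\ref{thm:stable-garding}. Its proof \emph{starts} from ``$f\not\equiv 0$, thus $\C_f\neq\R^n$'', which presupposes exactly what you want, so the citation is circular. A direct argument is easy. Suppose $f=\mu^*g$ is nonconstant and $\C_f=\mu^{-1}(\C_g)=\R^n$. For $\x_0\in\R^n$ and $\a\in\Gamma_n^{+}$, the points $\mu(\x_0+t\a)$, $t\in\R$, form a line in $\C_g$ with nonnegative direction. By Lemma~\ref{lem:If--is-2}, every derivative of $p(t)=f(\x_0+t\a)$ is then $\ge 0$ on all of $\R$. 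If $e=\deg p\ge 1$, then $p^{(e-1)}$ is a nonconstant affine function and is negative somewhere, a contradiction. So $p$ is constant, hence $\partial_{\a}f\equiv 0$ for every $\a\in\Gamma_n^{+}$, and $f$ is constant.

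Third, $(\ast)$ need not be carried through the induction. A nonzero $\partial_i f\in\mathcal{G}$ is positive on $\widehat{\C}_{\partial_i f}$ by the definition of $\widehat{\C}$, since negative constants are not in $\mathcal{G}$. So $(\ast)$ in degree $d$ follows from condition (3) alone, and the base-case verification can be dropped. In any case, items (\ref{enu:a2})--(\ref{enu:a3}) of Lemma~\ref{lem:stablepolynomialcone} only control $\partial_{\a}f$ for $\a\in\Gamma_n^{+}$. They would not give the coordinate dichotomy without (\ref{enu:a4}).
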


Similarly to Lemma~\ref{lem:splitting}, the restriction of $f\in\mathcal{G}$ to an affine coordinate subspace passing through a point in $\widehat{\C}_{f}$ is also in $\mathcal{G}$.

\begin{lem}
\label{lem:RestrforgeneralGrad}
If $f\in\mathcal{G}$ and $\a=(a_{1},\cdots,a_{n})\in\widehat{\C}_{f}$, then $f(a_{1},x_{2},\cdots,x_{n})\in\mathcal{G}[x_{2},\cdots,x_{n}]$.
\end{lem}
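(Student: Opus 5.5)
We argue by induction on the degree $d$ of $f$, mirroring the proof of Lemma~\ref{lem:splitting}(1). Write $\tilde f(\x'):=f(a_1,\x')$ with $\x'=(x_2,\dots,x_n)$. Define the candidate component
\[
R_f:=\pi_1\bigl(\{x_1=a_1\}\cap\widehat{\C}_f\bigr)\subset\R^{n-1}.
\]
It is nonempty because $\a\in\widehat{\C}_f$. It passes PRT because $\widehat{\C}_f$ does (Lemma~\ref{lem:trivialGarding}) and translation in the directions $\e_2,\dots,\e_n$ keeps $x_1=a_1$. Hence $R_f$ is connected by Lemma~\ref{lem:basic} and contains a positive affine hyper-octant of $\R^{n-1}$.

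The first step is to show that $R_f$ is a full connected component of $\{\tilde f>0\}$, so that $R_f=\widehat{\C}_{\tilde f}$. Openness is clear. For closedness in $\{\tilde f>0\}$, take $\y'\in\partial R_f$. Then $(a_1,\y')\in\partial\widehat{\C}_f$, and since $\widehat{\C}_f$ is a connected component of $\{f>0\}$, we get $f(a_1,\y')=0$, that is, $\tilde f(\y')=0$. So $R_f$ is open and closed in $\{\tilde f>0\}$, hence a component, and $\widehat{\C}_{\tilde f}=R_f\neq\emptyset$. If $\tilde f$ is constant it is a nonnegative constant, because it is positive at $\a'$, and there is nothing more to prove.

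The second step is the recursive conditions. For $i\ge2$ we have $\partial_i\tilde f=(\partial_i f)(a_1,\cdot)$, and $\partial_i f\in\mathcal{G}$ has degree at most $d-1$. By condition (3) of Definition~\ref{def:gard_recursive}, $\a\in\widehat{\C}_f\subseteq\widehat{\C}_{\partial_i f}$. The induction hypothesis then gives $\partial_i\tilde f\in\mathcal{G}$ with $\widehat{\C}_{\partial_i\tilde f}=R_{\partial_i f}$. From $\widehat{\C}_f\subseteq\widehat{\C}_{\partial_i f}$ we obtain $R_f\subseteq R_{\partial_i f}$, which is exactly condition (3) for $\tilde f$. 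Condition (1) was established in the first step.

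The main obstacle is the base of the induction, degrees $d\le2$, where $\mathcal{G}^d=\GS^d=\S^d$ by Theorem~\ref{thm:stable-garding}; this is needed so that the recursion has somewhere to start. Here we argue directly. Since $\tilde f$ is a restriction of a stable polynomial, it is stable by Lemma~\ref{lem:stablepolyclosurethe}. Its top-degree part is either the restriction of $f^{\mathrm{top}}$, which has nonnegative coefficients, or a lower-degree piece. To place $\tilde f$ in $\S$ rather than $-\S$, we use that $\tilde f>0$ on the positive hyper-octant contained in $R_f$: if $\tilde f$ were in $-\S$, its Gårding component would force $-\tilde f>0$ far out along positive rays, contradicting positivity on that hyper-octant. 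Then $\tilde f\in\S\subset\GS$, and by the uniqueness in Lemma~\ref{lem:basic} its component coincides with $R_f$. With the degree-$\le2$ case settled, the induction for $d\ge3$ goes through verbatim as in the first two steps.
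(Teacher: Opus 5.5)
Your Steps 1 and 2 are the right argument and follow the route the paper intends. The paper omits the proof and refers to Lemma~\ref{lem:splitting}(1), whose slice argument is your Step 1. Your inductive check of conditions (2)--(3), using $\a\in\widehat{\C}_f\subseteq\widehat{\C}_{\partial_i f}$, is exactly the extra input the recursive class requires.

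There is, however, a case your induction does not cover. In your framing, a polynomial of degree $\le 2$ belongs to $\mathcal{G}$ only through membership in $\GS^{2}$, and you establish this only when $\deg f\le 2$, by restricting a stable polynomial. But restriction can lower the degree. Take $f=x_1x_2x_3-1$ (Example~\ref{exa:nonstable-garding}) and $\a=(1,2,2)\in\widehat{\C}_f$. Then $\tilde f=x_2x_3-1$ has degree $2$, while $f$ has degree $3$ and is not stable. In this case your induction step only yields conditions (1)--(3) for $\tilde f$, and nothing in the proposal turns these into $\tilde f\in\GS^{2}$.

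The missing step is the fact that a polynomial of degree $\le 2$ satisfying (1)--(3) lies in $\GS^{2}$. It follows as in the proof of Theorem~\ref{thm:stable-garding}.

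In degree $1$ it is immediate, since the $\partial_i\tilde f$ are nonnegative constants.

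In degree $2$, argue as follows.
\begin{itemize}
\item Each $\partial_i\tilde f$ is affine with nonnegative coefficients, so the Hessian $H$ of $\tilde f$ is entrywise nonnegative and nonzero.
\item Fix $\x_0\in\R^{n-1}$ and $\v\in\Gamma_{n-1}^{+}$. Since $R_f$ contains a positive affine hyper-octant, the line $\x_0+t\v$ meets $R_f$ at some point $\x_1$.
\item The function $h(t)=\tilde f(\x_1+t\v)$ is a quadratic with leading coefficient $\frac12\v^{T}H\v>0$.
\item Suppose $h$ had no real root. Then the whole line lies in $\{\tilde f>0\}$, hence in the component $R_f\subseteq\bigcap_{i\ge 2}\widehat{\C}_{\partial_i\tilde f}$. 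This forces $h'(t)=\sum_{i\ge 2} v_i\,\partial_i\tilde f(\x_1+t\v)\ge 0$ for all $t\in\R$, which is impossible for an affine function of positive slope.
\item Hence $\tilde f(\x_0+t\v)$ is real-rooted with positive leading coefficient for every $\x_0$ and every $\v>0$. By Lemma~\ref{lem:stablepolyclosurethe}(1), $\tilde f\in\S$, and by Theorem~\ref{thm:stable-garding}, $\tilde f\in\GS^{2}$.
\end{itemize}

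With this in place your proof is complete. Since elements of $\GS^{2}=\S^{2}$ satisfy (3) by Lemma~\ref{lem:stablepolynomialcone}, the same fact even lets your induction start in degree $\le 1$. Your separate stability-based base case then becomes unnecessary.
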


The next lemma characterizes the  NRT-$\mathcal{G}$ polynomials as in Lemma~\ref{lem:Let--be}. The proof is similar.

\begin{lem}
\label{lem:NRTforgeneralgard}
Suppose that $f\in\mathcal{G}$. Then the following are equivalent:
\begin{enumerate}
\item \label{enu:-is-anNRTto1}$f$ is an NRT-$\mathcal{G}$ polynomial;
\item \label{enu:-is-anNRTto2}$f$ does not have redundant variables, i.e., $\deg_{x_{i}}f\geq 1$ for all $i\in[n]$;
\item \label{enu:-is-anNRTto3}for any $i\in[n]$, $\pdv_{i}f|_{\widehat{\C}_{f}}>0$.
\end{enumerate}
\end{lem}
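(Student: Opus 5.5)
We prove the cycle $(1)\Rightarrow(2)\Rightarrow(3)\Rightarrow(1)$, mirroring Lemma~\ref{lem:Let--be}. Throughout we use the recursive structure of $\mathcal{G}$: each $\pdv_i f\in\mathcal{G}$, and $\widehat{\C}_f\subseteq\widehat{\C}_{\pdv_i f}$. The latter inclusion gives $\pdv_i f>0$ on $\widehat{\C}_f$ whenever $\pdv_i f$ is nonconstant. The constant case is handled separately, since $\widehat{\C}_{c}=\R^n$ when $c\ge 0$.

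\emph{$(1)\Rightarrow(2)$.} Suppose $\deg_{x_i}f=0$ for some $i$. Then $f$ is invariant under translation in $\mathbf{e}_i$, so $\widehat{\C}_f$ is invariant under $\x\mapsto \x+t\mathbf{e}_i$ for all $t\in\R$. Indeed, $\widehat{\C}_f$ is a connected component of $\{f>0\}$ containing an affine hyper-octant, and its translate is again such a component meeting it, so the two coincide by uniqueness (Lemma~\ref{lem:basic}). Hence $(\{\x\}-\overline{\Gamma_n^+})\cap\widehat{\C}_f$ contains the whole ray $\x-t\mathbf{e}_i$, $t\ge0$. This contradicts NRT.

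\emph{$(2)\Rightarrow(3)$.} Fix $i$. Since $\pdv_i f\in\mathcal{G}$ and $\widehat{\C}_f\subseteq\widehat{\C}_{\pdv_i f}$, if $\pdv_i f$ is nonconstant then $\pdv_i f>0$ on $\widehat{\C}_f$ immediately. If $\pdv_i f\equiv c$, then $c\ge 0$ by membership in $\mathcal{G}$, and $c\neq0$ because $\deg_{x_i}f\ge1$. Note the multi-affine maximum-principle argument of Lemma~\ref{lem:Let--be} is not needed here: the recursive definition already supplies the strict positivity on the component.

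\emph{$(3)\Rightarrow(1)$.} This is the main step. Suppose $\widehat{\C}_f$ is not NRT. Then there are $\x_0\in\widehat{\C}_f$ and $\v\ge0$, $\v\ne0$, with $\x_0-t\v\in\widehat{\C}_f$ for all $t\ge0$; by PRT (Lemma~\ref{lem:trivialGarding}) the whole line $\x_0+t\v$, $t\in\R$, lies in $\widehat{\C}_f$. Consider $\phi(t)=f(\x_0+t\v)$. It is positive on $\R$, and $\phi'=\pdv_{\v}f>0$ along the line by (3). So $\phi$ is a strictly increasing positive polynomial on $\R$, which forces $\phi$ to have odd degree and positive leading coefficient. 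But then $\phi(t)\to-\infty$ as $t\to-\infty$, a contradiction, unless $\phi$ is constant, which $\phi'>0$ excludes.

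The argument therefore reduces to checking that an increasing polynomial bounded below on $\R$ must be constant. This is elementary: such a polynomial has odd degree if nonconstant, so it is unbounded below. The only real care required is in the degenerate constant case in $(2)\Rightarrow(3)$, and in justifying the translation-invariance argument for $(1)\Rightarrow(2)$ via uniqueness of $\widehat{\C}_f$.
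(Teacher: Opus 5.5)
Your proof is correct. It follows the same cycle $(1)\Rightarrow(2)\Rightarrow(3)\Rightarrow(1)$ as Lemma~\ref{lem:Let--be}, to which the paper defers (\emph{the proof is similar}), but each step uses a different mechanism.

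In the multi-affine template:
\begin{itemize}
\item $(1)\Rightarrow(2)$ uses that $t\mapsto f(\x_0+t\mathbf{e}_i)$ is affine, so leaving the component at some $t_0<0$ forces $\pdv_i f(\x_0)>0$.
\item $(2)\Rightarrow(3)$ uses nonnegativity of all Taylor coefficients on $\C_f$ (Lemma~\ref{lem:If--is-2}) to turn a zero of $\pdv_{i_0}f$ into independence of $x_{i_0}$.
\item $(3)\Rightarrow(1)$ restricts to the coordinate subspace spanned by the support of $\v$, and uses that a G\aa rding polynomial whose component is the whole subspace is constant.
\end{itemize}
The first two steps rely on multi-affinity. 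The third would need the $\mathcal{G}$-versions of the restriction lemma (Lemma~\ref{lem:RestrforgeneralGrad}) and of Lemma~\ref{lem:trivialGarding}(2), which the paper states without proof.

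Your replacements work in any degree and need only PRT:
\begin{itemize}
\item translation invariance of $\widehat{\C}_f$, obtained from uniqueness of the component;
\item the inclusion $\widehat{\C}_f\subseteq\widehat{\C}_{\pdv_i f}$, which is built into the recursive definition;
\item the fact that no polynomial is positive and strictly increasing on all of $\R$.
\end{itemize}
These supply exactly the adaptation the paper leaves to the reader. In exchange, the multi-affine template proves more: there an NRT component alone forces $f$ to be G\aa rding. Here membership in $\mathcal{G}$ is a standing hypothesis, and your $(2)\Rightarrow(3)$ uses it essentially.

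One step needs a line of justification; Lemma~\ref{lem:Let--be} skips it too. Failure of NRT only says that $(\{\x_0\}-\overline{\Gamma_n^+})\cap\widehat{\C}_f$ is unbounded, not that it contains a ray. To get one:
\begin{itemize}
\item Pick $\x_0-\w_k\in\widehat{\C}_f$ with $\w_k\ge0$ and $|\w_k|\to\infty$, and pass to a subsequence with $\w_k/|\w_k|\to\v$.
\item By PRT, $\x_0-t\w_k/|\w_k|\in\widehat{\C}_f$ whenever $t\le|\w_k|$. This gives $\x_0-t\v\in\overline{\widehat{\C}_f}$ for all $t\ge0$.
\item For each fixed $t$, these points are eventually $\le\x_0+\epsilon\mathbf{1}-t\v$. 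So PRT puts the whole ray $\x_0+\epsilon\mathbf{1}-t\v$, $t\ge0$, inside $\widehat{\C}_f$, and your argument applies from the base point $\x_0+\epsilon\mathbf{1}$.
\end{itemize}
Alternatively, run your argument directly on the closure ray. Then $\phi\ge0$ and $\phi'\ge0$ on $\R$, with $\phi'(0)>0$, and this already yields the contradiction.
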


\begin{cor}
\label{cor:NRTforeachvariable}
Suppose that $f\in\mathcal{G}$ and $\deg_{x_{1}}f\geq 1$. Then for any $\x_{0}\in\widehat{\C}_{f}$, there exists $T=T(\x_{0})>0$ such that $\x_{0}-T\mathbf{e}_{1}\in\partial\widehat{\C}_{f}$.
\end{cor}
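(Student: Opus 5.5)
The plan is to combine Lemma~\ref{lem:NRTforgeneralgard} with a one-variable analysis along the negative $\mathbf{e}_{1}$ direction. Fix $\x_{0}\in\widehat{\C}_{f}$ and set
\[
\phi(t):=f(\x_{0}-t\mathbf{e}_{1}),\qquad t\ge0 .
\]
Let $T:=\sup\{s\ge0:\ \x_{0}-t\mathbf{e}_{1}\in\widehat{\C}_{f}\ \text{for all } t\in[0,s]\}$. Since $\widehat{\C}_{f}$ is open, $T>0$. If $T<\infty$, then $\x_{0}-T\mathbf{e}_{1}\in\partial\widehat{\C}_{f}$, which is what we want. So the task is to rule out $T=\infty$, that is, the case where the whole negative ray from $\x_{0}$ in direction $-\mathbf{e}_{1}$ stays in $\widehat{\C}_{f}$.

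First I reduce to the case without redundant variables. Let $J\subset[n]$ be the set of indices with $\deg_{x_{j}}f\ge1$; by hypothesis $1\in J$. Since $f$ does not depend on $x_{k}$ for $k\notin J$, its positivity set and the component containing a positive affine hyper-octant are cylinders over the corresponding objects for the polynomial $\tilde f$ in the variables $x_{J}$. Hence $\widehat{\C}_{f}=\widehat{\C}_{\tilde f}\times\R^{[n]\setminus J}$. Also $\tilde f\in\mathcal{G}$: its partial derivatives are those of $f$ viewed in fewer variables, and I would verify this inductively on degree, exactly as in Lemma~\ref{lem::NRTpropirredIf-there-exists}. By Lemma~\ref{lem:NRTforgeneralgard}, (2)$\Rightarrow$(1), $\tilde f$ is an NRT-$\mathcal{G}$ polynomial, so $\widehat{\C}_{\tilde f}$ is NRT. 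It is enough to work with $\tilde f$ and the projection of $\x_{0}$, because boundary points of $\widehat{\C}_{\tilde f}$ lift to boundary points of $\widehat{\C}_{f}$ along the same $\mathbf{e}_{1}$-line.

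Now NRT gives the conclusion directly. The set $(\{\x_{0}\}-\overline{\Gamma^{+}})\cap\widehat{\C}_{\tilde f}$ is bounded and contains $\x_{0}-t\mathbf{e}_{1}$ for every $t\in[0,T)$. So $T<\infty$, and therefore $\x_{0}-T\mathbf{e}_{1}\in\partial\widehat{\C}_{f}$. I would add one consistency check. By Lemma~\ref{lem:NRTforgeneralgard}(3), $\phi'(t)=-\pdv_{1}f(\x_{0}-t\mathbf{e}_{1})<0$ on $[0,T)$, so $\phi$ decreases strictly to $\phi(T)=0$. This shows the boundary is reached transversally, and $f=0$ there.

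The main obstacle is the reduction step: showing that deleting the redundant variables keeps $\tilde f$ in $\mathcal{G}$, and that $\widehat{\C}_{f}$ really splits as a product. The product structure needs $\{f>0\}=\{\tilde f>0\}\times\R^{[n]\setminus J}$ together with connectedness of the fibers, and both are immediate. Membership in $\mathcal{G}$ follows by induction on degree, using that $\pdv_{k}f\equiv0$ for $k\notin J$ imposes no condition (by convention $\widehat{\C}_{0}=\R^{n}$).
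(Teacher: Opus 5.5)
Your proof is correct, but it takes a different route from the paper.

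\emph{The paper's route.} The paper never deletes variables. It fixes $\x_{0}\in\widehat{\C}_{f}$ and uses Lemma~\ref{lem:RestrforgeneralGrad}, iterated over the other coordinates, to see that $g(t)=f(\x_{0}+t\mathbf{e}_{1})$ is a univariate member of $\mathcal{G}$. A nonconstant univariate member of $\mathcal{G}$ has component $(r(g),\infty)$, because $\widehat{\C}_{g}=\R$ would force $g$ to be constant by Lemma~\ref{lem:trivialGarding}. So one can take $T=-r(g)$. The constant case is excluded because $\deg_{x_{1}}f\ge1$ gives $\pdv_{1}f(\x_{0})>0$ via Lemma~\ref{lem:NRTforgeneralgard}.

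\emph{Your route.} You use the direction (2)$\Rightarrow$(1) of Lemma~\ref{lem:NRTforgeneralgard} to get that the whole component is NRT. You then read off the exit point from openness and the supremum.

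\emph{What each buys.} The paper's argument is shorter and identifies $T$ concretely as minus the largest root of the restricted polynomial. Strictly speaking, though, it applies Lemma~\ref{lem:NRTforgeneralgard} under the hypothesis $\deg_{x_{1}}f\ge1$ alone, whereas the lemma as stated assumes $f$ has no redundant variables at all. Your deletion step is what makes the lemma applicable verbatim. That step uses the splitting $\widehat{\C}_{f}=\widehat{\C}_{\tilde f}\times\R^{[n]\setminus J}$ and proves $\tilde f\in\mathcal{G}$ by induction on degree. For the base case $d\le2$, write $\tilde f$ as the pullback of $f$ by the strictly positive linear map that sends each deleted coordinate to a copy of some $x_{j_{0}}$ with $j_{0}\in J$. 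So your version costs a little bookkeeping but respects the lemma's hypotheses. Your closing consistency check via (3) is not needed for the conclusion.
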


\begin{proof}
For any $f\in\mathcal{G}$ and $\x_{0}\in\widehat{\C}_{f}$, by Lemma~\ref{lem:RestrforgeneralGrad}, $g(t):=f(\x_{0}+t\mathbf{e}_{1})\in\mathcal{G}[t]$. If $g$ is nonconstant, then the conclusion follows. Otherwise, if $\deg_{x_{1}}f\geq 1$, then Lemma~\ref{lem:NRTforgeneralgard} implies that $\pdv_{1}f(\x_{0})>0$, and hence $g$ is an NRT-$\mathcal{G}$ polynomial. This completes the proof.
\end{proof}

If $f$ is an NRT-$\mathcal{G}$ polynomial with $\deg f\geq 2$, we show that $\widehat{\C}_{f}^{(1)}$ is an NRT component.
\begin{lem}
\label{lem:linear-in-y} Let $g(\x)\in \R[\x]$ 
and let $c>0$. If the polynomial  $G(\x,y):=g(\x)+c\,y$
belongs to $\mathcal{G}$, then $g(\x)$ is affine linear; equivalently,
\[
\partial_{x_{i}}\partial_{x_{j}}g\equiv0\qquad\text{for all }i,j.
\]
\end{lem}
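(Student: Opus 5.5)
The plan is to show that the distinguished component $\widehat{\C}_{G}$ projects onto all of $\R^{n}$, and then to read off from the nesting condition (3) of Definition~\ref{def:gard_recursive} that every first partial $\partial_{x_{i}}g$ has component equal to $\R^{n}$. Such a partial must then be a constant.

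\emph{Step 1: identify $\widehat{\C}_{G}$.} The positivity set is
\[
\{G>0\}=\{(\x,y):\ y>-g(\x)/c\},
\]
the region strictly above the graph of the continuous function $-g/c$. It is connected: each vertical fibre is a half-line, and the base is all of $\R^{n}$. It also contains a positive affine hyper-octant. For any $\x_{0}$, the set $g(\x_{0}+\overline{\Gamma_{n}^{+}})$ need not be bounded, but we do not need that. Since $G\in\mathcal{G}$ and the degree-$\le2$ case is covered by $\GS$, $\widehat{\C}_{G}\neq\emptyset$ exists by definition. Being a connected component of $\{G>0\}$, and $\{G>0\}$ being connected, we get $\widehat{\C}_{G}=\{G>0\}$. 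In particular the projection $\pi_{y}(\widehat{\C}_{G})=\R^{n}$, since for each $\x$ all sufficiently large $y$ work.

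\emph{Step 2: use the nesting.} First suppose $\deg G\ge3$. Condition (3) gives
\[
\widehat{\C}_{G}\subseteq\widehat{\C}_{\partial_{x_{i}}G}\quad\text{for each }i .
\]
Here $\partial_{x_{i}}G=\partial_{x_{i}}g$ is independent of $y$, so its positivity components are cylinders $R\times\R$, and $\widehat{\C}_{\partial_{x_{i}}G}=\widehat{\C}_{\partial_{i}g}\times\R$. By Step 1, $\widehat{\C}_{\partial_{i}g}\supseteq\pi_{y}(\widehat{\C}_{G})=\R^{n}$. Moreover $\partial_{x_{i}}G\in\mathcal{G}$, so Lemma~\ref{lem:trivialGarding}(2), applied in the variables $(\x,y)$, forces $\partial_{x_{i}}g\equiv c_{i}\ge0$. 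Hence $\partial_{x_{i}}\partial_{x_{j}}g\equiv0$ for all $i,j$, and $g$ is affine linear.

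In the case $\deg G\le2$ we have $G\in\GS$ by definition of $\mathcal{G}^{d}$. The same inclusion $\C_{G}\subseteq\C_{\partial_{x_{i}}G}$ is supplied by Theorem~\ref{thm:diff-closure}, and the conclusion follows from the convention that a Gårding component equals $\R^{n}$ only for a nonnegative constant.

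\emph{Main obstacle.} The only delicate point is the identification $\widehat{\C}_{G}=\{G>0\}$, together with the cylinder description of $\widehat{\C}_{\partial_{x_{i}}G}$. If $\partial_{i}g$ is nonconstant, we need its distinguished component in $\R^{n+1}$ to be $\widehat{\C}_{\partial_{i}g}\times\R$, which is a proper subset of $\R^{n+1}$. This holds because components of $\{\partial_{i}g>0\}\subset\R^{n+1}$ are exactly products of components in $\R^{n}$ with $\R$, and hyper-octant containment passes between the two. Once this is checked, the rest is immediate.
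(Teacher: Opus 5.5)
Your proof is correct and is essentially the paper's argument. The paper also uses that $\widehat{\C}_{G}=\{y>-g(\x)/c\}$ contains $(\x,y)$ for every $\x$ once $y$ is large, while $\widehat{\C}_{\partial_{x_i}G}$ is a $y$-invariant cylinder; it phrases this as a contradiction at a boundary point $\x^{*}\in\partial\widehat{\C}_{\partial_{x_i}G}$, whereas you argue directly via the projection and Lemma~\ref{lem:trivialGarding}(2). Your separate treatment of $\deg G\le 2$ through Theorem~\ref{thm:diff-closure} makes explicit a case the paper leaves implicit, though your aside that $\{G>0\}$ ``contains a positive affine hyper-octant'' holds only because $G\in\mathcal{G}$, which is how you then correctly justify it.
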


\begin{proof}
Since $G(\x,y)=g(\x)+c\,y$ with $c>0$, then 
\begin{equation}
\widehat{\mathcal{C}_{G}}=\{(\x,y)\in\mathbb{R}^{n+1}:\ y>-g(\x)/c\}.\label{eq:ppp}
\end{equation}
Suppose, toward a contradiction, that $\deg g\ge2$. Then there exists
an index $i$ such that $\partial_{x_{i}}g\not\equiv0$, and hence
$\partial_{x_{i}}G=\partial_{x_{i}}g$ is a non-constant polynomial
depending only on $\x$. Choose a point $\x^{\ast}\in\partial\widehat{\mathcal{C}_{\partial_{x_{i}}G}}$.
Because $\partial_{x_{i}}G$ is independent of $y$, the boundary
$\partial\mathcal{C}_{\partial_{x_{i}}G}$ is invariant under translation
in the $y$--direction, and therefore $(\x^{\ast},y)\in\partial\mathcal{C}_{\partial_{x_{i}}G}$
for all $y\in\mathbb{R}$. On the other hand, for all sufficiently
large $y$, by \eqref{eq:ppp}, $(\x^{\ast},y)\in\widehat{\C_{G}}$.
This contradicts the condition $\widehat{\mathcal{C}_{G}}\subseteq\widehat{\mathcal{C}_{\partial_{x_{i}}G}}$
required for membership in $\mathcal{G}$. Hence $\deg g\le1$, and
$g$ is affine linear. 
\end{proof}
\begin{lem}
\label{lem:top-nrt-affine} Let $g$ be an NRT-$\mathcal{G}$ polynomial
of degree $d$. Then for $k\leq d-1$, 
$\widehat{\C_{g}}^{(k)}$ is NRT.
\end{lem}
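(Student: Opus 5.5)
The plan is to show that at the polyhedral level $k=d-1$, every coordinate direction is controlled by some facet normal. The nesting of derived components then propagates the NRT property down to every $k\le d-1$.

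\emph{Reduction to $k=d-1$.} By condition (3) of Definition~\ref{def:gard_recursive}, applied recursively to the derivatives $\partial^\alpha g\in\mathcal{G}$, we have $\widehat{\C}_{\partial^\alpha g}\subseteq \widehat{\C}_{\partial^{\alpha+e_i} g}$. Hence $\widehat{\C}_g^{(k)}\subseteq\widehat{\C}_g^{(k+1)}$ whenever the left side is nonempty. Nonemptiness holds because each component passes PRT (Lemma~\ref{lem:trivialGarding}), so any finite intersection is nonempty by Lemma~\ref{lem:basic}. By Lemma~\ref{lem:basic}\,(\ref{enu:NRTlabel}), an open subset of an NRT set is NRT. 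It therefore suffices to prove that the top nontrivial level $\widehat{\C}_g^{(d-1)}$ is NRT. Intersections of open PRT sets are open, so this is legitimate.

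\emph{The polyhedral level.} For $|\alpha|=d-1$, each nonzero $\partial^\alpha g$ lies in $\mathcal{G}^1=\GS^1$, so it has the form $\a_\alpha\cdot\x-b_\alpha$ with $\a_\alpha\ge0$. Thus $\widehat{\C}_g^{(d-1)}=\{\a_\alpha\cdot\x>b_\alpha\}$, and by Lemma~\ref{lem:polyhedral-dual} it is NRT iff for each $i$ some $\a_\alpha$ has $i$-th entry positive. Equivalently, for each $i$ there must be $\alpha$ with $|\alpha|=d-1$ and $\partial_i\partial^\alpha g\not\equiv0$, i.e.\ $\partial_i g$ must have degree exactly $d-1$. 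So the claim reduces to this statement: for every $i$, the degree-$d$ part of $g$ depends on $x_i$.

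\emph{Key step (main obstacle).} Suppose instead that $\deg\partial_i g\le d-2$ for some $i$. By Lemma~\ref{lem:NRTforgeneralgard}, $\deg_{x_i}g\ge1$, so $\partial_i g\not\equiv0$. I would then pass to an affine two-variable slice in order to contradict Lemma~\ref{lem:linear-in-y}. Pick a direction $j$ and a multi-index $\beta$ with $|\beta|=d-2$ not involving $i$ for which $\partial^\beta g$ has degree $2$ and genuinely depends on its degree-$2$ part in $x_j$. Such a choice exists because $g^{\rm top}$ has degree $d\ge2$ and is independent of $x_i$. Then $G:=\partial^\beta g\in\mathcal{G}$ has degree $2$, so it lies in $\GS^2=\S^2$ by Theorem~\ref{thm:stable-garding}, and its quadratic part does not involve $x_i$. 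Restricting $G$ through a point of $\widehat{\C}_G$ to the coordinates $(x_j,x_i)$ via Lemma~\ref{lem:RestrforgeneralGrad} gives a polynomial of the form $q(x_j)+c\,x_i$ with $\deg q=2$ and $c=\partial_iG$ constant. If $c>0$, this contradicts Lemma~\ref{lem:linear-in-y}. If $c=0$ for every such $\beta$, then $\partial_i\partial^\beta g\equiv0$ for all of them, and I would iterate on lower-order derivatives. Alternatively, I would use NRT along $x_i$ (Corollary~\ref{cor:NRTforeachvariable}) together with the inclusion $\widehat{\C}_g\subseteq\widehat{\C}^{(d-2)}_g$ to reach the same contradiction: along $-\mathbf e_i$ the constraints from the degree-$2$ slices never terminate, while the lower-degree terms in $x_i$ must dominate.

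Making this last dichotomy rigorous is the main obstacle. Concretely, the task is to find a $\beta$ for which $\partial_i\partial^\beta g$ is a positive constant while $\partial^\beta g$ is still quadratic in another variable. I expect to handle it by choosing $\beta$ to maximize the $x_i$-degree among monomials of $\partial_i g$ of top degree $d-2$.
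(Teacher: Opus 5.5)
\textbf{Verdict: the proposal has a genuine gap at its key step.} Your reduction is the paper's. The nesting of the derived components together with Lemma~\ref{lem:basic}\,(\ref{enu:NRTlabel}) reduces the statement to $k=d-1$. Lemma~\ref{lem:polyhedral-dual} then reduces it to the claim that $g^{\rm top}$ depends on every $x_i$. The intended contradiction is with Lemma~\ref{lem:linear-in-y}, via a derivative $\partial^\beta g=p(\hat{\mathbf x})+c\,x_i$ with $c>0$ and $\deg p\ge2$.

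The gap is the existence of that $\beta$, which you leave open.
\begin{itemize}
\item Your primary choice $|\beta|=d-2$ fails whenever every monomial of $g$ containing $x_i$ has degree at most $d-2$. In that case $\partial_i\partial^\beta g\equiv0$ for every such $\beta$, so you are always in the $c=0$ branch.
\item Your fallback is to take $\gamma$ maximal among support monomials containing $x_i$ and set $\beta=\gamma-\mathbf e_i$. This does give $\partial^\beta g=p+c\,x_i$ with $p$ free of $x_i$ and $c>0$, but it gives no control on $\deg p$.
\item The support pattern alone cannot produce such a $\beta$. The polynomial $x_1^3+x_2^2$ has top part free of $x_2$ and depends on $x_2$, yet none of its derivatives has the form $p(x_1)+c\,x_2$ with $c>0$ and $\deg p\ge2$. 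It is excluded from $\mathcal G$ by the inclusion $\widehat{\C}_g\subseteq\widehat{\C}_{\partial_2 g}=\{x_2>0\}$, not by Lemma~\ref{lem:linear-in-y}.
\end{itemize}
The paper's proof also just asserts that a suitable $\beta$ with $|\beta|\le d-1$ can be chosen, so it does not supply this step either. Your ``alternatively'' sentence gestures at the right mechanism but is not an argument.

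One way to close the gap is to prove the following by induction on $D=\deg G\ge2$: if $G\in\mathcal G$ and $G^{\rm top}$ is free of $x_i$, then $G$ is free of $x_i$.
\begin{itemize}
\item \emph{Base case $D=2$.} Here $x_i$ enters only through $c\,x_i$ with $c=\partial_iG\ge0$. Lemma~\ref{lem:linear-in-y} forces $c=0$; no two-variable slice is needed.
\item \emph{Step $D\ge3$.} Let $J$ be the set of variables occurring in $G^{\rm top}$ and $K$ the remaining ones other than $x_i$. For $j\in J$, $\partial_jG\in\mathcal G$ has top part $\partial_jG^{\rm top}$ of degree $D-1$, free of $x_i$. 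By induction $\partial_jG$ is free of $x_i$, so $\partial_iG$ depends only on $(x_K,x_i)$.
\item \emph{Projection.} Since $\widehat{\C}_G$ contains a positive affine hyper-octant, $G^{\rm top}\ge0$ on $\Gamma_n^+$. Pick $\mathbf v_J>0$ with $G^{\rm top}(\mathbf v_J)>0$. Then $G(T\mathbf v_J,x_K,x_i)>0$ for large $T$, uniformly for $(x_K,x_i)$ in any compact box. Taking boxes that meet the projection of the hyper-octant, $\widehat{\C}_G$ projects onto all of the $(x_K,x_i)$-space.
\item \emph{Contradiction.} If $\partial_iG$ is nonconstant, then $\widehat{\C}_{\partial_iG}=\R^J\times W$ with $W$ a proper subset by Lemma~\ref{lem:trivialGarding}. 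This contradicts condition (3) of Definition~\ref{def:gard_recursive}. If instead $\partial_iG\equiv c>0$, Lemma~\ref{lem:linear-in-y} applies directly.
\end{itemize}
Apply this to $G=g$, which depends on every variable by Lemma~\ref{lem:NRTforgeneralgard}. This gives the needed claim, and the rest of your argument goes through.
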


\begin{proof}
Denote $g^{\mathrm{top}}$
for the homogeneous top--degree part of $g$. We claim
that for any  $i\in[n]$, 
\begin{equation}
\partial_{x_{i}}g^{\mathrm{top}}\not\equiv0.\label{eq:aaa}
\end{equation}

Assuming (\ref{eq:aaa}), for each $i\in[n]$, there exists a multi-index
$\alpha$ with $|\alpha|=d-1$ such that $\nabla\partial^{\alpha}g=\mathbf{a}_{\alpha}$
has a strictly positive $\mathrm{\mathbf{e}}_{i}$--direction. By
Lemma \ref{lem:polyhedral-dual}, this implies that $\widehat{\mathcal{C}_{g}}^{(d-1)}$
is an NRT polyhedral component. By Lemma \ref{lem:basic} (\ref{enu:NRTlabel}),
for $k<d-1$, $\widehat{\C_{g}}^{(k)}\subset\widehat{\C_{g}}^{(d-1)}$ is also NRT.

It remains to prove (\ref{eq:aaa}). Suppose, in contradiction,
that for some $i$ the polynomial $g^{\mathrm{top}}$ contains no
$x_{i}$. Then every monomial of $g$ that involves $x_{i}$ has degree
$\leq d-1$. Choose a multi-index $\beta$ with $|\beta|\leq d-1$
such that 
\[
\partial^{\beta}g=p(x)+c\,x_{i},
\]
where $c>0$ is a constant and $\deg p>1$. Since $g\in\mathcal{G}$
and $\mathcal{G}$ are closed under partial differentiation, $\partial^{\beta}g\in\mathcal{G}$
. This contradicts Lemma~\ref{lem:linear-in-y}. Hence, (\ref{eq:aaa})
holds.  
\end{proof}
Similarly to Theorem \ref{thm:approx-by-B}, we state an approximation
result for NRT-$\mathcal{G}$ polynomials. 
\begin{prop}
\label{prop:approxiG}
Let $f(\mathbf{x})$ be an NRT-$\mathcal{G}$ polynomial. Let
\[
\ell(\mathbf{x}) = \sum_{i=1}^{n} a_i x_i + b
\]
with $a_i \ge 0$. If $\ell(\mathbf{x})\big|_{\widehat{\mathcal{C}}_{f}^{(1)}} \ge 0$, then there exists $\epsilon > 0$ such that $f(\mathbf{x}) - \epsilon \ell(\mathbf{x})$ is also an NRT-$\mathcal{G}$ polynomial. If $\mathbf{0} \in \widehat{\mathcal{C}}_{f}$, we may also assume that $\mathbf{0} \in \widehat{\mathcal{C}}_{f - \epsilon \ell}$.
\end{prop}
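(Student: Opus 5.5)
Approach: induction on the degree $d=\deg f$, imitating the proof of Theorem~\ref{thm:approx-by-B}: since $\ell$ is affine with $\nabla\ell\ge 0$, the perturbation $f-\epsilon\ell$ changes only the first-order and constant parts. Then $\partial^{\alpha}(f-\epsilon\ell)=\partial^{\alpha}f$ for $|\alpha|\ge2$, and $\partial_i(f-\epsilon\ell)=\partial_i f-\epsilon a_i$. I will check the three conditions of Definition~\ref{def:gard_recursive} in turn, together with NRT.

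\emph{Derivatives.} For each $i$, $\partial_i f\in\mathcal{G}$ and $\partial_i f-\epsilon a_i\in\mathcal{G}$ by Lemma~\ref{lem:trivialGarding}(3), since $\epsilon a_i\ge0$. So condition (2) holds for every $\epsilon>0$. Moreover $\widehat{\C}_{\partial_i f-\epsilon a_i}$ is contained in $\widehat{\C}_{\partial_i f}$, and it exhausts $\widehat{\C}_{\partial_i f}$ as $\epsilon\downarrow0$. When $d=2$ the derivatives are affine and all of this is explicit. In that case I would instead argue through stability, using Theorem~\ref{thm:stable-garding}: an affine shift of a real stable quadratic by $-\epsilon\ell$ with small $\epsilon$ stays in $\GS^2$, because the PRT component persists by the argument below.

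\emph{Main step: existence of $\widehat{\C}_{f-\epsilon\ell}$ and the inclusion (3).} Set $F=f-\epsilon\ell$. Since $\ell\ge0$ on $\widehat{\C}_f^{(1)}\supseteq\widehat{\C}_f$, we have $F\le f$ there, and $F>0$ on the part of $\widehat{\C}_f$ where $f>\epsilon\ell$. I would use Lemma~\ref{lem:top-nrt-affine}: $\widehat{\C}_f^{(1)}$ is NRT, and $f>0$ deep in the positive direction, with $f$ growing like degree $d\ge2$ while $\ell$ grows only linearly. Hence $\{F>0\}\cap\widehat{\C}_f^{(1),\epsilon}$ contains a positive affine hyper-octant, where $\widehat{\C}_f^{(1),\epsilon}:=\bigcap_i\widehat{\C}_{\partial_iF}$. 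Let $\mathcal{C}$ be the component of $\{F>0\}$ containing it.

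The key claim is $\mathcal{C}\subseteq\widehat{\C}_F^{(1)}$. I would argue as in Theorem~\ref{thm:dominategardnew} via Lemma~\ref{lem:usefullemma}. If $\mathcal{C}$ leaves $\widehat{\C}_F^{(1)}$, there is a point $\x_1\in\partial\widehat{\C}_{\partial_iF}$ with $F(\x_1)>0$. Along the negative $\mathbf{e}_i$ ray from a point of $\mathcal{C}$, however, $F$ is increasing while $\partial_iF>0$, and one should reach $\{F=0\}$ before $\partial_iF$ vanishes. This holds for $f$ itself by condition (3) for $f$, via the one-variable restriction of Lemma~\ref{lem:RestrforgeneralGrad} and Corollary~\ref{cor:NRTforeachvariable}. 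For $F$ it must hold uniformly for small $\epsilon$. This is the \textbf{main obstacle}, because near infinity the boundaries could approach each other. I would handle it with a compactness/scaling argument. The hypothesis $\ell\ge0$ on $\widehat{\C}_f^{(1)}$ gives $F\le f$ on $\partial\widehat{\C}_f^{(1)}$, so $F\le0$ there. For the perturbed boundaries $\partial\widehat{\C}_{\partial_i F}$, which lie outside $\widehat{\C}^{(1)}_f$ only by $O(\epsilon)$, I would compare with $f$ using the positivity of second derivatives, since $\partial^{\alpha}F=\partial^\alpha f$ for $|\alpha|\ge2$. First I would try to reduce to the polarized multi-affine setting, where Proposition~\ref{prop:testing method} makes the boundary test $F\le0$ on $\partial\widehat{\C}^{(1)}_F$ sufficient.

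\emph{NRT and the origin.} Once $\mathcal{C}=\widehat{\C}_F\subseteq\widehat{\C}_F^{(1)}$ holds, NRT follows from Lemma~\ref{lem:top-nrt-affine}, or directly from Lemma~\ref{lem:NRTforgeneralgard}, since $F$ has the same variables as $f$ and so no redundant ones. If $\mathbf{0}\in\widehat{\C}_f$, then $f(\mathbf{0})>0$, and $\mathbf{0}$ lies in the open set $\widehat{\C}_f^{(1)}$. Shrinking $\epsilon$ so that $\epsilon\,|\ell(\mathbf{0})|<f(\mathbf{0})$ and $\epsilon a_i<\partial_if(\mathbf{0})$ for all $i$ puts $\mathbf{0}$ in $\{F>0\}\cap\widehat{\C}_F^{(1)}$. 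Joining $\mathbf{0}$ by a positive ray to the hyper-octant inside $\mathcal{C}$, along which $F$ increases, gives $\mathbf{0}\in\widehat{\C}_F$.
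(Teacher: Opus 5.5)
Your handling of the derivatives (Lemma~\ref{lem:trivialGarding}(3)), of the origin, and of NRT via Lemma~\ref{lem:NRTforgeneralgard} is fine. The gap is at the step you yourself call the main obstacle. The inclusion $\widehat{\C}_F\subseteq\widehat{\C}_F^{(1)}$ is the entire content of the proposition, and you leave it to an unspecified ``compactness/scaling argument''. Your fallback is to reduce to the polarized multi-affine setting and apply Proposition~\ref{prop:testing method}, but that is circular. Checking the boundary test for a polarization is exactly what Theorem~\ref{thm:recursivedef} does, and that proof invokes the present proposition to move into the class $\mathcal{B}$, where Proposition~\ref{prop:symmetry} applies. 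Your asserted $O(\epsilon)$ proximity of the perturbed boundaries is also never justified: level sets of $\partial_i f$ need not stay uniformly close at infinity. In fact no uniformity in $\epsilon$ is needed at all.

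The missing idea is a one-dimensional comparison along a coordinate segment. It works for every $\epsilon$ small enough that, after translation, $F(\mathbf 0)>0$ and $\partial_iF(\mathbf 0)>0$.

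First, $\widehat{\C}_F\subseteq\widehat{\C}_f$. Indeed, the connected set $\widehat{\C}_F$ contains $\mathbf 0\in\widehat{\C}_f$, and at any point of $\partial\widehat{\C}_f$ one has $f=0\le\epsilon\ell$, so $F\le0$ there. Hence $\widehat{\C}_F$ is NRT.

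Now suppose $\w\in\widehat{\C}_F\cap\partial\widehat{\C}_{\partial_1F}$, so $F(\w)>0$ and $\partial_1f(\w)=\epsilon a_1$.
\begin{itemize}
\item If $\deg_{x_1}F\le1$, then $F$ is constant and positive on $\w+\R\mathbf e_1$. This contradicts NRT.
\item Otherwise $\w\in\widehat{\C}_f\subseteq\widehat{\C}_f^{(1)}$. The backward ray $\w-t\mathbf e_1$ leaves $\widehat{\C}_{\partial_1 f}\supseteq\widehat{\C}_f^{(1)}$, by Corollary~\ref{cor:NRTforeachvariable} applied to $\partial_1 f$. So there is a first $T\ge0$ with $\w-T\mathbf e_1\in\partial\widehat{\C}_f^{(1)}$.
\item On this segment $\partial_{11}f\ge0$, since $\widehat{\C}_{\partial_1f}\subseteq\widehat{\C}_{\partial_{11}f}$. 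Hence $0\le\partial_1f\le\partial_1 f(\w)=\epsilon a_1$, that is, $\partial_1F\le0$ along the segment. Integrating gives $F(\w)\le F(\w-T\mathbf e_1)$.
\item At the exit point $f\le0$: it lies outside $\widehat{\C}_f$, yet is joined to $\w$ by a segment along which $f$ is nondecreasing. Also $\ell\ge0$ there by hypothesis. So $F(\w)\le0$, a contradiction.
\end{itemize}

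You had both ingredients: $\partial_1F(\w)=0$, and $F\le0$ on $\partial\widehat{\C}_f^{(1)}$. What you lacked is the monotonicity of $\partial_1 f$ along the backward segment, which links them. With it, the degree induction and the growth/hyper-octant discussion become unnecessary.
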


\begin{proof}
By a translation, we may assume $\mathbf{0}\in\widehat{\C}_{f}$. Since $f$
is NRT, $\pdv_{i}f(\mathbf{0})>0$. Choose $\epsilon_{0}>0$ small
enough such that $(f-\epsilon_{0}\ell)|_{\mathbf{0}}>0$ and $\pdv_{i}(f-\epsilon_{0}\ell)|_{\mathbf{0}}>0$
for all $i$. Let $\epsilon\in(0,\epsilon_{0}]$ and $g(\x)=f(\x)-\epsilon \ell(\x)$.
Then, by the choice of $\epsilon$, the Taylor expansion of $g$ at
the origin is non-negative, implying $\mathbf{0}\in\widehat{\C}_{g}\not=\emptyset$.
Moreover, since $\ell|_{\widehat{\C}_{f}}\geq0$, $\widehat{\C}_{g}\subset\widehat{\C}_{f}$,
implying that $\widehat{\C}_{g}$ is NRT. 

We claim  $g\in\mathcal{G}$. Since $\pdv_{i}g=\pdv_{i}f-\epsilon a_{i}\in\mathcal{G}$
by Lemma \ref{lem:trivialGarding}, it suffices to prove $\widehat{\C}_{g}\cap\pdv\widehat{\C}_{g}^{(1)}=\emptyset$.
We argue by contradiction. Assume there exists $\w\in\pdv\widehat{\C}_{g}^{(1)}\cap\widehat{\C}_{g}$.
Without loss of generality, we assume $\w\in\pdv\widehat{\C}_{\pdv_{1}g}$.
Hence, 
\[
\pdv_{1}g(\w)=\pdv_{1}f(\w)-\epsilon a_{1}=0,\ g(\w)>0.
\]
We distinguish 2 cases.

Case 1. $\deg_{x_{1}}g\leq1$. Then $g(t\mathbf{e}_{1}+\w)>0$ for
all $t\in\R,$ contradicting that $\widehat{\C}_{g}$ is NRT. 

Case 2. $\deg_{x_{1}}g\geq2$. Then $\pdv_{1}f$ is a non-constant
polynomial in $\mathcal{G}$. Moreover, Corollary \ref{cor:NRTforeachvariable}
states that for any $\x\in\widehat{\C}_{\pdv_{1}f}$, there exists
$T'(\x)>0$ such that $\x-T'(\x)\mathbf{e}_{1}\in\pdv\widehat{\C}_{\pdv_{1}f}.$
Since $\widehat{\C}_{\pdv_{1}f}$ contains both $\widehat{\C}_{f}^{(1)}$
and $\widehat{\C}_{g}^{(1)}$, there exists $T=T(\w)\geq0$ such that
$\w-T\mathbf{e}_{1}\in\pdv\widehat{{\C}}_{f}^{(1)}$. Then $\ell(\w-T\mathbf{e}_{1})\geq0$,
$f(\w-T\mathbf{e}_{1})\leq0$, and
\begin{equation}
g(\w-T\mathbf{e}_{1})\leq0.\label{eq:glessthan0}
\end{equation}
 For $t\in(-T,0]$, 
\[
0\leq\pdv_{1}f(\w+t\mathbf{e}_{1})\leq\pdv_{1}f(\w)=\epsilon a_{1}.
\]
 Integrating both sides to get
\begin{align}
f(\w) & -f(\w-T\mathbf{e}_{1})=\int_{-T}^{0}\pdv_{1}f(\w+t\mathbf{e}_{1})dt\leq\epsilon(\ell(\w)-\ell(\w-T\mathbf{e}_{1})).\label{eq:case2Appro}
\end{align}
 Combining (\ref{eq:case2Appro}) and (\ref{eq:glessthan0}), 
 we have $g(\w)\leq g(\w-T\mathbf{e}_{1})\leq0,$
contradicting $g(\w)>0$. 

Thus, no such $\w$ exists. Hence, $g$ is NRT-$\mathcal{G}.$ 
\end{proof}
Now we identify $\GS$ and $\mathcal{G}$, completing the proof of
Theorem \ref{thm:newmain}.

\begin{thm}
\label{thm:recursivedef}$\mathcal{G}=\GS$ and $\widehat{\C}_{g}=\C_{g}$
for all $g\in\mathcal{G}$. 
\end{thm}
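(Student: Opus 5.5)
The inclusion $\GS\subset\mathcal{G}$ is already available. By Theorem~\ref{thm:diff-closure}, every $g\in\GS$ has all partial derivatives in $\GS$, with $\C_g\subset\C_{\pdv_i g}$. The component $\C_g$ passes PRT, so it contains a positive affine hyper-octant, and hence $\widehat{\C}_g=\C_g$. An induction on degree then shows $g\in\mathcal{G}$; the base case $d\le 2$ holds by definition. The work is the reverse inclusion $\mathcal{G}\subset\GS$. I prove it by induction on $d=\deg f$; the base case $d\le2$ is again the definition.

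First I reduce to the NRT case. If $f\in\mathcal{G}$ has a redundant variable, I drop it: a polynomial independent of $x_i$ is a pullback under the coordinate projection, which is strictly positive, and that pullback preserves both $\GS$ and $\mathcal{G}$. By Lemma~\ref{lem:NRTforgeneralgard} I may then assume $f$ is NRT-$\mathcal{G}$. Next I choose a multi-index $\kappa\ge\kappa_f$ and set $h=\pol_\kappa(f)$. By the polarization identity \eqref{eq:polarizationIdentity}, $\pdv_{x_{ij}}h=\kappa_i^{-1}\pol_{\kappa-\mathbf{e}_i}(\pdv_i f)$. The induction hypothesis gives $\pdv_i f\in\GS$. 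Since $\pdv_i f$ has degree below $d$, Theorem~\ref{thm:polarization_keeps_garding} should apply and give $\pdv_{x_{ij}}h\in\GS$; it needs NRT, which I arrange by first dropping the redundant variables of $\pdv_i f$, since polarization commutes with this. Hence $h\in\mathcal{A}$. It then suffices to show $h\in\G$, because $f=\proj_\kappa(h)$ is the pullback of $h$ under the strictly positive diagonal map.

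To show $h\in\G$ I follow the template of Theorem~\ref{thm:polarization_keeps_garding}, with Proposition~\ref{prop:approxiG} in place of Theorem~\ref{thm:approx-by-B}.

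\textbf{Perturbation.} Lemma~\ref{lem:top-nrt-affine} makes $\widehat{\C}_f^{(d-1)}$ an NRT polyhedron. I let $\ell$ be the sum of its defining affine forms; by Lemma~\ref{lem:polyhedral-dual}, $\nabla\ell>0$, and $\ell\ge0$ on $\widehat{\C}_f^{(1)}\subset\widehat{\C}_f^{(d-1)}$. Proposition~\ref{prop:approxiG} then makes $f_\epsilon=f-\epsilon\ell$ NRT-$\mathcal{G}$ for small $\epsilon>0$, with $\mathbf 0$ kept inside the component after a translation. The polarizations satisfy $h_\epsilon=h-\epsilon L$ with $\nabla L>0$, so Lemma~\ref{lem:strong} and Remark~\ref{rem:strong reduction} place $h_\epsilon$ in $\mathcal{B}$.

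\textbf{Boundary test.} Proposition~\ref{prop:POLofNRTB} is stated for $g\in\GS$, but its proof uses only two facts: $g\le0$ on $\pdv\C_{\pdv_1 g}$, and the symmetrization Proposition~\ref{prop:symmetry}. For $f_\epsilon\in\mathcal{G}$, the first fact follows from the inclusion $\widehat{\C}_{f_\epsilon}\subset\widehat{\C}_{f_\epsilon}^{(1)}$ together with the connectedness of $\widehat{\C}_{f_\epsilon}$. I check that at a boundary point of $\widehat{\C}_{\pdv_1 f_\epsilon}$ lying in the closure of the region reached from $\widehat{\C}^{(1)}$, positivity of $f_\epsilon$ would put that point in $\widehat{\C}_{f_\epsilon}$, a contradiction. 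Then Proposition~\ref{prop:testing method} gives $h_\epsilon\in\G$.

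\textbf{Limit.} Letting $\epsilon\to0$ and using $h(\mathbf 0)>0$, Lemma~\ref{lem:G_+isclosed} gives $h\in\G$. Finally $\C_f=\mu^{-1}(\C_h)$ is the PRT component of $\{f>0\}$, which forces $\widehat{\C}_f=\C_f$.

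The main obstacle is the boundary test: transferring the inequality $f_\epsilon\le0$ on the relevant boundary from the recursive definition, which only guarantees $\widehat{\C}_f\subset\widehat{\C}^{(1)}_f$, to every point of $\pdv\widehat{\C}_{\pdv_1 f_\epsilon}$ that Proposition~\ref{prop:symmetry} produces. The symmetrized point $\mathbf q^*$ could leave $\overline{\widehat{\C}^{(1)}_{f_\epsilon}}$. I would first try to use the NRT property of $\widehat{\C}^{(1)}$ (Lemma~\ref{lem:top-nrt-affine}), sliding along $-\mathbf e_1$ as in Corollary~\ref{cor:NRTforeachvariable}, to move $\mathbf q^*$ back into that closure without decreasing $f_\epsilon$.
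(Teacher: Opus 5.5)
You are following the paper's route. You reduce to an NRT-$\mathcal{G}$ polynomial, polarize, and get $h\in\mathcal{A}$ from the induction hypothesis. You then perturb into $\mathcal{B}$ with Proposition~\ref{prop:approxiG}, rerun Proposition~\ref{prop:POLofNRTB}, and take the limit with Lemma~\ref{lem:G_+isclosed}. The paper settles the step ``$h_\epsilon\in\mathcal B\Rightarrow h_\epsilon\in\G$'' only by saying it follows from ``a similar argument'' to Theorem~\ref{thm:polarization_keeps_garding}. So you have found the real crux, but your proposal does not resolve it, and as written the proof has a gap there.

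The proof of Proposition~\ref{prop:POLofNRTB} needs $f_\epsilon(\mathbf q^*)\le 0$ at the symmetrized point $\mathbf q^*\in\pdv\C_{\pdv_1 f_\epsilon}$. Nothing in Proposition~\ref{prop:symmetry} keeps $\mathbf q^*$ inside $\overline{\widehat{\C}^{(1)}_{f_\epsilon}}$. Your connectedness argument proves only $f_\epsilon\le0$ on $\pdv\widehat{\C}^{(1)}_{f_\epsilon}$, so your claim that the first fact holds on all of $\pdv\widehat{\C}_{\pdv_1 f_\epsilon}$ is overstated. For $f\in\GS$ the stronger inequality comes from a multi-affine realization $H$: Theorem~\ref{thm:Suppose-that-is}, applied to $H=x_{11}\pdv_{11}H+H|_{x_{11}=0}$, gives $H\le 0$ on $\pdv\C_{\pdv_{11}H}$. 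No such realization is available for $f_\epsilon\in\mathcal{G}$, and that is exactly what the theorem is trying to establish.

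Your proposed repair cannot work. The closure of a PRT set is closed under adding nonnegative vectors. So if $\mathbf q^*\notin\overline{\widehat{\C}^{(1)}_{f_\epsilon}}$, no point $\mathbf q^*-t\mathbf e_1$ with $t>0$ lies in it either.

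The usable direction is $+\mathbf e_1$. It suffices to prove
\[
\C_{\pdv_1 f_\epsilon}\cap\{f_\epsilon>0\}\subseteq\widehat{\C}_{f_\epsilon}.
\]
Granted this, a point of $\pdv\C_{\pdv_1 f_\epsilon}$ with $f_\epsilon>0$ would be joined inside $\{f_\epsilon>0\}$ to $\widehat{\C}_{f_\epsilon}\subseteq\C_{\pdv_1 f_\epsilon}$, which is impossible. Take $\mathbf x$ in the left-hand side. Then $f_\epsilon$ increases along $\mathbf x+t\mathbf e_1$. Since $\widehat{\C}^{(1)}_{f_\epsilon}\cap\{f_\epsilon>0\}$ is PRT, it is connected and equals $\widehat{\C}_{f_\epsilon}$. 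So you need the ray to eventually enter every $\C_{\pdv_j f_\epsilon}$.

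For $j$ with $\pdv_1\pdv_j f_\epsilon\not\equiv 0$ this works. By Theorem~\ref{thm:diff-closure} the ray stays in $\C_{\pdv_1 f_\epsilon}\subseteq\C_{\pdv_1\pdv_j f_\epsilon}$, so $\pdv_j f_\epsilon$ increases to $+\infty$ along it. The analogue of Lemma~\ref{lem:Suppose-that--2}(3) for $\pdv_j f_\epsilon\in\GS$, obtained through a symmetric realization, then places the ray in $\C_{\pdv_j f_\epsilon}$.

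Indices with $\pdv_1\pdv_j f_\epsilon\equiv0$ need a separate argument, because $\pdv_j f_\epsilon$ is constant along the ray. A lemma of this kind must be supplied before Proposition~\ref{prop:POLofNRTB} can be adapted to $\mathcal{G}$.
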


\begin{proof}
Theorem \ref{thm:diff-closure} implies that $\GS\subset\mathcal{G}$.
It therefore suffices to prove the reverse inclusion.

Fix $g(\x)\in\mathcal{G}\cap\mathbb{R}_{\kappa}[\x]$, we prove $g\in\GS$
by induction on $d=\deg g.$ The base case $d\leq2$ follows directly
by the definition of $\mathcal{G}$. Assume $d>2$ and let $g\in\mathcal{G},$
with $\deg g=d.$ It is sufficient to prove that for any $\kappa\geq\kappa_{g}$,
\[
f(\w)=\pol_{\kappa}(g)(\w)\in\G_{N},\ N=|\kappa|.
\]

After deleting redundant variables and applying a translation, we
may assume that $g$ is NRT\emph{-}$\mathcal{G}$ with $\mathbf{0}\in\widehat{\C}_{g}$.
By induction, $\pdv_{i}g\in\GS$. Then Lemma \ref{lem:top-nrt-affine}
shows that $\C_{g}^{(k)}=\widehat{\C}_{g}^{(k)}$ is NRT for $1\leq k\leq d-1$.
Using the polarization identity (\ref{eq:polarizationIdentity}),
and Lemma \ref{lem:polyhedral-dual}, we conclude that $\C_{f}^{(d-1)}$
is also NRT. Arguing as in the proof of Theorem \ref{thm:approx-by-B},
we obtain a $\kappa$-symmetric affine function $L$ such that  $\nabla L\in\Gamma_{N}^{+}$,
$L|_{\C_{f}^{(1)}}>0$. Define $l=\proj_{\kappa}(L)$. Then $l|_{\widehat{\C}_{g}^{(1)}}>0$
and $\nabla l\in\Gamma_{n}^{+}$. 

Similarly to the proof of Theorem \ref{thm:polarization_keeps_garding},
by induction, $f$ lies in the auxiliary class $\mathcal{A}$. Furthermore, if
$f$ is  in $\mathcal{B}$, we conclude the proof using a similar
argument in the proof of Theorem \ref{thm:polarization_keeps_garding}.
Otherwise, we apply Proposition \ref{prop:approxiG} to $g$ and $l$
to obtain a sequence of NRT-$\mathcal{G}$ polynomials with non-negative
coefficients $g_{k}\to g$ in the compact-open topology such that $f_{k}=\pol_{\kappa}g_{k}\in\mathcal{B}$
for all $k.$ Then, $f_{k}\in\G$ and Lemma \ref{lem:G_+isclosed}
shows $f$ is also \gar{}. This completes the induction and proves
$\mathcal{G}\subset\GS$, hence $\mathcal{G}=\GS$. 

\end{proof}
%\begin{rem}
%As mentioned in the introduction, different characterizations in
%our Main Theorem \ref{thm:newmain} emphasize different aspects
%of \gar{} polynomials. While Definition \ref{def:general garding}
%is formulated in purely geometric  terms with strictly positive affine invariance,
%Definition \ref{def:gard_recursive} encodes the same notion
%through a recursive algebraic compatibility of positive regions under
%partial differentiation. 
%\end{rem}

\subsection{Right-Noetherian polynomial}

In \cite{Lin23JFA,LIN2024}, motivated by problems in geometric PDEs,
Lin introduced \emph{right-Noetherian polynomials}. In this subsection,
we show that Lin's constructions are special cases of \gar{} polynomials.
Recall that for a univariate polynomial $f(x),$ $r(f)$ denotes
the \emph{largest real root of $f$}, when it exists.
\begin{defn}\label{MRS}
Let $f$  be a univariate polynomial of degree $d$. Define its \emph{root
sequence} by 
\[
R(f):=(r(f),r(f'),\cdots,r(f^{(d-1)}))\in\R^{d}.
\]
We say that $f$ has a \emph{monotone root sequence} \emph{(MRS)}
if $r(f^{(i)})$ exists for all $0\leq i\leq d-1$ and $$r(f)\geq r(f')\geq\cdots\geq r(f^{(d-1)}).$$
\end{defn}

\medskip{}

\begin{defn}
(Lin \cite{Lin23JFA}) A univariate polynomial $f$ is called \emph{right-Noetherian
if it has a monotone root sequence.}
\end{defn}

We give a geometric interpretation. For univariate  $f(x)\in \GS[x]$,
the Gårding component is
\begin{equation}
\C_{f}=(r(f),+\infty)\label{eq:cone for RN}
\end{equation}
when $r(f)$ exists. A right-Noetherian polynomial $f$ can  be
characterized by
\[
\emptyset\neq\mathcal{C}_{f}\subset\mathcal{C}_{f'}\subset\cdots\subset\mathcal{C}_{f^{(d-1)}}.
\]
A direct consequence of Definition \ref{def:gard_recursive} and Theorem
\ref{thm:recursivedef} is that the class of right-noetherian polynomials
is exactly that of univariate Gårding polynomials.
\begin{example}
\label{exa:univariate-rightnoetherian} Given a sequence of monotone
real numbers $r_{0}\geq r_{1}\geq\cdots\geq r_{d-1}$, a right-noetherian
polynomial may be constructed inductively as follows: Set $f_{d}(x)=1$,
and for each $0\leq k\leq d-1$, define
\[
f_{k}(x):=\int_{r_{k}}^{x}f_{k+1}(t)dt.
\]
Then $f(x):=f_{0}(x)$ is a univariate Gårding polynomial with the
prescribed root sequence. Furthermore, $f$ is unique up to a positive
constant multiple.
\end{example}

\begin{rem}
An alternative way to show that right-Noetherian polynomials coincide with
$\GS_{1}$ is to apply Proposition \ref{prop:symmetry} to prove that
a $\kappa$-polarization of a right noetherian polynomial is multi-affine
Gårding. For $\kappa=d$, this result is essentially due to Lin \cite{Lin23JFA}
where such polarizations are called $\Upsilon$-stable. For $\kappa>d$,
especially in the degenerate case where successive derivatives have common largest real roots,
Lin's argument requires additional
justification. The approach
in Theorem \ref{thm:recursivedef} explicitly addresses the degenerate case.

\end{rem}
In one variable, real stability is equivalent to real-rootedness. By the classical interlacing theorem, real-rootedness implies MRS, and hence
\[
\{\text{univariate real-stable polynomials}\}\subseteq\{\text{univariate G{\aa}rding polynomials}\}.
\]
MRS is strictly broader than real stability in the univariate setting.

\begin{rem}
\label{rem:MRSnecessary}Let $g$ be a multivariate G{\aa}rding
polynomial and let $\mathbf{a}\in\Gamma_{n}^{+},\mathbf{b}\in\R^{n}$.
Then the univariate specialization $f(t)=g(\mathbf{a}t+\mathbf{b})$
is a univariate G{\aa}rding polynomial. Equivalently, $f$ satisfies
the MRS property. Thus, MRS 
for all affine univariate specializations provides an analytic necessary
condition for the G{\aa}rding property. 
\end{rem}

\section{Structural Consequences}\label{sec:structuralconsequence}

In the preceding sections, we have shown that the G{\aa}rding property is preserved under strictly positive affine changes of variables, partial differentiation, and $\kappa$-polarization. Moreover, using polarization together with diagonal restriction, most results proved in the multi-affine setting extend directly to general G{\aa}rding polynomials.

We record here three simple but useful consequences of these closure properties.

\begin{prop}
Let $g(\x)\in\GS[\x]$. If   $\mathbf{a}\in\R^n$, and $\mathbf{a}\geq0$, then the directional derivative $$\partial_{\a}g:=\sum_{i=1}^{n}a_{i}\,\partial_{i}g\in\GS[\x].$$
\end{prop}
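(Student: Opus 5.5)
The plan is to realize $\partial_{\a}g$ as a partial derivative of a pullback of $g$, and then use the closure results already established.

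Introduce an auxiliary variable $s$ and set
\[
G(\x,s):=g(\x+s\a).
\]
The map $\mu:(\x,s)\mapsto \x+s\a$ from $\R^{n+1}$ to $\R^n$ is linear with matrix $(I_n\mid \a)$. Its entries are nonnegative because $\a\ge 0$, and each row sum is at least $1$. By Lemma~\ref{lem:matrix characterization}, $\mu$ is strictly positive. Since $\GS$ is closed under pullback by maps in $\A_{++}$ (Remark~\ref{composition}, together with Definition~\ref{def:general garding}), we get $G=\mu^*g\in\GS_{n+1}$.

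Next, by Theorem~\ref{thm:diff-closure}, the partial derivative $\partial_sG$ is again Gårding. The chain rule gives
\[
\partial_sG(\x,s)=(\partial_{\a}g)(\x+s\a).
\]

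Finally, we restrict to $s=0$. Restriction by a weakly positive map is not allowed in general (Example~\ref{rem:general restriction}), so we avoid it. Instead we write $\partial_{\a}g=\nu^*(\partial_sG)$ with $\nu(\x)=(\x,0)$. The matrix of $\nu$ is $\binom{I_n}{0}$, whose last row sums to $0$, so $\nu$ is only weakly positive.

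The main obstacle is removing this degeneracy. Two fixes are available:

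\begin{itemize}
\item Use $\nu_t(\x)=(\x,t)$ with $t$ constant. As an affine map this is the linear embedding $\x\mapsto(\x,0)$ followed by a translation, so its linear part still has a zero row and is not strictly positive.
\item Use $\nu(\x)=(\x,x_1)$. This map is strictly positive linear, since every row has positive sum. Then $\nu^*(\partial_sG)(\x)=(\partial_{\a}g)(\x+x_1\a)$, which is the composition of $\partial_{\a}g$ with a nonsingular linear map $\x\mapsto\x+x_1\a$. That map is strictly positive, but its inverse is not positive, so we cannot pull back to recover $\partial_{\a}g$.
\end{itemize}

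The cleaner route is to avoid restriction altogether and work at the multi-affine level. By Theorem~\ref{thm:kappa-symmetric-realization}, write $g=\mu^*f$ with $f\in\G$, where $\mu(\x)=A\x+\mathbf b$ and $A\ge0$ has positive row sums. Then
\[
\partial_{\a}g=\mu^*(\partial_{A\a}f),\qquad \partial_{A\a}f=\sum_k c_k\,\partial_kf,\quad c_k=(A\a)_k\ge0.
\]
So it suffices to show that a nonnegative combination of first partials of a multi-affine Gårding $f$ is multi-affine Gårding.

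For this, use the proper-position relation. Theorem~\ref{lem:proper-position-basic}\ref{enu:222} gives $\partial_kf\prec f$ (first translate $f$ into $\G_+$ via Lemma~\ref{lem:move-into-GA+}; translations commute with derivatives). This is not yet of the form needed for \ref{lem:subordinatetoboth}, which requires a common lower element.

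Instead, apply the linear-preserver theorem. The operator $T=\sum_kc_k\partial_k$ on multi-affine polynomials has symbol
\[
\Sym_T(\y,\u)=\sum_kc_k\prod_{i\neq k}(y_i+u_i).
\]
This symbol is real stable with nonnegative coefficients: it is $\partial_{\mathbf c}$ of the stable polynomial $\prod(y_i+u_i)$ in the $\y$-direction. By Corollary~\ref{cor:transformwithrealstablesymb}, $T$ preserves $\G_+$. Since $T$ commutes with all translations, Corollary~\ref{cor:preserverofG} shows that $T$ preserves $\G$.

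Hence $\partial_{A\a}f\in\G$, and pulling back by $\mu$ gives $\partial_{\a}g\in\GS$. If $\partial_{\a}g\equiv0$ the statement holds trivially; a positive constant also falls in $\GS^0_n$.
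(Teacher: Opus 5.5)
Your final argument is correct, but it takes a different route from the paper.

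\textbf{The paper's route.} The paper follows your first idea. It sets $G(\x,t)=g(\x+\x_0+t\a)$ with $\x_0\in\C_g$, differentiates in $t$ (Theorem~\ref{thm:diff-closure}), restricts to $t=0$, and translates back.

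\textbf{Your first route already works.} The restriction obstacle you raised is not real. Example~\ref{rem:general restriction} fails only because the slice $\{x=0\}$ misses the Gårding component of $(x-1)y$. Restriction through a point of the Gårding component does preserve the Gårding property: see Lemma~\ref{lem:splitting} in the multi-affine case, and Lemma~\ref{lem:RestrforgeneralGrad} together with $\mathcal{G}=\GS$ in general. In your own setup $G(\x,s)=g(\x+s\a)$, take any $\x_0\in\C_g$. Then $(\x_0,0)\in\C_G\subset\C_{\partial_sG}$, so the slice $\{s=0\}$ meets $\C_{\partial_sG}$. Restricting $\partial_sG$ to $s=0$ therefore already gives $\partial_\a g\in\GS$, and neither of your two fixes is needed.

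\textbf{Your replacement argument.} You push everything down to a realization $g=\mu^*f$ with $f\in\G$ and $\mu(\x)=A\x+\mathbf b$. The chain rule gives $\partial_\a g=\mu^*\bigl(\sum_k c_k\partial_kf\bigr)$ with $\mathbf c=A\a\ge 0$. You then show that the constant-coefficient operator $\sum_k c_k\partial_k$ preserves $\G$. Its symbol $\partial_{\mathbf c}\prod_i(y_i+u_i)$ is stable with nonnegative coefficients, so Corollary~\ref{cor:transformwithrealstablesymb} applies. Since the operator commutes with all translations, Corollary~\ref{cor:preserverofG} finishes the step.

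\textbf{Comparison.} Your route uses only the multi-affine theory of Section~\ref{sec:linearpreserver} and the definition of $\GS$. It bypasses both Theorem~\ref{thm:diff-closure} and the general restriction lemma. It also exhibits an explicit multi-affine realization $\bigl(\sum_k c_k\partial_kf,\ \mu\bigr)$ of $\partial_\a g$. The cost is invoking the symbol/linear-preserver machinery for what the paper does in three lines.

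\textbf{A small point.} Theorem~\ref{thm:kappa-symmetric-realization} is stated only for $g$ of positive degree. Either dispose of constant $g$ first (then $\partial_\a g\equiv 0$), or take the realization $g=\mu^*f$ directly from Definition~\ref{def:general garding}, which is all you actually use.
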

\begin{proof}
Fix $\x_{0}\in\C_{g}$ and define $G(\x,t):=g(\x+\x_{0}+t\,\a)$. Since $\x_{0}+t\,\a\in\C_{g}$ for all $t>0$, the polynomial $G$ is G{\aa}rding in $(\x,t)$. By closure under partial differentiation, $\partial_{t}G$ is G{\aa}rding. Restricting to $t=0$ yields $\partial_{t}G(\x,0)=\partial_{\a}g(\x+\x_{0})$, which is therefore G{\aa}rding. By translation invariance, we have proved the proposition.
\end{proof}
\begin{prop}
\label{prop:product}
If $f,g\in\GS[\x]$, then $f\cdot g\in\GS[\x]$.
\end{prop}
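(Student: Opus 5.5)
Write $f=\mu^{*}F$ and $g=\nu^{*}H$, where $F\in\G_{m}$, $H\in\G_{k}$, and $\mu:\R^{n}\to\R^{m}$, $\nu:\R^{n}\to\R^{k}$ are strictly positive affine maps, as in Definition~\ref{def:general garding}. The plan is to reduce the product to a product of multi-affine G\aa rding polynomials in \emph{disjoint} sets of variables, and then pull back by a single strictly positive affine map.

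\textbf{Step 1 (disjoint product).} Let $P(\y,\z):=F(\y)H(\z)$ on $\R^{m}\times\R^{k}$. Since the variable sets are disjoint, $P$ is multi-affine. We claim $P\in\G_{m+k}$. If either factor is a nonnegative constant this is immediate (the zero constant gives $P\equiv0\in\G$). Otherwise set $R:=\C_{F}\times\C_{H}$. By Lemma~\ref{lem:basic}~\eqref{enu:directproduct}, $R$ passes PRT, and $P>0$ on $R$. It remains to check that $R$ is a full connected component of $\{P>0\}$. Take a boundary point $(\y_{0},\z_{0})\in\partial R$. Then either $\y_{0}\in\partial\C_{F}$, so $F(\y_{0})=0$, or $\z_{0}\in\partial\C_{H}$, so $H(\z_{0})=0$ (the closures are taken in the product topology). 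In either case $P(\y_{0},\z_{0})=0$. Hence $R$ is open and relatively closed in $\{P>0\}$, so it is a component, and $\C_{P}=R\neq\emptyset$. This is the same argument the paper already invokes in the proof of Theorem~\ref{thm:GardSymbolTransf}.

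\textbf{Step 2 (pullback).} Define $\lambda:\R^{n}\to\R^{m+k}$ by $\lambda(\x):=(\mu(\x),\nu(\x))$. It is affine, and its linear part is the stacked matrix $\binom{A_{\mu}}{A_{\nu}}$. This matrix has nonnegative entries, and every row has positive sum, because the rows of each block already do. By Lemma~\ref{lem:matrix characterization}, $\lambda$ is therefore strictly positive, so $\lambda\in\A_{++}$. Then $\lambda^{*}P(\x)=F(\mu(\x))H(\nu(\x))=f(\x)g(\x)$, and $f\cdot g\in\GS_{n}$ directly by Definition~\ref{def:general garding}.

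The only real subtlety is Step 1's verification that $\C_{F}\times\C_{H}$ is an entire component of $\{P>0\}$, rather than a proper subset of some larger component where both factors are negative. This is handled by the observation that $\partial R$ lies in $\{P=0\}$, so no path inside $\{P>0\}$ can leave $R$. As a corollary, $\C_{fg}=\lambda^{-1}(\C_{P})=\C_{f}\cap\C_{g}$.
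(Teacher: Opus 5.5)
Your argument is correct and is essentially the paper's proof: the paper reduces to multi-affine factors, forms the product in disjoint variables, and restricts to the diagonal, which amounts to your pullback $\lambda^{*}$ of $F(\y)H(\mathbf{z})$. Your version is slightly more economical, since it takes the realizations directly from Definition~\ref{def:general garding} instead of invoking polarization. It also verifies that $\C_{F}\times\C_{H}$ is a full connected component of $\{P>0\}$, which the paper only asserts.
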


\begin{proof}
By polarization, we may assume that $f$ and $g$ are multi-affine. Consider the product in independent variables $F(\x,\y)=f(\x)g(\y)$. Since the multi-affine G{\aa}rding class is closed under products in disjoint variable sets, $F$ is G{\aa}rding. Restricting to the diagonal $\y=\x$ preserves the G{\aa}rding property and yields $F(\x,\x)=f(\x)g(\x)\in\GS[\x]$.
\end{proof}

\begin{thm}
\label{thm:linear-preserver}
Let $T:\mathbb{R}_{\kappa}[x_{1},\dots,x_{n}]\longrightarrow\mathbb{R}_{\gamma}[y_{1},\dots,y_{m}]
$
be a linear operator. The symbol of $T$ is defined by
\[
\mathrm{sym}_{T}(\y,\u):=T\bigl((\x+\u)^{\kappa}\bigr).
\]
If $\mathrm{sym}_{T}(\y,\u)\in\GS_{+}\cap\mathbb{R}_{\gamma\oplus\kappa}[\y,\u]$, then for every $f\in\GS_{+}\cap\mathbb{R}_{\kappa}[\x]$ one has
\[
T(f)\in\GS_{+}\cap\mathbb{R}_{\gamma}[\y].
\]
\end{thm}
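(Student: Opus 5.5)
We reduce Theorem~\ref{thm:linear-preserver} to the multi-affine criterion of Theorem~\ref{thm:GardSymbolTransf} by conjugating $T$ with polarization and diagonal specialization. Concretely, we define the multi-affine operator
\[
\widetilde T:=\pol_{\gamma}\circ T\circ\proj_{\kappa}:\ \R^{\mathfrak a}_{\kappa}[x_{ij}]\longrightarrow\R^{\mathfrak a}_{\gamma}[y_{kl}],
\]
acting on polynomials in the variables $x_{ij}$, $i\in[n]$, $j\in[\kappa_i]$, with values in polynomials in $y_{kl}$, $k\in[m]$, $l\in[\gamma_k]$. Because $\proj_{\gamma}\circ\pol_{\gamma}=\mathrm{Id}$ and $\proj_\kappa\circ\pol_\kappa=\mathrm{Id}$, we have
\[
T(f)=\proj_{\gamma}\bigl(\widetilde T(\pol_\kappa f)\bigr).
\]
The proof then has three steps. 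First, show that $\pol_\kappa f\in\G_+$ for every $f\in\GS_+\cap\R_\kappa[\x]$. Second, show that $\widetilde T$ preserves $\G_+$. Third, specialize back along the diagonal.

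For the first step we invoke Theorem~\ref{thm:polarization_keeps_garding}. That theorem assumes $f$ is NRT, so we first delete redundant variables; this is harmless because polarization in the remaining variables coincides with $\pol_\kappa$ up to the constant blocks. Alternatively, we could use the characterization (2) of Theorem~\ref{thm:newmain}, which is already established through Theorem~\ref{thm:recursivedef}. Nonnegativity of the coefficients is immediate from \eqref{eq:-11-2}.

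For the third step, $\proj_\gamma$ is pullback by the diagonal map. This map is strictly positive linear, so it preserves $\GS$, and it clearly preserves nonnegativity of coefficients. Hence $T(f)\in\GS_+\cap\R_\gamma[\y]$.

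The second step is the heart of the proof. We compute the symbol of $\widetilde T$ in terms of $\Sym_T$. Writing $u_{ij}$ for the symbol variables, we need
\[
\widetilde T\Bigl(\prod_{i,j}(x_{ij}+u_{ij})\Bigr).
\]
The map $\proj_\kappa$ sends $\prod_{i,j}(x_{ij}+u_{ij})$ to $\prod_i\prod_j(x_i+u_{ij})$. Expanding in the $u$'s, the coefficient of a monomial $\u^{S}$ is $\prod_i x_i^{\kappa_i-|S_i|}$, where $S_i$ is the part of $S$ in block $i$. The polynomial $(\x+\u)^\kappa$ has the same structure with all $u_{ij}$ identified to $u_i$, and its $\u^\beta$-coefficient carries the extra factor $\prod_i\binom{\kappa_i}{\beta_i}$. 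Comparing the two expansions gives
\[
\Sym_{\widetilde T}(\{y_{kl}\},\{u_{ij}\})=\bigl(\pol_{\gamma}\otimes\pol_{\kappa}\bigr)\bigl(\Sym_T\bigr),
\]
where $\pol_\kappa$ acts in the $\u$-variables. Informally, polarizing jointly in $\y$ and $\u$ exactly undoes the binomial weights.

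Since $\Sym_T\in\GS_+\cap\R_{\gamma\oplus\kappa}[\y,\u]$, the same polarization argument used in the first step (Theorem~\ref{thm:polarization_keeps_garding} after removing redundant variables) shows that $\Sym_{\widetilde T}\in\G_{+}$. Theorem~\ref{thm:GardSymbolTransf} then shows that $\widetilde T$ preserves $\G_+$, which completes the second step.

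I expect the main obstacle to be the bookkeeping in the symbol identity. One must check that the normalization $1/\binom{\kappa_i}{\alpha_i}$ in $\pol_\kappa$ matches the binomial coefficients arising from $(x_i+u_i)^{\kappa_i}$, and that $\pol_\gamma$ applied to $T$ commutes correctly with the expansion in $\u$. This is a linear identity, so I would verify it on the basis monomials $\u^{S}$ block by block. A secondary concern is the NRT hypothesis in Theorem~\ref{thm:polarization_keeps_garding}: variables of $\Sym_T$ or $f$ may be absent, and I would handle these by restricting to the variables that actually occur and treating the missing blocks as constants.
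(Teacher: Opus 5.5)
Your proposal is correct and takes the same route as the paper. You polarize $f$ and the operator $T$, observe that the symbol of the polarized operator $\widetilde T$ is the $(\gamma\oplus\kappa)$-polarization of $\mathrm{sym}_T$ (and hence lies in $\GS_{+}$), apply Theorem~\ref{thm:GardSymbolTransf}, and then return via the diagonal specialization $\proj_\gamma$; your explicit check of the symbol identity and your handling of the NRT hypothesis in Theorem~\ref{thm:polarization_keeps_garding} supply details the paper leaves implicit.
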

\begin{proof}
The statement is similar to the linear preserver theorem for Lorentzian polynomials in Brändén--Huh \cite{BrandenHuh20}; we include a short argument for completeness.

Let $\widetilde{f}:=\Pi_{\kappa}^{\uparrow}(f)$ be the $\kappa$-polarization of $f$. Polarizing the operator $T$ yields a linear operator $\widetilde{T}$ acting on the multi-affine space, whose symbol $\mathrm{sym}_{\widetilde{T}}$ is the polarization of $\mathrm{sym}_{T}$ and therefore still lies in $\GS_{+}$. By Theorem \ref{thm:GardSymbolTransf},   $\widetilde{T}(\widetilde{f})\in\GS^{\mathfrak{a}}_{+}$. Finally, applying the $\kappa$-projection operator gives
\[
T(f)=\Pi_{\kappa}^{\downarrow}\bigl(\widetilde{T}(\widetilde{f})\bigr).
\]
The diagonal restriction preserves $\GS_{+}$; hence $T(f)\in\GS_{+}\cap\R_{\gamma}[\y]$.
\end{proof}

We present a few examples of linear preservers.
\begin{example}[Normalization]
\label{exa:normalization}
Consider the univariate normalization operator $T_{N}: x^{k}\mapsto \frac{x^{k}}{k!}$ defined in $\R_{d}[x]$. Then
$\Sym_{T_{N}}(x,1)=L_{d}(-x),$ where $L_{d}(x)$ is the degree $d$ Laguerre polynomial and is real-rooted. By Lemma~\ref{exa:univariate-rightnoetherian}, after homogenization, $\Sym_{T_{N}}(x,u)\in\S_{+}[x,u]$. By Theorem~\ref{thm:linear-preserver}, $T_{N}$ preserves $\GS_{+}[x]$. By induction on the number of variables, the multivariate normalization operator
\[
T_N:\sum_{\alpha\in\N^{n}}a_{\alpha}\x^{\alpha}\mapsto\sum_{\alpha\in\N^{n}}a_{\alpha}\prod_{i=1}^{n}\frac{x_{i}^{\alpha_{i}}}{\alpha_{i}!}
\]
also preserves $\GS_{+}$.
\end{example}

\begin{example}[Symmetric multiplier]
\label{ex:sym-multiplier}
Let $\mathbf{p}=(p_{1},\dots,p_{d})\geq0$. Define a linear operator 
$$T:\R_d[x]\to \R_d[x],\; x^k\mapsto \sigma_k(\mathbf{p})x^k.$$
Then $T$ preserves $\GS^d[x]$. To see that,  consider $T_{1}:x^{k}\mapsto \frac{d!\,\sigma_{k}(\mathbf{p})}{\binom{d}{k}}x^{k}$. Then
$\mathrm{sym}_{T_{1}}(x,u)=\prod_{i=1}^{d}(p_{i}x+u)$ is clearly real stable. Since the symbol of $T$ is the normalization of that of $T_1$, by Example~\ref{exa:normalization}, $\Sym_{T}(x,u)\in\S_{+}$.  By Theorem~\ref{thm:linear-preserver}, $T$ preserves $\GS^d[x]$.
\end{example}

\subsection{The inversion operation}
Recall that the inversion operator $T_{\tau}$ on $\mathbb{R}_{\kappa}[x_{1},\dots,x_{n}]$ is defined by
\[
T_{\tau}(f)(x_{1},\dots,x_{n}):=f(-1/x_{1},\dots,-1/x_{n})\,x_{1}^{\kappa_{1}}\cdots x_{n}^{\kappa_{n}},
\]
so that $T_{\tau}$ sends a monomial $\x^{\alpha}$ to $(-1)^{|\alpha|}\x^{\kappa-\alpha}$ and satisfies $T_{\tau}^{2}=\mathrm{id}$ on $\mathbb{R}_{\kappa}[\x]$.

\begin{thm}
\label{thm:inversion-1}
Let $f\in\GS_{+}(\x)\cap\mathbb{R}_{\kappa}[x]$ and assume $f(\mathbf{0})>0$. Then
\[
T_{\tau}(f)\in\GS(\x).
\]
\end{thm}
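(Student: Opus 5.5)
Via polarization I reduce the statement to the multi-affine case and then treat one variable at a time, using the derivative/restriction closure of $\G_+$ and the proper-position calculus of Theorem~\ref{proper-position}.

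\emph{Reduction to multi-affine $f$.} Put $F:=\pol_\kappa(f)$. Inversion commutes with polarization: $T_\tau(\x^\alpha)=(-1)^{|\alpha|}\x^{\kappa-\alpha}$, and $\sigma_k(x_{i1},\dots,x_{i\kappa_i})\mapsto(-1)^k\sigma_{\kappa_i-k}$ under the multi-affine inversion in the block variables, with the same binomial normalizations. Hence $T_\tau f=\proj_\kappa(T_\tau F)$, where on the right $T_\tau$ is inversion with all exponents $1$. Since $\proj_\kappa$ is pullback by a strictly positive linear map, it suffices to show $T_\tau F\in\G$. Here $F$ is Gårding (Theorem~\ref{thm:newmain}), has nonnegative coefficients, and $F(\mathbf 0)=f(\mathbf 0)>0$; so $F\in\G_+$ with $\mathbf 0\in\C_F$.

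\emph{Induction on variables.} Inversion factors as a composition of the single-variable inversions $\tau_i$ in each $x_i$, and these commute. Write $F=x_1A+B$ with $A=\pdv_1F$ and $B=F|_{x_1=0}$. By Lemmas~\ref{lem:Suppose-that--2} and \ref{lem:G+restriction}, $A,B\in\G_+$, $B(\mathbf 0)=F(\mathbf 0)>0$, and $A\prec B$. Moreover $\tau_1F=x_1B-A$. By Theorem~\ref{thm:iffdominate}, $x_1B-A\in\G$ exactly when $A\vtl B$ (when $A\not\equiv0$, $\{A>0\}$ contains a positive hyper-octant since $A\in\G_+$), and $A\vtl B$ follows from $A\prec B$ by Theorem~\ref{lem:proper-position-basic}\ref{enu:111}. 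Thus $\tau_1F\in\G$.

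The complication is that $\tau_1F$ has a negative coefficient and $\tau_1F(\mathbf 0)=-A(\mathbf 0)\le0$, so the hypotheses are not directly inherited for the next step. To keep the induction going I will prove the stronger statement that $\tau_1A\prec\tau_1B$, i.e.\ that inversion in the remaining variables preserves proper position, and then recombine as $x_1\tau'B-\tau'A$ with $\tau'=\tau_2\cdots\tau_n$. Concretely, I induct on $n$ with the hypothesis: if $A\prec B$ in $\G_+$ with $B(\mathbf 0)>0$, then $\tau'A\vtl\tau'B$ and $\tau'B\in\G$. Proper position is itself a Gårding condition on $yA+B$, so I apply the inductive hypothesis to $yA+B$, which lies in $\G_+$ and has value $B(\mathbf 0)>0$ at the origin. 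This gives $\tau'(yA+B)=y\tau'A+\tau'B\in\G$, that is, $\tau'A\prec\tau'B$ (no $y$-inversion is needed). Domination and the Gårding property of $x_1\tau'B-\tau'A$ then follow as above. The base case $n=1$ is the computation already done: for $F=ax+b$ with $a\ge0$ and $b>0$, $\tau F=bx-a$ is Gårding.

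\emph{Main obstacle.} The delicate point is the constant case and the positivity-at-origin hypothesis once $A\equiv0$ or $\tau'A$ degenerates. The hypothesis $f(\mathbf 0)>0$ is exactly what guarantees that $\tau'B$ has top coefficient $B(\mathbf 0)>0$, so that its top part is positive. If the bookkeeping with proper position fails, I will fall back on the geometric criterion Proposition~\ref{prop:testing method}: check directly that the region $\{\x<0\}$-image under $\iota$ of $\C_F\cap\Gamma_n^+$ is a PRT component of $\{T_\tau F>0\}$. The map $\iota$ is monotone increasing on each orthant, which is the geometric reason PRT should transfer.
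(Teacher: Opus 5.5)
Your proof is correct in substance but organized differently from the paper's. The paper never inverts a variable midway. It forms $g(\x,\y)=\prod_{i}(y_i-\pdv_{x_i})f(\x)$ in $2n$ variables; each factor keeps the polynomial in $\G$ unconditionally, because $\pdv_{x_i}h\vtl y_ih$ for every $h\in\G$ and Theorem~\ref{thm:iffdominate} applies. Only at the end does it restrict to $\x=\mathbf 0$. There $f(\mathbf 0)>0$ is used once: via the downward-closed support, the $\x$-coefficients of $g$ have leading terms $a_S\y^{[n]}$, so $(\mathbf 0,\y)\in\C_g$ for large $\y$.

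Your step $\tau_{x_1}(x_1A+B)=x_1B-A$ is exactly the paper's $(y_1-\pdv_{x_1})f$ restricted to $x_1=0$. Because $x_1B-A$ leaves $\G_+$, you cannot iterate it, so you must invert the other variables first and $x_1$ last, which costs an induction. In return you stay in the original variables and use only the chain $\prec\Rightarrow\vtl\Rightarrow x_1g-f\in\G$. It also becomes visible where $f(\mathbf 0)>0$ enters: it keeps $B\not\equiv0$ at every level, since with $B\equiv 0$ the last step would produce $-\tau'A$. The paper's lift gives a one-shot argument with no inductive invariant to maintain.

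To make your induction close, note that the hypothesis you wrote down (about pairs $A\prec B$) is not what you actually apply. State it instead as follows: for every $m\ge 0$, if $G\in\G_+$ in variables $(\mathbf z,x_1,\dots,x_k)$ with $\mathbf z\in\R^m$ satisfies $G(\mathbf 0)>0$, then inverting only $x_1,\dots,x_k$ yields a G\aa{}rding polynomial. Your first computation is the case $k=1$ for all $m$.

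For the step, apply case $k-1$ to $G$ with $x_1$ not inverted; this gives $x_1\tau'A+\tau'B\in\G$. You then get $\tau'B\in\G$ in either of two ways:
\begin{itemize}
\item apply case $k-1$ to $B$, noting $B\in\G_+$ and $B(\mathbf 0)=G(\mathbf 0)>0$; or
\item use the dichotomy of Theorem~\ref{thm:Suppose-that-is}. The coefficient of $x_2\cdots x_k$ in $\tau'B$ is $B(\mathbf z,\mathbf 0)\ge B(\mathbf 0)>0$ for $\mathbf z\ge 0$. Hence $\tau'B>0$ at points of $\C_{\tau'A}$ with all coordinates large, which rules out the first alternative and gives $\tau'A\vtl\tau'B$ with $\tau'B\in\G$.
\end{itemize}
The second route is the precise form of your remark about the top coefficient. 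The degenerate case $A\equiv 0$ gives $\tau_{x_1}G=x_1\tau'G$ directly, and the fallback through $\iota$ is unnecessary.
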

\begin{proof}
By polarization, it suffices to treat the multi-affine case. Thus, assume
\[
f(\x)=\sum_{S\subset[n]}a_{S}\x^{S}\in\G[\x]
\]
with $a_{\varnothing}=f(\mathbf{0})>0$. Since $\partial_{x_{i}}f\prec f\prec y_{i}f$, we have $\pdv_{x_{i}}f\vtl y_{i}f$. Hence
\[
(y_{i}-\partial_{x_{i}})f\in\G[\x,y_{i}]\qquad(1\le i\le n),
\]
and therefore, by repeated application,
\[
g(\x,\y):=\Bigl(\prod_{i=1}^{n}(y_{i}-\partial_{x_{i}})\Bigr)f(\x)\in\G[\x,\y].
\]

Write $A=\{S\subset[n]:a_{S}\neq 0\}$. Because $a_{\varnothing}>0$, the family $A$ is downward closed: if $S\in A$, then every $T\subset S$ lies in $A$. Hence
\[
g(\x,\y)=\sum_{S\in A}\x^{S}\,g_{S}(\y),
\]
where each coefficient polynomial $g_{S}(\y)$ has a leading coefficient $a_{S}>0$. Thus, for all sufficiently large $\y>0$, we have $g_{S}(\y)\geq 0$ for every $S\in A$, so $(\mathbf{0},\y)$ lies in the distinguished component $\C_{g}$ for large $\y$. Thus, $g(\mathbf{0},\y)\in\G[\y]$.

Finally, observe that $g(\mathbf{0},\y)=T_{\tau}(f)(\y)$. Since diagonal specialization preserves the G{\aa}rding property, we conclude that $T_{\tau}(f)\in\GS(\y)$.
\end{proof}

\begin{example}The condition $f(\mathbf{0})>0$ is necessary in Theorem~\ref{thm:inversion-1}.
\label{ex:inversion-necessity}
Indeed, consider
\[
f(x)=(x+1)^{4}-1-4x.
\]
Then $f\in\GS[x]$, since $R(f)=(0,0,-1,-1)$ is an MRS. A direct computation yields
\[
T_{\tau}(f)(x)=6x^{2}-4x+1,
\]
which is a degree-$2$ polynomial that is not real-stable. By Theorem~\ref{thm:stable-garding}, this implies that $T_{\tau}(f)$ is not G{\aa}rding.
\end{example}

\subsection{Binary relations for general G{\aa}rding polynomials}
We extend to general G{\aa}rding polynomials the notions of domination and proper position.
\begin{defn}
\label{def:domination-general}
Let $g(\x)\in\GS[\x]$. If $f(\x)\in\mathbb{R}[\x]$ satisfies
\[
\C_{\partial^{\alpha}g}\subset\{\partial^{\alpha}f>0\}\quad\text{for every nontrivial partial derivative }\partial^{\alpha}f,
\]
then we say that $g$ \emph{dominates} $f$, and write $f\vtl g$.
\end{defn}

Similarly, we have the following definition.
\begin{defn}
\label{def:proper-position-general}
Suppose $f(\x),g(\x)\in\GS[\x]\setminus\{0\}$. The pair $(f,g)$ is said to be in \emph{proper position} if
\[
f(\x)\,y+g(\x)\in\GS[\x,y].
\]
In this case, we also write $f\prec g$. We adopt the convention that $0\prec g\prec 0$ for any $g\in\GS[\x]$.
\end{defn}

It is straightforward to verify that Definitions~\ref{def:dominatesGA} and~\ref{def:properposition} in the multi-affine case are special cases of Definitions~\ref{def:domination-general} and~\ref{def:proper-position-general}. Proceeding as in the multi-affine setting, we prove the following result.
\begin{thm}
\label{thm:dominate-position-general-thm}
Let $f(\x),g(\x)\in\mathbb{R}_{\kappa}[\x]$, and suppose $g(\x)\in\GS[\x]$. If $f\vtl g$, then
\[
\Pi_{\kappa}^{\uparrow}f\;\vtl\;\Pi_{\kappa}^{\uparrow}g.
\]
\end{thm}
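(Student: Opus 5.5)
The plan is to reduce the statement to the multi-affine dichotomy, Theorem~\ref{thm:Suppose-that-is}, through the auxiliary polynomial
\[
H(\x,y):=g(\x)\,y-f(\x).
\]
The argument has three steps:
\begin{enumerate}
\item show that $H\in\GS[\x,y]$ whenever $f\vtl g$ in the sense of Definition~\ref{def:domination-general};
\item polarize $H$ with multi-index $(\kappa,1)$, which gives $\Pi^{\uparrow}_{(\kappa,1)}H=(\Pi_{\kappa}^{\uparrow}g)\,y-\Pi_{\kappa}^{\uparrow}f$;
\item read off $\Pi_{\kappa}^{\uparrow}f\vtl\Pi_{\kappa}^{\uparrow}g$ from the dichotomy.
\end{enumerate}
We argue by induction on $\deg g$. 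Throughout, variables $x_i$ with $\deg_{x_i}g=0$ are first discarded. This is legitimate because domination forces $\partial^\alpha f>0$ on translation-invariant components, so $f$ cannot depend on such variables (as in the proof of Theorem~\ref{thm:dominategardnew}). After this reduction $g$ is NRT, and Theorem~\ref{thm:polarization_keeps_garding} applies.

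\textbf{Step 1 (main obstacle).} We verify $H\in\mathcal{G}$ and then invoke Theorem~\ref{thm:recursivedef}.
\begin{itemize}
\item The derivative $\partial_yH=g$ lies in $\GS$.
\item The derivatives $\partial_{x_i}H=(\partial_ig)y-\partial_if$ lie in $\GS$ by the induction hypothesis. This uses that $\partial_if\vtl\partial_ig$, which is built into Definition~\ref{def:domination-general} since the condition is imposed for all $\alpha$.
\item The positive component is $\widehat{\C}_H=\{(\x,y):\x\in\C_g,\ y>f/g\}$. It contains a positive affine hyper-octant because $f>0$ on $\C_g$.
\item The delicate point is the inclusion $\widehat{\C}_H\subset\widehat{\C}_{\partial_{x_i}H}$, that is,
\[
\frac{f}{g}\ \ge\ \frac{\partial_if}{\partial_ig}\quad\text{on }\C_g\cap\{\partial_i g>0\},
\]
together with $\C_g\subset\C_{\partial_ig}$ (Theorem~\ref{thm:diff-closure}).
\end{itemize}
For the displayed inequality I would first try a one-variable argument along the line $\x+t\e_i$. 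By Theorem~\ref{thm:recursivedef} and Remark~\ref{rem:MRSnecessary}, the restrictions $t\mapsto g(\x+t\e_i)$ and $t\mapsto f(\x+t\e_i)$ are univariate (right-Noetherian) G\aa rding polynomials with nested derivative components. The inequality is equivalent to the concavity-type statement that $\log g-\log f$ has nonincreasing $t$-derivative ratio. I would establish it by reducing, through the symmetric realization of Theorem~\ref{thm:kappa-symmetric-realization}, to the multi-affine case. There it is exactly the Rayleigh inequality for $gy-f$ given by Lemma~\ref{lem:GardingToRayleigh} and Proposition~\ref{51}. If this direct route fails, the fallback is to perturb $f$ to $f-\epsilon\ell$ as in Proposition~\ref{prop:approxiG}, so that the boundaries separate strictly, and then pass to the limit.

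\textbf{Step 2.} With $H\in\GS$ and NRT in $(\x,y)$, Theorem~\ref{thm:polarization_keeps_garding} gives
\[
\widetilde H:=(\Pi_{\kappa}^{\uparrow}g)\,y-\Pi_{\kappa}^{\uparrow}f\in\G.
\]

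\textbf{Step 3.} Apply Theorem~\ref{thm:Suppose-that-is} to $\widetilde H$, viewed as $G\,y+(-F)$ with $G=\Pi_\kappa^\uparrow g$ and $F=\Pi_\kappa^\uparrow f$. Alternative~(2) would make $-F$ G\aa rding, so $\C_{-F}$ would contain a positive affine hyper-octant. By Remark~\ref{rem:easy}, that hyper-octant meets the diagonal ray $t\mathbf 1$ for large $t$. But there $F(t\mathbf 1)=f(t\mathbf 1)>0$, since $t\mathbf 1\in\C_g$ for $t\gg0$. This is a contradiction. Hence alternative~(1) holds, namely $G\in\G$ and $-F<0$ on $\C_G$, which is $\Pi_{\kappa}^{\uparrow}f\vtl\Pi_{\kappa}^{\uparrow}g$ in the sense of Definition~\ref{def:dominatesGA}.

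For the full general-sense domination, we also need $\partial^\beta F>0$ on $\C_{\partial^\beta G}$. By the polarization identity \eqref{eq:polarizationIdentity}, $\partial^\beta F$ and $\partial^\beta G$ are polarizations of $\partial^\alpha f$ and $\partial^\alpha g$ up to the same positive constant. The pair $(\partial^\alpha f,\partial^\alpha g)$ again satisfies the hypothesis, so the same argument applies to each derivative.
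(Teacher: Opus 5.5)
Your overall architecture (prove $H=g\,y-f\in\GS$, polarize with Theorem~\ref{thm:polarization_keeps_garding}, then read off domination from the dichotomy of Theorem~\ref{thm:Suppose-that-is}) is sound, and Steps 2 and 3 go through once Step 1 is in place. The gap is that Step 1 is not proved, and it is where all the difficulty lies. The inclusion $\widehat{\C}_H\subset\widehat{\C}_{\pdv_{x_i}H}$ is equivalent to $f\,\pdv_i g-g\,\pdv_i f\ge 0$ on $\C_g$, i.e.\ to the monotonicity of $g/f$. The paper obtains this only as a \emph{consequence} of the present theorem (Corollary~\ref{cor:domination-consequences}(2)).

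Your justification for this inequality is circular. Proposition~\ref{51} for the polarized pair requires $\pol_\kappa f\vtl\pol_\kappa g$, which is the statement being proved. Lemma~\ref{lem:GardingToRayleigh} for $(\pol_\kappa g)\,y-\pol_\kappa f$ requires that polynomial to be G\aa rding, which is the output of your Step 2. Theorem~\ref{thm:kappa-symmetric-realization} realizes $g$ alone and gives no multi-affine model of $f$ dominated by the model of $g$.

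The one-variable sketch also rests on a false claim: $t\mapsto f(\x+t\mathbf{e}_i)$ need not be G\aa rding. For example, $x^2+1\vtl x^2$, but $x^2+1\notin\GS$ since $\GS^2=\S^2$. The perturbation fallback is too vague to check. Finally, the same missing monotonicity is what you need for $\widehat{\C}_H$ to contain a positive affine hyper-octant; $f>0$ on $\C_g$ alone does not bound $f/g$ above on any hyper-octant.

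One way to close the gap is a genuinely univariate comparison that uses your induction hypothesis. Fix $\x\in\C_g$, let $(a,\infty)$ be the section $\{t:\x+t\mathbf{e}_i\in\C_g\}$ ($a$ is finite because $g$ is NRT), and put $\phi=f/g$, $\psi=\pdv_i f/\pdv_i g$ along this line.

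\begin{itemize}
\item Since $\pdv_{x_i}H\in\GS$ by induction and $\C_{\pdv_{x_i}H}\subset\C_{\pdv_{x_i}^2H}$ (Theorem~\ref{thm:diff-closure}), you get $\psi\ge\pdv_i^2f/\pdv_i^2g$. This says $\psi$ is nonincreasing in $t$; trivially so if $\pdv_i^2g\equiv0$, since domination then forces $\pdv_i^2f\equiv0$.
\item On the other hand, $\pdv_t\phi=(\pdv_i g/g)(\psi-\phi)$. Hence $\psi-\phi$ is strictly decreasing wherever it is positive.
\item Suppose $\phi<\psi$ at some $t_1$. Then this persists on all of $(a,t_1]$ with $0<\phi\le\phi(t_1)$, so $f(\x+a\mathbf{e}_i)=0$.
\item L'H\^opital, with $\psi$ monotone, then gives either $\phi\to\infty$ or $\psi-\phi\to0$ as $t\to a^{+}$. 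Both are impossible.
\end{itemize}

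This yields $\pdv_i(f/g)\le 0$ on $\C_g$, and with it the hyper-octant.

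With that lemma, your route becomes a real alternative to the paper's. You would first prove the unpolarized version of Theorem~\ref{thm:iffdominate} (the paper's Corollary~\ref{cor:domination-consequences}(3)) and let Theorem~\ref{thm:polarization_keeps_garding} do the polarizing. The paper never forms $g\,y-f$ unpolarized. It works with $z\,\pol_\kappa g-\pol_\kappa f$, approximates into the class $\mathcal{B}$, and uses Proposition~\ref{prop:symmetry} to move boundary points of $\C_{\pol_\kappa g}$ to $\kappa$-symmetric points, where the hypothesis $f\ge 0$ on $\partial\C_g$ can be applied.
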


\begin{proof}
Let $N=|\kappa|$. We argue by induction on the total degree. If $\deg f,\deg g\leq 1$, the statement is immediate.

Assume that the result holds for all pairs of degree less than $d$. Let $f,g\in\mathbb{R}_{\kappa}[\x]\setminus\{0\}$ with $\deg g=d$ and $f\vtl g$. As in the proof of Lemma~\ref{lem:basic-domination}, we have:
\begin{enumerate}
\item If $f\vtl g$, then $\deg f\leq \deg g$.
\item If $g$ is independent of $x_{i}$, then $f$ is independent of $x_{i}$.
\end{enumerate}

By Lemma~\ref{lem:NRTforgeneralgard}, we may assume that $g$ is NRT. Set
\[
\tilde{f}=\pol_{\kappa}f(\w),\quad \tilde{g}=\pol_{\kappa}g(\w),\quad h(\w,z)=z\tilde{g}(\w)-\tilde{f}(\w).
\]
By the induction hypothesis and Theorem~\ref{thm:iffdominate},
\[
\partial_{w_{ij}}h\in\G \ \text{for all } i\in[n],\ j\in[\kappa_{i}],\qquad \partial_{z}h=\pol_{\kappa}g\in\G.
\]
Thus $h$ belongs to the auxiliary class $\mathcal{A}$.

It suffices to prove the result for $h\in\mathcal{B}$. Otherwise, since $z\cdot g(\x)\in\GS[\x,z]$ is NRT, we may find a linear function $l(\x,z)$ such that $l|_{\C_{zg}^{(d)}}>0$ and $\nabla l>0$ by Lemma~\ref{lem:top-nrt-affine}. For any positive integer $m$, define
\[
f_{m}=f+\frac{1}{m}l(\x,0),\quad g_{m}=g-\frac{1}{m}\pdv_{z}l,\quad h_{m}=h-\frac{1}{m}\pol_{\kappa}l.
\]
Then $f_{m}\vtl g_{m}$ and $h_{m}\in\mathcal{B}$. Once we show $h_{m}\in\G$ for all sufficiently large $m$, it follows that $h=\lim_{m\to\infty}h_{m}$ is G{\aa}rding.

We claim that if $h\in\mathcal{B}$, then
\begin{equation}
\tilde{f}\big|_{\partial\C_{\tilde{g}}}\geq 0.
\label{eq:claim}
\end{equation}
Indeed, suppose that  there exists $\w\in\partial\C_{\tilde{g}}=\partial\C_{\partial_{z}h}$ such that $\tilde{f}(\w)<0$. Applying Proposition~\ref{prop:symmetry} to $h$, we obtain $(\u^{*},z^{*})\in\partial\C_{\partial_{z}h}$ with $h(\u^{*},z^{*})>0$ such that for each $i\in[n]$,
\[
u_{i1}^{*}=\cdots=u_{i\kappa_{i}}^{*}.
\]
Let $\u=(u_{11}^{*},\dots,u_{n1}^{*})$. Then $g(\u)=0$ and $-f(\u)>0$. Since $\u\in\partial\C_{g}$, this contradicts $\C_{g}\subset\{f>0\}$. Hence \eqref{eq:claim} holds.

Let $\C$ be a connected component of $\{\tilde{f}>0\}$ that intersects $\C_{\tilde{g}}$. If $\C_{\tilde{g}}\setminus\C\neq\varnothing$, Lemma~\ref{lem:usefullemma} gives a point $\w_{1}\in\C_{\tilde{g}}$ such that $\tilde{f}(\w_{1})<0$. For any $\w_{2}\leq\w_{1}$ with $\w_{2}\in\partial\C_{\tilde{g}}$, we have $\tilde{f}(\w_{2})\geq 0$ by \eqref{eq:claim}. Since $\C_{\tilde{g}}$ is NRT, it follows that $\nabla\tilde{f}(\w_{1})<0$, and hence for all $\v\geq 0$ with $|\v|\leq 1$,
\begin{equation}
\tilde{f}(\w_{1}+r\,\v)\leq \tilde{f}(\w_{1})<0 \quad \text{for small } r>0.
\label{eq:openset}
\end{equation}
Let $G=\{\w:\w\geq \w_{1},\ \tilde{f}(\w)\leq \tilde{f}(\w_{1})\}$. Then $G$ is closed and, by \eqref{eq:openset}, also open in $\w_{1}+\overline{\Gamma_{N}^{+}}$. Hence $G=\w_{1}+\overline{\Gamma_{N}^{+}}$. Choosing $r$ large, we obtain $r\mathbf{1}_{N}\in G\cap\C_{\tilde{g}}$, a contradiction. Therefore $\tilde{f}>0$ on $\C_{\tilde{g}}$, and the proof is complete.
\end{proof}
The following result is a direct consequence of Theorems \ref{thm:dominate-position-general-thm},  \ref{thm:dominategardnew} and Proposition \ref{proper-position}.
\begin{cor}
\label{cor:domination-consequences} Let $g(\x)\in\mathcal{\GS}[\x]$. Let $f(\x)$ be a non-trivial polynomial such that $f\vtl g$.
\begin{enumerate}
\item  For any $c\ge0$ such that 
$
\{g-cf>0\}\cap\mathcal{C}_{g}\neq\varnothing$, $g(\x)-c\,f(\x)\in\mathcal{\GS}[\x]$.
\item For $\x\in\mathcal{C}_{g}$ and $\mathbf{a}\ge 0$, the directional derivative \[
\partial_\mathbf{a}\left (\frac{g(\x)}{f(\x)}\right)\ge 0.\]
\item One has $f\vtl g$ if and only if $\{f>0\}$ contains a positive affine
hyper-octant, and 
\[
g(\x)\,y-f(\x)\in\GS[\x,y].
\]
\end{enumerate}
\end{cor}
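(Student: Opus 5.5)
The plan is to reduce all three items to the multi-affine statements already proved. The bridge is Theorem~\ref{thm:dominate-position-general-thm} together with the $\kappa$-polarization/projection machinery. Fix $\kappa\ge\kappa_g,\kappa_f$ and set $\tilde f=\pol_\kappa f$, $\tilde g=\pol_\kappa g$. By Theorem~\ref{thm:polarization_keeps_garding}, after removing redundant variables so that $g$ is NRT (redundant variables of $g$ are also absent from $f$, as noted in the proof of Theorem~\ref{thm:dominate-position-general-thm}), we have $\tilde g\in\G$. Theorem~\ref{thm:dominate-position-general-thm} then gives $\tilde f\vtl\tilde g$ in the sense of Definition~\ref{def:dominatesGA}.

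For (1), fix $c\ge0$ with $\{g-cf>0\}\cap\C_g\neq\emptyset$. The key step is to show $c<\rho_{\tilde g}(\tilde f)$, or handle the boundary case directly. Let $\x_0\in\C_g$ with $g(\x_0)-cf(\x_0)>0$. Its diagonal lift $\w_0$ lies in $\C_{\tilde g}$, since $\C_g$ is the diagonal preimage of $\C_{\tilde g}$. Moreover $\tilde g(\w_0)-c\tilde f(\w_0)>0$, and this is an open condition, so a slightly larger $c'$ still works. Hence $\rho_{\tilde g}(\tilde f)>c$. Theorem~\ref{thm:dominategardnew} then gives $\tilde g-c\tilde f\in\G$, and applying $\proj_\kappa$ (a strictly positive affine pullback) yields $g-cf\in\GS$. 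The point I expect to need the most care is the identification of $\C_g$ with the diagonal slice of $\C_{\tilde g}$. I would obtain it from Remark~\ref{rem:pullback}: the diagonal preimage of $\C_{\tilde g}$ is a PRT connected component of $\{g>0\}$, and it is unique.

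For (2), I would apply Proposition~\ref{51} to $\tilde f\vtl\tilde g$, so that $\tilde g/\tilde f$ is nondecreasing on $\C_{\tilde g}$ in all $\overline{\Gamma_N^+}$ directions. A direction $\a\ge0$ in $\R^n$ lifts under the diagonal map to a nonnegative direction in $\R^N$. The ratio $g/f$ is the restriction of $\tilde g/\tilde f$ to the diagonal, with $f>0$ on $\C_g$. The chain rule then gives $\partial_\a(g/f)\ge0$ on $\C_g$.

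For (3), the forward direction follows from Theorem~\ref{thm:iffdominate} applied to $\tilde f\vtl\tilde g$. This gives $\tilde g y-\tilde f\in\G$, and its $(\kappa,1)$-projection is $gy-f$. Also $\{f>0\}\supset\C_g$, which contains a positive affine hyper-octant. For the converse, suppose $gy-f\in\GS[\x,y]$ and $\{f>0\}$ contains an affine hyper-octant. I would mimic the proof of Theorem~\ref{thm:iffdominate} by polarizing $gy-f$ with respect to $(\kappa,1)$ via Theorem~\ref{thm:polarization_keeps_garding}, which gives $\tilde g y-\tilde f\in\G$, with $\{\tilde f>0\}$ still containing an affine hyper-octant. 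I would then invoke the dichotomy of Theorem~\ref{thm:Suppose-that-is} to get $\tilde f\vtl\tilde g$ and restrict to the diagonal to get $f>0$ on $\C_g$.

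The remaining gap is that Definition~\ref{def:domination-general} also requires $\partial^\alpha f>0$ on $\C_{\partial^\alpha g}$ for all $\alpha$. I would obtain this by differentiating: $\partial^\alpha$ applied in $\x$ to $gy-f$ keeps it Gårding by Theorem~\ref{thm:diff-closure}, and then the same argument runs for each pair $(\partial^\alpha f,\partial^\alpha g)$. The affine hyper-octant condition for $\partial^\alpha f$ is the likely obstacle. I would derive it from $\partial^\alpha f$ being positive for large positive arguments, using nonnegativity of the top part via Proposition~\ref{prop:truncateLet--be}.
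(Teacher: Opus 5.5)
Your treatment of (1), (2) and the forward half of (3) is sound and follows the paper's route. The paper gets the corollary in one line from Theorem~\ref{thm:dominate-position-general-thm} together with the multi-affine results (Theorem~\ref{thm:dominategardnew}, Proposition~\ref{51}, Theorem~\ref{thm:iffdominate}). You fill in the details correctly, including the identification of $\C_g$ with the diagonal preimage of $\C_{\tilde g}$ and the openness argument giving $\rho_{\tilde g}(\tilde f)>c$.

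The converse half of (3), however, has two steps that fail as written. First, $\{\tilde f>0\}$ need not contain an affine hyper-octant when $\{f>0\}$ does. Take $f=(x-y)^2+1$ and $\kappa=(2,2)$; in the variables $(x_1,x_2,y_1,y_2)$,
\[
\tilde f=x_1x_2-\tfrac12(x_1+x_2)(y_1+y_2)+y_1y_2+1 .
\]
Its $x_1y_1$-coefficient is $-\tfrac12$, so $\tilde f\to-\infty$ along $x_1=y_1=s\to\infty$ inside every affine hyper-octant of $\R^4$, although $f>0$ on all of $\R^2$. You do not need this claim. Apply Theorem~\ref{thm:Suppose-that-is} to $\tilde g\,y-\tilde f\in\G$ and rule out its second alternative downstairs. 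That alternative would give $-\tilde f\in\G$, hence $-f=\proj_\kappa(-\tilde f)\in\GS$ with a nonempty PRT component $\C_{-f}$. This component must meet the affine hyper-octant on which $f>0$ (Remark~\ref{rem:easy}), a contradiction.

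Second, for the derivative conditions in Definition~\ref{def:domination-general} you propose to use Proposition~\ref{prop:truncateLet--be}. That result concerns multi-affine G\aa rding polynomials, and $f$ is not known to be G\aa rding. Nor does positivity of $f$ on a hyper-octant make $\partial^\alpha f$ eventually positive: in the example above, $\partial_x f=2(x-y)$.

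The pairwise argument is in any case unnecessary. Once you know $\tilde f>0$ on $\C_{\tilde g}$, Lemma~\ref{lem:basic-domination}(3) gives $\partial^{\beta}\tilde f\vtl\partial^{\beta}\tilde g$ for every $\beta$. By the polarization identity, for $\alpha\le\kappa$ the polynomials $\partial^\alpha f$ and $\partial^\alpha g$ are the same positive multiple of the diagonal pullbacks of $\partial^{\tilde\alpha}\tilde f$ and $\partial^{\tilde\alpha}\tilde g$. Here $\tilde\alpha$ picks $\alpha_i$ distinct copies of each $x_i$. Remark~\ref{rem:pullback} then identifies $\C_{\partial^\alpha g}$ with the diagonal preimage of $\C_{\partial^{\tilde\alpha}\tilde g}$. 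This yields $\partial^\alpha f>0$ on $\C_{\partial^\alpha g}$ whenever $\partial^\alpha f\not\equiv0$.
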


Next, we prove
\begin{thm}
\label{thm:dominate-general}Suppose that $f(\mathbf{x}),g(\x)\in\R_{\kappa}[\x]\cap\GS[\mathbf{x}]$
and both are nontrivial. Then $f\prec g$ if and only if for each
$\alpha\in\N^{n}$,
\begin{equation}
\pdv^{\alpha}g(\x)\leq0,\ \forall\mathbf{x}\in\overline{\C_{\pdv^{\alpha}f}}\backslash\C_{\pdv^{\alpha}g}.\label{eq:position}
\end{equation}
\end{thm}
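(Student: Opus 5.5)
The plan is to reduce to the multi-affine case by $\kappa$-polarization and then invoke Theorem~\ref{proper-position} together with Lemma~\ref{lem:usefullemmaprec}. Set $\tilde f=\pol_{\kappa}f$ and $\tilde g=\pol_{\kappa}g$. Since $f\prec g$ means $fy+g\in\GS[\x,y]$, and $\pol_{(\kappa,1)}(fy+g)=\tilde f y+\tilde g$, the characterization $\GS=\mathcal{G}$ (Theorem~\ref{thm:recursivedef}) and Theorem~\ref{thm:polarization_keeps_garding} give
\[
f\prec g\iff \tilde f\prec\tilde g .
\]
For the non-NRT case I would first remove redundant variables, as in Lemma~\ref{lem:NRTforgeneralgard}. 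Conversely, $\kappa$-projection sends $\tilde f y+\tilde g$ back to $fy+g$, and that projection is a strictly positive linear pullback.

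For the forward direction, assume $f\prec g$. Theorem~\ref{proper-position} then yields condition (B) for $(\tilde f,\tilde g)$. Lemma~\ref{lem:usefullemmaprec}(1) gives $\pdv^{\beta}\tilde g\le 0$ on $\overline{\C_{\pdv^{\beta}\tilde f}}\setminus\C_{\pdv^{\beta}\tilde g}$ for every multi-affine index $\beta$. For a general $\alpha\le\kappa$, the polarization identity \eqref{eq:polarizationIdentity} iterated shows that $\pdv^{\alpha}f$ and $\pdv^{\alpha}g$ are, up to positive constants, the diagonal restrictions of $\pdv^{\beta}\tilde f$ and $\pdv^{\beta}\tilde g$, where $\beta$ picks $\alpha_i$ coordinates in block $i$. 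Their components are the diagonal slices of the polarized components, because $\C_{\mu^*p}=\mu^{-1}(\C_p)$. Evaluating at a diagonal point then gives \eqref{eq:position}. If $\alpha\not\le\kappa$, then $\pdv^\alpha g\equiv0$ and the claim is trivial.

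For the converse, I would not lift \eqref{eq:position} to the polarization directly. Instead I would show $h:=fy+g\in\mathcal{G}$ recursively by induction on $\deg g$. The derivatives are $\pdv_y h=f\in\GS$ and $\pdv_i h=(\pdv_i f)y+\pdv_i g$. The pair $(\pdv_i f,\pdv_i g)$ satisfies \eqref{eq:position} for all $\alpha$, since these are sub-indices of the original family. Hence by induction $\pdv_i f\prec\pdv_i g$, so $\pdv_i h\in\GS$. The base case $\deg\le 2$ is handled through Theorem~\ref{thm:stable-garding} and the stable theory of proper position.

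It remains to verify $\emptyset\ne\widehat{\C}_h\subset\widehat{\C}_h^{(1)}$. The candidate component is
\[
\C=\{(\x,y):\x\in\C_f,\ y>-g(\x)/f(\x)\},
\]
copying the proof of (C)$\Rightarrow$(A) in Theorem~\ref{proper-position}. The $\alpha=0$ case of \eqref{eq:position} says $g\le0$ on $\overline{\C_f}\setminus\C_g$. Arguing as in (B)$\Rightarrow$(C), this makes $\C$ a connected component of $\{h>0\}$ that contains a positive octant. Inclusion in $\pi_i^{-1}$ of the components of $\pdv_i h$ follows by the same argument one level down, using \eqref{eq:position} for $\alpha=\e_i$.

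The main obstacle is the last step. It requires the monotonicity of $g/f$ along positive directions on $\C_f$, which in the multi-affine proof came from the approximation argument via Proposition~\ref{prop:precpositivecombination}. For general polynomials I would handle it by approximation within the class $\mathcal{B}$, using Proposition~\ref{prop:approxiG}, and then pass to the limit with Lemma~\ref{lem:G_+isclosed} after a translation into $\C_g$.
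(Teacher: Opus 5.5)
Your forward direction is the paper's argument and is fine: polarize $fy+g$, apply Theorem~\ref{proper-position} and Lemma~\ref{lem:usefullemmaprec}, then restrict to the $\kappa$-diagonal using $\C_{\mu^*p}=\mu^{-1}(\C_p)$.

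The converse breaks where you say the $\alpha=0$ case of \eqref{eq:position} makes $\C=\{(\x,y):\x\in\C_f,\ y>-g/f\}$ a connected component of $\{h>0\}$. For that you need $g\le 0$ on all of $\partial\C_f$. That means $\C_g\cap\partial\C_f=\emptyset$, i.e.\ $\C_g\subset\C_f$. The hypothesis only controls $g$ on $\overline{\C_f}\setminus\C_g$ and never forces this inclusion. In fact the \emph{if} direction is false as literally stated. Take $f=x_1$, $g=x_2$. Then $g\le 0$ on $\overline{\C_f}\setminus\C_g=\{x_1\ge 0,\ x_2\le 0\}$. For $\alpha\neq 0$, every $\pdv^\alpha f,\pdv^\alpha g$ is a nonnegative constant with component $\R^2$, so \eqref{eq:position} holds for every $\alpha$. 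Yet $x_1y+x_2$ is not G\aa rding: it is multi-affine, and Theorem~\ref{thm:Suppose-that-is} would force $\C_g=\C_f\cap\{g>0\}$. Concretely, the points $(0,x_2,y)$ with $x_2>0$ lie in $\{h>0\}\cap\partial\C$. So the hypothesis has to be read in the stronger form $\C_{\pdv^\alpha f}\cap\{\pdv^\alpha g>0\}=\C_{\pdv^\alpha g}$. The two formulations in Theorem~\ref{proper-position}(B) agree only when $\C_g\subset\C_f$. The paper's own converse relies on this strengthening tacitly, both in its base case and when it deduces $\tilde g\le 0$ on $\partial\C_{\tilde f}$ from diagonal information. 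With domination added, your component step is correct: $g\le 0$ on $\partial\C_f$, and $\C\supset\C_g\times(0,\infty)$.

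Even granting domination, the decisive step is not supplied. The inclusion $\C\subset\widehat{\C}_{\pdv_i h}$ amounts to $g\,\pdv_i f\le f\,\pdv_i g$ on $\C_f$, i.e.\ condition (C), which is the heart of the matter. The multi-affine proof of (B)$\Rightarrow$(C) ends by noting that $p=g-\phi(T)f$ and $\pdv_i p$ both vanish at $\x_T$ while $p$ is affine in $x_i$, so $p$ vanishes on the whole line. For general polynomials this fails: a univariate G\aa rding polynomial such as $t^2$ has a multiple largest root.

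Your proposed remedy does not say where monotonicity would come from. Lemma~\ref{lem:G_+isclosed} is about multi-affine $\G_+$. The $\mathcal{B}$-machinery (Propositions~\ref{prop:n=00003D2} and~\ref{prop:symmetry}) only operates on symmetric multi-affine polarizations, so it pushes you back to the polarization you meant to avoid.

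That is precisely the paper's route. The non-NRT case is settled by Lemma~\ref{lem:precusefulforgeneralstable}: there $\pdv_i g=c_i f$, so $fy+g$ is a translate of $g$. In the NRT case the paper proceeds in four steps:
\begin{enumerate}
\item it perturbs $\tilde f y+\tilde g$ into $\mathcal{B}$, its first partials being G\aa rding by induction;
\item it uses Proposition~\ref{prop:symmetry} to move a point of $\partial\C_{\tilde f}$ to a $\kappa$-diagonal boundary point without decreasing $\tilde g$;
\item it reads off $\tilde g\le 0$ there from the hypothesis on $(f,g)$;
\item it lets Theorem~\ref{proper-position}, (B)$\Rightarrow$(A), supply the monotonicity.
\end{enumerate}
Your base case also needs repair. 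With $\deg f=\deg g=2$ the polynomial $h$ has degree $3$, outside $\GS^2=\S^2$. The natural base case is $\max\{\deg f,\deg g\}\le 1$, where Theorem~\ref{proper-position} applies directly, once the domination hypothesis is in place.
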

To prove Theorem~\ref{thm:dominate-general}, we state two lemmas. The first generalizes Lemma~\ref{lem:propotional}. We omit the proof, as it is identical.

\begin{lem}
\label{lem:propotional3}
For $n\geq 1$, if $p_{1},p_{2}\in\GS_{n}$ and, for each $\alpha\in\N^{n}$, $\C_{\pdv^{\alpha}p_{1}}=\C_{\pdv^{\alpha}p_{2}}$, then $p_{1}$ and $p_{2}$ are proportional.
\end{lem}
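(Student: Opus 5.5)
We prove Lemma~\ref{lem:propotional3} by induction on $d(p_1,p_2):=\max\{\deg p_1,\deg p_2\}$, following Lemma~\ref{lem:propotional} and replacing its multi-affine inputs with the general recursive structure from Theorem~\ref{thm:recursivedef} and Lemma~\ref{lem:NRTforgeneralgard}.

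\emph{Base case and reductions.} For $d\le 1$ both polynomials are affine with nonnegative gradients. Equal components $\{\a\cdot\x>b\}=\{\a'\cdot\x>b'\}$ (or both $\R^n$) force $(\a,b)$ and $(\a',b')$ to be proportional. For the inductive step, note that the hypothesis with $\alpha=\mathbf{e}_i$ gives $\C_{\pdv_i p_1}=\C_{\pdv_i p_2}$. By the convention $\C_0=\R^n$ and Lemma~\ref{lem:trivialGarding}(2), $\pdv_ip_1$ is constant if and only if $\pdv_ip_2$ is. If some $\pdv_i p_1\equiv 0$, then the NRT characterization in Lemma~\ref{lem:NRTforgeneralgard} shows $\C_{p_1}$ is invariant under the $\mathbf{e}_i$-direction, hence so is $\C_{p_2}$, and so $\pdv_ip_2\equiv0$ as well. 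We may therefore delete the redundant variables simultaneously and assume both $p_1$ and $p_2$ are NRT.

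\emph{Proportional derivatives.} The derivatives $\pdv_i p_1,\pdv_i p_2$ lie in $\GS$ by Theorem~\ref{thm:diff-closure}. Their higher derivative components coincide because $\pdv^\beta\pdv_i=\pdv^{\beta+\mathbf{e}_i}$, and their degree is at most $d-1$. By the induction hypothesis, $\pdv_i p_1=c_i\,\pdv_i p_2$ with $c_i>0$; if both are positive constants, proportionality is trivial.

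\emph{Integrating along coordinate rays.} Fix $\hat\x_0\in\C_{p_1}$. By Corollary~\ref{cor:NRTforeachvariable}, there is $t>0$ with $\hat\x_1:=\hat\x_0-t\mathbf{e}_i\in\pdv\C_{p_1}=\pdv\C_{p_2}$, so $p_1(\hat\x_1)=p_2(\hat\x_1)=0$. Integrating $\pdv_ip_1=c_i\pdv_ip_2$ along the segment from $\hat\x_1$ to $\hat\x_0$ gives $p_1(\hat\x_0)=c_i\,p_2(\hat\x_0)$. Since this holds on the open set $\C_{p_1}$, we get $p_1\equiv c_ip_2$ identically.

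\emph{Main obstacle.} The delicate point is the boundary step: we need $\hat\x_1$ to be a genuine zero of both polynomials. In the multi-affine case this is automatic. In general it requires that the component boundary lies in the zero set, which holds because $\C_{p_j}$ is a connected component of $\{p_j>0\}$, so $p_j=0$ on $\pdv\C_{p_j}$. The second ingredient is that the backward ray actually exits the component, which is exactly the NRT input from Corollary~\ref{cor:NRTforeachvariable}. Both facts are available, so the multi-affine argument transfers verbatim.
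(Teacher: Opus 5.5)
Your proposal is correct and is essentially the paper's argument. The paper omits the proof as identical to that of Lemma~\ref{lem:propotional}: induction on degree, reduction to the NRT case, proportional first derivatives by the induction hypothesis, and integration along $\mathbf{e}_i$ from a boundary point. You correctly supply the two ingredients needed to transfer this to general G\aa rding polynomials, namely that $p_j$ vanishes on $\partial\C_{p_j}$ and that the backward ray exits the component by Corollary~\ref{cor:NRTforeachvariable}.

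One minor point in your reduction step. Invariance of $\C_{p_1}$ under $\mathbf{e}_i$ is immediate because $p_1$ does not depend on $x_i$, so Lemma~\ref{lem:NRTforgeneralgard} is not needed there. The vanishing of $\pdv_i p_2$ follows directly from the hypothesis at $\alpha=\mathbf{e}_i$: it forces $\pdv_i p_2$ to be a nonnegative constant, and that constant must be zero, since otherwise $\C_{p_2}=\C_{p_1}$ would not be invariant under $-\mathbf{e}_i$.
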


The second lemma generalizes Lemma~\ref{lem:usefullemmaprec}.

\begin{lem}
\label{lem:precusefulforgeneralstable}
Suppose that $f,g\in\GS_{n}\backslash\{0\}$ and
\[
\pdv^{\alpha}g\big|_{\overline{\C_{\pdv^{\alpha}f}}\setminus\C_{\pdv^{\alpha}g}}\leq 0
\quad \text{for all } \alpha\in\N^{n}.
\]
If $f$ is independent of $x_{i}$, then $\deg_{x_{i}}g\leq 1$ and $\pdv_{i}g$ and $f$ are proportional.
\end{lem}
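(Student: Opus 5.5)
We will reduce to the multi-affine statement, Lemma~\ref{lem:constant multiple}, by polarization, and then transfer the conclusion back through $\kappa$-projection.

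\textbf{Step 1: $\deg_{x_i} g\le 1$.} Suppose instead that $\deg_{x_i} g\ge 2$. Pick a multi-index $\beta$ with $\beta_i=0$ such that $\pdv^{\beta}f\equiv c>0$ is a nonzero constant. Such a $\beta$ exists because $f$ is independent of $x_i$: take any top-degree monomial of $f$. Then $\C_{\pdv^\beta f}=\R^n$. The hypothesis with $\alpha=\beta$ says that $\pdv^\beta g\le 0$ off $\C_{\pdv^\beta g}$. Consequently $\{\pdv^\beta g>0\}$ equals the single PRT component $\C_{\pdv^\beta g}$.

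Next apply further derivatives. Choose $\gamma\ge\beta$ with $\gamma_i=\beta_i=0$ such that $\pdv^\gamma g$ has total degree exactly $2$ and still has $x_i$-degree $2$, or at least a quadratic part involving $x_i$. The hypothesis applied at $\gamma$ again uses $\C_{\pdv^\gamma f}=\R^n$ (a constant or zero, with the convention $\C_0=\R^n$). So $\{\pdv^\gamma g>0\}$ is connected. But $\pdv^\gamma g\in\GS^2=\S^2$ by Theorem~\ref{thm:stable-garding}, and it is nonconstant of degree $2$. Arguing exactly as in Lemma~\ref{lem:deg 0 case}, its positivity set has two components, which is a contradiction. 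Hence $\deg \pdv^\beta g\le 1$, and therefore $\deg_{x_i}g\le 1$.

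The delicate point is choosing $\gamma$ so that degree $2$ is reached. If $\deg g$ exceeds $\deg f+1$ this works directly. Otherwise we would polarize $g$ and $f$ in the block of $x_i$ (using Theorem~\ref{thm:polarization_keeps_garding}) and apply Lemma~\ref{lem:usefullemmaprec}(2) to the multi-affine pair. We expect this step to be the main obstacle.

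\textbf{Step 2: proportionality.} Now write $g=x_i\,\pdv_i g+g^{i}$, with both terms independent of $x_i$.

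If $\pdv_i g$ is constant, repeat the argument of Lemma~\ref{lem:constant multiple}. First, $c\prec g^{i}$ forces $\deg g\le 1$. Then $f\vtl g$ gives $\deg f\le 1$, and comparison of gradients shows that $f$ is constant, hence proportional to $\pdv_i g$.

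Otherwise, we show $\C_{\pdv^\alpha \pdv_i g}=\C_{\pdv^\alpha f}$ for every $\alpha$ with $\alpha_i=0$, and then invoke Lemma~\ref{lem:propotional3}. The inclusion $\C_{\pdv^\alpha\pdv_i g}=\pi_i(\C_{\pdv^\alpha g})\subset\C_{\pdv^\alpha f}$ follows from domination, which in turn follows from the hypothesis via the same argument as in (A)$\Rightarrow$(B). For strict inclusion we run the argument of Lemma~\ref{lem:constant multiple} on $\pdv^\alpha f,\pdv^\alpha g$. If some point of $\C_{\pdv^\alpha f}$ lies outside $\C_{\pdv^\alpha\pdv_i g}$, Lemma~\ref{lem:usefullemma} yields $\x$ with $\pdv^\alpha\pdv_i g(\x)<0$. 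Sliding $\x\to\x+T\mathbf e_i$ with $T\ll0$ then makes $\pdv^\alpha g>0$ at a point of $\overline{\C_{\pdv^\alpha f}}\setminus\C_{\pdv^\alpha g}$, which contradicts \eqref{eq:position}. Lemma~\ref{lem:propotional3} then gives $f=c\,\pdv_i g$ with $c>0$.
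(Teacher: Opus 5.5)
Your Step~1 has a genuine gap, and the fallback does not close it. The argument only ever invokes the hypothesis at multi-indices $\alpha\ge\beta$, where $\pdv^\alpha f$ is constant and so $\C_{\pdv^\alpha f}=\R^n$. At best this gives $\deg\pdv^\beta g\le1$, and ``therefore $\deg_{x_i}g\le1$'' does not follow, because $\pdv^\beta$ can annihilate every monomial of $g$ containing $x_i^2$. Concretely, take $n=3$, $i=3$, $f=x_1x_2$, $g=x_3^2$ (both G\aa{}rding). The only admissible $\beta$ is $(1,1,0)$, $\pdv^\beta g\equiv0$, and every hypothesis at $\alpha\ge\beta$ holds, yet $\deg_{x_3}g=2$. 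What rules this pair out is the sign of $g$ on $\overline{\C_f}$, i.e.\ the case $\alpha=0$, which your argument never uses. Polarizing does not help. To apply Lemma~\ref{lem:usefullemmaprec} to $(\pol_\kappa f,\pol_\kappa g)$ you need condition~(B) of Theorem~\ref{proper-position} for the polarized pair. Transferring the hypothesis to polarizations is exactly the hard step of Theorem~\ref{thm:dominate-general}, where the present lemma is an input. Besides, part~(2) of that lemma bounds only the total degree, $\deg g\le\deg f+1$, not $\deg_{x_i}g$.

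The missing idea, which is the paper's proof, is to restrict to lines in the $\mathbf{e}_i$ direction. This needs $\C_g\subset\C_f$, which the paper takes from $f\vtl g$. That inclusion does not follow from the displayed hypothesis, contrary to your claim that it does ``as in (A)$\Rightarrow$(B)''; that implication used $fy+g\in\G$ through Theorem~\ref{thm:Suppose-that-is}. Indeed $f=x_1$, $g=x_1+x_2$, $i=2$ satisfy the hypothesis for every $\alpha$, but $\C_g\not\subset\C_f$ and $\pdv_2 g=1$ is not proportional to $f$. So domination (for all derivatives, Definition~\ref{def:domination-general}) must be an explicit standing assumption, as it implicitly is in the paper. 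Granting it, for $\x\in\C_g$ the whole line $\x+\R\mathbf{e}_i$ lies in $\C_f$, since $f$ does not depend on $x_i$. The hypothesis at $\alpha=0$ then gives $g\le0$ on the part of the line outside $\C_g$, and by PRT the part inside is an up-ray. So $p_{\x}(t)=g(\x+t\mathbf{e}_i)$, which is univariate G\aa{}rding by Lemma~\ref{lem:RestrforgeneralGrad}, is either constant or of odd degree. The hypothesis at $\alpha=\mathbf{e}_i$ gives the same for $p_{\x}'$; there $\C_{\pdv_i f}=\C_0=\R^n$ and $\x\in\C_g\subset\C_{\pdv_i g}$. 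If $\deg p_{\x}=k\ge2$, one of $k$ and $k-1$ is even and positive, a contradiction. Hence $\deg p_{\x}\le1$ on the open set $\C_g$, i.e.\ $\deg_{x_i}g\le1$. Your Step~2 then matches the paper's, with one repair. Lemma~\ref{lem:usefullemma}(3) relies on harmonicity, so for general $\pdv^\alpha\pdv_i g$ you must also exclude nonnegative values on $\C_{\pdv^\alpha f}\setminus\C_{\pdv^\alpha\pdv_i g}$. Positive values are excluded by sliding $T\gg0$. Vanishing on this whole set is impossible because, by PRT, it contains a nonempty open set below each of its points.
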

\begin{proof}
Suppose $f$ is independent of $x_{i}$; then $\pdv_{i}f\equiv 0$. For any $\x\in\C_{g}$, define the univariate polynomial
\[
p_{\x}(t)=g(\x+t\mathbf{e}_{i}).
\]
Since $f\vtl g$, we have $f(\x+t\mathbf{e}_{i})\equiv f(\x)$, and hence $\x+\mathbb{R}\mathbf{e}_{i}\subset\C_{f}$. Thus $p_{\x}(t)$ has exactly one real root unless $p_{\x}(t)\equiv c$. Hence $\deg p_{\x}(t)$ cannot be a positive even number. The same holds for $p'_{\x}(t)=\pdv_{i}g(\x+t\mathbf{e}_{i})$. Therefore $\deg p_{\x}(t)=0$ or $1$.

Since $\C_{g}$ is open in $\mathbb{R}^{n}$, it follows that $\deg_{x_{i}}g\leq 1$. Thus $g$ is affine in $x_{i}$. By the argument in Lemma~\ref{lem:usefullemmaprec}, we have $\pdv_{i}g|_{\C_{f}\setminus\C_{\pdv_{i}g}}\leq 0$ and $\C_{\pdv_{i}g}=\C_{f}$. Similarly, $\C_{\pdv^{\alpha}\pdv_{i}g}=\C_{\pdv^{\alpha}f}$ for all $\alpha\in\N^{n}$. By Lemma~\ref{lem:propotional3}, $\pdv_{i}g$ and $f$ are proportional.
\end{proof}
We are ready to prove Theorem \ref{thm:dominate-general}.
\begin{proof}[Proof of Theorem~\ref{thm:dominate-general}]
($\Rightarrow$) By Theorem~\ref{thm:polarization_keeps_garding}, the relation $f\prec g$ is preserved under polarization. Then the result follows from Lemma~\ref{lem:usefullemmaprec} and Proposition~\ref{proper-position}.

($\Leftarrow$) We argue by induction. Suppose that $f,g\in\GS[\x]$ satisfy
\[
\pdv^{\alpha}g(\x)\leq 0 \quad \text{for all } \x\in\overline{\C_{\pdv^{\alpha}f}}\setminus\C_{\pdv^{\alpha}g}.
\]

Base case: If $\max\{\deg f,\deg g\}\leq 1$, then by Proposition~\ref{proper-position}, $f\prec g$.

Induction hypothesis: For some $d\geq 2$, if $f,g\in\GS^{d-1}[\x]$ satisfy \eqref{eq:position}, then $f\prec g$.

Now let $f,g\in\GS^{d}[\x]$. We consider two cases.

\emph{Case 1.} $f$ is not NRT. Then $f$ is independent of some variables, denoted as $\{x_{i}\}_{i=1}^{k}$. By Lemma~\ref{lem:precusefulforgeneralstable}, $g$ is affine in $\{x_{i}\}_{i=1}^{k}$ and $\pdv_{i}g=c_{i}f$. Without loss of generality, assume $c_{1}=1$. Then
\[
fy+g=g(x_{1}+y,x_{2},\cdots,x_{n})\in\GS[\x,y],
\]
and hence $f\prec g$.

\emph{Case 2.} $f$ is NRT. Let $N=|\kappa|$, and set $\tilde{f}(\w)=\pol_{\kappa}f(\w)$ and $\tilde{g}(\w)=\pol_{\kappa}g(\w)$. By an approximation argument as in Theorem~\ref{thm:dominate-position-general-thm}, we have $\tilde{g}|_{\partial\C_{\tilde{f}}}\leq 0$. Suppose that $\tilde{g}(\w_{0})>0$ for some $\w_{0}\in\C_{\tilde{f}}\setminus\C_{\tilde{g}}$. Since $\C_{\tilde{f}}$ is NRT, for each $i$ there exists $t_{i}>0$ such that $\w_{0}-t_{i}\mathbf{e}_{i}\in\partial\C_{\tilde{f}}$. Since $\tilde{g}$ is multi-affine, it is increasing on $\w_{0}+\Gamma_{N}^{+}$, and $\w_{0}+\Gamma_{N}^{+}\subset\C_{\tilde{g}}$, a contradiction. Thus $\tilde{g}|_{\C_{\tilde{f}}\setminus\C_{\tilde{g}}}\leq 0$. By Proposition~\ref{proper-position}, $\tilde{f}\prec\tilde{g}$, and hence $f\prec g$.
\end{proof}

 \section{Homogeneous Gårding Polynomials are Lorentzian}
\label{sec:Discussion}
In this section, we explore the connections among \gar{}, real stable, and
Lorentzian polynomials. We examine homogenization and convexity properties
of \gar{} polynomials, and show that homogeneous \gar{} polynomials are
Lorentzian, thereby implying Hodge-type inequalities and log-concavity of
their coefficient sequences. Through examples, we illustrate the structural
differences among these classes and indicate further research directions. 

First, we recall the definition of Lorentzian polynomials in \cite{BrandenHuh20}. 

\begin{defn}[Lorentzian polynomials]\label{defn:Lorentzian}
    A homogeneous degree $d\geq2$ polynomial $f$ with non-negative coefficients is \emph{Lorentzian} if it satisfies 2 conditions:
\begin{enumerate}
    \item For all multi-index $\alpha$ with $|\alpha|=d-2$, $\pdv^\alpha f$ is real stable.\label{enu:Lorentzian1}
    \item The support of $f$ is M-convex.\label{enu:LorentzianMconvex}
\end{enumerate} 
We denote $\L^d$ the class of Lorentzian polynomials of degree $d$.
\end{defn} 

Homogeneous degree $d$ real stable polynomials in $\S^{d,\mathfrak{h}}$ are Lorentzian. The same holds for $\GS^{d,\mathfrak{h}}$, the class of homogeneous degree $d$ \gar{} polynomials.  In fact $\GS^{d,\mathfrak{h}}$ is sandwiched between real-stable and Lorentzian ones.

\begin{thm} Homogeneous \gar{} polynomials are  Lorentzian. Moreover, for $d\geq2$, \begin{align}\label{eq:sandwich}
    \S^{d,\mathfrak{h}}\subset \GS^{d,\mathfrak{h}}\subset \L^d.
\end{align}
\end{thm}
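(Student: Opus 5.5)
Let $f\in\GS^{d,\mathfrak{h}}$ with $d\ge2$. The plan is to verify the two conditions of Definition~\ref{defn:Lorentzian} separately, and to get the first inclusion in \eqref{eq:sandwich} from Theorem~\ref{thm:stable-garding} together with the sign convention defining $\S$. For $\S^{d,\mathfrak{h}}\subset\GS^{d,\mathfrak{h}}$, Theorem~\ref{thm:stable-garding} gives $\S\subset\GS$, and homogeneity is untouched. For $\GS^{d,\mathfrak{h}}\subset\L^d$, I would first show that $f$ has nonnegative coefficients. Since $f$ is homogeneous, $f=f^{\mathrm{top}}$. Pass to a $\kappa$-symmetric realization $h=\pol_{\kappa}f\in\G$ (Theorem~\ref{thm:kappa-symmetric-realization}); $h$ is again homogeneous, so $h=h^{\mathrm{top}}\in\G_{+}$ by Proposition~\ref{prop:truncateLet--be}. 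The coefficients of $f=\proj_{\kappa}h$ are positive combinations of those of $h$, hence nonnegative.

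\emph{Condition (\ref{enu:Lorentzian1}).} For $|\alpha|=d-2$, Theorem~\ref{thm:diff-closure} gives $\pdv^{\alpha}f\in\GS^{2}$. By Theorem~\ref{thm:stable-garding}, $\GS^{2}=\S^{2}$, so $\pdv^{\alpha}f$ is real stable. This step is immediate.

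\emph{Condition (\ref{enu:LorentzianMconvex}), M-convexity of the support.} I expect this to be the main obstacle. The plan is to reduce to the multi-affine homogeneous case and use the Rayleigh property. Since $h\in\G_{+}$ is homogeneous multi-affine, Lemma~\ref{lem:GardingToRayleigh}(\ref{enu:gartoRay2}) says $h$ is Rayleigh. The support of a homogeneous multi-affine Rayleigh polynomial is the set of bases of a matroid; this is the standard fact (Choe--Wagner / Brändén--Huh) that Rayleigh implies the basis exchange property. I would prove it directly. Take two support sets $S,T$ and $i\in S\setminus T$. Restrict to the variables in $S\cup T$ by setting the others to $0$, which Lemma~\ref{lem:G+restriction} allows. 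Then apply the Rayleigh inequality $\pdv_i h\,\pdv_j h\ge h\,\pdv_{ij}h$ near suitable points, e.g.\ $x_k\to\infty$ on $S\cap T$ and using the bottom/top truncations of Proposition~\ref{prop:truncateLet--be}. This should force some $j\in T\setminus S$ with $S-i+j$ in the support.

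Then I would project: the support of $f=\proj_{\kappa}h$ is the image of the matroid bases under the map that counts elements in each block. The image of the bases of a matroid under such a block-counting map is M-convex; it is the support of $\proj_{\kappa}$ of the basis generating polynomial, a standard fact (Murota). Alternatively, I could argue via Brändén--Huh: a homogeneous polynomial with nonnegative coefficients whose polarization has M-convex support, and whose $(d-2)$-derivatives are stable, is Lorentzian, since Lorentzian polynomials are closed under $\proj_{\kappa}$. The second route seems cleaner. Show $h$ is Lorentzian using the Rayleigh-implies-matroid step plus condition (\ref{enu:Lorentzian1}) applied to $h$, then invoke closure of $\L^d$ under diagonal specialization, which is a nonnegative linear change of variables.
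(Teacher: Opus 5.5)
Your proposal is correct and follows the paper's proof. In both arguments you reduce to the homogeneous multi-affine case via $\kappa$-polarization, using that Lorentzian polynomials are closed under the nonnegative change of variables $\proj_\kappa$. Condition (1) then comes from $\GS^2=\S^2$, and M-convexity of the support comes from the Rayleigh property of $\G_+$ together with the standard fact that homogeneous Rayleigh polynomials have M-convex support (the paper cites Br\"and\'en--Huh, Theorem 2.23). Your argument for nonnegative coefficients via $h=h^{\mathrm{top}}\in\G_+$ supplies a step the paper only asserts, and once you cite that standard fact, your sketched direct proof of the exchange property is unnecessary.
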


\begin{proof} 

The first inclusion follows from Theorem \ref{thm:stable-garding}. A homogeneous \gar{}  polynomial $f$ has non-negative coefficients. Since both \gar{} and Lorentzian polynomials are preserved under polarization and projection, it suffices to prove the multi-affine case. 

Let $f\in\GS_+^{d,\mathfrak{a,h}}$. For $|\alpha|=d-2$, $\pdv^\alpha f\in \GS^{2,\mathfrak{a},\mathfrak{h}}$. In degree 2, $\GS^{2,\mathfrak{a}}=\S^{2,\mathfrak{a}}$. Hence, $f$ satisfies (\ref{enu:Lorentzian1}) in Definition \ref{defn:Lorentzian}.  

By Theorem \ref{lem:GardingToRayleigh}, $f$ is Rayleigh. By \cite[Theorem 2.23]{BrandenHuh20}, homogeneous Rayleigh polynomials have M-convex support. Hence,  $f$ satisfies (\ref{enu:LorentzianMconvex}) in Definition \ref{defn:Lorentzian}.  We have finished the proof.
\end{proof}

When $d=2$, all three classes coincide. The case $d=3$ is also special in the following sense.

\begin{prop}\label{H3}
    $\S^{3,\mathfrak{h}}=\GS^{3,\mathfrak{h}}.$
\end{prop}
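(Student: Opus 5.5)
The plan is to prove the nontrivial inclusion $\GS^{3,\mathfrak{h}}\subset\S^{3,\mathfrak{h}}$; the reverse inclusion is the first inclusion in \eqref{eq:sandwich}. Let $f\in\GS^{3,\mathfrak{h}}$ be nonzero. By Lemma~\ref{lem:stablepolyclosurethe}~(\ref{enu:part5}) and homogeneity, it suffices to show that $f$ is hyperbolic with respect to every $\a\in\Gamma_n^+$. That is, for every $\x\in\R^n$ the cubic $p(t):=f(\x+t\a)$ should be real-rooted. Its leading coefficient is $f(\a)>0$, since $\a\in\C_f$.

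First record the structure of $\C_f$. For $\lambda>0$, the set $\lambda\C_f$ is a component of $\{f>0\}$ passing PRT, so by uniqueness $\C_f$ is an open cone, and it contains $\Gamma_n^+$. By Theorem~\ref{thm:diff-closure} and the directional-derivative proposition, $\partial_{\a}f$ is a homogeneous quadratic Gårding polynomial with $\C_f\subset\C_{\partial_\a f}$. By Theorem~\ref{thm:stable-garding}, $\partial_\a f$ is real stable, hence hyperbolic with respect to $\a$, and $\C_{\partial_\a f}$ is its (convex) Gårding cone. Similarly, $\partial_\a^2 f$ is a nonzero linear form that is positive on $\C_{\partial_\a f}$.

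Now fix $\x$ not parallel to $\a$, and restrict to the plane $P=\mathrm{span}(\x,\a)$. Set $h(s,t):=f(s\x+t\a)$, a binary cubic with $h(0,1)>0$. Real-rootedness of $p$ is equivalent to $h$ splitting into three real linear factors. Suppose instead that $h=L\cdot Q$ with $L$ linear and $Q$ positive definite. Then $\{h>0\}\cap P$ is the open half-plane $\{L>0\}$, which is connected and contains $\a$. Hence it lies in $\C_f$, because $\C_f$ is the component of $\{f>0\}$ containing $\a$. Therefore
\[
\{L>0\}\subset \C_{\partial_\a f}\cap P .
\]
The set on the right is the positivity cone of the binary quadratic $\partial_t h$, which is real-rooted in $t$. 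If $\partial_t h$ has two distinct real root directions, its positive region containing $\a$ is a sector of angle $<\pi$, and it cannot contain a half-plane. This is the contradiction in the generic case.

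The main obstacle is the degenerate case, in which $\partial_t h$ is a square, or more generally the cone comparison is not strict. Here I would switch to the univariate MRS criterion of Remark~\ref{rem:MRSnecessary}, which applies to $p(t)=f(\x+t\a)$ since $t\mapsto \x+t\a$ is strictly positive affine. Under the assumption $h=LQ$, the inclusion $\{L>0\}\subset\C_{\partial_\a f}\cap P\subset\{\partial_\a^2 f>0\}\cap P$ forces $\partial_\a^2 f|_P$ to be a positive multiple of $L$. Consequently $r(p'')$ equals the unique real root $r(p)$, which is the $t$-value where $L=0$. MRS then gives $r(p)\ge r(p')\ge r(p'')=r(p)$, so $r(p)$ is a common root of $p$, $p'$ and $p''$. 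Hence $p=c\,(t-r(p))^3$, which is real-rooted, contradicting $Q>0$ definite (or showing the case was not genuinely non-real-rooted). Finally, the case $\x\parallel\a$ is trivial, and a continuity argument, using the fact that stability is closed under limits, covers any boundary situations. Together these show that $f$ is stable, so $\GS^{3,\mathfrak{h}}=\S^{3,\mathfrak{h}}$.
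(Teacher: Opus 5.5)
Your proof is correct, but it takes a different route from the paper's.

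\textbf{The paper's route.} The paper first translates so that $\b>0$. This makes $(t,s)\mapsto t\a+s\b$ strictly positive, so the binary cubic $h(t,s)=f(t\a+s\b)$ is itself G{\aa}rding with nonnegative coefficients. It disposes of the cases $a_3=0$ or $a_0=0$ via $\GS^{2}=\S^{2}$. Otherwise it argues purely with univariate data in the two affine charts:
\begin{itemize}
\item the MRS of $h(t,1)$, together with simplicity of its only real root, gives $r_1<r_0<0$;
\item that real root transfers to the other chart as $\rho_0=1/r_0$;
\item the domination $\partial_t h(1,s)\vtl h(1,s)$ forces $\rho_0\ge 1/r_1$, whereas $1/r_1>1/r_0=\rho_0$, a contradiction.
\end{itemize}

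\textbf{Your route.} You restrict to an arbitrary plane $P=\mathrm{span}(\x,\a)$; the restricted cubic need not be G{\aa}rding, and you never need it to be. The key input is global: $\C_f$ is the component of $\{f>0\}\subset\R^n$ containing $\a$. A non-real-rooted slice makes $\{f>0\}\cap P$ an entire half-plane, which must then fit inside $\C_{\partial_\a f}\cap P\subset\{\partial_\a^2 f>0\}$.

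This is the same obstruction the paper detects through the chart change $t\leftrightarrow 1/s$, but seen in a single projective picture. It avoids the shift, the case split on the extreme coefficients, and the domination relation. The price is that you rely on the inclusion $\C_f\subset\C_{\partial_\a f}$ for the directional derivative, which you should justify in a line: $\partial_\a f>0$ on the connected PRT set $\C_f$, which therefore meets, and lies in, the component $\C_{\partial_\a f}$.

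\textbf{Two simplifications.}
\begin{itemize}
\item Your ``degenerate case'' argument already covers every case. The inclusion $\{L>0\}\subset\{\partial_\a^2 f>0\}\cap P$ alone makes $\partial_\a^2 f|_P$ a positive multiple of $L$. MRS then gives $p'(t_0)=0$ at the real root $t_0$, which contradicts $p'(t_0)=L_t\,Q(1,t_0)>0$ (here $L_t$ is the coefficient of $t$ in $L$). So the sector comparison is only a shortcut.
\item The closing continuity argument is unnecessary, since every $\x$, parallel to $\a$ or not, has already been treated.
\end{itemize}
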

\begin{proof} By Theorem~\ref{lem:GardingToRayleigh}, it suffices to show that any $f \in \GS^{3,\mathfrak{h}}$ is real stable. Equivalently, it suffices to prove that $f(t\a+\b)$ is real-rooted for any $\a > 0$ and $\b\in \mathbb{R}^n$. 

Since $\a > 0$, by replacing $t$ with $t + T_0$ for a sufficiently large $T_0$, we may assume $\b > 0$. Since $f \in \GS^{3,\mathfrak{h}}$, 
\[h(t,s):=f(t\a+s\b)\in \GS^{3,\mathfrak{h}}[t,s].\]  

We claim that either $h(t,1)$ or $h(1,s)$ is real-rooted. Once proved, $h$ is the homogenization of a real stable polynomial, and hence $h$ is real stable by Lemma~\ref{lem:stablepolyclosurethe} (3).
 Write  $$h(t,s)=a_3t^3+a_2t^2s+a_1ts^2+a_0s^3,\quad a_i\geq0.$$ 

If $a_3=0$, then $h(t,1)\in\GS^2[t]=\S^2[t].$ By symmetry, $h(1,s)$ is real stable when $a_0=0$.

Assume $a_3,a_0\not=0$.  We argue by contradiction. Let $(r_0,r_1,r_2)$ be the root sequence of $h(t,1)$ which is decreasing. Note $r_0<0$. Let $\rho_0<0$ be the largest real root $h(1,s)$. Then $1/\rho_0$ is also a root of $h(t,1)$.  If $h(t,1)$ is not real stable, $1/\rho_0=r_0$ is a single root hence the only real root of $h(t,1)$. Note $1/r_1$ is a root of  $\pdv_th(1,s)$. Since $\pdv_th\vtl h$, we have $\pdv_th(1,s)\vtl h(1,s)$ which implies $\rho_0\geq 1/r_1$. However,  since $r_1<r_0<0$,  
$$0>\frac{1}{r_1}>\frac{1}{r_0}=\rho_0.$$ A contradiction. Thus, we have proved the claim.

Since $h$ is real rooted, $f$ is real stable by Lemma \ref{lem:stablepolyclosurethe} (1). We have finished the proof.
\end{proof}

The following example illustrates the strict  inclusions in \eqref{eq:sandwich} for $d\geq4$.

\begin{example}\label{ex:gard_sta_lorentzian}
Let $c\geq 0$. Consider the bivariate polynomial
\[
f(x,y)=(x^{2}+cxy+y^{2})xy.
\]
Then $f$ is real stable if and only if $c\geq 2$. By \cite[Example 2.26]{BrandenHuh20},  $f$ is Lorentzian if and only if  the coefficient sequence is ultra log-concave, equivalently, if and only if $c\geq 3/2$. We claim that $f$ is \gar{} if and only if $c\geq \sqrt{3}$. In fact, when this holds,
\[
\pdv_x f=\frac{y}{3}(3x+(c-\sqrt{c^2-3})y)(3x+(c+\sqrt{c^2-3})y),
\]
which is real stable. The same holds for $\pdv_y f$. Since $\widehat{\C}_{f}=\Gamma_2^+\subset \C_{\pdv_x f}$ for $c\geq \sqrt{3}$, it follows that $f\in\GS$. If $c<\sqrt{3}$, then $f(x,1)$ is not \gar{}, and hence neither is $f$.

Further computation indicates that for $n\geq 1$,
\[
f_{n}(x,y)=(x^{2}+c_{n}xy+y^{2})x^{n}y^{n}\in\GS_{+}
\]
if $c_{n}\geq\frac{2}{\sqrt{3}}\frac{n+2}{n+1}$. In contrast, $f_n\in\S_+$ whenever $c_n\geq 2$, and $f_n\in\L^{n+2}$ provided $c_n\geq \frac{n+2}{n+1}$.
\end{example}

\begin{example}
    The basis generating polynomial of the Fano matroid $F_7$ is an example of a polynomial that is Lorentzian and Rayleigh, but not G{\aa}rding. See Proposition~\ref{prop:F7-rayleigh-not-garding} and Theorem~\ref{F7summary}. 
\end{example}

Next, we show that the homogenization of a Gårding polynomial with nonnegative coefficients need not be a Gårding polynomial, which differs notably from real stable cases.
\begin{example} Let $
f(t)=t(t^{2}+3t+3).$
Since $f$ has the monotone root sequence $R(f)=(0,-1,-1)$, $f(t)\in\GS_+[t]$.
Consider the homogenization of $f$
\[
\text{H}f(t,y)=t^{3}+3t^{2}y+3ty^{2}.
\]
Notice that $\text{H}f(1,y)$ has degree 2 but is not real rooted; hence, it
is not \gar{}. Therefore, $\text{H}f(t,y)$ is not \gar{}.
\end{example}

The above example also shows that if $f=\sum_{k=0}^{d}a_{k}x^{k}$
is univariate \gar{} with nonnegative coefficients, $\tilde{f}=\sum_{k=0}^{d}a_{k}x^{d-k}$
is not necessarily so. 

The $\gar{}$ component of a real-stable polynomial is convex; this property does not extend to $\gar{}$ polynomials.

\begin{example}
Consider 
\[
f(x,y)=(x^{2}+y^{2}+\sqrt{3}xy-1)xy.
\]
As in Example \ref{ex:gard_sta_lorentzian}, $\pdv_xf$ is real stable and $\C_{\pdv_{x}f}$ contains $$\hat{\C}_{f}=\Gamma_{2}^{+}\backslash\{(x,y):x^{2}+y^{2}+\sqrt{3}xy\leq1\}.$$
Similarly, $\C_{\pdv_{y}f}$ also contains
$\hat{\C}_{f}$. Hence, $f$ is \gar{}. Notice that $\C_{f}=\hat{\C}_f$ is not
convex. 
\end{example}

\begin{rem}
A central feature of G{\aa}rding's theory of hyperbolic polynomials is the convexity of their associated G{\aa}rding cones, a property closely tied to analytic log-concavity. From the perspective of partial differential equations, our G{\aa}rding property corresponds only to ellipticity. In general, however, the concavity assumption required for the Caffarelli-Nirenberg-Spruck theory fails.

It is also worth noting that Lorentzian polynomials exhibit log-concavity in the positive hyper-octant. In subsequent work, we will focus on a distinguished subclass of G{\aa}rding polynomials with additional convexity properties and establish a precise connection with Lorentzian polynomials through homogenization.
\end{rem}

\part{Applications}
In this part, we apply the preceding theory to generating functions that arise
in matroid theory, graph theory, probability, and matrix theory. These
functions are typically multi-affine with nonnegative coefficients, making
them a natural setting for the G{\aa}rding framework, which yields new
Rayleigh-type inequalities and negative dependence results beyond those
accessible via stability or Lorentzian methods.

\section{Generating functions of matroids}

We study generating functions of matroids, which provide a natural
testing ground for the Gårding framework and its interaction with
Rayleigh properties in combinatorics. Matroid
theory occupies a central position in modern combinatorics, unifying
phenomena from graphs, linear algebra, and optimization, and has seen
much recent developments surrounding Lorentzian polynomials and combinatorial
Hodge theory.

\subsection{Generating functions on distinguished sets of a matroid}

We introduce the four standard generating functions associated with
a matroid: those of independent sets, bases, spanning sets, and
cospanning sets. Throughout we use the matroid notation from \cite{oxley_matroid_2011}.

Let $M=(E,\mathcal{I},\mathcal{B},r_{M})$ be a finite matroid on the
ground set $E$, where $\mathcal{I}$ denotes the family of independent
sets, $\mathcal{B}$ the family of bases, and $r_{M}$ the rank function.
For each $e\in E,$ let $w_{e}$ be a variable and write $\w=(w_{e})_{e\in E}$.
For any set $S\subset E,$ $w_{S}=\prod_{e\in S}w_{e}$.
\begin{defn}[Generating Functions]
\label{def:matroid}Let $M$ be a matroid as above. We associate with
$M$ the following four multi-affine generating functions.

The \emph{independent-set generating function} (ISGF) of $M$ is 
\[
I_{M}(\w):=\sum_{I\in\mathcal{I}}w_{I}.
\]

The \emph{basis generating function} (BSGF) of $M$ is 
\[
B_{M}(\w):=\sum_{B\in\mathcal{B}}w_{B}.
\]

The \emph{spanning-set generating function }(SSGF) of $M$ is 
\[
S_{M}(\w):=\sum_{\substack{T\subseteq E\\
T\text{ spanning}
}
}w_{T},
\]
where a subset $T\subseteq E$ is called \emph{spanning} if it contains
a basis of $M$.

The \emph{cospanning-set generating function }(CSGF) of $M$, is 
\[
C_{M}(\w):=\sum_{\substack{X\subseteq E\\
E\setminus X\in\mathcal{I}
}
}w_{X}.
\]
\end{defn}

All these polynomials are multi-affine with nonnegative coefficients and
are related by simple combinatorial transformations. Taking
the dual matroid interchanges the SSGF and CSGF; ISGF and CSGF are
related by inversion; and BSGF is the homogeneous component of degree
$r_M$ of both $I_M$ and $S_M$.

From a combinatorial perspective, the theory of real stable polynomials
provides a powerful framework for negative dependence, implying the strong
Rayleigh property for a broad class of generating functions \cite{BBL09}.
A fundamental example is that the basis generating function of every
regular matroid is stable, and hence strongly Rayleigh.
In parallel, log-concavity for sequences arising from matroid generating
functions was established by June Huh and his collaborators 
\cite{Huh2012,HuhKatz2012,huhHvectorsMatroidsLogarithmic2015,AHK18,HuhCorrelationBoundsFields2021a} via combinatorial Hodge theory. More
recently, Lorentzian polynomials
\cite{BrandenHuh20,ALOVi,ALOVii,ALOViii} provided a unified framework for
such log-concavity phenomena.

To simplify notation, we introduce the following definitions.
\begin{defn}
\label{def:Rayleighmatroids}Let $M$ be a matroid with generating
functions $I_{M},B_{M},S_{M},C_{M}$ as above. For $X\in\{I,B,S,C\}$,
we say that $M$ is \emph{$X$--Rayleigh} (respectively, \emph{$X$--Gårding})
if the corresponding generating function $X_{M}$ is Rayleigh (respectively
Gårding). Following Wagner~\cite{MR2248326}, a \emph{Rayleigh matroid}
is one whose basis generating function is Rayleigh; in the present
notation, this is precisely a $B$--Rayleigh matroid. 
\end{defn}

The study of negative dependence and Rayleigh-type inequalities for
matroids has developed along several distinct lines. Feder-Mihail
\cite{Feder1992BalancedM} introduced the concept of negatively correlated
matroids which is equivalent to the following condition: for $i\not=j\in E$,
\[
(B_{M}\pdv_{i}\pdv_{j}B_{M}-\pdv_{i}B_{M}\pdv_{j}B_{M})|_{\w=\mathbf{1}}\leq0,
\]
where $\mathbf{1}$ is the all one vector. Later, Choe-Wagner \cite{MR2248326}
defined the class of Rayleigh matroids which is equivalent to $B$-Rayleigh
matroids in Definition \ref{def:Rayleighmatroids}. $B$-Rayleigh
matroids include all rank 3 matroids \cite{WagnerRank3}, binary
matroids without $S_{8}$ factor, \cite{MR2248326} etc. Therefore all regular matroids
are Rayleigh matroids. In fact, they are HPP matroids (matroids with half-plane-property), which means that their BSGFs are real stable \cite{COSW04}. Many negative dependence conditions
can be derived from the seminal work of Borcea-\branden-Ligget \cite{BBL09}.
Semple-Welsh \cite{semple_negative_2008} introduced correlation inequalities
for spanning sets, extending earlier probabilistic results for graphs. $I$-Rayleigh and $S$-Rayleigh matroids were introduced  by Wagner in \cite{MR2428906}.

In the following, we provide a new framework connecting these generating functions via natural algebraic operations and show that the \gar{} property for the spanning-set and cospanning-set generating functions implies Rayleigh-type inequalities for the basis-set and independent-set generating functions. This yields a structural approach to negative dependence across different matroid generating functions beyond the classical real stable setting. Although the \gar{} property does not capture all Rayleigh polynomials, it applies in settings where stability-based methods are not available and provides a flexible tool for studying negative dependence and other combinatorial properties of matroids.

\subsection{Main Theorems}
We now state the main result of this section, formulated for the spanning-set generating function.

Let $\widehat{U}_{r,n}$ be the matroid obtained by removing one basis from the uniform matroid $U_{r,n}$ \cite{LeeNasrRadcliffe20} which is a paving matroid.  Let $\mathcal{S}$ be the smallest class of matroids that contains all matroids of the form $\widehat{U}_{r,n}$ and is closed under the $2$-sums.

\begin{thm}
\label{thm:main}
Let $M$ be a matroid. Then the spanning-set generating function $S_M(\w)$ is a G{\aa}rding polynomial in each of the following cases:
\begin{enumerate}
\item $M$ is a series--parallel network;
\item $M = U_{r,n}$ is a uniform matroid;
\item $M\in\mathcal S;$
\item $|E(M)| \le 6$.
\end{enumerate}
Moreover, the G{\aa}rding property of $S_M(\w)$ is preserved under deletion, contraction, series--parallel reductions and $2$-sums. Equivalently, the class of $S$-G{\aa}rding matroids is closed under these operations.
\end{thm}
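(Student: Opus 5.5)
I will first establish the closure statements, since the class results reduce to them. For \emph{deletion} and \emph{contraction} I will use the standard identities: if $e$ is not a coloop, $S_{M\setminus e}(\w)=\partial_e S_M(\w)$, and if $e$ is not a loop, $S_{M/e}(\w)=S_M(\w)|_{w_e=1}$ up to the trivial correction coming from $w_e$. More precisely, $S_M=w_e S_{M\setminus e}+S_{M\setminus e}^{\,\mathrm{sp}}$. Here $S_{M/e}=\partial_e S_M$ when $e$ is not a loop, while deletion corresponds to setting $w_e=0$. Contraction then follows from Lemma~\ref{lem:Suppose-that--2}. For deletion I will use Lemma~\ref{lem:G+restriction}: $S_M\in\G_+$, so $\iota^*$ restriction to $w_e=0$ stays in $\G_+$. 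Loops and coloops factor off as $(1+w_e)$ or $w_e$, and both factors are G\aa rding, so Lemma~\ref{lem:basic}\eqref{enu:directproduct} handles them.

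Direct sums give $S_{M_1\oplus M_2}=S_{M_1}S_{M_2}$ in disjoint variables, which is G\aa rding by the product remark. For $2$-sums along a basepoint $p$, I will write $S_{M_i}=w_p A_i+B_i$, with $A_i=\partial_p S_{M_i}$ (spanning sets of $M_i/p$) and $B_i$ (spanning sets of $M_i\setminus p$). The $2$-sum formula has the shape $S_{M_1\oplus_2M_2}=A_1B_2+B_1A_2-\epsilon A_1A_2$ or a similar bilinear combination. I will realize it as a linear operator applied to $S_{M_1}(\x,w_p)S_{M_2}(\y,w_q)$, namely the operator $w_pw_q\mapsto c$, $w_p\mapsto 1$, $w_q\mapsto 1$, $1\mapsto0$ appropriately. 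I will then check that its symbol is real stable with nonnegative coefficients, a degree-$2$ multi-affine polynomial in four variables, so Corollary~\ref{cor:transformwithrealstablesymb} applies. Series and parallel extensions are $2$-sums with $U_{1,3}$ or $U_{2,3}$, whose SSGFs are checked directly to lie in $\G$, so series--parallel reductions follow. Case (1) then follows by induction from $U_{1,2}$ and $U_{0,1}$ / $U_{1,1}$.

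For uniform matroids, $S_{U_{r,n}}=\sum_{k\ge r}\sigma_k(\w)$. I plan to show it is G\aa rding via Theorem~\ref{thm:newmain}(3) applied to the symmetric structure. Since the full symmetrization preserves $\G$ (Example~\ref{exam:fullsymm}) and polarization is exact for symmetric multi-affine polynomials, it suffices, by Theorem~\ref{thm:recursivedef} and Theorem~\ref{thm:polarization_keeps_garding}, to show the univariate diagonal $p(t)=\sum_{k\ge r}\binom nk t^k$ is right-Noetherian and NRT. Its derivatives are again of the same truncated-binomial type, $p^{(j)}\propto\sum_{k\ge r-j}\binom{n-j}{k}t^k$, so MRS reduces to a monotonicity of largest roots $r(q_{r,n})\ge r(q_{r-1,n-1})$ of tail sums of $(1+t)^n$. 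I expect to prove this via the identity $q_{r,n}=(1+t)q_{r,n-1}+\binom{n-1}{r-1}t^{r}$ (or integral representation), which is the main analytic obstacle. For $\widehat U_{r,n}$, $S=S_{U_{r,n}}-\w^{B_0}$. Here I will use Theorem~\ref{thm:dominategardnew} with $f=\w^{B_0}$: domination holds on $\C_{S_{U_{r,n}}}\subset\Gamma^+$ after checking the component lies in the positive orthant, and $\rho=\infty$ since $\deg S_{U_{r,n}}=n>r$. Case (3) then follows from the $2$-sum closure.

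Case (4) is a finite check. I will reduce via closure to connected, $3$-connected matroids on at most six elements that are not series--parallel or uniform, for example $M(K_4)$, $W^3$, $P_6$, $Q_6$, $R_6$ and a short list of others. For each, I will verify Proposition~\ref{prop:testing method} recursively, with derivatives handled by induction as contractions, and check $S_M\le0$ on $\partial\C^{(1)}_{S_M}$ by symmetry reduction and explicit computation. I expect this to be deferred to an appendix.
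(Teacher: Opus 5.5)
Your deletion/contraction step (once stated correctly as $\pdv_{w_e}S_M=S_{M/e}$ and, for $e$ not a coloop, $S_M|_{w_e=0}=S_{M\setminus e}$; your opening sentence has the two reversed), the direct-sum step, and the uniform case match the paper. For $U_{r,n}$ there is in fact no analytic obstacle: $t^{r}$ divides $q_{r,n}$ with a positive-coefficient cofactor, so the diagonal has root sequence $(0,\dots,0,-1,\dots,-1)$.

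\textbf{The 2-sum step fails.} Put $A_i=S_{M_i/p}$ and $B_i=S_{M_i\setminus p}$. The correct formula is
\[
S_{M_1\oplus_2M_2}=A_1B_2+B_1A_2-B_1B_2=A_1A_2-(A_1-B_1)(A_2-B_2).
\]
As a check, two triangles give $\sigma_3+\sigma_4$ for the $4$-cycle. You want to obtain this from
\[
S_{M_1}S_{M_2}=w_pw_qA_1A_2+w_pA_1B_2+w_qB_1A_2+B_1B_2 .
\]
Any operator acting on $(w_p,w_q)$ that does so must send $w_pw_q\mapsto0$, $w_p\mapsto1$, $w_q\mapsto1$ and $1\mapsto-1$. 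Its symbol is therefore $u_p+u_q-u_pu_q$. This has a negative coefficient and is not even G\aa rding (Example~\ref{exa:Let-.-We}). So Corollary~\ref{cor:transformwithrealstablesymb} is unavailable, and your tentative operator with $1\mapsto0$ produces a different polynomial.

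The subtraction has to be controlled by domination instead. Let $\eta_i=A_i-B_i$, which has nonnegative coefficients by Lemma~\ref{lem:minorcomparison}. Then $S_{M_i}=(1+w_p)A_i-\eta_i$, so Theorem~\ref{thm:iffdominate} gives $\eta_i\vtl A_i$. Now $A_1A_2-\eta_1\eta_2$ is G\aa rding by Example~\ref{exa:product}, with $\rho=\infty$ because $\deg\eta_i<\deg A_i$. This is the paper's mechanism: Lemma~\ref{lem:CSGFvtl} on the $C$-side for series/parallel connections, followed by contraction. As written, your series--parallel extensions and cases~(1) and~(3) all rest on the broken step.

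\textbf{The $\widehat U_{r,n}$ step also fails.} The inclusion $\C_{S_{U_{r,n}}}\subset\Gamma_n^+$ is false, and so is $\w^{B_0}\vtl S_{U_{r,n}}$. Take $r=2$, $n=4$ and $\w=(-0.1,0.1,10,10)$. Then $S_{U_{2,4}}(\w)=98.79$. Since $\pdv_1S_{U_{2,4}}$ does not depend on $w_1$ and equals $132.1$ here, $S_{U_{2,4}}$ increases along the segment to $(0.1,0.1,10,10)\in\Gamma_4^+$. Hence $\w\in\C_{S_{U_{2,4}}}$, yet $w_1w_2<0$. So Theorem~\ref{thm:dominategardnew} cannot be invoked. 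This case genuinely needs the paper's separate argument: reduce by polarization to $f_{r,q}(x,y)=g_{r,q}(x,y)-x^r$, then verify the recursive criterion by tracking largest roots through a continuity argument in $t$ and $y$ (Appendix~A).

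\textbf{Case (4).} $R_6$ is a $2$-sum of two copies of $U_{2,4}$ and so is not $3$-connected. The non-uniform $3$-connected six-element matroids are exactly $M(K_4)$, $W^3$, $Q_6$ and $P_6=\widehat U_{3,6}$. The ``explicit computation'' you defer still has to be supplied; the paper does this through concrete domination decompositions.
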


By duality and bottom truncation, we have the following.

\begin{cor}
\label{thm:extension}

Let \(M\) be a matroid belonging to one of the classes listed in
Theorem~\ref{thm:main}. Then the cospanning-set generating function
\(C_M(w)\) and the basis generating function \(B_M(w)\) are
G{\aa}rding polynomials.

Moreover, the class of \(C\)-G{\aa}rding matroids is closed under
 deletion, contraction, series--parallel reductions, and
\(2\)-sums. 
\end{cor}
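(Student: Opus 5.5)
We derive both halves of Corollary~\ref{thm:extension} from Theorem~\ref{thm:main} by duality, bottom truncation, and matroid identities, with no new analytic input.

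\emph{The CSGF.} For a matroid $M$ on $E$, a set $X\subseteq E$ satisfies $E\setminus X\in\mathcal I(M)$ if and only if $X$ contains a basis of the dual $M^{*}$, since complements of independent sets of $M$ are exactly the spanning sets of $M^{*}$. Hence
\[
C_M(\w)=S_{M^{*}}(\w).
\]
Each class in Theorem~\ref{thm:main} is closed under duality:
\begin{itemize}
\item series--parallel networks are closed under duality;
\item $U_{r,n}^{*}=U_{n-r,n}$;
\item $\widehat U_{r,n}^{*}=\widehat U_{n-r,n}$, since removing the basis $B$ from $U_{r,n}$ corresponds to removing the cobasis $E\setminus B$;
\item duality commutes with $2$-sums, so $\mathcal S$ is closed under duality;
\item $|E(M^{*})|=|E(M)|$.
\end{itemize}
Theorem~\ref{thm:main} applied to $M^{*}$ therefore gives $C_M\in\G$. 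For the closure statements we use $(M\setminus e)^{*}=M^{*}/e$, $(M/e)^{*}=M^{*}\setminus e$, the fact that duality swaps series and parallel extensions, and $(M_1\oplus_2M_2)^{*}=M_1^{*}\oplus_2M_2^{*}$. Thus $M$ is $C$-G\aa rding if and only if $M^{*}$ is $S$-G\aa rding, and each closure property of $S$-G\aa rding matroids in Theorem~\ref{thm:main} transfers to the dual operation on $C$-G\aa rding matroids.

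\emph{The BSGF.} $S_M$ has nonnegative coefficients and lies in $\G$, so $S_M\in\G_{+}$. Every spanning set has size at least $r_M$, and the spanning sets of size exactly $r_M$ are the bases. So the vanishing order of $S_M$ at $\mathbf 0$ is $r_M$, and
\[
S_M^{\mathrm{bot}}=B_M.
\]
Proposition~\ref{prop:truncateLet--be} then gives $B_M\in\G_{+}$. (As a cross-check, $B_M$ is also the top-degree part of $C_M$ up to the dual variable relabelling, giving a second route via $f^{\mathrm{top}}$.)

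\emph{Main obstacle.} The only delicate point is checking that the definition of $\mathcal S$ is genuinely self-dual. This requires that $2$-sums of duals correspond exactly, with the same basepoint convention, and that the removed-basis construction dualizes as stated. If $\widehat U_{r,n}$ fails to be closed under duality in some degenerate case (e.g.\ $r=0$ or $r=n$), that case is trivial anyway: for $r=n$ there is a single basis and removing it is degenerate, so the matroid is excluded or handled directly. All other steps are bookkeeping once Theorem~\ref{thm:main} is granted.
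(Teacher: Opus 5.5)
Your argument is correct and is essentially the paper's proof: $C_M=S_{M^{*}}$ plus closure of the listed classes under duality, and $B_M=(S_M)^{\mathrm{bot}}$ with Proposition~\ref{prop:truncateLet--be}. Your duality transfer of the closure properties spells out what the paper obtains directly from Propositions~\ref{prop:minor-closed} and~\ref{thm:SP_CSGF}.

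You should drop the parenthetical cross-check, because it is false. The top-degree part of $C_M$ is the single monomial $w_E$, and its bottom-degree part is $B_{M^{*}}=\Inv(B_M)$, which is not a relabelling of $B_M$.
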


\begin{rem}
Theorem~\ref{thm:main} and Corollary \ref{thm:extension} provide
natural families of G\aa rding polynomials that are not real stable in general.  
\end{rem}

\subsection{A formula for SSGF}
In this subsection, we prepare for the proof of the main theorems by deriving a combinatorial formula for the SSGF of a matroid.

Let $M=(E,\mathcal{I},\mathcal{B},r_{M})$ be a matroid as above. Define the modified weights
\[
v_{e}=w_{e}+1.
\]
For any subset $S\subset E$, write $v_{S}:=\prod_{e\in S}v_{e}$, and set $v_{\emptyset}=1$. Let $\mathcal{L}(M)$ denote the lattice of flats of $M$, partially ordered by inclusion, with $E$ as the top element and $\emptyset$ as the bottom element. Let $\mu(\cdot,\cdot)$ be the Möbius function of $\mathcal{L}(M)$.

\begin{thm}
\label{thm:SSGF}
With the above notation, the spanning-set generating function of $M$ satisfies
\begin{equation}
S_{M}(\w)=\sum_{F\in\mathcal{L}(M)}\mu(F,E)\,v_{F}.
\label{eq:flat formula}
\end{equation}
Moreover, $S_{M}$ satisfies the recursive identity
\begin{equation}
S_{M}(\w)=v_{E}-\sum_{\substack{F\in\mathcal{L}(M)\\ F\neq E}}S_{M|F}(\w),
\label{eq:recursive}
\end{equation}
where $M|F$ denotes the restriction of $M$ to  flat $F$.
\end{thm}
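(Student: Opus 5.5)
The plan is to prove \eqref{eq:flat formula} by Möbius inversion on the lattice of flats, and then to read off \eqref{eq:recursive} from the same computation.

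First, expand $v_{F}=\prod_{e\in F}(w_{e}+1)=\sum_{T\subseteq F}w_{T}$. For each subset $T\subseteq E$, let $\overline{T}=\mathrm{cl}_{M}(T)$ be its closure, which is a flat. The condition $T\subseteq F$ for a flat $F$ is equivalent to $\overline{T}\subseteq F$. Hence
\[
v_{F}=\sum_{G\in\mathcal{L}(M),\,G\leq F}\ \sum_{T:\,\overline{T}=G}w_{T}=\sum_{G\leq F}\Phi_{G}(\w),
\]
where $\Phi_{G}(\w):=\sum_{T:\overline{T}=G}w_{T}$. This is an identity in the polynomial ring indexed by the finite poset $\mathcal{L}(M)$. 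Möbius inversion from above then gives
\[
\Phi_{F}=\sum_{G\leq F}\mu(G,F)\,v_{G}.
\]
Since $T$ is spanning exactly when $\overline{T}=E$, we have $S_{M}=\Phi_{E}$. Taking $F=E$ yields \eqref{eq:flat formula}.

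For \eqref{eq:recursive}, take $F=E$ in the non-inverted relation: $v_{E}=\sum_{G\in\mathcal{L}(M)}\Phi_{G}$. It remains to identify $\Phi_{G}$ with $S_{M|G}(\w)$ for every flat $G$. The ground set of $M|G$ is $G$, its rank function is the restriction of $r_{M}$, and its closure operator is $\mathrm{cl}_{M}(\cdot)\cap G$. A subset $T\subseteq G$ spans $M|G$ iff $r_{M}(T)=r_{M}(G)$. For $T\subseteq G$ with $G$ a flat, this holds iff $\mathrm{cl}_{M}(T)=G$, since $\mathrm{cl}_{M}(T)\subseteq G$ always and flats of equal rank that are nested must coincide. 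So $\Phi_{G}=S_{M|G}$ as polynomials in the variables $w_{e}$, $e\in G$. Moving the terms with $G\neq E$ to the other side gives \eqref{eq:recursive}. The flat $\emptyset$ (or the closure of $\emptyset$ when there are loops) contributes the generating function of the restriction to loops, which is consistent with the formula.

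There is no serious obstacle here. The only step needing care is the bookkeeping when $M$ has loops: then the bottom element of $\mathcal{L}(M)$ is $\mathrm{cl}(\emptyset)$ rather than $\emptyset$. The paper's description of the lattice's bottom element should be read accordingly, and with that reading the argument above is unchanged.
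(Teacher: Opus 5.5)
Your proposal is correct. Part (1) is the paper's argument. Your $\Phi_G$ is the paper's $F(\alpha)$, and your relation $v_F=\sum_{G\le F}\Phi_G$ is its $G(\alpha)=v_\alpha$. The only difference is that you evaluate the inverted relation directly at $F=E$, whereas the paper sums over proper flats and invokes $\sum_{\beta\subseteq\alpha\subsetneq E}\mu(\beta,\alpha)=-\mu(\beta,E)$. (The inversion you use, with sums over $G\le F$, is the standard form rather than one ``from above'', but your displayed formula is right.)

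For (2) you take a genuinely shorter route. The paper applies (1) to each restriction $M|F$, interchanges the order of summation, and uses the same Möbius identity again. That step tacitly needs $\mathcal{L}(M|F)$ to be the interval of $\mathcal{L}(M)$ below $F$, so that the Möbius values of $M|F$ agree with those of $M$. You instead identify $\Phi_G=S_{M|G}$ and read (2) off the \emph{un-inverted} relation $v_E=\sum_G\Phi_G$. So (2) becomes nothing more than partitioning $2^E$ according to closure. It needs no Möbius function, is logically independent of (1), and shows the two identities are the inverted and un-inverted forms of a single relation.

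The paper's route has the advantage of being a purely formal consequence of (1) once (1) is known for all matroids. Yours needs only the elementary fact that, for a flat $G$ and $T\subseteq G$, $T$ spans $M|G$ iff $\mathrm{cl}_M(T)=G$, which you verify correctly.

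Your remark about loops (the bottom element is $\mathrm{cl}(\emptyset)$ rather than $\emptyset$) is also correct and affects neither formula.
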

 
\begin{proof}
For each flat $\alpha\in\mathcal{L}(M)$, define a pair of polynomials
\[
F(\alpha):=\sum_{\mathrm{cl}(T)=\alpha}w_{T},\quad G(\alpha):=\sum_{\beta\subset\alpha}F(\beta).
\]
Since the closure $\mathrm{cl}(T)\subset\alpha$ is equivalent to $T\subset\alpha$,
we have 
\[
G(\alpha)=\sum_{\beta\subset\alpha}\sum_{\mathrm{cl}(T)=\beta}w_{T}=\sum_{T\subset\alpha}w_{T}=\prod_{e\in\alpha}(1+w_{e})=v_{\alpha}.
\]
The Möbius inversion formula \cite[Proposition 2]{Rota1964} on $\mathcal{L}(M)$
shows 
\[
F(\alpha)=\sum_{\beta\subset\alpha}\mu(\beta,\alpha)\,v_{\beta}.
\]
Summing over all proper flats and using the standard identity 
\[
\sum_{\beta\subset\alpha\subsetneq E}\mu(\beta,\alpha)=-\mu(\beta,E),
\]
We have proved (\ref{eq:flat formula}). 

For each proper flat $F\in\mathcal{L}(M)$, applying (\ref{eq:flat formula})
to $S_{M|F}$, summing them over, and interchanging the order of summation,
we have 
\begin{align*}
\sum_{\substack{\text{proper }F\in\mathcal{L}(M)}
}S_{M|F} & =\sum_{\substack{\text{proper }F\in\mathcal{L}(M)}
}\sum_{\substack{G\in\mathcal{L}(M|F)}
}\mu(G,F)\,v_{G}.\\
 & =\sum_{\text{proper\ }G\in\mathcal{L}(M)}\left(\sum_{\substack{F\in\mathcal{L}(M):\;G\subseteq F\subsetneq E}
}\mu(G,F)v_{G}\right)
\end{align*}
Use the Möbius identity
\[
\sum_{\substack{F\in\mathcal{L}(M):\;G\subseteq F\subsetneq E}
}\mu(G,F)=-\mu(G,E), \qquad\text{for }G\subsetneq E,
\]
and (\ref{eq:flat formula}) to conclude (\ref{eq:recursive}).
\end{proof}
\begin{rem}
Since the CSGF of a matroid $M$ is the SSGF of its dual $M^{*}$, similar formulae hold for the CSGF.
\end{rem}

\section{S-\gar{} and C-\gar{} matroids}
In this section, we study spanning and co-spanning generating functions of matroids. The main goal is to prove Theorem~\ref{thm:main} and Corollary~\ref{thm:extension} which give examples of $S$- and $C$-\gar{} matroids. The proof combines closure properties of the G{\aa}rding class with verification on fundamental classes of matroids. 

We begin with concrete examples, including uniform matroids, then show that the G{\aa}rding property is preserved under standard matroid operations, and finally verify the result for the base classes appearing in Theorem~\ref{thm:main}. For completeness, we also record formulas for both the SSGF and the CSGF.

\subsection{Trees, cycles, uniform matroids and their modifications}

We introduce some concrete examples, beginning with graphic ones.
Let $G=(V,E)$ be a finite graph, and let $M(G)$ be the associated (cycle)
matroid.
\begin{example}
If $T$ is a tree, then $E$ is the unique spanning tree. Consequently,
\[
S_{M(T)}(\w)=\prod_{e\in E}w_{e}=w_{E},\qquad C_{M(T)}(\w)=\prod_{e\in E}(1+w_{e})=v_{E}.
\]
Both polynomials are stable, and hence G{\aa}rding. Therefore, every tree is  both $S$--G{\aa}rding and $C$--G{\aa}rding.
\end{example}

\begin{example}
Let $C_{n}$ be the cycle graph on $n$ edges. A subset is spanning in $M(C_{n})$ if it contains at least $n-1$ elements; it is cospanning if it is nonempty. Therefore,
\[
S_{M(C_{n})}(\w)=\sigma_{n}(\w)+\sigma_{n-1}(\w),\quad C_{M(C_{n})}(\w)=\prod_{j=1}^{n}(1+w_{j})-1.
\]
Both polynomials are symmetric and their diagonal specializations satisfy the monotone root sequence (MRS) property. Hence, by Theorem~\ref{thm:recursivedef}, they are G{\aa}rding. Thus, the cycle $C_{n}$ is both $S$--G{\aa}rding and $C$--G{\aa}rding.
\end{example}

In general, graphic matroids are not $S$-\gar{}. 
\begin{example}\label{exam:K5}
Take the lexicographic order on the edges of $K_5$.  Let \[\mathbf{b}=(1, 2, 1, -2, -2, -2, 2, -2, 1, 2).\] Then \[
\begin{aligned}
p(t):=&S_{M(K_5)}(\mathbf{b}+t\mathbf{1})=t^{10}+11t^9+39t^8+19t^7-168t^6-252t^5\\
&+215t^4+475t^3-140t^2-266t+112.
\end{aligned}
\]
Since $r(p)\approx-1.398364 < 1.360999 \approx r(p')$, by Remark \ref{rem:MRSnecessary}, $S_{M(K_5)}$ is not \gar{}. 
\end{example}

Next, we give a simple non-graphic example.

\begin{example}
\label{exa:uniform}
Let $U_{r,n}$ be the uniform matroid of rank $r$ on the ground set $E=[n]$. A subset of $E$ is spanning if and only if it has cardinality at least $r$. Hence
\[
S_{U_{r,n}}(\w)=\sum_{j=r}^{n}\sigma_{j}(\w).
\]
Let $P_{r,n}=\proj S_{U_{r,n}}$. It is straightforward to check that the root sequence of $P_{r,n}$ is monotone and consists only of $0$ and $-1$. Hence $P_{r,n}$ is univariate G{\aa}rding. Therefore, $U_{r,n}$ is $S$-\gar{}. By duality, $U_{r,n}$ is also $C$-\gar{}.
\end{example}

We now consider a simple modification of the uniform matroid. Fix a basis
$B_{0}=\{1,2,...,r\}$. Define the deleted-basis matroid 
\[
\widehat{U}_{r,n}:=U_{r,n}\setminus B_{0}.
\]
Equivalently, $\widehat{U}_{r,n}$ is the rank-$r$ matroid on $n$
elements with exactly one circuit-hyperplane of size $r$. The class
of matroids $\widehat{U}_{r,n}$ is closed under duality; in fact,
$(\widehat{U}_{r,n})^{*}\cong\widehat{U}_{n-r,n}.$ This is a class
of  sparse paving matroids, where any subset of $r$-element is
either a basis or a circuit-hyperplane. 

For their corresponding SSGFs, we have the following
\begin{equation}
S_{\widehat{U}_{r,n}}(w)=S_{U_{r,n}}(w)-w_{B_{0}}\label{eq:removed}.
\end{equation}
Let $q=n-r$ and let
\[
g_{r,q}(x,y):=S_{U_{r,n}}(\underbrace{x,\cdots,x}_{r},\underbrace{y,\cdots,y}_{q}),\qquad f_{r,q}(x,y):=S_{\hat{U}_{r,n}}(\underbrace{x,\cdots,x}_{r},\underbrace{y,\cdots,y}_{q}).
\]
Then 
\(
f_{r,q}(x,y)=g_{r,q}(x,y)-x^{r}.
\)

We prove the following theorem.
\begin{thm}
\label{thm:Uhat}Notations as above. The SSGF of $\widehat{U}_{r,n},$
and its two variable specialization $f_{r,q}(x,y)$ are both Gårding
polynomials.
\end{thm}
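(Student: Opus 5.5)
The plan is to reduce everything to the two-variable specialization $f_{r,q}$ and then lift back via polarization. The SSGF $S_{\widehat{U}_{r,n}}$ is symmetric separately in the block $B_0=\{1,\dots,r\}$ and in its complement. Applying the full symmetrization operators of Example~\ref{exam:fullsymm} on each block leaves it invariant. So $S_{\widehat{U}_{r,n}}$ is exactly the $\kappa$-polarization $\pol_{\kappa}(f_{r,q})$ with $\kappa=(r,q)$. The normalization constants match because
\[
f_{r,q}(x,y)=\sum_{a\le r,\,b\le q,\,a+b\ge r}\binom{r}{a}\binom{q}{b}x^ay^b-x^r,
\]
and $\pol_\kappa(x^ay^b)=\sigma_a\sigma_b/\bigl(\binom{r}{a}\binom{q}{b}\bigr)$. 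Hence, by Theorem~\ref{thm:polarization_keeps_garding}, it suffices to show that $f_{r,q}$ is an NRT G{\aa}rding polynomial. Conversely, G{\aa}rding-ness of the SSGF gives that of $f_{r,q}$ by diagonal specialization, so the two claims are equivalent.

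To prove $f_{r,q}\in\GS$, I would use the recursive characterization, Theorem~\ref{thm:recursivedef} (Definition~\ref{def:gard_recursive}), and induct on $(r,q)$. The key structural input is the derivative identities. Since $g_{r,q}(x,y)=\sum_{j\ge r}\sigma_j$ evaluated on the block diagonal, we have $\partial_x g_{r,q}=r\,g_{r-1,q}$ and $\partial_y g_{r,q}=q\,\widetilde g$, where $\widetilde g$ is the specialization of $S_{U_{r-1,n-1}}$ with $r$ copies of $x$ and $q-1$ copies of $y$. Consequently
\[
\partial_x f_{r,q}=r\,f_{r-1,q+1}\big|_{\text{one }y\to x}
\quad\text{and}\quad
\partial_y f_{r,q}=q\,\bigl(\text{SSGF of }U_{r-1,n-1}\text{ specialized}\bigr).
\]
The second term carries no subtracted monomial, since $x^r$ has no $y$. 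Rather than chase exact identities, I would argue at the multi-affine level. Deleting or contracting an element from $\widehat{U}_{r,n}$ gives either a uniform matroid or a smaller $\widehat U$. Indeed, $\partial_e S_M=S_{M/e}$ for non-loops, and $S_M|_{w_e=0}=S_{M\setminus e}$. Therefore every partial derivative of the SSGF is the SSGF of a uniform matroid (G{\aa}rding by Example~\ref{exa:uniform}) or of some $\widehat U_{r',n'}$, which is covered by induction. The base cases are $r\le 1$ or $q\le 1$, and those of low degree, where $\GS^2=\S^2$ by Theorem~\ref{thm:stable-garding}; these are direct checks.

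It then remains to verify condition (3) of Definition~\ref{def:gard_recursive} for $f_{r,q}$, which in two variables is concrete. Uniform matroids have $\C$-components described via $u=x+1$, $v=y+1$: $g_{r,q}=\sum_{j\ge r}\binom{\cdot}{\cdot}\dots$ is controlled by the monotone root structure in $\{0,-1\}$. My plan is to write $f_{r,q}=g_{r,q}-x^r$ and observe that $x^r\vtl g_{r,q}$ is plausible, because $\C_{g_{r,q}}\subset\{x>0\}$ up to boundary. I would then apply Corollary~\ref{cor:domination-consequences}(1) with $c=1$. This reduces the problem to showing that $x^r$ is dominated by $g_{r,q}$, in the general sense of Definition~\ref{def:domination-general}: every derivative $\partial^\alpha x^r$ must be positive on $\C_{\partial^\alpha g_{r,q}}$. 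This amounts to showing $\C_{\partial_x^k g_{r,q}}\subset\{x>0\}$ for $k<r$. I would also need to show $\{g-x^r>0\}\cap\C_g\neq\emptyset$, which holds since $g$ has higher-degree terms when $q\ge1$.

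The main obstacle is the containment $\C_{\partial_x^k g_{r,q}}\subset\{x>0\}$. It fails in general, because $\partial_x^k g_{r,q}$ contains pure $y$-terms and can be positive for $x<0$ when $y$ is large. If domination fails, I would fall back on Proposition~\ref{prop:testing method} applied directly to the polarized multi-affine $S_{\widehat U_{r,n}}$. The partials are G{\aa}rding by the matroid-minor induction above, so what must be checked is $S\le 0$ on $\partial\C^{(1)}$. Using Proposition~\ref{prop:symmetry}, this reduces to block-symmetric boundary points, that is, to an explicit two-variable inequality for $f_{r,q}$ on $\partial\C_{\partial_x f}\cup\partial\C_{\partial_y f}$. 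I expect to verify this by writing $g_{r,q}$ in terms of $u=x+1$, $v=y+1$ (via Theorem~\ref{thm:SSGF}) and estimating $x^r$ against it. That computation is where the real work lies.
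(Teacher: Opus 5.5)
There is a genuine gap. Several of your steps are sound:
\begin{itemize}
\item the identification $S_{\widehat U_{r,n}}=\pol_{(r,q)}(f_{r,q})$;
\item the reduction via Theorem~\ref{thm:polarization_keeps_garding} to showing $f_{r,q}\in\GS$;
\item the handling of the partials by contraction.
\end{itemize}
Your formula for $\partial_x f_{r,q}$ is garbled, though. Contracting an element of $B_0$ gives $\widehat U_{r-1,n-1}$ with blocks of sizes $(r-1,q)$, so simply $\partial_x f_{r,q}=r\,f_{r-1,q}$. Meanwhile $\partial_y f_{r,q}$ is $q$ times a specialization of $S_{U_{r-1,n-1}}$.

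The one nontrivial step is never proved. This is condition (3) of Definition~\ref{def:gard_recursive}, namely $\mathcal{C}_{f_{r,q}}\subset\mathcal{C}_{\partial_x f_{r,q}}\cap\mathcal{C}_{\partial_y f_{r,q}}$.

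Your first route fails outright. For $q\ge r$ one has $g_{r,q}(0,y)=\sum_{b\ge r}\binom{q}{b}y^b>0$. Hence $(0,y)\in\mathcal{C}_{g_{r,q}}$ for large $y>0$, while $x^r=0$ there. So $x^r\vtl g_{r,q}$ is false, and Corollary~\ref{cor:domination-consequences} does not apply.

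Your fallback is circular. Proposition~\ref{prop:symmetry} requires the class $\mathcal{B}$, so you would also need the perturbation of Theorem~\ref{thm:approx-by-B}. After symmetrizing, you are left with exactly the two-variable inequality $f_{r,q}\le 0$ on $\partial\mathcal{C}_{\partial_x f_{r,q}}\cup\partial\mathcal{C}_{\partial_y f_{r,q}}$. That inequality is the content of the theorem, and you defer it as ``where the real work lies.''

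What is missing is an actual two-variable argument. The paper writes $\mathcal{C}_{f_{r,q}}=\{y>0,\ x>\alpha_{r,q}(y)\}$, where $\alpha_{r,q}(y)$ is the largest $x$-root. It then proves two monotonicity facts.

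\begin{enumerate}
\item[(i)] $\alpha_{r-1,q}(y)\le\alpha_{r,q}(y)$, which gives containment in $\mathcal{C}_{\partial_x f}$.
\begin{itemize}
\item The paper deforms $f_{r,q,t}=g_{r,q}-t\,x^r$ for $t\in[0,1]$.
\item At the largest root $\beta_{r,q}$ of the uniform polynomial, $f_{r,q,t}(\beta_{r,q}(y),y)=-t\,\beta_{r,q}(y)^r$.
\item The parity of $r$ then yields the strict interlacing $\alpha_{r-1,q,t}\le\beta_{r-1,q}<\beta_{r,q}\le\alpha_{r,q,t}$ for even $r\le q$.
\item A jump lemma, using $\partial_x f_{r+1,q,t}=(r+1)f_{r,q,t}$, shows that $\alpha_{r+1}<\alpha_r$ forces $\alpha_{r+1}\le\alpha_{r-1}$.
\item Hence a first failure of monotonicity would force $\alpha_{r-1}=\alpha_r=\alpha_{r+1}$, which contradicts the strict interlacing.
\item An open--closed argument in $(t,y)$ then carries the inequality from $t=0$ to $t=1$.
\end{itemize}
\item[(ii)] $\alpha_{r,q}(y)$ is nonincreasing in $y$, i.e.\ $\partial_y f_{r,q}(\alpha_{r,q}(y),y)\ge 0$, which gives containment in $\mathcal{C}_{\partial_y f}$.
\begin{itemize}
\item This is shown by comparing with the largest root $\gamma_{r,q}(y)$ of the uniform-type polynomial $\partial_y f_{r,q}$.
\item The sign is then checked explicitly as $y\to 0^+$ and as $y\to\infty$.
\end{itemize}
\end{enumerate}

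An argument of this kind, or an explicit inequality replacing it, is the missing heart of your proof.
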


Theorem \ref{thm:Uhat} establishes the Gårding property of an infinite
class of non-regular matroids. See Appendix A for its proof.

\subsection{Closure under minor operations}

We relate the SSGF and CSGF of a matroid to those of their minors. 

Given a fixed matroid $M$ on $E$, for any minor $N$, we regard
the SSGF and CSGF of $N$ as a polynomial in variables $\{w_{e}:e\in E(N)\}$. 

Let $M=(E,\mathcal{I})$ be a matroid and let $e\in E$. The \emph{deletion}
$M\setminus e$ is the matroid on ground set $E\setminus\{e\}$ whose
independent sets are those independent sets of $M$ deleting $e$. The \emph{contraction} $M/e$ is the matroid on ground set $E\setminus\{e\}$
whose independent sets are those sets $I\subseteq E\setminus\{e\}$
such that $I\cup\{e\}$ is independent in $M$. An element $e$ is
a \emph{loop} if it is contained in no independent set, and a \emph{coloop}
if it is contained in every basis of $M$.
\begin{lem}[Deletion--contraction formulae]
 \label{lem:DCforCSGF} For any matroid $M$ and $e\in E$, we have
\begin{align}
\SSGF_M(w)=\begin{cases}
w_{e}S_{M/e}(\w), & e\text{ is a coloop},\\
(w_{e}+1)S_{M\backslash e}(\w), & e\text{ is a loop},\\
w_{e}S_{M/e}(\w)+\SSGF_{M\backslash{}e}(\w), & \mathrm{otherwise,}
\end{cases}
\end{align}

\begin{equation}
\CSGF_{M}(\w)=\begin{cases}
w_{e}\CSGF_{M\backslash e}(\w), & e\text{ is a loop},\\
(w_{e}+1)\CSGF_{M\backslash e}(\w), & e\text{ is a coloop},\\
w_{e}\CSGF_{M\backslash e}(\w)+\CSGF_{M/e}(\w), & \text{ otherwise}.
\end{cases}\label{eq:CSGFDeletioncontraction}
\end{equation}
\end{lem}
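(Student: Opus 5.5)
This is a direct combinatorial verification: expand each generating function according to whether a subset $T$ (respectively $X$) contains $e$, and then identify the two pieces with generating functions of the minors $M\setminus e$ and $M/e$. I will first treat $\SSGF_M$ and then obtain $\CSGF_M$ the same way, or by duality, using $\CSGF_M=\SSGF_{M^*}$ together with $(M\setminus e)^*=M^*/e$, $(M/e)^*=M^*\setminus e$, and the fact that loops of $M$ are coloops of $M^*$.

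For $\SSGF_M$, split the spanning sets $T$ of $M$ into those containing $e$ and those not containing it, so that
\[
\SSGF_M(\w)=w_e\sum_{T\ni e,\ T\text{ spanning}}w_{T\setminus e}+\sum_{T\not\ni e,\ T\text{ spanning}}w_T .
\]
Assume first that $e$ is neither a loop nor a coloop. Then $r(M/e)=r(M)-1$, and $T'\subseteq E\setminus e$ spans $M/e$ iff $r_M(T'\cup e)=r(M)$, i.e. iff $T'\cup e$ spans $M$. So the first sum is $\SSGF_{M/e}$. Also $r(M\setminus e)=r(M)$, since $e$ is not a coloop, so $T\subseteq E\setminus e$ spans $M\setminus e$ iff it spans $M$, and the second sum is $\SSGF_{M\setminus e}$. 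This gives the generic case.

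If $e$ is a coloop, every spanning set contains $e$, so the second sum is empty. In this case $M/e=M\setminus e$ has rank $r(M)-1$, and the identification of the first sum with $\SSGF_{M/e}$ still holds. If $e$ is a loop, $e$ never affects whether a set spans, so $T$ spans $M$ iff $T\setminus e$ spans $M\setminus e$. Both sums then equal $\SSGF_{M\setminus e}$, which gives the factor $w_e+1$.

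For $\CSGF_M$ I will argue directly with $X$ such that $E\setminus X\in\mathcal I$. If $e\in X$, then $X\setminus e$ is the complement of the same independent set $E\setminus X$ inside $E\setminus e$, and $E\setminus X$ is independent in $M\setminus e$. So these terms give $w_e\CSGF_{M\setminus e}$. If $e\notin X$, then $E\setminus X=I\cup e$ with $I$ independent in $M/e$ when $e$ is not a loop. So these terms give $\CSGF_{M/e}$, and they are absent when $e$ is a loop. When $e$ is a coloop, $M/e=M\setminus e$, so the two pieces combine into $(w_e+1)\CSGF_{M\setminus e}$.

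The only delicate point is the rank bookkeeping in the degenerate cases, namely that $M/e$ and $M\setminus e$ coincide for loops and coloops. It should be checked against the stated convention (loops handled by deletion) to make sure the formulas match exactly as written.
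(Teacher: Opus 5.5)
Your proposal is correct and follows the paper's argument: you partition the spanning (resp.\ cospanning) sets by whether they contain $e$, identify the two pieces with the generating functions of $M/e$ and $M\setminus e$, and handle the loop and coloop cases via $M/e=M\setminus e$. Your version is more careful than the paper's. You check that $r(M\setminus e)=r(M)$ for non-coloops, which is needed so that every spanning set of $M\setminus e$ spans $M$. You also carry out the $C_M$ case explicitly, and the final consistency check you flag does go through exactly as the statement is written.
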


\begin{proof}
For $S_{M}$, partition the spanning sets $T\subseteq E$ according
to whether $e\in T$. If $e\notin T$, then $T$ is a spanning set
of $M\setminus e$, contributing a term in $S_{M\setminus e}(\mathbf{w})$.
If $e\in T$ and $e$ is neither a loop nor a coloop, then $T\setminus\{e\}$
is a spanning set of $M/e$, contributing $w_{e}\,S_{M/e}(\mathbf{w})$.
The loop and coloop cases follow directly from the definitions.

The proof for $C_M$ is similar. 
%For $C_{M}$, partition the cospanning sets $X\subseteq E$ according to whether $e\in X$. If $e\notin X$, then $X$ is a cospanning set of $M\setminus e$. If $e\in X$ and $e$ is neither a loop nor a coloop, then $X\setminus\{e\}$ is a cospanning set of $M/e$, yielding the stated formula. Again, the loop and coloop cases are immediate.
%
\end{proof}

\begin{prop}
\label{prop:minor-closed}The classes of $S$- and $C$-\gar{} matroids are both minor closed.
\end{prop}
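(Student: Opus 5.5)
By duality it suffices to treat $S$-\gar{} matroids. Since $M^{*}\setminus e=(M/e)^{*}$ and $M^{*}/e=(M\setminus e)^{*}$, and $C_{M}=S_{M^{*}}$, the class of $C$-\gar{} matroids is the image under duality of the class of $S$-\gar{} matroids. Duality interchanges deletion and contraction, so minor-closedness transfers. Hence we fix $M$ with $S_{M}\in\G$ and $e\in E$, and show that $S_{M\setminus e},S_{M/e}\in\G$.

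We split into cases by the type of $e$ in Lemma~\ref{lem:DCforCSGF}.

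\emph{$e$ is a coloop.} Here $S_{M}=w_{e}S_{M/e}$, so $S_{M/e}=\partial_{w_{e}}S_{M}$, which is \gar{} by Lemma~\ref{lem:Suppose-that--2}. Also $M\setminus e=M/e$ for a coloop, so $S_{M\setminus e}$ is \gar{} as well.

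\emph{$e$ is a loop.} Here $S_{M}=(w_{e}+1)S_{M\setminus e}$, so again $S_{M\setminus e}=\partial_{w_{e}}S_{M}\in\G$, and $M/e=M\setminus e$.

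\emph{$e$ is neither a loop nor a coloop.} Here $S_{M}=w_{e}S_{M/e}+S_{M\setminus e}$.
\begin{itemize}
\item[] Contraction: $S_{M/e}=\partial_{w_{e}}S_{M}$, which is \gar{} by Lemma~\ref{lem:Suppose-that--2}.
\item[] Deletion: $S_{M\setminus e}=S_{M}|_{w_{e}=0}$. Since $S_{M}$ is multi-affine \gar{} with nonnegative coefficients, i.e. $S_{M}\in\G_{+}$, Lemma~\ref{lem:G+restriction} with $I=E\setminus\{e\}$ gives $\iota_{I}^{*}S_{M}=S_{M\setminus e}\in\G_{+}$. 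Equivalently, $\mathbf{0}\in\overline{\C_{S_{M}}}$, so Lemma~\ref{lem:splitting}(2) applies.
\end{itemize}
The same restriction argument covers the loop and coloop cases uniformly, since setting $w_{e}=0$ is always legitimate for $\G_{+}$.

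\emph{Main obstacle.} The only delicate point is that restriction does not preserve \gar{} polynomials in general (Example~\ref{rem:general restriction}). The argument therefore rests on the nonnegativity of the coefficients of $S_{M}$, which puts the origin in $\overline{\C_{S_{M}}}$.

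We should also check that the resulting polynomial is not the trivial negative constant case. This holds because spanning-set generating functions have nonnegative coefficients and a nonzero top term $w_{E}$.

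Iterating over single-element deletions and contractions gives closure under all minors.
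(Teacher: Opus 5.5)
Your proof is correct and follows the paper's argument. For $S$, contraction is $\partial_{w_e}S_M$ and deletion is the restriction $S_M|_{w_e=0}$; these are G\aa rding by Lemma~\ref{lem:Suppose-that--2} and by the $\G_{+}$ restriction lemma, and loops and coloops are handled via $M\setminus e=M/e$. The only differences are cosmetic: the paper treats $C_M$ directly and calls the $S$ case similar, whereas you treat $S_M$ and transfer to $C$ by duality; and you cite Lemma~\ref{lem:G+restriction} rather than Lemma~\ref{lem:For-any-.}, which is the cleaner reference for setting $w_e=0$.
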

\begin{proof} 
Let $M$ be a $C$-\gar{} matroid. Then
since $C_M(\w)$ has non-negative coefficients, $\pdv_{w_{e}}C_M$ and $C_M|_{w_{e}=0}$
are both \gar{} polynomials by Lemma \ref{lem:Suppose-that--2} and \ref{lem:For-any-.}. If $e$ is a loop, then $M\backslash{}e=M/e$. Otherwise, by the deletion--contraction
formula in Lemma \ref{lem:DCforCSGF}, both $M\backslash{}e$ and $M/e$
are $C$-\gar{} matroids. 

The proof for $S$-\gar{} matroids is similar and omitted.
\end{proof}

\begin{lem} Both $\SSGF_{M/e}-\SSGF_{M\backslash{}e}$ and $\CSGF_{M\backslash{}e}-\CSGF_{M/e}$ have non-negative coefficients. \label{lem:minorcomparison}
\end{lem}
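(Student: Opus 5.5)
The plan is a set-level comparison. Both polynomials are multi-affine, and the coefficient of $w_T$ in each is $0$ or $1$, so it suffices to show two inclusions between families of subsets of $E\setminus\{e\}$. For the spanning-set functions, the claim that $S_{M/e}-S_{M\backslash e}$ has nonnegative coefficients follows from
\[
\{T\subseteq E\setminus\{e\}: T \text{ spans } M\backslash e\}\subseteq\{T\subseteq E\setminus\{e\}: T \text{ spans } M/e\}.
\]
The difference is then the generating function of the set-theoretic difference of the two families, which visibly has coefficients in $\{0,1\}$.

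For the spanning inclusion I use the rank function: $r_{M\backslash e}(X)=r_M(X)$ and $r_{M/e}(X)=r_M(X\cup e)-r_M(e)$. Suppose $T$ spans $M\backslash e$, i.e.\ $r_M(T)=r_M(E\setminus e)$.
\begin{itemize}
\item If $e$ is not a coloop, then $r_M(E\setminus e)=r(M)$. Hence $r_M(T\cup e)=r(M)$, so $T$ spans $M/e$, whose rank is $r(M)-r_M(e)$.
\item If $e$ is a coloop, then $M/e=M\backslash e$ and the two families coincide.
\item If $e$ is a loop, the same identity holds.
\end{itemize}
In every case the inclusion holds.

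For the cospanning-set functions, $X\subseteq E\setminus e$ is cospanning in a minor $N$ exactly when $(E\setminus e)\setminus X$ is independent in $N$. So the claim that $C_{M\backslash e}-C_{M/e}$ has nonnegative coefficients reduces to $\mathcal I(M/e)\subseteq\mathcal I(M\backslash e)$. This holds because any $I$ independent in $M/e$ satisfies $r_M(I\cup e)-r_M(e)=|I|$, which forces $r_M(I)=|I|$ by submodularity (or, for non-loop $e$, directly from the fact that $I\cup e$ is independent). For a loop the two minors are equal.

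Alternatively, the second statement follows from the first by duality, using $C_M=S_{M^*}$ together with $(M\backslash e)^*=M^*/e$. I expect no real obstacle. The only care needed is in the loop and coloop cases, where the convention $M\backslash e=M/e$ makes the difference zero.
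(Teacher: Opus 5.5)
Your proof is correct and follows the paper's route: you show that every spanning set of $M\backslash e$ spans $M/e$ using $r_{M/e}(X)=r_M(X\cup e)-r_M(e)$, and you get the cospanning statement by duality. Your direct inclusion $\mathcal I(M/e)\subseteq\mathcal I(M\backslash e)$ is an equally valid alternative to the duality step, and writing $r(M/e)=r(M)-r_M(e)$ instead of the paper's $\mathrm{rk}(M)-1$ also covers the loop case, which the paper leaves implicit.
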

\begin{proof}
We prove for SSGF. The same statements hold
for CSGF by duality. 

If $e\in E$ is a coloop, then $M/e=M\backslash e$. Hence the result holds trivially. 

Suppose that $e\in E$ is not a coloop of $M$. It suffices to show that a spanning set  $S\subset E-e$ in $M\backslash{}e$ is also spanning in $M/e$. Notice that  $\text{rk}_{M}(S)=\text{rk}(M)$.
We have 
\[
\text{rk}_{M/e}(S)=\text{rk}(M)-1=\text{rk}(M/e).
\]
Hence, $S$ is also a spanning set in $M/e$. 
\end{proof}
Define
\[
\xi_{e}(M)(\w)=\CSGF_{M\backslash e}(\w)-\CSGF_{M/e}(\w).
\]
By Lemma \ref{lem:minorcomparison}, $\xi_e(M,\w)$ has non-negative coefficients. We then obtain the following characterization of the $C$-\gar{} matroids.

\begin{prop}
With the above notation, the following are equivalent:
\begin{enumerate}
\item $M$ is a $C$-\gar{} matroid.
\item There exists $e\in E$ such that $M\backslash e$ is a $C$-\gar{} matroid and $\xi_{e}(M;\w)\vtl \CSGF_{M\backslash e}(\w)$.
\end{enumerate}
\end{prop}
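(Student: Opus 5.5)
The starting point is the deletion--contraction formula \eqref{eq:CSGFDeletioncontraction}. I first dispose of the degenerate cases. If $e$ is a loop, then $C_M=w_eC_{M\backslash e}$. If $e$ is a coloop, then $C_M=(w_e+1)C_{M\backslash e}$ and $\xi_e(M)\equiv0$. In both cases $C_M$ is a product of $C_{M\backslash e}$ with a G\aa rding polynomial in the disjoint variable $w_e$. By Lemma~\ref{lem:basic}\ref{enu:directproduct}, $C_M$ is therefore G\aa rding if and only if $C_{M\backslash e}$ is, and $0\vtl C_{M\backslash e}$ holds by convention. So both directions are immediate there, and we may assume $e$ is neither a loop nor a coloop.

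In that case I rewrite the formula as
\[
C_M(\w)=w_e\,C_{M\backslash e}-\bigl(C_{M\backslash e}-C_{M/e}\bigr)+C_{M\backslash e}=(w_e+1)\,C_{M\backslash e}(\w)-\xi_e(M)(\w).
\]
Substituting $y=w_e+1$, which is a translation and so preserves $\G$ by Remark~\ref{rem:basicpreservers}, this becomes
\[
C_M=y\,C_{M\backslash e}-\xi_e(M),
\]
with $C_{M\backslash e}$ and $\xi_e(M)$ independent of $w_e$.

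For (2)$\Rightarrow$(1), assume $C_{M\backslash e}\in\G$ and $\xi_e(M)\vtl C_{M\backslash e}$. If $\xi_e\equiv0$, we are in the product case above. Otherwise Theorem~\ref{thm:iffdominate}, in the direction ``$f\vtl g$ implies $g y-f\in\G$'', gives $yC_{M\backslash e}-\xi_e(M)\in\G$. Translating back, $C_M$ is G\aa rding.

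For (1)$\Rightarrow$(2), take any non-loop, non-coloop $e$, or any $e$ at all in the degenerate cases. Minor-closedness (Proposition~\ref{prop:minor-closed}) shows that $M\backslash e$ is $C$-G\aa rding. Next I apply the converse direction of Theorem~\ref{thm:iffdominate} to $g=C_{M\backslash e}$ and $f=\xi_e(M)$. This requires two things: that $yC_{M\backslash e}-\xi_e$ is G\aa rding, which is exactly the translate of $C_M$, and that $\{\xi_e>0\}$ contains a positive affine hyper-octant. The second holds because $\xi_e$ is nonzero with nonnegative coefficients (Lemma~\ref{lem:minorcomparison}), so $\xi_e>0$ on $\Gamma_n^+$. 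Theorem~\ref{thm:iffdominate} then yields $\xi_e(M)\vtl C_{M\backslash e}$.

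The main point needing care is the case $\xi_e\equiv0$ for a non-coloop $e$. Here $C_M=(w_e+1)C_{M\backslash e}$, and the product argument handles it. It is also worth checking that Theorem~\ref{thm:iffdominate} is stated for nonzero $f,g$ with $g\in\G$, which is exactly the situation after excluding these trivial cases.
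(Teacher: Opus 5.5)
Your proof is correct and is essentially the paper's argument. The paper applies the dichotomy of Theorem~\ref{thm:Suppose-that-is} to $C_M=(w_e+1)C_{M\backslash e}-\xi_e(M)$ and uses the nonnegativity of the coefficients of $\xi_e(M)$ (Lemma~\ref{lem:minorcomparison}) to exclude the proper-position alternative; this is exactly what the converse half of Theorem~\ref{thm:iffdominate} packages for you. Your explicit handling of (2)$\Rightarrow$(1) through the forward half of Theorem~\ref{thm:iffdominate}, and of loops and coloops, tightens the paper's rather terse proof. Note also that for a non-loop $e$ one has $\xi_e(M)\equiv0$ exactly when $e$ is a coloop, so the case singled out in your last paragraph is already covered by your coloop case.
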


\begin{proof}
It suffices to prove for matroids without loops. Denote $f(\w)=\CSGF_{M\backslash e}(\w)$, $g(\w)=\CSGF_{M/e}(\w)$
and $\xi(\w)=\xi_{e}(M)(\w).$ Then, $\xi=f-g$. By \eqref{eq:CSGFDeletioncontraction},
\[
\CSGF_{M}(\w)=f(\w)(w_{e}+1)-\xi(\w).
\]
By Theorem \ref{thm:Suppose-that-is},  
$\CSGF_{M}$ is \gar{} if and only if either $\xi\vtl f$ or $f \prec -\xi$. The latter possibility is excluded  by Lemma~\ref{lem:minorcomparison}. Thus, $\CSGF_M$ is \gar{} if and only if $\xi\vtl f$.
\end{proof}

\subsection{Series-Parallel connections and 2-sums}

Consider 2 matroids $M_{1}$ and $M_{2}$ over ground sets $E_{1}$
and $E_{2}$ respectively, and $E_{1}\cap E_{2}=\{e\}$. Let $E=E_{1}\cup E_{2}$.
Denote 
\begin{enumerate}
\item $S(M_{1},M_{2})$ the \emph{series connection }of $M_{1}$ and $M_{2}$,
\item \emph{$P(M_{1},M_{2})$ }the\emph{ parallel connection} of $M_{1}$
and $M_{2}$,
\item $M_{1}\oplus_{2}M_{2}=S(M_{1},M_{2})/e$ the $2$-sum of $M_{1}$
and $M_{2}$.
\end{enumerate}
We refer to \cite[Chapter 7]{oxley_matroid_2011} for precise definitions
of series-parallel connections and $2$-sums.
\begin{lem}
Let $M_{1}$ and $M_{2}$ be 2 matroids over ground sets $E_{1}$
and $E_{2}$ respectively. 
\begin{enumerate}
\item For $E_{1}\cap E_{2}=\emptyset$, let $E=E_{1}\cup E_{2}$.
Then 
\begin{equation}
\CSGF_{M_{1}\oplus M_{2}}(\w)=\CSGF_{M_{1}}(\w)\CSGF_{M_{2}}(\w).\label{eq:CSGFDirectsum}
\end{equation}
\item For $\{e\}=E_{1}\cap E_{2}$, let $f_{e}=\CSGF_{M_{1}\backslash e}$,
$f^{e}=\CSGF_{M_{1}/e}$, $g_{e}=\CSGF_{M_{2}\backslash e}$, and
$g^{e}=\CSGF_{M_{2}/e}$. Then 
\begin{align}
\CSGF_{P(M_{1},M_{2})}(\w) & =\begin{cases}
w_{e}f_{e}g^{e}, & e\ \mathrm{is\ a\ loop\ of\ }M_{1},\\
w_{e}\left(f_{e}g^{e}+f^{e}g_{e}-f^{e}g^{e}\right)+f^{e}g^{e}, & \mathrm{else.}
\end{cases}\label{eq:CSGFParallel}\\
\CSGF_{S(M_{1},M_{2})}(\w) & =w_{e}f_{e}g_{e}+f_{e}g^{e}+f^{e}g_{e}-f^{e}g^{e}.\label{eq:CSGFSeries}
\end{align}
\end{enumerate}
\end{lem}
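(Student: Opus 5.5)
The plan is to reduce all three identities to the deletion--contraction formula \eqref{eq:CSGFDeletioncontraction}. Two further ingredients are needed: the standard structural facts about series and parallel connections from \cite[Chapter 7]{oxley_matroid_2011}, and a direct count of the cospanning sets of a $2$-sum.

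\emph{Step 1: direct sums.} For \eqref{eq:CSGFDirectsum} I argue directly. The independent sets of $M_1\oplus M_2$ are exactly the sets $I_1\cup I_2$ with $I_k\in\mathcal{I}(M_k)$. Hence $X\subseteq E$ is cospanning if and only if $X\cap E_k$ is cospanning in $M_k$ for $k=1,2$. Since the variables are disjoint, the weight $w_X$ factors as $w_{X\cap E_1}w_{X\cap E_2}$, and the generating function factors accordingly.

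\emph{Step 2: the $2$-sum.} The key lemma is
\[
\CSGF_{M_1\oplus_2 M_2}=f_e g^e+f^e g_e-f^e g^e .
\]
I would use the description of independent sets of $M_1\oplus_2M_2$ on $(E_1\cup E_2)\setminus e$. Write $I=I_1\cup I_2$ with $I_k\subseteq E_k\setminus e$. Then $I$ is independent if and only if
\begin{itemize}
\item $I_k$ is independent in $M_k\setminus e$ for $k=1,2$, and
\item at least one of $I_1,I_2$ is independent in the corresponding contraction $M_k/e$.
\end{itemize}
This follows from the circuit description of the $2$-sum: the circuits are the circuits of $M_k\setminus e$ together with the sets $(C_1\cup C_2)\setminus e$, where $e\in C_1\cap C_2$.

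Pass to complements $X_k=(E_k\setminus e)\setminus I_k$. Let $B_k$ be the condition that $X_k$ is cospanning in $M_k\setminus e$, and $A_k\subseteq B_k$ the condition that $X_k$ is cospanning in $M_k/e$ (recall $\mathcal{I}(M/e)\subseteq\mathcal{I}(M\setminus e)$). By inclusion--exclusion,
\[
\mathbf{1}_{B_1\cap B_2\cap(A_1\cup A_2)}=\mathbf{1}_{A_1}\mathbf{1}_{B_2}+\mathbf{1}_{B_1}\mathbf{1}_{A_2}-\mathbf{1}_{A_1}\mathbf{1}_{A_2}.
\]
Summing $w_{X_1}w_{X_2}$ over all pairs turns each indicator product into a product of generating functions, which gives the lemma. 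I expect the main care to be needed here, namely in justifying the independent-set description of the $2$-sum, including the degenerate cases where $e$ is a loop or coloop of one factor.

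\emph{Step 3: series connection.} Use the facts
\[
S(M_1,M_2)\setminus e=(M_1\setminus e)\oplus(M_2\setminus e),\qquad S(M_1,M_2)/e=M_1\oplus_2M_2 .
\]
When $e$ is neither a loop nor a coloop of $S(M_1,M_2)$, the generic case of \eqref{eq:CSGFDeletioncontraction} together with Steps 1 and 2 gives \eqref{eq:CSGFSeries} immediately. In the loop and coloop cases I would check the formula separately using $f_e=f^e$ (or $g_e=g^e$), where appropriate.

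\emph{Step 4: parallel connection.} Use the facts
\[
P(M_1,M_2)/e=(M_1/e)\oplus(M_2/e),\qquad P(M_1,M_2)\setminus e=M_1\oplus_2M_2 .
\]
Then $\CSGF_P=w_e\,\CSGF_{M_1\oplus_2M_2}+f^eg^e$, which is exactly the second case of \eqref{eq:CSGFParallel}. In the first case, $e$ is a loop of $M_1$. By convention $P(M_1,M_2)$ is then $M_1\oplus(M_2/e)$ with $e$ a loop, so the loop case of \eqref{eq:CSGFDeletioncontraction} together with Step 1 gives $w_e f_e g^e$.
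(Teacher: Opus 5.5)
Your argument is correct when $e$ is a loop of neither $M_1$ nor $M_2$, and there it is essentially the paper's proof, reorganized. The paper splits the independent sets of $P(M_1,M_2)$ into two groups. The family $\mathcal{I}^e$ corresponds to your term $C_{P/e}=f^eg^e$. The union $\mathcal{I}'_e\cup\mathcal{I}''_e$ corresponds to your inclusion--exclusion for $M_1\oplus_2M_2=P(M_1,M_2)\setminus e$. The paper then only says the series case is ``similar''. Your deletion--contraction packaging lets the single $2$-sum computation give the series formula as well, via $S(M_1,M_2)\setminus e$ and $S(M_1,M_2)/e$, which is a genuine gain.

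What fails is your plan to ``check separately'' the degenerate cases. Your description of $\mathcal{I}(M_1\oplus_2M_2)$ uses the following equivalence, for $I_k\in\mathcal{I}(M_k\setminus e)$: $I_k\supseteq C_k\setminus e$ for some circuit $C_k\ni e$ if and only if $I_k\notin\mathcal{I}(M_k/e)$. This needs $e$ not to be a loop of $M_k$. If $e$ is a loop of $M_1$, then $C_1=\{e\}$ gives $C_1\setminus e=\emptyset$. The correct condition is then simply $I_2\in\mathcal{I}(M_2/e)$, so $C_{M_1\oplus_2M_2}=f_eg^e$, not $f_eg^e+f^eg_e-f^eg^e=f_eg_e$.

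This cannot be repaired, because the series formula is then false. Take $M_1$ a single loop $e$ and $M_2=U_{1,2}$ on $\{e,b\}$. Then $S(M_1,M_2)\cong U_{1,2}$, so $C_{S(M_1,M_2)}=w_ew_b+w_e+w_b$. On the other hand $f_e=f^e=1$, $g_e=w_b+1$ and $g^e=w_b$, so the right-hand side equals $w_ew_b+w_e+w_b+1$.

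Likewise, the second branch of the parallel formula fails if $e$ is a loop of $M_2$ but not of $M_1$. Then $e$ is a loop of $P(M_1,M_2)$, so $C_{P(M_1,M_2)}$ vanishes at $w_e=0$, while the formula gives the nonzero polynomial $f^eg^e$ there. The generic deletion--contraction identity you use in Step~4 also does not apply in that case.

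So do not try to verify these cases; instead add the hypothesis that $e$ is a loop of neither $M_1$ nor $M_2$. The first parallel branch, which you handle correctly, is the one exception. Coloops cause no trouble: there $f^e=f_e$ or $g^e=g_e$, and Steps~2--4 go through. This is exactly the reduction the paper's proof makes (``we may assume that $e$ is neither a loop of $M_1$ nor $M_2$''), and with it your proof is complete.
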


\begin{proof}
For the first statement, if $M=M_{1}\oplus M_{2}$, then any independent
set of $M$ is a disjoint union of independent subsets of $M_{1}$
and $M_{2}$ respectively. (\ref{eq:CSGFDirectsum}) follows immediately. 

For the second statement, if $e$ is a loop of $M_{1}$, then the
result reduces to the first statement. Thus we may assume that $e$
is neither a loop of  $M_{1}$ nor $M_{2}$. There are 3 families
of independent subsets in $P(M_{1},M_{2})$. 
\begin{enumerate}
\item $\mathcal{I}^{e}=\{I_{1}\cup I_{2}\cup e:I_{1}\in\mathcal{I}(M_{1}/e),I_{2}\in\mathcal{I}(M_{2}/e)\}$. 
\item $\mathcal{I}_{e}'=\{I_{1}\cup I_{2}:I_{1}\in\mathcal{I}(M_{1}/e),I_{2}\in\mathcal{I}(M_{2}\backslash{}e)\}.$ 
\item $\mathcal{I}_{e}''=\{I_{1}\cup I_{2}:I_{1}\in\mathcal{I}(M_{1}\backslash{}e),I_{2}\in\mathcal{I}(M_{2}/e)\}.$ 
\end{enumerate}
Also notice that 
\[
\mathcal{I}_{e}'\cap\mathcal{I}_{e}''=\{I_{1}\cup I_{2}:I_{1}\in\mathcal{I}(M_{1}/e),I_{2}\in\mathcal{I}(M_{2}/e)\}.
\]
Therefore, by inclusion-exclusion principle, we have 
\begin{align*}
\CSGF_{M}(\w) & =\sum_{I\in\mathcal{I}_{e}'}w_{E\backslash{I}}+\sum_{I\in\mathcal{I}_{e}''}w_{E\backslash{I}}-\sum_{I\in\mathcal{I}_{e}'\cap\mathcal{I}_{e}''}w_{E\backslash{I}}+\sum_{I\in\mathcal{I}^{e}}w_{E\backslash{I}}\\
 & =f_{e}g^{e}w_{e}+f^{e}g_{e}w_{e}-f^{e}g^{e}w_{e}+f^{e}g^{e}
\end{align*}
The proof of (\ref{eq:CSGFSeries}) is similar. 
\end{proof}
\begin{lem}
\label{lem:CSGFvtl} Suppose that $f,g\in\G$ and $p\vtl f,q\vtl g$.
Then 
\[
f(\x)g(\y)\prec f(\x)g(\y)-p(\x)q(\y)\prec(f(\x)-p(\x))\cdot(g(\y)-q(\y)).
\]
\end{lem}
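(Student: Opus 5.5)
To prove Lemma~\ref{lem:CSGFvtl}, I will use the multi-affine characterization of proper position in Theorem~\ref{proper-position}, working in the disjoint variable sets $(\x,\y)$. Set
\[
F:=f(\x)g(\y),\quad H:=F-p(\x)q(\y),\quad K:=(f(\x)-p(\x))(g(\y)-q(\y)) .
\]
The first step is to check that $F,H,K\in\G$. For $F$, the product of multi-affine G\aa rding polynomials in disjoint variables is G\aa rding (Lemma~\ref{lem:basic}(\ref{enu:directproduct})), and $\C_F=\C_f\times\C_g$. For $H$, Example~\ref{exa:product} gives $pq\vtl fg$. Since $p\vtl f$ forces $\deg p\le \deg f$, I treat the generic situation $\rho_f(p),\rho_g(q)>1$ first; then $c=1<\rho_f(p)\rho_g(q)$ and $H\in\G$ by Theorem~\ref{thm:dominategardnew}. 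For $K$: when $c=1<\rho_f(p)$, we have $f-p\in\G$, and likewise $g-q\in\G$, so $K$ is again a disjoint product and $\C_K=\C_{f-p}\times\C_{g-q}$. The borderline cases ($\rho=1$, or equal degrees) I will handle at the end by a limiting argument. One can replace $p,q$ by $(1-\epsilon)p,(1-\epsilon)q$ and let $\epsilon\to0$, translating into a common point of the Gårding components and applying the closedness of $\G_+$ in Lemma~\ref{lem:G_+isclosed}. The same remark tells me that proper position is closed under limits, since it is defined through membership of $F y+H$ in $\G$.

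The second step is to verify condition \ref{enu:BBB} of Theorem~\ref{proper-position} for each pair. For $F\prec H$, I need $\C_F\cap\{H>0\}=\C_H$. On $\C_F=\C_f\times\C_g$ both $p$ and $q$ are positive. As in the proof of Theorem~\ref{thm:dominategardnew}, the component of $\{H>0\}$ meeting $\C_F$ stays inside $\C_F$, because $H=-pq<0$ on $\partial\C_F$ wherever $p,q>0$. Connectedness of $\C_F\cap\{H>0\}$ should follow from PRT: along a positive ray, $H/(fg)=1-(p/f)(q/g)$ is nondecreasing by Proposition~\ref{51}, since both ratios $p/f$ and $q/g$ are positive and nonincreasing there.

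Alternatively, and more cleanly, I will verify condition \ref{enu:CCC} directly. For a variable $x_i$, on $\C_F$ the sign of $H\pdv_iF-F\pdv_iH$ is the sign of $-\pdv_i(H/F)$. Since $H/F=1-(p/f)(q/g)$ and $\pdv_{x_i}(p/f)\le0$ on $\C_f$ by Proposition~\ref{51}, while $q/g>0$, this quantity is $\le0$. The $y$-variables are handled symmetrically.

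For $H\prec K$ I will again use \ref{enu:CCC}, now on $\C_H$. Note that
\[
K=H-p(g-q)-(f-p)q ,
\]
and the quantity to control is the monotonicity of $K/H$. Write $a=p/f$ and $b=q/g$, both in $(0,1)$ and nonincreasing in positive directions on the relevant region. Then
\[
K/H=\frac{(1-a)(1-b)}{1-ab},
\]
and a direct computation shows that this is a decreasing function of $a$ and of $b$ on $(0,1)^2$. Composing with nonincreasing $a$ and $b$ gives that $K/H$ is nondecreasing, which is exactly \ref{enu:CCC}. The main obstacle is that $\C_H$ is larger than $\C_F$, so I must justify that $a$ and $b$ remain monotone and in $(0,1)$ there. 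I will first reduce to $\C_H\cap\C_F$ and then, near $\partial\C_F$, argue via condition \ref{enu:BBB} in the form $\C_H\cap\{K>0\}=\C_K\subset\C_F$: on $\C_K$ one has $f>p>0$ and $g>q>0$, so $a,b\in(0,1)$. I expect this domain bookkeeping, rather than the algebra, to be where care is needed.
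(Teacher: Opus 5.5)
Your route, verifying condition (C) of Theorem~\ref{proper-position} through monotonicity of the ratios $H/F$ and $K/H$ (via Proposition~\ref{51}), differs from the paper's, which simply checks condition (B). That choice is fine, but as written your argument has two genuine problems.

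The first problem is the domain analysis for $H\prec K$. Once $F\prec H$ is known, $\C_H=\C_F\cap\{H>0\}\subset\C_F$, so $\C_H$ is smaller, not larger, than $\C_F$. On $\C_H$ you have $f,g,p,q>0$, and $a=p/f$, $b=q/g$ are nonincreasing in positive directions. What you do \emph{not} have on $\C_H$ is $a,b<1$; you only have $ab<1$. For example, with $f=x$, $p=1$, $g=y$, $q=1$ one gets $\C_H=\{x>0,\ y>0,\ xy>1\}\ni(1/2,4)$, where $a=2$ and $K<0$. So monotonicity of $\phi(a,b)=(1-a)(1-b)/(1-ab)$ on $(0,1)^2$ does not give (C) on all of $\C_H$. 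Invoking (B) for the pair $(H,K)$ to handle the remaining points is circular: with (B) in hand you would already be done by (B)$\Rightarrow$(A). The repair is a one-line computation: $\partial_a\phi=-\bigl(\frac{1-b}{1-ab}\bigr)^2\le0$ and $\partial_b\phi=-\bigl(\frac{1-a}{1-ab}\bigr)^2\le0$ on the whole region $\{ab<1\}$, so $K/H$ is nondecreasing on all of $\C_H$. Alternatively, do what the paper does. On $\C_H\subset\C_F$ the branch $\{f<p,\ g<q\}$ of $\{K>0\}$ is excluded by $fg>pq$, so $\C_H\cap\{K>0\}=\C_F\cap\{f>p,\ g>q\}=\C_{f-p}\times\C_{g-q}=\C_K$. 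This is (B) for $(H,K)$ with no analysis at all.

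The second problem is the promised limiting argument for the remaining cases. Outside your generic case the conclusion can fail, so no limit can rescue it. For $f=x$, $p=2x$, $g=y$, $q=2y$ one has $p\vtl f$ and $q\vtl g$, yet $fg-pq=-3xy\notin\G$ and $(fg-pq)z+(f-p)(g-q)=xy(1-3z)\notin\G$. It also fails with $\rho_f(p)=1$ and $f\ne p$: for instance $f=x$, $p=x+1$, $g=y$, $q=1$ gives $(f-p)(g-q)=1-y\notin\G$. In the first example your approximants $(1-\epsilon)p$ are not even in the generic case. In the second, the components $\C_{f-(1-\epsilon)p}=\{x>(1-\epsilon)/\epsilon\}$ have no common point, so Lemma~\ref{lem:G_+isclosed} gives nothing.

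The lemma has to be read with the tacit hypothesis that $f-p$ and $g-q$ are G\aa rding; for $f\ne p$ this amounts to $\rho_f(p)>1$, and likewise for $g,q$. That is exactly your generic case. It is what holds in the application, Proposition~\ref{thm:SP_CSGF}, where $f-p$ and $g-q$ are the CSGFs of the contractions $M_i/e$, and the paper's one-line proof uses it implicitly to know that $H,K\in\G$. With that hypothesis stated and the derivative computation above, your (C)-based proof is a correct, if heavier, alternative to the paper's set-theoretic check of (B).
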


\begin{proof}
Both relations follow directly from Theorem \ref{proper-position},
\ref{enu:BBB}$\Rightarrow$\ref{enu:AAA}.
\end{proof}

We next show that series-parallel connections and $2$-sums preserve
$S/C$-\gar{} properties.
\begin{prop}
\label{thm:SP_CSGF} The classes of S-\gar{} and C-\gar{} matroid
are preserved under series-parallel connections and 2-sums.
\end{prop}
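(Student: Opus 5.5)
We prove the statement for $C$-\gar{} matroids and deduce the $S$-\gar{} case by duality. Since $C_{M}=S_{M^{*}}$, $S(M_1,M_2)^{*}=P(M_1^{*},M_2^{*})$, and $(M_1\oplus_2M_2)^{*}=M_1^{*}\oplus_2M_2^{*}$, the $S$-statement for a matroid is the $C$-statement for its dual. The 2-sum case then follows from the series case, because $M_1\oplus_2 M_2=S(M_1,M_2)/e$ and $C$-\gar{} matroids are minor closed (Proposition~\ref{prop:minor-closed}). The parallel connection will be handled in the same way as the series connection, or obtained by duality from it. So the core task is to show: if $C_{M_1}$ and $C_{M_2}$ are \gar{}, then $C_{S(M_1,M_2)}$ is \gar{}.

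We may assume $e$ is neither a loop nor a coloop in either $M_i$; otherwise the formulas degenerate to direct sums, and (\ref{eq:CSGFDirectsum}) together with Lemma~\ref{lem:basic}\,(\ref{enu:directproduct}) (products in disjoint variables are \gar{}) settles those cases. Write $\xi_1=f_e-f^e$ and $\xi_2=g_e-g^e$. These have nonnegative coefficients by Lemma~\ref{lem:minorcomparison}. By the preceding proposition, $\xi_1\vtl f_e$ and $\xi_2\vtl g_e$, with $f_e,g_e$ \gar{} in disjoint variable sets $\x,\y$. Substituting $f^e=f_e-\xi_1$ and $g^e=g_e-\xi_2$ into (\ref{eq:CSGFSeries}) gives
\[
C_{S(M_1,M_2)}=w_e f_eg_e+f_eg_e-\xi_1\xi_2=f_eg_e\,(w_e+1)-\xi_1\xi_2 .
\]
By Theorem~\ref{thm:Suppose-that-is} and the same argument as in the preceding proposition, this is \gar{} if and only if $\xi_1(\x)\xi_2(\y)\vtl f_e(\x)g_e(\y)$. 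That domination follows from Example~\ref{exa:product}: on $\C_{f_eg_e}=\C_{f_e}\times\C_{g_e}$ both $\xi_i$ are positive. Equivalently, Theorem~\ref{thm:iffdominate} turns this into $(w_e+1)f_eg_e-\xi_1\xi_2\in\G$, since $\{\xi_1\xi_2>0\}$ contains a positive affine hyper-octant. This is the same structure as Lemma~\ref{lem:CSGFvtl}, which can be invoked directly.

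For the parallel connection, rewrite (\ref{eq:CSGFParallel}) as
\[
w_e\big(f_eg_e-\xi_1\xi_2\big)+(f_e-\xi_1)(g_e-\xi_2).
\]
Here Lemma~\ref{lem:CSGFvtl} gives exactly $f_eg_e-\xi_1\xi_2\prec(f_e-\xi_1)(g_e-\xi_2)$, which by Definition~\ref{def:properposition} means this linear-in-$w_e$ polynomial is \gar{}. We then translate $w_e\mapsto w_e+1$ or leave it as is; either way we stay in $\G$ via Remark~\ref{rem:basicpreservers}.

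The main obstacle is justifying Lemma~\ref{lem:CSGFvtl}'s hypotheses and its use through Theorem~\ref{proper-position}(B). One must check that $\C_{f_eg_e}\cap\{f_eg_e-\xi_1\xi_2>0\}$ is exactly the \gar{} component of the difference, and similarly for the second relation. Concretely, on $\overline{\C_{f_e}\times\C_{g_e}}$ we have $f_e,g_e\ge0$ and $\xi_i\ge0$ with $\xi_i\le f_e$ near the boundary, so the relevant polynomials are $\le0$ off the smaller component. I would verify this carefully, using Theorem~\ref{thm:dominategardnew} (with $\rho=\infty$ when degrees differ, and the case $\rho=1$ handled by the product $(f_e-\xi_1)(g_e-\xi_2)$ being \gar{} as a product of \gar{} polynomials in disjoint variables). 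Finally, all the identities above are in the $w$-variables with $E_1\cap E_2=\{e\}$ removed, so the variable sets are indeed disjoint, as the product lemmas require.
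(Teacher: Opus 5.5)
Your proposal is correct and follows the paper's proof. You reduce to the $C$-case by duality, rewrite the series and parallel formulas in terms of $\xi_e(M_1),\xi_e(M_2)$, use $\xi_1\vtl f_e$ and $\xi_2\vtl g_e$ together with Lemma~\ref{lem:CSGFvtl}, and obtain $2$-sums from minor-closedness via $M_1\oplus_2M_2=S(M_1,M_2)/e$. The one small difference is in the series step, where you go directly through $\xi_1\xi_2\vtl f_eg_e$, Theorem~\ref{thm:iffdominate} and a translation in $w_e$. That is slightly more self-contained than citing the first relation of Lemma~\ref{lem:CSGFvtl}, because it does not need to know in advance that $f_eg_e-\xi_1\xi_2$ is \gar{}.
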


A direct consequence of Theorem \ref{thm:SP_CSGF} shows that $M$
is $S$- or $C$-\gar{} if and only if the simplification matroid
$\text{si}(M)$ is $S$- or $C$-\gar{} respectively.
\begin{proof}
By duality, it suffices to treat the $C$-\gar{} case. We proceed
in three steps, treating the parallel connection, the series connection,
and the $2$-sum using lemmas above. 

Let $p=\xi_{e}(M_{1})$, $q=\xi_{e}(M_{2}),f=f^{e},g=g^{e}$. By (\ref{eq:CSGFParallel}), and (\ref{eq:CSGFSeries}), 
\[\begin{aligned}
\CSGF_{P(M_{1},M_{2})}(\w)&=w_{e}\left(f_{e}g_{e}-\xi_{e}(M_{1})\xi_{e}(M_{2})\right)+f^{e}g^{e},\\
\CSGF_{S(M_{1},M_{2})}(\w)&=w_{e}f_{e}g_{e}+f_{e}g_{e}-\xi_{e}(M_{1})\xi_{e}(M_{2}).
\end{aligned}\]
Apply Lemma \ref{lem:CSGFvtl} to conclude that  series-parallel connections
preserve $C$-\gar{} matroids.

Since $M_{1}\oplus_{2}M_{2}$ is a minor of $S(M_{1},M_{2})$, 2-sums also preserve $C$-\gar{} matroids. 
\end{proof}
As an immediate corollary, the cycle matroids
for series-parallel networks are $S/C$-\gar{}. For the definition
of series-parallel network, we refer to \cite[Section 5.4]{oxley_matroid_2011}.

\subsection{Matroids of at most 6 elements}

\begin{thm}
\label{thm:Any-matroid-on6elements}Any matroid on at most 6 elements
is $S$-Gårding. 
\end{thm}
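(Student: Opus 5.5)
Any matroid on at most six elements has an $S$-G{\aa}rding spanning-set generating function. The plan is a reduction to a small finite list of matroids, followed by a direct check of each survivor with the recursive criterion (Definition~\ref{def:gard_recursive}, Theorem~\ref{thm:recursivedef}), or equivalently Proposition~\ref{prop:testing method} in the multi-affine setting.

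\textbf{Reduction.} The first step removes all reducible cases using the closure results already established.
\begin{enumerate}
\item Loops and coloops split off as direct summands. Their factors $w_e+1$ and $w_e$ are G{\aa}rding, and products in disjoint variables stay G{\aa}rding (Lemma~\ref{lem:basic}(\ref{enu:directproduct})).
\item Parallel and series elements reduce by Proposition~\ref{thm:SP_CSGF}, via the remark that $M$ is $S$-G{\aa}rding iff $\mathrm{si}(M)$ is. By duality ($S_{M}$ versus $C_{M^*}$) we may cosimplify as well.
\item Direct sums and $2$-sums are handled by the same proposition.
\end{enumerate}
So we may assume $M$ is $3$-connected, simple and cosimple, on $n\le 6$ elements. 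The catalogue of such matroids is short:
\begin{itemize}
\item the uniform matroids $U_{r,n}$, already settled by Example~\ref{exa:uniform};
\item the matroids $\widehat U_{r,n}$, settled by Theorem~\ref{thm:Uhat};
\item $M(K_4)$, the whirl $\mathcal W^3$, $P_6$, $Q_6$, $R_6$, and a few other rank-$3$ six-element matroids, together with their duals.
\end{itemize}
Since $S_{M^*}$ is essentially the CSGF of $M$ after inversion, it is cleaner to handle each dual pair by checking whichever of $S_M$, $C_M$ is convenient. Lemma~\ref{lem:minorcomparison} and the proposition characterizing $C$-G{\aa}rding matroids via $\xi_e(M)\vtl C_{M\setminus e}$ convert the check into a domination statement. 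That statement involves a five-element minor, which is already known to be G{\aa}rding by induction on $|E|$.

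\textbf{Verification of each exceptional matroid.} For a given survivor, pick an element $e$ and write $S_M = w_e S_{M/e} + S_{M\setminus e}$. Both minors are $S$-G{\aa}rding by the induction. By Theorem~\ref{thm:Suppose-that-is} and Theorem~\ref{proper-position}, it suffices to prove $S_{M/e}\prec S_{M\setminus e}$. By criterion (C) of Theorem~\ref{proper-position}, this amounts to
\[
S_{M\setminus e}\,\pdv_i S_{M/e}-S_{M/e}\,\pdv_i S_{M\setminus e}\le 0 \quad\text{on } \C_{S_{M/e}} .
\]
Symmetry cuts this down considerably: $M(K_4)$, $\mathcal W^3$ and $P_6$ have transitive or nearly transitive automorphism groups, and the partial symmetrization of Example~\ref{exa:partialsymme} lets one reduce to few variable classes. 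On the regions where the direct inequality is awkward, I would instead use the flat formula (\ref{eq:flat formula}) to write $S_M = v_E-\sum(\ldots)$ and verify condition (2) of Proposition~\ref{prop:testing method}, namely $S_M\le 0$ on $\pdv\C^{(1)}_{S_M}$. Because every first partial of $S_M$ is (up to the factor structure) the SSGF of a contraction, $\C^{(1)}$ is known from the induction, and its boundary is a union of pieces where some $S_{M/i}$ vanishes.

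\textbf{Main obstacle.} The hard step is the boundary inequality for the genuinely non-regular rank-$3$ cases, such as $P_6$, $Q_6$ and $R_6$. There, stability is unavailable and the components are non-convex, so the check is a nonlinear optimization over a piece of $\pdv\C^{(1)}$. My first attempt would be to use symmetry to reduce to at most three distinct variables. I would then parametrize the boundary piece where a single $S_{M/i}=0$ (multi-affine, so it is solvable for one variable), substitute, and show that the resulting rational function is nonpositive by exhibiting it as a sum of terms with visible sign. If that fails, I would fall back to the necessary-and-sufficient two-variable tests of Example~\ref{exa:Let-.-We} on coordinate restrictions (Lemma~\ref{lem:For-any-.}), combined with Lemma~\ref{lem:GardingToRayleigh}(\ref{enu:gartoRay3}). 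This amounts to checking the Rayleigh inequality for translates in the candidate component, presumably with computer-assisted certification deferred to an appendix.
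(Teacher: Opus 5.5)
Your reduction is the same as the paper's: split off loops and coloops, use series--parallel connections, direct sums and $2$-sums, and reduce to $3$-connected matroids. The catalogue you end with is wrong, though.

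\begin{itemize}
\item On six elements the $3$-connected matroids are $U_{2,6},U_{3,6},U_{4,6}$, $P_6$, $Q_6$, $W^{3}$ and $M(K_4)$; on fewer elements they are all uniform.
\item $R_6$ is not on this list. It is $U_{2,4}\oplus_2 U_{2,4}$, so your reduction already handles it.
\item $P_6$ is exactly $\widehat U_{3,6}$, so Theorem~\ref{thm:Uhat} covers it and it is not a hard case.
\end{itemize}

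So the only genuinely new cases are $M(K_4)$, $W^{3}$ and $Q_6$. These (and $P_6$, $U_{3,6}$) are isomorphic to their duals, and that is the only reason your ``check whichever of $S_M$, $C_M$ is convenient'' is harmless. For a non-self-dual pair you would need both, since $S_{M^*}=C_M$ on the nose, with no inversion involved.

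The real gap is that for those three matroids you only restate the goal. Criterion (C) of Theorem~\ref{proper-position} and Proposition~\ref{prop:testing method} are correct reformulations, but the whole content of the theorem is the resulting inequality. You leave that as an unsolved, possibly computer-assisted, optimization.

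The missing idea is a structured splitting in the coordinates $\v=\w+\mathbf{1}$ of the flat formula.

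\begin{itemize}
\item For $M(K_4)$, write $S_M=f-g$ with $f=v_{123}(v_{456}-1)-(v_4+v_5+v_6-3)$. This $f$ is visibly G\aa rding because $v_4+v_5+v_6-3\vtl v_{456}-1$.
\item Then prove $g\vtl f$, i.e.\ $g>0$ on $\C_f$. This comes from AM--GM on the three terms $v_1(v_5v_6+v_4-2)$, $v_2(v_4v_6+v_5-2)$, $v_3(v_4v_5+v_6-2)$ of $g$, together with the identity $(xy+z-2)(yz+x-2)(xz+y-2)-(xyz-1)(x+y+z-3)^2=(x-1)^2(y-1)^2(z-1)^2$.
\item Theorem~\ref{thm:dominategardnew} then finishes, since $\rho_f(g)=\infty$ by the degree gap.
\item $W^{3}$ is analogous, with $S_{W^3}=v_2f-g$ and Theorem~\ref{thm:iffdominate}.
\item For $Q_6$, specialize the blocks $\{1,2\}$ and $\{4,5\}$ to get $F=fu-g$. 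Show $g|_{\C_f}>0$ by writing it as an explicit sum of positive terms, then conclude with Theorems~\ref{thm:iffdominate} and~\ref{thm:polarization_keeps_garding}.
\end{itemize}

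Your symmetry step also runs in the wrong direction. Example~\ref{exa:partialsymme} shows that symmetrizations of G\aa rding polynomials are G\aa rding. It does not let you infer $S_M\in\G$ from a symmetrized or specialized version. The legitimate variable reduction is Theorem~\ref{thm:polarization_keeps_garding} applied to a polynomial that is genuinely symmetric within blocks. That is available for $Q_6$ and $P_6$. It is not available for $M(K_4)$ or $W^{3}$, whose automorphism groups contain no transposition of two elements, so those must be treated in all six variables.
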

See Appendix \ref{6elements} for its proof. By the standard decomposition theory of connected matroids into
\(3\)-connected components via direct sums and \(2\)-sums, together with

Proposition~\ref{thm:SP_CSGF}, it suffices to verify the required
property for \(3\)-connected matroids on six elements.

Up to isomorphism, there are only five such matroids; see
Figure~\ref{fig:3-connnected-rank-3}. In each case, we compute the SSGF
using Theorem~\ref{thm:SSGF} with the labeling shown in the figure, and
verify that the resulting SSGF is \(\gar{}\).

\begin{figure}
\subfloat[\small$M(K_{4})$]{
\begin{tikzpicture}[scale=0.75, line cap=round, line join=round]

% ----- coordinates (Wikipedia-style layout) -----
\coordinate (P3) at (60:2);     % top
\coordinate (P1) at (210:2.6);    % bottom-left
\coordinate (P5) at (330:2.6);    % bottom-right

% points on edges (chosen as midpoints for the look)
\path (P1) -- (P3) coordinate[midway] (P2); 
\path (P1) -- (P5) coordinate[midway] (P4); 

\draw [thick, name path=line1] (P5) -- (P2);
\draw [thick, name path=line2] (P3) -- (P4);
\path [name intersections={of=line1 and line2, by=P6}];

\fill (P6) circle (0.03);
% ----- the 7 lines (flats) -----
% Outer triangle edges: {1,2,3}, {1,4,5}, {2,4,6}
\draw[thick, name path=l1] (P1) -- (P2);
\draw[thick] (P1) -- (P3);
\draw[thick] (P1) -- (P5);

% ----- points -----
\foreach \i in {1,2,3,4,5,6}{
  \fill (P\i) circle (2.5pt);
}
% ----- labels (slight offsets to mimic the SVG) -----
\node[above] at (P1) {$1$};
\node[above left] at (P2) {$3$};
\node[below right] at (P4) {$6$};

\node[right] at (P3) {$2$};
\node[right] at (P5) {$5$};
\node[above right] at (P6) {$4$};

\end{tikzpicture}

}$\quad$$\quad\ $\subfloat[$W^{3}$]{\begin{tikzpicture}[scale=0.75]
% ----- coordinates (Wikipedia-style layout) -----
\coordinate (P1) at (90:2);     % top
\coordinate (P2) at (210:2);    % bottom-left
\coordinate (P4) at (330:2);    % bottom-right

% points on edges (chosen as midpoints for the look)
\path (P1) -- (P2) coordinate[midway] (P3); % on 1-2
\path (P1) -- (P4) coordinate[midway] (P5); % on 1-4
\path (P2) -- (P4) coordinate[midway] (P6); % on 2-4

% ----- the 7 lines (flats) -----
% Outer triangle edges: {1,2,3}, {1,4,5}, {2,4,6}
\draw[thick] (P1) -- (P2);
\draw[thick] (P1) -- (P4);
\draw[thick] (P2) -- (P4);

% Circle line: {3,5,6}

% ----- points -----
\foreach \i in {1,2,3,4,5,6}{
  \fill (P\i) circle (2.5pt);
}

% ----- labels (slight offsets to mimic the SVG) -----
\node[above] at (P1) {$1$};
\node[below left] at (P2) {$3$};
\node[below right] at (P4) {$5$};

\node[left] at (P3) {$2$};
\node[right] at (P5) {$6$};
\node[below] at (P6) {$4$};

\end{tikzpicture}

}$\quad$$\quad\ $\subfloat[$Q_{6}$]{\begin{tikzpicture}[scale=0.75]
  % Put 2 as the left vertex
  \coordinate (P2) at (0,0);

  % Line {1,2,4}: horizontal ray to the right
  \coordinate (P1) at (1.8,0.0);
  \coordinate (P4) at (3.6,0.0);

  % Line {2,3,5}: ray at 45 degrees (slope 1)
  \coordinate (P3) at (1.4,1.4);
  \coordinate (P5) at (2.8,2.8);

  % Element 6 hanging between the two lines
  \coordinate (P6) at (3.6,1.5);

  % Draw supporting lines (extend a bit past endpoints)
  \draw[thick] (-0,0) -- (3.6,0);          % supports {1,2,4}
  \draw[thick] (-0,-0) -- (2.8,2.8);     % supports {2,3,5}

  % Draw points
  \fill (P2) circle (2.5pt) node[left] {$3$}; % emphasize vertex 2

  \fill (P1) circle (2.5pt) node[below] {$4$};
  \fill (P4) circle (2.5pt) node[below] {$5$};

  \fill (P3) circle (2.5pt) node[above left] {$2$};
  \fill (P5) circle (2.5pt) node[above left] {$1$};

  \fill (P6) circle (2.5pt) node[right] {$6$};

  % Optional line labels
\end{tikzpicture}}

\subfloat[\mbox{$P_{6}$}]{\parbox{3.6cm}{
    \centering
\begin{tikzpicture}[scale=0.7] 
	% Points 
	\fill (0,1.8) circle (2.5pt) node[above] {$4$}; 
\fill (1.8,2) circle (2.5pt) node[above] {$5$};
\fill (3.6,1.8) circle (2.5pt) node[above] {$6$}; 
\fill (0,0) circle (2.5pt) node[below] {$1$};
\fill (1.8,0) circle (2.5pt) node[below] {$2$};
\fill (3.6,0) circle (2.5pt) node[below] {$3$}; % The only line: 4-5-6 
	\draw[thick] (0,0) -- (3.6,0); 
\end{tikzpicture}}

}$\quad$$\quad\ $\subfloat[$U_{3,6}$]{\parbox{3.6cm}{
    \centering
\begin{tikzpicture}[scale=0.8, line cap=round, line join=round]

% ----- coordinates (Wikipedia-style layout) -----
\coordinate (P1) at (0:1.4);     % top
\coordinate (P2) at (60:1.4);    % bottom-left
\coordinate (P3) at (120:1.4);    % bottom-right
\coordinate (P4) at (180:1.4);    % bottom-right
\coordinate (P5) at (240:1.4);    % bottom-right
\coordinate (P6) at (300:1.4);    % bottom-right
\foreach \i in {1,2,3,4,5,6}{
  \fill (P\i) circle (2.5pt);
}
% ----- labels (slight offsets to mimic the SVG) -----
\node[right] at (P1) {$1$};
\node[right] at (P2) {$2$};
\node[left] at (P4) {$4$};

\node[left] at (P3) {$3$};
\node[left] at (P5) {$5$};
\node[right] at (P6) {$6$};

\end{tikzpicture}}

}$\ $

\caption{3-connected rank 3 matroids with 6 elements}\label{fig:3-connnected-rank-3}
\end{figure}
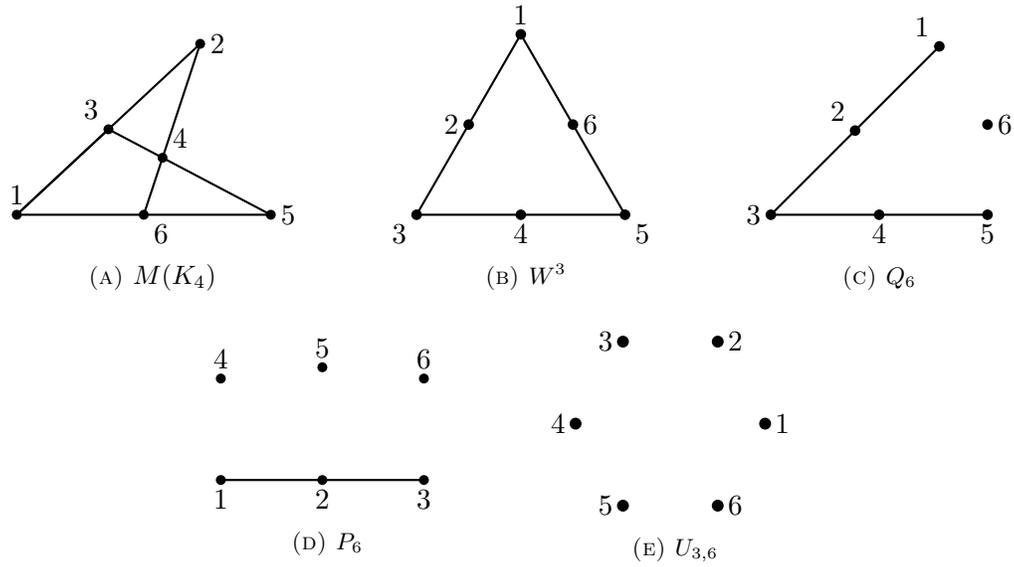

\subsection{Conclusion of the proof}
\begin{proof}[Proof of Theorem~\ref{thm:main}]
The result follows by combining the closure properties from Propositions~\ref{prop:minor-closed}
and \ref{thm:SP_CSGF}, Example \ref{exa:uniform}, Theorem \ref{thm:Uhat}
and Theorem \ref{thm:Any-matroid-on6elements}.
\end{proof}
\begin{proof}[Proof of Corollary~\ref{thm:extension}]
Let $M$ be a matroid in one of the classes described in Theorem~\ref{thm:main}.
We first consider the cospanning-set generating function. A subset
$X\subseteq E$ is cospanning in $M$ if and only if its complement
is spanning in the dual matroid $M^{\ast}$. Therefore 
\[
C_{M}(w)=S_{M^{\ast}}(w).
\]
Since the relevant classes are closed under duality, Theorem~\ref{thm:main}
implies that $S_{M^{\ast}}(w)$ is G{\aa}rding. Hence $C_{M}(w)$
is G{\aa}rding.

We now turn to the basis generating function. A basis is precisely
a spanning set of cardinality $\operatorname{rk}(M)$, so $B_{M}(w)$
is obtained from $S_{M}(w)$ by taking the homogeneous component of
the lowest degree;
\[
B_{M}\;=\;(S_{M})^{\mathrm{bot}}.
\]
The conclusion follows from Proposition \ref{prop:truncateLet--be}. 

Thus both $C_{M}(w)$ and $B_{M}(w)$ are G{\aa}rding, as claimed. 
\end{proof}
\begin{rem}
Our method can be applied to many more special matroids. A systematic
discussion will be included in future works.
\end{rem}

\section{Rayleigh properties }

We record some consequences of our main theorems in terms of Rayleigh
properties.

\subsection{Relations among generating functions}

We introduce some algebraic relations among the generating functions.
\begin{defn}[Combinatorial Inversion]
For a multivariate polynomial $f(w)=f((w_{e})_{e\in E})$, we define
its \emph{combinatorial inversion} by 
\[
\Inv(f)(\w):=\Bigl(\prod_{e\in E}w_{e}\Bigr)\,f\!\left(\left(\frac{1}{w_e}\right)_{e\in E}\right).
\]
\end{defn}
\begin{rem}
\label{inversion keeps rayleigh} The inversion operator $\Inv$ is
purely combinatorial and should not be confused with the inversion
operator $T_{\tau}$ introduced earlier in the paper. The two operations
differ by explicit sign factors: $T_{\tau}$ involves a signed transformation
arising from complex-analytic considerations, whereas $\Inv$ is sign-free
and is induced simply by complement of subsets. Especially,
$\Inv$ preserves the Rayleigh property for multi-affine polynomials
with nonnegative coefficients. 
\end{rem}

\medskip{}

The four generating functions are related as follows. The basis generating
function is the top-degree homogeneous component of the independent-set
generating function. It is also the bottom-degree homogeneous component
of the spanning-set generating function: 
\begin{equation}
B_{M}\;=\;(I_{M})^{\mathrm{top}}=(S_{M})^{\mathrm{bot}}.\label{eq:new1}
\end{equation}
Moreover, 
\begin{equation}
C_{M}=\Inv(I_{M}),\label{eq:new2}
\quad  B_{M}\;=\;\Inv((\Inv(S_{M}))^{\mathrm{top}}).
\end{equation}
If $M^{*}$ is the dual matroid of $M$, then the spanning-set and
cospanning-set generating functions are interchanged under duality:
\[
S_{M^{*}}\;=\;C_{M}\qquad\text{and}\qquad C_{M^{*}}\;=\;S_{M}.
\]

\begin{rem}
In the graphic case, the CSGF admits a concrete interpretation. Let
$G=(V,E)$ be a graph, and let $M(G)$ be its cycle matroid. The \emph{unrooted}
\emph{spanning-forest generating function} (SFGF) of $G$ is 
\[
F_{G}(\w):=\sum_{F\subseteq E,\;F\text{ a spanning forest}}\prod_{e\in F}w_{e}.
\]
It is easy to see that the cospanning-set generating function $C_{M(G)}$
is the combinatorial inversion of $F_{G}$, namely 
\[
C_{M(G)}(\w)=\Inv(F_{G})(\w).
\]
For a connected graph $G=(V,E)$, the spanning--set generating function
can be recovered from the multivariate $q$-Potts model partition
function $Z_{G}(q,\w)$ \cite{sokal_multivariate_2005} by
\[
S_{M(G)}(\mathbf{w})=\lim_{q\to0}\,\frac{1}{q}\,Z_{G}(q,\mathbf{w}).
\]
\end{rem}

\begin{rem}
By Proposition \ref{prop:truncateLet--be}, truncation to the top
or bottom homogeneous component preserves the multi-affine \gar{}
property for polynomials with nonnegative coefficients. Also, it is
well known that both truncation and inversion preserve the Rayleigh
property.
\end{rem}

\subsection{Rayleigh consequences}

We record several basic facts about the  \gar{} and Rayleigh properties
of the spanning-set and cospanning-set generating functions.
\begin{lem}
\label{lem:SCG-implies-Rayleigh} Let $M$ be a matroid with generating
functions $I_{M},B_{M},S_{M},C_{M}$ as above. Then

\begin{enumerate}
\item If $S_{M}$ (respectively, $C_{M}$) is G{\aa}rding, then $S_{M}$ (respectively, $C_{M}$) is Rayleigh.
\item $I_{M}$ is Rayleigh if and only if $C_{M}$ is Rayleigh.
\item Let $G$ be a graph, and let $M(G)$ be its cycle matroid. Then $C_{M(G)}$ is Rayleigh if and only if the unrooted spanning-forest generating function $F_{G}(w)$ is Rayleigh.
\item If for some $X\in\{I,S,C\}$, $X_{M}$ is Rayleigh, then $B_{M}$ is Rayleigh.
\item If $S_{M}$ or $C_{M}$ is \gar{}, then $B_{M}$ is \gar{}.
\label{enu:necessary condition}
\end{enumerate}
\end{lem}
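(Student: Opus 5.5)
The five items are largely bookkeeping consequences of results already proved, so the plan is to dispatch them one at a time, using the algebraic relations \eqref{eq:new1}--\eqref{eq:new2} together with Remark~\ref{inversion keeps rayleigh}. Item (1) is immediate. $S_M$ and $C_M$ are multi-affine with nonnegative coefficients, so if either one is G\aa rding it lies in $\G_{+}$. Lemma~\ref{lem:GardingToRayleigh}\,(\ref{enu:gartoRay2}) then gives the Rayleigh property.

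For item (2), use $C_M=\Inv(I_M)$ and the fact that $\Inv$ is an involution on multi-affine polynomials in the variables $(w_e)_{e\in E}$. By Remark~\ref{inversion keeps rayleigh}, $\Inv$ preserves the Rayleigh property, and applying it in both directions gives the equivalence. If one wants a check rather than a citation, a direct computation shows that for $f$ multi-affine and $g=\Inv(f)$, the Rayleigh difference $g\,\pdv_{ij}g-\pdv_i g\,\pdv_j g$ at $\w$ equals $\prod_e w_e^{2}\,w_i^{-2}w_j^{-2}$ times the corresponding expression for $f$ at $\w^{-1}$. Since $\w\mapsto \w^{-1}$ maps $\Gamma^{+}$ to itself, the sign condition transfers. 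Item (3) is the graphic special case of the same argument, because $C_{M(G)}=\Inv(F_G)$ by the preceding remark.

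For item (4), use \eqref{eq:new1}--\eqref{eq:new2}. We have $B_M=(I_M)^{\mathrm{top}}=(S_M)^{\mathrm{bot}}$, and $B_M=\Inv\bigl((\Inv I_M)^{\mathrm{bot}}\bigr)$ with $C_M=\Inv I_M$. Rayleigh is preserved under truncation to the top or bottom homogeneous part, which is the limit $k^{-d}f(k\w)$ or $k^{v}f(\w/k)$: dilations preserve the inequality on $\Gamma^{+}$, and the Rayleigh condition is closed under limits. Combined with preservation under $\Inv$, this shows that if $X_M$ is Rayleigh for some $X\in\{I,S,C\}$ then so is $B_M$. In the case $X=C$, first pass through $I_M$ using item (2).

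For item (5), if $S_M$ is G\aa rding, then $B_M=(S_M)^{\mathrm{bot}}\in\G_{+}$ by Proposition~\ref{prop:truncateLet--be}. If $C_M$ is G\aa rding, the step I expect to need the most care is that $\Inv$ does not directly preserve G\aa rding. I would route around this by duality. We have $C_M=S_{M^*}$, so $B_{M^*}=(C_M)^{\mathrm{bot}}$ is G\aa rding, and $B_M=\Inv(B_{M^*})$. For homogeneous multi-affine $h$ of degree $r$ we get $\Inv(h)=(-1)^{r}T_\tau(h)$ up to the sign convention. But $T_\tau$ requires $h(\mathbf{0})>0$ in Theorem~\ref{thm:inversion-1}, which fails here, so this route may not close. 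As a fallback, I would write $B_M=(I_M)^{\mathrm{top}}$ and try to show that $I_M$ inherits the G\aa rding property from $C_M=\Inv(I_M)$. Here $C_M$ has constant term $1$ (since $E$ itself is cospanning when $\emptyset$ is independent), so $C_M(\mathbf 0)>0$. Applying Theorem~\ref{thm:inversion-1} to $C_M$ yields $T_\tau(C_M)(\w)=I_M(-\w)$ up to sign, i.e.\ a G\aa rding polynomial. Then Proposition~\ref{prop:truncateLet--be} (top part of a G\aa rding polynomial lies in $\G_+$) gives $(I_M(-\cdot))^{\mathrm{top}}=\pm B_M(\w)\in\G_+$, and since $B_M$ has nonnegative coefficients the sign is $+$. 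Checking this sign bookkeeping is the main obstacle.
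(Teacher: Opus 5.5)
Items (1)--(4) are correct and follow the paper's one-line arguments: Lemma~\ref{lem:GardingToRayleigh}, invariance of the Rayleigh property under $\Inv$, and invariance under top/bottom truncation. Item (5) is also correct when $S_M$ is G\aa rding, via $B_M=(S_M)^{\mathrm{bot}}$ and Proposition~\ref{prop:truncateLet--be}. Your explicit formula for the Rayleigh difference of $\Inv(f)$ is a valid way to justify Remark~\ref{inversion keeps rayleigh}.

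The gap is the $C_M$ branch of item (5), and it cannot be closed.

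\emph{The fallback step is false.} It rests on $C_M(\mathbf 0)>0$. The constant term of $C_M$ is the coefficient of $w_{\emptyset}$, which is present only when $E\setminus\emptyset=E$ is independent, i.e.\ only for free matroids. The set $X=E$ you invoke contributes the top monomial $w_E$, not the constant term. Compare the paper's $C_{M(C_n)}(\w)=\prod_j(1+w_j)-1$, which vanishes at $\mathbf 0$. So Theorem~\ref{thm:inversion-1} is unavailable, and neither of your routes reaches $B_M$.

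\emph{The implication itself is false by the paper's own results.} Theorem~\ref{F7csgf} says $C_{F_7}$ is G\aa rding. Proposition~\ref{prop:F7-rayleigh-not-garding} shows $B_{F_7}$ is not: $h(t):=B_{F_7}(\w_0+t\mathbf 1)=28t^3-12t+4$ has largest real root $r(h)<0$, while $r(h')=1/\sqrt7>0$, so MRS fails.

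\emph{What you can actually prove.} Your first (duality) step is right: if $C_M$ is G\aa rding, then $B_{M^*}=(C_M)^{\mathrm{bot}}$ is G\aa rding. That is all that Proposition~\ref{prop:truncateLet--be}, the paper's cited justification for (5), gives in this case. It also matches Theorem~\ref{F7summary}, which records $B$-G\aa rdingness for $F_7^*$, not for $F_7$. Your suspicion that the inversion step ``may not close'' was correct. State and prove item (5) in the corrected form: ``$S_M$ G\aa rding $\Rightarrow B_M$ G\aa rding, and $C_M$ G\aa rding $\Rightarrow B_{M^*}$ G\aa rding.''
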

\begin{proof}
Part~(1) is an immediate consequence of Lemma~\ref{lem:GardingToRayleigh}. Parts~(2) and~(3) follow from Remark~\ref{inversion keeps rayleigh}. Since taking the top-degree part of a polynomial preserves the Rayleigh property, part~(4) follows from \eqref{eq:new1} and \eqref{eq:new2}. Part~(5) follows from Proposition~\ref{prop:truncateLet--be}.
\end{proof}

\begin{cor}
\label{cor:consequence}
For all matroids covered in Theorem~\ref{thm:main}—namely, series-parallel networks, uniform matroids and their one basis removal, and matroids on at most six elements—their SSGFs, CSGFs, ISGFs, and BSGFs are Rayleigh. For the corresponding graph examples, the SFGFs are also Rayleigh.
\end{cor}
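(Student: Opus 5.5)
This corollary is obtained by chaining Corollary~\ref{thm:extension} and Theorem~\ref{thm:main} with Lemma~\ref{lem:SCG-implies-Rayleigh}; no new analytic input is needed.

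Let $M$ be a matroid in one of the listed classes: a series--parallel network, a uniform matroid $U_{r,n}$, a deleted-basis matroid $\widehat{U}_{r,n}$ (or, more generally, an element of $\mathcal{S}$), or a matroid on at most six elements.

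\textbf{Step 1 ($S_M$ and $C_M$).} By Theorem~\ref{thm:main}, $S_M$ is G{\aa}rding. By Corollary~\ref{thm:extension}, $C_M$ and $B_M$ are G{\aa}rding. These polynomials are multi-affine with nonnegative coefficients, so they lie in $\G_+$. Lemma~\ref{lem:GardingToRayleigh}(\ref{enu:gartoRay2}), equivalently Lemma~\ref{lem:SCG-implies-Rayleigh}(1), then gives that $S_M$, $C_M$ and $B_M$ are Rayleigh.

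\textbf{Step 2 ($I_M$).} Since $C_M = \Inv(I_M)$ and $\Inv$ is an involution preserving the Rayleigh property (Remark~\ref{inversion keeps rayleigh}), Lemma~\ref{lem:SCG-implies-Rayleigh}(2) shows that $I_M$ is Rayleigh. Lemma~\ref{lem:SCG-implies-Rayleigh}(4) gives an alternative route to the Rayleigh property of $B_M$.

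\textbf{Step 3 (graphic case).} Suppose $M = M(G)$ is graphic, as for series--parallel networks, trees, cycles, and graphic matroids on at most six edges. Then $C_{M(G)} = \Inv(F_G)$, so Lemma~\ref{lem:SCG-implies-Rayleigh}(3) shows that $F_G$ is Rayleigh.

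\textbf{Where care is needed.} The only point requiring attention is that Corollary~\ref{thm:extension} uses closure of each class under duality, which is what allows one to pass from $S$ to $C$:
\begin{itemize}
\item series--parallel networks are closed under duality;
\item $U_{r,n}^* = U_{n-r,n}$;
\item $\widehat{U}_{r,n}^* \cong \widehat{U}_{n-r,n}$, and $\mathcal{S}$ is closed under duality because 2-sums commute with duality;
\item matroids on at most six elements are closed under duality trivially.
\end{itemize}
Once this is checked, the remaining steps are direct citations.
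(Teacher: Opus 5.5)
Your proposal is correct and takes essentially the paper's route: the corollary follows by combining Theorem~\ref{thm:main} and Corollary~\ref{thm:extension} with Lemma~\ref{lem:SCG-implies-Rayleigh}, using parts (1), (2) and (4) for $S_M,C_M,I_M,B_M$ and part (3) for the spanning-forest polynomial. Your check that each class is closed under duality is the same fact the paper relies on in its proof of Corollary~\ref{thm:extension}.
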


\begin{rem}
Although several statements in Corollary~\ref{cor:consequence} are known \cite{Erickson12,MR2248326,MR2428906}, their proofs do not arise from the classical strong Rayleigh framework and were generally obtained through case-by-case analysis. In contrast, our unified approach via the \gar{} property recovers these results in a systematic manner. Moreover, the corresponding statements for $\widehat{U}_{r,n}$ appear to be new.

Erickson proved the Rayleigh property for the unrooted spanning-forest generating polynomial of a series-parallel graph by 
a direct method \cite{Erickson12}. The argument above recovers this result as a consequence of the G{\aa}rding theory, providing a structural explanation.
\end{rem}

\subsection{2-symmetric rank 4 matroids}To highlight the analytic nature of our approach, we consider all rank 4 matroids with 2 block symmetry. Let
\[
\mathrm{x}\in \mathbb R^m,\qquad \mathrm{y}\in \mathbb R^n,\qquad m,n\ge 4.
\]
For \(0\le \ell\le u\le 4\), set
\[
\mathcal B^{\ell,u}_{m,n}(\mathrm{x},\mathrm{y})
=
\sum_{i=\ell}^{u}
\sigma_i(\mathrm{x})\sigma_{4-i}(\mathrm{y}).
\]
This is the basis generating polynomial of the rank four matroid whose bases
are the four-element subsets \(S\subseteq A\sqcup B\), with \(|A|=m\), \(|B|=n\),
satisfying
\[
\ell\le |S\cap A|\le u.
\]
The basis exchange axiom follows immediately from the condition on
\(|S\cap A|\). Moreover,
\[
\mathcal B^{\ell,u}_{m,n}
=
\pol_{(m,n)}(F^{\ell,u}_{m,n}),
\qquad
F^{\ell,u}_{m,n}(x,y)
=
\sum_{i=\ell}^{u}
\binom mi\binom n{4-i}x^iy^{4-i}.
\]

\begin{thm}\label{thm:rank4-list}
Assume \(m,n\ge 4\). For \(0\le \ell\le u\le 4\), let
\[
\mathcal B^{\ell,u}_{m,n}(\mathrm{x},\mathrm{y})
=
\sum_{i=\ell}^{u}
\sigma_i(\mathrm{x})\sigma_{4-i}(\mathrm{y}).
\]
Then the corresponding rank four two-symmetric matroid is \(B\)-G{\aa}rding
exactly for the following choices of \((\ell,u)\):
\[
\begin{array}{c|c}
(\ell,u) & \text{condition} \\
\hline
(0,0),(0,1),(0,4),(1,1),(1,2) & \text{always} \\
(2,2),(2,3),(3,3),(3,4),(4,4) & \text{always} \\
\hline
(0,2) & mn\le 5m+3n-9 \\
(2,4) & mn\le 3m+5n-9 \\
(0,3),(1,3),(1,4) & mn\le 5m+5n-13.
\end{array}\label{asd}
\]
\end{thm}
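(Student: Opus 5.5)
The plan is to reduce everything to the bivariate homogeneous quartic $F^{\ell,u}_{m,n}(x,y)$. Since $\mathcal B^{\ell,u}_{m,n}=\pol_{(m,n)}(F^{\ell,u}_{m,n})$, the matroid is $B$-G{\aa}rding if and only if $F^{\ell,u}_{m,n}$ is G{\aa}rding. In one direction this is Theorem~\ref{thm:polarization_keeps_garding}, applied after discarding redundant variables so that the NRT hypothesis holds. In the other direction it is $\kappa$-projection, which is a strictly positive linear pullback. So the whole question becomes one about binary quartics with nonnegative coefficients
\[
F=\sum_{i=\ell}^{u} c_i x^i y^{4-i},\qquad c_i=\tbinom mi\tbinom n{4-i}>0 .
\]

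For such $F$ we use the recursive characterization, Theorem~\ref{thm:recursivedef}, together with Theorem~\ref{thm:stable-garding}.
\begin{itemize}
\item The second partials $\pdv_x^2F$, $\pdv_x\pdv_yF$ and $\pdv_y^2F$ are binary quadratics. They are G{\aa}rding if and only if they are real stable, that is, real rooted with nonnegative coefficients.
\item The first partials are cubics. By Proposition~\ref{H3}, a homogeneous cubic is G{\aa}rding if and only if it is stable.
\item $F$ itself is G{\aa}rding if and only if, in addition, $\widehat\C_F\subseteq\widehat\C_{\pdv_xF}\cap\widehat\C_{\pdv_yF}$.
\end{itemize}
By homogeneity, each G{\aa}rding component is a cone in $\R^2$ bounded by rays. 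The containment condition therefore reduces to comparing largest roots of the dehomogenized polynomials $F(t,1)$, $\pdv_xF(t,1)$, $\pdv_yF(t,1)$, and symmetrically in $s=y/x$. This is exactly an MRS-type condition on the two affine charts.

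We then go through the cases.

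\begin{itemize}
\item \textbf{Monomials.} The cases $(0,0),(1,1),(2,2),(3,3),(4,4)$ are monomials, hence stable.
\item \textbf{Full range.} $(0,4)$ gives $\sigma_4(\mathrm{x},\mathrm{y})$, which is stable.
\item \textbf{Two adjacent terms.} $(0,1),(1,2),(2,3),(3,4)$ are of the form $x^ay^b$ times a linear form with positive coefficients, hence stable.
\item \textbf{Three terms.} For $(0,2)$ and $(2,4)$, $F$ is $y^2$ (respectively $x^2$) times a quadratic $q$. We verify directly that $F$ is G{\aa}rding if and only if the discriminant of $q$ is nonnegative. If $q$ has complex roots, its degree-two restriction fails Theorem~\ref{thm:stable-garding}. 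If it is real rooted, $F$ is stable. With $q=\binom n4y^2+\binom m1\binom n3xy+\binom m2\binom n2x^2$ for $(0,2)$, the discriminant condition $m^2\binom n3^2\ge4\binom n4\binom m2\binom n2$ simplifies, after cancellation, to $mn\le 5m+3n-9$. The case $(2,4)$ follows by the symmetry $m\leftrightarrow n$.
\item \textbf{Three or four terms, $(0,3),(1,3),(1,4)$.} $(1,3)$ is $xy$ times a quadratic, giving a discriminant condition. For $(0,3)$ and $(1,4)$, $F$ is a genuine non-stable candidate (up to the $x\leftrightarrow y$ symmetry), and we must run the recursive test. First, all second partials must be real rooted, which yields discriminant inequalities. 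Second, the largest-root comparisons must hold.
\end{itemize}

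I expect the last step to be the main obstacle. The goal is to show that the binding constraint in all three cases collapses to the single symmetric inequality $mn\le5m+5n-13$. The plan is as follows.
\begin{itemize}
\item Compute the discriminant of the middle quadratic $\pdv_x\pdv_yF$, or of the quadratic factor in case $(1,3)$. Factor out the common binomial factors and check that it reduces to $mn\le5m+5n-13$.
\item Show that every other discriminant condition, and the root-ordering condition between $F$ and its first partials, is implied by this one.
\item For the root ordering, use Proposition~\ref{prop:testing method} in its homogeneous form: $F\le0$ on the boundary rays of $\widehat\C^{(1)}_F$. Evaluate $F$ at the largest root of the relevant first-partial cubic, using the quadratic factorization of that cubic where available. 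Otherwise argue monotonicity in $(m,n)$ and reduce to checking the boundary of the region.
\end{itemize}

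Necessity in every case comes from exhibiting a failing second partial, or a failing univariate specialization via Remark~\ref{rem:MRSnecessary}, whenever the inequality is violated.
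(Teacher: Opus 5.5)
Your framework is the paper's own: reduce to the binary quartic $F=F^{\ell,u}_{m,n}$, treat second partials with Theorem~\ref{thm:stable-garding} and cubic first partials with Proposition~\ref{H3}, then compare largest roots on the two charts. The paper packages this as Lemma~\ref{lem:quartic-reduced}. Your monomial, two-term, $(0,4)$, $(0,2)$ and $(2,4)$ cases are fine.

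The gap is in the step you call the crux: neither quadratic whose discriminant you propose to compute yields $mn\le 5m+5n-13$. Write the middle part of $F$ as $Axy^3+Bx^2y^2+Cx^3y$, with $A=m\binom n3$, $B=\binom m2\binom n2$, $C=\binom m3 n$; add $\binom n4y^4$ for $(0,3)$ or $\binom m4x^4$ for $(1,4)$. In all three cases $\pdv_x\pdv_yF=3Ay^2+4Bxy+3Cx^2$, and
\[
16B^2-36AC=m^2n^2(m-1)(n-1)(m+n-3)>0 ,
\]
so this discriminant never binds. In fact no second partial ever fails in these cases, so necessity via a failing second partial cannot work either. The discriminant of the quadratic factor $q$ in case $(1,3)$ gives $B^2\ge 4AC$, i.e.\ $7mn\le 23m+23n-55$. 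That is the \emph{stability} threshold of Remark~\ref{rem:middle-stable}, strictly stronger than the G{\aa}rding one; for example $m=5$, $n=6$ is G{\aa}rding but not stable. This is the phenomenon of Example~\ref{ex:gard_sta_lorentzian}: $xy\,q$ can be G{\aa}rding although $q$ is not real-rooted. Unlike $(0,2)$, the specialization $F(t,1)=t\,q(t,1)$ is a cubic, so the degree-two argument from Theorem~\ref{thm:stable-garding} does not apply.

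The missing idea is that the binding constraint sits at the cubic first partials. For $(0,3)$ and $(1,3)$ it is $\pdv_xF=y\,(Ay^2+2Bxy+3Cx^2)$; for $(1,3)$ and $(1,4)$ it is $\pdv_yF=x\,(3Ay^2+2Bxy+Cx^2)$. Their quadratic factors have discriminant $4(B^2-3AC)$, and $B^2\ge 3AC$ is exactly $mn\le 5m+5n-13$. Necessity then follows because these first partials are homogeneous cubic G{\aa}rding polynomials, hence stable by Proposition~\ref{H3}. This is the condition ``$f_1,f_2$ real stable'' in Lemma~\ref{lem:quartic-reduced}.

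For sufficiency you must still prove, assuming $B^2\ge 3AC$, two things. First, the remaining first partial is real-rooted; for $(0,3)$ this is the full cubic $4\binom n4y^3+3Axy^2+2Bx^2y+Cx^3$. Second, the root orderings $r(f_1)\le r(f_3)$ and $r(f_2)\le r(f_4)$ of Lemma~\ref{lem:quartic-reduced} hold. For $(1,3)$ this is immediate. Either $q$ is real-rooted, so $F$ is stable, or $q>0$ off the origin. In the latter case $\widehat\C_F=\Gamma_2^+$, which lies in $\C_{\pdv_xF}\cap\C_{\pdv_yF}$ because both partials factor into linear forms with nonnegative coefficients.
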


The proof of Theorem~\ref{thm:rank4-list} is contained in Appendix~\ref{bgarding}.

Wagner proved that every rank-three matroid is Rayleigh \cite{WagnerRank3}.
The rank-four case is substantially less understood. By Corollary \ref{cor:consequence}, Theorem~\ref{thm:rank4-list} 
provides explicit rank-four Rayleigh families. For example, for $x\in \R^m,$ $\y\in \R^n$, 
\[
\mathcal B^{1,3}_{m,n}(\x,\y)
=
\sigma_1(\mathrm{x})\sigma_3(\mathrm{y})
+
\sigma_2(\mathrm{x})\sigma_2(\mathrm{y})
+
\sigma_3(\mathrm{x})\sigma_1(\mathrm{y})
\]
is Rayleigh whenever
\[
mn\le 5m+5n-13.
\]
This strengthens the known Lorentzian property for these matroid basis
generating polynomials. It is also distinct from stability: for the same
family, stability is equivalent to
\[
7mn\le 23m+23n-55.
\]
Thus, \(\mathcal B^{1,3}_{5,n}\) is \(B\)-G{\aa}rding, hence Rayleigh, for all
\(n\ge 4\), but is stable only for \(n\le 5\). The same recursive MRS
method works beyond this example, including to higher ranks and other
block-symmetric families.

\subsection{Fano matroid}

The Fano matroid $F_{7}$ is a rank-$3$ matroid on $7$ elements with seven nontrivial flats
\[
\mathcal{L}=\{\{1,2,3\},\{1,4,5\},\{1,6,7\},\{2,4,6\},\{2,5,7\},\{3,4,7\},\{3,5,6\}\}.
\]
Its BSGF, SSGF and CSGF are given below
\[
    B_{F_7}(\mathbf w)
= \sigma_3(\mathbf w)
- \sum_{L\in\mathcal L} w_L,
\quad S_{F_7}(\mathbf w)
= \sum_{k=3}^{7} \sigma_k(\mathbf w)
- \sum_{L\in\mathcal L} w_L,
\]
\[
   C_{F_7}(\mathbf w)
= \sigma_7(\mathbf v)
+ 2\sigma_2(\mathbf v)
- 6\sigma_1(\mathbf v)
+ 13 - \sum_{L\in\mathcal L} v_L
- \sum_{C\in\mathcal C_4} v_C,  
\]
where $v_i=w_i+1$, and $\mathcal{C}_4 = \{\, [7]\setminus L :\  L \in \mathcal{L} \}.$

Erickson, in his master thesis \cite{EricksonNegativeCorrelation}, proved that rank 3 matroids are $I$-Rayleigh, which implies that $F_7$ is $I$-Rayleigh. We show that $F_7$ is $C$-\gar{} and hence recover the $I$-Rayleigh property.

\begin{thm}\label{F7csgf} 
The cospanning-set generating function of $F_7$ is Gårding.
\end{thm}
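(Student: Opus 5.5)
We need a plan for showing that $C_{F_7}$ is G{\aa}rding. The natural tool is the proposition characterizing $C$-G{\aa}rding matroids via a single-element deletion. For any $e\in E$, $C_{F_7}$ is G{\aa}rding if and only if $F_7\setminus e$ is $C$-G{\aa}rding and $\xi_e(F_7)=C_{F_7\setminus e}-C_{F_7/e}\vtl C_{F_7\setminus e}$. By the symmetry of $F_7$ we may take $e=7$.

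The deletion $F_7\setminus 7$ is the matroid $M(K_4)$ on six elements, so it is $C$-G{\aa}rding by Theorem~\ref{thm:Any-matroid-on6elements} together with duality; its dual is again six-element, so its SSGF is G{\aa}rding. The contraction $F_7/7$ is a rank-$2$ matroid on six elements with three parallel pairs, namely $\{1,6\},\{2,5\},\{3,4\}$. Both $C_{F_7\setminus 7}$ and $C_{F_7/7}$ can be written explicitly from the formula in Theorem~\ref{thm:SSGF}, applied to the duals. The difference $\xi:=\xi_7(F_7)$ then has nonnegative coefficients by Lemma~\ref{lem:minorcomparison}.

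It remains to check the domination $\xi\vtl f$, where $f:=C_{M(K_4)}$, that is, $\xi>0$ on $\C_f$. By Theorem~\ref{thm:iffdominate} this is equivalent to $f\,y-\xi\in\G$, and I would prove positivity directly. I plan to exploit the $S_4$-type symmetry of $M(K_4)$ (automorphisms permuting the three perfect matchings $\{1,6\},\{2,5\},\{3,4\}$) as follows.

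\textbf{Step 1: locate $\C_f$.} Since $f$ has nonnegative coefficients with $f(\mathbf 0)>0$, we have $\overline{\Gamma^+_6}\subset\C_f$. However, $\C_f$ extends into negative territory, so positivity of $\xi$ on the orthant alone is not enough.

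\textbf{Step 2: reduce to the boundary.} Use the monotonicity in Corollary~\ref{cor:domination-consequences}(2) in reverse. It suffices to show $\xi\ge 0$ on $\partial\C_f$ and that $\xi$ does not vanish identically along any positive ray. The argument mirrors the proof of Theorem~\ref{thm:dominate-position-general-thm}: if $\xi<0$ somewhere in $\C_f$ while $\xi\ge 0$ on $\partial\C_f$, the NRT property and the multi-affine structure yield a contradiction.

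\textbf{Step 3: the boundary inequality.} Show $\xi\ge 0$ on $\partial\C_f\subset\{f=0\}\cap\overline{\C_f^{(1)}}$, which is where the main difficulty lies. My first attempt would be to write $\xi$ modulo $f$: find multi-affine $a,b$ such that $\xi=a\,f+b$, with $b\ge0$ on $\overline{\C_f^{(1)}}$. Here $b$ should be built from first and second partial derivatives of $f$, which are positive on $\C_f^{(1)}$ by Lemma~\ref{lem:If--is-2}, in the spirit of the identity $(y_i-\pdv_i)f$.

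If no clean certificate exists, I would fall back on symmetrization. First restrict to the three-block variables $(x_1,x_2,x_3)$ assigned to the matching pairs, via Proposition~\ref{prop:symmetry}, which pushes a boundary extremum to block-symmetric points. Then verify the resulting inequality in few variables by an explicit computation, checking the MRS condition along positive rays.
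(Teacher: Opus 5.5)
Your reduction is sound, and in fact it is equivalent to the paper's. Write $F=C_{F_7}=(w_7+1)f-\xi$ with $f=C_{F_7\setminus 7}$. Then the condition $\xi\ge 0$ on $\pdv\C_f$ is exactly the paper's condition $F\le 0$ on $\pdv\C_{\pdv_7F}$, which comes from Proposition~\ref{prop:testing method}. Your Step~2 also works: $\C_f$ is NRT, $\xi$ is multi-affine with nonnegative coefficients, and harmonicity gives strictness. One small slip: $f(\mathbf 0)=0$, since the ground set of $M(K_4)$ is dependent, so only $\Gamma_6^+\subset\C_f$, not its closure.

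The genuine gap is Step~3, which is the entire content of the theorem. You propose two strategies but carry out neither, and the fallback does not work:
\begin{itemize}
\item Proposition~\ref{prop:symmetry} requires the polynomial (here $fy-\xi$) to be fully symmetric within each block. Swapping the two elements of one matching pair, say $1\leftrightarrow 6$, is not an automorphism of $M(K_4)$: it sends the triangle $\{1,2,3\}$ to $\{2,3,6\}$, which is not a circuit. So neither $f$ nor $\xi$ is symmetric within $\{1,6\},\{2,5\},\{3,4\}$, and no reduction to three block variables is available.
\item Checking MRS along positive rays is only a necessary condition (Remark~\ref{rem:MRSnecessary}). It cannot certify a multivariate inequality.
\end{itemize}

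The missing idea is the Rayleigh-difference identity the paper uses. With $F$ as above,
\[
R_{6,7}:=\pdv_6F\,\pdv_7F-F\,\pdv_{67}F=\xi\,\pdv_6 f-f\,\pdv_6\xi .
\]
This quantity is independent of $w_6$ and $w_7$. On $\{f=0\}$ it equals $\xi\,\pdv_6 f$, and $\pdv_6 f>0$ on a dense part of $\pdv\C_f$. So the certificate you want is $\pdv_6 f\cdot\xi=\pdv_6\xi\cdot f+R_{6,7}$, not $\xi=af+b$ with multi-affine $a,b$. The remainder $R_{6,7}=BC-AD$ is not multi-affine, and one must prove $R_{6,7}\ge 0$ on the boundary in the five remaining variables.

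The paper does this in the shifted variables $v=w+1$. It sets $\alpha=v_1$, $\beta_1=v_2+v_3$, $\beta_2=v_4+v_5$, $\gamma_1=v_2v_3$, $\gamma_2=v_4v_5$, and writes $R_{6,7}=P\beta_1^2+Q\beta_1+R$. It then shows $P>0$ and
\[
Q^2-4PR=-4(\alpha-1)^2(\beta_2-\gamma_2-1)^2\Delta'\le 0 .
\]
This uses the boundary constraints $\beta_i^2\ge 4\gamma_i$, $A=\pdv_{67}F>0$, and $\alpha\gamma_i>1$; the last comes from positivity of $\pdv_{2367}F$ and $\pdv_{4567}F$. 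Without this computation, or an equivalent explicit verification, your proposal reduces the theorem to its hard step but does not prove it.
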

See Appendix C for a proof of Theorem~\ref{F7csgf}.

On the other hand, we have the following result.
\begin{prop}
\label{prop:F7-rayleigh-not-garding}
For the Fano matroid $F_{7}$, its basis generating function
$B_{F_{7}}$ and spanning-set generating function $S_{F_{7}}$ are Rayleigh,
but not G{\aa}rding.
\end{prop}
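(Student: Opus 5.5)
We need to show two things for $F_7$: $B_{F_7}$ and $S_{F_7}$ are Rayleigh, and neither is G{\aa}rding.

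\textbf{Rayleigh property.} For $B_{F_7}$ we simply invoke Wagner's theorem \cite{WagnerRank3} that every rank-$3$ matroid is Rayleigh.

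For $S_{F_7}$ we plan to pass through the cospanning side. Theorem~\ref{F7csgf} gives that $C_{F_7}$ is G{\aa}rding, so $C_{F_7}$ is Rayleigh by Lemma~\ref{lem:SCG-implies-Rayleigh}(1). Now $S_{F_7}=C_{F_7^*}$, and the dual $F_7^*$ is a rank-$4$ matroid, so we cannot just rerun the previous argument. Instead we use $S_M=\Inv(C_M)\big|$-type relations together with Remark~\ref{inversion keeps rayleigh}. The cleanest route is to note that $\Inv(S_M)=I_{M^*}$ and that Rayleigh is preserved under $\Inv$. This reduces the claim to showing $F_7^*$ is $I$-Rayleigh, equivalently that $C_{F_7^*}=S_{F_7}$ is Rayleigh, which is circular.

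We therefore plan a direct verification instead. By the symmetry group of $F_7$ (transitive on ordered pairs of points), it suffices to check the Rayleigh difference $\partial_1 S\,\partial_2 S-S\,\partial_{12}S\ge 0$ on $\overline{\Gamma_7^+}$ for the single pair $\{1,2\}$, where $\{1,2,3\}$ is a line. We write $S=w_1w_2A+w_1B+w_2B'+D$, so the difference becomes $BB'-AD$, a polynomial in $w_3,\dots,w_7$. We then expand it and aim to exhibit nonnegative coefficients, possibly after grouping terms, as Wagner does for rank $3$. If pure coefficient positivity fails, we fall back on Erickson's method \cite{EricksonNegativeCorrelation}, i.e. a sum-of-products decomposition in the remaining variables. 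We expect this expansion to be the main computational burden.

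\textbf{Failure of the G{\aa}rding property.} Since $B_{F_7}=(S_{F_7})^{\mathrm{bot}}$, Proposition~\ref{prop:truncateLet--be} shows that if $S_{F_7}$ were G{\aa}rding then so would be $B_{F_7}$. It therefore suffices to prove that $B_{F_7}$ is not G{\aa}rding.

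By Remark~\ref{rem:MRSnecessary}, it suffices to find $\mathbf{a}>0$ and $\mathbf{b}$ such that $p(t)=B_{F_7}(\mathbf{b}+t\mathbf{a})$ fails MRS. Because $B_{F_7}$ is homogeneous of degree $3$, Proposition~\ref{H3} gives an equivalent criterion: G{\aa}rding would force real stability. It is classically known that $F_7$ does not have the half-plane property \cite{COSW04}, so $B_{F_7}$ is not stable. This gives the result at once, and an explicit witness is only needed if we want a self-contained check.

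For such a witness, a natural choice is a specialization constant on a line: set $w_1=w_2=w_3=x$ and the other four variables to $y$. This yields a bivariate cubic $h(x,y)$, and we would exhibit a nonreal root of $h(t,1)$ for a suitable positive shift. We expect this verification to be routine once Proposition~\ref{H3} is in hand.
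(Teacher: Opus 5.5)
The genuine gap is the Rayleigh property of $S_{F_7}$, which is the substantive half of the positive claim. You rightly abandon the duality/inversion route as circular: it only converts the question into $I$-Rayleigh for the rank-$4$ matroid $F_7^*$, which Erickson's rank-$3$ result does not cover. What remains, however, is only a promise to expand $BB'-AD$, so no inequality is actually proved.

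Moreover, the outcome you aim for first does not occur, because the difference does not have nonnegative coefficients. In your labelling, take the pair $\{1,2\}$ with third point $3$; the other lines through $3$ are $\{3,4,7\}$ and $\{3,5,6\}$. Then $BB'-AD$ contains the monomial $-2w_4w_5w_6w_7$.

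What is needed is the decomposition the paper carries out for the equivalent pair $\{6,7\}$. Regard the difference as a quadratic in the variable of the third point on the line, $BB'-AD=C_2w_3^2+C_1w_3+C_0$. Observe that $C_2$ and $C_1$ have nonnegative coefficients, and complete a square in the constant term:
\[
C_0=(w_4w_7-w_5w_6)^2+(w_4w_7)^2(w_5+w_6+w_5w_6)+(w_5w_6)^2(w_4+w_7+w_4w_7).
\]
Together with the $2$-transitivity of $\mathrm{Aut}(F_7)$, which you invoke correctly, this proves $S_{F_7}$ is Rayleigh. Without it, that part of your argument is only a plan.

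The remaining parts are sound. For $B_{F_7}$, citing Wagner's theorem that rank-$3$ matroids are Rayleigh is legitimate; the paper instead deduces it from $S_{F_7}$ via bottom truncation.

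For the failure of the G{\aa}rding property you take a different and more economical route than the paper. The paper exhibits MRS failure separately for $B_{F_7}$ and $S_{F_7}$ along $\mathbf{w}_0+t\mathbf{1}$ with $\mathbf{w}_0=(0,1,-1,1,-1,-1,1)$; for example $B_{F_7}(\mathbf{w}_0+t\mathbf{1})=28t^3-12t+4$. You instead reduce $S_{F_7}$ to $B_{F_7}$ via Proposition~\ref{prop:truncateLet--be}, which is valid since $S_{F_7}$ has nonnegative coefficients. You then use Proposition~\ref{H3} together with the non-HPP property of $F_7$ from \cite{COSW04}. This trades computation for an external citation.

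Your line specialization makes this half self-contained. Indeed $h(x,y)=B_{F_7}(x,x,x,y,y,y,y)=4y(3x^2+3xy+y^2)$, which is not stable. Even without Proposition~\ref{H3}, one sees directly that $\{h>0\}=\{y>0\}\not\subset\{x+y>0\}=\mathcal{C}_{\partial_y h}$, since $\partial_y h=12(x+y)^2$. This contradicts the component inclusion under differentiation that any G{\aa}rding pullback of $B_{F_7}$ must satisfy.
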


\begin{proof}

Let $\w_{0}=(0,1,-1,1,-1,-1,1)$ and $\mathbf{1}=(1,\dots,1)$.

For the basis-set generating function,
\[
h(t):=B_{F_{7}}(\w_{0}+t\mathbf{1})=28t^{3}-12t+4.
\]
Its derivatives are
\[
h'(t)=84t^{2}-12, \qquad h''(t)=168t.
\]
A direct computation shows that it violates the monotone root sequence (MRS) property.
Hence $B_{F_{7}}$ is not G{\aa}rding.

Similarly,
\[
g(t):=S_{F_{7}}(\w_{0}+t\mathbf{1})
= t^{7}+7t^{6}+18t^{5}+20t^{4}+t^{3}-21t^{2}-4t+6,
\]
and a direct computation of the largest real roots of $g^{(k)}(t)$ shows
that its root sequence is not monotonic nonincreasing. Thus, $S_{F_{7}}$ is not G{\aa}rding.

\medskip

To establish the Rayleigh property, it suffices to verify one representative pair, say $(6,7)$. Let
\[
\hat{E} := \prod_{i=2}^{5}(1+w_i).
\]
A direct expansion gives
\[
\Delta_{67}(S_{F_{7}})
= C_2\,w_1^2 + C_1\,w_1 + C_0,
\]
where
\[
C_2 = \hat{E}\bigl(w_2+w_3+w_4+w_5+w_2w_3+w_4w_5\bigr),\qquad
C_1 = 2\hat{E}(w_2w_3+w_4w_5),
\]
and
\[
\begin{aligned}
C_0
&= (w_2w_3)^2(1+w_4)(1+w_5)
+ (w_4w_5)^2(1+w_2)(1+w_3)
- 2\,w_2w_3w_4w_5 \\
&= (w_2w_3 - w_4w_5)^2 + (w_2w_3)^2\bigl(w_4 + w_5 + w_4w_5\bigr)
+ (w_4w_5)^2\bigl(w_2 + w_3 + w_2w_3\bigr).
\end{aligned}
\]
For $w_e\ge 0$, we have   $\Delta_{67}(S_{F_{7}})\ge 0$.
All other pairs $(i,j)$ follow by symmetry, so $S_{F_{7}}$ is Rayleigh.
Since $B_{F_{7}}$ is the lowest-degree homogeneous component of $S_{F_{7}}$,
and the Rayleigh property is preserved by taking bottom homogeneous components,
it follows that $B_{F_{7}}$ is also Rayleigh.

\medskip

Therefore, $B_{F_{7}}$ and $S_{F_{7}}$ are Rayleigh but not G{\aa}rding.
\end{proof}

We summarize the results in the following theorem.

\begin{thm}\label{F7summary}
Let $F_7$ be the Fano matroid and $F_7^*$ its dual. Then:
\begin{itemize}
\item $F_7$ is $C$-G{\aa}rding.
\item $F_7^*$ is $S$- and $B$-G{\aa}rding.
\item Both $F_7$ and $F_7^*$ satisfy the properties $C$-, $S$-, $B$-, and $I$-Rayleigh.
\end{itemize}
\end{thm}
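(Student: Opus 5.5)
Theorem~\ref{F7summary} is a summary statement, and I would assemble it from results already established together with matroid duality. There are three bullets, and I would handle them in order.

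The first bullet, that $F_7$ is $C$-G{\aa}rding, is exactly Theorem~\ref{F7csgf}, so nothing further is needed for it.

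For the second bullet I would use the duality identity $S_{M^*}=C_M$, recorded before Lemma~\ref{lem:SCG-implies-Rayleigh}. It gives $S_{F_7^*}=C_{F_7}$, which is G{\aa}rding by the first bullet, so $F_7^*$ is $S$-G{\aa}rding. Then Lemma~\ref{lem:SCG-implies-Rayleigh}\,(\ref{enu:necessary condition}) applies: since $B_{F_7^*}=(S_{F_7^*})^{\mathrm{bot}}$ and $S_{F_7^*}$ has nonnegative coefficients, Proposition~\ref{prop:truncateLet--be} shows that $B_{F_7^*}$ is G{\aa}rding.

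For the third bullet I would first deduce the Rayleigh properties of $F_7$.
\begin{itemize}
\item $C_{F_7}$ is G{\aa}rding, multi-affine and has nonnegative coefficients, so it lies in $\G_+$. Lemma~\ref{lem:GardingToRayleigh}\,(\ref{enu:gartoRay2}) then gives that $F_7$ is $C$-Rayleigh.
\item By Lemma~\ref{lem:SCG-implies-Rayleigh}\,(2), $I_{F_7}$ is Rayleigh.
\item Proposition~\ref{prop:F7-rayleigh-not-garding} gives that $S_{F_7}$ and $B_{F_7}$ are Rayleigh.
\end{itemize}
Taken together, $F_7$ is $C$-, $S$-, $B$- and $I$-Rayleigh.

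For the dual $F_7^*$, duality again carries the statements across:
\begin{itemize}
\item $S_{F_7^*}=C_{F_7}$ is Rayleigh.
\item $C_{F_7^*}=S_{F_7}$ is Rayleigh by Proposition~\ref{prop:F7-rayleigh-not-garding}.
\item $I_{F_7^*}$ is Rayleigh by Lemma~\ref{lem:SCG-implies-Rayleigh}\,(2), since $C_{F_7^*}$ is.
\item $B_{F_7^*}$ is Rayleigh by Lemma~\ref{lem:SCG-implies-Rayleigh}\,(4). Alternatively, $B_{F_7^*}=\Inv(B_{F_7})$, and $\Inv$ preserves the Rayleigh property by Remark~\ref{inversion keeps rayleigh}.
\end{itemize}

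The only substantive inputs are Theorem~\ref{F7csgf} (proved in Appendix C) and the explicit computation of $\Delta_{67}(S_{F_7})$ in Proposition~\ref{prop:F7-rayleigh-not-garding}; everything else is bookkeeping. The one point where care is needed is checking the direction of the duality identities: $S_{M^*}=C_M$ and $C_{M^*}=S_M$, and $B_{M^*}$ is the combinatorial inversion of $B_M$. This matters because the G{\aa}rding property is asymmetric here: $C_{F_7}$ is G{\aa}rding while $S_{F_7}$ is not. In particular we must not claim that $F_7^*$ is $C$-G{\aa}rding, since $C_{F_7^*}=S_{F_7}$.
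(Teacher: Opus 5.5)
Your proposal is correct and matches the paper's own argument. The paper gives no separate proof of Theorem~\ref{F7summary}; it simply assembles Theorem~\ref{F7csgf}, Proposition~\ref{prop:F7-rayleigh-not-garding}, the duality $S_{M^*}=C_M$, bottom truncation via Proposition~\ref{prop:truncateLet--be}, and Lemma~\ref{lem:SCG-implies-Rayleigh}, exactly as you do.

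Your caution about duality is well placed. You correctly obtain that $B_{F_7^*}=(S_{F_7^*})^{\mathrm{bot}}=(C_{F_7})^{\mathrm{bot}}$ is G{\aa}rding. By contrast, the $C_M$ case of part~(5) of Lemma~\ref{lem:SCG-implies-Rayleigh}, read literally, would assert that $B_{F_7}$ is G{\aa}rding, which contradicts Proposition~\ref{prop:F7-rayleigh-not-garding}.
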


\begin{rem}
$F_7$ exhibits a subtle distinction between the Rayleigh and G{\aa}rding properties.
\end{rem}

\begin{cor}
\label{thm:Fano}$F_{7}$ is an excluded minor of S-\gar{} matroids.
$F_{7}^{*}$ is an excluded minor of C-Gårding matroids.
\end{cor}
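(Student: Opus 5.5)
The corollary has two halves: (i) $F_7$ is not $S$-G{\aa}rding but every proper minor of $F_7$ is; (ii) the dual statement for $F_7^*$ and $C$-G{\aa}rding. The non-G{\aa}rding part of (i) is already Proposition~\ref{prop:F7-rayleigh-not-garding}. So the work is to show that every single-element deletion and contraction of $F_7$ is $S$-G{\aa}rding. Minor-closedness (Proposition~\ref{prop:minor-closed}) then carries the property down to all proper minors.

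For the minors I will use the symmetry of $F_7$. Its automorphism group acts transitively on the ground set, so up to isomorphism there is only one deletion $F_7\backslash e$ and one contraction $F_7/e$. Both have six elements, so Theorem~\ref{thm:Any-matroid-on6elements} shows directly that each is $S$-G{\aa}rding. Concretely, $F_7\backslash e\cong M(K_4)$, and $F_7/e$ is the rank-$2$ matroid on six elements made of three parallel classes of size two. The latter has $U_{2,3}$ as its simplification, and it is also covered by the series--parallel part of Theorem~\ref{thm:main}; this gives a second check that does not depend on Appendix~\ref{6elements}. Hence every proper minor of $F_7$ is $S$-G{\aa}rding, while $F_7$ itself is not, so $F_7$ is an excluded minor.

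For (ii) I will dualise. Since $C_{N}=S_{N^*}$, a matroid $N$ is $C$-G{\aa}rding exactly when $N^*$ is $S$-G{\aa}rding. Duality also swaps deletion and contraction: $(F_7^*)\backslash e=(F_7/e)^*$ and $(F_7^*)/e=(F_7\backslash e)^*$. Therefore $F_7^*$ is not $C$-G{\aa}rding, because $C_{F_7^*}=S_{F_7}$ is not G{\aa}rding. Each single-element minor of $F_7^*$ has $C$-generating function equal to the $S$-generating function of a single-element minor of $F_7$, and these were shown G{\aa}rding in the previous step. Closing under further minors again uses Proposition~\ref{prop:minor-closed}, this time for $C$-G{\aa}rding matroids.

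The only step with real content is the base verification, which rests on the appendix: the six-element classification (Theorem~\ref{thm:Any-matroid-on6elements}) together with the MRS computation in Proposition~\ref{prop:F7-rayleigh-not-garding}. The one point that needs care is the transitivity of $\mathrm{Aut}(F_7)$, which reduces everything to a single deletion and a single contraction. This follows from the transitive action of $\mathrm{PGL}(3,2)$ on the points of the Fano plane. I expect no genuine obstacle beyond citing these results.
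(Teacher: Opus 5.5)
Your proposal is correct and follows essentially the same argument as the paper, which gives this corollary without a written proof. That argument combines three ingredients: non-G{\aa}rdingness of $S_{F_7}=C_{F_7^*}$ from Proposition~\ref{prop:F7-rayleigh-not-garding}, Theorem~\ref{thm:Any-matroid-on6elements} for all proper minors, and the duality $C_N=S_{N^*}$ for the $F_7^*$ half. The transitivity of $\mathrm{Aut}(F_7)$ and the explicit identifications $F_7\backslash e\cong M(K_4)$ and $F_7/e$ are harmless extras, since every proper minor of $F_7$ or $F_7^*$ has at most six elements and is therefore covered directly, with the $C$-case following because duals of matroids on at most six elements also have at most six elements.
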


\begin{example}
The binary matroid $S_{8}$ is not negatively correlated \cite[pp.~495]{seymour_combinatorial_1975}, and therefore is neither Rayleigh nor $S/C$-\gar{} by Lemma~\ref{lem:SCG-implies-Rayleigh}.
In fact, $S_{8}$ contains $F_{7}$ and its dual as minors. Similarly, the matroid $AG(3,2)$, which is self dual and has both $F_{7}$ and $F_7^*$
as minors. By Corollary \ref{thm:Fano}, $AG(3,2)$ is neither $C$-Gårding
nor $S$-Gårding.
\end{example}

\subsection{Conjectures and Remarks}
Wagner \cite{MR2428906} raised several conjectures on $X$-Rayleigh
matroids including the following.
\begin{conjecture}
Any regular matroid is S- and I-Rayleigh. \label{conj:Any-regular-matroid}
\end{conjecture}

Notice that regular matroids include graphic and cographic matroids.
A positive answer to Conjecture \ref{conj:Any-regular-matroid} would
confirm a conjecture of Kahn \cite{kahnNormalLawMatchings2000} and
Grimmett-Winkler \cite{grimmettNegativeAssociationUniform2004}, which
asserts that the (unrooted) spanning-forest distribution on a graph
is negatively correlated. 
\begin{conjecture}
\label{conj:KGWconj}For any graph $G=(V,E)$,  the SFGF of $G$,  $F_{G}(\mathbf{w})$,
is Rayleigh.
\end{conjecture}

It is instructive to contrast $F_{G}(\w)$ with rooted spanning-forest
generating functions. The latter admits a determinantal representation
via the all-minors Matrix--Tree Theorem \cite{ChaikenMatrixTree1982},
and is therefore real-stable, hence strongly Rayleigh. By contrast,
the generating function of unrooted spanning forests generally fails
to be stable. Lemma \ref{lem:SCG-implies-Rayleigh} indicates that,
should the CSGF of a graphic matroid be Rayleigh, this Rayleigh behavior
propagates naturally to the unrooted spanning forest generating function
via inversion.

It is known that the basis generating function $B_{M}$ of a matroid
$M$ is a Lorentzian polynomial and plays a central role in the Lorentzian
approach to matroid theory developed by Brändén-Huh \cite{BrandenHuh20}.
The Lorentzian property successfully captures Hodge-theoretic and
log-concavity phenomena associated with bases. However, Lorentzian
polynomials are in general not Rayleigh: they satisfy only the weaker
c-Rayleigh inequalities \cite[Sec 4.5]{BrandenHuh20}.  In recent work with Shouda Wang~\cite{Fang-Ma-Wang}, we investigate the Gårding property for matroids on up to eight elements. 

In addition,
Lorentzian theory does not account for the Rayleigh behavior of non-homogeneous
generating functions such as SSGFs and CSGFs considered here. 
The c-Rayleigh inequalities were also studied in \cite{HuhCorrelationBoundsFields2021a}, where Huh--Schr\"oter--Wang gave a remarkable interpretation of the basis generating function in terms of toric intersection numbers, leading to various correlation bounds for matroids.

We conclude with several open problems suggested by the results of this
section, which will be investigated in future work.

A fundamental objective is to obtain a combinatorial characterization of
\(C\)- and \(S\)-\(\gar{}\) matroids.

Motivated by Conjecture~\ref{conj:Any-regular-matroid} and
Conjecture~\ref{conj:KGWconj}, we seek to identify regular and graphic
matroids that satisfy \(\gar{}\) properties. Since graphic matroids are
not \(S\)-\(\gar{}\) (see Example~\ref{exam:K5}), we are led to the
following question:

\begin{problem}\label{first}
Are graphic matroids \(C\)-\(\gar{}\)?
\end{problem}

An affirmative answer to Problem~\ref{first} would imply that all graphic
matroids are \(C\)-\(\gar{}\). In particular, the unrooted spanning forest
generating function of any graph would be Rayleigh. Consequently,
Conjecture~\ref{conj:KGWconj} would follow.

On the other hand, there exist generating functions in matroid theory
that are Rayleigh but fail to be Gårding. The precise boundary between
these two notions remains unclear and warrants further investigation.

%Another problem is related to the \gar{} property of the diagonal specialization of generating functions. 
%\begin{problem}
%Characterize the class of matroids with $S_M(t,t,\cdots,t)\in\GS[t]$.
%\end{problem}
%We speculate that any matroid belongs to this class.  In fact, with computer, we tested all matroids with 12 elements or less (1003638 in total) and confirm that all the diagonal specializations of SSGF and CSGF are \gar{}.  

\section{Ultra log-concave sequences and \gar{}-Rayleigh measures }\label{sec:ULCpolynomials-and}
Log-concavity is one of the most persistent regularity phenomena in
combinatorics and probability. Classically, the logarithmic
concavity of the coefficient sequences can be derived through Newton's inequalities 
and hence ultimately from real-rootedness. More recently, Hodge-theoretic
methods and Lorentzian polynomials have achieved great success in the
study of log-concave sequences in combinatorics \cites{adiprasitoHodgeTheoryCombinatorial2018}{HuhCorrelationBoundsFields2021a}{BrandenHuh20}{ALOVi}{ALOVii}{ALOViii}.

In this section, we show that the Gårding framework provides a further
geometric extension of log-concavity. Univariate  specializations of Gårding polynomials along positive affine
directions yield ultra log-concave sequences, thereby identifying ultra
log-concavity as a consequence of the Gårding property rather than stability.

We first recall the relevant notions 
\begin{defn}
Let $\{a_{k}\}$ be a finite or infinite sequence of nonnegative real
numbers. $\{a_{k}\}$ is said to \emph{have no internal zeros} if
for any $r_{1}<r_{2}<r_{3}$ with $a_{r_{1}},a_{r_{3}}$ positive,
then $a_{r_{2}}>0$. $\{a_{k}\}$ is called \emph{log-concave} if
$\{a_{k}\}$ has no internal zeros and satisfies 
\[
a_{k}^{2}\geq a_{k+1}a_{k-1}.
\]
A finite sequence $\{a_{k}\}_{k=0}^{d}$ is called \emph{ultra log-concave
(ULC)} if $\{a_{k}/\binom{d}{k}\}_{k=0}^{d}$ is log-concave. 
\end{defn}

Suppose that $\{a_{k}\}_{k=0}^{d}$ is a sequence of nonnegative real
numbers and consider 
\[
f(x)=\sum_{k=0}^{d}a_{k}x^{k}\in\mathbb{R}[x].
\]
If $f(x)$ is real--rooted, then the classical Newton inequalities
imply that $\{a_{k}\}_{k=0}^{d}$ is ultra log--concave, which immediately
implies log--concavity. However, ultra log-concavity alone does not
imply real-rootedness. On the other hand, define the polarization 
\begin{equation}
p(\x)=\pol f:=\sum_{k=0}^{d}a_{k}\frac{\sigma_{k}(\x)}{\binom{d}{k}}.\label{eq:Pemantlemeasure}
\end{equation}
Pemantle \cite[Theorem 2.7]{Pemantle2000} observed that $p(\x)$
in the form of (\ref{eq:Pemantlemeasure}) is Rayleigh if and only
if $\{a_{k}\}_{k=0}^{d}$ forms a ULC sequence. 

If $f(x)\in\GS_{+}[x]$, then by Theorem \ref{thm:polarization_keeps_garding},
$p(\x)\in\G_{+}[\x]$. See also \cite[Theorem 1.2]{Lin23JFA}. Hence,
by Lemma \ref{lem:GardingToRayleigh}, $p(\x)$ is Rayleigh.
Pemantle's result indicates that $\{a_{k}\}_{k=0}^{d}$ is ULC.
Since strictly positive affine transforms preserve the Gårding property,
we have the following theorem. 
\begin{thm}
\label{thm:ULCseq}Suppose that $g(\x)\in\GS_{n}$. Let $\a\in\Gamma_{n}^{+},\mathbf{b}\in\overline{\C_{g}}$. Then  $g(\mathbf{a}t+\mathbf{b})=\sum_{i=0}^da_kt^k$ has a ULC coefficient sequence. If $\sum_{k=0}^{d}a_{k}t^{k}\in\GS_{+}[t]$, then $\{a_{k}\}_{k=0}^{d}$
is ULC. 
\end{thm}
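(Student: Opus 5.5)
The plan is to reduce the first assertion to the second, and then prove the second by polarization, as sketched in the paragraph preceding the statement.

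\emph{Second assertion.} Let $f(t)=\sum_{k=0}^{d}a_{k}t^{k}\in\GS_{+}[t]$. Since $f$ has nonnegative coefficients and $\deg f=d$, Theorem~\ref{thm:polarization_keeps_garding} (with $\kappa=d$) is the tool to invoke. If $f$ is constant the claim is trivial. Otherwise $f$ is NRT in one variable: by Lemma~\ref{lem:NRTforgeneralgard}, a univariate nonconstant element of $\mathcal{G}=\GS$ has no redundant variable. Hence
\[
p(\x)=\pol f=\sum_{k}a_{k}\sigma_{k}(\x)/\binom{d}{k}
\]
is a multi-affine Gårding polynomial with nonnegative coefficients, i.e.\ $p\in\G_{+}$. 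By Lemma~\ref{lem:GardingToRayleigh}\,(\ref{enu:gartoRay2}), $p$ is Rayleigh. Pemantle's criterion \cite[Theorem 2.7]{Pemantle2000} then says that a symmetric multi-affine polynomial of the form (\ref{eq:Pemantlemeasure}) is Rayleigh exactly when $\{a_{k}\}$ is ULC, including the no-internal-zeros condition. This gives the second assertion.

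\emph{First assertion.} Set $\mu(t)=\a t+\b$. Since $\a\in\Gamma_{n}^{+}$, the map $\mu:\R\to\R^{n}$ is strictly positive affine, so $h(t):=g(\a t+\b)\in\GS_{1}$ by Remark~\ref{composition} and Definition~\ref{def:general garding}. It remains to show $h$ has nonnegative coefficients, and then apply the second part.

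\emph{Main obstacle: nonnegativity of the coefficients of $h$.} Since $\b\in\overline{\C_{g}}$, I would first treat $\b\in\C_{g}$. Here $0\in\C_{h}=\mu^{-1}(\C_{g})$. Theorem~\ref{thm:diff-closure} gives $\C_{h}\subset\C_{h^{(k)}}$ for every $k$, so each nonzero $h^{(k)}(0)>0$ on $\C_{h}$. The coefficients $a_{k}=h^{(k)}(0)/k!$ are therefore nonnegative, and $h\in\GS_{+}[t]$.

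For a boundary point $\b\in\partial\C_{g}$, take $\b_{\epsilon}=\b+\epsilon\a\in\C_{g}$, which holds by PRT. The coefficients of $g(\a t+\b_{\epsilon})$ are nonnegative, and they converge as $\epsilon\to0$ to those of $h$. Hence $h$ has nonnegative coefficients. The ULC inequalities $a_{k}^{2}\binom{d}{k}^{-2}\ge a_{k-1}a_{k+1}\binom{d}{k-1}^{-1}\binom{d}{k+1}^{-1}$ pass to the limit.

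The delicate point is the no-internal-zeros condition in the limit. To avoid it, I would argue directly that $h\in\GS_{+}[t]$. Either $h\equiv0$, or $h$ is a nonzero element of $\GS_{1}$ with $0\in\overline{\C_{h}}$ and nonnegative coefficients. In the latter case the second assertion applies to $h$ itself, which yields the no-internal-zeros condition as well.
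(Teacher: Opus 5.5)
Your proposal is correct and follows the paper's route. You polarize via Theorem~\ref{thm:polarization_keeps_garding}, obtain the Rayleigh property from Lemma~\ref{lem:GardingToRayleigh}, conclude ULC by Pemantle's criterion, and reduce the first assertion to the second through the strictly positive affine pullback $t\mapsto \a t+\b$. You also supply two details the paper leaves implicit, and both arguments are sound: the NRT hypothesis needed to invoke the polarization theorem, and the nonnegativity of the coefficients of $g(\a t+\b)$ when $\b\in\overline{\C_{g}}$, which you then use to apply the second assertion directly to $h$.
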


The converse of Theorem~\ref{thm:ULCseq} is not true. For instance,
let
\[
f(x)=x^{3}+9x^{2}+24x+21.
\]
The coefficient sequence of $f$ is ULC, but $f$ does not have a monotone
root sequence.

For a matroid $M$ and $X\in\{I,S,C\}$, the coefficients 
$a_{k}$ of the univariate generating function
\[
X_{M}(t,t,\cdots,t)=\sum_{k=0}^{n}a_{k}t^{k}
\]
counts the number of independent sets, spanning sets, and cospanning
sets of size $k$ for $X\in\{I,S,C\}$, respectively. Moreover, $a_{k}$
forms a ULC sequence if $M$ is $X$-\gar{}. 

In probability theory, Rayleigh and ULC property
are interpreted as negative dependence conditions for probability
measures on Boolean lattices \cite[Theorem 2.7]{Pemantle2000}. 

Denote $2^{[n]}$ the collection of subsets of $[n]$, a set of $n$
elements. Let $\mu$ be a probability measure defined on $2^{[n]}$.
The generating polynomial $g_{\mu}(\x)$ of measure $\mu$ is defined
to be
\begin{equation}
g_{\mu}(\x)=\sum_{S\subset[n]}\mu(S)\x^{S}.\label{eq:generatingmeasure}
\end{equation}
$\mu$ is called \emph{Rayleigh}
if the generating polynomial is Rayleigh, and\emph{ strong Rayleigh
(SR)} if $g_{\mu}$ is real stable. $\mu$ is called \emph{ultra log-concave
}if the diagonal restriction
\[
g_{\mu}(t,t,\cdots,t)=\sum_{k=0}^{n}a_{k}t^{k}
\]
has ultra log-concave coefficients. 

In their seminal work \cite{BBL09}, Borcea-Brändén-Liggett proved
that strong Rayleigh measures satisfy various negative dependence
conditions including ultra log-concavity, while in general Rayleigh
measures are not ULC. They \cite[pp. 531]{BBL09} raised the question
of finding new natural sub-families of Rayleigh measures that are
ULC. We identify one such family, which we call the Gårding-Rayleigh measures. 
\begin{defn}
\label{def:G-Rmeasure} A measure $\mu$ is called a \emph{\gar{}-Rayleigh
measure (GR)} if $g_{\mu}(\x)\in\G_{+}[\x]$. If in addition, $g_{\mu}(\x)$ 
is homogeneous, then $\mu$ is called a homogeneous \gar{}-Rayleigh
measure. $\mu$ is called a \emph{projected homogeneous \gar{}-Rayleigh
measure} \emph{(PHGR)} if $\mu$ a projection to the first $n$ variables
of a homogeneous \gar{}-Rayleigh measure on $2^{[m]}$ with
$m\geq n$.
\end{defn}

By Theorem \ref{thm:ULCseq}, we obtain a partial answer
to the question raised in \cite[pp. 531]{BBL09}
\begin{thm}
\label{thm:ULCmeasure}Any \gar{}-Rayleigh measure is ULC. Any projected
homogeneous \gar{}-Rayleigh measure is negatively associated (NA). 
\end{thm}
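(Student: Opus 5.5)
The statement has two parts, and I would prove them separately.

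\textbf{ULC part.} Let $\mu$ be Gårding-Rayleigh, so $g_\mu\in\G_{+}[\x]$. Since the coefficients are nonnegative, $\mathbf{0}\in\overline{\C_{g_\mu}}$. Apply Theorem~\ref{thm:ULCseq} with $\a=\mathbf{1}\in\Gamma_n^+$ and $\b=\mathbf{0}\in\overline{\C_{g_\mu}}$. This shows that the diagonal restriction $g_\mu(t,\dots,t)=\sum_k a_k t^k$ has an ultra log-concave coefficient sequence, which is exactly the definition of a ULC measure.

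For completeness I would also spell out the mechanism behind Theorem~\ref{thm:ULCseq}. The map $t\mapsto t\mathbf{1}$ is strictly positive, so $\proj g_\mu\in\GS_+[t]$. Then Theorem~\ref{thm:polarization_keeps_garding} gives that $\pol(\proj g_\mu)=\sum_k a_k\sigma_k(\x)/\binom nk$ is multi-affine Gårding with nonnegative coefficients. Lemma~\ref{lem:GardingToRayleigh} makes it Rayleigh, and Pemantle's criterion \cite[Theorem 2.7]{Pemantle2000} converts this into ULC.

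\textbf{NA part.} Let $\nu$ be a homogeneous Gårding-Rayleigh measure on $2^{[m]}$, and let $\mu$ be its projection to the first $n$ coordinates. By Lemma~\ref{lem:GardingToRayleigh}, $g_\nu\in\G_+$ is Rayleigh. Since $g_\nu$ is also homogeneous, Section~\ref{sec:Discussion} shows it is Lorentzian, with M-convex support.

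The plan is to invoke the known result that a homogeneous multi-affine Rayleigh polynomial, i.e.\ a balanced Rayleigh measure, whose restrictions remain Rayleigh is negatively associated. This is the Feder--Mihail argument: induct on $m$ using negative correlation of every conditioned measure. The needed closure comes from the Gårding class itself:
\begin{itemize}
\item conditioning on $i\in S$ corresponds to $\pdv_i g_\nu$, which is Gårding by Lemma~\ref{lem:Suppose-that--2};
\item conditioning on $i\notin S$ corresponds to $g_\nu|_{x_i=0}$, which is Gårding by Lemma~\ref{lem:G+restriction};
\item external fields $\x\mapsto D_{\a}\x$ preserve the class by dilation invariance.
\end{itemize}
All three operations preserve homogeneity and nonnegative coefficients, so every conditioned and reweighted measure is again homogeneous Gårding-Rayleigh, hence Rayleigh and in particular negatively correlated. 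The Feder--Mihail induction for balanced (homogeneous) matroid-like measures then yields NA for $\nu$. NA passes to projections because increasing functions of the first $n$ coordinates are increasing functions on $2^{[m]}$, so $\mu$ is NA.

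\textbf{Main obstacle.} The delicate step is verifying the Feder--Mihail hypotheses exactly. They require negative correlation of every conditioned measure, together with homogeneity for the "balanced" averaging step, and the induction must stay inside a class closed under conditioning. I would first check that the class of homogeneous $\G_+$ polynomials is closed under both $\pdv_i$ and $x_i=0$, which is the content of the lemmas cited above. Should the direct Feder--Mihail route stall, the fallback is to cite the theorem of Brändén--Huh that homogeneous Rayleigh (in fact Lorentzian with Rayleigh restrictions) measures closed under these operations are NA.
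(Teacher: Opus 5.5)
Your proposal is correct and follows the paper's argument. ULC comes from Theorem~\ref{thm:ULCseq} applied with $\a=\mathbf{1}$, $\b=\mathbf{0}$ (via polarization, the Rayleigh property and Pemantle's criterion), and NA comes from the Feder--Mihail theorem for homogeneous Rayleigh measures, which then passes to marginals. Your extra check that every conditioning stays in ${\mathrm G}^{\mathfrak{a}}_{+}$ is harmless but unnecessary, since the Rayleigh class is already closed under conditioning and external fields; the fallback result you attribute to Br\"and\'en--Huh is really Feder--Mihail's.
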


See \cite{BBL09,Pemantle2000} for precise definitions of the negative
association property. The NA property is proved by Feder-Mihail \cite{Feder1992BalancedM}
for projected homogeneous Rayleigh measures and holds \emph{a fortiori}
for projected homogeneous \gar{}-Rayleigh measures.

\gar{}-Rayleigh measures are  preserved under the \emph{symmetric
exclusion process} (SEP), which is a continuous Markov process. See
\cite[Section 4.4]{BBL09} and \cite[Section 7]{Wagner11} for further
discussion on this topic. The proof uses Example \ref{exa:partialsymme} and
the argument in \cite[Section 7]{Wagner11}. 

Table \ref{tab:Measures-that-ULC,} summarizes whether the aforementioned
measures are always ULC, NA, or preserved under SEP.
\begin{table}
\begin{tabular}{|c|c|c|c|c|}
\hline 
 & R & SR & GR & PHGR\tabularnewline
\hline 
\hline 
ULC & No & Yes & Yes & Yes\tabularnewline
\hline 
NA & No & Yes & ? & Yes\tabularnewline
\hline 
SEP & No & Yes & Yes & Yes\tabularnewline
\hline 
\end{tabular}

\medskip{}

\caption{Measures that are ULC, NA, or preserved under SEP}\label{tab:Measures-that-ULC,}
\end{table}
 It is natural to ask whether \gar{}-Rayleigh measures satisfy other
negative dependence conditions proposed in \cite{Pemantle2000}. We
will leave the discussion for later works.

In matroid theory, Mason's conjecture \cite{Mason72} asserts that, for any matroid, the
numbers of independent sets of size $k$, denoted $I_k(M)$, form an
ultra log-concave sequence. Using Lorentzian polynomials, Br\"and\'en-Huh
\cite{BrandenHuh20} and Anari-Liu-OveisGharan-Vinzant \cite{ALOViii}
proved Mason's conjecture. From our perspective, if the diagonal CSGF
of a matroid $M$ is G{\aa}rding, then Mason's conjecture follows.

Matroids such as $F_7^*, S_8$, and $AG(3,2)$ are not $C$-G\aa rding.
However, for all such examples we have investigated, the diagonal
specializations of their cospanning-set generating functions are still
G\aa rding.
Since diagonal generating functions arise naturally in enumeration
problems, this suggests that the G\aa rding condition may continue to play
a significant role in these cases.

Based on strong numerical evidence, we pose the following analytic version
of Mason's problem.

\begin{problem}
Does the diagonal cospanning-set generating function of any matroid
satisfy the monotone root sequence property?
Equivalently, are these diagonal specializations G\aa rding in general?
\end{problem}

\section{Characteristic polynomials of $Z$-matrices and $M$-matrices}\label{sec:-matrix,--matrix,-and}

In this section, we investigate examples of Gårding polynomials in
the context of matrix theory, with emphasis on $Z$- and $M$-matrices. 

Let $I_n$ denote the $n \times n$ identity matrix. An $n \times n$ matrix $A$ is called a $Z$-matrix if all its off-diagonal entries are nonpositive; that is, $A = (a_{ij})$ with $a_{ij} \le 0$ for all $i \ne j$. For
a $Z$-matrix $A$, the following conditions are equivalent (see \cite[Chapter 6]{BP79Nonnegative}):
\begin{enumerate}
\item \label{enu:All-principal-minors}All principal minors of $A$ are
nonnegative.
\item \label{enu:The-matrix-}For any $\epsilon>0$, the matrix $A+\epsilon I_{n}$
is non-singular.
\item \label{enu:-where-}$A=sI_{n}-B$ where $B=(b_{ij})$ with $b_{ij}\geq0$,
and $s\geq\rho(B)$ is a nonnegative real number. Here, $\rho(B)$
denotes the spectral radius of $B$. 
\end{enumerate}
If any of the above conditions holds and hence all of the above conditions hold, then $A$ is called
a $M$-\emph{matrix}. 

%By the Perron-Frobenius theorem, when $B$ has nonnegative entries, its spectral radius $\rho(B)$ is an eigenvalue of $B$.

Given an $n\times n$ matrix $A$, let $X=\text{diag}(\x)=\text{diag}(x_{1},\cdots,x_{n})$.
The multivariate characteristic polynomial of $A$ is defined by 
\begin{equation}
q_{A}(x_{1},\cdots,x_{n}):=\det(X+A).\label{eq:charforM-matrix}
\end{equation}
Holtz \cite{Holtz05} proved that the coefficients of the univariate
characteristic polynomial of an $M$-matrix (or inverse $M$-matrix)
$A$ form an ultra log-concave sequence. Brändén-Huh \cite{BrandenHuh20}
later showed that the homogeneous multivariate characteristic polynomial
of $A$ is Lorentzian, thus recovering Holtz's result. By Theorem
\ref{thm:ULCseq}, we obtain an alternative proof of Holtz's inequality
by showing that $q_{A}\in\G_{+}$. 
\begin{thm}
\label{thm:MmatrixChar}If $A$ is a $Z$-matrix, then $q_{A}\in\G_{n}$.
If $A$ is an $M$-matrix, then $q_{A}\in\G_{+}$.
\end{thm}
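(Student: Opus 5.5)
We argue by induction on $n$, using the recursive test of Proposition~\ref{prop:testing method} together with Lemma~\ref{lem:Let--be}. The polynomial $q_A(\x)=\det(X+A)$ is multi-affine. Its leading term is $x_1\cdots x_n$, and the partial derivatives have the form
\[
\pdv_i q_A=q_{A_{\hat i}}(\pi_i(\x)),
\]
where $A_{\hat i}$ is the principal submatrix obtained by deleting row and column $i$. Principal submatrices of $Z$-matrices are again $Z$-matrices, and likewise for $M$-matrices. So the induction hypothesis gives $\pdv_i q_A\in\G$ for every $i$. The base case $n=1$ is $x_1+a_{11}\in\G$.

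The candidate component is the set where $X+A$ is a nonsingular $M$-matrix:
\[
R:=\{\x:\ X+A \text{ is a nonsingular $M$-matrix}\}=\{\x:\ x_i+a_{ii}>\rho_i(\x)\ \text{suitably}\}.
\]
Equivalently, $R$ is the set of $\x$ for which all leading principal minors of $X+A$ are positive. This holds because $X+A$ is a $Z$-matrix for every $\x$. The steps are as follows.

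\begin{enumerate}
\item $R$ passes PRT. Adding a nonnegative diagonal matrix to a nonsingular $M$-matrix keeps it a nonsingular $M$-matrix, by characterization (3) and monotonicity of the spectral radius.
\item $R$ is open and nonempty, and $q_A>0$ on $R$.
\item $R$ is a connected component of $\{q_A>0\}$. On $\partial R$ the matrix $X+A$ is a singular $M$-matrix, being a limit of nonsingular ones, so $q_A=0$ there. Since $R$ is open, this makes $R$ closed in $\{q_A>0\}$. Connectedness follows from PRT via Lemma~\ref{lem:basic}.
\end{enumerate}

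Hence $\C_{q_A}=R\neq\emptyset$ and $q_A\in\G_n$. Note that this direct argument actually bypasses the induction. I would still keep the induction as a sanity check through Proposition~\ref{prop:testing method}, verifying $q_A\le 0$ on $\partial\C^{(1)}_{q_A}$ via the Schur complement expansion
\[
q_A=(x_i+a_{ii})\,\pdv_i q_A-\sum_{j,k\ne i}a_{ij}a_{ki}\,C_{jk},
\]
where the $C_{jk}\ge 0$ are cofactors.

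For the $M$-matrix statement, $\mathbf 0\in\overline{R}$: for every $\epsilon>0$ the matrix $A+\epsilon I_n$ is a nonsingular $M$-matrix by condition (2), so $\epsilon\mathbf 1\in R$. Then Lemma~\ref{lem:If--is-2} and continuity give $\pdv^\alpha q_A(\mathbf 0)\ge 0$ for all $\alpha$. These derivatives are exactly the coefficients of $q_A$, namely the principal minors of $A$. So $q_A$ has nonnegative coefficients and $q_A\in\G_+$. This also follows directly from condition (1).

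The main obstacle is step 3, identifying $R$ exactly as a full component. The delicate point is that points of $\{q_A>0\}$ outside $R$ — where an even number of eigenvalues are negative — are not adjacent to $R$. The boundary argument handles this: any path leaving $R$ must pass through a singular $M$-matrix, where $q_A=0$. To justify the limiting claim I would use the characterization that a $Z$-matrix is a nonsingular $M$-matrix iff its real eigenvalue of minimal real part, $s-\rho(B)$, is positive. This quantity is continuous in $\x$ and equals $0$ exactly on $\partial R$, which forces $\det=0$ there.
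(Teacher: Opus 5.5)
Your proposal is correct and is essentially the paper's proof. The G\aa rding component is identified as the region where $X+A$ is a nonsingular $M$-matrix; its boundary consists of singular $M$-matrices, so $q_A=0$ there. PRT gives connectedness: the paper derives it from $\pdv_i q_A>0$, and you derive it from monotonicity of the spectral radius. Nonnegativity of the coefficients comes from the principal minors of $A$.

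The induction and the Schur-complement check are superfluous, as you note yourself. Treating $Z$-matrices directly, rather than through the paper's shift $A'=A-sI$, is only a cosmetic difference.
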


\begin{proof}
Any $Z$-matrix $A'$ can be written in the form $A'=A-sI$ for some
real number $s$ and some $M$-matrix $A$. Then 
\[
q_{A'}(x_{1},\cdots,x_{n})=q_{A}(x_{1}-s,\cdots,x_{n}-s).
\]
Therefore,  it suffices to prove the second statement for an $M$-matrix $A$.  Denote 
\[
\mathcal{C}=\{\mathbf{x}\in\R^{n}:A(\x):=A+X\ \text{is a non-singular }M\text{-matrix}\}.
\]
Then $\C$ is an open subset of $\R^n$. 
% We claim that $\mathcal{C}$ is open in $\mathbb{R}^{n}$. For any
% $\mathbf{x}\in\mathcal{C}$, we may write 
% \begin{equation}
% A+X=s(\mathbf{x})I_{n}-B(\mathbf{x}),\label{eq:M-matrix}
% \end{equation}
% where $B(\mathbf{x})$ has nonnegative entries and $s(\mathbf{x})>\rho(B(\mathbf{x}))$.
% Choose $0<\epsilon<\frac{1}{3}(s(\mathbf{x})-\rho(B(\mathbf{x}))$,
% and let $\mathbf{y}\in\mathbb{R}^{n}$ satisfy $|\mathbf{y}|<\epsilon$.
% Writing $Y=\text{diag}(y_{1},\cdots,y_{n})$, we obtain 
% \[
% A+X+Y=(s(\mathbf{x})+|\mathbf{y}|)I_{n}-(B(\mathbf{x})+|\mathbf{y}|I_{n}-Y).
% \]
% Since the spectral radius of $B(\mathbf{x})+|\mathbf{y}|I_{n}-Y$
% is at most $\rho(B(\mathbf{x}))+2\epsilon$, it follows that $A+X+Y$
% is again a non-singular $M$-matrix. 
By the equivalent condition (\ref{enu:The-matrix-}) for an $M$-matrix,
a boundary point of $\mathcal{C}$ can only be a singular $M$-matrix.
Hence $q_{A}(\x)=0$ on $\partial\C$, and $\mathcal{C}\subset\{q_{A}>0\}$.

Finally, since $A(\mathbf{x})$ is a $M$-matrix for $\mathbf{x}\in\mathcal{C}$,
all principal minors are positive.  $\pdv_{i}q_{A}(\mathbf{x})$ is the principal minor
of $A(\mathbf{x})$ obtained by removing the $i^{\text{th}}$-row
and column. Therefore, 
$\pdv_{i}q_{A}(\mathbf{x})>0$ which implies that $\mathcal{C}$ passes PRT. Consequently, $q_{A}\in\G$.
Since $A\in\mathcal{C}$, we conclude that $q_{A}\in\G_{+}$. 
\end{proof}
\begin{rem}
 Brändén-Huh \cite{BrandenHuh20} showed that 
\[
p_{A}(x_{0},\cdots,x_{n}):=\det\left(x_{0}I_{n}+XA\right)=x_{0}^{n}x_{1}\cdots x_{n}\cdot\text{H}q_{A}(\frac{1}{x_{0}},\cdots,\frac{1}{x_{n}})
\]
is a Lorentzian polynomial. In fact, $\text{H}q_{A}$ is also Lorentzian.
The proof will be given in a separate paper.
\end{rem}

On the other hand, $p_{A}$ and $p_{A}(1,\x)$ are, in general, not
\gar{} polynomials.
\begin{example}
Let 
\[
A=\begin{pmatrix}2 & -1 & 0 & -1\\
0 & 1 & 0 & -1\\
0 & -1 & 1 & 0\\
-1 & 0 & 0 & 1
\end{pmatrix}.
\]
$A$ is a diagonal dominating $Z$-matrix and hence a $M$-matrix
(\cite[Chapter 6, M35]{BP79Nonnegative}). Take
\[
g(t)=p_{A}(1,t,2t,t,t)=7t^{3}+12t^{2}+6t+1.
\]
Since the root sequence $(-1,\frac{-4+\sqrt{2}}{7},-\frac{4}{7})$ is not a MRS,  $p_{A}(1,\x)$ and $p_{A}$ are not \gar{}
by Remark \ref{rem:MRSnecessary}. 
\end{example}

A non-singular matrix $A$ is called an\emph{ inverse $M$-matrix}
if $A^{-1}$ is an $M$-matrix. The following is an immediate corollary
of Theorem \ref{thm:MmatrixChar}.
\begin{cor}
\label{cor:inverseM}If $A$ is an inverse $M$-matrix, then $p_{A}(1,\mathbf{x})$
is a \gar{} polynomial. 
\end{cor}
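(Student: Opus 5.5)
The plan is to reduce the statement directly to Theorem~\ref{thm:MmatrixChar} by a determinant factorization. By definition,
\[
p_{A}(1,\x)=\det\left(I_{n}+XA\right),\qquad X=\mathrm{diag}(x_{1},\cdots,x_{n}).
\]
Since $A$ is an inverse $M$-matrix, it is non-singular. Factoring $A$ out on the right gives
\[
\det\left(I_{n}+XA\right)=\det\left((A^{-1}+X)A\right)=\det(A)\,\det\left(X+A^{-1}\right)=\det(A)\,q_{A^{-1}}(\x),
\]
where $q_{A^{-1}}$ is the multivariate characteristic polynomial defined in \eqref{eq:charforM-matrix}.

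Next I would check that the constant factor $\det(A)$ is positive. The matrix $A^{-1}$ is a non-singular $M$-matrix, so by condition (\ref{enu:All-principal-minors}) all its principal minors are nonnegative. In particular $\det(A^{-1})\ge 0$, and non-singularity forces $\det(A^{-1})>0$. Hence
\[
\det(A)=\det(A^{-1})^{-1}>0.
\]

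Now apply Theorem~\ref{thm:MmatrixChar} to the $M$-matrix $A^{-1}$, which gives $q_{A^{-1}}\in\G_{+}$. Multiplication by a positive constant does not change the positivity set, and hence does not change the G{\aa}rding component or the PRT property. Therefore $p_{A}(1,\x)=\det(A)\,q_{A^{-1}}(\x)$ lies in $\G_{+}\subset\GS$. It also has nonnegative coefficients, since $q_{A^{-1}}$ does.

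There is essentially no obstacle here. The only point requiring care is the sign of $\det(A)$, which is settled by the characterization of non-singular $M$-matrices through their principal minors.
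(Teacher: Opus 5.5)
Your proof is correct and follows the same route the paper intends when it calls this an immediate corollary of Theorem~\ref{thm:MmatrixChar}: the factorization $\det(I_n+XA)=\det(A)\,q_{A^{-1}}(\x)$, with $\det(A)>0$, reduces the claim to the $M$-matrix $A^{-1}$. Your check that $\det(A)>0$, via the principal-minor characterization of non-singular $M$-matrices, is the one detail the paper leaves implicit.
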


Inverse $M$-matrices provide examples of determinantal  \gar{}-Rayleigh measures. 
\begin{example}
Let $A$ be an $n\times n$ inverse $M$-matrix with spectral radius  $\rho(A)<1$. Then
there is a probability measure $\mu$ on $2^{[n]}$ such that 
for any $S\subset[n]$,
\[
\mu(\{T:T\supset S\})=\det(A_{S})
\]
where $A_{S}$ is the principal minor with rows and columns in $S$.
Then $\mu$ is a \gar{}-Rayleigh measure but not a strong Rayleigh measure
in general. In fact, from \cite[Proposition 3.5]{BBL09},
\[
g_{\mu}(\x)=\det((X-I_n)A+I_n).
\]
Then $g_\mu\in \G$ follows from Corollary \ref{cor:inverseM}. 
\end{example}

\appendix
\section{Proof of Theorem~\ref{thm:Uhat}}
In this appendix, we record the proof of Theorem~\ref{thm:Uhat}. Recall that  
\[
g_{r,q}(x,y):=S_{U_{r,n}}(\underbrace{x,\cdots,x}_{r},\underbrace{y,\cdots,y}_{q}),\qquad f_{r,q}(x,y):=S_{\hat{U}_{r,n}}(\underbrace{x,\cdots,x}_{r},\underbrace{y,\cdots,y}_{q}).
\]
By Theorem~\ref{thm:polarization_keeps_garding}, it suffices to prove $f_{r,q}(x,y)\in\GS_{+}(x,y)$ to establish Theorem~\ref{thm:Uhat}.

We prepare the proof with the following deformation. For $t\in[0,1]$, define
\[
f_{r,q,t}(x,y):=g_{r,q}(x,y)-t\,x^{r}.
\]
Thus, $f_{r,q,1}=f_{r,q}$ and $f_{r,q,0}=g_{r,q}$.  For $t\in[0,1]$, the polynomials $f_{r,q,t}$ have nonnegative coefficients.

We will often suppress the dependence on $r,q,y$ when these parameters are fixed. A direct computation shows that for all $r,q\geq 1$,
\begin{equation}
\frac{\partial}{\partial x}f_{r,q,t}(x,y)=r\,f_{r-1,q,t}(x,y).
\label{eq:april1}
\end{equation}
We also define
\[
h_{r,q}(x,y):=\frac{\partial}{\partial y}f_{r,q}(x,y)=\frac{\partial}{\partial y}g_{r,q}(x,y).
\]

Recall that for a univariate polynomial $f(x)\in \R[x]$,  $r(f)$ denotes the largest real root of $f$. For each fixed $q\geq1,r\geq1$, $y>0$ and $t\in(0,1]$,  we view each $f_{r,q,t}(x,y),\ g_{r,q}(x,y),\ h_{r,q}(x,y)$ as polynomial in $x$ and denote 
\[\alpha_{r,q,t}(y)=r(f_{r,q,t}(x,y)),\ \beta_{r,q}(y)=r(g_{r,q}(x,y)),\ \gamma_{r,q}(y)=r(h_{r,q}(x,y)).\]

By Example~\ref{exa:uniform}, $g_{r,q},h_{r,q}\in\GS[x,y]$. We collect the following facts based on their G{\aa}rding properties.
\begin{lem}
\label{lem:known}
Fix $q\geq 1,r\geq1$ and $y>0$. Then, $\beta_{r,q}(y)$ and $\gamma_{r,q}(y)$ exist. Moreover,  
\begin{enumerate}
\item If $r>q$, then $\beta_{r,q}(y)=0$.
\item If $r\leq q$, then $\beta_{r,q}(y)<0$.
\item If $r\leq q$ and $y>0$, then $\beta_{r,q}(y)<\beta_{r+1,q}(y)$.
\item $\partial_{y}h_{r,q}(\gamma_{r,q}(y),y)\leq 0$, and $\gamma_{r,q}(y)$ is nonincreasing in $y$.
\end{enumerate}
\end{lem}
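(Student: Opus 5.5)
Throughout I fix $q,r\ge 1$ and $y>0$, and regard $g_{r,q}(\cdot,y)$ and $h_{r,q}(\cdot,y)$ as univariate polynomials in $x$. The plan is to reduce existence to the univariate G\aa rding (MRS) theory, to get (1)–(2) from sign information, to get (3) from the derivative identity \eqref{eq:april1}, and to get (4) from the PRT geometry of $\C_h$.

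\textbf{Existence.} By Example~\ref{exa:uniform}, $g_{r,q},h_{r,q}\in\GS_+[x,y]$. Both have nonnegative coefficients, so $\mathbf 0\in\overline{\C_g}$. Since $\C_g$ passes PRT, it follows that $\Gamma_2^+\subset\C_g$, and likewise for $h$. Hence $(a,y)\in\C_g$ for any $a>0$. By Lemma~\ref{lem:RestrforgeneralGrad} and Theorem~\ref{thm:recursivedef}, $x\mapsto g_{r,q}(x,y)$ is a univariate G\aa rding, i.e.\ right-Noetherian, polynomial. Its degree is $r\ge1$, with leading coefficient $(1+y)^q>0$. Its G\aa rding component is therefore $(r(g),\infty)\ne\R$ by \eqref{eq:cone for RN}, so $\beta_{r,q}(y)$ exists. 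The same argument applies to $h$, giving $\gamma_{r,q}(y)$; if $\deg_x h\ge1$ fails, the statement about $\gamma$ is vacuous or trivial.

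\textbf{(1) and (2).} Write $g_{r,q}=\sum_{a+b\ge r}\binom ra\binom qb x^ay^b$.

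If $r>q$, every monomial has $a\ge r-q\ge1$, so $x\mid g$. Moreover $g>0$ for $x>0$, since $g$ has nonnegative coefficients and is not identically zero. Hence the largest root is $0$.

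If $r\le q$, then $g(0,y)=\sum_{b\ge r}\binom qb y^b>0$, and $g(x,y)>0$ for all $x\ge0$. Hence $\beta<0$.

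\textbf{(3).} By \eqref{eq:april1} with $t=0$, we have $\partial_xg_{r+1,q}=(r+1)g_{r,q}$. Since $g_{r+1,q}(\cdot,y)$ has a monotone root sequence, this gives $\beta_{r,q}\le\beta_{r+1,q}$.

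If $r+1>q$, strictness is immediate: $\beta_{r+1,q}=0>\beta_{r,q}$ by (1) and (2).

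If $r+1\le q$, I would argue by contradiction. Suppose $\beta_{r,q}=\beta_{r+1,q}=\rho<0$. Apply deletion–contraction (Lemma~\ref{lem:DCforCSGF}) to one $x$-element $e$ of $U_{r+1,n+1}$: the deletion is $U_{r+1,n}$ and the contraction is $U_{r,n}$. Using $S_{U_{r+1,n}}=S_{U_{r,n}}-\sigma_r$, this gives
\[
g_{r+1,q}=x\,g_{r,q}+g_{r,q}-\sum_{a}\binom ra\binom q{r-a}x^ay^{r-a}.
\]
Evaluating at $x=\rho$ forces the degree-$r$ form $\sum_a\binom ra\binom q{r-a}\rho^ay^{r-a}$ to vanish. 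I would then seek a contradiction from $g_{r,q}(\rho,y)=0$, using that the remaining terms of degree $>r$ have a definite sign at this point. This strictness step is where I expect the main obstacle. If the sign argument fails, a fallback is to show that $\rho$ would be a double root of $g_{r+1,q}$ and to exclude this via the explicit form of $g_{r,q}$.

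\textbf{(4).} For $y>0$, I would show that $\C_h\cap\{y>0\}=\{(x,y):x>\gamma(y)\}$. The inclusion $\subseteq$ holds because $\C_h$ is a connected component of $\{h>0\}$. The reverse inclusion follows from the restriction lemma together with connectedness. Given this, PRT in the $y$-direction says that $(x,y)\in\C_h$ implies $(x,y')\in\C_h$ for all $y'>y$. Hence $\gamma(y')\le\gamma(y)$, i.e.\ $\gamma$ is nonincreasing.

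For the derivative sign, note that $(\gamma(y),y)\in\partial\C_h$. Along this boundary, $h(\gamma(y),y)\equiv0$ and $\partial_xh\ge0$, the latter because $\C_h$ lies to the right of the boundary point. Implicit differentiation then gives $\gamma'\,\partial_xh+\partial_yh=0$, which ties the sign of $\partial_yh$ at $(\gamma(y),y)$ to the monotonicity of $\gamma$. I would derive the stated inequality from this relation.

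There is a sign tension to flag. Monotonicity $\gamma'\le0$ together with $\partial_xh>0$ forces $\partial_yh\ge0$, not $\le0$. So before finalizing I would re-check the convention in the claimed inequality $\partial_yh\le0$ against this relation.
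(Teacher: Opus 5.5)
\paragraph{What is fine, and the sign in (4).} The paper gives no separate argument for this lemma; it only says these facts follow from $g_{r,q},h_{r,q}\in\GS$. Your treatment of the following parts is fine:
\begin{itemize}
\item existence, noting that $\deg_x h_{r,q}=r$ because its $x^r$-coefficient is $q(1+y)^{q-1}>0$, so $\gamma_{r,q}$ always exists;
\item parts (1) and (2);
\item the non-strict half of (3), and strictness when $r=q$;
\item monotonicity of $\gamma_{r,q}$ via PRT in the $y$-direction.
\end{itemize}
You are also right about the sign in (4): the inequality as printed is false. For $r=q=2$ one has $h_{2,2}=2(1+x)^2(1+y)-2$ and $\gamma_{2,2}(y)=(1+y)^{-1/2}-1$, so $\partial_yh_{2,2}(\gamma_{2,2}(y),y)=2/(1+y)>0$. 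The correct statement is $\partial_yh_{r,q}(\gamma_{r,q}(y),y)\ge 0$. It follows at once from $(\gamma_{r,q}(y),y)\in\overline{\mathcal{C}_{h_{r,q}}}$ and $\mathcal{C}_h\subset\mathcal{C}_{\partial_y h}$ (Theorem~\ref{thm:diff-closure}), with no implicit differentiation needed. Only the monotonicity of $\gamma_{r,q}$ is used later, in Proposition~\ref{prop:april10}.

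\paragraph{The gap: strictness in (3) when $r+1\le q$.} Your planned contradiction does not work. At $x=\rho<0$, the terms $\binom ra\binom qb\rho^a y^b$ of degree $>r$ alternate in sign with $a$, so they have no definite sign. Your fallback, that $\rho$ would be a double root of $g_{r+1,q}(\cdot,y)$, is not contradictory by itself. G\aa rding/MRS polynomials can have a repeated largest root; see $R(f)=(0,0,-1,-1)$ in Example~\ref{ex:inversion-necessity}. Some input specific to this family is needed.

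\paragraph{A way to close it.} Keep your identity $g_{r+1,q}=(1+x)g_{r,q}-e_r$, where $e_r=\sum_a\binom ra\binom q{r-a}x^ay^{r-a}$. Add the block-swap identity
\[
h_{r+1,q}=q\,g_{r,q}+(x-y)\,h_{r,q}.
\]
It comes from specializing one variable $w$ in $S_{U_{r,r+q}}=w\,S_{U_{r-1,r+q-1}}+S_{U_{r,r+q}}|_{w=0}$ once to $x$ and once to $y$. Now let $p=(\rho,y_0)$ with $g_{r,q}(p)=g_{r+1,q}(p)=0$.
\begin{itemize}
\item The block-swap identity gives $h_{r+1,q}(p)=(\rho-y_0)\,h_{r,q}(p)$.
\item Both $h_{r+1,q}(p)$ and $h_{r,q}(p)$ are $\ge 0$, because $p\in\overline{\mathcal{C}_{g_{r+1,q}}}\subset\overline{\mathcal{C}_{g_{r,q}}}$ and $\mathcal{C}_g\subset\mathcal{C}_{\partial_y g}$. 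Since $\rho-y_0<0$, both vanish.
\item Your identity at $p$ gives $e_r(p)=0$. Differentiating it in $y$ gives $\partial_y e_r(p)=0$.
\item Euler's relation then forces $\partial_x e_r(p)=0$ as well.
\end{itemize}
Hence $\rho/y_0$ is a multiple root of $E(z)=\sum_a\binom ra\binom q{r-a}z^a$. But $E(z)=(1-z)^r P_r^{(q-r,0)}\bigl(\tfrac{1+z}{1-z}\bigr)$ is a Jacobi polynomial in disguise. For $r\le q$ it has $r$ simple negative roots, which is a contradiction.
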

Next, we establish the following lemma.

\begin{lem}
\label{lem:april5}
For fixed $q\geq1,r\geq1$, $y>0$ and $t\in(0,1]$, $\alpha_{r,q,t}(y)$ is a well-defined non-positive number.
\end{lem}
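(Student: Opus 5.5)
The plan is to treat $x\mapsto f_{r,q,t}(x,y)$ for fixed $q,r\ge1$, $y>0$, $t\in(0,1]$ as a univariate polynomial of degree $n=r+q$ and to check two things: at $x=0$ it is nonpositive, and for $x\to+\infty$ it is positive. The intermediate value theorem then gives a real root in $[0,\infty)$ only when the value at $0$ is negative, so I will first rule that case out. Combining the sign at $0$ with monotonicity on $(0,\infty)$ will pin the largest real root to $0$, so in fact $\alpha_{r,q,t}(y)\le 0$ with equality at the boundary. To get the inequality in general, the cleaner route is to exhibit a real root and show there is none in $(0,\infty)$.

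Step 1 (no positive roots). Since $f_{r,q,t}$ has nonnegative coefficients for $t\in[0,1]$, it suffices to check that it is not identically zero and has a positive coefficient of some power $x^k$ with $k\ge1$ when $y>0$. The coefficient of $x^ry^q$ in $g_{r,q}$ is $1$ (the full set $E$ is spanning) and is untouched by $-tx^r$, so $f_{r,q,t}(x,y)>0$ for all $x>0$, $y>0$. Therefore every real root, if any exists, lies in $(-\infty,0]$.

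Step 2 (existence of a real root). This is the main point. At $x=0$ we have $f_{r,q,t}(0,y)=g_{r,q}(0,y)$, which equals the number-weighted sum over spanning sets using only the $q$ elements carrying $y$, namely $\sum_{j\ge r}\binom{q}{j}y^j$. If $r>q$ this is $0$, so $x=0$ is a root and $\alpha=0$. If $r\le q$, then $f(0,y)>0$ and I need a real root in $(-\infty,0)$. For that I compare with $\beta_{r,q}(y)$ from Lemma~\ref{lem:known}, which exists and is negative, so $g_{r,q}(\beta,y)=0$. Then
\[
f_{r,q,t}(\beta,y)=-t\beta^r .
\]
If $r$ is odd this is positive, which is not immediately useful, and it is here that I expect the difficulty. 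The fallback is to use parity of degree. The leading term in $x$ of $f_{r,q,t}$ is $x^r$ times $\sum_{j}\binom{q}{j}y^j=(1+y)^q$ when $r\le q$, because $(1-t)$ only affects the pure $x^r$ term. Thus $\deg_x f=r$ with positive leading coefficient.

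For $r$ odd, $f\to-\infty$ as $x\to-\infty$ while $f(0,y)>0$, so a root exists in $(-\infty,0)$. For $r$ even, $f(\beta,y)=-t\beta^r<0$ since $t>0$ and $\beta\ne0$, while $f(0,y)>0$, so a root lies in $(\beta,0)$.

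In every case a real root exists, and by Step~1 the largest real root is nonpositive. The step I consider most fragile is the odd-$r$ case with $t$ possibly making the degree-$r$ coefficient vanish. I will double check that the $x^r$ coefficient is $(1+y)^q-t\ge (1+y)^q-1>0$ for $y>0$, and this resolves the concern.
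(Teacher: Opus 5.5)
Your proof is correct and follows the same route as the paper. Nonnegative coefficients, together with the $x^ry^q$ term, exclude positive roots; $r>q$ forces $f_{r,q,t}(0,y)=0$; and for $r\le q$ you use odd degree when $r$ is odd, and the sign change between $f_{r,q,t}(\beta_{r,q}(y),y)=-t\beta_{r,q}(y)^r<0$ and $f_{r,q,t}(0,y)>0$ when $r$ is even. Your check that the leading coefficient in $x$ is $(1+y)^q-t>0$ makes explicit a point the paper leaves implicit; note only that the $x$-degree is $r$, not $r+q$ as your opening paragraph says.
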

\begin{proof}
Since $f$ has nonnegative coefficients, $f_{r,q,t}(\cdot,y)$ has no positive real roots, so any real root is non-positive.

If $q<r$, then $f_{r,q,t}(0,y)=0$, and hence its largest real root $\alpha_{r,q,t}(y)=0$.

Now assume $q\geq r$. If $r$ is odd, then $f_{r,q,t}(\cdot,y)$ has an odd degree with positive leading coefficient, so its largest real root exists and is at most $0$. If $r$ is even, we compare with the uniform case. 
\begin{equation}
f_{r,q,t}(\beta_{r,q}(y),y)=g_{r,q}(\beta_{r,q}(y),y)-t\,\beta_{r,q}(y)^{r}=-t\,\beta_{r,q}(y)^{r}<0.
\label{eq:compare}
\end{equation}
On the other hand, $f_{r,q,t}(0,y)=g_{r,q}(0,y)\geq 0$. Thus, $f_{r,q,t}(\cdot,y)$ has a real root in $(\beta_{r,q}(y),0]$, so its largest real root exists and is nonpositive.
\end{proof}

Furthermore, by (\ref{eq:compare}) and the parity, we have
\begin{cor}
notation as above. Fix $q\ge1$, $0<t\le1$, and $y>0$. Then:
\begin{enumerate}
\item If $r\le q$ is even, then $\alpha_{r,q,t}(y)\ge\beta_{r,q}(y).$
\item If $r\le q$ is odd, then $\alpha_{r,q,t}(y)\le\beta_{r,q}(y).$
\end{enumerate}
Furthermore, for $r$ even and $r\le q$, we have 
\begin{equation}
\alpha_{r-1,q,t}(y)\le\beta_{r-1,q}(y)<\beta_{r,q}(y)\le\alpha_{r,q,t}(y).\label{eq:april3}
\end{equation}
\end{cor}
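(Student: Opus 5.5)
The plan is to read off both parity statements from the sign computation \eqref{eq:compare}, $f_{r,q,t}(\beta_{r,q}(y),y)=-t\,\beta_{r,q}(y)^{r}$, together with the behaviour of $f_{r,q,t}(\cdot,y)$ at $+\infty$. Fix $q\ge1$, $0<t\le1$, $y>0$ and $r\le q$. By Lemma~\ref{lem:known}(2), $\beta:=\beta_{r,q}(y)<0$, so $\beta^{r}>0$ when $r$ is even and $\beta^{r}<0$ when $r$ is odd.

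\emph{Even $r$.} Here $f_{r,q,t}(\beta,y)=-t\beta^{r}<0$. Since $f_{r,q,t}(\cdot,y)$ has positive leading coefficient, it is positive for large $x$. By the intermediate value theorem it has a root in $(\beta,\infty)$, so its largest root satisfies $\alpha_{r,q,t}(y)>\beta$; in particular $\alpha_{r,q,t}(y)\ge\beta$. This is exactly the argument already used in Lemma~\ref{lem:april5}.

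\emph{Odd $r$.} Here $f_{r,q,t}(\beta,y)=-t\beta^{r}>0$, so that IVT argument gives nothing directly. Instead I show that $f_{r,q,t}(\cdot,y)>0$ on $(\beta,\infty)$, which forces $\alpha_{r,q,t}(y)\le\beta$. On this interval $g_{r,q}(x,y)>0$ by the definition of $\beta$ as the largest root, since $g$ is univariate Gårding in $x$ with Gårding component $(\beta,\infty)$. Then split into two subintervals.
\begin{itemize}
\item On $(\beta,0]$ we have $x^{r}\le0$, so $f=g-tx^{r}\ge g>0$.
\item On $(0,\infty)$, $f_{r,q,t}$ has nonnegative coefficients and a nonzero constant term, so it is positive there. (At $x=0$ we have $f=g(0,y)>0$ because $\beta<0$.)
\end{itemize}
Hence $f_{r,q,t}(\cdot,y)$ has no root exceeding $\beta$, and $\alpha_{r,q,t}(y)\le\beta_{r,q}(y)$.

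\emph{The chain \eqref{eq:april3}.} For even $r\le q$, the index $r-1$ is odd and $r-1\le q$. If $r-1\ge1$, the odd case gives $\alpha_{r-1,q,t}\le\beta_{r-1,q}$. Lemma~\ref{lem:known}(3), applied with $r-1\le q$, gives $\beta_{r-1,q}<\beta_{r,q}$. The even case gives $\beta_{r,q}\le\alpha_{r,q,t}$. Concatenating the three yields \eqref{eq:april3}.

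The only delicate step is the odd case. The key observation there is that $x^{r}\le0$ on the negative part of the interval, so subtracting $t x^{r}$ only helps. I do not expect any other obstacles.
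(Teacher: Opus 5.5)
Your proof is correct and follows the paper's own route, which is the sign identity \eqref{eq:compare} combined with parity and Lemma~\ref{lem:known}. In the odd case, your positivity argument on $(\beta_{r,q}(y),\infty)$, splitting at $x=0$, supplies the step the paper leaves implicit: $f_{r,q,t}(\beta_{r,q}(y),y)>0$ alone would not force $\alpha_{r,q,t}(y)\le\beta_{r,q}(y)$. Your caveat ``if $r-1\ge1$'' is unnecessary, since an even $r\ge1$ already satisfies $r\ge2$.
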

We record a key estimate.

\begin{lem}
\label{lem:april10}
Fix $q\geq 1$, $t\in[0,1]$, and $y>0$, and use the notation above. If $r$ is even and $\alpha_{r+1}<\alpha_{r}$, then
\begin{equation}
\alpha_{r+1}\leq \alpha_{r-1}<\alpha_{r}.
\label{eq:jump}
\end{equation}
\end{lem}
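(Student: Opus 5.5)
The idea is to exploit the derivative relation \eqref{eq:april1} in the variable $x$. Fix $q$, $t$ and $y>0$, and write $F_k(x):=f_{k,q,t}(x,y)$, so that $F_{r+1}'=(r+1)F_r$ and $F_r'=rF_{r-1}$. The only structural fact needed about these univariate polynomials is the following: each $F_k$ has degree $k$ in $x$ with positive leading coefficient, since the $x^k$-coefficient is $(1-t)+\sum_{j\ge1}\binom{q}{j}y^j>0$. Consequently $F_k>0$ on $(\alpha_k,\infty)$, where $\alpha_k$ exists by Lemma~\ref{lem:april5}. The proof then has two parts: the strict inequality $\alpha_{r-1}<\alpha_r$, and the comparison $\alpha_{r+1}\le\alpha_{r-1}$.

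\emph{Step 1: $\alpha_{r-1}<\alpha_r$.} If $r\le q$, this is contained in \eqref{eq:april3}, since $r$ is even. If $r>q$, then $r+1>q$ as well, so Lemma~\ref{lem:known} together with $F_{r+1}(0)=g_{r+1,q}(0,y)=0$ gives $\alpha_{r+1}=0=\alpha_r$. This contradicts the hypothesis $\alpha_{r+1}<\alpha_r$, so the case $r>q$ cannot occur.

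\emph{Step 2: $\alpha_{r+1}\le\alpha_{r-1}$.} I argue by contradiction and suppose $\alpha_{r-1}<\alpha_{r+1}$.
\begin{itemize}
\item Since $F_{r-1}>0$ on $(\alpha_{r-1},\infty)\supset[\alpha_{r+1},\infty)$, the function $F_r$ is strictly increasing on $[\alpha_{r+1},\infty)$.
\item Because $F_r(\alpha_r)=0$ and $\alpha_{r+1}<\alpha_r$, this monotonicity forces $F_r<0$ on $[\alpha_{r+1},\alpha_r)$.
\item Hence $F_{r+1}'=(r+1)F_r<0$ there, so $F_{r+1}$ is strictly decreasing on $[\alpha_{r+1},\alpha_r]$.
\item Starting from $F_{r+1}(\alpha_{r+1})=0$, we obtain $F_{r+1}(\alpha_r)<0$.
\end{itemize}
This contradicts $F_{r+1}>0$ on $(\alpha_{r+1},\infty)$, because $\alpha_r>\alpha_{r+1}$. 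Combining Steps 1 and 2 yields \eqref{eq:jump}.

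The step needing the most care is Step 1, specifically the degenerate range $r>q$. There the largest roots can coincide at $0$, so the strict inequality does not come from \eqref{eq:april3} directly. As shown above, in that range the hypothesis $\alpha_{r+1}<\alpha_r$ itself fails, so the case is vacuous. I would also double-check that the positivity of the leading coefficient in $x$ holds at $t=1$; this is why $y>0$ is needed.
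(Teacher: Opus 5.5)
Your proof is correct and follows the paper's argument. The paper likewise assumes $\alpha_{r+1}>\alpha_{r-1}$ and uses \eqref{eq:april1} to see that $F_r$ is strictly increasing past $\alpha_{r-1}$, which forces $F_r(\alpha_{r+1})<0$; this contradicts $F_r(\alpha_{r+1})\ge 0$, i.e.\ the sign of $F_{r+1}'$ at its largest root, which is the local form of your integrated contradiction $F_{r+1}(\alpha_r)<0$. The paper then gets the strict inequality from \eqref{eq:april3}, and your check that the case $r>q$ is vacuous (both largest roots equal $0$ there) is a detail the paper leaves implicit, since \eqref{eq:april3} is only stated for $r\le q$.
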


\begin{proof}
By definition, $\alpha_{r+1}$ is the largest real root of $f_{r+1,q,t}$, which has nonnegative coefficients. Therefore,
\[
\frac{\partial}{\partial x}f_{r+1,q,t}(\alpha_{r+1},y)\geq 0,
\]
which, by \eqref{eq:april1}, implies
\begin{equation}
f_{r,q,t}(\alpha_{r+1},y)\geq 0.
\label{eq:april2}
\end{equation}

Now suppose, for contradiction, that $\alpha_{r+1}>\alpha_{r-1}$. By the definition of $\alpha_{r-1}$ and \eqref{eq:april1}, the function $x\mapsto f_{r,q,t}(x,y)$ is strictly increasing on $[\alpha_{r-1},\infty)$. Since $\alpha_{r}$ is the largest real root of $f_{r,q,t}(\cdot,y)$ and $\alpha_{r+1}<\alpha_{r}$, this strict monotonicity yields
\[
f_{r,q,t}(\alpha_{r+1},y)<0,
\]
contradicting \eqref{eq:april2}. Hence $\alpha_{r+1}\leq \alpha_{r-1}$. Combining this with \eqref{eq:april3}, we prove \eqref{eq:jump}.
\end{proof}

We now analyze the geometric setting. For fixed $r,q\geq 1$, let $C_{r,q}$ be the connected component of $\{(x,y)\in\mathbb{R}^{2}:f_{r,q,1}(x,y)>0\}$ that contains the open first quadrant. By Lemma~\ref{lem:april5},
\[
C_{r,q}=\{(x,y)\in\mathbb{R}^{2}: y>0,\ x>\alpha_{r,q,1}(y)\}.
\]

The following proposition establishes a monotonicity relation between the roots, which corresponds to the monotonicity of the regions $C_{r,q}$.

\begin{prop}
\label{prop:april6}
For every $t\in[0,1]$, $y>0$, and $r>1$, one has
\begin{equation}
\alpha_{r-1,q,t}(y)\leq \alpha_{r,q,t}(y).
\label{eq:april7}
\end{equation}
Equivalently, $C_{r,q}\subseteq C_{r-1,q}$.
\end{prop}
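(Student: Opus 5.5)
The plan is to fix $q\ge1$ and $y>0$, and to reduce \eqref{eq:april7} to a sign condition for one polynomial at one point. That sign condition is then checked by a continuity argument in $t\in[0,1]$, starting from the uniform case $t=0$.

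\textbf{Step 1: trivial cases.} Several values of $r$ need no work.
\begin{itemize}
\item If $r>q$, then $\alpha_{r,q,t}(y)=0$, as in the proof of Lemma~\ref{lem:april5}. Every $\alpha_{r-1,q,t}(y)$ is nonpositive, so \eqref{eq:april7} holds.
\item If $r\le q$ and $r$ is even, then \eqref{eq:april3} already gives $\alpha_{r-1,q,t}<\alpha_{r,q,t}$.
\item At $t=0$ we have $\alpha=\beta$ for every index. Since $\partial_x g_{r,q}=r\,g_{r-1,q}$ and $g_{r,q}\in\GS$, Lemma~\ref{lem:known} and the monotone root sequence of $x\mapsto g_{r,q}(x,y)$ give $\beta_{r-1,q}\le\beta_{r,q}$.
\end{itemize}
So it remains to treat odd $r$ with $3\le r\le q$ and $t\in(0,1]$. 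In that case the previous index $r-1$ is even.

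\textbf{Step 2: reformulation.} For odd $r$, the polynomial $x\mapsto f_{r,q,t}(x,y)$ has odd degree and positive leading coefficient. Hence $\alpha_{r-1}\le\alpha_r$ holds if and only if
\[
\varphi_r(t):=f_{r,q,t}(\alpha_{r-1,q,t}(y),y)\le 0 .
\]
Suppose instead that $\varphi_r(t)>0$ for some $t$. Lemma~\ref{lem:april10}, applied to the even index $r-1$, gives $\alpha_r\le\alpha_{r-2}<\alpha_{r-1}$. So $f_{r,q,t}(\cdot,y)>0$ on the whole interval $(\alpha_{r-2},\infty)$: a violation is never marginal, it forces a macroscopic gap.

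\textbf{Step 3: first violation time.} I would run an induction on odd $r$, assuming \eqref{eq:april7} for all smaller indices and all $t$. Let $t^*$ be the infimum of the $t$ at which $\varphi_r(t)>0$.
\begin{itemize}
\item The root $\alpha_{r-1,q,t}$ of the even-degree polynomial $f_{r-1,q,t}$ is simple. Indeed, $\partial_x f_{r-1,q,t}=(r-1)f_{r-2,q,t}$ is positive there by the induction hypothesis together with \eqref{eq:april3}. So $\alpha_{r-1,q,t}$ depends smoothly on $t$, and $\varphi_r$ is continuous.
\item At $t^*$ we get $\varphi_r(t^*)=0$, and $f_{r,q,t^*}\ge0$ on $(\alpha_{r-2},\infty)$. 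Hence $\alpha_{r-1}$ is a double root of $f_{r,q,t^*}$. This is compatible with $\partial_x f_r=r f_{r-1}$ vanishing there.
\item The contradiction must come from Step 2: the polynomial $f_{r,q,t}$ would have to pass from having a root $\ge\alpha_{r-1}$ to having none $>\alpha_{r-2}$ without its largest root crossing $(\alpha_{r-2},\alpha_{r-1})$ continuously.
\end{itemize}

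\textbf{Main obstacle.} Step 3 is the hardest step. The naive derivative computation at a tangency gives $\varphi_r'=-\alpha_{r-1}^{\,r}>0$ for odd $r$, which points the wrong way. So an honest proof needs extra structure beyond the root-sequence lemmas.

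What I would try first is an explicit identity from the combinatorics of $g_{r,q}=\sum_{a+b\ge r}\binom ra\binom qb x^a y^b$. The aim is an expression for $f_{r,q,t}$ at a root of $f_{r-1,q,t}$, for instance by writing
\[
f_{r,q,t}-(x+1)\,f_{r-1,q,t}
\]
in terms of $g_{r-1,q-1}$ and $t x^{r-1}$ only. Using $f_{r-1,q,t}(\alpha_{r-1})=0$, this would leave terms whose signs follow from Lemma~\ref{lem:known}: $\beta_{r,q}<\beta_{r+1,q}$, and the monotonicity of $\gamma_{r,q}$ in $y$. If no such identity works cleanly, the fallback is a direct estimate of $\varphi_r$ on the interval $[\beta_{r-1},\beta_r]$ that contains $\alpha_{r-1}$ by \eqref{eq:april3}.
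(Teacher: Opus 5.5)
Your reductions are correct and use the same ingredients as the paper's proof. The cases $r>q$, $r$ even (by \eqref{eq:april3}) and $t=0$ are fine. For odd $r\le q$, \eqref{eq:april7} is equivalent to $\varphi_r(t)=f_{r,q,t}(\alpha_{r-1,q,t}(y),y)\le 0$, because $\partial_x f_{r,q,t}=r f_{r-1,q,t}>0$ on $(\alpha_{r-1,q,t},\infty)$. Lemma~\ref{lem:april10} then forces any violation to be a jump of $\alpha_{r,q,t}$ below $\alpha_{r-2,q,t}$.

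\textbf{The gap.} The odd case is never proved, and the deformation in $t$ cannot prove it. Since $\partial_x f_{r,q,t}=rf_{r-1,q,t}$ vanishes at $\alpha_{r-1,q,t}$ for every $t$, not only at a tangency, you get $\varphi_r'(t)=-\alpha_{r-1,q,t}(y)^{r}>0$ on all of $[0,1]$. (Note $\alpha_{r-1,q,t}<0$, since $f_{r-1,q,t}(0,y)>0$ for $r-1\le q$.) So $\varphi_r$ is strictly increasing in $t$. The claim for all $t$ is therefore equivalent to $\varphi_r(1)\le 0$, and the inequality at $t=0$ says nothing about $t=1$. A first violation would be a double root of $f_{r,q,t^*}$ at $\alpha_{r-1,q,t^*}$ followed by a downward jump of the largest root, and this is compatible with \eqref{eq:april3} and Lemma~\ref{lem:april10}.

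The paper's proof is this same continuity argument. It excludes the jump only by asserting that the largest real roots depend continuously on $(t,y)$. Your tangency computation shows that this continuity fails exactly in that configuration, so you cannot close Step 3 by appealing to it.

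\textbf{What is missing} is a direct proof of $\varphi_r(1)\le0$ for all $y>0$ and all odd $3\le r\le q$. An identity of the kind you wanted exists, though it involves a bottom part rather than $g_{r-1,q-1}$. Comparing coefficients gives $g_{r,q}=(1+x)g_{r-1,q}-E_{r-1}$ with $E_{r-1}(x,y)=\sum_{c+b=r-1}\binom{r-1}{c}\binom{q}{b}x^{c}y^{b}$. Hence $f_{r,q,t}=(1+x)f_{r-1,q,t}-\bigl(E_{r-1}-tx^{r-1}\bigr)$ and
\[
\varphi_r(t)=-\bigl(E_{r-1}(x,y)-t\,x^{r-1}\bigr)\big|_{x=\alpha_{r-1,q,t}(y)} .
\]
Here $E_{r-1}-tx^{r-1}$ is the lowest-degree homogeneous part of $f_{r-1,q,t}$. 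So the odd case says that the bottom part of $f_{r-1,q,1}$ is nonnegative at the boundary point $(\alpha_{r-1,q,1}(y),y)$.

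This does not follow from the root orderings you list. As $y\to0^+$, $\alpha_{r-1,q,1}(y)/y$ tends to the largest root of this bottom part (compare the expansion in Lemma~\ref{lem:endpoint-verification}), so the sign is decided at second order. The bound $\alpha_{r-1}\ge\beta_{r-1}$ from \eqref{eq:april3} is too weak: for $r=q=3$ the bottom part is $3y(2x+y)$, which is negative at $x=\beta_{2,3}(y)\approx(\sqrt6-3)y$.

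Your fallback also assumes $\alpha_{r-1}\le\beta_r$, which \eqref{eq:april3} does not give. That bound follows only from the claim itself combined with $\alpha_{r,q,t}\le\beta_{r,q}$ for odd $r$. A complete proof must establish this second-order inequality for general odd $r$.
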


\begin{proof} When $r$ is even, the claim follows from (\ref{eq:april3}). We assume now $r$ is odd.
We use a continuity method argument.

\textbf{Step 1.} Equation~\eqref{eq:april7} holds at $t=0$. For $y=1$ and fixed $r,q$, the largest real roots $\alpha_{r,q,t}(1)$ depend continuously on $t$. Therefore, there exists $t_{0}\in(0,1)$ such that \eqref{eq:april7} holds for all $0\leq t<t_{0}$.

\textbf{Step 2.} Fix $t\in(0,t_{0})$. We claim that \eqref{eq:april7} holds for every $y>0$, not just for $y=1$.

Assume, in  contradiction, that this fails. Then there exist $y>0$ and an index $r$ such that
\[
\alpha_{r+1,q,t}(y)<\alpha_{r,q,t}(y).
\]
Let
\[
Y=\{\,y\neq 1:\ \alpha_{r+1,q,t}(y)<\alpha_{r,q,t}(y)\text{ for some }r\,\}.
\]
Choose $y_{0}\in Y$ such that $|y_{0}-1|=\inf\{|y-1|:\ y\in Y\}$. Then $y_{0}>0$. By the continuity of the largest real roots with respect to $y$, we have
\begin{equation}
\alpha_{r+1,q,t}(y_{0})\geq \alpha_{r,q,t}(y_{0}).
\label{eq:bigger}
\end{equation}
On the other hand, by Lemma~\ref{lem:april10}, any first failure of monotonicity forces
\[
\alpha_{r+1,q,t}(y_{0})\leq \alpha_{r-1,q,t}(y_{0})\le\alpha_{r,q,t}(y_{0}),
\]
which indicates $$\alpha_{r-1,q,t}(y_0)=\alpha_{r,q,t}(y_0)=\alpha_{r+1,q,t}(y_0),$$ 
which contradicts  (\ref{eq:april3}).
Hence, such $y$ does not  exist, and \eqref{eq:april7} holds for all $y>0$.

\textbf{Step 3.} Let
\[
D:=\{t\in [0,1]: \text{the conclusion of Step 2 holds for this } t\}.
\]
Run the same argument as in Step 2, $D$ is open. It is also closed, since the roots $\alpha_{r,q,t}(y)$ depend continuously on $(t,y)$.

Hence $D=[0,1]$, and  $1\in D$. This completes the proof.
\end{proof}

We now turn to vertical slice of the region. First, we have

\begin{lem}
    \label{lem:endpoint-verification}
Fix $r,q\ge 1$. Let $\alpha_{r,q}(y)$ and $h_{r,q}=\partial_y f_{r,q}$ be defined as before. 
Then
\[
h_{r,q}(\alpha_{r,q}(y),y)\ge0
\]
for all sufficiently small $y>0$ and all sufficiently large $y>0$.
\end{lem}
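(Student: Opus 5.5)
Throughout, write $f=f_{r,q}=g_{r,q}-x^{r}$ with $g_{r,q}(x,y)=\sum_{j\ge r}\sum_{a+b=j}\binom{r}{a}\binom{q}{b}x^ay^b$. We have $h_{r,q}=\partial_y g_{r,q}$, and $\alpha(y)=\alpha_{r,q,1}(y)\le 0$ by Lemma~\ref{lem:april5}. The plan is to study the two regimes $y\to0^+$ and $y\to\infty$ separately by asymptotic expansion of $\alpha(y)$ and of $h_{r,q}(\alpha(y),y)$. No global structure is needed; only the leading terms in each limit matter.

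\emph{Small $y$.} Expand in powers of $y$:
\[
f(x,y)=\bigl[(1+x)^r-x^r\bigr]\,y^{0}\cdot[\text{terms with }a=r]+\cdots .
\]
More precisely, collect the $y^0$ coefficient. The only $y^0$ monomials allowed are $x^a$ with $a\ge r$, so this coefficient is $x^r-x^r=0$. The $y^1$ coefficient is $q\sum_{a\ge r-1}\binom{r}{a}x^a=q\,x^{r-1}(x+r)$.

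Hence $f(x,y)=y\bigl(q x^{r-1}(x+r)+O(y)\bigr)$. As $y\to0^+$, the largest real root $\alpha(y)$ tends to the largest root of $x^{r-1}(x+r)$, namely $0$ when $r\ge2$ (or $-r$ when $r=1$). In either case we then evaluate
\[
h_{r,q}(x,y)=\partial_y g_{r,q}=q\,x^{r-1}(1+x)+O(y)\quad\text{for }x\text{ near }\alpha.
\]
For $r=1$ the limit is $\alpha\to-1$, which gives $h\to q\cdot0$, so a first-order correction is needed. For $r\ge2$ the quantity $x^{r-1}$ is small, so we expand $\alpha(y)=-c\,y^{\theta}+\dots$ via a Newton polygon of $f$ near $(0,0)$ and check the sign of the leading term of $h$. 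I expect the Newton polygon to yield $\alpha\sim -(\text{const})y$ when $q<r$, and $\alpha=0$ exactly when $q<r$ since $f(0,y)=0$ (in that case $h(0,y)=\partial_y\sum_{b\ge r}\binom{q}{b}y^b=0$, which is fine). For $q\ge r$ we compare $h$ with $f/y$ at the root using $f(\alpha,y)=0$.

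\emph{Large $y$.} Rescale with $x=y\,s$. The top-degree part of $f$ is $x^ry^q$ (for $r\le q$, or in general $x^r y^q$ times lower-degree corrections). Dividing by $y^{r+q}$ gives
\[
f\approx y^{\,n}\,s^{r}+\dots .
\]
Instead, I would note that $f-g=-x^r$ has degree $r<r+q$, so for large $y$ the root $\alpha(y)$ is a perturbation of $\beta_{r,q}(y)$, the largest root of $g$, which Lemma~\ref{lem:known} controls (it is $0$ or negative). We then use $g$'s Gårding property, $\C_g\subset\C_{\partial_y g}$, to get $h(\beta,y)\ge0$, and verify that the perturbation moves $\alpha$ in the direction where $h$ stays nonnegative. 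Here Corollary (\ref{eq:april3}) gives $\alpha\ge\beta$ for even $r$, and $\alpha\ge\beta$ implies $h(\alpha)\ge0$ whenever $\alpha\ge\gamma_{r,q}(y)$. For odd $r$ we have $\alpha\le\beta$, so we need a quantitative estimate that $\beta-\alpha=O(y^{-k})$ is smaller than $\beta-\gamma$. This last comparison, especially in the odd case and for $r=1$ at small $y$ where $h$ vanishes to leading order, is the main obstacle; I would handle it by explicit expansion of $g(x,y)=\sum_{j\ge r}\sigma$-type sums in $y^{-1}$.
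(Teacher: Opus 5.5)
There is a genuine gap: the odd-$r$ case, which is where the lemma has content, is never established. The parts you do complete are correct, and the even case is handled better than in the paper.

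\emph{What your proposal does establish.} For $r\le q$ even, \eqref{eq:april3} gives $\alpha_{r,q}(y)\ge\beta_{r,q}(y)$. Slicing $\C_{g_{r,q}}\subseteq\C_{\pdv_y g_{r,q}}$ at height $y>0$ gives $\beta_{r,q}\ge\gamma_{r,q}$, because both components contain the open quadrant and so their slices are $(\beta_{r,q}(y),\infty)$ and $(\gamma_{r,q}(y),\infty)$. Hence $h_{r,q}(\alpha_{r,q}(y),y)\ge0$ for \emph{every} $y>0$, with no asymptotics at all; this is a cleaner route than the paper's expansions. The case $q<r$ is also fine: $\alpha\equiv0$ and $h_{r,q}(0,y)=0$. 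For $r=1$ no ``first-order correction'' is needed, since $f_{1,q}=((1+y)^q-1)(1+x)$ exactly, so $\alpha\equiv-1$ and $h\equiv0$. What remains is odd $r\ge3$ with $q\ge r$.

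\emph{Where the gap is.} In that remaining case the proposal proves nothing in either regime. The small-$y$ paragraph stops at ``expand via a Newton polygon and check the sign of the leading term''. The large-$y$ paragraph explicitly defers the comparison of $\beta-\alpha$ with $\beta-\gamma$. The missing content is the leading-order computation together with its sign, which is what the paper's proof consists of.

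For small $y$, put $x=yz$. Then $f_{r,q}(yz,y)=y^rQ(z)+O(y^{r+1})$ with $Q(z)=\sum_{j=1}^r\binom qj\binom rj z^{r-j}$, so $\alpha=\rho y+O(y^2)$, where $\rho<0$ is the largest root of $Q$. The identity $\sum_j j\binom qj\binom rj z^{r-j}=rQ(z)-zQ'(z)$ is Euler's relation for the degree-$r$ part; this is what your ``compare $h$ with $f/y$'' has to become. It yields $h(\alpha,y)=-\rho Q'(\rho)\,y^{r-1}+O(y^r)\ge0$. Note also that $h(x,0)=qx^{r-1}(x+r)$, not $qx^{r-1}(1+x)$.

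For large $y$, put $x=u-1$. Then $f_{r,q}(u-1,y)=u^r((1+y)^q-1)-\sum_{j=1}^{r-1}\binom qj y^jD_{r,j}(u)$ with $D_{r,r-1}=1$. Hence $u^r\sim\binom q{r-1}y^{r-1-q}$ and $h(\alpha,y)=(q-r+1)\binom q{r-1}y^{r-2}+O(y^{r-3})>0$.

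In your own terms: $\alpha$ and $\beta$ share this leading behaviour, while $\gamma$ satisfies $u^r\sim\frac{r-1}{q}\binom q{r-1}y^{r-1-q}$. That is exactly the quantitative separation you named as the main obstacle. Until it is actually computed, the odd case is not proved.
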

\begin{proof}
If $q<r$, then $f_{r,q}(0,y)=0$, so $\alpha_{r,q}(y)=0$ and
$h_{r,q}(\alpha_{r,q}(y),y)=0$.

\medskip
\noindent\textbf{Small $y$.}
Assume $q\ge r$ and set $x=yz$. Then
\[
f_{r,q}(yz,y)=y^r Q_{r,q}(z)+O(y^{r+1}),
\]
where
\[
Q_{r,q}(z)=\sum_{j=1}^r \binom qj \binom r{r-j} z^{r-j}.
\]
Let $\rho$ be the largest real root of $Q_{r,q}$. Then
\[
\alpha_{r,q}(y)=\rho y+O(y^2).
\]
Substituting into $h_{r,q}$ gives
\[
h_{r,q}(\alpha_{r,q}(y),y)
=
y^{r-1}\!\sum_{j=1}^r j\binom qj \binom r{r-j}\rho^{r-j}
+O(y^r).
\]
Since
\[
\sum_{j=1}^r j\binom qj \binom r{r-j} z^{r-j}
=
rQ_{r,q}(z)-zQ'_{r,q}(z),
\]
the leading term is $-\rho Q'_{r,q}(\rho)\ge0$, and hence,
$h_{r,q}(\alpha_{r,q}(y),y)\ge0$ for small $y$.

\medskip
\noindent\textbf{Large $y$.}
Write $x=u-1$. Then
\[
f_{r,q}(u-1,y)
=
u^r((1+y)^q-1)
-
\sum_{j=1}^{r-1}\binom qj y^j D_{r,j}(u),
\]
with $D_{r,r-1}(u)=1$. Writing $\alpha_{r,q}(y)=-1+u(y)$ and using
$f_{r,q}(\alpha_{r,q}(y),y)=0$, we obtain
\[
u(y)^r
=
\binom q{r-1}y^{r-1-q}
+O(y^{r-2-q}).
\]
Then
\[
h_{r,q}(\alpha_{r,q}(y),y)
=
q\binom q{r-1}y^{r-2}
-
(r-1)\binom q{r-1}y^{r-2}
+O(y^{r-3}),
\]
so
\[
h_{r,q}(\alpha_{r,q}(y),y)
=
(q-r+1)\binom q{r-1}y^{r-2}
+O(y^{r-3})\ge0
\]
for large $y$.
\end{proof}
\begin{prop}
\label{prop:april10}$\alpha_{r,q}(y)$ is a non-increasing function
of $y$. 
\end{prop}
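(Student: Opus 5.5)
We prove monotonicity by implicit differentiation along the curve $x=\alpha_{r,q}(y)$, reducing it to a sign condition on $h_{r,q}$, and then use a continuity/connectedness argument in $y$, seeded by Lemma~\ref{lem:endpoint-verification} at both ends. Write $\alpha(y)=\alpha_{r,q,1}(y)$ and $f=f_{r,q}$. The case $q<r$ is trivial, since then $\alpha\equiv 0$; so assume $q\ge r$. On the curve $f(\alpha(y),y)=0$ we have $\partial_x f(\alpha(y),y)=r f_{r-1,q}(\alpha(y),y)\ge 0$, because $\alpha_{r-1}\le\alpha_r$ by Proposition~\ref{prop:april6}. At points where this derivative is positive, $\alpha$ is differentiable and
\[
\alpha'(y)=-\frac{h_{r,q}(\alpha(y),y)}{r\,f_{r-1,q}(\alpha(y),y)}.
\]
Thus it suffices to show that
\[
h_{r,q}(\alpha(y),y)\ge 0 \qquad \text{for all } y>0,
\]
and to treat separately the degenerate points where $\alpha_{r-1}(y)=\alpha_r(y)$.

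\textbf{Step 1: sign of $h$ on the curve.} We argue by contradiction on the open set
\[
Y=\{y>0:\ h_{r,q}(\alpha(y),y)<0\}.
\]
By Lemma~\ref{lem:endpoint-verification}, $Y$ is bounded away from $0$ and $\infty$. Take a connected component $(y_1,y_2)$ of $Y$. At both endpoints $h_{r,q}(\alpha(y_i),y_i)=0$, and on $(y_1,y_2)$ the formula above makes $\alpha$ strictly increasing. Hence $\alpha(y_1)<\alpha(y_2)$.

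We now exploit the geometry of $h_{r,q}=\partial_y g_{r,q}\in\GS$. Its Gårding component is $\{x>\gamma_{r,q}(y)\}$, with $\gamma$ nonincreasing by Lemma~\ref{lem:known}(4). The sign of $h$ off this component is controlled by the root sequence (MRS) of $h(\cdot,y)$. The point $(\alpha(y),y)$ enters the region $\{h<0\}$ at $y_1$ and leaves it at $y_2$, while moving to the right. Since $\gamma$ moves weakly left, the point cannot cross $\{x=\gamma(y)\}$ from inside back out. So the excursion must occur entirely in the region $x\le\gamma(y)$.

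There, I would compare with $\beta_{r,q}$: we have $h=\partial_y g$ and $f=g-x^r$, so $f(\alpha,y)=0$ gives $g(\alpha,y)=\alpha^r$. I would use the parity bounds~\eqref{eq:april3} to show that along the curve $\alpha(y)\ge\gamma(y)$, for instance by checking that $f_{r,q}(\gamma(y),y)\le 0$. That inequality should follow from Lemma~\ref{lem:known}(4), $\partial_y h(\gamma,y)\le 0$, after integrating in $y$ from a point where the inequality is known. Once $\alpha(y)\ge\gamma(y)$ holds, we get $h(\alpha,y)\ge 0$, which contradicts $Y\neq\emptyset$.

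\textbf{Step 2: degenerate points.} At points where $f_{r-1,q}(\alpha(y),y)=0$, the roots satisfy $\alpha_{r-1}=\alpha_r$. If $r$ is even, \eqref{eq:april3} makes such points impossible. If $r$ is odd, I would argue that $\alpha$ is still continuous and locally nonincreasing by a one-sided comparison: $f$ is increasing in $y$ where $h\ge 0$, so the zero set can only move left as $y$ increases. An alternative is a perturbation in $t$ using $f_{r,q,t}$ and continuity, as in Step 3 of Proposition~\ref{prop:april6}.

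The main obstacle is Step 1, namely establishing $\alpha_{r,q}(y)\ge\gamma_{r,q}(y)$ (equivalently $f_{r,q}\le 0$ at $x=\gamma_{r,q}(y)$). I would attack it first by the explicit formula $f_{r,q}(u-1,y)$ from the large-$y$ computation in Lemma~\ref{lem:endpoint-verification}. If that fails, I would run a continuity-in-$t$ argument exactly parallel to Proposition~\ref{prop:april6}: start from $t=0$, where $\alpha=\beta$ and the claim holds because $g$ is Gårding, and show the set of good $t$ is open and closed.
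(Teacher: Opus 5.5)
Your reduction and setup match the paper's. Since $\partial_x f_{r,q}\ge 0$ at the largest root, it suffices to show $h_{r,q}(\alpha_{r,q}(y),y)\ge 0$. You then take a component $(y_1,y_2)$ of $Y$, bounded by Lemma~\ref{lem:endpoint-verification}, on which $\alpha_{r,q}$ strictly increases.

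After that, however, Step~1 stalls. You only conclude that the excursion lies in $\{x\le\gamma_{r,q}(y)\}$, which is automatic. The entire content of the proposition, $\alpha_{r,q}\ge\gamma_{r,q}$, is deferred to an unproved inequality $f_{r,q}(\gamma_{r,q}(y),y)\le 0$ plus two fallback strategies.

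The proposed integration does not go through as stated. Along the curve, $\frac{d}{dy}f_{r,q}(\gamma_{r,q}(y),y)=r\,f_{r-1,q}(\gamma_{r,q}(y),y)\,\gamma_{r,q}'(y)$. You would therefore need the sign of $f_{r-1,q}$ at $\gamma_{r,q}$, i.e.\ a comparison of $\gamma_{r,q}$ with $\alpha_{r-1,q}$, and nothing in your plan supplies it. The inequality $\partial_y h_{r,q}(\gamma_{r,q},y)\le 0$ only controls $\gamma_{r,q}'$. As written, the proposal is not a proof.

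The missing idea is that the excursion starts on the boundary curve of $\mathcal{C}_{h_{r,q}}$, i.e.\ $\alpha_{r,q}(y_1)=\gamma_{r,q}(y_1)$; the paper uses this at both endpoints. Granting it, $\alpha_{r,q}-\gamma_{r,q}$ is strictly increasing on $[y_1,y_2)$, because $\alpha_{r,q}$ strictly increases there and $\gamma_{r,q}$ is nonincreasing by Lemma~\ref{lem:known}(4). Hence $\alpha_{r,q}>\gamma_{r,q}$, and so $h_{r,q}(\alpha_{r,q}(y),y)>0$, on $(y_1,y_2)$, contradicting $y\in Y$. The paper writes this as $\alpha_{r,q}(y_2)>\alpha_{r,q}(y_1)=\gamma_{r,q}(y_1)\ge\gamma_{r,q}(y_2)=\alpha_{r,q}(y_2)$. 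Your remark that the point, moving right while $\gamma_{r,q}$ moves weakly left, cannot pass from $\{x\ge\gamma_{r,q}\}$ into $\{x<\gamma_{r,q}\}$ is exactly this contradiction, once you know where the excursion begins.

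What needs justification, and what the paper takes for granted, is that the zero $h_{r,q}(\alpha_{r,q}(y_1),y_1)=0$ is the largest zero of $h_{r,q}(\cdot,y_1)$ rather than a smaller one. This follows from the induction on $r$ in the proof of Theorem~\ref{thm:Uhat}:
\begin{itemize}
\item By \eqref{eq:april1}, $\partial_x h_{r,q}=r\,h_{r-1,q}$, so $h_{r,q}(\cdot,y)$ is nondecreasing on $[\gamma_{r-1,q}(y),\infty)$.
\item $\alpha_{r,q}\ge\alpha_{r-1,q}$ by Proposition~\ref{prop:april6}.
\item $\alpha_{r-1,q}\ge\gamma_{r-1,q}$ by the G{\aa}rding property of $f_{r-1,q}$, via $\mathcal{C}_{f_{r-1,q}}\subset\mathcal{C}_{h_{r-1,q}}$.
\end{itemize}
Consequently $h_{r,q}(\alpha_{r,q},y)\ge 0\iff\alpha_{r,q}\ge\gamma_{r,q}$, and a zero at $\alpha_{r,q}(y_1)$ forces $\alpha_{r,q}(y_1)=\gamma_{r,q}(y_1)$.

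With this in hand, the comparison with $\beta_{r,q}$, the parity bounds, and the continuity-in-$t$ fallback are unnecessary. The degenerate points in your Step~2 are a side issue that the paper also treats only implicitly.
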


\begin{proof}
Since $\alpha_{r,q}(y)$ is the largest real root of $f_{r,q}(x,y)$,
by the definition of $\gamma_{r,q}(y)$, and the fact that $h_{r,q}$
is Gårding,  to establish the result, it is sufficient to
prove that for $y>0$,
\begin{equation}
\partial_{y}f_{r,q}(\alpha_{r,q}(y),y)\geq0.\label{eq:star}
\end{equation}
Or, equivalently, $\gamma_{r,q}(y)\leq\alpha_{r,q}(y).$

Suppose, for the sake of contradiction, that (\ref{eq:star}) fails. Then there
exists $y_{0}$ such that 
\begin{equation}
\partial_{y}f_{r,q}(\alpha_{r,q}(y_{0}),y_{0})<0.\label{eq:<}
\end{equation}
By Lemma~\ref{lem:endpoint-verification}, for $y\to0^{+}$ and for $y\to\infty$,
(\ref{eq:star}) holds. Hence, by continuity, there exists $y_{1}<y_{0}<y_{2}$
such that 
\[
\partial_{y}f_{r,q}(\alpha_{r,q}(y_{i}),y_{i})=h_{r,q}(\alpha_{r,q}(y_{i}),y_{i})=0\quad(i=1,2),
\]
and 
\begin{equation}
\partial_{y}f_{r,q}(\alpha_{r,q}(y),y)\leq0\quad\text{for }y\in(y_{1},y_{2}).\label{eq:<=00003D}
\end{equation}
Thus,
\begin{equation}
\alpha_{r,q}(y_{i})=\gamma_{r,q}(y_{i})\quad(i=1,2),\label{eq:=00003D}
\end{equation}
By Lemma \ref{lem:known}, $\gamma_{r,q}(y)$ is strictly decreasing.
Therefore, $\gamma_{r,q}(y_{1})\geq\gamma_{r,q}(y_{2}).$ However,
by (\ref{eq:<}) and (\ref{eq:<=00003D}), $\alpha_{r,q}(y_{1})<\alpha_{r,q}(y_{2}),$
contradicting (\ref{eq:=00003D}). This proves the claim. 
\end{proof}
Finally, we establish the proof of Theorem \ref{thm:Uhat}.
\begin{proof}[Proof of Theorem \ref{thm:Uhat}]
 The Gårding property of $f_{r,q}$ follows by a direct induction on
$r$, Propositions~\ref{prop:april10}, \ref{prop:april6} and Definition
\ref{def:gard_recursive}. The Gårding property of $S_{\hat{U}}$
then follows from Theorem \ref{thm:polarization_keeps_garding}.
\end{proof}

\section{Proof of Theorem~\ref{thm:Any-matroid-on6elements}}\label{6elements}
In this appendix, we give the proof of Theorem~\ref{thm:Any-matroid-on6elements}.
By the classification of small matroids, and the reduction via minors
and 2-sums, it suffices to verify the Gårding property for SSGFs of
3-connected matroids on 6 elements. There are only $5$ such matroids.
See Figure \ref{fig:3-connnected-rank-3} for their geometric representations.
By Example \ref{exa:uniform}, uniform matroids are $S$-\gar{}.
Therefore, it  remains to check the remaining cases: $M(K_{4})$, $W^{3},$
$P_{6}$ and $Q_{6}$. 

Case 1. $M=M(K_{4})$. A direct computation shows that
\[
S_{M(K_{4})}=f_{K_{4}}(\mathbf{v})-g_{K_{4}}(\mathbf{v}),
\]
where 
\[
\begin{aligned}
f_{K_{4}}(\mathbf{v})&=v_{123}\bigl(v_{456}-1\bigr)-\bigl(v_{4}+v_{5}+v_{6}-3\bigr),\\
g_{K_{4}}(\mathbf{v})&=v_{1}\bigl(v_{56}+v_{4}-2\bigr)+v_{2}\bigl(v_{46}+v_{5}-2\bigr)+v_{3}\bigl(v_{45}+v_{6}-2\bigr)-3\bigl(\sum_{i=4}^{6}v_{i}-3\bigr).
\end{aligned}
\]
Since $v_{4}+v_{5}+v_{6}-3\vtl v_{456}-1$, $f_{K_{4}}(\mathbf{v})$
is \gar{}. For $\v\in\C_{f_{K_{4}}},$ $v_{i}>0$ for $i\in\{1,2,3\}$
and $v_{i}v_{j}+v_{k}-2>0$ for $\{i,j,k\}=\{4,5,6\}$. Applying the
AM-GM inequality yields 
\begin{equation}
g_{K_{4}}\geq3\left[v_{123}\bigl(v_{56}+v_{4}-2\bigr)\bigl(v_{46}+v_{5}-2\bigr)\bigl(v_{45}+v_{6}-2\bigr)\right]^{\frac{1}{3}}-3\bigl(v_{4}+v_{5}+v_{6}-3\bigr).\label{eq:new11}
\end{equation}
Observe that for any real numbers $x,y,z$,
\begin{equation}
(xy+z-2)(yz+x-2)(xz+y-2)-(xyz-1)(x+y+z-3)^{2}=(x-1)^{2}(y-1)^{2}(z-1)^{2}.\label{eq:algebrafact}
\end{equation}
(\ref{eq:new11}) and (\ref{eq:algebrafact}) imply that $g_{K_{4}}\geq0$
for $\v\in\C_{f}$, and $g_{K_{4}}\vtl f_{K_{4}}$. As a result, $S_{M(K_{4})}(\mathbf{v})$
is \gar{} by Theorem \ref{thm:dominategardnew}.

Case 2. $M=W^{3}$. We have the following decomposition:
\[
S_{W^{3}}=v_{2}f_{W^{3}}(\mathbf{v})-g_{W^{3}}(\mathbf{v}),
\]
where
\[
\begin{aligned}
f_{W^{3}}(\mathbf{v})&=v_{13}\bigl(v_{456}-1\bigr)-\bigl(v_{4}+v_{5}+v_{6}-3\bigr),\\
g_{W^{3}}(\mathbf{v})&=v_{1}\bigl(v_{56}+v_{4}-2\bigr)+v_{3}\bigl(v_{45}+v_{6}-2\bigr)+\bigl(v_{46}+v_{5}-2\bigr)-3\bigl(\sum_{i=4}^{6}v_{i}-3\bigr).
\end{aligned}
\]
Then the similar argument as in Case 1 shows that $f_{W^{3}}(\mathbf{v})\in\G$
and $g_{W^{3}}\vtl f_{W^{3}}.$ Therefore, $S_{W^{3}}$ is \gar{}
by Theorem \ref{thm:iffdominate}.

Case 3. $M=Q_{6}$. By Theorem \ref{thm:SSGF} , 
\[
\begin{aligned}S_{Q_{6}} & =v_{123456}-\bigl(v_{123}+v_{345}\bigr)\\
 & \quad-\bigl(v_{14}+v_{15}+v_{16}+v_{24}+v_{25}+v_{26}+v_{36}+v_{46}+v_{56}\bigr)\\
 & \quad+2v_{3}+3(v_{1}+v_{2}+v_{4}+v_{5})+4v_{6}-8.
\end{aligned}
\]
Define
\begin{align*}
F(x,y,z,u):= & S_{Q_{6}}|_{v_{1}=v_{2}=x,v_{3}=y,v_{4}=v_{5}=z,v_{6}=u}\\
= & f(x,y,z)u-g(x,y,z),
\end{align*}
where 
\[
f=(x^{2}z^{2}-1)y-2(x+z-2),\ g=y(x^{2}+z^{2}-2)+4xz-6(x+z)+8.
\]
We claim $F$ is \gar{}. A direct computation shows that $f(x,y,z)\in\GS[x,y,z]$
and $\pdv_{i}g\vtl\pdv_{i}f$ for $i\in\{x,y,z\}$. If $(x,y,z)\in\C_{f}$,
then $y>\frac{2(x+z-2)}{x^{2}z^{2}-1},x>0,z>0,xz>1$. 
\begin{equation}
g|_{\C_{f}}=\frac{2(x-1)^{2}(z-1)^{2}(2xz+x+z)}{x^{2}z^{2}-1}+\left(y-\frac{2(x+z-2)}{x^{2}z^{2}-1}\right)(x^{2}+z^{2}-2)>0,\label{eq:addd1}
\end{equation}
which leads to $g\vtl f$; and $F(x,y,z,u)\in\GS[x,y,z,u].$ Since
$S_{Q_{6}}(\mathbf{v})$ is a polarization of $F(x,y,z,u)$, it follows
from Theorem \ref{thm:polarization_keeps_garding} that $S_{Q_{6}}\in\G$.

Case 4. $M=P_{6}$. This follows from Theorem \ref{thm:Uhat}. 

We have finished the proof of the theorem.

\section{Proof of Theorem~\ref{thm:rank4-list}}\label{bgarding}

We start with a technical result.

\begin{lem}\label{lem:quartic-reduced}
Let \(F(x,y)\) be homogeneous of degree \(4\) with non-negative coefficients.
Set \(F_x=\pdv_xF\) and \(F_y=\pdv_yF\). Let 
\[
f_1(t)=F_x(t,1),\qquad
f_2(t)=F_y(1,t),
\]
and
\[
f_3(t)=F_y(t,1),\qquad
f_4(t)=F_x(1,t).
\]
Then $F$ is \gar{} if and only if \(f_1\) and \(f_2\)  are real stable and satisfy
\[
r(f_1)\leq  r(f_3),\qquad
r(f_2)\leq r(f_4).\]
\end{lem}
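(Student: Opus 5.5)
The plan is to use the recursive characterization, Definition~\ref{def:gard_recursive} together with Theorem~\ref{thm:recursivedef}. It says $F\in\GS$ if and only if three things hold: $F_x,F_y\in\GS$, $\widehat{\C}_F\neq\emptyset$, and $\widehat{\C}_F\subseteq\widehat{\C}_{F_x}\cap\widehat{\C}_{F_y}$. Since $F_x$ and $F_y$ are homogeneous cubics with nonnegative coefficients, Proposition~\ref{H3} gives $F_x\in\GS$ if and only if $F_x$ is real stable. For a bivariate homogeneous polynomial, real stability is equivalent to real-rootedness of one dehomogenization, by Lemma~\ref{lem:stablepolyclosurethe}(3) and (5). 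So the first step is to show that $F_x$ is real stable if and only if $f_1(t)=F_x(t,1)$ is real-rooted, and likewise $F_y$ if and only if $f_2(t)=F_y(1,t)$ is real-rooted. One has to check the degree drop: if the $t^3$ coefficient vanishes, homogenization still only adds roots at infinity, so real-rootedness is unaffected.

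Next I will describe the components geometrically, using homogeneity and PRT. Because all coefficients are nonnegative, $\Gamma_2^+\subset\{F>0\}$ and likewise for $F_x,F_y$. Hence each $\widehat{\C}$ is a cone containing $\Gamma_2^+$, which settles the condition $\widehat{\C}_F\neq\emptyset$. The cone $\widehat{\C}_G$ of a homogeneous $G$ is determined by two boundary rays, one in the second quadrant and one in the fourth.
\begin{itemize}
\item The boundary ray through $(t,1)$ with $t<0$ sits at the largest real root of $G(t,1)$.
\item The boundary ray through $(1,t)$ sits at the largest real root of $G(1,t)$.
\end{itemize}
Thus the inclusion $\widehat{\C}_F\subseteq\widehat{\C}_{F_x}$ reduces to two one-dimensional comparisons, $r(F(\cdot,1))\ge r(F_x(\cdot,1))$ and $r(F(1,\cdot))\ge r(F_x(1,\cdot))$, and similarly for $F_y$.

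The third step is to eliminate the comparisons involving $F$ itself, leaving only the stated inequalities among $F_x,F_y$. On the slice $y=1$ we have $\partial_t F(t,1)=F_x(t,1)=f_1$. Once $f_1$ is real-rooted with largest root $r(f_1)$, $F(\cdot,1)$ is increasing on $[r(f_1),\infty)$. The question is then whether $F(r(f_1),1)\le0$. Euler's identity $4F=xF_x+yF_y$ at $(r(f_1),1)$ gives $4F=f_3(r(f_1))$. This is $\le0$ exactly when $r(f_1)\le r(f_3)$: indeed $f_3$ has nonnegative coefficients and positive values beyond its largest root, and its sign just left of $r(f_3)$ must be controlled. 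That control comes from the stability of $F_y$ once $f_2$ is real-rooted. The slice $x=1$ works symmetrically, giving $r(f_2)\le r(f_4)$. After that, the remaining inclusions, the $F_y$-side on $y=1$ and the $F_x$-side on $x=1$, should be automatic: on the slice $x=1$ the partial $F_x(1,t)$ is not a $t$-derivative, and Euler's identity again relates it to $F$ and $F_y$.

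I expect the main obstacle to be this sign bookkeeping. The real content is showing that $f_3(r(f_1))\le0$ is equivalent to $r(f_1)\le r(f_3)$. That needs $f_3$ to be negative just below its largest root and not to have a second region of positivity between $r(f_1)$ and $r(f_3)$, which requires the MRS and real-rootedness information for $F_y$. The degenerate cases, such as vanishing leading coefficients or a common root at $0$, must also be handled separately. I would treat these by a continuity argument, using the closedness of $\G_+$ (Lemma~\ref{lem:G_+isclosed}) after polarizing.
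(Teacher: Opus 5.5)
Your forward direction is sound and coincides with the paper's. $F\in\GS$ makes $F_x,F_y$ stable cubics by Proposition~\ref{H3}. Euler's identity at $(r(f_1),1)$ then gives $4F(r(f_1),1)=f_3(r(f_1))\le 0$, so $r(f_1)\le r(f_3)$ because $f_3>0$ on $(r(f_3),\infty)$; likewise on $x=1$.

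The converse, however, has a genuine gap. On the slice $y=1$ you must show both $r(F(\cdot,1))\ge r(f_1)$ \emph{and} $r(F(\cdot,1))\ge r(f_3)$.

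For the first, you need $f_3(r(f_1))\le 0$, and you attribute this to the stability of $F_y$. But real-rootedness of $f_3$ says nothing about where $r(f_1)$ lies relative to the second root of $f_3$. This is a statement coupling $f_1$ and $f_3$, and you give no argument for it. It is true: Euler's identity for the cubic $F_x$ gives $f_3'(t)=3f_1(t)-tf_1'(t)\ge 0$ on $[r(f_1),0]$. That argument is not in your plan, though.

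More seriously, you call the second inclusion ``automatic'', whereas it is the \emph{stronger} one. Since $r(f_3)\ge r(f_1)$, knowing $F(r(f_1),1)\le 0$ only yields $r(F(\cdot,1))\ge r(f_1)$, not $r(F(\cdot,1))\ge r(f_3)$. The same inversion occurs on the slice $x=1$, where the inclusion you treat as automatic, $r(F(1,\cdot))\ge r(f_4)$, is again the stronger one.

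The missing idea is to evaluate Euler's identity at the root of the \emph{other} partial:
\[
4F(r(f_3),1)=r(f_3)\,f_1(r(f_3))+f_3(r(f_3))=r(f_3)\,f_1(r(f_3))\le 0 .
\]
This holds since $r(f_3)\le 0$ (nonnegative coefficients) and $f_1(r(f_3))\ge 0$ (because $r(f_3)\ge r(f_1)$). Hence $r(F(\cdot,1))\ge r(f_3)\ge r(f_1)$, which gives both inclusions on $y=1$ at once. Symmetrically, $4F(1,r(f_4))=r(f_4)\,f_2(r(f_4))\le 0$ handles $x=1$.

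This is exactly the paper's argument. It makes the sign bookkeeping you flag as the main obstacle unnecessary. It also removes any need for a polarization and Lemma~\ref{lem:G_+isclosed} limiting argument.
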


\begin{proof} "$\Leftarrow$" For simplicity, we denote $r_i=r(f_i)$ for $i=1,2,3,4$. Denote $\C$ the connected component of $\{f>0\}$ which contains the first quadrant.  If $f_1,f_2$ are real stable, then their homogenization $F_x,F_y$ are real stable. Thus, it suffices to prove that  $\C\subset \C_{F_x}\cap\C_{F_y}$. Since $F$ is homogeneous of degree 4, we have 
\[F(x,y)=xF_x+yF_y.\]
Since $r_3\leq0$, and $r_1\leq r_3$, we have $f_1(r_3)\geq0$ and \[F(r_3,1)=r_3f_1(r_3)\leq0.\]
Similarly, \[F(1,r_4)=r_4f_2(r_4)\leq0.\] Therefore, $\C\subset \{x-r_3y>0\}\cap\{y-r_4x>0\}.$ Since $r_1\leq r_3,r_2\leq r_4$, we have $$ \{(x,y):x-r_3y>0,y-r_4x>0\}=\C_{F_x}\cap\C_{F_y}$$ which implies $\C\subset \C_{F_x}\cap\C_{F_y}$.  

"$\Rightarrow$"  Suppose that $F$ is \gar{}. The partial derivatives \(F_x\) and \(F_y\) are homogeneous cubic \gar{} and hence real  stable by Proposition~\ref{H3}. Thus $f_1,f_2$ are real stable. Notice that 
\[F(r(f_1),1)=r(f_1)f_1(r(f_1))+f_3(r(f_1))=f_3(r(f_1)).\] Since $f_1\prec F(t,1)$, $f_3(r(f_1))\leq0.$ Hence, $r(f_1)\leq r(f_3)$. Similarly, we have $r(f_2)\leq r(f_4).$

% By Definition~\ref{def:gard_recursive}, it suffices to show that \(F_x,F_y\) are
% G{\aa}rding and that
% \[
% C_F\subseteq C_{F_x}\cap C_{F_y}.
% \]

% The partial derivatives \(F_x\) and \(F_y\) are homogeneous cubics. By Proposition~\ref{H3},
% homogeneous cubic G{\aa}rding polynomials are stable. Since
% \[
% F_x(t,1)=f_1'(t),\qquad F_y(1,t)=f_2'(t),
% \]
% and \(f_1,f_2\) satisfy MRS, both derivatives \(f_1'\) and \(f_2'\) are
% real-rooted. Hence \(F_x\) and \(F_y\) are stable, and therefore G{\aa}rding.

% For the cone inclusion, let \(H\) be homogeneous bivariate and set
% \[
% p(t)=H(t,1),\qquad q(t)=H(1,t).
% \]
% Then
% \[
% C_H\cap\{y=1\}=(r(p),\infty),\qquad
% C_H\cap\{x=1\}=(r(q),\infty).
% \]
% Applying this to \(F\), we obtain
% \[
% C_F\subseteq C_{F_y}
% \iff r(F(t,1))\ge r(F_y(t,1))
% \iff r(f_1)\ge r(f_3),
% \]
% and
% \[
% C_F\subseteq C_{F_x}
% \iff r(F(1,t))\ge r(F_x(1,t))
% \iff r(f_2)\ge r(f_4).
% \]
% The assumed inequalities yield
% \[
% C_F\subseteq C_{F_x}\cap C_{F_y},
% \]
% and hence \(F\) is G{\aa}rding.
\end{proof}

\begin{proof}[Proof of Theorem \ref{thm:rank4-list}]
The one- and two-term cases are immediate. The case \((0,4)\) is uniform.

For \((0,2)\),
\[
F(t,1)
=
\binom n4 + m\binom n3 t + \binom m2\binom n2 t^2,
\]
whose discriminant yields
\[
mn\le 5m+3n-9.
\]
By symmetry, \((2,4)\) gives \(mn\le 3m+5n-9\).

For \((1,3)\), writing \(F=Axy^3+Bx^2y^2+Cx^3y\), the condition in Lemma \ref{lem:quartic-reduced} reduces to
\[
B^2-3AC\ge 0
\iff
mn\le 5m+5n-13.
\]
The same condition governs \((0,3)\) and \((1,4)\).
\end{proof}

\begin{rem}
\label{rem:middle-stable}
For  $\mathcal B^{1,3}_{m,n}$, stability holds if and only if
\[
7mn \le 23m+23n-55.
\]
Thus the stability condition differs from the \(B\)-G{\aa}rding condition (\ref{asd}). For  every \(n \ge 6\), the corresponding matroid with \(m=5\)
is \(B\)-G{\aa}rding but not strong Rayleigh.
\end{rem}

\section{Proof of Theorem~\ref{F7csgf}}
\noindent\textbf{Acknowledgment.}
The results in this appendix were obtained in collaboration with Shouda Wang.

\begin{proof}[Proof of Theorem~\ref{F7csgf}]
We denote $f(\v)=C_{F_7}(\v-1)$. By the deletion-contraction formula \eqref{eq:CSGFDeletioncontraction} and Corollary \ref{thm:extension}, $\partial_i f\in\G$ for all $i\in[7]$. Let $\Omega=\partial \C_{\partial_7 f}$, and let $\Omega'=\{\v\in \Omega,\ \partial_{7i}f(\v)>0,\ i\in[6]\}$, which is a dense open set of $\Omega$.

For  $\v\in \Omega'$, $v_i\geq0$, $\partial_\alpha\partial_7 f>0$ for any nontrivial $\alpha\subset[6]$.
By Proposition~\ref{prop:testing method} and symmetry, it suffices to prove that for $\v\in\Omega'$, $f(\v)\leq 0$. 

We decompose the polynomial $f$ as a multi-affine function in the variables $v_6$ and $v_7$:
\[
f(v_1,\dots,v_7)
=
A\,v_6 v_7 + B\,v_6 + C\,v_7 + D,
\]
where the coefficients $A,B,C,D$ depend only on $(v_1,\dots,v_5)$ and are given by
\[
\begin{aligned}
A &= v_{12345} - v_{23} - v_{45} - v_1 + 2=f_{67}, \\[4pt]
B &= -\,v_{134} - v_{125} - v_{24} - v_{35}
+ 2\,\sigma_1(v_1,\dots,v_5) - 6, \\[4pt]
C &= -\,v_{135} - v_{124} - v_{25} - v_{34}
+ 2\,\sigma_1(v_1,\dots,v_5) - 6, \\[4pt]
D &= -\,v_{2345} - v_{123} - v_{145}
+ 2\,\sigma_2(v_1,\dots,v_5)
- 6\,\sigma_1(v_1,\dots,v_5) + 13.
\end{aligned}
\]

Denote the Rayleigh difference $$R_{6,7}=\partial_6 f  \pdv_7f-f\partial_{67}f=BC-AD.$$ For $\v\in \Omega'$, $f_{67}>0$, and $-f(\v)f_{67}=R_{6,7}.$ Thus, it suffices to prove $R_{6,7}\geq0$ when $\v\in\Omega'$.

We reduce the verification of the Gårding property to a quadratic inequality 
and analyze its discriminant. For convenience, we introduce the following variables:
\[
\alpha := v_1,\quad
\beta_1 := v_2 + v_3,\quad
\beta_2 := v_4 + v_5,\quad
\gamma_1 := v_2 v_3,\quad
\gamma_2 := v_4 v_5.
\]
Since  for $\v\in \Omega'$, $v_i > 0$, we have 
\begin{equation}
\label{newadd1}
\alpha > 0,\quad \beta_i > 0, \quad 
\gamma_i > 0,\quad 
\beta_i^2 \geq 4\gamma_i, \quad \alpha\gamma_i - 1>0. 
\end{equation}
The last inequality is because in $\Omega'$, $\partial_{2367} f>0$ and $\pdv_{4567}f > 0$. 

A long but direct computation shows that the Rayleigh difference
\[
R_{6,7} = P\,\beta_1^2 + Q\,\beta_1 + R,
\]
where
\[
\begin{aligned}
P &=
\alpha \beta_2^2 + (\alpha-1)^2 \gamma_2 - 2(\alpha+1)\beta_2 + 4,\\
Q &=
-2(\alpha+\beta_2-3)\big((\alpha+1)\beta_2 - 4 + A\big),\\
R &=
(\alpha-1)^2(\gamma_1 \beta_2^2 - 4\gamma_1\gamma_2)
+ (2(\alpha+\beta_2)-6)^2 \\
&- A\Big(
- \gamma_1\gamma_2
+ (2-\alpha)(\gamma_1+\gamma_2)
+ (2\alpha-6)\beta_2
- 6\alpha + 13
\Big).
\end{aligned}
\]

We claim that $P(\mathbf v) > 0$ for all $\mathbf v \in \Omega'$. 
Indeed, viewing $P$ as a quadratic polynomial in $\beta_2$, its discriminant is
\[
\Delta_P=
4(\alpha-1)^2(1-\alpha\gamma_2).
\]
By \eqref{newadd1}, $\Delta_P\leq0$. Thus,  
$P(\mathbf v) \geq 0$ in $\Omega'$. If $P=0$ then $\alpha=1,\beta_2=2$, which implies $\gamma_2\leq 1$ by \eqref{newadd1} and violates the  inequality $\alpha\gamma_2>1$.

Regarding $R_{6,7}$ as a quadratic function of $\beta_1$,  we compute the discriminant:
\[
\Delta := Q^2 - 4PR=
-4(\alpha-1)^2(\beta_2-\gamma_2-1)^2\,\Delta',
\]
where
\[
\Delta' =
(\alpha\gamma_1-1)(\beta_2^2-4\gamma_2)
+ (\alpha+\gamma_1+2)A.
\]
By \eqref{newadd1}, $\Delta'\geq 0$. It follows that $\Delta \le 0$ on $\Omega'$. Since $P>0$, we conclude that
\[
R_{6,7} \ge 0 \quad \text{on } \Omega'.
\]
This completes the analytic verification of the Gårding property for the CSGF of $F_7$.
\end{proof}
\printbibliography

\end{document}